\documentclass[11pt,twoside,reqno]{amsart}
\usepackage{academic}
\theoremstyle{definition}
\newtheorem*{declaration}{Declaration}

\DeclareMathOperator{\dimconf}{dim_{conf}}

\newcommand{\uuu}{\mathtt{u}}
\newcommand{\vvv}{\mathtt{v}}
\newcommand{\cmax}{\lambda_{\max}}
\newcommand{\cmin}{\lambda_{\min}}
\newcommand{\ol}{\overline}
\newcommand{\hi}{\iii'}
\newcommand{\hj}{\jjj'}

\begin{document}

\title{Exponential separation of self-conformal systems}

\author{Antti K\"aenm\"aki}
\address[Antti K\"aenm\"aki]
        {Department of Physics and Mathematics \\
         University of Eastern Finland \\
         P.O.\ Box 111 \\
         FI-80101 Joensuu \\
         Finland}
\email{antti@kaenmaki.net}

\subjclass[2020]{Primary 28A80, 37C45; Secondary 30C35, 37C20, 37F35.}
% 28A80 - Fractals
% 37C45 - Dimension theory of smooth dynamical systems
% 30C35 - General theory of conformal mappings
% 37C20 - Generic properties, structural stability of dynamical systems
% 37F35 - Conformal densities and Hausdorff dimension for holomorphic dynamical systems
\keywords{Conformal iterated function system, exponential separation, pre-Schwarzian derivative, Schwarzian derivative, dual iterated function system, genericity, self-conformal measure, Hausdorff dimension}
\date{\today}

\begin{abstract}
  On the real line, B\'ar\'any, Kolossv\'ary, and Troscheit \cite{BaranyKolossvaryTroscheit} made exponential separation of analytic self-conformal systems checkable through a dual iterated function system on a space of analytic functions. The quantity their condition separates is the pre-Schwarzian derivative, and this observation lets us carry the construction to conformal iterated function systems in every dimension, where the pre-Schwarzian is $T_f=\nabla\log\|Df\|$. A pointwise separation of the pre-Schwarzian cocycle implies the strong exponential separation condition, and in dimensions one and two separation of the Schwarzian cocycle implies the same condition modulo M\"obius maps. Each hypothesis is equivalent to a uniform gap condition, and these are $\mathcal C^2$-open. On the line they are also dense, so both separation conditions are $\mathcal C^2$-generic there; this sharpens the genericity theorem of B\'ar\'any, Kolossv\'ary, and Troscheit, whose open and dense set carries only the plain condition. In the plane they are dense on Jordan domains with simply connected extension domains, and both separation conditions are $\mathcal C^2$-generic there too. In dimensions at least three every conformal map is M\"obius, so no system satisfies the condition modulo M\"obius maps, and the pre-Schwarzian hypothesis becomes a pole-separation condition, dense when the generators contract strongly enough and stay away from the similarities, but not dense in general. On Jordan domains, the planar genericity and a theorem of Feng and Rapaport \cite{FengRapaport} settle the dimension drop conjecture for $\mathcal C^2$-generic planar systems with injective generators.
\end{abstract}

\maketitle

\tableofcontents

\section{Introduction}\label{sec:intro}

\subsection{Dimension drop and exponential separation} \label{sec:intro-drop}

An \emph{iterated function system} on $\R^d$ is a finite tuple $\Phi=(\fii_i)_{i\in\II}$, indexed by a finite alphabet $\II=\{1,\ldots,N\}$, $N \ge 2$, of strictly contracting maps of a domain into itself. By a theorem of Hutchinson \cite{Hutchinson} it has a unique nonempty compact set $X$ satisfying $X=\bigcup_{i\in\II}\fii_i(X)$, and, for a probability vector $\mathbf p=(p_i)_{i\in\II}$, a unique Borel probability measure $\mu_{\mathbf p}$ with $\mu_{\mathbf p}=\sum_{i\in\II}p_i\mu_{\mathbf p}\circ\fii_i^{-1}$, supported on $X$. The maps are \emph{conformal} when the derivative $D\fii_i$ is at every point a positive multiple of an orthogonal matrix; the set $X$ is then a \emph{self-conformal set} and the measure $\mu_{\mathbf p}$ is a \emph{self-conformal measure}. For such systems both the Hausdorff dimension $\dimh(X)$ of $X$ and the dimension $\dim(\mu_{\mathbf p})$ of $\mu_{\mathbf p}$ have natural upper bounds, the \emph{conformality dimension} $\dimconf(\Phi)$ for the set (see e.g.~\cite{MauldinUrbanski1996}) and the ratio $H(\mathbf p)/\chi(\mathbf p)$ of entropy to Lyapunov exponent for the measure (see e.g.~\cite{FengHu}), each capped at the ambient dimension $d$. These bounds can be strict when the pieces $\fii_i(X)$ overlap, and whether overlaps other than exact ones can force such a drop is the \emph{dimension drop conjecture}.

Exponential separation is a natural hypothesis under which no dimension drop occurs. For self-similar systems on the line, Hochman \cite{Hochman2014} proved that the exponential separation condition, which asks that distinct length-$n$ compositions stay at least $c^n$ apart for some fixed $c>0$, is a mild and verifiable hypothesis that rules out a dimension drop. It is satisfied for example if the parameters defining the self-similar set are algebraic and there are no exact overlaps; see \cite{FengFeng, Rapaport_ExactOverlaps, RapaportVarju, Varju} for generalizations in the self-similar setting. The condition is nevertheless strictly stronger than the absence of exact overlaps: Baker \cite{Baker} and B\'ar\'any and K\"aenm\"aki \cite{BaranyKaenmaki} exhibited self-similar systems on the line with no exact overlaps that are not exponentially separated. Rapaport \cite{Rapaport_SelfConfESC} extended Hochman's result to analytic self-conformal systems on the real line, showing that under exponential separation, both dimensions attain their upper bounds. What his theorem leaves unresolved is the verification of exponential separation for a concrete system: for self-similar systems the algebraic criterion above renders this routine, reducing it to the absence of exact overlaps, whereas for non-linear analytic systems it is genuinely difficult.

\subsection{The dual system and the Schwarzian hierarchy} \label{sec:intro-dual}

On the line, B\'ar\'any, Kolossv\'ary, and Troscheit \cite{BaranyKolossvaryTroscheit} introduced a verifiable sufficient condition for exponential separation through a remarkable device: a \emph{dual} iterated function system acting on a space of analytic functions. Its dual canonical projection attaches to a word $i_1\cdots i_n$ over $\II$ the explicit sum
\begin{equation*}
  \sum_{k=1}^{n}\biggl(\frac{\fii_{i_k}''}{\fii_{i_k}'}\circ\fii_{i_{k-1}}\circ\cdots\circ\fii_{i_1}\biggr)(\fii_{i_{k-1}}\circ\cdots\circ\fii_{i_1})'
\end{equation*}
along the orbit of the partial compositions of the word. They identify this sum with the derivative ratio $f''/f'$ of the full composition $f=\fii_{i_n}\circ\cdots\circ\fii_{i_1}$. That ratio is the \emph{pre-Schwarzian derivative}, on the line the scalar $T_f=f''/f'$, and recognizing their projection as such is where the present paper starts. The pre-Schwarzian is the first-order, affine counterpart of the classical Schwarzian derivative, on the line $S_f=(f''/f')'-\tfrac12(f''/f')^2$: the Schwarzian is the fundamental M\"obius invariant, vanishing exactly on the M\"obius maps, whereas the pre-Schwarzian is the fundamental affine invariant, vanishing exactly on the affine maps. Naming the projection in this way opens both of the extensions pursued here: to every dimension, where the pre-Schwarzian is $T_f=\nabla\log\|Df\|$, and one group level higher, to the Schwarzian $S_f = DT_f-T_fT_f^{\top}+\tfrac12|T_f|^2I$. Read in these terms, their theorems say that a pointwise separation of the pre-Schwarzian cocycle implies the strong exponential separation condition and that the systems satisfying it contain a $\mathcal{C}^2$-open and dense subset.

The absence of a dimension drop under exponential separation has since been established beyond the self-similar systems of the line: Hochman \cite{Hochman} proved it for self-similar systems throughout $\R^d$, Hochman and Solomyak \cite{HochmanSolomyak} for random products of M\"obius maps of the projective line, and Feng and Rapaport \cite{FengRapaport} for planar self-conformal sets and measures. Each takes exponential separation as a standing hypothesis whose verification, for a concrete non-linear system, it leaves open. This paper carries the dual construction of B\'ar\'any, Kolossv\'ary, and Troscheit to conformal iterated function systems on a bounded domain $\Omega\subset\R^d$ in every dimension, and one group level higher, from the affine invariant to the M\"obius invariant; in passing it simplifies the proof of their line theorem. Conformality is precisely what permits the extension: since $D\fii_i$ is a scalar multiple of an orthogonal matrix, the pre-Schwarzian $T_f=\nabla\log\|Df\|$ is an abelian cocycle. Two regimes arise. In the plane, that is for holomorphic maps of a domain in $\C$, the derivative is a nonzero scalar and the proofs become complex-analytic, the line theorem being recovered from the planar one by complexification. In dimension $d\ge3$, Liouville's theorem \cite{Blair} forces every conformal map to be the restriction of a M\"obius transformation, so the systems lie in a finite-dimensional family, the pre-Schwarzian reduces to a pole field, and separation becomes a separation of poles. The genuinely non-abelian case, of maps whose derivative has distinct singular values, falls outside this framework and remains open; \cref{rem:anisotropic} records that it is the composition law, not the definition of the operators, that confines the theory to the conformal class.

Our three main results build on one another, from separation through genericity to dimension. First, \cref{thm:main} gives verifiable sufficient conditions for the strong exponential separation condition, by the pre-Schwarzian in every dimension and by the Schwarzian, one weight higher, on the line and in the plane. Second, \cref{thm:genericity} establishes the genericity of these conditions, unconditional on the line and on planar \emph{Jordan domains}, bounded domains whose boundaries are Jordan curves, with simply connected extension domains, and, necessarily, conditional in dimensions at least three; on the line, the condition it makes generic is strictly stronger than the one carried by the open and dense set of B\'ar\'any, Kolossv\'ary, and Troscheit. Third, \cref{thm:main-dim} draws the dimensional consequence on the same planar domains: combined with a theorem of Feng and Rapaport \cite{FengRapaport}, the genericity above makes the absence of a dimension drop generic among the systems with injective generators.

\subsection{Sufficient conditions for exponential separation} \label{sec:intro-separation}

We work throughout with a \emph{conformal iterated function system} $\Phi=(\fii_i)_{i\in\II}$, whose generators $\fii_i$ send $\ol\Omega$ into $\Omega$, have derivative of norm below one on $\ol\Omega$, and extend conformally, and real-analytically when $d=1$, to a neighborhood of $\ol\Omega$; the precise definition is given in \cref{sec:cifs}. We write $\cmax=\max_{i \in \II}\sup_{x\in\ol{\Omega}}\|D\fii_i(x)\|$ for the largest conformal factor of a generator on $\ol{\Omega}$, so that the second requirement reads $\cmax<1$. The dual construction assigns to $\Phi$, for every finite or infinite word $\iii\in\II^\N\cup\II^*$, the \emph{pre-Schwarzian cocycle} $T_\iii$, built from $T_{f_\iii}=\nabla\log\|Df_\iii\|$, where $f_\iii$ is the composition of generators $\fii_i$ read backwards along $\iii$: on the line it is the scalar $f_\iii''/f_\iii'$, and in the plane the complex conjugate of the same holomorphic quantity. Its vanishing detects similarities, so a coincidence $T_\iii=T_\jjj$ forces $f_\iii$ and $f_\jjj$ to agree up to a similarity, and exact overlaps are a special case. One group level higher the same construction assigns the \emph{Schwarzian cocycle} $S_\iii$, which in the plane is the classical complex Schwarzian of $f_\iii$. Its vanishing detects M\"obius maps, a wider family than the similarities, so a coincidence $S_\iii=S_\jjj$ forces $f_\iii$ and $f_\jjj$ to agree up to a M\"obius map. 

Our first main result states that ruling out the pre-Schwarzian coincidences between equal-length words forces the \emph{strong exponential separation condition}, under which distinct words of length $n$ give maps at least $c^n$ apart in the supremum norm, for a fixed $c>0$ and every $n$, and that ruling out the Schwarzian coincidences forces the \emph{strong exponential separation condition modulo M\"obius maps}, which no post-composition by a M\"obius transformation can defeat.

\begin{theorem} \label{thm:main}
  Let $\Phi=(\fii_i)_{i\in\II}$ be a conformal iterated function system on $\Omega \subset \R^d$.
  \begin{enumerate}
    \item\label{it:main-pre} If $d \in \N$ and 
    \begin{equation} \label{eq:mainhyp}
      \sup_{x\in\ol{\Omega}}|T_\iii(x)-T_\jjj(x)|>0
    \end{equation}
    for all distinct $\iii,\jjj\in\II^\N\cup\II^*$ with $|\iii|=|\jjj|$, then $\Phi$ satisfies the strong exponential separation condition.
    \item\label{it:main-schw} If $d\in\{1,2\}$ and
    \begin{equation} \label{eq:mainhyp2}
      \sup_{x\in\ol{\Omega}}|S_\iii(x)-S_\jjj(x)|>0
    \end{equation}
    for all distinct $\iii,\jjj\in\II^\N\cup\II^*$ with $|\iii|=|\jjj|$, then $\Phi$ satisfies the strong exponential separation condition modulo M\"obius maps.
  \end{enumerate}
\end{theorem}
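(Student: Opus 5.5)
The plan is a compactness argument on the dual system. The pre-Schwarzian cocycle obeys the chain rule $T_{g\circ f}=(T_g\circ f)\,Df+T_f$, read with the appropriate transpose or conjugation. This makes $\iii\mapsto T_\iii$ the canonical projection of a dual iterated function system acting on a Banach space of bounded conformal vector fields on $\ol\Omega$. Its maps are $\Psi_i(G)=(G\circ\fii_i)\,D\fii_i+T_{\fii_i}$ when the word is extended on the inner side. These maps are affine, with linear part $G\mapsto (G\circ\fii_i)D\fii_i$ of norm at most $\cmax<1$. Hence $|T_{\iii}-T_{\iii|_n}|\le C\cmax^n$, and $\iii\mapsto T_\iii$ is continuous on $\II^\N$ with $T_\iii=\lim_n T_{\iii|_n}$. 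The first step is to set this up, checking that the series defining $T_\iii$ for infinite words converges uniformly on $\ol\Omega$, together with its derivatives on a slightly smaller neighborhood via Cauchy estimates, using the conformal extension to a neighborhood of $\ol\Omega$.

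The second step turns hypothesis \eqref{eq:mainhyp} into a uniform gap. Fix $m$ and consider pairs $(\iii,\jjj)$ of infinite words whose first letters differ at the first position where the composition is applied. The set of such pairs is compact in $\II^\N\times\II^\N$, and $(\iii,\jjj)\mapsto\sup_{\ol\Omega}|T_\iii-T_\jjj|$ is continuous and positive on it. It therefore has a positive minimum $\delta$. Finite words are handled by approximation, and pairs with a common prefix are handled by stripping that prefix. Since $T_{f\circ h}=(T_f\circ h)Dh+T_h$, a common inner factor $h=f_{\mathtt{w}}$ contributes $T_{f_\iii\circ h}-T_{f_\jjj\circ h}=((T_{f_\iii}-T_{f_\jjj})\circ h)Dh$. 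This costs a factor of at most $\cmin^{|\mathtt w|}$ in the lower bound, which is exponential in $n$. A common outer factor $g$ is harmless, since its contributions $(T_g\circ f_\iii)Df_\iii$ and $(T_g\circ f_\jjj)Df_\jjj$ differ by an amount controlled by $|f_\iii-f_\jjj|_{\mathcal C^1}$. Combining these, I aim for the bound $\sup_{\ol\Omega}|T_{f_\iii}-T_{f_\jjj}|\ge c_0\cmin^{n}$ for distinct $\iii,\jjj$ of length $n$.

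The third step converts this gap back to the maps themselves. Suppose $\|f_\iii-f_\jjj\|_\infty\le\varepsilon$ on $\ol\Omega$. Then, by Cauchy estimates on the complexified or conformal extension (using harmonicity in $d\ge3$ via Liouville, and analyticity in $d\le2$), the second derivatives differ by $\lesssim\varepsilon$ on a slightly smaller set. The first derivatives are bounded below by $\cmin^n$, so $|T_{f_\iii}-T_{f_\jjj}|\lesssim \varepsilon\,\cmin^{-n}$ there. The main obstacle is that we need these estimates on $\ol\Omega$ itself, not only on a shrunken set. I would handle this by using invariance: the images $f_\iii(U)$ of the neighborhood $U\supset\ol\Omega$ still contain a uniform neighborhood scaled by $\|Df_\iii\|$, and bounded distortion keeps all constants uniform in $n$. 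Comparing with the gap gives $\varepsilon\gtrsim\cmin^{2n}$, which is the strong exponential separation condition with $c=\cmin^2$ times a constant.

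For part (2), I run the same scheme one level higher. The Schwarzian satisfies the cocycle law $S_{g\circ f}=(S_g\circ f)(f')^2+S_f$ for $d\in\{1,2\}$, so it is the projection of a dual system with linear parts of norm at most $\cmax^2$. The compactness gap is obtained exactly as above. The new point is in the conversion step: if $\|\gamma\circ f_\iii-f_\jjj\|$ is small for some Möbius $\gamma$, then $S_{\gamma\circ f_\iii}=S_{f_\iii}$, so the Schwarzian gap bounds the distance below. This uses third-derivative Cauchy estimates, with the Möbius map normalized so that $\gamma\circ f_\iii$ stays in a compact family near $f_\jjj(\ol\Omega)$.
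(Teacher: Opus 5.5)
Your second step breaks at the prefix stripping, and that is where the difficulty of the theorem lies. Write $\iii=\kkk\iii'$ and $\jjj=\kkk\jjj'$ with $|\kkk|=m$ and $(\iii',\jjj')\in Z$. The peeling identity gives $T_\iii-T_\jjj=(Df_\kkk)^{\top}(\Xi\circ f_\kkk)$ with $\Xi=T_{\iii'}-T_{\jjj'}$. So $\sup_{\ol\Omega}|T_\iii-T_\jjj|$ sees $\Xi$ only on $f_\kkk(\ol\Omega)$, a set of exponentially small diameter about a point of $X$. Your compactness gap $\delta$ bounds $\sup_{\ol\Omega}|\Xi|$ from below, not $\sup_{f_\kkk(\ol\Omega)}|\Xi|$; the factor $\cmin^{m}$ pays only for $Df_\kkk$. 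In dimensions one and two, nothing in \cref{eq:mainhyp} prevents $\Xi$ from vanishing to a high order $k$ at a point of $X$. Take, say, the fixed point of $\fii_a$ with $\kkk=a^m$. Then $\|T_\iii-T_\jjj\|$ is at most a constant times $\lambda^{m(k+1)}$ for any fixed $\lambda\in(\cmax,1)$, which beats $\cmin^{m}$ once $k$ is large. So the bound $c_0\cmin^{n}$, and with it the rate $\cmin^{2n}$, is not available in general. Bounded distortion in your third step does not touch this: it controls the size of the images, not how small an analytic function can be on them. Moreover, in the plane $f_\kkk$ need not be univalent, so distortion theorems are not available; the paper traps a disc by Rouch\'e's theorem in \cref{lem:disc}.

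What is missing is unique continuation in a form uniform over the compact pair space $Z$. The paper obtains it by contradiction in \cref{lem:coincidence}. Super-exponential condensation, differentiated by Cauchy's estimate on $K_\rho$ (\cref{lem:pre-diff}) and peeled at cost $\cmin^{-n}$, makes $\Xi$ super-exponentially small on a disc of radius at least $r_0b^{n}$, $b=\cmin^2/\cmax$, about $f_\kkk(z_0)$. One more Cauchy estimate then kills every derivative there. Along a limit pair $(\ppp,\qqq)\in Z$ and limit point $\zeta\in X$, the difference $T_\ppp-T_\qqq$ vanishes to infinite order, contradicting \cref{eq:mainhyp} by the identity theorem. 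Your direct route can be repaired in the same spirit. Compactness of $Z\times X$ and the identity theorem give $k$ and $c>0$ with $\max_{j\le k}|\Xi^{(j)}(\zeta)|\ge c$ for every pair and point. Cauchy's estimate on the trapped disc of radius $r\le1$ then yields $\sup|\Xi|\ge cr^{k}/k!$, an exponential bound whose rate depends on $k$.

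For $d\ge3$ your step 2 does hold, but only because distinct pole fields differ at every point (\cref{lem:poledict}), which you must invoke. Your conversion there is wrong as stated: M\"obius maps are not harmonic for $d\ge3$ (already $\Delta(x_1|x|^{-2})=-2(d-2)x_1|x|^{-4}$). One needs the pole-gap derivative bounds with a Landau interpolation (\cref{lem:CU,lem:landau}). On the line, smallness on the real interval gives no Cauchy estimate on complex discs without the two-constants theorem (\cref{lem:twoconst}) or a real Landau-type estimate.

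In part (2) you must also rule out orientation-reversing M\"obius competitors in the plane; the paper does this with Stokes' theorem. On the line you must handle competitors whose pole approaches or enters the complex neighborhood, which your proposed normalization does not exclude. The paper clears denominators through the Wronskian identity of \cref{lem:wronskian}.
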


\Cref{thm:main}\cref{it:main-pre} is proved in \cref{sec:C2-pre} for the plane, in \cref{sec:C1-pre} for the line, and in \cref{sec:C3-pre} for $d\ge3$, and \cref{thm:main}\cref{it:main-schw} in \cref{sec:C2-mob} for the plane and in \cref{sec:C1-mob} for the line. Restricted to the line it contains the sufficient condition of B\'ar\'any, Kolossv\'ary, and Troscheit \cite[Theorem~1.5]{BaranyKolossvaryTroscheit}, as recorded in \cref{rem:BKT-comparison}, by a proof that passes through the plane. Where their argument transfers closeness of maps to closeness of every derivative of the pre-Schwarzian through a Taylor estimate backed by a Fa\`a di Bruno apparatus and finishes with a two-case endgame, a single Cauchy estimate performs all of these differentiations at once, and one application of Rouch\'e's theorem makes the endgame free of cases, as \cref{rem:cleaner} explains; the line is recovered afterward by lifting the analytic system to a complex neighborhood and appealing to the planar theorem. In dimension $d\ge3$ the function theory disappears entirely: contraction pushes every pole a definite distance from $\ol\Omega$, and this pole gap, with the rigidity of the M\"obius class, stands in for every analytic estimate, as \cref{rem:rigid} describes.

The Schwarzian hypothesis is the stronger of the two, forbidding the larger family of coincidences and earning the stronger conclusion; in the plane the weaker hypothesis already yields separation modulo similarities, as \cref{prop:similarities} shows. It is confined to dimensions one and two because the Schwarzian cannot replace the pre-Schwarzian in dimension $d\ge3$: there every conformal map is M\"obius and has vanishing Schwarzian, while the pre-Schwarzian still records its pole. Neither hypothesis is checkable as it stands, both being quantified over all equal-length pairs of words, but each follows from a gap between the projections of the generators at a single point, large against the geometric tail of the cocycle. This sufficient condition on the generators alone is the one-point gap criterion of \cref{prop:gencrit}, available at weight one in every dimension and at weight two for dimensions one and two, and shown by \cref{rem:gencrit-necessity} not to be necessary. On the line, its weight-one half is due to B\'ar\'any, Kolossv\'ary, and Troscheit \cite[Proposition~1.8]{BaranyKolossvaryTroscheit}. In \cref{ex:C2} we run its weight-two half on an explicit planar system of three polynomial maps, obtaining a concrete system that satisfies the strong exponential separation condition, plain and modulo M\"obius maps.

\subsection{Genericity of the conditions} \label{sec:intro-genericity}

For a dimension $d\ge1$, a bounded domain $\Omega\subset\R^d$, an extension domain $\Omega'\supset\ol{\Omega}$, and the alphabet $\II$, write $\mathcal S$ for the space of conformal iterated function systems on $\Omega$ with extension domain $\Omega'$, metrized by the $\mathcal C^2$-distance between generators over $\ol{\Omega}$. The pointwise hypotheses \cref{eq:mainhyp,eq:mainhyp2} require only that each distance $\sup_{x\in\ol{\Omega}}|T_\iii(x)-T_\jjj(x)|$, respectively $\sup_{x\in\ol{\Omega}}|S_\iii(x)-S_\jjj(x)|$, be positive, and these distances have infimum zero, since two words with a long common prefix have exponentially close projections; over the pairs with distinct first letters, however, the infimum can be positive. \Cref{sec:gaps} attaches to $\mathcal S$ the two resulting uniform gap conditions, at weight one in every dimension and at weight two in dimensions one and two; the weight-two condition implies the weight-one one, both are $\mathcal C^2$-open, and each is equivalent to the corresponding hypothesis of \cref{thm:main}, so making a gap condition dense makes that hypothesis generic, and with it the separation it forces. On the line the weight-two condition is dense with no further hypothesis, so the strong exponential separation condition modulo M\"obius maps is $\mathcal C^2$-generic there; this strictly sharpens the theorem of B\'ar\'any, Kolossv\'ary, and Troscheit \cite[Theorem~1.4]{BaranyKolossvaryTroscheit}, whose open and dense set carries the plain condition. In the plane density costs a hypothesis on the domain, the Cauchy estimates forbidding the interior localization that the line allows and pushing the perturbations to the boundary. In dimensions at least three the result is conditional, and necessarily so. These three cases are the content of our second main theorem.

\begin{theorem} \label{thm:genericity}
  Let $\Omega\subset\R^d$ be a bounded domain and $\Omega'$ be its extension domain.
  \begin{enumerate}
    \item\label{it:gen-line} If $d=1$, then the systems in $\mathcal S$ satisfying the strong exponential separation condition modulo M\"obius maps contain a $\mathcal C^2$-open and dense subset, and so do those satisfying the strong exponential separation condition.
    \item\label{it:gen-plane} If $d=2$, then the same two statements hold whenever $\Omega$ is a Jordan domain and $\Omega'$ is simply connected.
    \item\label{it:gen-higher} If $d \ge 3$, then no system satisfies the strong exponential separation condition modulo M\"obius maps, while the systems in $\mathcal S$ satisfying the strong exponential separation condition contain a $\mathcal C^2$-open set whose closure contains every system with $\cmax<N^{-2/d}$ whose generators are bounded on $\Omega'$ and satisfy
    \begin{equation} \label{eq:genhigher}
      \sup_{x\in\ol{\Omega}}|T_{\fii_i}(x)| > \frac{4}{\min_{j \in \II}\dist(\fii_j(\ol{\Omega}),\partial\Omega)}\sqrt{\frac{\cmax}{1-\cmax}}
    \end{equation}
    for every $i\in\II$.
  \end{enumerate}
\end{theorem}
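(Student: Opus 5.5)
The plan is to reduce genericity to density of the uniform gap conditions of \cref{sec:gaps}. We take as given, as announced in \cref{sec:intro-genericity}, that the weight-one and weight-two gap conditions are $\mathcal C^2$-open, that the weight-two condition implies the weight-one condition, and that each gap condition is equivalent to the corresponding pointwise hypothesis \cref{eq:mainhyp} or \cref{eq:mainhyp2}. Granting this, \cref{thm:main} shows that the set $\mathcal G_2\subset\mathcal S$ of systems with the weight-two gap lies inside the systems satisfying the strong exponential separation condition modulo M\"obius maps. Likewise the set $\mathcal G_1$ of systems with the weight-one gap lies inside the systems satisfying the plain condition. Both sets are open, so it remains to prove density, or in case \cref{it:gen-higher} the stated closure property. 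Since $\mathcal G_2\subset\mathcal G_1$, density of $\mathcal G_2$ yields both claims in \cref{it:gen-line,it:gen-plane}.

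To prove density, I start from an arbitrary $\Phi\in\mathcal S$ and aim to land in the one-point gap criterion of \cref{prop:gencrit}. The first move is to replace $\Phi$ by a nearby system whose contraction is so strong that the geometric tail of the cocycle is small: conjugate or post-compose each generator by a homothety toward a point of $\fii_i(\Omega)$, scaling $\fii_i$ into $\rho$ times itself. This move is not $\mathcal C^2$-small, however. So I will instead perturb only the second and third jets, namely $T_{\fii_i}$ and $S_{\fii_i}$, at a chosen point $x_0\in\ol\Omega$. The perturbation should be $\mathcal C^2$-small but create a large gap between $S_{\fii_i}(x_0)$ and $S_{\fii_j}(x_0)$, since the Schwarzian involves third derivatives that a $\mathcal C^2$-small change can move arbitrarily.

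On the line, $\fii_i$ is replaced by $\fii_i+\varepsilon\,\psi_i$, where $\psi_i$ is real-analytic, extends to $\Omega'$, and is concentrated near $x_0$; for instance $\psi_i$ is a narrow Gaussian-type analytic bump, obtained by approximating via Runge or Weierstrass. Its $\mathcal C^2$ norm is small while $\psi_i'''(x_0)$ is large and distinct for distinct $i$. The gap in $S$ at $x_0$ must dominate the tail bound of \cref{prop:gencrit}, which depends only on $\mathcal C^2$-controlled data such as $\cmax$ and the bounds on $T$. Hence a sufficiently large third-jet gap satisfies the criterion.

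In the plane, the Cauchy estimates forbid this interior localization: holomorphic functions with small sup norm near $\ol\Omega$ have small derivatives of every order inside. I therefore place $x_0$ on $\partial\Omega$ and construct the perturbations with Runge or Mergelyan approximation on the simply connected domain $\Omega'$. Since $\Omega$ is a Jordan domain, the complement of $\ol\Omega$ is connected and the boundary is accessible. This gives polynomials, or functions holomorphic on $\Omega'$, that are uniformly small in $\mathcal C^2$ on $\ol\Omega$ but have large third derivative at a boundary point. Concretely, I take a function like $\varepsilon(z-w)^{-k}$ with $w\notin\ol\Omega$ close to $x_0$, approximated on $\ol\Omega$ by functions holomorphic on $\Omega'$, with the parameters balanced so that the $\mathcal C^2$ size tends to zero while the third derivative blows up. This balancing is the main obstacle. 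The $\mathcal C^2$ norm on $\ol\Omega$ near $x_0$ and the third derivative at $x_0$ are linked through the Cauchy estimates at scale $\dist(w,\ol\Omega)$, so they cannot be separated by a pole placed at a fixed distance. I would try instead a conformal map onto the disc, via Carath\'eodory extension to the Jordan boundary, and functions like $\varepsilon\log(1-\bar\zeta_0\zeta)$-type singularities that are $\mathcal C^2$-small yet have unbounded third derivative on the boundary. Failing that, I would weaken the target to a gap at weight one and then lift.

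For $d\ge3$, Liouville's theorem makes every generator M\"obius, so $S_\iii\equiv0$ and no system separates modulo M\"obius maps. Moreover $T_{\fii_i}$ is determined by the pole of $\fii_i$. I will perturb the poles within the finite-dimensional M\"obius family so that the points $T_{\fii_i}(x_0)$ are pairwise distinct, and check that \cref{eq:genhigher} together with $\cmax<N^{-2/d}$ puts the perturbed system into the weight-one gap condition via \cref{prop:gencrit}. The condition $\cmax<N^{-2/d}$ is a measure-counting condition: it leaves room in $\R^d$ to choose $N$ pole directions with separation exceeding the tail $\frac{4}{\delta}\sqrt{\cmax/(1-\cmax)}$, where $\delta=\min_j\dist(\fii_j(\ol\Omega),\partial\Omega)$. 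The inequality \cref{eq:genhigher} guarantees that each $|T_{\fii_i}|$ is large enough for such a perturbation to be realized with small change.
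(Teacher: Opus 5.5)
Your reduction is the right one: by \cref{lem:gap}, membership in the gap classes $\mathcal G_S\subseteq\mathcal G_T$ is equivalent to the hypotheses of \cref{thm:main}, so everything hinges on density. You do take openness on faith; for $\mathcal G_S$ it is not automatic, since $S$ is of third order while $d_2$ controls only two derivatives, and the paper derives it from \cref{lem:Sclosed}.

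The genuine gap is your density mechanism: the one-point criterion of \cref{prop:gencrit} cannot be reached by $\mathcal C^2$-small perturbations in general. Its weight-two threshold is $2\cmax^2\max_i\|S_{\fii_i}\|/(1-\cmax^2)$. So, contrary to your claim, the tail is not built from $\mathcal C^2$-controlled data: it grows with the very third derivatives you inflate. Since every gap $|S_{\fii_i}(x)-S_{\fii_j}(x)|$ is at most $2\max_i\|S_{\fii_i}\|$, no choice of bumps helps unless $\cmax<2^{-1/2}$, and at weight one the ceiling is $\cmax<\tfrac12$ (\cref{rem:gencrit-range}). As $\cmax$ moves by at most the $d_2$-distance, every system with $\cmax>2^{-1/2}$ has a whole $\mathcal C^2$-neighborhood on which neither criterion is available; an example is the pair of similarities of ratio $\tfrac{9}{10}$ in \cref{rem:gencrit-necessity}. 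Your bumps therefore give at best density inside $\{\cmax<2^{-1/2}\}$.

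The weight-one version, which you use for $d\ge3$ and as the planar fallback, fails even for small $\cmax$. Its data are $d_2$-continuous, so near a system with $\fii_2=A\circ\fii_1$ for a similarity $A$, where $T_{\fii_1}=T_{\fii_2}\ne0$, the ratio $\kappa$ stays near $0$ on a whole neighborhood. For \cref{thm:genericity}\cref{it:gen-higher}, the hypotheses also allow $\cmax$ up to $N^{-2/d}$, which exceeds $\tfrac12$ already for $d=3$ and $N=2$. Making the values $T_{\fii_i}(x_0)$ pairwise distinct says nothing about the infinitely many pairs in $Z$. Your planar construction is, in addition, left unresolved.

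The paper never passes through word length one.

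\emph{On the line} it compares projections at a large word length $n$. By \cref{lem:linecylinders}, every Schwarzian projection of a word extending $\iii$ lies, at each point $x$, in an interval of radius $L(f_\iii'(x))^2\le L\cmax^{2n}$. A fixed shift $\tau$ with $2L\lambda^{2n}<\tau$ therefore separates a level-$n$ pair whatever $\cmax$ is. The work is to impose such shifts on all bad level-$n$ pairs, and on all coincident shorter pairs, at once without disturbing the others. This is done by:
\begin{itemize}
  \item removing exact overlaps (\cref{lem:identityfree});
  \item choosing base points with pairwise disjoint orbits (\cref{lem:lineorbits});
  \item applying the localized third-derivative perturbation with targets and guards (\cref{lem:localizedS});
  \item returning to the fixed $\Omega'$ via \cref{lem:entire} and openness.
\end{itemize}

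\emph{In the plane and for $d\ge3$} there is no finite-level criterion at all. The paper runs transversality in a finite-dimensional family. In the plane the family comes from peaking polynomials at boundary points (Carath\'eodory plus Mergelyan), entering via $g_i'=\fii_i'e^{\Theta_i}$, which shifts the pre-Schwarzian of a letter exactly. For $d\ge3$ it comes from translations of the sources, which move the poles rigidly. The paper proves a small-ball estimate uniform over $Z$ and counts with \cref{lem:avoidance,lem:paircover}.

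That is where the hypotheses of \cref{thm:genericity}\cref{it:gen-higher} enter, and not as you read them. First, $\cmax<N^{-2/d}$ makes the covering exponent $2\log N/\log(1/\lambda)$ of $Z$ smaller than the dimension $d$ of the pole difference. Second, boundedness on $\Omega'$ keeps the translated generators conformal there. Third, \cref{eq:genhigher} traps the co-poles in $\Omega$ and gives $\cmax(1+K)<1$. As a result, a balanced translation of the two first letters, corrected by a Neumann series, is a right inverse of the derivative of the pole difference (\cref{prop:transversal3,cor:C3dense}).
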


\Cref{thm:genericity} is proved in \cref{sec:gen-line-dense} for the line, in \cref{sec:gen-plane-dense} for the plane, and in \cref{sec:gen-higher-dense} for dimensions at least three, in each case right after the density theorem of its section. Each proof combines two ingredients established separately: the openness of the gap classes, proved in \cref{sec:gen-line-open,sec:gen-plane-open,sec:gen-higher-open} from a Lipschitz estimate for the dual projections that is uniform over the words, and their density, the main result of each of the three sections; the counting lemma and the small-ball lemma behind the two transversality arguments are isolated in \cref{sec:gaps}. On the line, \cref{rem:BKT} records how the plain half of \cref{it:gen-line} and the open-dense theorem of B\'ar\'any, Kolossv\'ary, and Troscheit \cite[Theorem~1.4]{BaranyKolossvaryTroscheit} differ, in the space, in the conclusion, and in the method.

The hypotheses on the domain in \cref{it:gen-plane} are those of the method: \cref{ex:nonconvex} shows that on an annulus the parametrization through which the boundary perturbations act is unavailable, the argument principle locking the periods of the pre-Schwarzians, and \cref{q:nonconvex} asks whether the conclusion nevertheless holds on multiply connected domains. The first half of \cref{it:gen-higher}, by contrast, is no limitation of the method but a fact about the systems: for $d\ge3$ every composition $\fii_\iii$ is the restriction of a M\"obius transformation, so for distinct equal-length words the M\"obius map $M=\fii_\iii\circ\fii_\jjj^{-1}$ satisfies $M\circ\fii_\jjj=\fii_\iii$ on $\ol{\Omega}$, and the separation modulo M\"obius maps fails outright; this is \cref{ex:esc-not-mobius}(1). In the second half, \cref{eq:genhigher} keeps the generators quantitatively away from the similarities, on which the pre-Schwarzian vanishes, and the bound $\log N/\log(1/\cmax)<d/2$ implies $\dimm(X)<d/2$ for the Minkowski dimension of the self-conformal set $X$, which exists by Falconer \cite[Theorem~4]{Falconer1989}. It is the bound on $\cmax$ itself, and not the dimension bound it implies, that the transversality argument uses, the count there running over pairs of words rather than over the points they build; it would be interesting to know whether $\dimm(X)<d/2$ suffices. The statement is \cref{cor:C3dense}, a special case of the slightly more general \cref{thm:C3dense}, whose contraction threshold the pole geometry of the system sets and which \cref{rem:C3gen} shows to be sufficient but not necessary. Some hypothesis of this kind is needed, by \cref{prop:nondense}: when $\Omega$ is convex, the gap condition that produces the open set is not dense in $\mathcal S$, failing on an open set of near-homothetic systems with $\cmax>N^{-1/d}$. What this rules out is the density of the gap condition, not that of the separation itself, which we leave undecided, and \cref{q:C3dense} asks whether the gap condition is nevertheless dense in the open set of systems with $\cmax<N^{-1/d}$; the finite-word half of it is dense in the open set of systems whose generators have pairwise distinct poles, with no hypothesis on the domains or on $\cmax$, by \cref{prop:finite}.

\subsection{Consequences for dimension} \label{sec:intro-dimension}

These separation results feed directly into dimension. In the plane a theorem of Feng and Rapaport \cite{FengRapaport} turns exponential separation into the sharp dimensions of the self-conformal set and of every self-conformal measure of a system whose generators are injective on $\ol{\Omega}$, provided the system preserves neither a point nor a real-analytic curve and is not holomorphically conjugate to a homothetic system. Fed through our $\mathcal{C}^2$-genericity, their theorem shows that, on Jordan domains with simply connected extension domains, a planar system with injective generators generically suffers no dimension drop. Write $\mathcal S'$ for the subspace of $\mathcal S$ consisting of the conformal iterated function systems whose generators are injective on $\ol{\Omega}$.

\begin{theorem} \label{thm:main-dim}
  Let $\Omega$ be a Jordan domain and let $\Omega'$ be simply connected. Then the conformal iterated function systems in $\mathcal S'$ satisfying 
  \begin{equation*}
    \dimh(X)=\min\{2,\dimconf(\Phi)\}\qquad\text{and}\qquad\dim(\mu_{\mathbf p})=\min\biggl\{2,\frac{H(\mathbf p)}{\chi(\mathbf p)}\biggr\}
  \end{equation*}
  for every positive probability vector $\mathbf p$ contain a $\mathcal{C}^2$-open and dense subset.
\end{theorem}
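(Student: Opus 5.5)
Our plan is to intersect the open dense set supplied by \cref{thm:genericity}\cref{it:gen-plane} with the open dense set of systems meeting the non-degeneracy hypotheses of Feng and Rapaport \cite{FengRapaport}. Everything is done inside $\mathcal S'$.

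\textbf{Step 1: separation.} \Cref{thm:genericity}\cref{it:gen-plane} gives a $\mathcal C^2$-open dense set $\mathcal U\subset\mathcal S$ of systems satisfying the strong exponential separation condition. We first check that $\mathcal S'$ is $\mathcal C^2$-open in $\mathcal S$. Injectivity on the compact set $\ol\Omega$ of a map with nonvanishing derivative is stable under $\mathcal C^1$-small perturbation: a local inverse function argument handles nearby pairs of points, and compactness handles the rest. It follows that $\mathcal U\cap\mathcal S'$ is open and dense in $\mathcal S'$. Strong exponential separation implies the exponential separation used in \cite{FengRapaport}, since distinct words of length $n$ give maps $c^n$-apart in supremum norm.

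\textbf{Step 2: non-degeneracy.} Let $\mathcal V\subset\mathcal S'$ consist of the systems that preserve no point, preserve no real-analytic curve, and are not holomorphically conjugate to a homothetic system. We show $\mathcal V$ contains an open dense set.
\begin{itemize}
\item A common fixed point fails once two generators have distinct fixed points, and this is an open dense condition.
\item Conjugacy to a homothetic system forces the pre-Schwarzian cocycle to vanish after conjugation, and so rules out the gap condition. More concretely, it forces all generators to commute, since homotheties about a common center commute. Non-commutation of two generators, say $\fii_1\circ\fii_2\ne\fii_2\circ\fii_1$ at some point, is open, and it is dense by a small polynomial perturbation.
\item An invariant real-analytic curve must contain the self-conformal set $X$. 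When $X$ lies in such a curve, the derivatives $\fii_i'(x)$ at points $x\in X$ are tied to the tangent direction of the curve. We would rule this out openly by asking that $\arg\fii_i'$ at the fixed points be incompatible: the fixed point $z_i$ of $\fii_i$ lies on the curve, and $\fii_i'(z_i)$ must then be real, because $\fii_i$ maps the curve to itself near $z_i$. So $\fii_i'(z_i)\notin\R$ for some $i$ excludes invariant curves. This is an open condition, and it is dense via perturbation by a small rotation about $z_i$ composed in.
\end{itemize}

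\textbf{Step 3: conclusion.} On the open dense set $\mathcal U\cap\mathcal V'\cap\mathcal S'$, where $\mathcal V'\subset\mathcal V$ is the open dense set built in Step 2, the theorem of Feng and Rapaport gives both dimension equalities for every positive $\mathbf p$.

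\textbf{Main obstacle.} I expect the real-analytic curve condition to be the hardest step. The claim that $\fii_i'(z_i)$ must be real requires care, because the curve may be singular or fail to pass through $z_i$ in a controlled way. If the argument through the fixed point breaks down, we would instead use two non-commuting generators whose multipliers have rationally independent arguments. In a further fallback, we would use the Schwarzian-separation conclusion modulo M\"obius maps, which rules out the M\"obius-conjugate degenerate configurations.
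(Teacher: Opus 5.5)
Your architecture matches the paper's proof of \cref{thm:dim}. You intersect the separation class with the systems meeting the hypotheses of Feng and Rapaport, remove a common fixed point by a small translation, and make a multiplier at a fixed point non-real by a small rotation. Two steps do not go through as written, and the first is the serious one.

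\textbf{Step 1: injectivity of the approximants.} You obtain injectivity on $\ol\Omega$ from the claim that $\mathcal S'$ is open in $(\mathcal S,d_2)$, and your justification of that claim fails. The metric $d_2$ controls a perturbed generator $\psi$ only on $\ol\Omega$, so near a boundary point there is no full neighborhood on which to run an inverse function argument. Moreover, a Jordan domain can contain two points of $\ol\Omega$ that are very close while every path joining them inside $\ol\Omega$ is much longer, for instance on either side of an inward cusp. For such pairs, $\mathcal C^2$-smallness of $\psi-\fii$ on $\ol\Omega$ gives no lower bound on $|\psi(x)-\psi(y)|$.

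The paper never asserts that $\mathcal S'$ is open, and it does not take density in $\mathcal S'$ from the black-box density in $\mathcal S$. It reopens the construction instead. The approximants of \cref{prop:transversal} satisfy $\|g_i^t-\fii_i\|_{\mathcal C^2(\ol U)}\le C|t|$ on the neighborhood $U$ of \cref{lem:collar}, and $U$ contains the segment between any two sufficiently close points of $\ol\Omega$. With that control your argument works: a Taylor estimate along such segments handles close pairs, and compactness with uniform convergence handles the rest. As written, Step 1 only yields separated approximants in $\mathcal S$, not in $\mathcal S'$.

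\textbf{Step 2: homothetic conjugacy.} Conjugacy to a homothetic system does not force the generators to commute. The model maps $z\mapsto r_iz+b_i$ generally have different fixed points, and two such maps commute only when they share one. The system $z/3$, $z/3+2/3$ of \cref{ex:esc-not-mobius}(2) is itself homothetic, yet its generators do not commute, so non-commutation excludes nothing.

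The repair is already in your third bullet. If $h\circ\fii_i\circ h^{-1}$ is $z\mapsto r_iz+b_i$, then $h$ carries the fixed point $z_i$ of $\fii_i$ to the fixed point of the model map, and the chain rule gives $\fii_i'(z_i)=r_i\in\R$. So the single open and dense condition $\fii_i'(z_i)\notin\R$ excludes both invariant analytic curves and conjugacy to a homothetic system. This is \cref{prop:FR-criterion}, and it is how the paper handles both hypotheses.

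Your pre-Schwarzian remark can also be made to work. For $f_\iii=h^{-1}\circ B\circ h$ with $B$ affine of real ratio $r\to0$, every infinite-word projection equals $h''/h'$, so the gap fails. That argument needs $\mathcal U=\mathcal G_T$, however, not an arbitrary set of separated systems.

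The curve step you flagged as the main obstacle is fine. $\Gamma$ is an embedded submanifold that is relatively closed in $\Omega$, so it contains $X$ and hence $z_i$, and multiplication by $\fii_i'(z_i)$ must preserve its tangent line there.
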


\Cref{thm:main-dim} is the final assertion of \cref{thm:dim}, which we prove in \cref{sec:dimension} by verifying the hypotheses of the theorem of Feng and Rapaport on an open and dense subset of $\mathcal S'$. This subset can be chosen so that its members also satisfy the strong exponential separation condition modulo M\"obius maps. The density is relative to $\mathcal S'$, which need not be dense in $\mathcal S$: by Rouch\'e's theorem, the systems with a generator that is not injective on $\Omega$ form an open set, and \cref{rem:injectivity-restriction} shows that it can be nonempty.

Feng and Rapaport \cite{FengRapaport} pose the problem of extending the criterion of B\'ar\'any, Kolossv\'ary, and Troscheit to holomorphic systems, an extension which would produce explicit non-self-similar planar systems whose sets and measures have the expected dimensions. This paper solves it: \cref{thm:main} with \cref{prop:gencrit} is that extension and \cref{ex:C2dim} works out an explicit example. \Cref{thm:main-dim} goes further and makes the dimension formulas generic. Feng and Rapaport themselves reach a genericity statement of a different kind in \cite[Corollary~1.8]{FengRapaport}, along a non-degenerate one-parameter family and outside a set of parameters of Hausdorff dimension zero; \cref{rem:FR-genericity} compares the two, and neither implies the other.

\subsection{Organization of the paper} \label{sec:intro-organization}

The preliminaries consist of \cref{sec:schwarzian} on the two operators, \cref{sec:cifs-esc} on conformal iterated function systems and the separation conditions, and \cref{sec:dual-ifs} on the dual construction; the last of these ends with the reduction of \cref{thm:main} to a coincidence of limit projections and with the one-point gap criterion. \Cref{thm:main} is proved in \cref{sec:C2,sec:C1,sec:C3}, the plane before the line because the line is reached from it by complexification, and \cref{thm:genericity} in \cref{sec:gen-line,sec:gen-plane,sec:gen-higher}, after the common gap framework and transversality argument of \cref{sec:gaps}. The planar dimension theorem, \cref{thm:main-dim}, is derived in \cref{sec:dimension}.

\section{Pre-Schwarzian and Schwarzian derivatives} \label{sec:schwarzian}

Both operators of this paper derive from one scalar quantity, the logarithm of the operator norm of the derivative. We define them in their natural generality, for maps that need not be conformal, and then isolate the conformal maps as the class on which the operators compose exactly, with the similarities and the M\"obius maps as their respective kernels. Throughout the paper, a \emph{domain} is a nonempty connected open set, and throughout this section $\Omega\subseteq\R^d$ is one. In \cref{sec:schwarzian-defs} we define the two operators for an arbitrary $\mathcal{C}^2$-map, and in \cref{sec:conformal-maps} we record the classification of the conformal maps by the ambient dimension and compute both operators in closed form in the two regimes where conformality is a rigidity. The remaining two subsections isolate the properties on which the rest of the paper rests, and they are used in different places: in \cref{sec:composition} we prove the composition laws, which make the operators affine cocycles of weight one and weight two and which \cref{sec:dual-ifs} dualizes, and in \cref{sec:kernels} we compute the kernels, which are what turns a coincidence of two projections into a statement about the maps themselves and which \cref{sec:C2,sec:C1,sec:C3} consume. The Schwarzian, and the weight-two theory built upon it, bears on the separation results only in the plane and on the line, through the finer separation modulo M\"obius maps of \cref{thm:main}\cref{it:main-schw}; in dimension $d\ge3$ it vanishes identically, every conformal map being M\"obius.

\subsection{The operators in the general \texorpdfstring{$\mathcal{C}^2$}{C2}-setting} \label{sec:schwarzian-defs}

Let $f\colon\Omega\to\R^d$ be a $\mathcal{C}^2$-map. The \emph{pre-Schwarzian derivative} of $f$ is the vector field
\begin{equation} \label{eq:Tdef}
  T_f = \nabla\log\|Df\|,
\end{equation}
defined at every point at which the operator norm of the Jacobian $Df$, given by $\|Df(x)\|=\sup_{|v|=1}|Df(x)v|$, is positive, $\|Df(x)\|>0$, and the function $x\mapsto\log\|Df(x)\|$ is differentiable. For $d=1$ the norm $|f'|$ is the only singular value and \cref{eq:Tdef} is the classical pre-Schwarzian $T_f=(\log|f'|)'=f''/f'$, defined wherever $f'\ne0$. In higher dimensions the positivity alone does not suffice. Since $\|Df(x)\|$ is the largest singular value of $Df(x)$ and singular values are $1$-Lipschitz functions of the matrix, the function $\log\|Df\|$ is locally Lipschitz on the set $\{x\in\Omega : \|Df(x)\|>0\}$, so $T_f$ exists at least almost everywhere there.

Wherever $T_f$ is in turn differentiable, the \emph{Schwarzian derivative} of $f$ is the matrix field
\begin{equation} \label{eq:Sdef}
  S_f = DT_f-T_fT_f^{\top}+\tfrac12|T_f|^2I,
\end{equation}
built from the pre-Schwarzian and one derivative of it alone, so that $S_f$ is of second order in $\log\|Df\|$ and hence of third order in $f$. Implicit in this is that $T_f$ is defined on a neighborhood of any point at which it is differentiated, so that $S_f$ lives on the interior of the set on which $T_f$ exists and not merely on that set. At such a point the partial derivatives of $\log\|Df\|$ exist in a neighborhood and are differentiable at it, so the mixed second partial derivatives coincide by the theorem of Young \cite{Young1909}; see \cite[Theorem~12.12]{Apostol1974} for a proof and \cite{Minguzzi2015} for the hierarchy of criteria for the equality of mixed partial derivatives. Hence $DT_f$ is the Hessian of $\log\|Df\|$ and is symmetric, and the remaining two terms of \cref{eq:Sdef} are symmetric by inspection, so that $S_f$ takes values in the symmetric matrices. For $d=1$ the last two terms combine and \cref{eq:Sdef} is the classical Schwarzian $S_f=T_f'-\tfrac12T_f^2=(f''/f')'-\tfrac12(f''/f')^2$.

We single out three linear-algebraic notions attached to the symmetric matrix $S_f$. At each point we read $S_f$ as a \emph{tensor}, the symmetric bilinear form $(v,w)\mapsto v^{\top}S_fw$ or equivalently the quadratic form $v\mapsto v^{\top}S_fv$ on vectors $v,w\in\R^d$; this quadratic reading is intrinsic, the matrix transforming as $S_f\mapsto O^{\top}S_fO$ when the coordinates are rotated by $O\in O(d)$, and the \emph{trace} $\operatorname{tr}S_f$ is unchanged by such a rotation. The splitting $S_f=\tfrac1d(\operatorname{tr}S_f)I+(S_f-\tfrac1d(\operatorname{tr}S_f)I)$ into a multiple of the identity and a \emph{trace-free} remainder separates the average stretching common to every direction from the anisotropy, how unequally $S_f$ stretches different directions. When $d=1$ there is a single direction and hence no anisotropy to record, so the trace-free remainder vanishes and only the scalar $\operatorname{tr}S_f$ remains. The trace-free part of \cref{eq:Sdef} is the Schwarzian tensor that Osgood and Stowe \cite[(1.4)]{OsgoodStowe} associate to the function $\log\|Df\|$; we keep the trace: for $d=1$ nothing else survives, and in the proof of \cref{lem:kernel} below it is the trace that singles out the conformal factors of the M\"obius maps from the wider class of conformal factors on which the trace-free tensor alone vanishes, those of the M\"obius metrics of \cite{OsgoodStowe}.

The construction so far asks only that $\log\|Df\|$ be twice differentiable, and one might attempt it away from the conformal class, on the open set where the largest singular value of $Df$ is simple; that set is fragile, however, and \cref{rem:anisotropic} records why we do not pursue it. Conformality lies at the \emph{isotropic} extreme, where all singular values coincide identically, and no exceptional set need be removed from $\Omega$; it is the class on which the two operators are globally defined, as the classification below records, and the composition laws \cref{eq:Tchain} and \cref{eq:Schain} below show that they compose exactly on it. This is the assumption we now adopt.

\subsection{Conformal maps and their classification} \label{sec:conformal-maps}

A $\mathcal{C}^1$-map $f\colon\Omega\to\R^d$ is \emph{conformal} if at every point the derivative is a positive multiple of an orthogonal matrix, $Df=\lambda_fO_f$ with the \emph{conformal factor} $\lambda_f>0$ and $O_f\in O(d)$ pointwise; equivalently $(Df)^{\top}Df=\lambda_f^2I$, so $f$ preserves angles. All singular values of $Df$ then equal $\lambda_f$, so the norm $\|Df\|=\lambda_f=|\det Df|^{1/d}$ is exactly as regular as $Df$, the singular values coinciding identically instead of crossing transversally. For $d=1$ we require in addition that $f$ be real-analytic. This is a genuine restriction special to the line, where conformality asks only that $f'$ not vanish and carries no regularity beyond the assumed $\mathcal{C}^1$; in higher dimensions the classification below makes a conformal map real-analytic of its own accord. We impose it here mainly for convenience, so that a conformal map has, in every dimension, the smoothness the results of this section need, in particular the derivatives the composition laws of \cref{lem:chain} below require. Real-analyticity is needed in earnest from \cref{lem:dual} onwards, where on the line it supplies the complex neighborhood to which the generators, and with them the dual projections, extend; the identity principle that closes the proof of \cref{prop:gencrit} rests on this analyticity, and so does the reduction of the line to the plane in \cref{sec:C2}. For $d=2$ we require in addition that $f$ be orientation-preserving, $O_f\in SO(2)$; this makes $Df$ complex-linear, so by the Cauchy--Riemann equations a planar conformal map is a holomorphic map with $f'\neq0$. The added condition rules out the anti-holomorphic maps, which reverse orientation while preserving angles; for such a map $\partial_zf=0$, so the scalar pre-Schwarzian $f''/f'$ used below is unavailable. An anti-holomorphic map $f$ has the same conformal factor $\lambda_f$, hence the same $T_f$ and $S_f$, as the holomorphic map $z\mapsto\ol{f(z)}$, but this does not identify their iterated dynamics, and orientation-reversing systems remain outside the scope of this paper.

The conformal maps are classified by dimension. For $d=1$ a conformal map is, by the added assumption, a real-analytic map of nonvanishing derivative, with $\lambda_f=|f'|$; conformality on the line is otherwise no rigidity at all. For $d=2$, identifying $\R^2$ with $\C$, the conformal maps are, as just observed, the holomorphic maps of nonvanishing derivative, with $\lambda_f=|f'|$; for these, \cref{lem:scalar} below computes both operators from the classical complex quantities, identifying $T_f$ with the scalar pre-Schwarzian $f''/f'$ up to complex conjugation and $S_f$ with the classical complex Schwarzian 
\begin{equation*}
  S_f^{\C} = \biggl(\frac{f''}{f'}\biggr)'-\frac12\biggl(\frac{f''}{f'}\biggr)^2.
\end{equation*}
For $d\ge3$ rigidity takes over: by Liouville's theorem \cite[Theorem~5.5]{Blair} every conformal map of a domain in $\R^d$ is the restriction of a M\"obius transformation of $\R^d\cup\{\infty\}$, that is a finite composition of similarities $x\mapsto\lambda Ox+b$ and inversions. In every dimension a conformal map is thus real-analytic, on the line by assumption and for $d\ge2$ by the classification, so $T_f=\nabla\log\lambda_f$ and, one derivative up, $S_f$ are defined on all of $\Omega$.

\begin{lemma} \label{lem:scalar}
  Let $\Omega\subseteq\C$ be a domain and $f\colon\Omega\to\C$ conformal. Then
  \begin{equation} \label{eq:scalarpreschwarzian}
    T_f = \ol{\biggl(\frac{f''}{f'}\biggr)} \qquad\text{and}\qquad v^{\top}S_fv = \re(S_f^{\C}v^2),
  \end{equation}
  where the second identity holds for all $v\in\C$. The matrix $S_f$ is symmetric and trace-free, hence determined by the scalar $S_f^{\C}$.
\end{lemma}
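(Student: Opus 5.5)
The plan is a direct Wirtinger-calculus computation. Identify $\R^2$ with $\C$ and a vector $v=(v_1,v_2)$ with $v_1+iv_2$. For a real-valued $\mathcal C^1$ function $u$, the real gradient corresponds to the complex number $u_x+iu_y=2\,\partial_{\bar z}u=2\,\ol{\partial_z u}$. Since $f$ is holomorphic with $f'\neq0$, we have $\|Df\|=\lambda_f=|f'|$. Locally, where a branch of $\log f'$ exists, this gives $\log\lambda_f=\re\log f'=\tfrac12(\log f'+\ol{\log f'})$, so that $\partial_z\log\lambda_f=\tfrac12 f''/f'$. Hence $T_f=2\,\ol{\partial_z\log\lambda_f}=\ol{f''/f'}$, which is the first identity. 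The existence of a local branch suffices, because the identity is pointwise. Alternatively one can differentiate $\tfrac12\log(f'\ol{f'})$ directly and avoid branches altogether.

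For the Schwarzian, set $g=f''/f'$, which is holomorphic. The strategy is to express each of the three terms of \cref{eq:Sdef} as a quadratic form evaluated at $v$.
\begin{itemize}
\item The term $DT_f$ is the Hessian of $u=\log\lambda_f$. For a real function one has $v^\top(\mathrm{Hess}\,u)v=2\re(\partial_z^2u\,v^2)+2\,\partial_z\partial_{\bar z}u\,|v|^2$. Here $u$ is harmonic, so $\partial_z\partial_{\bar z}u=0$, and $\partial_z^2u=\tfrac12 g'$. This term therefore contributes $\re(g'v^2)$.
\item With $T_f=\ol g$ viewed as a vector, $T_f^\top v=\re(g v)$. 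This holds because the real inner product satisfies $\langle a,b\rangle=\re(\bar a b)$ and $\ol{\ol g}=g$. Consequently $v^\top T_fT_f^\top v=(\re(gv))^2=\tfrac12\re(g^2v^2)+\tfrac12|g|^2|v|^2$, using $(\re w)^2=\tfrac12(\re w^2+|w|^2)$.
\item The last term gives $\tfrac12|T_f|^2|v|^2=\tfrac12|g|^2|v|^2$.
\end{itemize}
Adding these, the $|g|^2|v|^2$ contributions cancel, and we obtain $v^\top S_fv=\re\bigl((g'-\tfrac12g^2)v^2\bigr)=\re(S_f^{\C}v^2)$.

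It remains to handle symmetry and the trace. Symmetry of $S_f$ was already noted after \cref{eq:Sdef}. For the trace, $\operatorname{tr}S_f=e_1^\top S_fe_1+e_2^\top S_fe_2=\re(S^{\C}_f)+\re(S^{\C}_f\cdot i^2)=0$. A symmetric matrix is determined by its quadratic form through polarization, and the form $v\mapsto\re(S^{\C}_fv^2)$ depends only on $S^{\C}_f$. Concretely, writing $S^{\C}_f=a+ib$, the matrix is $\begin{pmatrix}a&-b\\-b&-a\end{pmatrix}$.

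The main obstacle is bookkeeping rather than conceptual: fixing the conventions between the real gradient and the complex number $2\partial_{\bar z}$, and checking the Wirtinger form of the Hessian identity, including its factor of $2$. I would verify both on the test function $u=\re(cz^2)$ before relying on them.
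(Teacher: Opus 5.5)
Your proof is correct and is essentially the paper's argument. The paper also reduces everything to $u=\log|f'|$ being harmonic with $f''/f'=u_x-iu_y$, which it obtains from Cauchy--Riemann on a local branch of $\log f'$. It then writes out the entries of $S_f$ in real coordinates, reads off trace-freeness from $u_{yy}=-u_{xx}$, and identifies $(S_f)_{11}-i(S_f)_{12}$ with $S_f^{\C}$, whereas you evaluate the three terms of the quadratic form in Wirtinger notation and deduce the trace from the resulting identity. Your Wirtinger bookkeeping is right, including the Hessian formula with its factors of $2$ and the identity $T_f^{\top}v=\re(gv)$.
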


\begin{proof}
  For holomorphic $f$ the derivative $Df$ is multiplication by the complex number $f'$, hence $|f'|$ times a rotation, so $\|Df\|=|f'|$ and $T_f=\nabla\log|f'|$. Fix $z_0\in\Omega$. Since $f'$ is holomorphic and nonvanishing, it has a holomorphic logarithm on a neighborhood of $z_0$. Write this branch as $\log f'=\log|f'|+i\arg f'$. For holomorphic functions the complex derivative equals the $x$-derivative, so
  \begin{equation*}
    \frac{f''}{f'} = \frac{\mathrm{d}}{\mathrm{d}z}\log f' = \partial_x\log f' = \partial_x\log|f'|+i\partial_x\arg f',
  \end{equation*}
  and the Cauchy--Riemann equations for $\log f'$ give $\partial_x\arg f'=-\partial_y\log|f'|$, so that $f''/f'=\partial_x\log|f'|-i\partial_y\log|f'|$. The gradient $T_f=\nabla\log|f'|=(\partial_x\log|f'|,\partial_y\log|f'|)$ is the complex number $\partial_x\log|f'|+i\partial_y\log|f'|$, the complex conjugate of the previous quantity. Hence $T_f=\ol{f''/f'}$.

  The second identity is \cite[(1.5)]{OsgoodStowe}, where the computation is only indicated; we include it for completeness. Set $u=\log|f'|$; then $T_f=\nabla u$, and $u$, the real part of the holomorphic $\log f'$, is harmonic, so $u_{yy}=-u_{xx}$. Thus $DT_f=D^2u$ is the symmetric Hessian, and \cref{eq:Sdef} gives $S_f=D^2u-T_fT_f^{\top}+\tfrac12|T_f|^2I$; writing subscripts for the partial derivatives of $u$, its entries are
  \begin{equation*}
    (S_f)_{11}=u_{xx}-\tfrac12(u_x^2-u_y^2)=-(S_f)_{22} \qquad\text{and}\qquad (S_f)_{12}=u_{xy}-u_xu_y,
  \end{equation*}
  so $S_f$ is trace-free, and $v^{\top}S_fv=\re(sv^2)$ with $s=(S_f)_{11}-i(S_f)_{12}$. By the first part $f''/f'=u_x-iu_y$ is holomorphic, so $\frac{\mathrm{d}}{\mathrm{d}z}(f''/f')=u_{xx}-iu_{xy}$ and
  \begin{align*}
    S_f^{\C}=(f''/f')'-\tfrac12(f''/f')^2 &= (u_{xx}-iu_{xy})-\tfrac12(u_x-iu_y)^2 \\
    &= u_{xx}-\tfrac12(u_x^2-u_y^2)-i(u_{xy}-u_xu_y)=s.
  \end{align*}
  Hence $v^{\top}S_fv=\re(S_f^{\C}v^2)$, which is the second identity of \cref{eq:scalarpreschwarzian}.
\end{proof}

The companion of \cref{lem:scalar} for $d\ge3$ is a rigidity statement. There is no counterpart of the scalar $S_f^{\C}$ to compute: by Liouville's theorem the conformal maps are the restrictions of M\"obius transformations, and on the M\"obius maps, in every dimension, both operators collapse to closed forms, the pre-Schwarzian retaining only the pole, and the Schwarzian retaining nothing.

\begin{lemma} \label{lem:mobius}
  Let $\Omega\subseteq\R^d$ be a domain and $f\colon\Omega\to\R^d$ the restriction of a M\"obius transformation of $\R^d\cup\{\infty\}$, with which we identify it. If $f$ is a similarity, then $\lambda_f$ is constant and $T_f\equiv0$. Otherwise $f$ has a unique pole $p=f^{-1}(\infty)\notin\Omega$ and conformal factor $\lambda_f(x)=\rho_f/|x-p|^2$ for a constant $\rho_f>0$, and
  \begin{equation} \label{eq:pole}
    T_f(x) = -\frac{2(x-p)}{|x-p|^2}\qquad\text{and}\qquad DT_f(x) = -\frac{2}{|x-p|^2}I + T_f(x)T_f(x)^{\top}.
  \end{equation}
  In both cases $S_f\equiv0$. In particular, for $d\ge3$ the Schwarzian vanishes identically on the entire conformal class, while the pre-Schwarzian retains the pole through \cref{eq:pole}.
\end{lemma}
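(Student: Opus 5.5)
The plan is to reduce to two model computations, one for a similarity and one for a single inversion, and then assemble the general case. A M\"obius transformation of $\R^d\cup\{\infty\}$ is a finite composition of similarities and inversions, and can always be written either as a similarity or in the normal form $f=A\circ\sigma_p$. Here $\sigma_p(x)=p+(x-p)/|x-p|^2$ is the inversion in the unit sphere centred at $p$, and $A(y)=\lambda Oy+b$ is a similarity. This normal form is standard: the stabilizer of $\infty$ is the similarity group, and if $f(\infty)\ne\infty$ one composes with an inversion to fix $\infty$. In that case $p=f^{-1}(\infty)$ is unique, and $p\notin\Omega$ because $f$ maps $\Omega$ into $\R^d$.

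If $f=A$ is a similarity, then $Df=\lambda O$, so $\lambda_f\equiv\lambda$ is constant. Hence $T_f=\nabla\log\lambda_f=0$, $DT_f=0$, and \cref{eq:Sdef} gives $S_f=0$ at once.

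Otherwise, the chain rule gives $\lambda_f(x)=\lambda\,\lambda_{\sigma_p}(x)$. A direct differentiation gives
\begin{equation*}
  D\sigma_p(x)=|x-p|^{-2}\bigl(I-2uu^{\top}\bigr),\qquad u=\frac{x-p}{|x-p|},
\end{equation*}
which is $|x-p|^{-2}$ times a reflection. So $\lambda_f(x)=\rho_f/|x-p|^2$ with $\rho_f=\lambda>0$, and
\begin{equation*}
  T_f(x)=\nabla\bigl(\log\rho_f-\log|x-p|^2\bigr)=-\frac{2(x-p)}{|x-p|^2},
\end{equation*}
which is the first formula of \cref{eq:pole}. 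Differentiating $-2(x-p)|x-p|^{-2}$ gives
\begin{equation*}
  DT_f=-\frac{2}{|x-p|^2}I+\frac{4(x-p)(x-p)^{\top}}{|x-p|^4},
\end{equation*}
and the last term is exactly $T_fT_f^{\top}$, which is the second formula.

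Substituting into \cref{eq:Sdef}, the $T_fT_f^{\top}$ terms cancel. Since $|T_f|^2=4/|x-p|^2$, the remaining terms are $-\tfrac{2}{|x-p|^2}I+\tfrac12\cdot\tfrac{4}{|x-p|^2}I=0$, so $S_f\equiv0$.

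The final sentence of the statement then follows from Liouville's theorem as quoted earlier: for $d\ge3$ every conformal map on a domain is such a restriction. Its Schwarzian therefore vanishes, and its pre-Schwarzian is either $0$ or the pole field \cref{eq:pole}.

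The only step needing care is the normal form $f=A\circ\sigma_p$ together with the uniqueness of the pole. I would justify both from the group structure: the Möbius group acts transitively on $\R^d\cup\{\infty\}$, and the stabilizer of $\infty$ consists exactly of the similarities. The rest is routine calculus.
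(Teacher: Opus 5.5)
Your proof is correct and follows essentially the same route as the paper: both write a non-similarity as $f=(f\circ\sigma_p)\circ\sigma_p$, where $f\circ\sigma_p$ fixes $\infty$ and is therefore a similarity. Both then read off $\lambda_f=\rho_f/|x-p|^2$ from the reflection form of $D\sigma_p$ and verify \cref{eq:pole} and $S_f\equiv0$ by direct differentiation. Your appeal to transitivity of the M\"obius group is not needed; the only group-theoretic fact used, here as in the paper, is that the stabilizer of $\infty$ consists of the similarities.
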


\begin{proof}
  A M\"obius transformation is a bijection of $\R^d\cup\{\infty\}$, and a finite composition of similarities and inversions. An inversion has a unique finite pole, while a composition of similarities is a similarity, so either $f$ fixes $\infty$ and is a similarity, with constant $\lambda_f$ and $T_f=\nabla\log\lambda_f\equiv0$, or the pole $p=f^{-1}(\infty)$ is a unique finite point, not in $\Omega$ since $f$ maps $\Omega$ into $\R^d$. In the latter case the composition $g=f\circ\iota_p$ with the inversion $\iota_p(x)=p+(x-p)/|x-p|^2$ fixes $\infty$ and is therefore a similarity $g(x)=\rho_fOx+b$; since $\iota_p$ is an involution, $f=g\circ\iota_p$, and since $D\iota_p(x)$ is $|x-p|^{-2}$ times the reflection $I-2\widehat u\widehat u^{\top}$, where $u=x-p$ and $\widehat u=u/|u|$, the conformal factor is $\lambda_f(x)=\rho_f/|x-p|^2$. Taking the gradient of $\log\lambda_f=\log\rho_f-2\log|x-p|$ gives the first identity of \cref{eq:pole}. Differentiating gives
  \begin{equation*}
    DT_f(x) = -2D(u|u|^{-2}) = -2|u|^{-2}I + 4|u|^{-4}uu^{\top} = -2|u|^{-2}I + T_f(x)T_f(x)^{\top},
  \end{equation*}
  which is the second identity. Since $|T_f|^2=4/|x-p|^2$, substituting \cref{eq:pole} into \cref{eq:Sdef} cancels all three terms, so $S_f\equiv0$; for a similarity every term of \cref{eq:Sdef} vanishes with $T_f$. For the trace-free part of $S_f$, both cases are also contained in \cite{OsgoodStowe}: the homotheties have vanishing Schwarzian tensor by \cite[Theorem~2.2]{OsgoodStowe}, and $1/\lambda_f=|x-p|^2/\rho_f$ is of the form in \cite[Lemma~2.5]{OsgoodStowe}. The last claim follows since for $d\ge3$, by Liouville's theorem, every conformal map of a domain in $\R^d$ is the restriction of a M\"obius transformation; see also \cite[Theorem~6.4]{OsgoodStowe}, where this is proved through the Schwarzian tensor.
\end{proof}

\subsection{Composition laws} \label{sec:composition}

Both operators are cocycles on the conformal class, satisfying exact composition laws. Since a conformal map is real-analytic in every dimension, as recorded after the classification, the composition laws below need no additional $\mathcal{C}^3$-regularity hypothesis.

\begin{lemma} \label{lem:chain}
  Let $\Omega,\Omega'\subseteq\R^d$ be domains and let $f\colon\Omega\to\R^d$ and $g\colon\Omega'\to\R^d$ be conformal maps with $f(\Omega)\subseteq\Omega'$. Then $g\circ f$ is conformal and satisfies
  \begin{align}
    \lambda_{g\circ f} &= (\lambda_g\circ f)\lambda_f, \label{eq:lambdachain} \\
    T_{g\circ f} &= (Df)^{\top}(T_g\circ f)+T_f, \label{eq:Tchain} \\
    S_{g\circ f} &= (Df)^{\top}(S_g\circ f)(Df)+S_f. \label{eq:Schain}
  \end{align}
\end{lemma}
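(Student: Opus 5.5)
I will first establish conformality of $g\circ f$ together with \cref{eq:lambdachain}, and then differentiate twice. Write $Df=\lambda_fO_f$ and $Dg=\lambda_gO_g$. The chain rule gives
\begin{equation*}
  D(g\circ f)=(Dg\circ f)Df=(\lambda_g\circ f)\lambda_f\,(O_g\circ f)O_f .
\end{equation*}
The product of orthogonal matrices is orthogonal, so $g\circ f$ is conformal with conformal factor $(\lambda_g\circ f)\lambda_f$. For $d=1$, real-analyticity is preserved by composition. For $d=2$, $SO(2)$ is closed under products, so orientation is preserved as well.

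Next I take logarithms, $\log\lambda_{g\circ f}=\log\lambda_g\circ f+\log\lambda_f$, and apply the gradient. The chain rule for a scalar function composed with $f$ gives $\nabla(h\circ f)=(Df)^{\top}(\nabla h\circ f)$. With $h=\log\lambda_g$ this is exactly \cref{eq:Tchain}. These manipulations are legitimate because conformal maps are real-analytic in every dimension, as recorded after the classification, and $\|Df\|=\lambda_f>0$ everywhere.

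For \cref{eq:Schain} I write $A=Df=\lambda_fO_f$ and $\tau=T_g\circ f$, so that $T_{g\circ f}=A^{\top}\tau+T_f$. The key step is to differentiate $A^{\top}\tau$. Componentwise,
\begin{equation*}
  D(A^{\top}\tau)=A^{\top}(DT_g\circ f)A+\sum_k \tau_k\,D^2f^k ,
\end{equation*}
so I need the second derivatives of a conformal map. I expect this identity to be the main obstacle. I will obtain it by differentiating $(Df)^{\top}Df=\lambda_f^2 I$, or more quickly by using the classical formula for the Hessian of a conformal map. For $w=A^{\top}\tau$, that is $\tau=\lambda_f^{-2}Aw$, the target identity is
\begin{equation*}
  \sum_k\tau_k D^2f^k = w\,T_f^{\top}+T_f\,w^{\top}-(w\cdot T_f)I .
\end{equation*}
One proves it by differentiating $\partial_if\cdot\partial_jf=\lambda_f^2\delta_{ij}$ and applying the standard trick of permuting the indices $i,j,k$ to solve for $\partial_i\partial_jf\cdot\partial_kf$. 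This yields
\begin{equation*}
  \partial_i\partial_j f\cdot\partial_k f=\lambda_f^2\bigl(\delta_{jk}\partial_i\log\lambda_f+\delta_{ik}\partial_j\log\lambda_f-\delta_{ij}\partial_k\log\lambda_f\bigr),
\end{equation*}
and contracting with $\tau$ gives the displayed formula.

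Given this, I substitute everything into \cref{eq:Sdef}. Using $A^{\top}A=\lambda_f^2I$, the norm of the new pre-Schwarzian expands as $|T_{g\circ f}|^2=\lambda_f^2|\tau|^2+2w\cdot T_f+|T_f|^2$, since $|w|^2=\lambda_f^2|\tau|^2$. The product term expands as
\begin{equation*}
  T_{g\circ f}T_{g\circ f}^{\top}=ww^{\top}+wT_f^{\top}+T_fw^{\top}+T_fT_f^{\top} .
\end{equation*}
Collecting terms, the cross terms $wT_f^{\top}+T_fw^{\top}$ cancel against the Hessian contribution, and $-(w\cdot T_f)I$ cancels with the corresponding part of $\tfrac12|T_{g\circ f}|^2I$. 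What remains is
\begin{equation*}
  A^{\top}\bigl(DT_g-T_gT_g^{\top}\bigr)\circ f\,A+\tfrac12\lambda_f^2|\tau|^2I+S_f ,
\end{equation*}
where I used $ww^{\top}=A^{\top}\tau\tau^{\top}A$. Finally, $\tfrac12\lambda_f^2|\tau|^2I=A^{\top}\bigl(\tfrac12|T_g|^2I\circ f\bigr)A$, again because $A^{\top}A=\lambda_f^2I$. The leftover bracket is therefore $A^{\top}(S_g\circ f)A$, which proves \cref{eq:Schain}.
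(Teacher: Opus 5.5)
Your proposal is correct and follows essentially the same route as the paper: conformality and the multiplicative factor come from the chain rule, \cref{eq:Tchain} from the gradient of the logarithm, and \cref{eq:Schain} from the cyclic-permutation trick applied to the differentiated identity $(Df)^{\top}Df=\lambda_f^2I$, followed by differentiating $w=(Df)^{\top}(T_g\circ f)$ and the same two cancellations in \cref{eq:Sdef}. The only cosmetic difference is that you contract the second-derivative identity directly against $T_g\circ f$, whereas the paper first expands $\partial_i\partial_kf$ in the orthonormal frame $\lambda_f^{-1}\partial_lf$ to obtain its conformality relation.
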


\begin{proof}
  The chain rule factorizes $D(g\circ f)=(Dg\circ f)Df=(\lambda_g\circ f)\lambda_f(O_g\circ f)O_f$, and a product of orthogonal matrices is orthogonal, of rotations again a rotation, so $g\circ f$ is conformal with the multiplicative conformal factor $\lambda_{g\circ f}=(\lambda_g\circ f)\lambda_f$ giving \cref{eq:lambdachain}. Taking first the logarithm of this identity and then the gradient, using the chain rule $\nabla(h\circ f)=(Df)^{\top}(\nabla h\circ f)$ for the scalar $h=\log\lambda_g$, gives \cref{eq:Tchain}, the pre-Schwarzian of a conformal map being the gradient of the logarithm of its conformal factor.

  For the trace-free part, \cref{eq:Schain} is \cite[Lemma~2.1 and (2.2)]{OsgoodStowe}; we give a self-contained Euclidean proof that also covers the trace. Differentiating \cref{eq:Tchain} calls for the second derivative of $f$, and conformality pins it down. Write $B_{ikl}=\langle\partial_i\partial_kf,\partial_lf\rangle$, so that $B_{ikl}=B_{kil}$ by the symmetry of second derivatives. Differentiating the entries $\langle\partial_if,\partial_kf\rangle=\lambda_f^2\delta_{ik}$ of the conformality identity $(Df)^{\top}Df=\lambda_f^2I$ with respect to $x_l$, and using $\partial_l\lambda_f^2=2\lambda_f^2(T_f)_l$ from $T_f=\nabla\log\lambda_f$, gives $B_{ilk}+B_{kli}=2\lambda_f^2(T_f)_l\delta_{ik}$, the two terms on the left being $\langle\partial_l\partial_if,\partial_kf\rangle=B_{lik}=B_{ilk}$ and $\langle\partial_if,\partial_l\partial_kf\rangle=B_{lki}=B_{kli}$. Replacing $(i,k,l)$ by its cyclic permutations $(k,l,i)$ and $(l,i,k)$ gives the two further identities
  \begin{align*}
    B_{kil}+B_{lik} &= 2\lambda_f^2(T_f)_i\delta_{kl}, \\
    B_{lki}+B_{ikl} &= 2\lambda_f^2(T_f)_k\delta_{il}.
  \end{align*}
  Adding these two and subtracting the identity itself,
  \begin{equation*}
    (B_{kil}+B_{lik})+(B_{lki}+B_{ikl})-(B_{ilk}+B_{kli}) = 2B_{ikl},
  \end{equation*}
  the symmetry in the first two indices turning $B_{lik}$ and $B_{lki}$ into $B_{ilk}$ and $B_{kli}$, which cancel the subtracted pair, while $B_{kil}=B_{ikl}$ contributes a second copy of the term already present. Combining the right-hand sides with the same signs and halving gives $B_{ikl}=\lambda_f^2((T_f)_i\delta_{kl}+(T_f)_k\delta_{il}-(T_f)_l\delta_{ik})$. Since the columns $\lambda_f^{-1}\partial_lf$ of $\lambda_f^{-1}Df$ form an orthonormal basis of $\R^d$, expanding the second derivative in this basis gives the \emph{conformality relation}
  \begin{equation} \label{eq:confrel}
    \partial_i\partial_kf = \lambda_f^{-2}\sum_{l}B_{ikl}\partial_lf = (T_f)_k\partial_if+(T_f)_i\partial_kf-\delta_{ik}(Df)T_f,
  \end{equation}
  the last term collecting $\sum_l(T_f)_l\partial_lf=(Df)T_f$. The relation \cref{eq:confrel} is the rule relating the Riemannian connections of two conformal metrics, see \cite[(2.0)]{OsgoodStowe}, applied to the Euclidean metric and its multiple by $\lambda_f^2$, which is the pullback of the Euclidean metric under $f$.

  Now write $w=(Df)^{\top}(T_g\circ f)$, so that $T_{g\circ f}=w+T_f$ by \cref{eq:Tchain} and $\langle(Df)T_f,T_g\circ f\rangle=\langle T_f,w\rangle$. Differentiating the components $w_i=\langle\partial_if,T_g\circ f\rangle$ by the product rule, the chain rule $\partial_k(T_g\circ f)=(DT_g\circ f)\partial_kf$, and \cref{eq:confrel} gives $\partial_kw_i=(T_f)_kw_i+(T_f)_iw_k-\delta_{ik}\langle T_f,w\rangle+((Df)^{\top}(DT_g\circ f)Df)_{ik}$, in matrix form
  \begin{equation*}
    Dw = (Df)^{\top}(DT_g\circ f)Df+wT_f^{\top}+T_fw^{\top}-\langle T_f,w\rangle I.
  \end{equation*}
  Substituting $T_{g\circ f}=w+T_f$ and $DT_{g\circ f}=Dw+DT_f$ into \cref{eq:Sdef} and expanding,
  \begin{align*}
    S_{g\circ f} &= Dw+DT_f-(w+T_f)(w+T_f)^{\top}+\tfrac12|w+T_f|^2I \\
    &= (Df)^{\top}(DT_g\circ f)Df-ww^{\top}+\tfrac12|w|^2I+(DT_f-T_fT_f^{\top}+\tfrac12|T_f|^2I),
  \end{align*}
  the mixed terms $wT_f^{\top}+T_fw^{\top}$ of $Dw$ canceling against those of $-(w+T_f)(w+T_f)^{\top}$, and the term $-\langle T_f,w\rangle I$ of $Dw$ against the cross term $\langle T_f,w\rangle I$ of $\tfrac12|w+T_f|^2I$. The second cancellation is where the coefficient $\tfrac12$ of \cref{eq:Sdef} is spent: with $c|T_f|^2I$ in its place the same computation leaves the defect $(2c-1)\langle T_f,w\rangle I$, so the composition law forces the normalization of the Schwarzian. The parenthesized sum is $S_f$. Using $Df(Df)^{\top} = (Df)^{\top}Df = \lambda_f^2I$, we have $ww^{\top}=(Df)^{\top}(T_g\circ f)(T_g\circ f)^{\top}Df$ and $|w|^2=\lambda_f^2|T_g\circ f|^2$, so that $\tfrac12|w|^2I=(Df)^{\top}(\tfrac12|T_g\circ f|^2I)Df$; the three therefore collect into $(Df)^{\top}(S_g\circ f)Df$, and \cref{eq:Schain} follows.
\end{proof}

We call \cref{eq:Tchain} the \emph{pre-Schwarzian composition law} and \cref{eq:Schain} the \emph{Schwarzian composition law}. The pre-Schwarzian law is affine of weight one and the Schwarzian law of weight two, the weight counting the derivative factors through which the operator of the outer map enters, while the operator of the inner map enters as a translation. For $d=1$ these are the classical composition laws $T_{g\circ f}=(T_g\circ f)f'+T_f$ and $S_{g\circ f}=(S_g\circ f)(f')^2+S_f$. For $d=2$ they persist in the classical scalar quantities of \cref{lem:scalar}, for which we write $T^{\C}_f=f''/f'=\ol{T_f}$ and, as there, $S^{\C}_f$: conjugating \cref{eq:Tchain} and reading \cref{eq:Schain} through the quadratic form of \cref{lem:scalar}, the derivative factors feeding a vector $v$ into the form as $f'v$, give $T^{\C}_{g\circ f}=(T^{\C}_g\circ f)f'+T^{\C}_f$ and $S^{\C}_{g\circ f}=(S^{\C}_g\circ f)(f')^2+S^{\C}_f$.

\subsection{Kernels} \label{sec:kernels}

The kernels are the groups whose deviation the operators measure: the similarities for the pre-Schwarzian and the M\"obius maps for the Schwarzian. On the conformal class each operator determines the map up to post-composition by a member of its kernel. This is what the separation results of \cref{sec:C2,sec:C1,sec:C3} read backwards, a coincidence of two dual projections being converted there into a relation between the two compositions, and it is also what fixes the two regimes of \cref{thm:main}, the wider kernel of the Schwarzian earning the finer conclusion.

\begin{lemma} \label{lem:kernel}
  Let $\Omega\subseteq\R^d$ be a domain and let $f,g\colon\Omega\to\R^d$ be conformal maps.
  \begin{enumerate}
    \item\label{it:tkernel} $T_f\equiv0$ if and only if $f$ is a similarity. More generally, $T_f\equiv T_g$ if and only if $f=S\circ g$ for a similarity $S$.
    \item\label{it:skernel} $S_f\equiv0$ if and only if $f$ is the restriction of a M\"obius transformation of $\R^d\cup\{\infty\}$. More generally, $S_f\equiv S_g$ if and only if $f=M\circ g$ for a M\"obius transformation $M$.
  \end{enumerate}
\end{lemma}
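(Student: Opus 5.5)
The plan is to prove the two ``if and only if'' statements with vanishing right-hand side first, and then to reduce the relative statements ($T_f\equiv T_g$, $S_f\equiv S_g$) to them. The reduction inverts $g$ locally and uses the composition laws of \cref{lem:chain}. The step that needs care is passing from a local relation back to all of $\Omega$, since $g$ need not be globally injective.

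For \cref{it:tkernel} with $g$ absent: if $f$ is a similarity then $\lambda_f$ is constant and $T_f\equiv0$, as in \cref{lem:mobius}. Conversely, $T_f\equiv0$ makes the right-hand side of the conformality relation \cref{eq:confrel} vanish, so $\partial_i\partial_kf\equiv0$. Hence $Df$ is constant on the connected set $\Omega$, equal to $\lambda O$, and $f(x)=\lambda Ox+b$.

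For \cref{it:skernel} with $g$ absent, one direction is \cref{lem:mobius}. For the converse I split by dimension.
\begin{itemize}
\item For $d\ge3$, Liouville's theorem already makes $f$ M\"obius, so there is nothing to prove.
\item For $d=2$, by \cref{lem:scalar} the vanishing of $S_f$ is equivalent to $S^{\C}_f\equiv0$. Then $u=f''/f'$ solves the Riccati equation $u'=\tfrac12u^2$. Locally this gives $u\equiv0$ or $u=-2/(z-c)$. Integrating twice gives $f$ affine or $f(z)=a/(z-c)+b$, in either case M\"obius on a disc.
\item For $d=1$ the same real ODE computation works, with $S_f=T_f'-\tfrac12T_f^2$.
\end{itemize}
This shows $f$ agrees with a M\"obius map near each point. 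Globalization then goes as in the last step below.

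For the relative statements, the direction ``$\Leftarrow$'' is immediate from \cref{lem:chain}. If $f=S\circ g$, then $T_f=(Dg)^{\top}(T_S\circ g)+T_g=T_g$ since $T_S\equiv0$. Likewise, if $f=M\circ g$, then $S_f=(Dg)^{\top}(S_M\circ g)Dg+S_g=S_g$ by \cref{lem:mobius}. For ``$\Rightarrow$'' I fix $x_0\in\Omega$. Since $Dg(x_0)$ is invertible, $g$ is a diffeomorphism of a ball $U\ni x_0$ onto $g(U)$. Set $h=f\circ g^{-1}$ on $g(U)$; it is conformal as a composition of conformal maps. Then $f=h\circ g$ on $U$, and \cref{eq:Tchain} gives $(Dg)^{\top}(T_h\circ g)=T_f-T_g=0$. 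Invertibility of $Dg$ forces $T_h\equiv0$, so $h$ is a similarity $S_0$ by the first step. The Schwarzian case is identical: \cref{eq:Schain} gives $(Dg)^{\top}(S_h\circ g)Dg=0$, hence $S_h\equiv0$, and $h$ is a M\"obius map $M_0$ on $g(U)$, after shrinking $U$ if needed.

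The main obstacle is globalizing $f=S_0\circ g$ (or $f=M_0\circ g$) from $U$ to $\Omega$. Here I use real-analyticity of conformal maps in every dimension. In the similarity case, $f-S_0\circ g$ is real-analytic on the connected $\Omega$ and vanishes on an open set, so it vanishes identically.

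In the M\"obius case $M_0\circ g$ is defined only off $W=g^{-1}(p)$, where $p$ is the pole of $M_0$. I argue as follows.
\begin{itemize}
\item $W$ is discrete, since $g$ is a local diffeomorphism, so $\Omega\setminus W$ is still connected.
\item By the identity principle, $f=M_0\circ g$ on $\Omega\setminus W$.
\item If $W$ contained a point $x_1$, then $|M_0(g(x))|\to\infty$ as $x\to x_1$. This contradicts the boundedness of the continuous $f$ near $x_1$. Hence $W=\emptyset$ and $f=M_0\circ g$ on all of $\Omega$.
\end{itemize}
For $d=1$, connectedness of $\Omega\setminus W$ fails. Instead I apply the same boundedness argument to the component of $\Omega\setminus W$ containing $x_0$, which already rules out any point of $W$ at its boundary inside $\Omega$.
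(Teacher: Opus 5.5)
Your proof is correct. The reduction of the relative statements is the same as the paper's: invert $g$ locally, apply \cref{eq:Tchain} or \cref{eq:Schain} to $h=f\circ(g|_U)^{-1}$, use invertibility of $Dg$, and finish with the identity principle. The differences are in the absolute cases and in how explicitly the globalization is carried out.

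For $T_f\equiv0$, the paper first deduces that $\lambda_f$ is constant and then splits by dimension:
\begin{itemize}
  \item Liouville's theorem with \cref{lem:mobius} for $d\ge3$;
  \item a holomorphic derivative of constant modulus for $d=2$;
  \item the constant sign of $f'$ for $d=1$.
\end{itemize}
You instead read $\partial_i\partial_kf\equiv0$ directly off the conformality relation \cref{eq:confrel}. That argument is uniform in $d$ and needs no case analysis.

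For $S_f\equiv0$ with $d\ge2$, the paper follows Osgood and Stowe. It substitutes $v=1/\lambda_f$, obtains $D^2v=2aI$, shows $a$ is constant, and uses the trace to single out the flat case $v=a|x-p|^2$. It then concludes by Liouville's theorem for $d\ge3$ and by constancy of $f'(z)(z-p)^2$ for $d=2$. You go straight to Liouville's theorem for $d\ge3$, which the paper also needs in the end, so nothing is lost. For $d=2$ you run the complex version of the Riccati argument $u'=\tfrac12u^2$, which the paper uses only on the line.

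Your route is shorter and more elementary. The paper's route yields the explicit conformal factor $\lambda_f=\rho/|x-p|^2$ from the Schwarzian equation alone, and it shows that the trace is what excludes the non-flat constant-curvature metrics allowed by the trace-free tensor.

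Finally, the paper handles globalization in one line, applying the identity principle to maps into $\RS^d$. You make the pole avoidance explicit:
\begin{itemize}
  \item you delete the discrete set $g^{-1}(p)$, which does not disconnect $\Omega$ when $d\ge2$;
  \item you pass to a component when $d=1$;
  \item you rule out the pole by the local boundedness of $f$.
\end{itemize}
This is a correct and more transparent rendering of the same step.
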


\begin{proof}
  For \cref{it:tkernel}, the pre-Schwarzian is the gradient $T_f=\nabla\log\lambda_f$ on the connected $\Omega$, so $T_f\equiv0$ holds if and only if $\lambda_f$ is constant; a similarity has constant factor, and a conformal map with constant factor is a similarity, by Liouville's theorem and \cref{lem:mobius} for $d\ge3$, for $d=2$ because a holomorphic derivative of constant modulus is constant, and for $d=1$ because $f'$ has constant sign on the connected $\Omega$, so the constancy of $|f'|$ makes $f'$ constant. For the level sets, fix $x_0\in\Omega$. Since $Dg(x_0)$ is invertible, there is a connected neighborhood $U\subseteq\Omega$ of $x_0$ such that $g|_U$ is a conformal diffeomorphism onto the domain $g(U)$. Thus $h=f\circ(g|_U)^{-1}$ is conformal on $g(U)$, and the composition law \cref{eq:Tchain} applied on $U$ to $f=h\circ g$ gives $T_f-T_g=(Dg)^{\top}(T_h\circ g)$. If $T_f\equiv T_g$, then the invertibility of $Dg$ gives $T_h\equiv0$, so the first claim gives a similarity $S$ such that $h=S|_{g(U)}$, and hence $f=S\circ g$ on $U$. Both sides are real-analytic on the connected domain $\Omega$, so the identity principle gives $f=S\circ g$ throughout $\Omega$. Conversely, if $f=S\circ g$, then \cref{eq:Tchain} and $T_S\equiv0$ give $T_f\equiv T_g$.

  For \cref{it:skernel}, the vanishing of the Schwarzian on the M\"obius maps is \cref{lem:mobius}. For the converse, let $d\ge2$ and let $f$ be conformal with $S_f\equiv0$. Following \cite[(1.7) and (2.6)]{OsgoodStowe}, we substitute $v=1/\lambda_f$, which linearizes the trace-free part of the equation. This gives $T_f=-\nabla v/v$ and hence $DT_f=-D^2v/v+(\nabla v)(\nabla v)^{\top}/v^2$, and turns \cref{eq:Sdef} into
  \begin{equation} \label{eq:vsub}
    S_f = -\frac1v\biggl(D^2v-\frac{|\nabla v|^2}{2v}I\biggr),
  \end{equation}
  the rank-one term of $DT_f$ canceling against the subtracted $T_fT_f^{\top}$. Put $a=|\nabla v|^2/(4v)$, a nonnegative real-analytic function on $\Omega$, so that the vanishing of $S_f$ is precisely the Hessian identity $D^2v=2aI$. For each $j \in \{1,\ldots,d\}$, choose $i \in \{1,\ldots,d\}\setminus\{j\}$, which $d\ge2$ permits. Since $\partial_i\partial_iv=2a$ and $\partial_i\partial_jv=0$, equality of mixed third derivatives, justified by the real-analyticity of $v$, gives $2\partial_ja=\partial_j\partial_i\partial_iv=\partial_i\partial_i\partial_jv=0$, and hence $a$ is constant. Integrating $D^2v=2aI$ on the connected domain gives $v(x)=a|x|^2+b\cdot x+e$, the form found in \cite[Lemma~2.5]{OsgoodStowe} for the trace-free equation, and the defining identity $|\nabla v|^2=4av$ reduces to $|b|^2=4ae$. By \cite[Lemma~2.6]{OsgoodStowe} and its proof, the Euclidean metric multiplied by $v^{-2}$ has constant curvature $4ae-|b|^2$, so for a positive function $v$ of this form the trace-free equation alone admits metrics of every constant curvature, among them the spherical and hyperbolic metrics of \cite[(2.12) and (2.13)]{OsgoodStowe}, while the trace selects the flat case. Thus either $a=0$, and then $b=0$ and $v$ is constant, so $f$ is a similarity by \cref{it:tkernel}, or $a>0$ and completing the square gives $v=a|x-p|^2$ with $p=-b/(2a)\notin\Omega$ by the positivity of $v$.

  In the latter case $\lambda_f$ is the conformal factor of an inversion-type M\"obius map, and $f$ itself is M\"obius: for $d\ge3$ by Liouville's theorem, and for $d=2$ because $f'(z)(z-p)^2$ is holomorphic of constant modulus, hence constant, so $f'(z)=c/(z-p)^2$ and $f(z)=q-c/(z-p)$. For $d=1$, put $u=T_f=f''/f'$. The equation $S_f=0$ is $u'=\tfrac12u^2$. If $u$ vanishes at one point, uniqueness gives $u\equiv0$, in which case $f$ is affine. Otherwise $u$ has no zeros and $(1/u)'=-1/2$, so $u(x)=-2/(x-p)$ for some $p\notin\Omega$, where $p\notin\Omega$ since $u$ is defined throughout $\Omega$; integration then gives $f(x)=q+c/(x-p)$. Thus in every dimension $S_f\equiv0$ if and only if $f$ is the restriction of a M\"obius transformation.

  For the level sets, fix $x_0\in\Omega$ and choose a connected neighborhood $U\subseteq\Omega$ on which $g$ is a conformal diffeomorphism, as above. With $h=f\circ(g|_U)^{-1}$ on $g(U)$, the composition law \cref{eq:Schain} applied on $U$ to $f=h\circ g$ gives $S_f-S_g=(Dg)^{\top}(S_h\circ g)(Dg)$. If $S_f\equiv S_g$, then the invertibility of $Dg$ gives $S_h\equiv0$, so the first claim gives a M\"obius transformation $M$ such that $h=M|_{g(U)}$, and hence $f=M\circ g$ on $U$. Viewed as maps into $\R^d\cup\{\infty\}$, both sides are real-analytic on the connected domain $\Omega$, so the identity principle gives $f=M\circ g$ throughout $\Omega$; in particular, the pole of $M$ does not belong to $g(\Omega)$. Conversely, if $f=M\circ g$, then \cref{eq:Schain} and $S_M\equiv0$ give $S_f\equiv S_g$.
\end{proof}

\begin{remark} \label{rem:anisotropic}
  The two operators were defined in \cref{eq:Tdef,eq:Sdef} for maps that need not be conformal, and one may ask whether the theory can be developed away from the conformal class, on the open set where the largest singular value of $Df$ is simple and positive. The set of matrices with this property is open and dense in matrix space, but for a fixed map its inverse image under $Df$ need not be dense in $\Omega$, and for a conformal map with $d\ge2$ it is empty. On this set the smooth dependence of a simple singular value on the matrix entries makes $T_f$ continuous when $f$ is a $\mathcal{C}^2$-map and defines $S_f$ when $f$ is a $\mathcal{C}^3$-map, and such a map is \emph{anisotropic}, distinguishing a single direction of greatest stretch. The set is fragile, however: it is left the instant the two largest singular values cross, where the maximum of two smooth branches need not be smooth and $T_f$ itself ceases to be defined. For example, the real-analytic diffeomorphism $f(x,y)=(x+x^2/2,y-y^2/2)$ near the origin has $Df=\diag(1+x,1-y)$ and $\log\|Df\|=\max\{\log(1+x),\log(1-y)\}$, which is not differentiable on the line $y=-x$. 
  
  The planar harmonic maps exhibit the same crossing in a classical family. If $f=h+\ol{g}$ is a sense-preserving harmonic map, with $h$ and $g$ holomorphic and $|g'|<|h'|$, then the singular values of $Df$ are $|h'|+|g'|$ and $|h'|-|g'|$, which coincide exactly at the zeros of the \emph{dilatation} $g'/h'$, and $\log\|Df\|=\log|h'|+\log(1+|g'/h'|)$ is real-analytic near such a zero precisely when its order is even; for $f(z)=z+\ol{z}^2/2$ on the unit disc, $\log\|Df\|=\log(1+|z|)$ is not differentiable at the origin. On a simply connected domain the zeros of the dilatation all have even order exactly when the dilatation is the square of a holomorphic function, and this is the hypothesis under which Chuaqui, Duren, and Osgood \cite{ChuaquiDurenOsgood} define the Schwarzian derivative of a harmonic map; read as a complex scalar as in \cref{lem:scalar}, their Schwarzian is the trace-free part of $S_f$. Hern\'andez and Mart\'in \cite{HernandezMartin} dispense with the hypothesis by building both operators from $\tfrac12\log\det Df=\tfrac12\log(|h'|^2-|g'|^2)$ instead, which is real-analytic for every sense-preserving harmonic map. Both repairs act on the definitions within a single class of maps, whereas for iterated function systems the obstruction lies in the composition laws.
\end{remark}

The composition laws, rather than the definitions, are what confine the theory to the conformal class. For matrices one has only submultiplicativity, $\|AB\|\le\|A\|\|B\|$, with equality precisely when $B$ attains its norm at some unit vector whose image direction is one at which $A$ attains its norm; the additivity of $\log\|D\cdot\|$ along a composition, behind \cref{eq:Tchain}, therefore requires the norm-attaining directions of the derivatives to stay aligned along the entire orbit. Non-conformal families can be arranged to satisfy this, diagonal maps with one dominant coordinate being an example, but the alignment is then a rigid additional constraint, and $\|Df\|$ reads a single singular value, so the pre-Schwarzian retains a one-dimensional shadow of the derivative. For self-affine systems the alignment is irrelevant: every composition is affine and hence has vanishing pre-Schwarzian, so such a system, dominated or not, carries no pre-Schwarzian information, just as a self-similar system carries none on the conformal class. Conformality is the pointwise condition making the alignment automatic and lossless: every direction attains the norm, so \cref{eq:Tchain} holds for all compositions of conformal maps, and $\|Df\|$ determines $Df$ up to a rotation, so the pre-Schwarzian retains the full conformal content of the derivative.

Only the inner map of a composition needs to be conformal. If $f$ is conformal, then $\|Dg(f(x))Df(x)\|=\lambda_f(x)\|Dg(f(x))\|$ for every map $g$, and with this identity in place of \cref{eq:lambdachain} the proof of \cref{lem:chain} uses the conformality of $f$ alone; hence \cref{eq:Tchain} holds for every outer map $g$ wherever $T_g$ is defined, \cref{eq:Schain} holds wherever $T_g$ is moreover differentiable, and both remain true when the norm is replaced by $|\det\,\cdot\,|^{1/d}$ throughout. Read in complex notation, and at weight two on trace-free parts, these one-sided laws for a harmonic outer map and a holomorphic inner map are the chain rules of \cite{ChuaquiDurenOsgood} and \cite{HernandezMartin}. They do not help along an iterated function system: the dual system of \cref{sec:dual-ifs} peels the generators off a composition from the inside, so every generator occurs as an inner map, and the one-sided laws cover every word only when every generator is conformal.

The determinant, in contrast, composes on both sides at weight one, since the field $\tfrac1d\nabla\log|\det Df|$ satisfies \cref{eq:Tchain} for arbitrary $\mathcal{C}^2$-diffeomorphisms, the determinant being multiplicative, and agrees with $T_f$ on conformal maps; for planar harmonic maps it is, up to the complex conjugation of \cref{lem:scalar}, the pre-Schwarzian of Hern\'andez and Mart\'in. One weight up it still needs a conformal inner map, since \cref{eq:Schain} can fail otherwise. At weight one it pays with faithfulness, depending on $f$ only through $|\det Df|$ up to a constant factor: for $d\ge2$ it cannot distinguish post-compositions by maps of constant Jacobian determinant, which is an infinite-dimensional family. Among planar harmonic maps it vanishes only on the affine maps (see \cite[Section~4]{HernandezMartin}), but harmonic maps are not closed under composition, so this rigidity is lost along the words of an iterated function system. On the conformal class the pre-Schwarzian is thus simultaneously everywhere defined, exactly cocyclic, and faithful up to similarities; this is the class on which the dual iterated function system of \cref{sec:dual-ifs} runs.

\section{Conformal iterated function systems and exponential separation} \label{sec:cifs-esc}

Fix the ambient dimension $d\ge1$. Throughout, $\II=\{1,\ldots,N\}$ is a finite alphabet with $N \ge 2$, $\II^\N$ is the set of infinite words, $\II^n$ is the set of words of length $n$, written $\iii=i_1\cdots i_n$, and $\II^*=\bigcup_{n\ge0}\II^n$ is the set of finite words with $\II^0=\{\varnothing\}$; $|\iii|$ is the length of $\iii$, with $|\iii|=\infty$ for $\iii\in\II^\N$, $\iii|_n=i_1\cdots i_n$ is the length-$n$ prefix of $\iii$ for $n\le|\iii|$, and $\iii\land\jjj$ is the longest common prefix of $\iii$ and $\jjj$. We write $\sigma$ for the left shift and $\overleftarrow{\iii}=i_n\cdots i_1$ for the reversal of $\iii$, and, for any $\beta\in(0,1)$, we regard $\II^\N\cup\II^*$ as a compact metric space under the ultrametric equal to $\beta^{|\iii\land\jjj|}$ when $\iii\ne\jjj$ and to $0$ otherwise; the induced topology does not depend on $\beta$, only that topology is used, and every subsequential limit of words below is taken in it.

This section fixes the objects the rest of the paper works with. In \cref{sec:cifs} we define a conformal iterated function system, in a form that imposes the contraction on the derivative rather than on distances, and record in \cref{lem:tube} the long-word contraction and the invariant neighborhood of $\ol{\Omega}$ that this form still delivers. In \cref{sec:cifs-measure} we construct the self-conformal set and the self-conformal measures, the objects whose dimensions \cref{sec:dimension} computes. In \cref{sec:esc} we define the four separation conditions, plain and modulo M\"obius maps, and show in \cref{lem:osc} that the classical separation hypotheses give the plain ones, while \cref{ex:esc-not-mobius} shows that no hypothesis on the pieces can give the conditions modulo M\"obius maps. In \cref{sec:condensation} we negate the strong conditions into super-exponential condensation, which is the form in which \cref{sec:C2,sec:C1,sec:C3} use them.

\subsection{Conformal iterated function systems} \label{sec:cifs}

Let $\Omega\subset\R^d$ be a bounded domain, an open interval when $d=1$, and let $\Phi=(\fii_i)_{i\in\II}$ be a tuple of maps that send $\ol{\Omega}$ into $\Omega$ and extend, conformally in the sense of \cref{sec:conformal-maps}, to a domain $\Omega'$ containing $\ol{\Omega}$, with $\|D\fii_i\|<1$ on $\ol{\Omega}$ for every $i \in \II$. We call such a $\Phi$ a \emph{conformal iterated function system}, the maps $\fii_i$ its \emph{generators}, and the domain $\Omega'$ its \emph{extension domain}. By conformality and compactness $0<\cmin\le\cmax<1$, where $\cmin=\min_{i \in \II}\inf_{x\in\ol{\Omega}}\|D\fii_i(x)\|$ and $\cmax=\max_{i \in \II}\sup_{x\in\ol{\Omega}}\|D\fii_i(x)\|$. The contraction is thus imposed infinitesimally. When $\Omega$ is convex, the mean value theorem along segments makes $\cmax$ a shared Lipschitz constant of the generators on $\ol{\Omega}$, but for a non-convex $\Omega$ a generator need not contract distances at all, the same estimate along an in-domain path bounding $|\fii_i(x)-\fii_i(y)|$ only by $\cmax$ times the length of that path; \cref{lem:tube} below shows that all sufficiently long compositions nevertheless are contractions of $\ol{\Omega}$, which is what the construction of the self-conformal set requires. For $d=1$ conformality by itself asks only that $\fii_i'$ not vanish, and the definition in \cref{sec:conformal-maps} therefore imposes the extra assumption that a conformal map on the line be real-analytic. For $d\ge2$ no regularity is imposed beyond conformality: by the classification of \cref{sec:conformal-maps}, a conformal extension is for $d=2$ a holomorphic map with nonvanishing derivative and for $d\ge3$ the restriction of a M\"obius transformation, so the extensions are real-analytic on $\Omega'$ without any further assumption. In every case the conformal factor $\|D\fii_i\|$, the pre-Schwarzian $T_{\fii_i}=\nabla\log\|D\fii_i\|$ of \cref{eq:Tdef}, and the Schwarzian $S_{\fii_i}$ of \cref{eq:Sdef} are defined on all of $\Omega'$.

We use two compositions along a finite word $\iii=i_1\cdots i_n$, the empty composition read as the identity of $\ol{\Omega}$. The \emph{forward} composition $\fii_{\iii}=\fii_{i_1}\circ\cdots\circ\fii_{i_n}$ carries the attractor and the separation; the \emph{reversed} composition
\begin{equation} \label{eq:frev}
  f_{\iii} = \fii_{\overleftarrow{\iii}} = \fii_{i_n}\circ\cdots\circ\fii_{i_1}
\end{equation}
carries the pre-Schwarzian cocycle, its outermost factor being the last applied; reversal is a bijection of each $\II^n$ interchanging the two families. Both compositions map $\ol{\Omega}$ into itself, their orbits under partial compositions stay in $\ol{\Omega}$, and the chain rule gives $\|D\fii_\iii\|\le\cmax^{|\iii|}$ and $\|Df_\iii\|\le\cmax^{|\iii|}$ on $\ol{\Omega}$. The next lemma turns this derivative bound into a contraction of distances, at a fixed loss determined by $\Phi$ and the geometry of $\Omega$.

\begin{lemma} \label{lem:tube}
  For every $\lambda\in(\cmax,1)$ there are $r_\Phi>0$ and $C_\Phi\ge1$ such that every finite word $\iii$ satisfies $|\fii_\iii(x)-\fii_\iii(y)|\le \lambda^{|\iii|}|x-y|$ for all $x,y\in\ol{\Omega}$ with $|x-y|<r_\Phi$, and
  \begin{equation} \label{eq:tube}
    |\fii_\iii(x)-\fii_\iii(y)| \le C_\Phi \lambda^{|\iii|}|x-y|
  \end{equation}
  for all $x,y\in\ol{\Omega}$. In particular, $\fii_\iii$ is a strict contraction of $\ol{\Omega}$ for all $\iii\in\II^n$ when $n \ge 1$ satisfies $C_\Phi \lambda^n < 1$.
\end{lemma}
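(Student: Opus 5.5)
The plan is to pass from the infinitesimal bound $\|D\fii_i\|\le\cmax$ to a bound on lengths of curves lying in a slightly larger set than $\ol{\Omega}$, and to iterate this bound along the word. Distances then follow from lengths: directly for nearby points, and through a uniform bound on path lengths inside that set for distant points.

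Fix $\lambda\in(\cmax,1)$ and write $N_\delta=\{z:\dist(z,\ol{\Omega})<\delta\}$. The first step is to choose $\delta>0$ so that $N_\delta\subset\Omega'$ and $\|D\fii_i\|\le\lambda$ on $N_\delta$ for every $i\in\II$. This is possible because the finitely many extensions are $\mathcal C^1$ on $\Omega'$, $\ol{\Omega}$ is compact, and the strict inequality $\cmax<\lambda$ holds on $\ol{\Omega}$. Next, each $\fii_i(\ol{\Omega})$ is a compact subset of the open set $\Omega$, so $\eta=\min_i\dist(\fii_i(\ol{\Omega}),\partial\Omega)>0$. By uniform continuity of the $\fii_i$ on $\ol{N_\delta}$, I then pick $r_\Phi\in(0,\delta]$ with $\fii_i(N_{r_\Phi})\subset\Omega$ for every $i$.

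The key invariance is the following. If $\gamma$ is a rectifiable curve in $N_{r_\Phi}$, then $\fii_i\circ\gamma$ lies in $\Omega\subset N_{r_\Phi}$ and has length at most $\lambda\,\mathrm{len}(\gamma)$, because $\|D\fii_i\|\le\lambda$ along $\gamma$. Applying this to the factors of $\fii_\iii=\fii_{i_1}\circ\cdots\circ\fii_{i_n}$ from the inside out, by induction on $n$, gives $\mathrm{len}(\fii_\iii\circ\gamma)\le\lambda^n\mathrm{len}(\gamma)$. For $x,y\in\ol{\Omega}$ with $|x-y|<r_\Phi$, the segment $[x,y]$ lies in $N_{r_\Phi}$, since every point of it is within $|x-y|<r_\Phi$ of $x$. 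The first assertion follows because distance is at most length.

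For the global bound \cref{eq:tube} I need a constant $L$ such that any two points of $\ol{\Omega}$ are joined by a curve in $N_{r_\Phi}$ of length at most $L$; I expect this to be the only point needing care, since $\Omega$ may be non-convex. To get $L$, cover the compact set $\ol{\Omega}$ by finitely many balls $B(z_k,r_\Phi/2)$ with centres $z_k\in\Omega$. Because $\Omega$ is a connected open set, it is polygonally path-connected, so each pair $z_k,z_l$ is joined by a polygonal path in $\Omega$; let $L_0$ be the maximum of these finitely many lengths. A point $x\in\ol{\Omega}$ lies within $r_\Phi/2$ of some $z_k$, and the segment from $x$ to $z_k$ stays in $N_{r_\Phi}$. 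Concatenating segment, path, and segment gives $L=L_0+r_\Phi$. Then for $|x-y|\ge r_\Phi$ we get $|\fii_\iii(x)-\fii_\iii(y)|\le\lambda^{|\iii|}L\le (L/r_\Phi)\lambda^{|\iii|}|x-y|$. Setting $C_\Phi=\max\{1,L/r_\Phi\}$ covers both cases, and the final claim is immediate once $C_\Phi\lambda^n<1$.
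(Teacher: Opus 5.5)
Your proposal is correct and follows essentially the same route as the paper: a neighbourhood of $\ol{\Omega}$ on which every generator has derivative norm at most $\lambda$ and which the generators map into itself, a segment for nearby points, and a uniform bound on the lengths of connecting paths inside that neighbourhood for distant points, giving $C_\Phi=\max\{1,L/r_\Phi\}$. The differences are cosmetic. The paper gets invariance from the estimate $\dist(\fii_i(z),\ol{\Omega})\le\lambda\dist(z,\ol{\Omega})$ rather than from your margin argument. It bounds path lengths through the continuity of the intrinsic path metric rather than through your explicit finite ball cover. Your uniform-continuity step should be run on a closed neighbourhood contained in $\Omega'$, or replaced by the same nearest-point segment estimate; this is a trivial adjustment.
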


\begin{proof}
  The generators are conformal on $\Omega'$, so the conformal factors $\|D\fii_i\|$ are continuous there, and on the compact set $\ol{\Omega}\subseteq\Omega'$ they are at most $\cmax<1$. By compactness there is $r_\Phi>0$ such that the neighborhood $V=\{x\in\R^d : \dist(x,\ol{\Omega})<r_\Phi\}$ lies in $\Omega'$ and $\|D\fii_i\|\le \lambda$ on $V$ for every $i \in \II$. If $z\in V$ and $x\in\ol{\Omega}$ is a nearest point, then the segment from $x$ to $z$ lies in $V$, each of its points $w$ satisfying $\dist(w,\ol{\Omega})\le|w-x|\le|z-x|<r_\Phi$, and $\fii_i(x)\in\ol{\Omega}$. Integrating $D\fii_i$ along the segment, that is, applying the fundamental theorem of calculus to the continuously differentiable curve $t\mapsto\fii_i(x+t(z-x))$ on $[0,1]$, therefore gives
  \begin{align*}
    |\fii_i(z)-\fii_i(x)| &= \biggl|\int_0^1 D\fii_i(x+t(z-x))(z-x)\dd t\biggr| \\
    &\le \int_0^1 \|D\fii_i(x+t(z-x))\||z-x|\dd t \le \lambda|z-x|,
  \end{align*}
  the points $x+t(z-x)$ of the segment staying in $V$, where $\|D\fii_i\|\le \lambda$, and hence $\dist(\fii_i(z),\ol{\Omega})\le|\fii_i(z)-\fii_i(x)|\le \lambda|z-x|<r_\Phi$. Thus every generator maps $V$ into itself, all partial orbits of points of $V$ stay in $V$, and the chain rule gives $\|D\fii_\iii\|\le \lambda^{|\iii|}$ on $V$ for every $\iii\in\II^*$. If $x,y\in\ol{\Omega}$ satisfy $|x-y|<r_\Phi$, then the segment from $x$ to $y$ lies in $V$ by the same distance estimate, and integrating $D\fii_\iii$ along it, exactly as in the display above but with the bound $\|D\fii_\iii\|\le \lambda^{|\iii|}$ on $V$ in place of $\|D\fii_i\|\le \lambda$, gives $|\fii_\iii(x)-\fii_\iii(y)|\le \lambda^{|\iii|}|x-y|$. For \cref{eq:tube}, note that $V$ is open and connected, being the union of the connected set $\Omega$ with the balls $B^o(x,r_\Phi)$, $x\in\ol{\Omega}$, each of which meets $\Omega$. Any two points of $V$ are therefore joined by a polygonal path in $V$, and the infimum $\ell_V(x,y)$ of the lengths of such paths is a metric on $V$ that agrees with $|x-y|$ whenever the segment from $x$ to $y$ lies in $V$. For each $(x,y)\in V\times V$, choose $\delta>0$ such that $B^o(x,\delta),B^o(y,\delta)\subseteq V$. If $x'\in B^o(x,\delta)$ and $y'\in B^o(y,\delta)$, then the segments from $x$ to $x'$ and from $y$ to $y'$ lie in $V$, and the triangle inequality gives $|\ell_V(x',y')-\ell_V(x,y)|\le|x'-x|+|y'-y|$. Thus $\ell_V$ is continuous on $V\times V$, hence bounded on the compact set $\ol{\Omega}\times\ol{\Omega}$, say by $D$. Since all partial orbits of points of $V$ stay in $V$, integrating $D\fii_\iii$ along the segments of a polygonal path in $V$ from $x$ to $y$ gives $|\fii_\iii(x)-\fii_\iii(y)|\le \lambda^{|\iii|}\ell_V(x,y)\le \lambda^{|\iii|}D$ for all $x,y\in\ol{\Omega}$. Together with the first bound this gives \cref{eq:tube} with $C_\Phi=\max\{1,D/r_\Phi\}$, since $\lambda^{|\iii|}D\le \lambda^{|\iii|}(D/r_\Phi)|x-y|$ when $|x-y|\ge r_\Phi$.
\end{proof}

\subsection{The self-conformal set and measure} \label{sec:cifs-measure}

The compositions of \cref{sec:cifs} contract along every infinite word, and the limit is the object the dimension theory of \cref{sec:dimension} is about. We construct it here, together with the self-conformal measures, and record the uniqueness that \cref{sec:dimension} uses.

Let $\lambda\in(\cmax,1)$ be arbitrary, and let $r_\Phi$ and $C_\Phi$ be the constants \cref{lem:tube} provides for it. By \cref{eq:tube}, for every $x_0\in\ol{\Omega}$ and $\iii\in\II^\N$ the sequence $(\fii_{\iii|_n}(x_0))_{n \ge 1}$ is Cauchy, consecutive terms being at most $C_\Phi \lambda^{n}\diam(\ol{\Omega})$ apart, a summable bound, since $\fii_{\iii|_{n+1}}(x_0)=\fii_{\iii|_n}(\fii_{i_{n+1}}(x_0))$ with $\fii_{i_{n+1}}(x_0)\in\ol{\Omega}$, so the limit $\pi(\iii)=\lim_{n \to \infty}\fii_{\iii|_n}(x_0)$ exists in $\ol{\Omega}$, uniformly over $\iii\in\II^\N$, and does not depend on $x_0$, the terms for two starting points being at most $C_\Phi \lambda^{n}\diam(\ol{\Omega})$ apart at stage $n$; the map $\pi$, continuous as a uniform limit of locally constant maps, is the \emph{canonical projection}, and its image $X=\pi(\II^\N)$, a nonempty compact set, is the \emph{self-conformal set} associated with $\Phi$. Since $\pi(i\iii)=\fii_i(\pi(\iii))$ for all $i \in \II$ and $\iii\in\II^\N$ by the continuity of $\fii_i$, it satisfies
\begin{equation*}
  X=\bigcup_{i \in \II}\fii_i(X),
\end{equation*}
and by Hutchinson \cite{Hutchinson} it is the unique nonempty compact subset of $\ol{\Omega}$ with this property: for $n \ge 1$ with $C_\Phi \lambda^n<1$, the compositions $\fii_\iii$, $\iii\in\II^n$, are strict contractions of $\ol{\Omega}$ by \cref{lem:tube}, so his theorem provides exactly one nonempty compact set invariant under them, and every nonempty compact set invariant under the generators, $X$ among them, is invariant under their length-$n$ compositions and hence equals that one set. The invariance also gives $X\subseteq\bigcup_{i \in \II}\fii_i(\ol{\Omega})\subseteq\Omega$; the left-hand side being compact, this gives $\dist(X,\partial\Omega)>0$. Finally, for every nonempty finite word $\iii$ some iterate of $\fii_\iii$ is a strict contraction of $\ol{\Omega}$ by \cref{lem:tube}, so $\fii_\iii$ has exactly one fixed point in $\ol{\Omega}$: every fixed point of $\fii_\iii$ is also fixed by the iterate, and the unique fixed point of the iterate is fixed also by $\fii_\iii$, which permutes the fixed points of the iterate. This fixed point is $\pi(\iii^\infty)\in X$, and $\|D\fii_\iii\|\le\cmax^{|\iii|}<1$ there, so it is attracting.

For a positive probability vector $\mathbf p=(p_i)_{i\in\II}$, the projection also carries the \emph{self-conformal measure} $\mu_{\mathbf p}=\mathbf p^{\N}\circ\pi^{-1}$, the push-forward under $\pi$ of the Bernoulli measure $\mathbf p^{\N}$ on $\II^\N$; it is a Borel probability measure supported on $X$. Writing $\tau_i(\iii)=i\iii$, the identity $\pi\circ\tau_i=\fii_i\circ\pi$ recorded above and the decomposition $\mathbf p^{\N}=\sum_{i \in \II}p_i\mathbf p^{\N}\circ\tau_i^{-1}$ by the first letter give
\begin{equation*}
  \mu_{\mathbf p} = \sum_{i \in \II}p_i\mu_{\mathbf p}\circ\fii_i^{-1},
\end{equation*}
and $\mu_{\mathbf p}$ is the unique Borel probability measure with this property, again by passing to length-$n$ compositions: for $n$ with $C_\Phi \lambda^n<1$ the maps $\fii_\iii$, $\iii\in\II^n$, with the weights $p_\iii=p_{i_1}\cdots p_{i_n}$ are strict contractions of $\ol{\Omega}$ carrying a positive probability vector, so the theorem of Hutchinson \cite{Hutchinson} provides exactly one invariant Borel probability measure for them, and substituting the invariance into itself shows that every Borel probability measure invariant under the generators with the weights $p_i$ is invariant under the length-$n$ compositions with the weights $p_\iii$ and hence equals that one measure.

\subsection{Separation conditions} \label{sec:esc}

The separation quantities use the supremum norm $\|h\|=\sup_{x\in\ol{\Omega}}|h(x)|$. The system $\Phi=(\fii_i)_{i\in\II}$ satisfies the \emph{exponential separation condition modulo M\"obius maps} if there is $c>0$ such that, for infinitely many $n\in\N$,
\begin{equation} \label{eq:sescmob}
  \|\fii_\iii - M \circ \fii_\jjj\| \ge c^n
\end{equation}
for all distinct $\iii,\jjj\in\II^n$ and all M\"obius transformations $M$ of $\RS^d=\R^d\cup\{\infty\}$, orientation-reversing ones included, the norm read as $\infty$ when the pole of $M$ meets $\fii_\jjj(\ol{\Omega})$; it satisfies the \emph{strong exponential separation condition modulo M\"obius maps} if this holds for every $n\in\N$. Throughout, the logarithm of $0$ is read as $-\infty$ and the logarithm of $\infty$ as $\infty$, so that quantities such as $\frac1n\log\|\fii_\iii-M\circ\fii_\jjj\|$ take values in $[-\infty,\infty]$.

Requiring $M$ to be the identity in \cref{eq:sescmob} instead defines the \emph{exponential separation condition} and the \emph{strong exponential separation condition}. Restricting $M$ to the similarities of $\R^d$, orientation-reversing ones included, defines in the same way the \emph{exponential separation condition modulo similarities} and the \emph{strong exponential separation condition modulo similarities}. A similarity is a M\"obius transformation fixing $\infty$ and the identity is a similarity, so the conditions modulo M\"obius maps imply those modulo similarities, and these in turn imply their plain counterparts. The plain strong condition follows from the classical separation hypotheses on the pieces, with a proviso special to the plane. The system $\Phi$ satisfies the \emph{open set condition} if there is a nonempty open set $U\subseteq\Omega$ such that $\fii_i(U)\subseteq U$ for every $i\in\II$ and $\fii_i(U)\cap\fii_j(U)=\emptyset$ for all distinct $i,j\in\II$, and the \emph{strong separation condition} if the sets $\fii_i(X)$, $i\in\II$, are pairwise disjoint; both conditions are classical \cite{Hutchinson,MauldinUrbanski1996}. The proviso is forced by the generators themselves: for $d\ne2$ every generator is injective on all of $\Omega$, being strictly monotone for $d=1$, the nonvanishing derivative keeping a constant sign on the interval, and being the restriction of a M\"obius bijection of $\RS^d$ for $d\ge3$ by the classification of \cref{sec:conformal-maps}, whereas for $d=2$ conformality carries only local injectivity, so either injectivity on the open set or the cylinder form of the open set condition recorded below enters as a hypothesis. The following lemma verifies that the exponential separation condition holds for conformal systems satisfying the open set condition whenever the generators are injective.

\begin{lemma} \label{lem:osc}
  Let $\Phi=(\fii_i)_{i\in\II}$ be a conformal iterated function system satisfying the open set condition with open set $U$. Suppose that either $d\ne2$, or $d=2$ and at least one of the following holds:
  \begin{enumerate}
    \item\label{it:osc-inj} the generators $\fii_i$ are injective on $U$,
    \item\label{it:osc-cyl} $\fii_\iii(U)\cap\fii_\jjj(U)=\emptyset$ for all $n\in\N$ and all distinct $\iii,\jjj\in\II^n$.
  \end{enumerate}
  Then $\Phi$ satisfies the strong exponential separation condition. Likewise, if $\Phi$ satisfies the strong separation condition and either $d\ne2$, or $d=2$ and the generators are injective on $X$, then $\Phi$ satisfies the strong exponential separation condition.
\end{lemma}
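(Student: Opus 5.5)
The plan is to produce, for distinct $\iii,\jjj\in\II^n$, a single point $x\in\ol{\Omega}$ with $|\fii_\iii(x)-\fii_\jjj(x)|\ge c^n$, with $c$ independent of $n$. Write $\kkk=\iii\land\jjj$, $m=|\kkk|<n$, and $\iii=\kkk i\iii'$, $\jjj=\kkk j\jjj'$ with $i\ne j$. The first step is a uniform separation at the first letter. Under the open set condition, fix a point $y$ and a ball $B(y,\rho)$ with closure in $U$. Then $\fii_{i\iii'}(y)\in\fii_i(U)$ and $\fii_{j\jjj'}(y)\in\fii_j(U)$, and each of these points sits at distance at least $\lambda_{\min}^{n-m}\rho'$ from the complement of its piece. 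To justify this distance bound I would use that $\fii_{\iii'}(B(y,\rho))\subseteq U$ contains a ball of radius comparable to $\lambda_{\min}^{|\iii'|}\rho$, by conformality and a Koebe-type distortion estimate. In fact the simpler route is the lower bound on $\|D\fii\|$ along a short segment, which gives a ball of radius $\gtrsim\lambda_{\min}^{n-m}\rho$ inside the image. This step needs a local inverse, which is available for every $d$ by the inverse function theorem with a uniform radius from compactness. Since $\fii_i(U)\cap\fii_j(U)=\emptyset$, the two points are at distance $\ge c_1\lambda_{\min}^{n-m}$.

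The second step transports this separation through the common prefix $\fii_\kkk$. I need a lower Lipschitz bound of the form $|\fii_\kkk(a)-\fii_\kkk(b)|\ge c_2\lambda_{\min}^{m}\,|a-b|$ at small scales, for $a,b$ at distance $\ge\lambda_{\min}^{n-m}\rho'$ from the complement of $U$. This is where injectivity enters. For $d\ne2$ the generators are injective on $\Omega$: they are monotone for $d=1$ and M\"obius for $d\ge3$. So $\fii_\kkk$ is injective on $\ol{\Omega}$, and by conformality together with the lower bound $\|D\fii_\kkk\|\ge\lambda_{\min}^m$ it is a bi-Lipschitz map on small balls. For the global estimate I would argue by contradiction. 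Since $\fii_\kkk$ maps a ball of radius $r$ about $a$ onto a set containing a ball of radius $\gtrsim\lambda_{\min}^m r$, injectivity forces $\fii_\kkk(b)$ to lie outside that ball whenever $|a-b|\ge r$. In case (2) for $d=2$ I would skip this step entirely: the disjointness $\fii_{\kkk i\iii'}(U)\cap\fii_{\kkk j\jjj'}(U)=\emptyset$ is applied directly, with the inner-ball estimate for the full words $\iii,\jjj$. Case (1) for $d=2$ is the same as $d\ne2$, with injectivity on $U$ replacing global injectivity; here I need the ball images to stay in $U$, which holds because $\fii_i(U)\subseteq U$. Combining the two steps gives $|\fii_\iii(y)-\fii_\jjj(y)|\ge c\lambda_{\min}^{n}$.

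The main obstacle is the inner-ball (Koebe-type) estimate in general dimension without distortion theory. I would handle it with the inverse function theorem applied quantitatively: the second derivatives are uniformly bounded on $\ol{\Omega}$, and conformal compositions have bounded distortion at scale comparable to $\|D\fii\|$ times a constant. That bounded distortion follows from summing the $T$ cocycle, $|T_{\fii_\kkk}|\le C$ uniformly, via the composition law \cref{eq:Tchain} and the geometric decay of the terms.

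For the strong separation version, the argument is the same with $U$ replaced by the compact set $X$. The sets $\fii_i(X)$ are at mutual distance $\delta>0$, so $\fii_{i\iii'}(y)$ and $\fii_{j\jjj'}(y)$ lie at distance $\ge\delta$ for $y\in X$. It then remains to transport this through $\fii_\kkk$, using injectivity on $X$ and the local bi-Lipschitz property near $X$. In the planar case the local bi-Lipschitz property on a uniform neighborhood of $X$ follows from compactness, and together with injectivity on $X$ it yields a lower bound $|\fii_\kkk(a)-\fii_\kkk(b)|\ge c\lambda_{\min}^m|a-b|$ for $a,b\in X$. I would prove this by induction on $m$ one letter at a time, using the $\delta$-separation of the pieces at each level.
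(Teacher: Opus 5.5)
Your proposal is correct, but it is organized differently from the paper's proof.

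The paper first upgrades the open set condition to disjointness of all equal-length cylinders $\fii_\iii(U)$, by induction on length using injectivity on $U$. It then needs a single inner-ball estimate, $\fii_\iii(B^o(z_0,r))\supseteq B^o(\fii_\iii(z_0),r\cmin^{|\iii|}/4)$, which it proves with a different tool in each dimension:
\begin{itemize}
\item monotonicity and the mean value theorem on the line;
\item the Koebe one-quarter theorem in the plane, after an extra argument that makes every composition univalent on a small disc in case (2);
\item the M\"obius two-point identity, together with a boundary-of-the-image argument, for $d\ge3$.
\end{itemize}
Two disjoint balls then give the constant $r\cmin^n/2$.

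You instead separate at the first differing letter using only the open set condition. You then carry that separation through the common prefix $\fii_\kkk$ by combining an inner ball at $a$ with the injectivity of $\fii_\kkk$ on $U$. All your inner balls come from one quantitative inverse function theorem. This is uniform in $d$ and needs no separate univalence argument in case (2), at the cost of less explicit constants.

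Two of the justifications you list do not work on their own. A lower bound on $\|D\fii\|$ gives no inner ball by itself. Compactness gives a uniform inverse-function radius only for the finitely many generators, not for compositions of unbounded length. Either of the following does work:
\begin{itemize}
\item compose the generators' local inverses letter by letter, each step shrinking the radius by a factor comparable to $\cmin$;
\item use your distortion route: the cocycle bound $|T_{\fii_\kkk}|\le C$ on $\ol{\Omega}$ and the conformality relation \cref{eq:confrel} give $\|D^2\fii_\kkk\|\le C'\lambda_{\fii_\kkk}(y)$ on a ball $B(y,\rho)\subseteq\Omega$, so $\fii_\kkk$ is a controlled perturbation of its derivative at $y$ on a ball of fixed radius.
\end{itemize}

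For the strong separation part, the paper builds an open-set-condition neighborhood $U_\eps$ of $X$, on which the planar generators are injective, and reduces to the first part. Your direct argument on $X$ is shorter. You also do not need the rate $\cmin$ there. Injectivity on $X$ and compactness give a per-letter bound $|\fii_l(u)-\fii_l(v)|\ge c_1|u-v|$ for all $l\in\II$ and $u,v\in X$. Iterated along the prefix this gives $c_1^m\delta$, which is already exponential.
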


\begin{proof}
  We first obtain the cylinder disjointness $\fii_\iii(U)\cap\fii_\jjj(U)=\emptyset$ for all distinct $\iii,\jjj\in\II^n$. If $d=2$ and \cref{it:osc-cyl} holds, this is the hypothesis. Otherwise either $d\ne2$, in which case every generator is injective on $\Omega$ and hence on $U$, or $d=2$ and \cref{it:osc-inj} holds; in both remaining cases the generators are injective on $U$. Since the generators map $U$ into $U$, every $\fii_\iii$ sends $U$ into $U$ and is injective on $U$, being a composition of injections along partial images that stay in $U$. The cylinder claim then follows by induction on $n$. The case $n=1$ is the open set condition. Let $n\ge2$ and let $\iii,\jjj\in\II^n$ be distinct. If $i_1\ne j_1$, then iterating the invariance of $U$ gives $\fii_\iii(U)\subseteq\fii_{i_1}(U)$ and $\fii_\jjj(U)\subseteq\fii_{j_1}(U)$, and the latter two sets are disjoint. If $i_1=j_1$, then $\sigma\iii$ and $\sigma\jjj$ are distinct words in $\II^{n-1}$, so $\fii_{\sigma\iii}(U)$ and $\fii_{\sigma\jjj}(U)$ are disjoint subsets of $U$ by the induction hypothesis, and the injectivity of $\fii_{i_1}$ on $U$ keeps the images $\fii_\iii(U)=\fii_{i_1}(\fii_{\sigma\iii}(U))$ and $\fii_\jjj(U)=\fii_{i_1}(\fii_{\sigma\jjj}(U))$ disjoint.

  Fix $z_0\in U$ and $r>0$ with $B(z_0,r)\subseteq U$. By \cref{eq:lambdachain}, applied along the partial images, which stay in $\ol{\Omega}$, the conformal factor of $\fii_\iii$ is the product of the conformal factors of its letters along the orbit, so $\cmin^{|\iii|}\le\lambda_{\fii_\iii}\le\cmax^{|\iii|}$ on $\ol{\Omega}$ for every $\iii\in\II^*$. We claim that
  \begin{equation} \label{eq:innerball}
    \fii_\iii(B^o(z_0,r)) \supseteq B^o(\fii_\iii(z_0),r\cmin^{|\iii|}/4)
  \end{equation}
  for every $\iii\in\II^*$, the empty word being trivial.

  For $d=1$ the derivative $\fii_\iii'$ is, by the chain rule, a product along the orbit of factors each of constant sign on the interval, so it has constant sign and $\fii_\iii$ is strictly monotone on $\Omega$; moreover $|\fii_\iii'|=\lambda_{\fii_\iii}\ge\cmin^{|\iii|}$ on $\ol{\Omega}$, so the mean value theorem along the segments from $z_0$ to $z_0\pm r$, which lie in $\Omega$, gives $|\fii_\iii(z_0\pm r)-\fii_\iii(z_0)|\ge\cmin^{|\iii|}r$; the image $\fii_\iii(B^o(z_0,r))$ is the open interval with endpoints $\fii_\iii(z_0-r)$ and $\fii_\iii(z_0+r)$, which lie on opposite sides of $\fii_\iii(z_0)$ at distance at least $\cmin^{|\iii|}r$, so \cref{eq:innerball} holds with the radius $\cmin^{|\iii|}r$. For $d=2$ we first arrange that every composition $\fii_\iii$ is univalent on the disc $B^o(z_0,r)$. If \cref{it:osc-inj} holds, this is immediate from the injectivity on $U$ recorded above. If instead \cref{it:osc-cyl} is the standing planar hypothesis, local injectivity of the finitely many generators on a neighborhood of $\ol{\Omega}$ supplies, by compactness, $\delta>0$ such that each generator is injective on every ball of radius $\delta$ centered at a point of $\ol{\Omega}$; shrink $r$ so that $2r<\min\{\delta,r_\Phi\}$, with $r_\Phi$ from \cref{lem:tube}, and $B(z_0,r)\subseteq U$. Every composition $\fii_\iii$ is then injective on $B^o(z_0,r)$ by induction on length: the empty word is the identity, and if $\fii_{\sigma\iii}$ is injective on the disc then $\fii_{\sigma\iii}(B^o(z_0,r))$ has diameter at most $2r<\delta$ by \cref{lem:tube}, two points of the disc being less than $r_\Phi$ apart, and lies in $U\subseteq\Omega$, hence sits in a ball of radius $\delta$ centered at one of its points, on which $\fii_{i_1}$ is injective, so the composite $\fii_\iii=\fii_{i_1}\circ\fii_{\sigma\iii}$ is injective on the disc. With univalence and $|\fii_\iii'(z_0)|=\lambda_{\fii_\iii}(z_0)\ge\cmin^{|\iii|}$, the Koebe one-quarter theorem \cite[Theorem 14.14(b)]{Rudin} gives $\fii_\iii(B^o(z_0,r))\supseteq B^o(\fii_\iii(z_0),r|\fii_\iii'(z_0)|/4)$, which contains the ball of \cref{eq:innerball}.

  For $d\ge3$ the composition $\fii_\iii$ is the restriction of a M\"obius transformation of $\RS^d$, with which we identify it; the M\"obius transformations, finite compositions of similarities and inversions, are homeomorphisms of $\RS^d$. Every M\"obius transformation $f$ obeys the \emph{two-point identity}
  \begin{equation} \label{eq:twopoint}
    |f(u)-f(v)| = \sqrt{\lambda_f(u)\lambda_f(v)}|u-v|,
  \end{equation}
  away from the pole: both sides are multiplicative under composition, the identity is trivial for similarities, and for the unit inversion $\iota_0(y)=y/|y|^2$ it is the computation $|\iota_0(u)-\iota_0(v)|^2=(|v|^2-2\langle u,v\rangle+|u|^2)/(|u|^2|v|^2)=|u-v|^2\lambda_{\iota_0}(u)\lambda_{\iota_0}(v)$, an inversion in an arbitrary sphere being a similarity conjugate of $\iota_0$; the factorwise induction establishes the identity off the finitely many intermediate poles, and both sides being continuous off the pole of $f$ itself, it extends there. For $u\in\ol{\Omega}$ with $|u-z_0|=r$, the identity and the factor bound give $|\fii_\iii(u)-\fii_\iii(z_0)|\ge\cmin^{|\iii|}r$. Write $W=\fii_\iii(B^o(z_0,r))$. The set $W$ is open in $\RS^d$ and does not contain $\infty$, the preimage of $\infty$ under $\fii_\iii$ being either the pole, a point outside $\Omega$ by \cref{lem:mobius}, or, for a similarity, the point $\infty$ itself, so $W$ is an open subset of $\R^d$; moreover $\ol{W}\subseteq\fii_\iii(B(z_0,r))$, the image of the closed ball being compact, so the injectivity of $\fii_\iii$ gives $\partial W\subseteq\fii_\iii(B(z_0,r))\setminus\fii_\iii(B^o(z_0,r))=\fii_\iii(\{u\in\R^d : |u-z_0|=r\})$, a set at distance at least $\cmin^{|\iii|}r$ from $\fii_\iii(z_0)$ by the previous estimate. If a point $w$ with $|w-\fii_\iii(z_0)|<\cmin^{|\iii|}r$ were outside $W$, the segment from $\fii_\iii(z_0)$ to $w$, a connected set meeting $W$ and its complement, would meet $\partial W$ at distance less than $\cmin^{|\iii|}r$ from $\fii_\iii(z_0)$, which is impossible. Hence \cref{eq:innerball} holds with the radius $\cmin^{|\iii|}r$, and the claim is proved in every dimension.

  Let now $n\ge1$ and let $\iii,\jjj\in\II^n$ be distinct. By \cref{eq:innerball} and $B^o(z_0,r)\subseteq U$, the balls $B^o(\fii_\iii(z_0),r\cmin^{n}/4)$ and $B^o(\fii_\jjj(z_0),r\cmin^{n}/4)$ lie in $\fii_\iii(U)$ and $\fii_\jjj(U)$, respectively, so they are disjoint by the cylinder disjointness of the first paragraph; and two disjoint open balls of a common radius have centers at distance at least twice that radius, the midpoint of the centers lying in both balls otherwise. Since $z_0\in\ol{\Omega}$, this gives
  \begin{equation*}
    \|\fii_\iii-\fii_\jjj\| \ge |\fii_\iii(z_0)-\fii_\jjj(z_0)| \ge r\cmin^{n}/2,
  \end{equation*}
  and the constant $c=\min\{r/2,1\}\cmin$ satisfies $\|\fii_\iii-\fii_\jjj\|\ge c^{n}$ for every $n\in\N$ and all distinct $\iii,\jjj\in\II^n$: this is the strong exponential separation condition.

  Finally, suppose that $\Phi$ satisfies the strong separation condition and, when $d=2$, that the generators are injective on $X$. The compact sets $\fii_i(X)$ are pairwise disjoint, so $\delta=\min\{\dist(\fii_i(X),\fii_j(X)) : i,j\in\II \text{ so that } i\ne j\}>0$. For $t>0$ write $U_t=\{x\in\R^d : \dist(x,X)<t\}$. In the planar case, compactness also gives $\eta>0$ such that every generator is injective on $U_\eta$. Indeed, otherwise finiteness of the alphabet gives a generator $\fii_i$ and distinct pairs $x_k,y_k$ tending to points $x,y\in X$ such that $\fii_i(x_k)=\fii_i(y_k)$; injectivity on $X$ forces $x=y$, while $\fii_i'(x)\ne0$ makes $\fii_i$ injective on a ball about $x$ that contains both $x_k$ and $y_k$ for large $k$, a contradiction.

  Let $\lambda$ and $r_\Phi$ be as above and choose $0<\eps<\min\{\dist(X,\partial\Omega),r_\Phi,\delta/(2\lambda)\}$, with also $\eps<\eta$ when $d=2$. Then $U_\eps$ is a nonempty open subset of $\Omega$. For $x\in U_\eps$, choose a nearest point $y\in X$. By \cref{lem:tube},
  \begin{equation*}
    \dist(\fii_i(x),\fii_i(X)) \le |\fii_i(x)-\fii_i(y)| \le \lambda|x-y| < \lambda\eps.
  \end{equation*}
  Since $\fii_i(X)\subseteq X$, this gives $\fii_i(U_\eps)\subseteq U_\eps$, while $2\lambda\eps<\delta$ makes the $\lambda\eps$-neighborhoods of the sets $\fii_i(X)$ pairwise disjoint and hence $\fii_i(U_\eps)\cap\fii_j(U_\eps)=\emptyset$ for distinct $i$ and $j$. Thus $U_\eps$ is an open set for the open set condition. When $d=2$, the additional restriction $\eps<\eta$ ensures that every generator is injective on $U_\eps$, so \cref{it:osc-inj} holds. The first part of the proof therefore shows that $\Phi$ satisfies the strong exponential separation condition.
\end{proof}

The conclusion of \cref{lem:osc} is the plain condition, and necessarily so: no separation hypothesis on the pieces reaches the conditions modulo M\"obius maps. The next example records the obstruction in every dimension. For $d\ge3$ every conformal map is itself M\"obius, so a conjugating M\"obius transformation collapses every equal-length pair. In the plane, and on the line by restriction, the middle-third Cantor set satisfies the strong separation condition, hence the plain strong exponential separation condition by \cref{lem:osc}, while a translation collapses every equal-length pair.

\begin{example} \label{ex:esc-not-mobius}
  (1) Let $d\ge3$ and let $\Phi=(\fii_i)_{i\in\II}$ be any conformal iterated function system. By the classification of \cref{sec:conformal-maps}, every generator is the restriction of a M\"obius transformation of $\RS^d$, and hence so is every composition on $\ol{\Omega}$. Let $n\in\N$ and let $\iii,\jjj\in\II^n$ be distinct. The composition $M=\fii_\iii\circ \fii_\jjj^{-1}$ is a M\"obius transformation of $\RS^d$ and satisfies $M\circ\fii_\jjj=\fii_\iii$ throughout $\ol{\Omega}$. The pole of $M$ lies off $\fii_\jjj(\ol{\Omega})$, since $\fii_\iii$ takes values in $\R^d$, so
  \begin{equation*}
    \|\fii_\iii-M\circ\fii_\jjj\| = 0.
  \end{equation*}
  Hence \cref{eq:sescmob} fails for every $c>0$, every $n\in\N$, and every pair of distinct words in $\II^n$: no conformal iterated function system in dimension $d\ge3$ satisfies the exponential separation condition modulo M\"obius maps.

  (2) Let $\Omega=B^o(0,2)$, let $\Omega'=\C$, and consider the two maps
  \begin{equation*}
    \fii_1(z) = \frac{z}{3} \qquad\text{and}\qquad \fii_2(z) = \frac{z}{3}+\frac{2}{3}.
  \end{equation*}
  Both maps are entire conformal similarities, their common Lipschitz constant is $\tfrac{1}{3}$, and $|\fii_1(z)|\le\tfrac{2}{3}$ and $|\fii_2(z)|\le\tfrac{4}{3}$ on $\ol{\Omega}$. Hence they map $\ol{\Omega}$ into $\Omega$ and define a conformal iterated function system whose attractor is the middle-third Cantor set $X$ in $[0,1]\subset\Omega$. Since the maps are injective similarities and the sets $\fii_1(X)\subseteq[0,\tfrac13]$ and $\fii_2(X)\subseteq[\tfrac23,1]$ are disjoint, the system satisfies the strong separation condition, so \cref{lem:osc} already implies strong exponential separation; the following calculation gives the explicit constant.

  For $\iii=i_1\cdots i_n\in\II^n$, let $\eps_k(\iii)=0$ if $i_k=1$ and $\eps_k(\iii)=1$ if $i_k=2$. Direct composition gives
  \begin{equation*}
    \fii_\iii(z) = 3^{-n}z+t_\iii,\qquad \text{where} \qquad t_\iii = 2\sum_{k=1}^{n}\eps_k(\iii)3^{-k}.
  \end{equation*}
  The integer
  \begin{equation*}
    \frac{3^n t_\iii}{2} = \sum_{k=1}^{n}\eps_k(\iii)3^{n-k}
  \end{equation*}
  has base-three digits $\eps_1(\iii),\ldots,\eps_n(\iii)$. Hence distinct words $\iii,\jjj\in\II^n$ give distinct integers, so $3^n(t_\iii-t_\jjj)$ is a nonzero even integer. Consequently
  \begin{equation*}
    \|\fii_\iii-\fii_\jjj\| = |t_\iii-t_\jjj| \ge 2\cdot3^{-n} \ge \biggl(\frac{1}{3}\biggr)^n.
  \end{equation*}
  Thus $\Phi=(\fii_1,\fii_2)$ satisfies the strong exponential separation condition with $c=\tfrac{1}{3}$.

  On the other hand, for distinct $\iii,\jjj\in\II^n$, define $M_{\iii,\jjj}(w)=w+t_\iii-t_\jjj$. This translation is an orientation-preserving M\"obius transformation of $\RS^2$ with pole $\infty$, so its pole misses $\fii_\jjj(\ol{\Omega})$, and it satisfies $M_{\iii,\jjj}\circ\fii_\jjj=\fii_\iii$ on $\C$. Therefore
  \begin{equation*}
    \|\fii_\iii-M_{\iii,\jjj}\circ\fii_\jjj\| = 0.
  \end{equation*}
  Hence \cref{eq:sescmob} fails for every $c>0$; in fact, it fails for every $n\in\N$ and every pair of distinct words in $\II^n$. The same two maps, restricted to a bounded open interval containing $[0,1]$ and with extension domain $\R$, form a conformal iterated function system with $d=1$ to which the same conclusions apply, each map contracting toward its fixed point, $0$ for $\fii_1$ and $1$ for $\fii_2$, and the collapsing translations being M\"obius transformations of $\RS^1$.
\end{example}

\subsection{Super-exponential condensation} \label{sec:condensation}

The proofs of \cref{thm:main} given in \cref{sec:C2,sec:C1,sec:C3} proceed by contraposition. We call the negation of the strong exponential separation condition modulo M\"obius maps \emph{weak super-exponential condensation modulo M\"obius maps}, after B\'ar\'any and K\"aenm\"aki \cite{BaranyKaenmaki}, whose self-similar systems on the line, like those of Baker \cite{Baker}, condense super-exponentially without generating exact overlaps. The next lemma characterizes this condition by requiring $\|\fii_\iii-M\circ\fii_\jjj\|$ to decay super-exponentially along a sequence of lengths for suitable distinct equal-length words and M\"obius maps $M$.

\begin{lemma} \label{lem:condensation-mobius}
  Let $\Phi=(\fii_i)_{i\in\II}$ be a conformal iterated function system. Then $\Phi$ satisfies weak super-exponential condensation modulo M\"obius maps if and only if there are a strictly increasing sequence $(n_\ell)_{\ell \ge 1}$ and, for each $\ell$, distinct $\iii_\ell,\jjj_\ell\in\II^{n_\ell}$ and a M\"obius transformation $M_\ell$ such that
  \begin{equation*}
    \frac{1}{n_\ell}\log\|\fii_{\iii_\ell}-M_\ell\circ\fii_{\jjj_\ell}\| \to -\infty
  \end{equation*}
  as $\ell\to\infty$.
\end{lemma}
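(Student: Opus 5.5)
The plan is to unwind the negation of the strong condition and then convert the failure of a single exponential bound at every scale into a super-exponential sequence by a diagonal choice. For brevity write
\begin{equation*}
  \Delta_n=\inf\{\|\fii_\iii-M\circ\fii_\jjj\| : \iii,\jjj\in\II^n\text{ distinct},\ M\text{ M\"obius}\}.
\end{equation*}
The strong exponential separation condition modulo M\"obius maps says that there is $c>0$ with $\Delta_n\ge c^n$ for every $n\in\N$. Its negation, weak super-exponential condensation modulo M\"obius maps, therefore says that for every $c>0$ there are some $n=n(c)\in\N$, distinct $\iii,\jjj\in\II^n$ and a M\"obius $M$ with $\|\fii_\iii-M\circ\fii_\jjj\|<c^n$.

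The implication from the sequence to condensation is immediate. Given $c>0$, choose $\ell$ with $\frac1{n_\ell}\log\|\fii_{\iii_\ell}-M_\ell\circ\fii_{\jjj_\ell}\|<\log c$. Then $n=n_\ell$, the words $\iii_\ell,\jjj_\ell$ and the map $M_\ell$ witness the failure of the bound at $c$, and the convention $\log 0=-\infty$ covers exact coincidences.

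For the converse, take $c_k=e^{-k}$ and let $n_k$, $\iii_k,\jjj_k$, $M_k$ be witnesses, so that $\frac1{n_k}\log\|\fii_{\iii_k}-M_k\circ\fii_{\jjj_k}\|<-k$. The only issue is that the $n_k$ need not tend to infinity. This is the one real step, and I would split it into two cases.

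\emph{Case 1: $\sup_k n_k=\infty$.} Pass to a subsequence along which $n_k$ is strictly increasing. The quantities $-k$ still tend to $-\infty$ along it, so the subsequence is the required sequence.

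\emph{Case 2: $n_k$ is bounded.} Then some length $n$ recurs infinitely often, with $\|\fii_{\iii_k}-M_k\circ\fii_{\jjj_k}\|<e^{-kn}$ along those $k$. Finiteness of $\II^n$ gives a fixed pair of distinct words $\iii,\jjj$ with $\Delta$-values tending to $0$ for that pair, so $\inf_M\|\fii_\iii-M\circ\fii_\jjj\|=0$. I would upgrade this to exact coincidence, $\fii_\iii=M\circ\fii_\jjj$ on $\ol\Omega$, by a compactness argument in the M\"obius group. Since $\fii_\jjj(\ol\Omega)$ contains an open set on which $M_k$ converge uniformly to the nonconstant map $\fii_\iii\circ\fii_\jjj^{-1}$, applied locally, the maps $M_k$ cannot degenerate. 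A normal-family, or three-point, argument for M\"obius maps then extracts a convergent subsequence with a M\"obius limit $M$ satisfying $M\circ\fii_\jjj=\fii_\iii$. This compactness step is where I expect the work, and I would handle it through the standard fact that a sequence of M\"obius maps converging uniformly on an open set to a nonconstant map converges to a M\"obius map.

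Once an exact coincidence $\fii_\iii=M\circ\fii_\jjj$ at length $n$ is available, extend it by concatenation. For $m\ge1$ take $\iii'=\iii\kkk$ and $\jjj'=\jjj\kkk$ with any $\kkk\in\II^{m}$. These are distinct words of length $n+m$ with $\fii_{\iii'}=M\circ\fii_{\jjj'}$, so the logarithm is $-\infty$. Taking $n_\ell=n+\ell$ gives the required sequence, and in this case the limit $-\infty$ holds trivially.
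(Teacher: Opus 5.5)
Your proof is correct. The easy direction and the unbounded-length case match the paper's argument; in the bounded case you take a genuinely different, longer route.

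\textbf{Your route in the bounded case.} Once a fixed pair $\iii,\jjj$ of length $m$ has $\inf_M\|\fii_\iii-M\circ\fii_\jjj\|=0$, you upgrade this to an exact overlap $\fii_\iii=M\circ\fii_\jjj$ on $\ol{\Omega}$. The upgrade rests on the convergence property of the M\"obius group: M\"obius maps converging uniformly on an open set to a nonconstant map subconverge uniformly on $\RS^d$ to a M\"obius map. Only after that do you append common tails.

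\textbf{The paper's route.} The paper never needs the exact overlap. For each $\ell$ it chooses a fresh $M_\ell$ with $\|\fii_\iii-M_\ell\circ\fii_\jjj\|\le e^{-\ell n_\ell}$, which the vanishing infimum allows. It then appends a common tail $\kkk$ of length $n_\ell-m$. Since $\fii_{\iii\kkk}=\fii_\iii\circ\fii_\kkk$ and $\fii_\kkk(\ol{\Omega})\subseteq\ol{\Omega}$, precomposition cannot increase the supremum norm, so the error stays at most $e^{-\ell n_\ell}$. This is exactly your concatenation step, applied to an approximate rather than an exact coincidence. It uses nothing about the maps $M$ beyond their being fixed maps composed on the left. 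That is why the paper can say the lemma holds verbatim for the plain condition and for the condition modulo similarities.

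\textbf{What each buys.} Your route costs an appeal to compactness in the M\"obius group. It also needs a local-inverse step in the plane, where $\fii_\jjj$ is only locally injective. For the similarity version it additionally needs the closedness of the similarity subgroup. In return it gives a sharper structural conclusion: condensation modulo M\"obius maps along bounded lengths forces an exact M\"obius overlap.
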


\begin{proof}
  Choose violations of \cref{eq:sescmob} with $c=e^{-k}$: since the strong condition fails, for each $k\in\N$ there are a length $n$, distinct $\iii,\jjj\in\II^{n}$, and a M\"obius transformation $M$ with $\|\fii_\iii-M\circ\fii_\jjj\|<e^{-kn}$. If their lengths are unbounded, thin them so that $k\ge\ell$ and $n_\ell$ is strictly increasing, the $\ell$-th violation then having error below $e^{-\ell n_\ell}$. If they are bounded, then some pair $\iii,\jjj$ of a fixed length $m$ recurs for arbitrarily large $k$, so its infimum over the M\"obius maps is zero; choose any strictly increasing $n_\ell\ge m$ and append a common tail $\kkk$ of length $n_\ell-m$. Since $\fii_{\iii\kkk}=\fii_\iii\circ\fii_\kkk$ with $\fii_\kkk(\ol{\Omega})\subseteq\ol{\Omega}$, appending the tail cannot increase the supremum norm, so choosing the M\"obius map $M_\ell$ before appending it, with $\|\fii_\iii-M_\ell\circ\fii_\jjj\|\le e^{-\ell n_\ell}$, keeps the error at most $e^{-\ell n_\ell}$. Thus in both cases there are, for each $\ell$, distinct $\iii_\ell,\jjj_\ell\in\II^{n_\ell}$ and a M\"obius transformation $M_\ell$ with error at most $e^{-\ell n_\ell}$, so $\frac{1}{n_\ell}\log\|\fii_{\iii_\ell}-M_\ell\circ\fii_{\jjj_\ell}\|\le-\ell\to-\infty$. 
  
  Conversely, if such witnesses exist and the strong condition held with some $c>0$, then $c^{n_\ell}\le\|\fii_{\iii_\ell}-M_\ell\circ\fii_{\jjj_\ell}\|$ for every $\ell$, so $\log c\le\frac{1}{n_\ell}\log\|\fii_{\iii_\ell}-M_\ell\circ\fii_{\jjj_\ell}\|\to-\infty$, which is absurd.
\end{proof}

With $M$ and every $M_\ell$ read as the identity, the negation of the plain strong condition is likewise \emph{weak super-exponential condensation}, and \cref{lem:condensation-mobius} holds verbatim for it, as it does with $M$ and every $M_\ell$ restricted to the similarities, the proof using nothing about these maps beyond their being fixed maps composed on the left. Since $\fii_\iii=f_{\overleftarrow{\iii}}$ and reversal is a bijection of each $\II^n$, all the preceding separation and condensation properties are reversal invariant.

\section{The dual iterated function system} \label{sec:dual-ifs}

This section builds the dual iterated function system, a system of affine contractions whose orbit of the zero field runs through the pre-Schwarzian cocycle $T_\iii$ and, one weight up, the Schwarzian cocycle $S_\iii$. In \cref{sec:dual-cocycles} we unfold the two cocycles along a finite word, in \cref{sec:dual-maps} we assemble the dual systems and their canonical projections, and in \cref{sec:dual-properties} we extend those projections to infinite words and record the peeling identity, in \cref{lem:dual,lem:dual-peel,lem:dual2}. The last two subsections are what \cref{sec:C2,sec:C1,sec:C3} carry out and apply: \cref{sec:reduction} reduces \cref{thm:main} to a coincidence of two limit projections, and \cref{sec:gencrit} makes its hypotheses checkable on a concrete system through the one-point gap criterion of \cref{prop:gencrit}. The dual objects live on the space $\mathcal{C}(\ol{\Omega};\R^d)$ of $\R^d$-valued continuous maps on $\ol{\Omega}$, normed by $\|\cdot\|$. Since $\Omega$ is bounded, $\ol{\Omega}$ is compact, so $\mathcal{C}(\ol{\Omega};\R^d)$ is a Banach space under the supremum norm. Analyticity is deliberately left out of the function space: real-analyticity does not pass to uniform limits, the polynomials being dense in $\mathcal{C}(\ol{\Omega};\R^d)$ by the Stone--Weierstrass theorem, so the real-analytic maps form no closed subspace; instead, \cref{lem:dual} below proves that all pre-Schwarzian projections extend real-analytically to one common neighborhood of $\ol{\Omega}$, while \cref{lem:dual2} treats the Schwarzian projections. The plane is the exception at the level of the ambient space: there holomorphy does pass to uniform limits, and the dual system of \cref{eq:dualdef2} below is housed in a space of holomorphic maps outright. 

\subsection{Cocycles along words} \label{sec:dual-cocycles}

The \emph{Ruelle transfer operator} $\LL^{(1)}$ of $\Phi=(\fii_i)_{i\in\II}$ at weight one is defined by
\begin{equation*}
  \LL^{(1)} h = \sum_{i\in\II}(D\fii_i)^{\top}(h\circ\fii_i)
\end{equation*}
for all $h \in \mathcal{C}(\ol{\Omega};\R^d)$. The transfer operator $\LL^{(1)}$ acts by composition with the generators and multiplication by a power of the derivative, the weight, and here the multiplier $(D\fii_i)^{\top}=\|D\fii_i\|O_{\fii_i}^{\top}$ carries the first power of the conformal factor together with an isometric orthogonal part. The dual construction iterates the generators one at a time rather than their sum: the \emph{weight-one composition operator} $\LL^{(1)}_i$ of the generator $\fii_i$ is the $i$-th summand
\begin{equation*}
  \LL^{(1)}_i h = (D\fii_i)^{\top}(h\circ\fii_i),
\end{equation*}
so that $\LL^{(1)}=\sum_{i\in\II}\LL^{(1)}_i$. It maps $\mathcal{C}(\ol{\Omega};\R^d)$ into itself, the image $\fii_i(\ol{\Omega})$ lying in $\ol{\Omega}$, and satisfies $\|\LL^{(1)}_i h\|\le\cmax\|h\|$, since the orthogonal part preserves norms and $\|D\fii_i\|\le\cmax$ on $\ol{\Omega}$; thus each $\LL^{(1)}_i$ is a linear $\cmax$-contraction of $\mathcal{C}(\ol{\Omega};\R^d)$. Since $\fii_i$ is real-analytic and maps $\Omega$ into $\Omega$, the operator also preserves real-analyticity on $\Omega$. The weight is forced: applied to a composition $g\circ\fii_i$, with $g$ conformal on a domain containing $\ol{\Omega}$, the pre-Schwarzian composition law \cref{eq:Tchain} reads 
\begin{equation} \label{eq:Tcocycle}
  T_{g\circ\fii_i} = (D\fii_i)^{\top}(T_g\circ \fii_i)+T_{\fii_i} = \LL^{(1)}_i T_g+T_{\fii_i},
\end{equation}
so the pre-Schwarzian is an affine cocycle over the weight-one action, the translation part being the pre-Schwarzian of the generator.

The construction repeats one weight up, the superscript recording the weight; the ambient space is now the Banach space of continuous symmetric matrix fields on $\ol{\Omega}$, normed by $\|H\|=\sup_{x\in\ol{\Omega}}\|H(x)\|$, the inner norm being the matrix operator norm. The \emph{Ruelle transfer operator at weight two} is
\begin{equation*}
  \LL^{(2)}H = \sum_{i\in\II}(D\fii_i)^{\top}(H\circ\fii_i)(D\fii_i),
\end{equation*}
its $i$-th summand being the \emph{weight-two composition operator}
\begin{equation*}
  \LL^{(2)}_i H = (D\fii_i)^{\top}(H\circ\fii_i)(D\fii_i),
\end{equation*}
so that $\LL^{(2)}=\sum_{i\in\II}\LL^{(2)}_i$; the derivative now enters on both sides as $(D\fii_i)^{\top}H(D\fii_i)=\|D\fii_i\|^2 O_{\fii_i}^{\top}HO_{\fii_i}$, the orthogonal conjugation preserving norms, so $\|\LL^{(2)}_i H\|\le\cmax^2\|H\|$ and each $\LL^{(2)}_i$ is a $\cmax^2$-contraction. Applied to a composition $g\circ\fii_i$, the Schwarzian composition law \cref{eq:Schain} reads
\begin{equation} \label{eq:Scocycle}
  S_{g\circ\fii_i} = (D\fii_i)^{\top}(S_g\circ\fii_i)(D\fii_i)+S_{\fii_i} = \LL^{(2)}_i S_g+S_{\fii_i},
\end{equation}
so the Schwarzian is an affine cocycle over the weight-two action, its translation part being the Schwarzian of the generator, exactly as the pre-Schwarzian is an affine cocycle over $\LL^{(1)}_i$. It is needed only for the Schwarzian results of \cref{sec:C2,sec:C1}, the Schwarzian being void for $d\ge3$ as \cref{rem:S3} explains.

Iterated along a finite word $\iii=i_1\cdots i_n$, the two affine cocycles compute the pre-Schwarzian and Schwarzian derivatives of $f_\iii$ in closed form, the empty composition $f_\varnothing$ being the identity with $T_{f_\varnothing}\equiv0$ and $S_{f_\varnothing}\equiv0$. Taking $g=f_{\sigma\iii}$ in the cocycle relations \cref{eq:Tcocycle} and \cref{eq:Scocycle}, the decomposition $f_\iii=f_{\sigma\iii}\circ\fii_{i_1}$ gives the recursions $T_{f_\iii}=\LL^{(1)}_{i_1}T_{f_{\sigma\iii}}+T_{\fii_{i_1}}$ and $S_{f_\iii}=\LL^{(2)}_{i_1}S_{f_{\sigma\iii}}+S_{\fii_{i_1}}$, and iterating them until the empty word is reached unfolds the two derivatives into the finite sums
\begin{align}
  T_{f_\iii} &= \sum_{k=1}^{n}(Df_{\iii|_{k-1}})^{\top}(T_{\fii_{i_k}}\circ f_{\iii|_{k-1}}), \label{eq:Tword} \\
  S_{f_\iii} &= \sum_{k=1}^{n}(Df_{\iii|_{k-1}})^{\top}(S_{\fii_{i_k}}\circ f_{\iii|_{k-1}})(Df_{\iii|_{k-1}}), \label{eq:Sword}
\end{align}
the multipliers accumulated along the iteration collecting, by the chain rule and $(AB)^{\top}=B^{\top}A^{\top}$, into the transposed derivatives of the prefix maps $f_{\iii|_{k-1}}$.

On the line the sums are classical. There the chain rule telescopes the derivative of $f_\iii$ into the product $f_\iii'=\prod_{k=1}^{n}\fii_{i_k}'\circ f_{\iii|_{k-1}}$ along the orbit of the prefixes, the logarithmic derivative $T_f=f''/f'$ recorded after \cref{eq:Tdef} turns this product into a sum, and $(Df)^{\top}$ acts by multiplication by $f'$, so \cref{eq:Tword} reads
\begin{equation} \label{eq:Tline}
  \frac{f_\iii''}{f_\iii'} = \sum_{k=1}^{n}\biggl(\frac{\fii_{i_k}''}{\fii_{i_k}'}\circ f_{\iii|_{k-1}}\biggr)f_{\iii|_{k-1}}'
\end{equation}
and \cref{eq:Sword} reads $S_{f_\iii}=\sum_{k=1}^{n}(S_{\fii_{i_k}}\circ f_{\iii|_{k-1}})(f_{\iii|_{k-1}}')^{2}$, the classical Schwarzian cocycle. The identity \cref{eq:Tline} is where the construction of B\'ar\'any, Kolossv\'ary, and Troscheit \cite{BaranyKolossvaryTroscheit} begins: they take the right-hand side of \cref{eq:Tline} as the definition of the projection attached to the word $\iii$ and formulate their separation hypothesis \cite[Theorem~1.5]{BaranyKolossvaryTroscheit} directly in terms of it, identifying the sum with the derivative ratio $f_\iii''/f_\iii'$ only afterward, whereas here both sides of \cref{eq:Tline} unpack the same definition \cref{eq:Tdef}.

In higher dimensions it is the conformal factor, not the derivative, that telescopes; on the line the two coincide up to sign, so the distinction is invisible there. The derivative of $f_\iii$ is a product of matrices, but conformality makes its norm multiplicative along the composition, by the identity $\lambda_{g\circ f}=(\lambda_g\circ f)\lambda_f$ of \cref{lem:chain}; hence $\|Df_\iii\|=\prod_{k=1}^{n}\|D\fii_{i_k}\|\circ f_{\iii|_{k-1}}$, the logarithm turns this product into a sum, and the gradient chain rule $\nabla(h\circ f)=(Df)^{\top}(\nabla h\circ f)$ differentiates the sum into \cref{eq:Tword} term by term, exactly as the logarithmic derivative turned the telescoped product into \cref{eq:Tline}. In the plane the classical reading persists verbatim in complex notation, the only trace of the ambient dimension being a complex conjugation at weight one, and not even that surviving at weight two. Indeed, $Df$ and its transpose act on $\C$ as multiplication by $f'$ and by $\ol{f'}$ respectively, and $T_{\fii_{i_k}}=\ol{\fii_{i_k}''/\fii_{i_k}'}$ by \cref{lem:scalar}, so each summand of \cref{eq:Tword} is the complex conjugate of its counterpart in \cref{eq:Tline}; since $T_{f_\iii}=\ol{f_\iii''/f_\iii'}$, again by \cref{lem:scalar}, conjugating termwise shows that \cref{eq:Tline} holds in the plane exactly as written, with the complex derivatives of the prefix maps in place of the real ones. 

One weight up the conjugation disappears altogether: the two derivative factors of \cref{eq:Sword} feed a vector $v\in\C$ into the quadratic form of $S_{\fii_{i_k}}\circ f_{\iii|_{k-1}}$ as the product $f_{\iii|_{k-1}}'v$, so the tensor reading $v^{\top}S_fv=\re(S_f^{\C}v^{2})$ of \cref{lem:scalar} turns the $k$-th summand, evaluated at $v$, into $\re((S_{\fii_{i_k}}^{\C}\circ f_{\iii|_{k-1}})(f_{\iii|_{k-1}}')^{2}v^{2})$, the weight-two multiplier being the holomorphic square $(f_{\iii|_{k-1}}')^{2}$ with no conjugation at all, and \cref{eq:Sword} becomes the classical complex Schwarzian cocycle
\begin{equation} \label{eq:Sline}
  S_{f_\iii}^{\C}=\sum_{k=1}^{n}(S_{\fii_{i_k}}^{\C}\circ f_{\iii|_{k-1}})(f_{\iii|_{k-1}}')^{2},
\end{equation}
formally the same identity as on the line. For $d\ge3$ the reading changes character rather than form: by \cref{lem:mobius} the pre-Schwarzian $T_{\fii_{i_k}}$ of each generator either vanishes, when $\fii_{i_k}$ is a similarity, or is the pole field of \cref{eq:pole}, so \cref{eq:Tword} exhibits $T_{f_\iii}$ as a superposition of pole fields seen through the prefix maps, while every summand of \cref{eq:Sword} vanishes outright, consistent with the vanishing of $S_{f_\iii}$ itself, the composition $f_\iii$ being the restriction of a M\"obius transformation by the Liouville classification recorded in \cref{sec:conformal-maps}.

\subsection{The dual maps and their projections} \label{sec:dual-maps}

For $d\neq2$ the \emph{dual iterated function system} of $\Phi=(\fii_i)_{i\in\II}$ is the tuple $\Phi^*=(F_i)_{i\in\II}$ of affine maps
\begin{equation} \label{eq:dualdef}
  F_ih = \LL^{(1)}_i h+T_{\fii_i} = (D\fii_i)^{\top}(h\circ\fii_i)+T_{\fii_i},
\end{equation}
whose linear part and translation are the two ingredients of the cocycle relation \cref{eq:Tcocycle}. Each $F_i$ maps $\mathcal{C}(\ol{\Omega};\R^d)$ into itself, the translation $T_{\fii_i}$ being real-analytic on $\Omega'$ and in particular a member of $\mathcal{C}(\ol{\Omega};\R^d)$; it preserves real-analyticity on $\Omega$, as $\LL^{(1)}_i$ does, and satisfies $\|F_ih-F_ih'\|\le\cmax\|h-h'\|$, the difference $F_ih-F_ih'=\LL^{(1)}_i(h-h')$ shedding the translation: the dual system is an iterated function system of affine $\cmax$-contractions of the complete space $\mathcal{C}(\ol{\Omega};\R^d)$. In this reading the recursion that unfolds into \cref{eq:Tword} is one application of a dual map, $T_{f_\iii}=\LL^{(1)}_{i_1}T_{f_{\sigma\iii}}+T_{\fii_{i_1}}=F_{i_1}T_{f_{\sigma\iii}}$, so induction on the length, from the empty word, gives $F_\iii0=T_{f_\iii}$ for every $\iii\in\II^*$, where $F_\iii=F_{i_1}\circ\cdots\circ F_{i_n}$ and $F_\varnothing$ is the identity: the orbit of the zero field under the dual maps runs through the pre-Schwarzian cocycle. The construction repeats one weight up: the \emph{weight-two dual iterated function system} of $\Phi$ is the tuple $\Phi^{*2}=(G_i)_{i\in\II}$ of affine maps 
\begin{equation} \label{eq:dualdefG}
  G_iH = \LL^{(2)}_iH+S_{\fii_i} = (D\fii_i)^{\top}(H\circ\fii_i)(D\fii_i)+S_{\fii_i},
\end{equation}
assembled from the two ingredients of \cref{eq:Scocycle}; each $G_i$ is an affine $\cmax^2$-contraction of the complete space of continuous symmetric matrix fields on $\ol{\Omega}$, and the same induction gives $G_\iii0=S_{f_\iii}$, the orbit unfolding along the Schwarzian cocycle \cref{eq:Sword}.

For $d=2$ the general weight-one action $h\mapsto(D\fii_i)^{\top}(h\circ\fii_i)+T_{\fii_i}$ is anti-holomorphic: as recorded after \cref{eq:Tline}, the multiplier $(D\fii_i)^{\top}$ acts as multiplication by $\ol{\fii_i'}$ and the translation is $T_{\fii_i}=\ol{T^{\C}_{\fii_i}}$, so it carries conjugates of members of $\mathcal{H}(\ol{\Omega})$ to conjugates of members of $\mathcal{H}(\ol{\Omega})$, where $\mathcal{H}(\ol{\Omega})$ is the space of $\C$-valued maps holomorphic on $\Omega$ and continuous on $\ol{\Omega}$, again a Banach space under the supremum norm, holomorphy passing to uniform limits by the Weierstrass convergence theorem; see \cite[Theorem 5.1]{Ahlfors}. To keep the dual system in $\mathcal{H}(\ol{\Omega})$ we conjugate the two coefficients: for $d=2$ the \emph{dual iterated function system} of $\Phi$ is the tuple $\Phi^*=(F_i)_{i\in\II}$ of affine self-maps
\begin{equation} \label{eq:dualdef2}
  F_ih = \ol{\LL^{(1)}_i\ol{h}+T_{\fii_i}} = \fii_i'(h\circ\fii_i)+T^{\C}_{\fii_i}
\end{equation}
of $\mathcal{H}(\ol{\Omega})$, with the same properties as its counterpart for $d\neq2$; no pre-Schwarzian changes meaning, the conjugation being applied to the two coefficients and written out, $\ol{T_{\fii_i}}=T^{\C}_{\fii_i}=\fii_i''/\fii_i'$. Since \cref{eq:dualdef2} is the conjugate of the general action, the interior conjugations cancel in pairs along a composition, and, the zero field being fixed, its orbit is the conjugate of the pre-Schwarzian: $F_\iii0=\ol{T_{f_\iii}}=T^{\C}_{f_\iii}$, holomorphic and unfolding along the sums \cref{eq:Tline}. Here the weight-one composition operator is the multiplication $\LL^{(1)}_ih=\fii_i'(h\circ\fii_i)$, the conjugate of its general form and of the same norms. One weight up no such adjustment is needed, but the fields narrow: we restrict to the closed subspace of symmetric trace-free fields, which contains every planar Schwarzian by \cref{lem:scalar} and is invariant under each $\LL^{(2)}_i$, the conjugation by $D\fii_i$ preserving trace-freeness. Reading this subspace through the identification of \cref{lem:scalar}, under which a symmetric trace-free matrix with quadratic form $v\mapsto\re(\mu v^2)$ corresponds to the scalar $\mu$, with eigenvalues $\pm|\mu|$ and hence operator norm $|\mu|$, the operator $\LL^{(2)}_i$ becomes the multiplication $\LL^{(2)}_ih=(\fii_i')^{2}(h\circ\fii_i)$ exactly, and the weight-two dual system $\Phi^{*2}$ becomes the tuple of affine self-maps $G_ih=\LL^{(2)}_ih+S^{\C}_{\fii_i}$ of $\mathcal{H}(\ol{\Omega})$, with $G_\iii0=S^{\C}_{f_\iii}$ unfolding along \cref{eq:Sline}. Since the conjugation and the identification preserve all supremum norms, every estimate below reads the same in either picture.

In every dimension we denote by $T_\iii$ and $S_\iii$ the orbit points of the zero field under the dual maps, the \emph{dual canonical projections} of $\iii$,
\begin{equation} \label{eq:dualrec}
  T_\iii = F_\iii0 = F_{i_1}T_{\sigma\iii}\qquad\text{and}\qquad S_\iii = G_\iii0 = G_{i_1}S_{\sigma\iii},
\end{equation}
for every $\iii\in\II^*$, with $T_\varnothing\equiv0$ and $S_\varnothing\equiv0$. Typewriter word subscripts thus name dual projections throughout the paper, while map subscripts keep naming the operators of \cref{sec:schwarzian}: by the above, for finite words $T_\iii=T_{f_\iii}$ and $S_\iii=S_{f_\iii}$ outright except in the plane, where $T_\iii=T^{\C}_{f_\iii}=\ol{T_{f_\iii}}$ and $S_\iii=S^{\C}_{f_\iii}$ are the holomorphic scalars. The dual canonical projections are the counterparts of the canonical projection $\pi$ of $\Phi$, and \cref{lem:dual,lem:dual2} below extend them to infinite words. In particular $\|T_\iii-T_\jjj\|=\|T_{f_\iii}-T_{f_\jjj}\|$ and $\|S_\iii-S_\jjj\|=\|S_{f_\iii}-S_{f_\jjj}\|$ for finite words in every dimension. Away from the plane the two sides of each identity are the same object; in the plane the first holds because conjugation preserves moduli, and the second because the correspondence of \cref{lem:scalar} between a symmetric trace-free matrix and its scalar is linear over $\R$ and isometric, the matrix attached to $\mu$ having eigenvalues $\pm|\mu|$ and hence operator norm $|\mu|$, so it carries the difference of the matrices to the difference of the scalars without changing any supremum. The finite-word part of the separation hypotheses \cref{eq:mainhyp,eq:mainhyp2} of \cref{thm:main} therefore reads the same whether the projections or the pre-Schwarzians and Schwarzians themselves are compared, the infinite-word part being stated in the dual projections alone.

Composed along a word, the dual maps stay affine: $F_\iii=F_{i_1}\circ\cdots\circ F_{i_n}$ has linear part $\LL^{(1)}_\iii=\LL^{(1)}_{i_1}\cdots\LL^{(1)}_{i_n}$ and translation $F_\iii0=T_\iii$, and $G_\iii$ has linear part $\LL^{(2)}_\iii=\LL^{(2)}_{i_1}\cdots\LL^{(2)}_{i_n}$ and translation $S_\iii$. By the chain rule and $(AB)^{\top}=B^{\top}A^{\top}$ the letter operators collect into
\begin{equation} \label{eq:wordops}
  \LL^{(1)}_\iii h=(Df_\iii)^{\top}(h\circ f_\iii)\qquad\text{and}\qquad\LL^{(2)}_\iii H=(Df_\iii)^{\top}(H\circ f_\iii)(Df_\iii),
\end{equation}
for $d=2$ the multiplications $\LL^{(1)}_\iii h=f_\iii'(h\circ f_\iii)$ and $\LL^{(2)}_\iii h=(f_\iii')^{2}(h\circ f_\iii)$, with $\LL^{(k)}_\varnothing$ being the identity.

\subsection{Limit projections and peeling} \label{sec:dual-properties}

The following lemmas gather the basic properties of the dual system at weights one and two, each Schwarzian claim being its pre-Schwarzian counterpart with the multiplier squared. At weight one, \cref{lem:dual,lem:dual-peel} generalize the construction of B\'ar\'any, Kolossv\'ary, and Troscheit \cite[Lemmas~2.1 and~2.4]{BaranyKolossvaryTroscheit} from the line to every dimension $d\ge1$: \cref{lem:dual} records the metric bounds and the real-analyticity of the limits along infinite words, and \cref{lem:dual-peel} records the peeling identity, the paragraph after its proof assembling the dual attractor $X^*$; at weight two, \cref{lem:dual2} records the metric bounds, the peeling identity, and the regularity needed in the plane, and the paragraphs after its proof assemble the weight-two dual attractor $X^{*2}$ and relate it to $X^*$.

The limits along infinite words are produced on a complex thickening of $\ol{\Omega}$, on which the generators and their pre-Schwarzians live holomorphically and still contract. The same thickening carries several later arguments, so we record it on its own before using it. Throughout, $E=\C$ for $d\in\{1,2\}$ and $E=\C^d$ for $d\ge3$, with $\ol{\Omega}$ embedded in its usual real slice when $d\ne2$, and $\Omega_s=\{z\in E : \dist(z,\ol{\Omega})<s\}$ denotes the open $s$-neighborhood of $\ol{\Omega}$ in $E$.

\begin{lemma} \label{lem:thicken}
  Let $\Phi=(\fii_i)_{i\in\II}$ be a conformal iterated function system on $\Omega \subset \R^d$. Then there are an open set $\Omega^{\C}\subseteq E$ containing $\ol{\Omega}$, a constant $\lambda$ with $\cmax<\lambda<1$, and a radius $r>0$ such that the following hold:
  \begin{enumerate}
    \item\label{it:thick-ext} Every generator $\fii_i$ and every pre-Schwarzian $T_{\fii_i}$ extends holomorphically to $\Omega^{\C}$, the extension of $\fii_i$ having nonvanishing derivative when $d\in\{1,2\}$.
    \item\label{it:thick-bound} The closure $\ol{\Omega_r}$ is a compact subset of $\Omega^{\C}$ and $\max_{i \in \II}\sup_{z\in\Omega_r}\|D\fii_i(z)\|\le \lambda$.
    \item\label{it:thick-contract} Every generator satisfies $\dist(\fii_i(z),\ol{\Omega})\le \lambda\dist(z,\ol{\Omega})$ on $\Omega_r$, so that $\fii_i(\ol{\Omega_s})\subseteq\Omega_s$ for every $0<s<r$; in particular every $\fii_i$ maps $\Omega_r$ into itself, all partial orbits of points of $\Omega_r$ stay in $\Omega_r$, and $\sup_{z\in\Omega_r}\|Df_\iii(z)\|\le \lambda^{|\iii|}$ for every $\iii\in\II^*$.
    \item\label{it:thick-slice} The real slice $\Omega_r\cap\R^d$ is an open neighborhood of $\ol{\Omega}$ contained in $\Omega'$ and mapped into itself by every generator, and for $d\ne2$ the extensions of the generators and of the pre-Schwarzians coincide there with the original real ones.
  \end{enumerate}
\end{lemma}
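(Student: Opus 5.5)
The plan is to construct $\Omega^{\C}$ dimension by dimension, show that the derivative bound and the distance contraction persist on a thin neighborhood, and take $r$ small. Everything will rest on compactness of $\ol{\Omega}$ and continuity of $\|D\fii_i\|$, exactly as in the proof of \cref{lem:tube}, now carried out in $E$ rather than in $\R^d$.

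\emph{Construction of $\Omega^{\C}$ and item \cref{it:thick-ext}.} For $d=2$ I take $\Omega^{\C}=\Omega'$. The generators are holomorphic there with $\fii_i'\ne0$, so $T_{\fii_i}=\ol{\fii_i''/\fii_i'}$ is the conjugate of the holomorphic function $T^{\C}_{\fii_i}$. The word ``holomorphically'' is therefore to be read for the holomorphic scalar $T^{\C}_{\fii_i}$, which is the object the dual system of \cref{eq:dualdef2} uses.

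For $d=1$, each $\fii_i$ is real-analytic on $\Omega'$, so its power series at each point of the compact set $\ol{\Omega}$ converge on complex discs. Covering $\ol{\Omega}$ by finitely many such discs, and using the identity principle on overlaps that meet the real axis, I get a complex neighborhood $\Omega^{\C}$ of $\ol{\Omega}$ on which every $\fii_i$ is holomorphic. Shrinking $\Omega^{\C}$ so that $\fii_i'\ne0$, which is possible since $\fii_i'\ne0$ on the compact $\ol{\Omega}$, makes $T_{\fii_i}=\fii_i''/\fii_i'$ holomorphic as well.

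For $d\ge3$, each $\fii_i$ is the restriction of a M\"obius map $x\mapsto b+\rho O\,\iota_p(x)$ or a similarity, with pole $p\notin\Omega'$. The formulas $x\mapsto (x-p)/\langle x-p,x-p\rangle$ and $T(x)=-2(x-p)/\langle x-p,x-p\rangle$ from \cref{eq:pole}, with the bilinear (not Hermitian) form, extend holomorphically to $\{z\in\C^d:\langle z-p,z-p\rangle\ne0\}$. This set contains a neighborhood of $\ol{\Omega}$, since there $|x-p|\ge\dist(p,\ol{\Omega})>0$. For $d\ge3$ I interpret $\|D\fii_i(z)\|$ as the operator norm of the complex Jacobian. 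Item \cref{it:thick-slice} then follows once $r$ is small enough that $\Omega_r\cap\R^d\subseteq\Omega'$, the extensions agreeing with the originals on the real slice by construction.

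\emph{Items \cref{it:thick-bound} and \cref{it:thick-contract}.} I fix $\lambda\in(\cmax,1)$. The map $z\mapsto\|D\fii_i(z)\|$ is continuous on $\Omega^{\C}$ and at most $\cmax$ on $\ol{\Omega}$, so by compactness there is $r>0$ with $\ol{\Omega_r}\subseteq\Omega^{\C}$ compact and $\|D\fii_i\|\le\lambda$ on $\Omega_r$. For the contraction, given $z\in\Omega_r$ I pick a nearest point $x\in\ol{\Omega}$. The segment $[x,z]$ lies in $\Omega_r$, since every point of it is within $|z-x|$ of $x$. Integrating $D\fii_i$ along it gives $|\fii_i(z)-\fii_i(x)|\le\lambda|z-x|$. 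Since $\fii_i(x)\in\ol{\Omega}$, this yields $\dist(\fii_i(z),\ol{\Omega})\le\lambda\dist(z,\ol{\Omega})$, and hence the invariance of $\ol{\Omega_s}$ and $\Omega_s$ for $s<r$. The bound $\|Df_\iii\|\le\lambda^{|\iii|}$ then comes from the chain rule along orbits that stay in $\Omega_r$, together with submultiplicativity of the operator norm.

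\emph{Expected obstacle.} The step I expect to need care is the $d=1$ holomorphic extension. Local power series must glue into a single-valued function on one neighborhood, which requires choosing discs small enough that pairwise intersections are connected and meet $\R$; a Lebesgue number argument for the cover of $\ol{\Omega}$ handles this. A second, smaller point is that for $d\ge3$ the complex extension is not conformal, so item \cref{it:thick-bound} has to be obtained from continuity of the complex operator norm rather than from conformality.
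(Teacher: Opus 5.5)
Your proposal is correct and follows the paper's proof step by step: you build the extensions dimension by dimension, taking $\Omega^{\C}=\Omega'$ for $d=2$ and reading $T_{\fii_i}$ as $T^{\C}_{\fii_i}$ there; you choose $\lambda$ and $r$ by continuity and compactness; you obtain the distance contraction by integrating along the segment to a nearest point of $\ol{\Omega}$; and you get the bound on $\|Df_\iii\|$ from the chain rule and the real-slice statement from agreement with the original maps. The only difference is that you construct the holomorphic extensions by hand (gluing discs for $d=1$, the bilinear-form M\"obius formula for $d\ge3$) where the paper cites Krantz--Parks; the gluing is easier than you expect, since two discs centered on $\R$ meet in a convex, conjugation-symmetric set that automatically meets $\R$, so no Lebesgue number argument is needed, and the fact you use silently, that the complex operator norm of a real Jacobian equals its real operator norm, is one the paper states explicitly.
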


\begin{proof}
  We construct one analytic neighborhood for all words. For $d=1$, the real-analytic generators extend holomorphically to a complex neighborhood of $\ol{\Omega}$; see \cite[Section~6.4]{KrantzParks}. After shrinking the neighborhood their derivatives do not vanish, and we extend $T_{\fii_i}$ as $\fii_i''/\fii_i'$. For $d=2$, the generators and the scalar pre-Schwarzians $T_{\fii_i}^{\C}=\fii_i''/\fii_i'$ are already holomorphic on $\Omega'$, where $\fii_i'$ does not vanish by conformality. For $d\ge3$, the real-analytic maps $\fii_i$ and vector fields $T_{\fii_i}$ extend holomorphically, componentwise, to a complex neighborhood of $\ol{\Omega}$; see \cite[Section~6.4]{KrantzParks}. Since the alphabet is finite, in every case the extensions, denoted by the same symbols, are defined on one open set $\Omega^{\C}\subseteq E$ containing $\ol{\Omega}$; for $d=2$ we take $\Omega^{\C}=\Omega'$. This gives \cref{it:thick-ext}.

  At each point of $\ol{\Omega}$ the complex operator norm of $D\fii_i$ equals its real operator norm. Choose $\lambda$ with $\cmax<\lambda<1$. By continuity of the complexified derivatives and compactness of $\ol{\Omega}$, there is $r>0$ such that the neighborhood $\Omega_r$ has compact closure in $\Omega^{\C}$ and $\max_{i \in \II}\sup_{z\in\Omega_r}\|D\fii_i(z)\|\le \lambda$, which is \cref{it:thick-bound}. Shrinking $r$, we also assume that $r<\dist(\ol{\Omega},\R^d\setminus\Omega')$, so that the real slice $\Omega_r\cap\R^d$ lies in $\Omega'$; for $d=2$ this inclusion is already implied by $\ol{\Omega_r}\subseteq\Omega^{\C}=\Omega'$.

  For \cref{it:thick-contract}, let $z\in\Omega_r$ and let $x\in\ol{\Omega}$ be a nearest point, so that $|z-x|=\dist(z,\ol{\Omega})$. The segment from $x$ to $z$ lies in $\Omega_r$, each of its points $w$ satisfying $\dist(w,\ol{\Omega})\le|w-x|\le|z-x|$, and $\fii_i(x)\in\ol{\Omega}$. Integrating $D\fii_i$ along the segment, as in the proof of \cref{lem:tube} but in $E$ with the complex operator norm, gives $\dist(\fii_i(z),\ol{\Omega})\le|\fii_i(z)-\fii_i(x)|\le \lambda|z-x|=\lambda\dist(z,\ol{\Omega})$. If $0<s<r$ and $z\in\ol{\Omega_s}\subseteq\Omega_r$, then $\dist(z,\ol{\Omega})\le s$ and hence $\dist(\fii_i(z),\ol{\Omega})\le \lambda s<s$, which is the inclusion $\fii_i(\ol{\Omega_s})\subseteq\Omega_s$. Applied on $\Omega_r$ itself the estimate shows that every $\fii_i$ maps $\Omega_r$ into itself, so all partial orbits of points of $\Omega_r$ stay in $\Omega_r$ and the chain rule gives $\sup_{z\in\Omega_r}\|Df_\iii(z)\|\le \lambda^{|\iii|}$.

  The real slice $\Omega_r\cap\R^d$ is the open $r$-neighborhood of $\ol{\Omega}$ in $\R^d$, and it lies in $\Omega'$ by the choice of $r$. For $d\ne2$ the extended generators and pre-Schwarzians coincide on it with the original real ones by the identity theorem, the slice being connected, so the extensions send its points to points of $\R^d$; for $d=2$ the slice is $\Omega_r$ itself. With the invariance of $\Omega_r$ just proved, the slice is therefore mapped into itself by every generator, which is \cref{it:thick-slice}.
\end{proof}

For $\iii\in\II^\N$, we denote by $T_\iii$ the uniform limit on $\ol{\Omega}$ of the prefix projections $T_{\iii|_n}$, extending the dual canonical projection to infinite words; the following lemma verifies that this limit exists.

\begin{lemma} \label{lem:dual}
  Let $\Phi=(\fii_i)_{i\in\II}$ be a conformal iterated function system. Then there is a neighborhood $U$ of $\ol{\Omega}$ such that every projection $T_\iii$, for $\iii\in\II^\N\cup\II^*$, extends real-analytically to $U$; when $d=2$, these extensions are holomorphic, so that every $T_\iii$ belongs to $\mathcal{H}(\ol{\Omega})$. For $\iii\in\II^\N$, the prefix projections $T_{\iii|_n}$ converge uniformly on $\ol{\Omega}$. We have
  \begin{equation} \label{eq:dualbounds}
    \|T_\iii-T_\jjj\|\le\frac{2\max_{i \in \II}\|T_{\fii_i}\|\cmax^{|\iii\land\jjj|}}{1-\cmax}
  \end{equation}
  for all distinct $\iii,\jjj\in\II^\N\cup\II^*$. In particular, $\iii\mapsto T_\iii$ is continuous on $\II^\N\cup\II^*$.
\end{lemma}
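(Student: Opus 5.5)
The plan is to work on the complex thickening of \cref{lem:thicken} throughout. Fix $\Omega^{\C}$, $\lambda$, and $r$ as provided there, and set $U=\Omega_r\cap\R^d$ for $d\ne2$ and $U=\Omega_r$ for $d=2$. For a finite word $\iii=i_1\cdots i_n$, I will define the holomorphic extension of $T_\iii$ on $\Omega_r$ by the unfolded sum \cref{eq:Tword}. In the plane the sum is read through \cref{eq:Tline}, and for $d\ne2$ it is taken with the complexified derivatives and the complexified $T_{\fii_{i_k}}$. This is legitimate because every partial orbit $f_{\iii|_{k-1}}(z)$ of a point $z\in\Omega_r$ stays in $\Omega_r$ by \cref{lem:thicken}\cref{it:thick-contract}. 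Each summand is therefore holomorphic on $\Omega_r$, and by \cref{it:thick-slice} it restricts on the real slice to the real summand, so the finite projections extend real-analytically to $U$.

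Next I will bound the $k$-th summand on $\Omega_r$ by $\lambda^{k-1}M$, where $M=\max_i\sup_{\ol{\Omega_r}}|T_{\fii_i}|<\infty$. The supremum is finite because $\ol{\Omega_r}$ is a compact subset of $\Omega^{\C}$ and the extensions are holomorphic there. The factor $\lambda^{k-1}$ comes from $\|Df_{\iii|_{k-1}}\|\le\lambda^{k-1}$ on $\Omega_r$, again by \cref{it:thick-contract}; in the complex setting the transpose has the same operator norm. For an infinite word $\iii$ the series is then dominated by a geometric series uniformly on $\Omega_r$, so the prefix sums $T_{\iii|_n}$ converge uniformly on $\Omega_r$. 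The limit is holomorphic on $\Omega_r$ by Weierstrass's theorem, in several variables componentwise for $d\ge3$, and it is real-valued on the real slice because every prefix sum is. This gives the real-analytic extension to $U$, and holomorphy together with membership in $\mathcal{H}(\ol{\Omega})$ when $d=2$. Uniform convergence on $\ol{\Omega}$ follows since $\ol{\Omega}\subseteq\Omega_r$.

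For \cref{eq:dualbounds} I will work on $\ol{\Omega}$ itself with the sharper constant $\cmax$. Let $m=|\iii\land\jjj|$. The first $m$ summands of \cref{eq:Tword} coincide for $\iii$ and $\jjj$, including the limiting infinite sums, so $T_\iii-T_\jjj$ is the tail from $k=m+1$ for $\iii$ minus the corresponding tail for $\jjj$. On $\ol{\Omega}$, partial orbits stay in $\ol{\Omega}$ and $\|Df_{\iii|_{k-1}}\|\le\cmax^{k-1}$ there. Each tail is therefore at most $\max_i\|T_{\fii_i}\|\sum_{k>m}\cmax^{k-1}=\max_i\|T_{\fii_i}\|\cmax^m/(1-\cmax)$. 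A shorter finite word simply has a shorter or empty tail, which obeys the same bound. Adding the two tails gives the factor $2$.

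For continuity, I note that the ultrametric topology makes $\iii_\ell\to\iii$ mean $|\iii_\ell\land\iii|\to\infty$ when $\iii$ is infinite, and eventual equality when $\iii$ is finite. In the infinite case \cref{eq:dualbounds} gives the conclusion directly; in the finite case there is nothing to prove. I do not expect a serious obstacle. The only delicate point is bookkeeping: the thickened bound uses $\lambda$ and the complex norm, whereas \cref{eq:dualbounds} must use $\cmax$ on the real $\ol{\Omega}$, and in the plane the conjugation convention must be handled so that the summands are holomorphic rather than anti-holomorphic.
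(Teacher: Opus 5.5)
Your proposal is correct and follows essentially the paper's proof: the same thickening from \cref{lem:thicken} with $U=\Omega_r\cap\R^d$ or $U=\Omega_r$, the unfolded sums \cref{eq:Tword} and \cref{eq:Tline} dominated by $\lambda^{k-1}$ on $\Omega_r$ with the Weierstrass convergence theorem for the extensions, and the sharper $\cmax$ tail estimate on $\ol{\Omega}$ for \cref{eq:dualbounds}. Your continuity argument through the ultrametric, with finite words isolated, is only a cosmetic variant of the paper's H\"older remark.
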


\begin{proof}
  Write $\beta_1=\max_{i \in \II}\|T_{\fii_i}\|$. Each $T_{\fii_i}$ is real-analytic on the extension domain $\Omega'$ and hence continuous on the compact set $\ol{\Omega}$, so $\beta_1<\infty$. No distortion theorem is needed, the generators being finitely many fixed maps. Let $\Omega^{\C}$, $\lambda$, and $r$ be as in \cref{lem:thicken}, and denote the holomorphic extensions of the generators and of the pre-Schwarzians by the same symbols as the original maps. For $d=2$ we use the scalar holomorphic pre-Schwarzian $T_{\fii_i}^{\C}=\fii_i''/\fii_i'$, for which $\|T_{\fii_i}^{\C}\|=\|T_{\fii_i}\|$, since $T_{\fii_i}^{\C}=\ol{T_{\fii_i}}$.

  Put $B=\max_{i \in \II}\sup_{z\in\Omega_r}\|T_{\fii_i}(z)\|$, using the scalar holomorphic pre-Schwarzian when $d=2$; this supremum is finite because each $T_{\fii_i}$ is continuous on the compact closure $\ol{\Omega_r}\subseteq\Omega^{\C}$. For each infinite word $\iii$, form on $\Omega_r$ the series obtained from \cref{eq:Tword} with the extensions in place of the real maps, and for $d=2$ form the scalar series obtained from \cref{eq:Tline}; for a finite word $\iii$, use the corresponding truncation at $|\iii|$. Since transpose preserves the Euclidean operator norm, the $k$-th summand has norm at most $B\lambda^{k-1}$ on $\Omega_r$. The infinite series therefore converges normally, and its sum is holomorphic on $\Omega_r$ by the Weierstrass convergence theorem. On $\ol{\Omega}$ its finite partial sums are precisely the projections $T_{\iii|_n}$ by \cref{eq:Tword,eq:Tline}; hence for an infinite word $\iii$ its restriction is the uniform limit denoted by $T_\iii$. Taking $U=\Omega_r\cap\R^d$ when $d\ne2$ and $U=\Omega_r$ when $d=2$ proves the asserted common real-analytic extension and the planar holomorphic extension.

  On $\ol{\Omega}$ the $k$-th summand has the sharper bound $\beta_1\cmax^{k-1}$, so the sums converge absolutely and uniformly. If $\iii\ne\jjj$ and $m=|\iii\land\jjj|$, then the first $m$ summands of $T_\iii$ and $T_\jjj$ agree, while the two tails are together at most $2\beta_1\sum_{k>m}\cmax^{k-1}=2\beta_1\cmax^{m}/(1-\cmax)$, giving \cref{eq:dualbounds}; for identical pairs the difference vanishes. Since the word metric is $2^{-m}$, the bound is a H\"older estimate with exponent $\alpha=\log(1/\cmax)/\log 2>0$. In particular, this proves the prefix continuity.
\end{proof}

So far each projection has figured only as a point of the function space, bounded and prefix-continuous by \cref{lem:dual}. It was produced, however, by iterating the dual maps from the zero field, and the next lemma reads off the dynamical content of that construction. Two facts result: the iteration reaches the same limit $T_\iii$ from any starting field, matching the base-point independence of the canonical projection $\pi$, while concatenating words composes the dual maps, so a shared prefix of two projections peels off through one word-level operator, the identity used throughout the separation proofs. The same construction leaves the set of projections compact, and the paragraph after the lemma reads this compactness as the invariance of a dual attractor.

\begin{lemma} \label{lem:dual-peel}
  Let $\Phi=(\fii_i)_{i\in\II}$ be a conformal iterated function system. Then $F_{\iii|_n}h\to T_\iii$ uniformly on $\ol{\Omega}$ for all $\iii\in\II^\N$ and every starting point $h$ in the space carrying the dual system. The peeling identity
  \begin{equation} \label{eq:peeling}
    T_{\kkk\lll} = F_\kkk T_{\lll} = \LL^{(1)}_\kkk T_{\lll}+T_\kkk
  \end{equation}
  holds for all $\kkk\in\II^*$ and $\lll\in\II^\N\cup\II^*$. The set $\{T_\iii : \iii\in\II^\N\}$ is compact in the supremum norm.
\end{lemma}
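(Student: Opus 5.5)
The plan is to derive all three claims from the affine structure of the dual maps: each $F_\kkk$ has linear part $\LL^{(1)}_\kkk$ and translation $T_\kkk$. We combine this with the contraction bound $\|\LL^{(1)}_\kkk h\|\le\cmax^{|\kkk|}\|h\|$ and the prefix limits of \cref{lem:dual}. The contraction bound follows from \cref{eq:wordops}: $(Df_\kkk)^{\top}$ is $\|Df_\kkk\|$ times an orthogonal matrix, with $\|Df_\kkk\|\le\cmax^{|\kkk|}$ on $\ol{\Omega}$, and $f_\kkk$ maps $\ol{\Omega}$ into itself. In the plane the same holds with the multiplication by $f_\kkk'$.

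For base-point independence, we fix $\iii\in\II^\N$ and a starting field $h$ in the space carrying the dual system. Since $F_{\iii|_n}$ is affine with translation $F_{\iii|_n}0=T_{\iii|_n}$, we have
\begin{equation*}
  F_{\iii|_n}h-T_{\iii|_n}=\LL^{(1)}_{\iii|_n}h, \qquad \text{so} \qquad \|F_{\iii|_n}h-T_{\iii|_n}\|\le\cmax^n\|h\|\to0 .
\end{equation*}
By \cref{lem:dual}, $T_{\iii|_n}\to T_\iii$ uniformly, and the claim follows from the triangle inequality.

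For the peeling identity, we first take $\lll$ finite. Then $T_{\kkk\lll}=F_{\kkk\lll}0=F_\kkk(F_\lll0)=F_\kkk T_\lll$, because the composition of dual maps along a concatenated word is the composition of the word maps. Affineness gives $F_\kkk T_\lll=\LL^{(1)}_\kkk T_\lll+T_\kkk$. For infinite $\lll$, we apply the finite case to the prefixes $\lll|_n$, which gives $T_{\kkk(\lll|_n)}=F_\kkk T_{\lll|_n}$. Since $(\kkk\lll)|_{|\kkk|+n}=\kkk(\lll|_n)$, the left side converges uniformly to $T_{\kkk\lll}$ by \cref{lem:dual}. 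The right side converges to $F_\kkk T_\lll$ because $F_\kkk$ is $\cmax^{|\kkk|}$-Lipschitz. For $d=2$ the identical argument runs in $\mathcal{H}(\ol{\Omega})$ with the conjugated maps \cref{eq:dualdef2}.

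For compactness, we use that $\II^\N$ is compact in the word topology and that $\iii\mapsto T_\iii$ is continuous by the Hölder bound \cref{eq:dualbounds}. The set $\{T_\iii:\iii\in\II^\N\}$ is then the continuous image of a compact set, hence compact in the supremum norm. The only point needing care is that the limits $T_\iii$ lie in the ambient Banach space, which holds since that space is complete and, for $d=2$, holomorphy survives uniform limits. I expect no step to be a genuine obstacle; the most delicate point is the passage to infinite $\lll$, handled above through the Lipschitz continuity of $F_\kkk$.
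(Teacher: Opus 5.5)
Your proposal is correct and follows essentially the same route as the paper. Base-point independence comes from the affine structure of the dual maps with a $\cmax^n$-contracting linear part, the peeling identity is definitional for finite words and passes to infinite $\lll$ through the $\cmax^{|\kkk|}$-Lipschitz bound of $F_\kkk$, and compactness holds because the set is the continuous image of the compact $\II^\N$ under the map of \cref{lem:dual}.
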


\begin{proof}
  Differences of dual orbits forget the translations: iterating $F_ih-F_ih'=\LL^{(1)}_i(h-h')$ along a prefix gives $F_{\iii|_n}h-F_{\iii|_n}h'=\LL^{(1)}_{\iii|_n}(h-h')$, of norm at most $\cmax^{n}\|h-h'\|$, and taking $h'=0$, where $F_{\iii|_n}0=T_{\iii|_n}$, shows $F_{\iii|_n}h\to T_\iii$ uniformly on $\ol{\Omega}$ for every starting point $h$. The peeling identity is definitional for finite $\lll$: concatenation of words is composition of dual maps, $F_{\kkk\lll}=F_\kkk\circ F_\lll$, so $T_{\kkk\lll}=F_{\kkk\lll}0=F_\kkk T_{\lll}=\LL^{(1)}_\kkk T_{\lll}+T_\kkk$, the last equality by \cref{eq:wordops} and $F_\kkk0=T_\kkk$; for infinite $\lll$, apply the finite identity to $\lll|_n$. By \cref{lem:dual}, $T_{\lll|_n}\to T_\lll$ and $T_{\kkk(\lll|_n)}\to T_{\kkk\lll}$ uniformly on $\ol{\Omega}$, while $\|F_\kkk u-F_\kkk v\|\le\cmax^{|\kkk|}\|u-v\|$ implies $F_\kkk T_{\lll|_n}\to F_\kkk T_\lll$ uniformly. Passing to the limit proves \cref{eq:peeling}.

  Finally, the set $\{T_\iii : \iii\in\II^\N\}$ is compact as the image of the compact $\II^\N$ under the continuous $\iii\mapsto T_\iii$ of \cref{lem:dual}.
\end{proof}

We call the compact set $X^*=\{T_\iii : \iii\in\II^\N\}$ the \emph{dual attractor} of $\Phi$. The case $\kkk=i$ of \cref{eq:peeling} reads $F_iT_{\lll}=T_{i\lll}$, whence 
\begin{equation*}
  X^*=\bigcup_{i}F_iX^*;
\end{equation*}
since each $F_i$ is an affine $\cmax$-contraction of a complete space, the theorem of Hutchinson \cite{Hutchinson} makes $X^*$ the unique nonempty compact set with this property.

The Schwarzian projections satisfy the same metric and cocycle properties one weight up, with the multiplier squared, and the analogous dual attractor is assembled after the proof. For $\iii\in\II^\N$, we denote by $S_\iii$ the uniform limit on $\ol{\Omega}$ of the prefix projections $S_{\iii|_n}$, extending the dual canonical projection to infinite words; the following lemma verifies that this limit exists.

\begin{lemma} \label{lem:dual2}
  Let $\Phi=(\fii_i)_{i\in\II}$ be a conformal iterated function system. Then each finite-word projection $S_\iii$ extends real-analytically to a neighborhood of $\ol{\Omega}$, and for $\iii\in\II^\N$ the prefix projections $S_{\iii|_n}$ converge uniformly on $\ol{\Omega}$. When $d\in\{1,2\}$, every projection satisfies $S_\iii=T_\iii'-\tfrac12T_\iii^2$ on $\Omega$; when $d=2$, every projection also belongs to $\mathcal{H}(\ol{\Omega})$. We have
  \begin{equation} \label{eq:dualbounds2}
    \|S_\iii-S_\jjj\|\le\frac{2\max_{i \in \II}\|S_{\fii_i}\|\cmax^{2|\iii\land\jjj|}}{1-\cmax^2}
  \end{equation}
  for all distinct $\iii,\jjj\in\II^\N\cup\II^*$. In particular, $\iii\mapsto S_\iii$ is continuous on $\II^\N\cup\II^*$. The peeling identity
  \begin{equation} \label{eq:peeling2}
    S_{\kkk\lll} = G_\kkk S_{\lll} = \LL^{(2)}_\kkk S_{\lll}+S_\kkk
  \end{equation}
  holds for all $\kkk\in\II^*$ and $\lll\in\II^\N\cup\II^*$.
\end{lemma}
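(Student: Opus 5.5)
The plan follows the proofs of \cref{lem:dual,lem:dual-peel} with the multiplier squared. First, for a finite word $\iii$, I will obtain the real-analytic extension of $S_\iii=S_{f_\iii}$ directly. The composition $f_\iii$ extends to the real slice $U=\Omega_r\cap\R^d$ of \cref{lem:thicken}, which it maps into itself, so it is a real-analytic conformal map on a neighborhood of $\ol{\Omega}$. Its Schwarzian, a rational expression in derivatives of $\log\lambda_{f_\iii}$ with $\lambda_{f_\iii}>0$, is therefore real-analytic there. In the plane I will argue instead with the scalar $S^{\C}_{f_\iii}$, which is holomorphic on $\Omega_r$.

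For infinite words I will use the unfolded sum \cref{eq:Sword}, or \cref{eq:Sline} when $d\in\{1,2\}$. The operator norm of $(Df_{\iii|_{k-1}})^{\top}S(Df_{\iii|_{k-1}})$ equals $\lambda_{f_{\iii|_{k-1}}}^2\|S\|$, since conjugation by the orthogonal part preserves norms. On $\ol{\Omega}$ the $k$-th summand is therefore bounded by $\beta_2\cmax^{2(k-1)}$, where $\beta_2=\max_i\|S_{\fii_i}\|$, and the partial sums converge uniformly.

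The estimate \cref{eq:dualbounds2} then follows as before. The first $m=|\iii\land\jjj|$ summands of the two sums coincide, and the two tails together contribute at most $2\beta_2\sum_{k>m}\cmax^{2(k-1)}=2\beta_2\cmax^{2m}/(1-\cmax^2)$. Continuity of $\iii\mapsto S_\iii$ follows from this bound.

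The peeling identity \cref{eq:peeling2} holds by definition for finite $\lll$, because $G_{\kkk\lll}=G_\kkk\circ G_\lll$ and \cref{eq:wordops} gives $G_\kkk H=\LL^{(2)}_\kkk H+S_\kkk$. For infinite $\lll$ I pass to the limit along $\lll|_n$, using that $G_\kkk$ is a $\cmax^{2|\kkk|}$-Lipschitz map together with the uniform convergence just established.

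When $d\in\{1,2\}$, the identity $S_\iii=T_\iii'-\tfrac12T_\iii^2$ holds for finite words by \cref{lem:scalar} and the classical one-dimensional formula, since $T_\iii=T^{\C}_{f_\iii}$ and $S_\iii=S^{\C}_{f_\iii}$ in the plane and $T_\iii=f_\iii''/f_\iii'$ on the line.

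The step I expect to be the main obstacle is passing this identity to infinite words, because it requires convergence of the derivatives $T_{\iii|_n}'$. I will get this from \cref{lem:dual}: the partial sums $T_{\iii|_n}$ converge uniformly on the complex neighborhood $\Omega_r$ to a holomorphic function. By the Weierstrass convergence theorem, or a Cauchy estimate on $\Omega_{r/2}\supseteq\ol{\Omega}$, their derivatives converge uniformly on $\ol{\Omega}$ to $T_\iii'$. Hence $T_{\iii|_n}'-\tfrac12T_{\iii|_n}^2\to T_\iii'-\tfrac12T_\iii^2$. The left-hand side equals $S_{\iii|_n}$, which converges to $S_\iii$, so the identity holds in the limit.

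The same argument gives membership of $S_\iii$ in $\mathcal{H}(\ol{\Omega})$ when $d=2$. Alternatively, this follows from the fact that $\mathcal{H}(\ol{\Omega})$ is closed and each $G_i$ preserves it, as recorded after \cref{eq:dualdef2}.
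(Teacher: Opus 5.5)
Your proposal is correct and follows the paper's proof essentially step for step. The geometric bound $\beta_2\cmax^{2(k-1)}$ on the summands of \cref{eq:Sword} gives the convergence and \cref{eq:dualbounds2}, the peeling identity passes to infinite $\lll$ through the $\cmax^{2|\kkk|}$-Lipschitz bound on $G_\kkk$, and the identity $S_\iii=T_\iii'-\tfrac12T_\iii^2$ reaches infinite words via the Weierstrass convergence theorem on the complex neighborhood of \cref{lem:thicken}, exactly as in the paper; your alternative derivation of $S_\iii\in\mathcal H(\ol{\Omega})$ from the closedness of $\mathcal H(\ol{\Omega})$ under uniform limits is also valid.
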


\begin{proof}
  Write $\beta_2=\max_{i \in \II}\|S_{\fii_i}\|$. The metric part of the proof is that of \cref{lem:dual} under the replacements $T\mapsto S$, $F\mapsto G$, and $\LL^{(1)}\mapsto\LL^{(2)}$, with the multiplier squared and $\max_{i \in \II}\|T_{\fii_i}\|$ replaced by $\beta_2$, and the peeling identity follows as in \cref{lem:dual-peel} under the same replacements.

  Each $S_{\fii_i}$ is real-analytic on $\Omega'$ and continuous on the compact $\ol{\Omega}$, so $\beta_2<\infty$, and on the invariant real slice of \cref{lem:thicken}\cref{it:thick-slice} each finite-word $S_\iii$ inherits the regularity of $f_\iii$. For $d\in\{1,2\}$ it satisfies $S_\iii=T_\iii'-\tfrac12T_\iii^2$ by the scalar formula of \cref{sec:schwarzian}; for $d=2$ it is holomorphic there.

  Unfolded along the word, $S_\iii$ is the sum \cref{eq:Sword}, for $d=2$ the sum \cref{eq:Sline}, whose $k$-th summand is bounded on $\ol{\Omega}$ by $\|Df_{\iii|_{k-1}}\|^{2}\beta_2\le\beta_2\cmax^{2(k-1)}$, the identification of \cref{lem:scalar} preserving all supremum norms; the geometric series of ratio $\cmax^2$ then yields the absolute and uniform convergence. For distinct $\iii,\jjj\in\II^\N\cup\II^*$, put $m=|\iii\land\jjj|$. The first $m$ summands agree, while the two tails have total norm at most $2\beta_2\sum_{k>m}\cmax^{2(k-1)}=2\beta_2\cmax^{2m}/(1-\cmax^2)$, giving \cref{eq:dualbounds2}; for identical pairs the difference vanishes. Since the word metric is $2^{-m}$, this is a H\"older estimate with exponent $2\log(1/\cmax)/\log 2>0$, so $\iii\mapsto S_\iii$ is continuous on $\II^\N\cup\II^*$.

  Let $d\in\{1,2\}$ and $\iii\in\II^\N$. The proof of \cref{lem:dual} shows that on the complex neighborhood of \cref{lem:thicken} the holomorphic extensions of $T_{\iii|_n}$ converge normally to the extension of $T_\iii$. Hence $T_{\iii|_n}'\to T_\iii'$ locally uniformly there by the Weierstrass convergence theorem \cite[Theorem 5.1]{Ahlfors}. Since $S_{\iii|_n}\to S_\iii$ uniformly on $\ol{\Omega}$ by the first part of the present proof, the finite-word identities $S_{\iii|_n}=T_{\iii|_n}'-\tfrac12T_{\iii|_n}^2$ pass to the limit on $\Omega$. When $d=2$, the same normal-convergence argument shows that $S_\iii$ is holomorphic on $\Omega$ and continuous on $\ol{\Omega}$, so $S_\iii\in\mathcal H(\ol{\Omega})$.

  The peeling identity \cref{eq:peeling2} is definitional for finite $\lll$: indeed, $S_{\kkk\lll}=G_{\kkk\lll}0=G_\kkk S_{\lll}=\LL^{(2)}_\kkk S_{\lll}+S_\kkk$ by \cref{eq:wordops} and $G_\kkk0=S_\kkk$. For infinite $\lll$, apply the finite identity to $\lll|_n$. By the uniform convergence above, $S_{\lll|_n}\to S_\lll$ and $S_{\kkk(\lll|_n)}\to S_{\kkk\lll}$ uniformly on $\ol{\Omega}$ as $n\to\infty$, while $\|G_\kkk H-G_\kkk H'\|\le\cmax^{2|\kkk|}\|H-H'\|$; passing to the limit proves \cref{eq:peeling2}.
\end{proof}

We call the set $X^{*2}=\{S_\iii : \iii\in\II^\N\}$ the \emph{weight-two dual attractor} of $\Phi$; it is compact, being the image of the compact $\II^\N$ under the continuous $\iii\mapsto S_\iii$ of \cref{lem:dual2}. The case $\kkk=i$ of \cref{eq:peeling2} reads $G_iS_{\lll}=S_{i\lll}$, whence
\begin{equation*}
  X^{*2}=\bigcup_{i}G_iX^{*2};
\end{equation*}
since each $G_i$ is an affine $\cmax^2$-contraction of a complete space, the theorem of Hutchinson \cite{Hutchinson} makes $X^{*2}$ the unique nonempty compact set with this property, exactly as at weight one.

The two attractors are related through the map $h\mapsto h'-\tfrac12h^2$ by which \cref{eq:Sdef} builds the Schwarzian from the pre-Schwarzian. When $d\in\{1,2\}$, the identity $S_\iii=T_\iii'-\tfrac12T_\iii^2$ of \cref{lem:dual2} realizes $X^{*2}$ as the image of $X^*$ under this map, each $S_\iii$ being the continuous extension to $\ol{\Omega}$ of $T_\iii'-\tfrac12T_\iii^2$. The correspondence is dynamical and not merely pointwise: writing $F_ih=\fii_i'(h\circ\fii_i)+T_i$ in the scalar reading of \cref{eq:dualdef,eq:dualdef2}, a direct computation gives
\begin{equation*}
  (F_ih)'-\tfrac12(F_ih)^2 = \LL^{(2)}_i(h'-\tfrac12h^2)+S_i = G_i(h'-\tfrac12h^2)
\end{equation*}
on $\Omega$ for every differentiable $h$, holomorphic when $d=2$, the cross terms canceling through $T_i\fii_i'=\fii_i''$, so the map intertwines the dual systems and exhibits $\Phi^{*2}$ as a factor of $\Phi^*$. The kernels of \cref{lem:kernel} locate the information each system retains: a coincidence $T_\iii=T_\jjj$ of finite words forces $f_\iii=A\circ f_\jjj$ for a similarity $A$, while $S_\iii=S_\jjj$ forces only $f_\iii=M\circ f_\jjj$ for a M\"obius transformation $M$, so $\Phi^{*2}$ records the reversed compositions modulo M\"obius maps as $\Phi^*$ records them modulo similarities, and $X^{*2}$ is the quotient of $X^*$ by exactly the M\"obius relations that the Schwarzian cannot see. For $d\ge3$ the relation trivializes: every summand of \cref{eq:Sword} vanishes, as recorded after \cref{eq:Sline}, so $X^{*2}=\{0\}$ carries no information, in accordance with \cref{rem:S3}. We do not take up here the systems for which either dual attractor collapses to a singleton.

\subsection{Reduction to a coincidence of limit projections} \label{sec:reduction}

The two dual systems record the reversed compositions exactly modulo their kernels: as observed after \cref{lem:dual2}, a coincidence $T_\iii=T_\jjj$ of finite words forces $f_\iii=A\circ f_\jjj$ for a similarity $A$, and a coincidence $S_\iii=S_\jjj$ forces $f_\iii=M\circ f_\jjj$ for a M\"obius transformation $M$. The separation hypotheses of \cref{thm:main}, \cref{eq:mainhyp} at weight one and \cref{eq:mainhyp2} at weight two, ask that no two distinct equal-length words, finite or infinite, share their dual projection. For finite words this forbids, in particular, the exact overlaps $f_\iii=f_\jjj$, and at weight two also the M\"obius overlaps $f_\iii=M\circ f_\jjj$; for infinite words it says that the dual canonical projection $\iii\mapsto T_\iii$, respectively $\iii\mapsto S_\iii$, is injective on $\II^\N$, so that $X^*$, respectively $X^{*2}$, is a homeomorphic copy of the symbolic space $\II^\N$. \Cref{thm:main} converts these pointwise separations of the dual projections into the strong exponential separation condition, plain at weight one and modulo M\"obius maps at weight two.

\Cref{thm:main} is proved in the next three sections: the proof of \Cref{thm:main}\cref{it:main-pre} is in \cref{sec:C2-pre} for the plane, in \cref{sec:C1-pre} for the line, and in \cref{sec:C3-pre} for the dimensions $d\ge3$; \cref{thm:main}\cref{it:main-schw} is proved in \cref{sec:C2-mob} for the plane and in \cref{sec:C1-mob} for the line. The hypotheses \cref{eq:mainhyp,eq:mainhyp2} are quantified over finite and infinite equal-length pairs alike and, reversal being a bijection of each $\II^n$, their finite-word parts are reversal invariant. The plane and the higher dimensions run by different tools on one \emph{common strategy}: weak super-exponential condensation of the maps is differentiated into super-exponential closeness of dual projections, the common prefix of the two words is peeled off by \cref{eq:peeling} at an exponential cost that the super-exponential smallness absorbs, and compactness of $\II^\N\cup\II^*$ together with \cref{lem:dual} produces distinct equal-length limit words $\ppp$ and $\qqq$ whose dual projections coincide where the estimates reach, through all derivatives at a single point in the plane and through the value at a single point in higher dimensions. Upgrading this to the full coincidence $T_{\ppp}\equiv T_{\qqq}$, contradicting \cref{eq:mainhyp}, is where the two regimes take their separate tools: in the plane an identity-theorem step uses the holomorphy supplied by \cref{lem:dual}, and for $d\ge3$ the M\"obius pole structure recovers the pole, hence the whole projection, from that single value. At weight two, where the higher dimensions contribute nothing, the plane runs the same strategy with the Schwarzian in place of the pre-Schwarzian, the condensation now modulo M\"obius maps and \cref{eq:peeling2} and \cref{lem:dual2} standing in for \cref{eq:peeling} and \cref{lem:dual}. The line rides on the plane at both weights: real-analyticity lifts the system to a complex neighborhood of $\ol{\Omega}$, the planar case applies there, and the two-constants theorem carries the separation back down to the interval.

\subsection{The one-point gap criterion} \label{sec:gencrit}

The hypotheses of \cref{thm:main} quantify over all equal-length pairs of words, finite and infinite alike, and are not checkable as they stand. This subsection replaces them by a condition on the generators alone, which is what makes \cref{thm:main} applicable to a concrete system; it is used in \cref{ex:C2} and again, through \cref{lem:gap}, in \cref{sec:gaps}.

The hypotheses \cref{eq:mainhyp,eq:mainhyp2} carry checkable content: a gap between the projections of the generators at a single point, large against the geometric tail of the cocycle, already forces the separation of all projections, in every dimension at weight one and for $d\in\{1,2\}$ at weight two. We call this the \emph{one-point gap criterion}; on the line and at weight one it is \cite[Proposition~1.8]{BaranyKolossvaryTroscheit}.

\begin{proposition} \label{prop:gencrit}
  Let $\Phi=(\fii_i)_{i\in\II}$ be a conformal iterated function system.
  \begin{enumerate}
    \item\label{it:gencrit-pre} If there exists $\alpha>\frac{2\cmax}{1-\cmax}\max_{i \in \II}\|T_{\fii_i}\|$ such that for all $i\ne j$ in $\II$ there is $x_{i,j}\in\ol{\Omega}$ with $|T_i(x_{i,j})-T_j(x_{i,j})|\ge\alpha$, then
    \begin{equation*}
      \sup_{x\in\ol{\Omega}}|T_\iii(x)-T_\jjj(x)|>0
    \end{equation*}
    for all distinct $\iii,\jjj\in\II^\N\cup\II^*$ with $|\iii|=|\jjj|$.
    \item\label{it:gencrit-schw} If $d\in\{1,2\}$ and there exists $\alpha>\frac{2\cmax^2}{1-\cmax^2}\max_{i \in \II}\|S_{\fii_i}\|$ such that for all $i\ne j$ in $\II$ there is $x_{i,j}\in\ol{\Omega}$ with $|S_i(x_{i,j})-S_j(x_{i,j})|\ge\alpha$, then
    \begin{equation*}
      \sup_{x\in\ol{\Omega}}|S_\iii(x)-S_\jjj(x)|>0
    \end{equation*}
    for all distinct $\iii,\jjj\in\II^\N\cup\II^*$ with $|\iii|=|\jjj|$.
  \end{enumerate}
\end{proposition}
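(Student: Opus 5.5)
The plan is to peel off the common prefix and reduce to a pair of words with distinct first letters, where the one-point gap does the work. Let $\iii,\jjj$ be distinct with $|\iii|=|\jjj|$. Put $\kkk=\iii\land\jjj$, a finite word. Since the words are distinct and of equal length, $\iii=\kkk i\uuu$ and $\jjj=\kkk j\vvv$ with letters $i\ne j$ and tails $\uuu,\vvv\in\II^\N\cup\II^*$; the tails may be empty, and are both infinite when $\iii,\jjj$ are. By the peeling identity \cref{eq:peeling} the translation $T_\kkk$ cancels, giving
\begin{equation*}
  T_\iii-T_\jjj=\LL^{(1)}_\kkk\bigl(T_{i\uuu}-T_{j\vvv}\bigr)=(Df_\kkk)^{\top}\bigl((T_{i\uuu}-T_{j\vvv})\circ f_\kkk\bigr),
\end{equation*}
using \cref{eq:wordops}, with multiplication by $f_\kkk'$ in the plane.

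First step: show that $h=T_{i\uuu}-T_{j\vvv}$ is not identically zero on $\ol{\Omega}$. Peel one more letter with \cref{eq:peeling}, so that $T_{i\uuu}=\LL^{(1)}_iT_\uuu+T_{\fii_i}$ and similarly for $j\vvv$. Evaluate at $x=x_{i,j}$ and use $\|\LL^{(1)}_i g\|\le\cmax\|g\|$. The bound $\|T_\uuu\|\le\max_{l}\|T_{\fii_l}\|/(1-\cmax)$ comes from the summed series in the proof of \cref{lem:dual}, or equivalently from \cref{eq:dualbounds} with $\jjj=\varnothing$ and $T_\varnothing=0$. Together these give
\begin{equation*}
  |h(x_{i,j})|\ge\alpha-\frac{2\cmax}{1-\cmax}\max_{l\in\II}\|T_{\fii_l}\|>0.
\end{equation*}

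Second step: transport this nonvanishing through $\LL^{(1)}_\kkk$. The multiplier $(Df_\kkk)^{\top}$ is invertible everywhere, so it suffices that $h\circ f_\kkk$ does not vanish identically on $\ol{\Omega}$. This is where I would use analyticity, and I expect it to be the only delicate point. By \cref{lem:dual}, $h$ is real-analytic on $\Omega$, and holomorphic when $d=2$. The map $f_\kkk$ is conformal, hence a local diffeomorphism, so $f_\kkk(\Omega)$ is a nonempty open subset of $\Omega$. If $h\circ f_\kkk\equiv0$, then $h$ vanishes on this open set. The identity principle on the connected domain $\Omega$ then gives $h\equiv0$ on $\Omega$, and by continuity on $\ol{\Omega}$, contradicting the first step. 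Hence $\sup_{\ol{\Omega}}|T_\iii-T_\jjj|>0$. For $\kkk=\varnothing$ the second step is vacuous.

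Part \cref{it:gencrit-schw} runs the same two steps with $S$, $G$, $\LL^{(2)}$ and \cref{eq:peeling2} in place of $T$, $F$, $\LL^{(1)}$ and \cref{eq:peeling}. The multiplier $(Df_\kkk)^{\top}(\,\cdot\,)Df_\kkk$, or $(f_\kkk')^2$ in scalar form, is again invertible, and the contraction ratio is $\cmax^2$, which yields exactly the threshold $2\cmax^2\max_l\|S_{\fii_l}\|/(1-\cmax^2)$ in the hypothesis. The needed analyticity comes from \cref{lem:dual2}. For $d\in\{1,2\}$ it gives $S_\iii=T_\iii'-\tfrac12T_\iii^2$ on $\Omega$, which is real-analytic on the line by \cref{lem:dual}, and it places $S_\iii$ in $\mathcal H(\ol{\Omega})$ in the plane. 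With this in hand the identity-principle step goes through unchanged.
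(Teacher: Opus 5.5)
Your proof is correct and is essentially the paper's: settle the case of distinct first letters by the one-point estimate, peel the common prefix $\kkk$ with \cref{eq:peeling} (respectively \cref{eq:peeling2}), invert the multiplier, and apply the identity principle on the open set $f_\kkk(\Omega)$. Your one-letter peel with $\|\LL^{(1)}_i g\|\le\cmax\|g\|$ is the same estimate as the paper's termwise bound on the tail of the cocycle sum. One correction: the bound $\|T_\uuu\|\le\max_l\|T_{\fii_l}\|/(1-\cmax)$ must come from summing the series, because \cref{eq:dualbounds} with $\jjj=\varnothing$ gives only $2\max_l\|T_{\fii_l}\|/(1-\cmax)$, which would double the threshold and so not reach the stated hypothesis (likewise for $S$ and \cref{eq:dualbounds2}).
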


\begin{proof}
  Let $P$ denote either $T$, in any dimension, or $S$, in dimensions one and two, and let $\beta$ and $w$ be $\max_{i \in \II}\|T_{\fii_i}\|$ and $1$ in the first case and $\max_{i \in \II}\|S_{\fii_i}\|$ and $2$ in the second, so that the hypothesis reads $\alpha>2\beta\cmax^w/(1-\cmax^w)$. First, let $\iii,\jjj\in\II^\N\cup\II^*$ have equal length and $i_1 \ne j_1$. The first summand of \cref{eq:Tword}, respectively \cref{eq:Sword}, in the plane of \cref{eq:Tline}, respectively \cref{eq:Sline}, is $P_{i_1}$, the empty composition being the identity, and the $k$-th summand is bounded on $\ol{\Omega}$ by $\beta\cmax^{w(k-1)}$, as in the proof of \cref{lem:dual}, respectively \cref{lem:dual2}, so at $x_0=x_{i_1,j_1}$,
  \begin{equation} \label{eq:gencritbound}
    |P_\iii(x_0)-P_\jjj(x_0)| \ge |P_{i_1}(x_0)-P_{j_1}(x_0)|-2\beta\sum_{k\ge2}\cmax^{w(k-1)} \ge \alpha-\frac{2\beta\cmax^{w}}{1-\cmax^{w}} > 0,
  \end{equation}
  and $P_\iii\not\equiv P_\jjj$ on $\ol{\Omega}$. For general distinct $\iii,\jjj$ of equal length, let $\kkk=\iii\land\jjj$ and $m=|\kkk|$, so that $\sigma^m\iii$ and $\sigma^m\jjj$ have equal length and distinct first letters, and $B=P_{\sigma^m\iii}-P_{\sigma^m\jjj}$ is real-analytic on $\Omega$ and continuous on $\ol{\Omega}$, by \cref{lem:dual} at weight one and, through the identity $S_\lll=T_\lll'-\tfrac12T_\lll^2$ of \cref{lem:dual2}, at weight two, and does not vanish identically by the previous case. The peeling identity \cref{eq:peeling}, respectively \cref{eq:peeling2}, gives $P_\iii-P_\jjj=\LL^{(w)}_\kkk B$ with $\LL^{(w)}_\kkk$ as in \cref{eq:wordops}. Were this identically zero on $\ol{\Omega}$, the invertibility of $Df_\kkk$ would imply that $B$ vanishes on $f_\kkk(\Omega)$. Since $f_\kkk$ is a local diffeomorphism, this is a nonempty open subset of $\Omega$. The identity principle for real-analytic functions, applied componentwise, would therefore give $B\equiv0$ on $\Omega$ and hence on $\ol{\Omega}$ by continuity, a contradiction. Hence $\|P_\iii-P_\jjj\|>0$, which is \cref{eq:mainhyp} at weight one and \cref{eq:mainhyp2} at weight two.
\end{proof}

By \cref{thm:main}, a system meeting \cref{prop:gencrit}\cref{it:gencrit-pre} therefore satisfies the strong exponential separation condition, and one meeting \cref{prop:gencrit}\cref{it:gencrit-schw} satisfies it modulo M\"obius maps; in \cref{ex:C2}, we will exhibit a planar system meeting \cref{prop:gencrit}\cref{it:gencrit-schw}. The estimate \cref{eq:gencritbound} is uniform over the pairs with distinct first letters, which is the form in which \cref{sec:gaps} uses it. The criterion is checkable at a price in the contraction range, and the threshold in its hypothesis fixes that range exactly.

\begin{remark} \label{rem:gencrit-range}
  Let $P$, $\beta$, and $w$ be as in the proof above, so that $\beta=\max_{i \in \II}\|P_{\fii_i}\|$ and the hypothesis of \cref{prop:gencrit} reads $\alpha>2\beta\cmax^w/(1-\cmax^w)$; \cref{eq:frev} identifies the one-letter reversed compositions with the generators. The criterion keeps the first summand of the cocycle and bounds the remaining terms by the geometric estimate of \cref{eq:gencritbound}. The same $\beta$ controls both parts: the triangle inequality gives $\|P_{\fii_i}-P_{\fii_j}\|\le2\beta$, while the two tails cost $2\beta\cmax^w/(1-\cmax^w)$. If $\beta=0$, then every $P_{\fii_i}$ vanishes identically and no $\alpha$ meets the hypothesis; by \cref{lem:kernel} this is the case in which every generator is a similarity at weight one and every generator is M\"obius at weight two. Assume henceforth that $\beta>0$ and put
  \begin{equation*}
    \kappa = \frac{1}{\beta}\min_{i \ne j}\|P_{\fii_i}-P_{\fii_j}\|,
  \end{equation*}
  a quantity in $[0,2]$. The hypothesis of \cref{prop:gencrit} is satisfiable if and only if
  \begin{equation} \label{eq:gencritrange}
    \cmax^w < \frac{\kappa}{2+\kappa}.
  \end{equation}
  Indeed, a system meeting the hypothesis has $\kappa\beta\ge\alpha>2\beta\cmax^w/(1-\cmax^w)$. Conversely, compactness of $\ol{\Omega}$ makes every norm in the definition of $\kappa$ attained, so any $\alpha$ strictly between $2\beta\cmax^w/(1-\cmax^w)$ and $\kappa\beta$ is witnessed at some point of $\ol{\Omega}$ for each pair. Since $\kappa\le2$, the weight-one criterion is unavailable unless $\cmax<\tfrac12$ in every dimension, and the weight-two criterion is unavailable unless $\cmax<2^{-1/2}$ in dimensions one and two. These necessary bounds hold for every $N\ge2$ and every domain. The bound \cref{eq:gencritrange} decides whether the hypothesis can hold for a single system and involves neither $d$ nor $N$. By contrast, when $d\ge3$, the threshold $\cmax<N^{-2/d}$ enters the system-space approximation statement of \cref{thm:genericity}\cref{it:gen-higher}, together with the boundedness of the generators on $\Omega'$ and the quantitative hypothesis \cref{eq:genhigher}.

  The range narrows when a generator lies in the kernel of the operator. If exactly one generator $\fii_{i_0}$ is a similarity, then $T_{\fii_{i_0}}\equiv0$ by \cref{lem:kernel}\cref{it:tkernel}, and hence $\|T_{\fii_{i_0}}-T_{\fii_j}\|=\|T_{\fii_j}\|\le\beta$ for every $j\ne i_0$. Thus $\kappa\le1$, and \cref{eq:gencritrange} forces $\cmax<\tfrac13$. If at least two generators are similarities, then either $\beta=0$, as treated above, or $\kappa=0$; in either case \cref{prop:gencrit}\cref{it:gencrit-pre} is unavailable whatever the contraction. At weight two the same argument, with \cref{lem:kernel}\cref{it:skernel} in place of \cref{lem:kernel}\cref{it:tkernel}, replaces similarities by M\"obius generators and gives $\cmax<3^{-1/2}$ when exactly one generator is M\"obius and unavailability when at least two are M\"obius.

  The ceilings $\tfrac12$, $\tfrac13$, $2^{-1/2}$, and $3^{-1/2}$ are sharp. On $\Omega=(0,1)$ with extension domain $\Omega'=\R$ and $\II=\{1,2\}$, the maps
  \begin{equation*}
    \fii_1(x) = \tfrac{1}{20}+\lambda(e^{x-1}-e^{-1}) \qquad \text{and} \qquad \fii_2(x) = \tfrac13+\lambda(1-e^{-x})
  \end{equation*}
  send $\ol{\Omega}$ into $\Omega$ and have derivatives $\fii_1'(x)=\lambda e^{x-1}$ and $\fii_2'(x)=\lambda e^{-x}$, so they form a conformal iterated function system with $\cmax=\lambda$ for every $\lambda\in(0,1)$. Their pre-Schwarzians are the constants $T_{\fii_1}\equiv1$ and $T_{\fii_2}\equiv-1$, so $\beta=1$ and $\kappa=2$, and \cref{eq:gencritrange} makes \cref{prop:gencrit}\cref{it:gencrit-pre} available exactly for $\lambda<\tfrac12$. Replacing $\fii_1$ by the similarity $s_\lambda(x)=(1-\lambda)/2+\lambda x$, which maps $\ol{\Omega}$ into $\Omega$ for every $\lambda\in(0,1)$, leaves $\cmax=\lambda$ and $\beta=1$ but gives $\kappa=1$, so the weight-one criterion is available exactly for $\lambda<\tfrac13$. At weight two, on the same $\Omega$ with $\Omega'=(-1,\tfrac32)$, the maps $\psi_1(x)=\tfrac{1}{50}+\lambda\cos^2(1)\tan x$ and $\psi_2(x)=\tfrac15+\lambda\tanh x$ form a conformal iterated function system with $\cmax=\lambda$ for every $\lambda\in(0,1)$. Their Schwarzians are the constants $2$ and $-2$, so $\beta=2$ and $\kappa=2$, and the weight-two criterion is available exactly for $\lambda<2^{-1/2}$. Replacing $\psi_1$ by $s_\lambda$ gives $\beta=2$ and $\kappa=1$, so the criterion is available exactly for $\lambda<3^{-1/2}$.

  In dimensions $d\ge3$, the pre-Schwarzian of a generator is either $0$ or the nonconstant pole field of \cref{eq:pole}, but the universal bound $\kappa\le2$ remains sharp. Write $x=(s,y)\in\R\times\R^{d-1}$, fix $0<\eps<1$, and consider the bounded domain
  \begin{equation*}
    \Omega_\eps = \{(s,y)\in\R\times\R^{d-1} : 0<|y|<\eps \text{ and } |s|<|y|^2/4\}.
  \end{equation*}
  Since $d-1\ge2$, the punctured $y$-ball is connected, and hence so is $\Omega_\eps$. Put $p_1=(1,0)$ and $p_2=(-1,0)$. For $x=(s,y)\in\ol{\Omega_\eps}$, the bound $2|s|\le|y|^2/2$ gives $|x-p_i|\ge1$ for both $i$, with equality at $x=0$. Choose a bounded extension domain $\Omega'\supset\ol{\Omega_\eps}$ avoiding $p_1$ and $p_2$, fix $a\in\Omega_\eps$, and take $0<\rho<\min\{1,\dist(a,\partial\Omega_\eps)\}$. For $i\in\{1,2\}$, define
  \begin{equation*}
    \fii_i(x) = a+\rho\frac{x-p_i}{|x-p_i|^2}.
  \end{equation*}
  These M\"obius maps send $\ol{\Omega_\eps}$ into $\Omega_\eps$ and have conformal factors $\rho/|x-p_i|^2$, so they form a conformal iterated function system with $\cmax=\rho$. By \cref{eq:pole}, both pre-Schwarzians have norm $2$, while $T_{\fii_1}(0)=2e_1$ and $T_{\fii_2}(0)=-2e_1$. Thus $\beta=2$, $\|T_{\fii_1}-T_{\fii_2}\|=4$, and $\kappa=2$.
\end{remark}

\section{The planar case} \label{sec:C2}

With the preliminaries in place, we take up the proof of \cref{thm:main} outlined in \cref{sec:reduction}, and it begins in the plane rather than on the line. The plane comes first because it has for free the holomorphy that the argument runs on, and the line, which must buy that holomorphy by complexification, draws its separation from the results of this section rather than earning it on its own terms. The dimensions $d\ge3$ stand beside the plane rather than under it, sharing its skeleton but none of its function theory, which M\"obius rigidity replaces.

Throughout this section $d=2$, and we identify $\R^2$ with $\C$, so that, by the classification of \cref{sec:conformal-maps}, the generators are holomorphic with nonvanishing derivative; by \cref{sec:dual-ifs}, the dual projection of a finite word $\iii$ is at weight one the holomorphic function $T_\iii=f_\iii''/f_\iii'$ and at weight two the classical Schwarzian $S_\iii=S^{\C}_{f_\iii}$, and the norm of $h\in\mathcal{H}(\ol{\Omega})$ is $\|h\|=\sup_{x\in\ol{\Omega}}|h(x)|$. The section carries the planar program in two parts, and the line is deduced from both in \cref{sec:C1}. The first part proves the case $d=2$ of \cref{thm:main}\cref{it:main-pre}, which generalizes the sufficient separation condition of B\'ar\'any, Kolossv\'ary, and Troscheit \cite[Theorem~1.5]{BaranyKolossvaryTroscheit} from the real line to the plane, on the common strategy of \cref{sec:reduction}, which is the skeleton of their proof \cite[Section~3]{BaranyKolossvaryTroscheit}; everything after the differentiation of the condensation is factored into \cref{lem:coincidence}, a coincidence lemma stated for an abstract family of holomorphic projections, continuous in the word and obeying a peeling identity, two properties that the pre-Schwarzian and the Schwarzian projections share, and the differentiation itself into \cref{lem:pre-diff} and \cref{lem:schw-diff}.

The second part runs the same program one group level up, replacing the pre-Schwarzian by the Schwarzian: through \cref{lem:coincidence}, applied to the Schwarzian projections, a pointwise separation of the Schwarzian cocycle yields exponential separation that no post-composition by a M\"obius transformation can defeat. The machinery that both parts use is collected in \cref{sec:C2-machinery}: the constants and the geometry, the Cauchy estimate of \cref{lem:cauchy}, the trapped disc of \cref{lem:disc}, the notion of an admissible family of projections, and \cref{lem:coincidence} itself. That subsection is stated for an abstract admissible family and is therefore free of the weight; \cref{sec:C2-pre} and \cref{sec:C2-mob} each verify that their projections form such a family, differentiate the condensation into the decay hypothesis of \cref{lem:coincidence}, and conclude. The section closes, in \cref{sec:C2-compare}, with a comparison of the two hypotheses and an example. The constants and the geometry fixed in \cref{sec:C2-machinery} are also the ones \cref{sec:C1,sec:gen-plane} import.

\subsection{Admissible families and the coincidence lemma} \label{sec:C2-machinery}

Differentiation in the plane costs one scale: through Cauchy's estimate, recorded in \cref{lem:cauchy} below, a holomorphic function controls all derivatives of itself on any slightly smaller set. Applied to differences of compositions and of projections, whose supremum norms the hypotheses control, this single inequality produces every derivative bound the section needs. Before stating the lemma, we fix the constants and the geometry that the estimates of this section use.

Every composition $f_\iii$ lies in $\mathcal{H}(\ol{\Omega})$, the generators being holomorphic self-maps of $\Omega$ that extend continuously to $\ol{\Omega}$, and the multiplicativity of the conformal factor recorded in \cref{sec:composition}, applied along an orbit that stays in $\ol{\Omega}$, gives $\cmin^{|\iii|}\le|f_\iii'|\le\cmax^{|\iii|}$ on $\ol{\Omega}$. Put $C_0=2\max_{i \in \II}\|\fii_i''/\fii_i'\|/(1-\cmax)$, which is finite since each $\fii_i''/\fii_i'$ is holomorphic on $\Omega'$ and hence continuous on the compact set $\ol{\Omega}$. For a finite word $\iii=i_1\cdots i_n$ the chain rule telescopes the derivative into the product $f_\iii'=\prod_{k=1}^{n}\fii_{i_k}'\circ f_{\iii|_{k-1}}$, whose logarithmic derivative is the sum $f_\iii''/f_\iii'=\sum_{k=1}^{n}((\fii_{i_k}''/\fii_{i_k}')\circ f_{\iii|_{k-1}})f_{\iii|_{k-1}}'$ recorded in \cref{eq:Tline}; the $k$-th summand is bounded by $\max_{i \in \II}\|\fii_i''/\fii_i'\|\cmax^{k-1}$ on $\ol{\Omega}$, the orbit of the prefixes staying in $\ol{\Omega}$, so $|f_\iii''/f_\iii'|\le\max_{i \in \II}\|\fii_i''/\fii_i'\|/(1-\cmax)\le C_0$ and hence $|f_\iii''|\le C_0\cmax^{|\iii|}$ on $\ol{\Omega}$ for every finite word $\iii$.

The attractor $X$ is a nonempty compact subset of the bounded domain $\Omega$, so $\dist(X,\partial\Omega)$ is positive and finite; for $0<r<\dist(X,\partial\Omega)$ put $K_r=\{x\in\ol{\Omega} : \dist(x,\partial\Omega)\ge r\}$, and fix $\rho=\tfrac12\dist(X,\partial\Omega)$ and $z_0\in X$. Then each $K_r$ is a compact subset of $\Omega$ containing $X$ and satisfying $\dist(K_r,\partial\Omega)\ge r$. Since a nearest point of the closed set $\C\setminus\Omega$ to a point of $\Omega$ lies on $\partial\Omega$, we have $\dist(z_0,\C\setminus\Omega)=\dist(z_0,\partial\Omega)\ge2\rho$, so $\Omega$ contains the neighborhood $B^o(z_0,2\rho)$ of $B(z_0,\rho)$; moreover $B(z_0,\rho)\subseteq K_\rho$, since every $z\in B(z_0,\rho)$ then lies in $\ol{\Omega}$ and has $\dist(z,\partial\Omega)\ge\dist(z_0,\partial\Omega)-\rho\ge\rho$. Furthermore, $f_\kkk(z_0)\in X$ for every $\kkk\in\II^*$: the invariance $X=\bigcup_{i \in \II}\fii_i(X)$ gives $\fii_i(X)\subseteq X$, and $f_\kkk$ is a composition of generators.

\begin{lemma} \label{lem:cauchy}
  Let $h$ be holomorphic and bounded on a disc $B^o(z,r)\subseteq\C$. Then
  \begin{equation} \label{eq:cauchy}
    |h^{(k)}(z)| \le \frac{k!}{r^k}\sup_{w\in B^o(z,r)}|h(w)|
  \end{equation}
  for all integers $k\ge0$. In particular, $\sup_{x\in K_r}|h^{(k)}(x)|\le k!r^{-k}\|h\|$ for all $h\in\mathcal{H}(\ol{\Omega})$, integers $k\ge0$, and $0<r<\dist(X,\partial\Omega)$.
\end{lemma}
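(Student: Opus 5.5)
The first claim is the classical Cauchy estimate, and I would prove it from the Cauchy integral formula on slightly smaller circles. Write $M=\sup_{w\in B^o(z,r)}|h(w)|$ and fix $0<s<r$. Since $h$ is holomorphic on the open disc, it is holomorphic on a neighborhood of $B(z,s)$. The Cauchy integral formula for derivatives then gives
\begin{equation*}
  h^{(k)}(z)=\frac{k!}{2\pi i}\int_{|w-z|=s}\frac{h(w)}{(w-z)^{k+1}}\,\mathrm{d}w .
\end{equation*}
Bounding the integrand by $M s^{-k-1}$ and the length of the contour by $2\pi s$ yields $|h^{(k)}(z)|\le k!\,s^{-k}M$. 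Letting $s\uparrow r$ gives \cref{eq:cauchy}. The only care needed is that boundedness is assumed only on the open disc, not on its closure. Working on circles of radius $s<r$ sidesteps this, and the limit costs nothing. The case $k=0$ is trivial.

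For the second claim, fix $h\in\mathcal{H}(\ol{\Omega})$, $0<r<\dist(X,\partial\Omega)$, and $x\in K_r$. By the definition of $K_r$ we have $\dist(x,\partial\Omega)\ge r$, and $x\in\ol{\Omega}$. I would first check that $B^o(x,r)\subseteq\Omega$. Suppose some point of this disc lay outside $\Omega$. Then the segment from $x$ to that point would contain a point of $\partial\Omega$ at distance less than $r$ from $x$, contradicting $\dist(x,\partial\Omega)\ge r$. This uses that $x\in\Omega$, which holds because $x\in\ol{\Omega}$ and $\dist(x,\partial\Omega)>0$.

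Since $h$ is holomorphic on $\Omega$, it is holomorphic on $B^o(x,r)$. Since the disc lies in $\ol{\Omega}$, $h$ is bounded there by $\|h\|$. Applying the first part at $z=x$ gives $|h^{(k)}(x)|\le k!\,r^{-k}\|h\|$, and taking the supremum over $x\in K_r$ finishes the proof. The argument is entirely routine. The only step worth stating explicitly is the containment $B^o(x,r)\subseteq\Omega$ for $x\in K_r$, which follows from the same nearest-boundary-point observation the paper already records for $B(z_0,\rho)$.
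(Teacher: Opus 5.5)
Your proposal is correct and follows the paper's proof essentially step for step: the Cauchy integral formula on circles of radius $s<r$ with $s\uparrow r$, then $x\in\Omega$ because $\dist(x,\partial\Omega)\ge r>0$, and $B^o(x,r)\subseteq\Omega$. Your segment argument for that last containment is just an explicit version of the paper's observation that $\dist(x,\C\setminus\Omega)=\dist(x,\partial\Omega)$ for $x\in\Omega$.
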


\begin{proof}
  For $0<r'<r$ the Cauchy integral formula on the circle of radius $r'$ about $z$ gives $|h^{(k)}(z)|\le k!(r')^{-k}\sup_{w\in B^o(z,r)}|h(w)|$, which is Cauchy's estimate \cite[Chapter~4, (25)]{Ahlfors}, and letting $r'\to r$ gives \cref{eq:cauchy}. If $0<r<\dist(X,\partial\Omega)$ and $x\in K_r$, then $r>0$ implies $x\notin\partial\Omega$, and hence $x\in\Omega$. Therefore $\dist(x,\C\setminus\Omega)=\dist(x,\partial\Omega)\ge r$, so $B^o(x,r)\subseteq\Omega$. Every $h\in\mathcal{H}(\ol{\Omega})$ is holomorphic on this disc and satisfies $|h|\le\|h\|$ there, so \cref{eq:cauchy} applies at $x$ with this radius.
\end{proof}

The proof of \cref{lem:coincidence} below reads the smallness of a dual difference on the image of a fixed disc under a composition $f_\kkk$ and must convert it into derivative decay at the image of the center. For this the image must contain a disc of controlled radius about the image of the center. Being a long composition, $f_\kkk$ need not be univalent, so the Koebe one-quarter theorem does not apply; instead, a lower bound on the derivative at the center and an upper bound on the second derivative trap a disc through Rouch\'e's theorem. The radius is exponentially small in the length of the word, which is enough against super-exponential errors.

\begin{lemma} \label{lem:disc}
  Let $r,a,M>0$, let $z_0\in\C$, and let $g$ be holomorphic on a neighborhood of $B(z_0,r)$ with $|g'(z_0)|\ge a$ and $\sup_{z\in B(z_0,r)}|g''(z)|\le M$. Then
  \begin{equation} \label{eq:disc}
    B^o(g(z_0),as/2) \subseteq g(B^o(z_0,r)),
  \end{equation}
  where $s=\min\{r,a/M\}$.
\end{lemma}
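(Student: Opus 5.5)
The plan is a Rouché argument on the disc $B^o(z_0,s)$, where $s=\min\{r,a/M\}\le r$ makes $g$ holomorphic on a neighborhood of $B(z_0,s)$ and the bound on $g''$ available there. Fix a target $w$ with $|w-g(z_0)|<as/2$; the aim is to show that $g-w$ has a zero in $B^o(z_0,s)$, which gives $w\in g(B^o(z_0,s))\subseteq g(B^o(z_0,r))$ and hence \cref{eq:disc}.

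For the comparison I will use the linear part $\ell(z)=g(z_0)+g'(z_0)(z-z_0)-w$, which has exactly one zero, $z^*=z_0+(w-g(z_0))/g'(z_0)$. Since $|z^*-z_0|<(as/2)/a=s/2$, this zero lies in $B^o(z_0,s)$. On the circle $|z-z_0|=s$ the modulus of $\ell$ is bounded below by
\begin{equation*}
  |\ell(z)|\ge|g'(z_0)|s-|w-g(z_0)|>as-\tfrac{as}{2}=\tfrac{as}{2}.
\end{equation*}
The difference $(g-w)-\ell=g(z)-g(z_0)-g'(z_0)(z-z_0)$ is controlled by the integral form of Taylor's theorem along the segment from $z_0$ to $z$, which lies in $B(z_0,r)$. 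This gives
\begin{equation*}
  |(g-w)-\ell|\le\tfrac12 Ms^2\le\tfrac12 as,
\end{equation*}
where the second inequality uses $s\le a/M$.

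The main obstacle is that this only yields the non-strict comparison $|(g-w)-\ell|\le as/2<|\ell|$. It is still enough, because the lower bound on $|\ell|$ is strict, so Rouché's theorem in its strict form applies on the circle. It follows that $g-w$ and $\ell$ have the same number of zeros in $B^o(z_0,s)$, namely one. No univalence of $g$ is used anywhere, which is exactly what the application to long compositions $f_\kkk$ requires.
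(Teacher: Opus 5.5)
Your proof is correct and is essentially the paper's argument: the same Taylor bound $Ms^2/2\le as/2$ on the circle $|z-z_0|=s$ feeds Rouch\'e's theorem on $B^o(z_0,s)$. The only difference is the comparison function. The paper compares $g-w$ with $g-g(z_0)$, which differs from it by a constant and vanishes at $z_0$, whereas you compare with the linearization $\ell$; your choice also shows that every such $w$ has exactly one preimage in $B^o(z_0,s)$, counted with multiplicity.
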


\begin{proof}
  Integrating $g''$ along segments from $z_0$ gives $|g'(w)-g'(z_0)|\le M|w-z_0|$ for $|w-z_0|\le s$, and integrating this bound in turn gives $|g(z)-g(z_0)-g'(z_0)(z-z_0)|\le Ms^2/2$ on the circle $|z-z_0|=s$, whence $|g(z)-g(z_0)|\ge as-Ms^2/2\ge as/2$ there, since $Ms\le a$. Fix $w_0$ with $|w_0-g(z_0)|<as/2$. On the circle $|z-z_0|=s$ the constant $w_0-g(z_0)$ has modulus strictly smaller than $|g(z)-g(z_0)|$, so by Rouch\'e's theorem \cite[p.~153]{Ahlfors} the functions $g-g(z_0)$ and $g-w_0=(g-g(z_0))+(g(z_0)-w_0)$ have the same number of zeros, counted with multiplicity, in $B^o(z_0,s)$. Since $g-g(z_0)$ vanishes at $z_0$, it has at least one zero in $B^o(z_0,s)$, and hence so does $g-w_0$. Thus $w_0\in g(B^o(z_0,s))\subseteq g(B^o(z_0,r))$.
\end{proof}

The heart of the section is the following lemma, which runs the peeling, the passage to the limit pair, and the endgame once, for any family of projections with two properties, and says that a super-exponential decay of differences along a sequence of distinct equal-length pairs forces an exact coincidence of two limit projections. We call a family $(P_\lll)_{\lll\in\II^\N\cup\II^*}$ in $\mathcal{H}(\ol{\Omega})$ an \emph{admissible family of projections} with rate $\gamma\in(0,1]$ if
\begin{enumerate}
  \item\label{it:adm-cont} the map $\lll\mapsto P_\lll$ is continuous from $\II^\N\cup\II^*$ to $\mathcal{H}(\ol{\Omega})$ in the supremum norm, 
  \item\label{it:adm-peel} for every $\kkk\in\II^*$ there is a function $g_\kkk\colon\ol{\Omega}\to\C$ with $|g_\kkk|\ge\gamma^{|\kkk|}$ on $\ol{\Omega}$ such that $P_{\kkk\lll}=g_\kkk(P_\lll\circ f_\kkk)+P_\kkk$ on $\ol{\Omega}$ for all $\lll\in\II^\N\cup\II^*$.
\end{enumerate}
The pre-Schwarzian projections form such a family with $g_\kkk=f_\kkk'$, as \cref{lem:pre-admissible} records in \cref{sec:C2-pre}, and the Schwarzian projections form one with $g_\kkk=(f_\kkk')^2$, as \cref{lem:schw-admissible} records in \cref{sec:C2-mob}; both the planar case of \cref{thm:main}\cref{it:main-pre} and \cref{thm:main}\cref{it:main-schw} reduce to the lemma, each after its own differentiation step, \cref{lem:pre-diff} at weight one and \cref{lem:schw-diff} at weight two.

\begin{lemma} \label{lem:coincidence}
  Let $\Phi=(\fii_i)_{i\in\II}$ be a conformal iterated function system on $\C$, let $\rho$ and $K_\rho$ be as above, read for $\Phi$, and let $(P_\lll)_{\lll\in\II^\N\cup\II^*}$ be an admissible family of projections with rate $\gamma$. Suppose that there are a strictly increasing sequence $(n_\ell)_{\ell \ge 1}$ and distinct $\iii_\ell,\jjj_\ell\in\II^{n_\ell}$ with 
  \begin{equation*}
    \sup_{x\in K_\rho}|P_{\iii_\ell}(x)-P_{\jjj_\ell}(x)|\le\eta_{n_\ell}
  \end{equation*}
  for numbers $\eta_{n_\ell}\ge0$ satisfying $\log\eta_{n_\ell}/n_\ell\to-\infty$. Then there are distinct $\ppp,\qqq\in\II^\N\cup\II^*$ with $|\ppp|=|\qqq|$, $\ppp_1\ne\qqq_1$, and 
  \begin{equation*}
    P_{\ppp}\equiv P_{\qqq}
  \end{equation*}
  on $\ol{\Omega}$.
\end{lemma}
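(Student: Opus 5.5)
Write $\kkk_\ell=\iii_\ell\land\jjj_\ell$, $m_\ell=|\kkk_\ell|<n_\ell$, and let $\iii_\ell'=\sigma^{m_\ell}\iii_\ell$ and $\jjj_\ell'=\sigma^{m_\ell}\jjj_\ell$. These are words of equal length $n_\ell-m_\ell\ge1$ with distinct first letters. The plan is to peel the prefix $\kkk_\ell$ off and push the smallness of $P_{\iii_\ell}-P_{\jjj_\ell}$ to smallness of all derivatives of $B_\ell=P_{\iii_\ell'}-P_{\jjj_\ell'}$ at the single point $w_\ell=f_{\kkk_\ell}(z_0)\in X$. Then we pass to a limit along a subsequence.

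\textbf{Peeling and disc trapping.} By admissibility \cref{it:adm-peel}, $P_{\iii_\ell}-P_{\jjj_\ell}=g_{\kkk_\ell}(B_\ell\circ f_{\kkk_\ell})$ on $\ol{\Omega}$. Since $|g_{\kkk_\ell}|\ge\gamma^{m_\ell}$, we get $|B_\ell\circ f_{\kkk_\ell}|\le\gamma^{-m_\ell}\eta_{n_\ell}$ on $B(z_0,\rho)\subseteq K_\rho$. Now apply \cref{lem:disc} to $g=f_{\kkk_\ell}$ on $B(z_0,\rho)$, with $a=\cmin^{m_\ell}$ and $M=C_0\cmax^{m_\ell}$. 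This gives a disc $B^o(w_\ell,s_\ell)\subseteq f_{\kkk_\ell}(B^o(z_0,\rho))$ with $s_\ell=\tfrac12\cmin^{m_\ell}\min\{\rho,\cmin^{m_\ell}/(C_0\cmax^{m_\ell})\}\ge c_1 c_2^{m_\ell}$ for constants $c_1,c_2>0$. Hence $|B_\ell|\le\gamma^{-m_\ell}\eta_{n_\ell}$ on that disc. Cauchy's estimate \cref{lem:cauchy} then gives
\begin{equation*}
  |B_\ell^{(k)}(w_\ell)|\le k!\,(c_1c_2^{m_\ell})^{-k}\gamma^{-m_\ell}\eta_{n_\ell}
\end{equation*}
for every $k\ge0$. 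Since $m_\ell<n_\ell$ and $\log\eta_{n_\ell}/n_\ell\to-\infty$, for each fixed $k$ the right-hand side tends to $0$ as $\ell\to\infty$. The super-exponential decay absorbs every exponential loss.

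\textbf{Passing to the limit.} By compactness of $\II^\N\cup\II^*$ and of $X$, pass to a subsequence along which $\iii_\ell'\to\ppp$, $\jjj_\ell'\to\qqq$ and $w_\ell\to w\in X$. Equal lengths are preserved in the limit: either both lengths stay eventually constant and finite, or both tend to infinity and $\ppp,\qqq\in\II^\N$. Distinct first letters are also preserved, because the first letter is locally constant in the ultrametric. Hence $\ppp\ne\qqq$, $|\ppp|=|\qqq|$ and $\ppp_1\ne\qqq_1$. By admissibility \cref{it:adm-cont}, $B_\ell\to B=P_\ppp-P_\qqq$ uniformly on $\ol{\Omega}$. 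By Cauchy's estimate on the fixed disc $B^o(w,\rho)$, which lies in $\Omega$ since $w\in X$, every derivative also converges uniformly on $B(w,\rho/2)$. Combining this with the equicontinuity of $B_\ell^{(k)}$ near $w$, again from Cauchy with radius $\rho$, gives $B^{(k)}(w)=\lim_\ell B_\ell^{(k)}(w_\ell)=0$ for all $k$.

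\textbf{Conclusion and main obstacle.} Since $B$ is holomorphic on the domain $\Omega$ and vanishes to infinite order at $w\in\Omega$, the identity theorem gives $B\equiv0$ on $\Omega$, hence on $\ol{\Omega}$ by continuity. So $P_\ppp\equiv P_\qqq$.

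The step requiring the most care is the disc trapping: $f_{\kkk_\ell}$ need not be univalent, which is why \cref{lem:disc} is used instead of Koebe. One must also check that the trapped radius is only exponentially small in $m_\ell$, which holds because $|f''_{\kkk}|\le C_0\cmax^{|\kkk|}$ on $\ol{\Omega}$ while $|f'_\kkk|\ge\cmin^{|\kkk|}$. A further point is the diagonal limit $B_\ell^{(k)}(w_\ell)\to B^{(k)}(w)$ with a moving point, which the uniform Cauchy bounds on $K_{\rho}$ handle.
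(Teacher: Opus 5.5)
Your proposal is correct and follows the paper's proof essentially step for step. Both arguments peel the common prefix through the admissibility identity, trap a disc of exponentially small radius about $f_{\kkk_\ell}(z_0)$ with \cref{lem:disc}, use one Cauchy estimate at its center to control all derivatives, pass to a limit of the suffix pair by compactness, and close with the identity theorem. The only adjustment is cosmetic: \cref{lem:disc} requires $M>0$, and $C_0$ vanishes when every generator is affine, so take $M=\max\{C_0,1\}\cmax^{m_\ell}$ as the paper does.
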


\begin{proof}
  The dependence of $\iii=\iii_\ell$ and $\jjj=\jjj_\ell$ on $\ell$ is suppressed below. Since the conclusion asks for words with distinct first letters, the common prefix is peeled off at an exponential cost. Let $\kkk=\iii\land\jjj$, $m=|\kkk|<n_\ell$, $\hi=\sigma^m\iii$, and $\hj=\sigma^m\jjj$, so that $\iii=\kkk\hi$ and $\jjj=\kkk\hj$, and $\hi$ and $\hj$ are nonempty words of common length $n_\ell-m$ with $\hi_1\ne\hj_1$. Subtracting the peeling identities $P_\iii=g_\kkk(P_{\hi}\circ f_\kkk)+P_\kkk$ and $P_\jjj=g_\kkk(P_{\hj}\circ f_\kkk)+P_\kkk$, the instances of condition \cref{it:adm-peel} with the prefix $\kkk$, and writing $\Xi_\ell=P_{\hi}-P_{\hj}$ gives $P_\iii-P_\jjj=g_\kkk(\Xi_\ell\circ f_\kkk)$, where $|g_\kkk|\ge\gamma^{m}\ge\gamma^{n_\ell}$ on $\ol{\Omega}$, so $\sup_{x\in K_\rho}|(\Xi_\ell\circ f_\kkk)(x)|\le\gamma^{-n_\ell}\eta_{n_\ell}=\eta_{n_\ell}'$, still with $\log\eta_{n_\ell}'/n_\ell\to-\infty$; in other words, $|\Xi_\ell|\le\eta_{n_\ell}'$ on the image $f_\kkk(K_\rho)$.

  The suffixes left by the peeling produce the limit pair $\ppp$ and $\qqq$. By compactness of $(\II^\N\cup\II^*)^2$, pass to a subsequence along which $\hi\to\ppp$ and $\hj\to\qqq$. Since the empty word is isolated and every $\hi,\hj$ is nonempty, both limits are nonempty. If one limit is finite, then the corresponding sequence is eventually constant, and the equality $|\hi|=|\hj|$ forces the other limit to be finite of the same length; otherwise both lengths tend to infinity. Thus $|\ppp|=|\qqq|$, and convergence of the first letters gives $p_1 \ne q_1$, so $\ppp$ and $\qqq$ are distinct words of equal length. Every projection $P_\lll$ belongs to $\mathcal{H}(\ol{\Omega})$ and, by condition \cref{it:adm-cont}, $\lll\mapsto P_\lll$ is continuous, so $\Xi_\ell\to \Xi_0=P_{\ppp}-P_{\qqq}$ uniformly on $\ol{\Omega}$, and applying \cref{lem:cauchy} to $\Xi_\ell-\Xi_0\in\mathcal{H}(\ol{\Omega})$ shows that $\Xi_\ell^{(k)}\to \Xi_0^{(k)}$ uniformly on $K_\rho$ for every $k$. Passing to a further subsequence, the points $x_\ell=f_\kkk(z_0)\in X$ converge to some $\zeta\in X$.

  Cauchy's estimate needs a disc of controlled radius about $x_\ell$ inside the image $f_\kkk(K_\rho)$, and the derivative bounds trap one there. The map $f_\kkk$ is holomorphic on $\Omega$, which contains a neighborhood of $B(z_0,\rho)$, and satisfies $|f_\kkk'(z_0)|\ge\cmin^{m}$ and $|f_\kkk''|\le\max\{C_0,1\}\cmax^{m}$ on $\ol{\Omega}$, so \cref{lem:disc} shows that $f_\kkk(B^o(z_0,\rho))\subseteq f_\kkk(K_\rho)$ contains the disc $B^o(x_\ell,r_\ell)$ of radius
  \begin{equation} \label{eq:discradius}
    r_\ell = \frac{\cmin^{m}}{2}\min\biggl\{\rho,\frac{\cmin^{m}}{\max\{C_0,1\}\cmax^{m}}\biggr\} \ge r_0b^{n_\ell},
  \end{equation}
  where $b=\cmin^2/\cmax<1$ and $r_0=\tfrac12\min\{\rho,1/\max\{C_0,1\}\}$, using $\min\{\rho,tc\}\ge t\min\{\rho,c\}$ for $0<t\le1$ together with $m\le n_\ell$. Since $|\Xi_\ell|\le\eta_{n_\ell}'$ on $B^o(x_\ell,r_\ell)\subseteq f_\kkk(K_\rho)\subseteq\Omega$ and $\Xi_\ell$ is holomorphic on $\Omega$, the estimate \cref{eq:cauchy} at the center gives
  \begin{equation} \label{eq:allders}
    |\Xi_\ell^{(k)}(x_\ell)| \le k!r_\ell^{-k}\eta_{n_\ell}' \le k!r_0^{-k}b^{-kn_\ell}\eta_{n_\ell}',
  \end{equation}
  which tends to $0$ as $\ell\to\infty$ for every fixed $k$, the factor $b^{-kn_\ell}$ being only exponential in $n_\ell$. For every integer $k\ge0$, the continuity of $\Xi_0^{(k)}$, the uniform convergence $\Xi_\ell^{(k)}\to\Xi_0^{(k)}$ on $K_\rho$, and the estimate $\Xi_\ell^{(k)}(x_\ell)\to0$ give $\Xi_0^{(k)}(\zeta)=0$.

  Finally, $\Xi_0=P_{\ppp}-P_{\qqq}$ is holomorphic on $\Omega$, the family lying in $\mathcal{H}(\ol{\Omega})$, and every derivative of $\Xi_0$ vanishes at $\zeta\in\Omega$. Hence the Taylor series of $\Xi_0$ at $\zeta$ vanishes identically, so $\Xi_0$ vanishes near $\zeta$ and therefore throughout the connected domain $\Omega$ by the identity theorem \cite[pp.~126--127]{Ahlfors}, and on $\ol{\Omega}$ by continuity. Thus $P_{\ppp}\equiv P_{\qqq}$ on $\ol{\Omega}$ for the distinct equal-length pair $\ppp$ and $\qqq$.
\end{proof}

\subsection{Separation by the pre-Schwarzian} \label{sec:C2-pre}

The pre-Schwarzian projections meet the two conditions of an admissible family, the multiplier being the derivative $f_\kkk'$ of the composition along the peeled prefix.

\begin{lemma} \label{lem:pre-admissible}
  Let $\Phi=(\fii_i)_{i\in\II}$ be a conformal iterated function system on $\C$. Then the pre-Schwarzian projections $(T_\lll)_{\lll\in\II^\N\cup\II^*}$ form an admissible family of projections with rate $\cmin$, the multipliers being $g_\kkk=f_\kkk'$.
\end{lemma}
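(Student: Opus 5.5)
The plan is to check the two defining conditions of an admissible family directly against results already established for the planar dual system. Throughout, $d=2$, the dual projections are the holomorphic scalars $T_\lll$, and the dual maps are those of \cref{eq:dualdef2}.

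Condition \cref{it:adm-cont} follows from \cref{lem:dual}, with one verification first. That lemma shows every $T_\lll$, for $\lll\in\II^\N\cup\II^*$, extends holomorphically to a neighborhood of $\ol{\Omega}$. Hence each $T_\lll$ is holomorphic on $\Omega$ and continuous on $\ol{\Omega}$, so the whole family lies in $\mathcal{H}(\ol{\Omega})$. The bound \cref{eq:dualbounds} gives $\|T_\lll-T_{\lll'}\|\le 2\max_{i\in\II}\|T_{\fii_i}\|\cmax^{|\lll\land\lll'|}/(1-\cmax)$. Since the ultrametric topology on $\II^\N\cup\II^*$ makes $|\lll\land\lll'|\to\infty$ along any convergent sequence with a distinct limit, the map $\lll\mapsto T_\lll$ is continuous into $\mathcal{H}(\ol{\Omega})$ in the supremum norm. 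The finite words are isolated points, so they need no further argument.

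Condition \cref{it:adm-peel} is the peeling identity \cref{eq:peeling} of \cref{lem:dual-peel}, read in the planar scalar picture. Its linear part is the word operator from \cref{eq:wordops}, which for $d=2$ is the multiplication $\LL^{(1)}_\kkk h=f_\kkk'(h\circ f_\kkk)$. The identity therefore reads $T_{\kkk\lll}=f_\kkk'(T_\lll\circ f_\kkk)+T_\kkk$ on $\ol{\Omega}$ for all $\kkk\in\II^*$ and $\lll\in\II^\N\cup\II^*$, which is exactly condition \cref{it:adm-peel} with $g_\kkk=f_\kkk'$.

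The one point needing care is that \cref{lem:dual-peel} was stated for the general, anti-holomorphic weight-one action. In the plane the dual maps were defined as the conjugates \cref{eq:dualdef2}, and their compositions were observed to have linear part $h\mapsto f_\kkk'(h\circ f_\kkk)$. So the proof of \cref{lem:dual-peel} should be rerun in this picture. For finite $\lll$ the identity is definitional, since $F_{\kkk\lll}=F_\kkk\circ F_\lll$. For infinite $\lll$ it follows by passing to the limit, using that $F_\kkk$ is Lipschitz with constant $\cmax^{|\kkk|}$.

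It remains to bound the multiplier from below. For $\kkk=k_1\cdots k_m$, the chain rule writes $f_\kkk'$ as a product of the factors $\fii_{k_j}'\circ f_{\kkk|_{j-1}}$ along an orbit that stays in $\ol{\Omega}$. Each factor has modulus at least $\cmin$ there, so $|f_\kkk'|\ge\cmin^{|\kkk|}$ on $\ol{\Omega}$, with $\cmin\in(0,1]$. This lower bound is the multiplicativity of the conformal factor already recorded before \cref{lem:cauchy}, so the rate is $\gamma=\cmin$. I expect no real obstacle here; the only step needing care is the conjugation bookkeeping in the peeling identity.
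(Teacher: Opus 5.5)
Your proposal is correct and follows the paper's proof: condition \cref{it:adm-cont} comes from \cref{lem:dual}, condition \cref{it:adm-peel} from the peeling identity \cref{eq:peeling} read through the planar word operator $h\mapsto f_\kkk'(h\circ f_\kkk)$ of \cref{eq:wordops}, and the rate from $|f_\kkk'|\ge\cmin^{|\kkk|}$ on $\ol{\Omega}$. Rerunning the peeling argument is harmless but not needed, since \cref{lem:dual-peel} is already stated for the dual system on whatever space carries it, which in the plane is $\mathcal{H}(\ol{\Omega})$ with the maps \cref{eq:dualdef2}.
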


\begin{proof}
  By \cref{lem:dual}, every $T_\lll$ belongs to $\mathcal{H}(\ol{\Omega})$ and $\lll\mapsto T_\lll$ is continuous on $\II^\N\cup\II^*$, which is condition \cref{it:adm-cont}. The peeling identity \cref{eq:peeling} of \cref{lem:dual-peel}, read through the planar word operator $\LL^{(1)}_\kkk h=f_\kkk'(h\circ f_\kkk)$ of \cref{eq:wordops}, gives $T_{\kkk\lll}=f_\kkk'(T_\lll\circ f_\kkk)+T_\kkk$ on $\ol{\Omega}$ for all $\kkk\in\II^*$ and $\lll\in\II^\N\cup\II^*$, and $|f_\kkk'|\ge\cmin^{|\kkk|}$ on $\ol{\Omega}$, which is condition \cref{it:adm-peel} with $g_\kkk=f_\kkk'$ and $\gamma=\cmin$.
\end{proof}

The differentiation step converts the closeness of a composition $f_\iii$ to a comparison map $\psi$ into the closeness of their pre-Schwarzians on the compact $K_\rho$, at an exponential cost in the length of the word. It is stated for a general $\psi\in\mathcal{H}(\ol{\Omega})$, since the Schwarzian counterpart in \cref{sec:C2-mob} compares $f_\iii$ with a M\"obius image of another composition; the smallness threshold on the error $\eta$ is met, in both applications, once the length of the word is large. In the following lemma, let $\rho$, $K_\rho$, and $C_0$ be as in \cref{sec:C2-machinery}.

\begin{lemma} \label{lem:pre-diff}
  Fix $\iii\in\II^n$ and let $\psi\in\mathcal{H}(\ol{\Omega})$ satisfy $\|f_\iii-\psi\|\le\eta$ for some $0\le\eta\le\min\{\tfrac{\rho}{2}\cmin^{n},\cmax^{n}\}$. Then $|\psi'|\ge\tfrac12\cmin^{n}$ and $|\psi''|\le C_1\cmax^{n}$ on $K_\rho$, with $C_1=C_0+2\rho^{-2}$, and
  \begin{equation} \label{eq:prediff}
    \sup_{x\in K_\rho}\biggl|\frac{f_\iii''(x)}{f_\iii'(x)}-\frac{\psi''(x)}{\psi'(x)}\biggr| \le \biggl(\frac{2C_0}{\rho}+\frac{4}{\rho^2}\biggr)\cmin^{-2n}\eta.
  \end{equation}
\end{lemma}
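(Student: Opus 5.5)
The plan is to apply the Cauchy estimate of \cref{lem:cauchy} to the difference $h=f_\iii-\psi\in\mathcal{H}(\ol{\Omega})$, whose supremum norm is at most $\eta$. With $r=\rho$ this gives
\begin{equation*}
  |f_\iii'-\psi'|\le\rho^{-1}\eta \qquad\text{and}\qquad |f_\iii''-\psi''|\le2\rho^{-2}\eta
\end{equation*}
on $K_\rho$. On $K_\rho\subseteq\ol{\Omega}$ we also have $|f_\iii'|\ge\cmin^{n}$ and $|f_\iii''|\le C_0\cmax^{n}$, from the bounds recorded at the start of \cref{sec:C2-machinery}. Both derivative bounds on $\psi$ then follow by the triangle inequality. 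Since $\eta\le\tfrac{\rho}{2}\cmin^n$, we get $|\psi'|\ge\cmin^n-\rho^{-1}\eta\ge\tfrac12\cmin^n$. Since $\eta\le\cmax^n$, we get $|\psi''|\le C_0\cmax^n+2\rho^{-2}\cmax^n=C_1\cmax^n$.

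For \cref{eq:prediff}, I would write the difference of quotients over a common denominator:
\begin{equation*}
  \frac{f_\iii''}{f_\iii'}-\frac{\psi''}{\psi'}=\frac{f_\iii''(\psi'-f_\iii')+f_\iii'(f_\iii''-\psi'')}{f_\iii'\psi'}.
\end{equation*}
The denominator is bounded below by $\tfrac12\cmin^{2n}$ on $K_\rho$. In the numerator, the first term is at most $C_0\cmax^n\rho^{-1}\eta$ and the second at most $\cmax^n\cdot2\rho^{-2}\eta$. Dividing and using $\cmax^n\le1$ gives the bound $2(C_0\rho^{-1}+2\rho^{-2})\cmin^{-2n}\eta$, which is exactly the constant $(2C_0/\rho+4/\rho^2)$ of the statement.

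The argument is routine, and the only point needing care is that Cauchy's estimate applies on $K_\rho$ with radius $\rho$. This requires $\rho<\dist(X,\partial\Omega)$, which holds because $\rho$ is half that distance. It also requires that discs of radius $\rho$ about points of $K_\rho$ lie in $\Omega$, which is the second statement of \cref{lem:cauchy}.
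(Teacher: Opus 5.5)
Your proposal is correct and follows the paper's proof step for step: Cauchy's estimate applied to $h=f_\iii-\psi$ at radius $\rho$, the triangle inequality for the bounds on $\psi'$ and $\psi''$, and the same common-denominator identity. The only difference is cosmetic and yields the same constant: the paper cancels $f_\iii'$ in the second term, bounding it by $|h''|/|\psi'|$ and then using $\cmin^{-n}\le\cmin^{-2n}$, whereas you keep the full denominator $\tfrac12\cmin^{2n}$ and use $|f_\iii'|\le\cmax^{n}\le1$ in the numerator.
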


\begin{proof}
  The difference $h=f_\iii-\psi$ lies in $\mathcal{H}(\ol{\Omega})$ with $\|h\|\le\eta$, so \cref{lem:cauchy} gives $|h'|\le\rho^{-1}\eta$ and $|h''|\le2\rho^{-2}\eta$ on $K_\rho$. On $\ol{\Omega}$ the composition satisfies $\cmin^{n}\le|f_\iii'|\le\cmax^{n}$ and $|f_\iii''|\le C_0\cmax^{n}$, so on $K_\rho$ we get $|\psi'|\ge|f_\iii'|-|h'|\ge\cmin^{n}-\rho^{-1}\eta\ge\tfrac12\cmin^{n}$ and $|\psi''|\le|f_\iii''|+|h''|\le C_0\cmax^{n}+2\rho^{-2}\eta\le C_1\cmax^{n}$, by the two bounds in the hypothesis on $\eta$. Writing the difference of the two quotients over the common denominator as
  \begin{equation*}
    \frac{f_\iii''}{f_\iii'}-\frac{\psi''}{\psi'} = \frac{f_\iii''(\psi'-f_\iii')+f_\iii'(f_\iii''-\psi'')}{f_\iii'\psi'},
  \end{equation*}
  we obtain, on $K_\rho$,
  \begin{equation*}
    \biggl|\frac{f_\iii''}{f_\iii'}-\frac{\psi''}{\psi'}\biggr| \le \frac{|f_\iii''||h'|}{|f_\iii'||\psi'|}+\frac{|h''|}{|\psi'|} \le \frac{2C_0\cmax^{n}}{\rho\cmin^{2n}}\eta+\frac{4}{\rho^{2}\cmin^{n}}\eta,
  \end{equation*}
  and $\cmax^{n}\le1$ together with $\cmin^{-n}\le\cmin^{-2n}$ gives \cref{eq:prediff}.
\end{proof}

We turn to the proof of \cref{thm:main}\cref{it:main-pre}, the first of the two reductions to \cref{lem:coincidence}. A failure of the strong exponential separation condition condenses the compositions super-exponentially along a sequence of distinct equal-length pairs; \cref{lem:pre-diff} carries that smallness to the pre-Schwarzian differences $T_\iii-T_\jjj$, meeting the decay hypothesis of \cref{lem:coincidence} for the admissible family of \cref{lem:pre-admissible}; and the coincidence that \cref{lem:coincidence} returns contradicts \cref{eq:mainhyp}.

\begin{proof}[Proof of \cref{thm:main}\cref{it:main-pre} for $d=2$]
  Suppose to the contrary that $\Phi$ satisfies \cref{eq:mainhyp} for all distinct equal-length words but not the strong exponential separation condition. The plain version of \cref{lem:condensation-mobius}, with the identity in place of every M\"obius transformation, provides a strictly increasing sequence $(n_\ell)_{\ell \ge 1}$ and distinct $\uuu_\ell,\vvv_\ell\in\II^{n_\ell}$ such that $\log\|\fii_{\uuu_\ell}-\fii_{\vvv_\ell}\|/n_\ell\to-\infty$. Set $\iii_\ell=\overleftarrow{\uuu_\ell}$ and $\jjj_\ell=\overleftarrow{\vvv_\ell}$. Since reversal is a bijection of each $\II^n$ and $f_{\iii_\ell}=\fii_{\uuu_\ell}$ and $f_{\jjj_\ell}=\fii_{\vvv_\ell}$ by \cref{eq:frev}, the words $\iii_\ell$ and $\jjj_\ell$ are distinct and their errors $\eta_{n_\ell}=\|f_{\iii_\ell}-f_{\jjj_\ell}\|$ satisfy $\log\eta_{n_\ell}/n_\ell\to-\infty$, where the logarithm of $0$ is read as $-\infty$; the dependence of $\iii$ and $\jjj$ on $\ell$ is suppressed below.

  It remains to differentiate the condensation into the decay hypothesis of \cref{lem:coincidence} for the family $(T_\lll)_{\lll\in\II^\N\cup\II^*}$. Discarding finitely many $\ell$, we may assume $\eta_{n_\ell}\le\min\{\tfrac{\rho}{2}\cmin^{n_\ell},\cmax^{n_\ell}\}$, the errors being super-exponentially small, so \cref{lem:pre-diff}, applied to $\psi=f_\jjj\in\mathcal{H}(\ol{\Omega})$, gives $\sup_{x\in K_\rho}|T_\iii(x)-T_\jjj(x)|\le(2C_0/\rho+4/\rho^2)\cmin^{-2n_\ell}\eta_{n_\ell}=\eta_{n_\ell}'$, and $\log\eta_{n_\ell}'/n_\ell\le\log\eta_{n_\ell}/n_\ell+2\log(1/\cmin)+\log(2C_0/\rho+4/\rho^2)/n_\ell\to-\infty$. By \cref{lem:coincidence}, applied with the errors $\eta_{n_\ell}'$ to the family $(T_\lll)_{\lll\in\II^\N\cup\II^*}$, admissible by \cref{lem:pre-admissible}, there are distinct $\ppp,\qqq\in\II^\N\cup\II^*$ with $|\ppp|=|\qqq|$ and $T_{\ppp}\equiv T_{\qqq}$ on $\ol{\Omega}$, which is the desired contradiction with \cref{eq:mainhyp}.
\end{proof}

\subsection{Separation modulo M\"obius maps by the Schwarzian} \label{sec:C2-mob}

By \cref{lem:kernel}, post-composition by a similarity is invisible to the pre-Schwarzian and post-composition by a M\"obius map is invisible to the Schwarzian, and these are exactly the invisible families; a Schwarzian analogue of \cref{eq:mainhyp} is thus a stronger hypothesis, forbidding the larger family of coincidences $f_\iii=M\circ f_\jjj$, and it earns a stronger conclusion: exponential separation that no post-composition by a M\"obius transformation can defeat. The separation notion matching the Schwarzian kernel of \cref{lem:kernel}\cref{it:skernel} is the strong exponential separation condition modulo M\"obius maps of \cref{eq:sescmob}, which quantifies over the group that the Schwarzian cannot see. The Schwarzian projections need no new construction, since $S_\iii=S^{\C}_{f_\iii}$ is the sum \cref{eq:Sline} of \cref{sec:dual-ifs}, whose properties \cref{lem:dual2} records; the following lemma recasts those properties as the admissibility of the Schwarzian family, with the multiplier squared. The case $d\ge3$ is void for the Schwarzian, as \cref{rem:S3} explains; the line, in contrast, is covered by the same statement, deduced in \cref{sec:C1}.

\begin{lemma} \label{lem:schw-admissible}
  Let $\Phi=(\fii_i)_{i\in\II}$ be a conformal iterated function system on $\C$. Then the Schwarzian projections $(S_\lll)_{\lll\in\II^\N\cup\II^*}$ form an admissible family of projections with rate $\cmin^2$, the multipliers being $g_\kkk=(f_\kkk')^{2}$.
\end{lemma}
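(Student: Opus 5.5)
The plan is to mirror the proof of \cref{lem:pre-admissible} line by line, with \cref{lem:dual2} in place of \cref{lem:dual} and \cref{eq:peeling2} in place of \cref{eq:peeling}. There are two conditions to verify, continuity and peeling, and the only new ingredient is reading the weight-two word operator through the scalar identification of \cref{lem:scalar}.

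For condition \cref{it:adm-cont}, \cref{lem:dual2} gives what is needed directly when $d=2$. First, every projection $S_\lll$, for finite or infinite $\lll$, belongs to $\mathcal{H}(\ol{\Omega})$; for finite words this is the holomorphy of $S^{\C}_{f_\lll}$ on a neighborhood of $\ol{\Omega}$, and for infinite words it is the normal-convergence argument recorded there. Second, \cref{eq:dualbounds2} makes $\lll\mapsto S_\lll$ Hölder, hence continuous, from $\II^\N\cup\II^*$ into $\mathcal{H}(\ol{\Omega})$ with the supremum norm. The norm here is the same whether the fields are read as trace-free matrices or as scalars, since the identification of \cref{lem:scalar} is isometric.

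For condition \cref{it:adm-peel}, I will take the peeling identity \cref{eq:peeling2}, $S_{\kkk\lll}=\LL^{(2)}_\kkk S_\lll+S_\kkk$, and rewrite its word operator in the planar scalar picture. By \cref{eq:wordops}, $\LL^{(2)}_\kkk h=(f_\kkk')^2(h\circ f_\kkk)$. The justification is that the two derivative factors feed a vector $v$ into the quadratic form as $f_\kkk'v$, so $\re(\mu\,(f_\kkk'v)^2)=\re((f_\kkk')^2\mu\,v^2)$. This gives $S_{\kkk\lll}=(f_\kkk')^2(S_\lll\circ f_\kkk)+S_\kkk$ on $\ol{\Omega}$ for all $\kkk\in\II^*$ and $\lll\in\II^\N\cup\II^*$. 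For infinite $\lll$ the identity passes to the limit exactly as in \cref{lem:dual2}. The multiplier bound comes from the multiplicativity of the conformal factor along the orbit, which stays in $\ol{\Omega}$: it gives $|f_\kkk'|\ge\cmin^{|\kkk|}$ there, hence $|g_\kkk|=|f_\kkk'|^2\ge(\cmin^2)^{|\kkk|}$, which is rate $\gamma=\cmin^2\in(0,1]$.

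I do not expect a genuine obstacle. The one point needing care is that the matrix-valued peeling identity of \cref{lem:dual2} really descends to the scalar form with the holomorphic square $(f_\kkk')^2$ and no conjugation. This is precisely the computation recorded before \cref{eq:Sline}, so I will cite it rather than redo it.
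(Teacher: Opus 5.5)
Your proposal is correct and matches the paper's proof. It takes membership in $\mathcal{H}(\ol{\Omega})$ and continuity in the word from \cref{lem:dual2}, and it gets condition \cref{it:adm-peel} from \cref{eq:peeling2} read through the planar word operator $\LL^{(2)}_\kkk h=(f_\kkk')^2(h\circ f_\kkk)$ of \cref{eq:wordops}, together with $|f_\kkk'|^2\ge\cmin^{2|\kkk|}$ on $\ol{\Omega}$. The separate limiting step you describe for infinite $\lll$ is unnecessary, because \cref{lem:dual2} already states \cref{eq:peeling2} for all $\lll\in\II^\N\cup\II^*$.
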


\begin{proof}
  By \cref{lem:dual2}, every $S_\lll$ belongs to $\mathcal{H}(\ol{\Omega})$ and $\lll\mapsto S_\lll$ is continuous on $\II^\N\cup\II^*$, which is condition \cref{it:adm-cont}. The peeling identity \cref{eq:peeling2} of \cref{lem:dual2}, read through the planar word operator $\LL^{(2)}_\kkk h=(f_\kkk')^{2}(h\circ f_\kkk)$ of \cref{eq:wordops}, gives $S_{\kkk\lll}=(f_\kkk')^{2}(S_\lll\circ f_\kkk)+S_\kkk$ on $\ol{\Omega}$ for all $\kkk\in\II^*$ and $\lll\in\II^\N\cup\II^*$, and $|f_\kkk'|^{2}\ge\cmin^{2|\kkk|}$ on $\ol{\Omega}$, which is condition \cref{it:adm-peel} with $g_\kkk=(f_\kkk')^{2}$ and $\gamma=\cmin^2$.
\end{proof}

The differentiation step now runs one derivative deeper: the closeness of a composition $f_\iii$ to a comparison map $\psi$ is carried to the closeness of their scalar Schwarzians on $K_\rho$, through the third derivative that the Schwarzian involves. The estimate rests on \cref{lem:pre-diff}, which supplies the lower bound on $\psi'$ and the closeness of the quotients $f_\iii''/f_\iii'$ and $\psi''/\psi'$, and on one further application of \cref{lem:cauchy}; the loss is again exponential in the length of the word, one factor $\cmin^{-n}$ more than in \cref{eq:prediff}. In the following lemma, let $\rho$, $K_\rho$, and $C_0$ be as in \cref{sec:C2-machinery}.

\begin{lemma} \label{lem:schw-diff}
  Fix $\iii\in\II^n$ and let $\psi\in\mathcal{H}(\ol{\Omega})$ satisfy $\|f_\iii-\psi\|\le\eta$ for some $0\le\eta\le\min\{\tfrac{\rho}{2}\cmin^{n},\cmax^{n}\}$. Then
  \begin{equation} \label{eq:schwdiff}
    \sup_{x\in K_\rho}|S^{\C}_{f_\iii}(x)-S^{\C}_\psi(x)| \le C_3\cmin^{-3n}\eta,
  \end{equation}
  where $C_3>0$ depends only on $\rho$ and $C_0$.
\end{lemma}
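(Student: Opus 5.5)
We write $u=f_\iii''/f_\iii'$ and $v=\psi''/\psi'$, so that $S^{\C}_{f_\iii}-S^{\C}_\psi=(u'-v')-\tfrac12(u-v)(u+v)$ on $K_\rho$, and we bound the two terms separately. \Cref{lem:pre-diff} already supplies the needed preliminaries: $|\psi'|\ge\tfrac12\cmin^{n}$ and $|\psi''|\le C_1\cmax^{n}$ on $K_\rho$, and $|u-v|\le(2C_0/\rho+4/\rho^2)\cmin^{-2n}\eta$ there. We also have $|u|\le C_0$ on $\ol{\Omega}$, and $|v|\le 2C_1\cmax^{n}\cmin^{-n}$. So the product term $\tfrac12|u-v||u+v|$ is at most a constant times $\cmin^{-3n}\eta$, using $\cmax^n\le1$.

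For the derivative term we expand
\begin{equation*}
  u'-v'=\frac{f_\iii'''}{f_\iii'}-\frac{\psi'''}{\psi'}-(u^2-v^2),
\end{equation*}
and $u^2-v^2=(u-v)(u+v)$ is handled exactly as above. The remaining quotient difference is written over a common denominator, as in \cref{lem:pre-diff}:
\begin{equation*}
  \frac{f_\iii'''}{f_\iii'}-\frac{\psi'''}{\psi'}=\frac{f_\iii'''(\psi'-f_\iii')+f_\iii'(f_\iii'''-\psi''')}{f_\iii'\psi'}.
\end{equation*}
With $h=f_\iii-\psi$, \cref{lem:cauchy} gives $|h'|\le\rho^{-1}\eta$ and $|h'''|\le6\rho^{-3}\eta$ on $K_\rho$. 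The second summand is then at most $12\rho^{-3}\cmin^{-n}\eta$.

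The one genuinely new input, and the main point to check, is a bound on $f_\iii'''$ on $K_\rho$. The cleanest route avoids a chain-rule expansion: apply \cref{lem:cauchy} with radius $\rho/2$ to $f_\iii''$ on the slightly larger compact set $K_{\rho/2}$. This is legitimate because $\rho/2<\dist(X,\partial\Omega)$ and $|f_\iii''|\le C_0\cmax^{n}$ on all of $\ol{\Omega}$. Since every $x\in K_\rho$ has $B^o(x,\rho)\subseteq\Omega$, the one-derivative Cauchy estimate on the disc $B^o(x,\rho)$ gives $|f_\iii'''(x)|\le\rho^{-1}C_0\cmax^{n}$ directly. The first summand is then at most $\rho^{-1}C_0\cmax^n\cdot\rho^{-1}\eta/(\cmin^{n}\cdot\tfrac12\cmin^{n})\lesssim\cmin^{-2n}\eta$.

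Collecting terms, every contribution is a constant depending only on $\rho$ and $C_0$, with $C_1=C_0+2\rho^{-2}$ absorbed, times $\cmin^{-kn}\eta$ with $k\le3$. Using $\cmin^{-kn}\le\cmin^{-3n}$ yields \cref{eq:schwdiff}. The worst factor comes from $|u-v|$ times $|v|$, which is where the extra $\cmin^{-n}$ relative to \cref{eq:prediff} enters. I expect no real obstacle beyond the third-derivative bound, which the Cauchy estimate on the full disc $B^o(x,\rho)$ settles.
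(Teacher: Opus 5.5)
Your proof is correct and follows the paper's argument. You use the same splitting $S^{\C}_f=f'''/f'-\tfrac32(f''/f')^2$, the same inputs from \cref{lem:pre-diff}, the same Cauchy bounds on $h'$ and $h'''$, and the same source of the extra factor $\cmin^{-n}$ in $|u-v||u+v|$. The one difference is the bound on $f_\iii'''$: you apply \cref{lem:cauchy} directly to $f_\iii''\in\mathcal{H}(\ol{\Omega})$, which gives $\rho^{-1}C_0\cmax^{n}$ (the detour through $K_{\rho/2}$ you first mention is not needed), whereas the paper applies it to $f_\iii''/f_\iii'$ and uses $f_\iii'''=((f_\iii''/f_\iii')'+(f_\iii''/f_\iii')^2)f_\iii'$, arriving at the slightly larger constant $C_2=\rho^{-1}C_0+C_0^2$.
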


\begin{proof}
  By \cref{lem:pre-diff}, $|\psi'|\ge\tfrac12\cmin^{n}$ and $|\psi''|\le C_1\cmax^{n}$ on $K_\rho$, so $\psi$ has nonvanishing derivative on $K_\rho$ and $S^{\C}_\psi$ is defined there. The difference $h=f_\iii-\psi$ lies in $\mathcal{H}(\ol{\Omega})$ with $\|h\|\le\eta$, so \cref{lem:cauchy} gives $|h'|\le\rho^{-1}\eta$ and $|h'''|\le6\rho^{-3}\eta$ on $K_\rho$. On $\ol{\Omega}$ the composition satisfies $\cmin^{n}\le|f_\iii'|\le\cmax^{n}$ and $|f_\iii''|\le C_0\cmax^{n}$, and on $K_\rho$ also $|f_\iii'''|\le(\rho^{-1}C_0+C_0^2)\cmax^{n}=C_2\cmax^{n}$: the quotient $f_\iii''/f_\iii'$ lies in $\mathcal{H}(\ol{\Omega})$ with $|f_\iii''/f_\iii'|\le C_0$, so \cref{lem:cauchy} gives $|(f_\iii''/f_\iii')'|\le\rho^{-1}C_0$ on $K_\rho$, and $f_\iii'''=((f_\iii''/f_\iii')'+(f_\iii''/f_\iii')^2)f_\iii'$. Writing the scalar Schwarzian of \cref{lem:scalar} as $S^{\C}_f=f'''/f'-\tfrac32(f''/f')^2$, by $(f''/f')'=f'''/f'-(f''/f')^2$, we estimate the two parts separately. Over the common denominators,
  \begin{equation*}
    \biggl|\frac{f_\iii'''}{f_\iii'}-\frac{\psi'''}{\psi'}\biggr| \le \frac{|f_\iii'''-\psi'''|}{|\psi'|}+\frac{|f_\iii'''||f_\iii'-\psi'|}{|f_\iii'||\psi'|} \le \frac{12\eta}{\rho^{3}\cmin^{n}} + \frac{2C_2\cmax^{n}\eta}{\rho\cmin^{2n}},
  \end{equation*}
  and, by \cref{eq:prediff} for the first factor,
  \begin{align*}
    \biggl|\biggl(\frac{f_\iii''}{f_\iii'}\biggr)^{2}-\biggl(\frac{\psi''}{\psi'}\biggr)^{2}\biggr| &\le \biggl|\frac{f_\iii''}{f_\iii'}-\frac{\psi''}{\psi'}\biggr|\biggl|\frac{f_\iii''}{f_\iii'}+\frac{\psi''}{\psi'}\biggr| \\
    &\le \biggl(\frac{2C_0}{\rho}+\frac{4}{\rho^{2}}\biggr)\cmin^{-2n}\eta\biggl(C_0+2C_1\biggl(\frac{\cmax}{\cmin}\biggr)^{n}\biggr),
  \end{align*}
  the second factor being at most $C_0+2C_1(\cmax/\cmin)^{n}$ on $K_\rho$, since $|f_\iii''/f_\iii'|\le C_0$ and $|\psi''/\psi'|\le 2C_1(\cmax/\cmin)^{n}$ there. Since $\cmax^{n}\le1$, $\cmin^{-n}\le\cmin^{-2n}\le\cmin^{-3n}$, and $(\cmax/\cmin)^{n}\le\cmin^{-n}$, the first part is at most $(12\rho^{-3}+2C_2\rho^{-1})\cmin^{-3n}\eta$ and the second at most $(C_0+2C_1)(2C_0\rho^{-1}+4\rho^{-2})\cmin^{-3n}\eta$; combining them with the factor $\tfrac32$ gives \cref{eq:schwdiff} with $C_3=12\rho^{-3}+2C_2\rho^{-1}+\tfrac32(C_0+2C_1)(2C_0\rho^{-1}+4\rho^{-2})$.
\end{proof}

We now turn to the proof of \cref{thm:main}\cref{it:main-schw} for $d=2$, the second reduction to \cref{lem:coincidence}. A failure of the strong exponential separation condition modulo M\"obius maps condenses a composition $f_\iii$ super-exponentially onto a M\"obius image $M_\ell\circ f_\jjj$ of another; a first step rules out orientation-reversing $M_\ell$, \cref{lem:schw-diff} then carries the smallness to the Schwarzians, the M\"obius invariance of the Schwarzian identifies $S^{\C}_{M_\ell\circ f_\jjj}$ with $S_\jjj$, and \cref{lem:coincidence}, applied to the family of \cref{lem:schw-admissible}, returns a coincidence that contradicts \cref{eq:mainhyp2}.

\begin{proof}[Proof of \cref{thm:main}\cref{it:main-schw} for $d=2$]
  Suppose to the contrary that $\Phi$ satisfies \cref{eq:mainhyp2} but not the conclusion. Since reversal is a bijection of each $\II^n$, \cref{lem:condensation-mobius} gives a strictly increasing sequence $(n_\ell)_{\ell \ge 1}$ and, for each $\ell$, distinct $\iii_\ell,\jjj_\ell\in\II^{n_\ell}$ and a M\"obius transformation $M_\ell$ whose errors
  \begin{equation*}
    \eta_{n_\ell}=\|f_{\iii_\ell}-M_\ell\circ f_{\jjj_\ell}\|
  \end{equation*}
  satisfy $\log\eta_{n_\ell}/n_\ell\to-\infty$; the dependence of $\iii$ and $\jjj$ on $\ell$ is suppressed below. Write $\psi=M_\ell\circ f_\jjj$ and $h=f_\iii-\psi$. The finiteness of the supremum places the pole of $M_\ell$ off the compact set $f_\jjj(\ol{\Omega})$, so $\psi$ is continuous on $\ol{\Omega}$, and $\psi$ is holomorphic on $\Omega$ when $M_\ell$ preserves orientation and anti-holomorphic when it reverses orientation. Recall that an orientation-reversing M\"obius transformation is $z\mapsto m(\ol z)$ for a holomorphic M\"obius transformation $m$.

  Orientation-reversing $M_\ell$ cannot occur infinitely often. If $\psi$ is anti-holomorphic on $\Omega$, then $\partial_z\psi\equiv0$, so $\partial_zh=f_\iii'$ there. In this case $h$ is smooth but not holomorphic, so \cref{lem:cauchy} does not apply and the bound comes from Stokes' theorem for the form $h\dd\ol z$, whose exterior derivative is $\partial_zh\dd z\wedge\dd\ol z=-2i\partial_zh\dd\LL^2$. Let $x\in K_\rho$ and $0<r<\rho$, so that the closed disc $B(x,r)$ lies in $\Omega$. Rewriting $f_\iii'$ as $\partial_zh$ moves the estimate onto the boundary, where $|h|\le\eta_{n_\ell}$, and the mean value property of the holomorphic $f_\iii'$ over $B^o(x,r)$ and Stokes' theorem on the positively oriented $\partial B^o(x,r)$ give
  \begin{equation*}
    f_\iii'(x)=\frac{1}{\pi r^2}\int_{B^o(x,r)}f_\iii'\dd\LL^2=\frac{1}{\pi r^2}\int_{B^o(x,r)}\partial_zh\dd\LL^2=\frac{i}{2\pi r^2}\oint_{\partial B^o(x,r)}h\dd\ol{z},
  \end{equation*}
  whence $|f_\iii'(x)|\le r^{-1}\eta_{n_\ell}$, and letting $r\to\rho$ gives $|f_\iii'|\le\rho^{-1}\eta_{n_\ell}$ on $K_\rho$, contradicting $|f_\iii'|\ge\cmin^{n_\ell}$ for large $\ell$, the bound $\eta_{n_\ell}$ being super-exponentially small. Discarding finitely many $\ell$ and passing to a subsequence, every $M_\ell$ preserves orientation, so $\psi\in\mathcal{H}(\ol{\Omega})$.

  It remains to differentiate the condensation into the decay hypothesis of \cref{lem:coincidence} for the family $(S_\lll)_{\lll\in\II^\N\cup\II^*}$. Discarding finitely many $\ell$ once more, we may assume $\eta_{n_\ell}\le\min\{\tfrac{\rho}{2}\cmin^{n_\ell},\cmax^{n_\ell}\}$, so \cref{lem:schw-diff} gives $\sup_{x\in K_\rho}|S_\iii(x)-S^{\C}_\psi(x)|\le C_3\cmin^{-3n_\ell}\eta_{n_\ell}=\eta_{n_\ell}'$, and $\log\eta_{n_\ell}'/n_\ell\to-\infty$, the loss being only exponential. Moreover $S^{\C}_\psi=S_\jjj$ on $\Omega$: the classical composition law recorded after \cref{lem:chain}, applied to $\psi=M_\ell\circ f_\jjj$, with $S^{\C}_{M_\ell}\equiv0$ by \cref{lem:mobius,lem:scalar}, gives $S^{\C}_\psi=(S^{\C}_{M_\ell}\circ f_\jjj)(f_\jjj')^2+S^{\C}_{f_\jjj}=S_\jjj$. Hence $\sup_{x\in K_\rho}|S_\iii(x)-S_\jjj(x)|\le\eta_{n_\ell}'$, the compact $K_\rho$ lying in $\Omega$, and \cref{lem:coincidence}, applied with the errors $\eta_{n_\ell}'$ to the family $(S_\lll)_{\lll\in\II^\N\cup\II^*}$, admissible by \cref{lem:schw-admissible}, produces distinct $\ppp,\qqq\in\II^\N\cup\II^*$ with $|\ppp|=|\qqq|$ and $S_{\ppp}\equiv S_{\qqq}$ on $\ol{\Omega}$, which is the desired contradiction with \cref{eq:mainhyp2}.
\end{proof}

\subsection{Comparison of the hypotheses and an example} \label{sec:C2-compare}

The two hypotheses of \cref{thm:main} are related but not interchangeable; the following remark records the relations between them, and the proposition after it shows that in the plane the weaker hypothesis already earns a conclusion beyond the plain condition.

\begin{remark} \label{rem:S3}
  (1) For $d\ge3$ the hypothesis \cref{eq:mainhyp2} cannot hold: by the classification of \cref{sec:conformal-maps} every composition $f_\iii$ is the restriction of a M\"obius transformation, so every Schwarzian projection vanishes identically by \cref{lem:mobius}, and already the length-one words $1$ and $2$ violate \cref{eq:mainhyp2}, while the strong exponential separation condition modulo M\"obius maps fails for every system there by \cref{ex:esc-not-mobius}(1); the Schwarzian is blind exactly where the pre-Schwarzian retains the pole of \cref{eq:pole}, which is why \cref{thm:main} runs on the pre-Schwarzian. 
  
  (2) The hypothesis \cref{eq:mainhyp2} implies \cref{eq:mainhyp}: a coincidence $T_\iii\equiv T_\jjj$ on $\ol{\Omega}$ forces $S_\iii\equiv S_\jjj$ through the identity $S=T'-\tfrac12T^2$ of \cref{lem:dual2}. 
\end{remark}

The weaker hypothesis earns more than the plain condition in the plane. By \cref{lem:kernel}\cref{it:tkernel}, post-composition by a similarity is invisible to the pre-Schwarzian, exactly as post-composition by a M\"obius map is invisible to the Schwarzian, and the proof of \cref{thm:main}\cref{it:main-schw} for $d=2$ runs one weight down on this invariance, with \cref{lem:pre-diff} in place of \cref{lem:schw-diff} and \cref{lem:coincidence} applied to the pre-Schwarzian family of \cref{lem:pre-admissible}. The separation notion matching the kernel of \cref{lem:kernel}\cref{it:tkernel} is the strong exponential separation condition modulo similarities of \cref{sec:esc}, in which the quantifier of \cref{eq:sescmob} runs over the similarities of $\C$, orientation-reversing ones included.

\begin{proposition} \label{prop:similarities}
  Let $\Phi=(\fii_i)_{i\in\II}$ be a conformal iterated function system on $\C$. If \cref{eq:mainhyp} holds for all distinct $\iii,\jjj\in\II^\N\cup\II^*$ with $|\iii|=|\jjj|$, then $\Phi$ satisfies the strong exponential separation condition modulo similarities.
\end{proposition}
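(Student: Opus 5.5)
The plan is to run the proof of \cref{thm:main}\cref{it:main-schw} for $d=2$ one weight down, exactly as the paragraph before the statement indicates. Suppose \cref{eq:mainhyp} holds but the strong exponential separation condition modulo similarities fails. The version of \cref{lem:condensation-mobius} with every $M_\ell$ restricted to similarities, combined with reversal via \cref{eq:frev}, provides a strictly increasing sequence $(n_\ell)$, distinct $\iii_\ell,\jjj_\ell\in\II^{n_\ell}$, and similarities $A_\ell$ of $\C$ such that
\begin{equation*}
  \eta_{n_\ell}=\|f_{\iii_\ell}-A_\ell\circ f_{\jjj_\ell}\|
\end{equation*}
satisfies $\log\eta_{n_\ell}/n_\ell\to-\infty$. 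Write $\psi=A_\ell\circ f_{\jjj_\ell}$. Since $A_\ell$ fixes $\infty$, there is no pole issue, and $\psi$ is continuous on $\ol{\Omega}$.

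The first step is to rule out orientation-reversing $A_\ell$. Such a similarity is $z\mapsto a\ol z+b$, so $\psi$ is anti-holomorphic and $\partial_z(f_\iii-\psi)=f_\iii'$. The Stokes argument from the proof of \cref{thm:main}\cref{it:main-schw} then gives $|f_\iii'|\le\rho^{-1}\eta_{n_\ell}$ on $K_\rho$, which contradicts $|f_\iii'|\ge\cmin^{n_\ell}$ for large $\ell$. After discarding finitely many $\ell$, every $A_\ell$ is $z\mapsto az+b$ with $a\ne0$, so $\psi\in\mathcal{H}(\ol{\Omega})$.

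The second step differentiates the condensation. For large $\ell$ we have $\eta_{n_\ell}\le\min\{\tfrac{\rho}{2}\cmin^{n_\ell},\cmax^{n_\ell}\}$, so \cref{lem:pre-diff} applies and bounds $\sup_{K_\rho}|T_\iii-\psi''/\psi'|$ by $(2C_0/\rho+4/\rho^2)\cmin^{-2n_\ell}\eta_{n_\ell}=\eta'_{n_\ell}$. This bound is still super-exponentially small. Since $\psi'=af_\jjj'$ and $\psi''=af_\jjj''$, we get $\psi''/\psi'=f_\jjj''/f_\jjj'=T_\jjj$; equivalently, this is the composition law with $T_{A_\ell}\equiv0$ from \cref{lem:kernel}\cref{it:tkernel}. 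Hence $\sup_{K_\rho}|T_\iii-T_\jjj|\le\eta'_{n_\ell}$.

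The final step applies \cref{lem:coincidence} to the admissible family of \cref{lem:pre-admissible}. It yields distinct equal-length $\ppp,\qqq$ with $T_\ppp\equiv T_\qqq$ on $\ol{\Omega}$, which contradicts \cref{eq:mainhyp}.

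The only step that needs care is the orientation-reversing case, and it is handled verbatim by the Stokes estimate already in the paper. Everything else is a direct substitution of \cref{lem:pre-diff} for \cref{lem:schw-diff}, so I expect no genuine obstacle.
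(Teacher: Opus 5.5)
Your proposal is correct and follows the paper's proof essentially step for step. Both take the similarity version of the condensation lemma with reversal, discard the orientation-reversing similarities by the Stokes estimate, identify $\psi''/\psi'$ with the pre-Schwarzian projection of the second word to get the bound from the weight-one differentiation lemma, and conclude with the coincidence lemma applied to the admissible pre-Schwarzian family.
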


\begin{proof}
  Suppose to the contrary that $\Phi$ satisfies \cref{eq:mainhyp} for all distinct equal-length words but not the conclusion. Since reversal is a bijection of each $\II^n$, \cref{lem:condensation-mobius} gives, read with the similarities in place of the M\"obius transformations as recorded after its proof, a strictly increasing sequence $(n_\ell)_{\ell\ge1}$ and, for each $\ell$, distinct $\iii_\ell,\jjj_\ell\in\II^{n_\ell}$ and a similarity $A_\ell$ of $\C$ whose errors
  \begin{equation*}
    \eta_{n_\ell}=\|f_{\iii_\ell}-A_\ell\circ f_{\jjj_\ell}\|
  \end{equation*}
  satisfy $\log\eta_{n_\ell}/n_\ell\to-\infty$, the logarithm of $0$ read as $-\infty$; the dependence of $\iii$ and $\jjj$ on $\ell$ is suppressed below. Write $\psi=A_\ell\circ f_\jjj$ and $h=f_\iii-\psi$. A similarity of $\C$ is $z\mapsto az+b$ when it preserves orientation and $z\mapsto a\ol{z}+b$ when it reverses it, in both cases with $a\ne0$, so $\psi$ is continuous on $\ol{\Omega}$ and holomorphic or anti-holomorphic on $\Omega$ accordingly.

  Orientation-reversing $A_\ell$ cannot occur infinitely often. If $\psi$ is anti-holomorphic on $\Omega$, then $\partial_z\psi\equiv0$ there, and the argument in the proof of \cref{thm:main}\cref{it:main-schw} for $d=2$, which uses nothing about the outer map beyond that identity and the bound $\|h\|\le\eta_{n_\ell}$, gives $|f_\iii'|\le\rho^{-1}\eta_{n_\ell}$ on $K_\rho$, contradicting $|f_\iii'|\ge\cmin^{n_\ell}$ for large $\ell$, the bound $\eta_{n_\ell}$ being super-exponentially small. Discarding finitely many $\ell$, every $A_\ell$ is $z\mapsto a_\ell z+b_\ell$ with $a_\ell\ne0$, so $\psi\in\mathcal{H}(\ol{\Omega})$ with $\psi'=a_\ell f_\jjj'$ and $\psi''=a_\ell f_\jjj''$, and hence $\psi''/\psi'=f_\jjj''/f_\jjj'=T_\jjj$ on $\Omega$; this is the similarity invariance of the pre-Schwarzian, the instance of the composition law \cref{eq:Tchain} in which the outer map has vanishing pre-Schwarzian.

  It remains to differentiate the condensation into the decay hypothesis of \cref{lem:coincidence} for the family $(T_\lll)_{\lll\in\II^\N\cup\II^*}$. Discarding finitely many $\ell$ once more, we may assume $\eta_{n_\ell}\le\min\{\tfrac{\rho}{2}\cmin^{n_\ell},\cmax^{n_\ell}\}$, so \cref{lem:pre-diff}, applied to $\psi$, gives $\sup_{x\in K_\rho}|T_\iii(x)-T_\jjj(x)|=\sup_{x\in K_\rho}|f_\iii''(x)/f_\iii'(x)-\psi''(x)/\psi'(x)|\le(2C_0/\rho+4/\rho^2)\cmin^{-2n_\ell}\eta_{n_\ell}=\eta_{n_\ell}'$, and $\log\eta_{n_\ell}'/n_\ell\to-\infty$, the loss being only exponential. By \cref{lem:coincidence}, applied with the errors $\eta_{n_\ell}'$ to the family $(T_\lll)_{\lll\in\II^\N\cup\II^*}$, admissible by \cref{lem:pre-admissible}, there are distinct $\ppp,\qqq\in\II^\N\cup\II^*$ with $|\ppp|=|\qqq|$ and $T_{\ppp}\equiv T_{\qqq}$ on $\ol{\Omega}$, which is the desired contradiction with \cref{eq:mainhyp}.
\end{proof}

In particular, in the plane the weight-one gap criterion \cref{prop:gencrit}\cref{it:gencrit-pre} is a checkable sufficient condition for the strong exponential separation condition modulo similarities, whose range of contraction ratios \cref{rem:gencrit-range} determines exactly. The conclusion of \cref{prop:similarities} lies between the two conclusions of \cref{thm:main}, and it is not implied by the plain condition: the two similarities of \cref{ex:esc-not-mobius}(2) satisfy the strong exponential separation condition, while a translation collapses every equal-length pair.

We will next exhibit a planar system that meets the Schwarzian part of the one-point gap criterion of \cref{prop:gencrit}; by \cref{rem:S3}(2), it then satisfies both hypotheses of \cref{thm:main}. 

\begin{example} \label{ex:C2}
  On $\Omega=B^o(0,1)$, the open unit disc, with $\Omega'=B^o(0,\frac{3}{2})$, take 
  \begin{equation*}
    \fii_1(z)=\frac{z}{10}, \qquad \fii_2(z)=\frac{z}{10}+\frac{z^2}{100}, \qquad \fii_3(z)=\frac{z}{10}+\frac{iz^2}{100}+\frac{4}{5}.
  \end{equation*}
  The three maps are polynomials with $|\fii_1|\le\frac{1}{10}$, $|\fii_2|\le\frac{11}{100}$, and $|\fii_3|\le\frac{91}{100}$ on $B(0,1)$, so they map $B(0,1)$ into $B^o(0,1)$ and $X\subseteq B^o(0,1)$; their derivatives are $\fii_1'(z)=\frac{1}{10}$, $\fii_2'(z)=(5+z)/50$, and $\fii_3'(z)=(5+iz)/50$, which vanish nowhere on $\Omega'$, the zeros lying at $-5$ and $5i$, and satisfy $|\fii_k'|\le\frac{6}{50}$ on $B(0,1)$, so $\Phi=(\fii_1,\fii_2,\fii_3)$ is a conformal iterated function system with $\cmax=\frac{6}{50}$. The pre-Schwarzian projections of the generators are $T_1\equiv0$, $T_2(z)=1/(5+z)$, and $T_3(z)=i/(5+iz)$, and the Schwarzian projections are $S_1\equiv0$, $S_2(z)=-\tfrac32(5+z)^{-2}$, and $S_3(z)=\tfrac32(5+iz)^{-2}$, by $S=T'-\tfrac12T^2$. We have $\|S_2\|=\|S_3\|=\tfrac{3}{32}$ and $\|S_2-S_3\|\ge|S_2(0)-S_3(0)|=\tfrac{3}{25}$, so $\beta=\tfrac{3}{32}$ and $\kappa=1$. Thus the exact ceiling supplied by \cref{eq:gencritrange} is $3^{-1/2}$, which $\cmax=\tfrac{6}{50}$ meets. The threshold in \cref{prop:gencrit}\cref{it:gencrit-schw} is $2\cdot\tfrac{3}{32}\cdot\tfrac{(6/50)^2}{1-(6/50)^2}=\tfrac{27}{9856}$. At $x_{i,j}=0$ the Schwarzian values are $0$, $-\tfrac{3}{50}$, and $\tfrac{3}{50}$, every pairwise gap being at least $\tfrac{3}{50}>\tfrac{27}{9856}$, so \cref{prop:gencrit}\cref{it:gencrit-schw} applies with $\alpha=\tfrac{3}{50}$, and $\Phi$ satisfies \cref{eq:mainhyp2}, hence also \cref{eq:mainhyp} by \cref{rem:S3}(2). By \cref{thm:main}, $\Phi$ therefore satisfies the strong exponential separation condition, plain and modulo M\"obius maps. The system is genuinely complex, $\fii_3$ having a non-real quadratic coefficient, and it has $\fii_1$ and $\fii_2$ sharing the fixed point $0$; it is the planar counterpart of the example on the line that B\'ar\'any, Kolossv\'ary, and Troscheit give after their \cite[Proposition~1.8]{BaranyKolossvaryTroscheit}, of a similar design, a linear map and a quadratic map sharing the fixed point $0$ together with a third quadratic map. The dimension formulas of \cref{thm:dim} apply to $\Phi$ too; this is verified in \cref{ex:C2dim}, once the genericity machinery is in place.
\end{example}

\section{The one-dimensional case} \label{sec:C1}

We turn to the line, the home of \cite[Theorem~1.5]{BaranyKolossvaryTroscheit}; throughout this section $d=1$, so that $\Omega$ is a bounded open interval and $\ol{\Omega}$ a compact nondegenerate interval. Conformality by itself carries no regularity, so the real-analyticity of the generators on $\Omega'$ is part of the definition of \cref{sec:cifs-esc}, and it buys exactly what the plane had for free: a holomorphic extension to a complex neighborhood of $\ol{\Omega}$, uniform over the finitely many generators. Both planar results of \cref{sec:C2} descend along this extension. The case $d=1$ of \cref{thm:main}\cref{it:main-pre} is a corollary of the planar case: the system lifts to such a neighborhood, the planar case applies to the lift, and the separation descends back to the interval; the case $d=1$ of \cref{thm:main}\cref{it:main-schw} descends through the same lift and one classical determinant identity. The route through the plane also simplifies the original argument on the line: one Cauchy estimate performs at once, and at a loss linear in the error, the differentiations that a Taylor estimate on the interval carries out order by order at square-root losses, drawing its derivative bounds from a Fa\`a di Bruno apparatus, and the disc of \cref{lem:disc} makes the endgame free of cases, as \cref{rem:cleaner} sets out. In \cref{sec:C1-lift} we record the two preparations that serve both descents, the lift itself and the two-constants theorem of potential theory, which is where the price of the route is paid; \cref{sec:C1-pre} and \cref{sec:C1-mob} then carry the two planar results down, at weight one and at weight two, and \cref{sec:C1-compare} places the weight-one result and its proof beside those of B\'ar\'any, Kolossv\'ary, and Troscheit.

\subsection{The complex lift and the two-constants theorem} \label{sec:C1-lift}

The neighborhoods $\Omega_r=\{z\in\C : \dist(z,\ol{\Omega})<r\}$ of \cref{lem:thicken}, read here with $E=\C$, are taken about a compact interval, so each is a convex bounded domain whose closure is $\{z\in\C : \dist(z,\ol{\Omega})\le r\}$.

\begin{lemma} \label{lem:lift}
  Let $\Phi=(\fii_i)_{i\in\II}$ be a conformal iterated function system on $\R$ with self-conformal set $X$. There are $\delta>0$ and $\lambda\in(0,1)$ such that every generator extends holomorphically to a neighborhood of $\ol{\Omega_\delta}$, satisfies $0<|\fii_i'|\le \lambda$ on $\ol{\Omega_\delta}$, and maps $\ol{\Omega_\delta}$ into $\Omega_\delta$ and $\ol{\Omega_{\delta/2}}$ into $\Omega_{\delta/2}$. The extensions restricted to $\Omega_{\delta/2}$ form a conformal iterated function system $\Phi_\C$ on the domain $\Omega_{\delta/2} \subset \C$, conformal on $\Omega_\delta$, whose attractor is $X$. The compositions of $\Phi_\C$ restrict on $\ol{\Omega}$ to those of $\Phi$, together with their derivatives, and so do the dual projections at both weights: $T^{\Phi_\C}_\iii=T_\iii$ and $S^{\Phi_\C}_\iii=S_\iii$ on $\ol{\Omega}$ for all $\iii\in\II^\N\cup\II^*$.
\end{lemma}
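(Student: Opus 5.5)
The plan is to read off almost everything from \cref{lem:thicken}, applied with $d=1$ and $E=\C$, and then to choose $\delta$ small. That lemma supplies an open set $\Omega^{\C}\supseteq\ol{\Omega}$ on which every generator and every $T_{\fii_i}=\fii_i''/\fii_i'$ extends holomorphically with nonvanishing derivative. It also supplies $\lambda\in(\cmax,1)$ and $r>0$ with $\ol{\Omega_r}\subseteq\Omega^{\C}$ compact, $|\fii_i'|\le\lambda$ on $\Omega_r$, and $\dist(\fii_i(z),\ol{\Omega})\le\lambda\dist(z,\ol{\Omega})$ on $\Omega_r$. I would take $\delta=r/2$. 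Then $\ol{\Omega_\delta}\subseteq\Omega_r\subseteq\Omega^{\C}$, so each generator is holomorphic on the neighborhood $\Omega_r$ of $\ol{\Omega_\delta}$. On the compact set $\ol{\Omega_\delta}$ we have $0<|\fii_i'|\le\lambda$. The inclusions $\fii_i(\ol{\Omega_s})\subseteq\Omega_s$ for $s=\delta$ and $s=\delta/2$ are \cref{lem:thicken}\cref{it:thick-contract}, since both are below $r$.

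Next I verify that $\Phi_\C$, the extensions on $\Omega_{\delta/2}$, is a conformal iterated function system in the sense of \cref{sec:cifs}. The domain $\Omega_{\delta/2}$ is bounded, convex and hence connected. The generators send its closure into itself, and they are holomorphic with nonvanishing derivative on the domain $\Omega_\delta\supseteq\ol{\Omega_{\delta/2}}$, so they are conformal there by the planar classification of \cref{sec:conformal-maps}, with $\Omega_\delta$ as extension domain. Their derivatives have modulus at most $\lambda<1$ on $\ol{\Omega_{\delta/2}}$.

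For the attractor, note that $X$ is a nonempty compact subset of $\ol{\Omega}\subseteq\ol{\Omega_{\delta/2}}$ with $X=\bigcup_i\fii_i(X)$, the extensions agreeing with the real generators on the real slice by \cref{lem:thicken}\cref{it:thick-slice}. The uniqueness statement of \cref{sec:cifs-measure}, which is Hutchinson's theorem applied to long compositions, then identifies $X$ as the attractor of $\Phi_\C$.

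Finally, I would compare compositions and projections. The compositions $f_\iii$ of $\Phi_\C$ are compositions of the extensions, and the partial orbits of points of $\ol{\Omega}$ stay in $\ol{\Omega}$. Since the extensions coincide with the real generators on the real slice, $f^{\Phi_\C}_\iii=f_\iii$ on $\ol{\Omega}$. The complex derivatives also agree there, because a holomorphic function that is real on a real interval has complex derivative equal to the real one along it; inductively this holds for $f_\iii'$, $f_\iii''$ and $f_\iii'''$. Hence, for finite words, $T^{\Phi_\C}_\iii=f_\iii''/f_\iii'=T_\iii$ and $S^{\Phi_\C}_\iii=S_\iii$ on $\ol{\Omega}$. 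For infinite words both sides are uniform limits on $\ol{\Omega}$ of the prefix projections, by \cref{lem:dual} and \cref{lem:dual2} applied to each system, so they agree as well.

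The main obstacle is bookkeeping rather than mathematics. One has to make sure that the planar norms $\|\cdot\|$ of $\Phi_\C$, which are suprema over $\ol{\Omega_{\delta/2}}$, are never confused with those over $\ol{\Omega}$. The statement only asserts equality on $\ol{\Omega}$, so this is safe.
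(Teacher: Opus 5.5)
Your proposal is correct and follows the paper's proof essentially step by step. You take \cref{lem:thicken} with $\delta<r$ (you choose $\delta=r/2$) to get the extensions, the derivative bounds and the two invariance inclusions; you then get agreement of compositions and their derivatives on $\ol{\Omega}$ through the real slice, and pass to infinite words by the uniform limits of \cref{lem:dual,lem:dual2}.

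The one variation is in identifying the attractor. You invoke the long-composition uniqueness of \cref{sec:cifs-measure}, whereas the paper uses the convexity of $\Omega_{\delta/2}$ to make each extension a $\lambda$-Lipschitz self-map of $\ol{\Omega_{\delta/2}}$ and applies Hutchinson's theorem directly; both arguments are valid.
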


\begin{proof}
  By \cref{lem:thicken}, there are $r>0$ and $\lambda$ with $\cmax<\lambda<1$ such that every generator extends holomorphically to a neighborhood of $\ol{\Omega_r}$, satisfies $0<|\fii_i'|\le \lambda$ on $\Omega_r$, agrees on the real slice $\Omega_r\cap\R$ with the original real generator, which it maps into itself, and satisfies $\fii_i(\ol{\Omega_s})\subseteq\Omega_s$ for every $0<s<r$. Choose $0<\delta<r$; the last inclusion, read at the radii $\delta$ and $\delta/2$, gives $\fii_i(\ol{\Omega_\delta})\subseteq\Omega_\delta$ and $\fii_i(\ol{\Omega_{\delta/2}})\subseteq\Omega_{\delta/2}$. The extensions restricted to $\Omega_{\delta/2}$ send $\ol{\Omega_{\delta/2}}$ into $\Omega_{\delta/2}$, satisfy $|\fii_i'|\le \lambda<1$ there, and extend conformally to $\Omega_\delta\supseteq\ol{\Omega_{\delta/2}}$. The neighborhood $\Omega_{\delta/2}$ is a convex bounded domain, so the derivative bound integrates along segments to the Lipschitz bound $|\fii_i(z)-\fii_i(w)|\le \lambda|z-w|$ on $\ol{\Omega_{\delta/2}}$, and each extension is a contraction of the complete compact space $\ol{\Omega_{\delta/2}}$ into itself. The original attractor $X$ is a nonempty compact invariant subset of this space, since $X=\bigcup_{i \in \II}\fii_i(X)$ and $X\subseteq\Omega\subseteq\Omega_{\delta/2}$, so uniqueness for contractive systems shows that the lifted system has attractor $X$; see \cite{Hutchinson}. Thus the restrictions form a conformal iterated function system $\Phi_\C$ with $d=2$.

  For a finite word the partial orbits of $\ol{\Omega}$ stay in $\ol{\Omega}$, so the reversed composition $f_\iii$ of $\Phi_\C$ restricts on $\ol{\Omega}$ to that of $\Phi$, together with its derivatives, and likewise the forward composition $\fii_\iii$. The $d=1$ and $d=2$ dual projections of \cref{sec:dual-ifs} are read by the same formula, so $T^{\Phi_\C}_\iii=f_\iii''/f_\iii'=T_\iii$ on $\ol{\Omega}$, and at weight two both projections are computed from the common restriction by the same formula $S_\iii=f_\iii'''/f_\iii'-\tfrac32(f_\iii''/f_\iii')^2$, the classical Schwarzian recorded in \cref{sec:schwarzian-defs} for the line and in the proof of \cref{lem:schw-diff} for the plane, so $S^{\Phi_\C}_\iii=S_\iii$ on $\ol{\Omega}$. For an infinite word both sides of either identity are the uniform limits of their finite-word prefixes, by \cref{lem:dual}, respectively \cref{lem:dual2}, applied to $\Phi_\C$ and to $\Phi$, so the identities pass to the limit.
\end{proof}

The lift carries the hypotheses of \cref{thm:main} upward at no cost: the dual projections of $\Phi_\C$ extend those of $\Phi$ by \cref{lem:lift}, so the suprema in \cref{eq:mainhyp,eq:mainhyp2} for $\Phi_\C$, taken over the larger set $\ol{\Omega_{\delta/2}}$, dominate those for $\Phi$, and positivity survives. The conclusion does not come down by restriction. The planar case bounds from below the supremum of $|\fii_\iii-\fii_\jjj|$ over $\ol{\Omega_{\delta/2}}$, the closure of the domain of $\Phi_\C$, whereas the separation condition for $\Phi$ concerns the supremum over the smaller set $\ol{\Omega}$, and passing to a subset can only decrease a supremum. No constant closes the gap either, a holomorphic function being possibly far smaller on an interval than at a fixed distance from it: for large $N$ the function $z\mapsto e^{-N\delta}\cos(Nz)$ is at most $e^{-N\delta}$ on $\R$ but of modulus at least $\tfrac12(1-e^{-2N\delta})$ at height $\delta$. What closes it is the two-constants theorem: it converts smallness on an interval into smallness on a surrounding complex neighborhood at the cost of a fixed positive power, so, read contrapositively, it returns a lower bound on that neighborhood to a lower bound on the interval, and that power is the price the route through the plane pays in the separation constant.

\begin{lemma} \label{lem:twoconst}
  Let $\delta>0$. There is $\theta=\theta(\Omega,\delta)\in(0,1]$ such that every bounded holomorphic $h$ on $\Omega_\delta$ satisfies
  \begin{equation} \label{eq:twoconst}
    \sup_{z\in\ol{\Omega_{\delta/2}}}|h(z)| \le m^{\theta}M^{1-\theta},
  \end{equation}
  where $m=\sup_{x\in\ol{\Omega}}|h(x)|$ and $M=\sup_{z\in\Omega_\delta}|h(z)|$.
\end{lemma}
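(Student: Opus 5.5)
The plan is to derive \cref{eq:twoconst} from the subharmonicity of $\log|h|$ on the domain $D=\Omega_\delta\setminus\ol{\Omega}$, compared against the harmonic measure of the slit $\ol{\Omega}$. Write $\Omega=(a,b)$, so that $\Omega_\delta$ is the stadium of points within distance $\delta$ of $[a,b]$. Set $D=\Omega_\delta\setminus[a,b]$; this is a bounded domain, connected because the slit does not separate the stadium. Let $\omega(z)$ be the harmonic measure of $[a,b]$ relative to $D$, that is, the bounded harmonic function on $D$ with boundary values $1$ on the slit and $0$ on $\partial\Omega_\delta$. It can be obtained from the Perron solution. Every boundary point of $D$ is regular, since the slit is a continuum and the stadium boundary is a Jordan curve. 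Hence $\omega$ extends continuously to $\ol D$, and $0<\omega<1$ on $D$.

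Step one is the comparison. We may assume $m>0$ and $M<\infty$; the case $m=0$ forces $h\equiv0$ by the identity theorem, since $h$ vanishes on an interval. The function $u=\log|h|-\omega\log m-(1-\omega)\log M$ is subharmonic on $D$ and bounded above. For the boundary limits: at points of $\partial\Omega_\delta$, $\limsup\log|h|\le\log M$ and $\omega\to0$. At points of the slit, $h$ is continuous with $|h|\le m$ and $\omega\to1$. So $\limsup u\le0$ at every boundary point, and the maximum principle gives $u\le0$. This yields $|h(z)|\le m^{\omega(z)}M^{1-\omega(z)}$ on $D$, and trivially $|h|\le m$ on the slit itself.

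Step two extracts a uniform exponent. Put $\theta=\min\{\omega(z): z\in\ol{\Omega_{\delta/2}}\setminus[a,b]\}$, extended by the value $1$ on the slit. Since $\omega$ is continuous on $\ol D$ and equals $1$ on the slit, the function $\omega$ is continuous on the compact set $\ol{\Omega_{\delta/2}}$. That set lies at positive distance $\delta/2$ from $\partial\Omega_\delta$, so $\omega>0$ there, and therefore $\theta\in(0,1]$ is attained. If $m\le M$, then $m^{\omega}M^{1-\omega}\le m^{\theta}M^{1-\theta}$ whenever $\omega\ge\theta$, because the expression is decreasing in $\omega$. And $m\le M$ holds automatically, since $[a,b]\subseteq\Omega_\delta$. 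This proves \cref{eq:twoconst}, with $\theta$ depending only on the geometry of $\Omega$ and $\delta$.

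The main obstacle is the boundary regularity behind step one: $\omega$ must attain its boundary values, and the maximum principle must be applied with only $\limsup$ information. If one wants to avoid Perron theory, an alternative is to construct $\omega$ explicitly. The map $z\mapsto\sqrt{(z-a)/(z-b)}$ sends $\C\setminus[a,b]$ conformally onto a half-plane, where harmonic measures are explicit. One then obtains a positive lower bound for the harmonic measure of the slit relative to a larger disc containing $\Omega_\delta$, and monotonicity of harmonic measure in the domain transfers this bound to $D$. I would try the Perron route first and fall back on this explicit lower bound if needed; only positivity of $\theta$ matters.
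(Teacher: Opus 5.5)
Your proposal is correct and follows the paper's proof essentially step for step. Both arguments use the harmonic measure $\omega$ of $\ol{\Omega}$ in $D=\Omega_\delta\setminus\ol{\Omega}$, obtained from the Dirichlet problem because every complementary component is a nondegenerate continuum, apply the maximum principle to the subharmonic function $\log|h|-\omega\log m-(1-\omega)\log M$, and then take $\theta=\min_{\ol{\Omega_{\delta/2}}}\omega>0$ and use that $m^{\omega}M^{1-\omega}$ is nonincreasing in $\omega$ when $m\le M$, so the explicit conformal-map fallback you mention is not needed.
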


\begin{proof}
  We may assume $m>0$ and $M>0$, since $M=0$ makes \cref{eq:twoconst} trivial and $m=0$ forces $h\equiv0$ on the connected $\Omega_\delta$ by the identity theorem; also $m\le M$, the interval $\ol{\Omega}$ lying in $\Omega_\delta$. Let $D=\Omega_\delta\setminus\ol{\Omega}$ and write $\ol{\Omega}=[a,b]$. Its closure is $\ol{\Omega_\delta}$ and its boundary is the disjoint union of $\ol{\Omega}$ and $\partial\Omega_\delta$, two closed sets a distance $\delta$ apart. Since $\ol{\Omega}$ lies on the real axis, the intersections of $D$ with the open upper and the open lower half-plane are the intersections of $\Omega_\delta$ with them, hence open and convex, and the remaining points of $D$ are the real ones, the two intervals $(a-\delta,a)$ and $(b,b+\delta)$. Each of these intervals lies in the closure of both halves, so adjoining it to either of them leaves a connected set, and the two halves are joined inside $D$ by passing around either endpoint of $\ol{\Omega}$. Thus $D$ is a bounded domain.

  In the Riemann sphere the complement $\RS^2\setminus D$ is the disjoint union of $\ol{\Omega}$ and $\RS^2\setminus\Omega_\delta$, each connected and neither a single point. A boundary point of a planar domain is regular for the Dirichlet problem in the sense of \cite[Definition~4.1.4]{Ransford} as soon as the connected component of the complement containing it has more than one point, a criterion special to the plane with no analogue in higher dimensions, where a component may be thin at one of its points; see \cite[Theorem~4.2.2]{Ransford}. Every point of $\partial D$ is therefore regular, and \cite[Theorem~4.1.5]{Ransford} then gives solvability of the Dirichlet problem on $D$: every continuous function on $\partial D$ is the restriction of a function harmonic on $D$ and continuous on $\ol{D}$. The two pieces of $\partial D$ are disjoint and closed, so prescribing the value $1$ on $\ol{\Omega}$ and the value $0$ on $\partial\Omega_\delta$ defines such a continuous function, and we write $\omega$ for the harmonic function it produces, the harmonic measure of $\ol{\Omega}$ in $D$. The weak maximum principle \cite[Theorem~1.1.8(b)]{Ransford} gives $0\le\omega\le1$ on $\ol{D}$. Were $\omega$ to take the value $0$ or the value $1$ somewhere in $D$, it would be constant there, against its two boundary values, so $0<\omega<1$ on $D$.

  Recall that a function with values in $[-\infty,\infty)$ is \emph{subharmonic} if it is upper semicontinuous and is at most its own mean over every sufficiently small circle about each point, that $\log|h|$ is subharmonic for holomorphic $h$, with the value $-\infty$ at the zeros of $h$, and that adding a harmonic function preserves subharmonicity. Accordingly $u=\log|h|-\omega\log m-(1-\omega)\log M$ is subharmonic on $D$, and $\log|h|\le\log M$ there bounds it above by $\log(M/m)$. The limit superior of $u$ is nonpositive at every point of $\partial D$. Every limit here is taken along points of $D$, where $\omega$ is strictly between $0$ and $1$, and it reaches the boundary value of $\omega$ because $\omega$ is continuous on $\ol{D}$. At a point of $\ol{\Omega}$ the function $h$ is continuous and bounded by $m$ on $\ol{\Omega}$, so $\log|h|$ has limit superior at most $\log m$, while $\omega\to1$; hence $u$ has limit superior at most $\log m-\log m=0$. At a point of $\partial\Omega_\delta$ the bound $|h|\le M$ holds throughout $\Omega_\delta$ and $\omega\to0$, so $u$ has limit superior at most $\log M-\log M=0$. The maximum principle for subharmonic functions, which makes the function $u$ nonpositive throughout a bounded domain, therefore gives $|h|\le m^{\omega}M^{1-\omega}$ on $D$; see \cite[Theorem~2.3.1(b)]{Ransford}. This bound, interpolating the two constants $m$ and $M$ by the harmonic measure, is the two-constants theorem.

  The function $\omega$ is already defined and continuous on $\ol{D}=\ol{\Omega_\delta}$, with the value $1$ on $\ol{\Omega}$, and it vanishes only on $\partial\Omega_\delta$. The compact $\ol{\Omega_{\delta/2}}$ does not meet that set, its points lying at distance at most $\delta/2$ from $\ol{\Omega}$, so $\omega$ is positive on it and $\theta=\min_{\ol{\Omega_{\delta/2}}}\omega$ is positive. Moreover $\theta<1$, because $\ol{\Omega_{\delta/2}}$ meets $D$, for instance at $a+i\delta/4$, where $\omega<1$. Since $m\le M$, the identity $m^{\omega}M^{1-\omega}=m^{\theta}M^{1-\theta}(m/M)^{\omega-\theta}$ shows that $m^{\omega}M^{1-\omega}\le m^{\theta}M^{1-\theta}$ wherever $\omega\ge\theta$, which covers $D\cap\ol{\Omega_{\delta/2}}$, and on $\ol{\Omega}$ itself $|h|\le m\le m^{\theta}M^{1-\theta}$ directly. These two sets cover $\ol{\Omega_{\delta/2}}$, and \cref{eq:twoconst} follows.
\end{proof}

\subsection{Separation by the pre-Schwarzian} \label{sec:C1-pre}

We now prove the one-dimensional case: the planar case of \cref{thm:main} applies to the lift of \cref{lem:lift}, and \cref{lem:twoconst} returns the resulting separation from the complex neighborhood to the line.

\begin{proof}[Proof of \cref{thm:main}\cref{it:main-pre} for $d=1$]
  Let $\delta$ and $\Phi_\C$ be as in \cref{lem:lift}. The hypothesis transfers upward: for all distinct $\iii,\jjj\in\II^\N\cup\II^*$ with $|\iii|=|\jjj|$, the identity $T^{\Phi_\C}_\iii=T_\iii$ on $\ol{\Omega}$ of \cref{lem:lift} gives $\sup_{x\in\ol{\Omega_{\delta/2}}}|T^{\Phi_\C}_\iii(x)-T^{\Phi_\C}_\jjj(x)|\ge\|T_\iii-T_\jjj\|>0$, so $\Phi_\C$ satisfies \cref{eq:mainhyp}, and the planar case of \cref{thm:main}, proved in \cref{sec:C2}, provides $c>0$ with $\sup_{z\in\ol{\Omega_{\delta/2}}}|\fii_\iii(z)-\fii_\jjj(z)|\ge c^n$ for all $n$ and all distinct $\iii,\jjj\in\II^n$, the compositions of $\Phi_\C$ extending those of $\Phi$.

  The conclusion transfers downward by \cref{lem:twoconst}. For distinct $\iii,\jjj\in\II^n$, let $h=\fii_\iii-\fii_\jjj$ for the lifted compositions. By \cref{lem:lift}, every lifted generator maps $\ol{\Omega_\delta}$ into $\Omega_\delta$, and hence every lifted composition does so and is holomorphic on $\Omega_\delta$. Thus, with $R=\sup_{z\in\Omega_\delta}|z|$, we have $M=\sup_{z\in\Omega_\delta}|h(z)|\le2R$. On $\ol{\Omega}$ the restriction identity gives $m=\sup_{x\in\ol{\Omega}}|h(x)|=\|\fii_\iii-\fii_\jjj\|$ for the original system. \Cref{lem:twoconst} gives $c^n\le\sup_{z\in\ol{\Omega_{\delta/2}}}|h(z)|\le m^{\theta}(2R)^{1-\theta}$, so in particular $m>0$, and rearranging gives $m\ge Ac_1^n$ with $c_1=c^{1/\theta}$ and $A=(2R)^{-(1-\theta)/\theta}$; since $Ac_1^n\ge(Ac_1)^n$ for $n\ge1$ when $A<1$, and $Ac_1^n\ge c_1^n$ otherwise, the constant $c_2=c_1\min\{A,1\}$ satisfies $\|\fii_\iii-\fii_\jjj\|\ge c_2^n$ for all $n$ and all distinct $\iii,\jjj\in\II^n$. This is the strong exponential separation condition for $\Phi$.
\end{proof}

\subsection{Separation modulo M\"obius maps by the Schwarzian} \label{sec:C1-mob}

The Schwarzian theorem descends as well. For $d=1$ the M\"obius transformations of $\RS^1=\R\cup\{\infty\}$, the finite compositions of similarities and inversions as in \cref{sec:schwarzian}, are exactly the real fractional-linear maps $x\mapsto(ax+b)/(cx+d)$ with $a,b,c,d\in\R$ and $ad-bc\ne0$, orientation-reversing ones included: similarities and inversions are of this form, the fractional-linear maps form a group under composition, and, conversely, for $c\ne0$ the map $x\mapsto(ax+b)/(cx+d)$ equals $x\mapsto a/c-(ad-bc)/(c^2(x+d/c))$, the composition of a similarity with an inversion, while for $c=0$ it is a similarity. The descent is more delicate than for \cref{thm:main}: the finiteness of the norm in \cref{eq:sescmob} keeps the pole $-d/c$ of a competitor $M$ off the real image $f_\jjj(\ol{\Omega})$ only, so the pole may approach, or enter, the complex neighborhood on which \cref{lem:twoconst} would need $f_\iii-M\circ f_\jjj$ to stay bounded, and competitors with $ad-bc$ close to $0$ defeat every bound on $M$ alone. Clearing denominators removes the pole: the numerator $cf_\iii f_\jjj+df_\iii-af_\jjj-b$ is holomorphic and uniformly bounded on the neighborhood, whatever the pole does, and the following identity converts its smallness, through three derivatives, into closeness of the Schwarzian projections, with no case distinction on $M$. The identity is a Wronskian one: the numerator is the linear combination of the four functions $f_\iii f_\jjj$, $f_\iii$, $f_\jjj$, and the identity with the coefficients $c$, $d$, $-a$, and $-b$ of $M$, and the determinant it evaluates is the Wronskian of these same four functions, formed from them and their first three derivatives. A relation $f_\iii=M\circ f_\jjj$ is exactly a linear dependence among the four, which makes their Wronskian vanish identically, and the identity locates that vanishing in the difference of the two Schwarzian projections.

\begin{lemma} \label{lem:wronskian}
  Let $U\subseteq\C$ be a domain and let $u,v\colon U\to\C$ be holomorphic with nonvanishing derivatives. Then
  \begin{equation} \label{eq:wronskian}
    \det
    \begin{pmatrix}
      uv & u & v & 1 \\
      (uv)' & u' & v' & 0 \\
      (uv)'' & u'' & v'' & 0 \\
      (uv)''' & u''' & v''' & 0
    \end{pmatrix}
    = -2(u')^2(v')^2(S^{\C}_u-S^{\C}_v)
  \end{equation}
  on $U$.
\end{lemma}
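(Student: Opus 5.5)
The plan is to expand the $4\times4$ determinant along its last column, which has the single nonzero entry $1$ in the first row. This reduces the left side of \cref{eq:wronskian} to $-\det W$, where $W$ is the $3\times3$ matrix with rows $((uv)^{(k)},u^{(k)},v^{(k)})$ for $k=1,2,3$; the sign is $(-1)^{1+4}=-1$. By the Leibniz rule, $(uv)^{(k)}=u^{(k)}v+uv^{(k)}+R_k$, with $R_1=0$, $R_2=2u'v'$, and $R_3=3(u''v'+u'v'')$. Subtracting $v$ times the second column and $u$ times the third from the first column leaves the determinant unchanged and replaces the first column by $(0,R_2,R_3)^{\top}$.

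Next I expand this reduced matrix along its first column:
\begin{equation*}
  \det W = -R_2(u'v'''-u'''v')+R_3(u'v''-u''v').
\end{equation*}
Substituting $R_2$ and $R_3$ should give
\begin{equation*}
  \det W = -2u'v'(u'v'''-u'''v')+3(u''v'+u'v'')(u'v''-u''v') = -2u'v'(u'v'''-u'''v')+3\bigl((u'v'')^2-(u''v')^2\bigr).
\end{equation*}

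The last step is to compare this with $2(u')^2(v')^2(S^{\C}_u-S^{\C}_v)$. I will use the form $S^{\C}_f=f'''/f'-\tfrac32(f''/f')^2$ already recorded in the proof of \cref{lem:schw-diff}. Multiplying out, $2(u')^2(v')^2S^{\C}_u=2u'''u'(v')^2-3(u'')^2(v')^2$, and symmetrically for $v$. The difference is $2u'v'(u'''v'-u'v''')-3(u''v')^2+3(u'v'')^2$, which equals $\det W$. Hence the left side of \cref{eq:wronskian} is $-\det W=-2(u')^2(v')^2(S^{\C}_u-S^{\C}_v)$, as claimed.

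The nonvanishing of $u'$ and $v'$ is used only to define the two Schwarzians; the identity itself is polynomial. The only real risk is a sign or bookkeeping slip, either in the cofactor sign of the first expansion or in the orientation of the column operation. I will therefore sanity-check the result on $u=v$, where both sides vanish, and on $v(z)=z$, $u(z)=z^2$. In that test case the determinant can be computed directly, and it should equal $-2\cdot4z^2\cdot(-\tfrac32z^{-2})=12$.
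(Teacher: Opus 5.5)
Your proposal is correct and is essentially the paper's own proof. Both arguments expand along the fourth column to get $-\det W$, clear the first column by the operation $C_1\mapsto C_1-vC_2-uC_3$ to leave $(0,2u'v',3(u''v'+u'v''))^{\top}$, expand along that column, and match the result against $(f')^2S^{\C}_f=f'f'''-\tfrac32(f'')^2$; your cofactor signs are right, and the test case $u=z^2$, $v=z$ does give determinant $12$.
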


\begin{proof}
  Write $W$ for the matrix of \cref{eq:wronskian}. Expanding along the fourth column, whose only nonzero entry is the $1$ in the first row, gives $\det W=-\det W_3$, where $W_3$ consists of the last three rows of the first three columns. The product rule gives $(uv)'=u'v+uv'$, $(uv)''=u''v+2u'v'+uv''$, and $(uv)'''=u'''v+3u''v'+3u'v''+uv'''$, so subtracting from the first column of $W_3$ the second column multiplied by $v$ and the third column multiplied by $u$, which at every point of $U$ leaves the determinant unchanged, replaces it by $(0,2u'v',3(u''v'+u'v''))^{\top}$. Expanding along this column and evaluating the two $2\times2$ determinants,
  \begin{align*}
    \det W_3 &= -2u'v'(u'v'''-v'u''')+3(u''v'+u'v'')(u'v''-v'u'') \\
    &= 2(v')^2u'u'''-2(u')^2v'v'''+3(u')^2(v'')^2-3(u'')^2(v')^2 \\
    &= 2(v')^2(u'u'''-\tfrac32(u'')^2)-2(u')^2(v'v'''-\tfrac32(v'')^2),
  \end{align*}
  and since $S^{\C}_u=(u''/u')'-\tfrac12(u''/u')^2=u'''/u'-\tfrac32(u''/u')^2$, so that $(u')^2S^{\C}_u=u'u'''-\tfrac32(u'')^2$, the last line is $2(u')^2(v')^2(S^{\C}_u-S^{\C}_v)$. Hence $\det W=-2(u')^2(v')^2(S^{\C}_u-S^{\C}_v)$.
\end{proof}

The identity of \cref{lem:wronskian} is the conversion promised above. With it, the case $d=1$ of \cref{thm:main}\cref{it:main-schw} follows by the lift-and-descend of the one-dimensional case of \cref{thm:main}\cref{it:main-pre}, now read one weight higher.

\begin{proof}[Proof of \cref{thm:main}\cref{it:main-schw} for $d=1$]
  Suppose to the contrary that $\Phi$ satisfies \cref{eq:mainhyp2} for all distinct equal-length words but not the conclusion. Since reversal is a bijection of each $\II^n$, \cref{lem:condensation-mobius} provides, in the reversed notation, a strictly increasing sequence $(n_\ell)_{\ell \ge 1}$ and, for each $\ell$, distinct $\iii_\ell,\jjj_\ell\in\II^{n_\ell}$ and a M\"obius transformation $M_\ell$ of $\RS^1$ whose errors $\eta_\ell=\|f_{\iii_\ell}-M_\ell\circ f_{\jjj_\ell}\|$ satisfy $\log\eta_\ell/n_\ell\to-\infty$, the logarithm of $0$ read as $-\infty$; the dependence of $\iii$ and $\jjj$ on $\ell$ is suppressed below. After discarding finitely many indices, we may assume $\eta_\ell<\infty$, the logarithm of $\infty$ being $\infty$. By the fractional-linear form recorded before \cref{lem:wronskian}, $M_\ell(x)=(ax+b)/(cx+d)$ with $a,b,c,d\in\R$ and $ad-bc\ne0$, and, multiplying the four coefficients by a common nonzero real, we may assume $\max\{|a|,|b|,|c|,|d|\}=1$. The finiteness of $\eta_\ell$ places the pole of $M_\ell$ off the compact $f_\jjj(\ol{\Omega})$, so $cf_\jjj+d$ vanishes nowhere on $\ol{\Omega}$, for $c=0$ because $ad-bc\ne0$ then forces $d\ne0$.

  Let $\delta$, $\lambda$, and $\Phi_\C$ be as in \cref{lem:lift}, let $\theta=\theta(\Omega,\delta)\in(0,1]$ be the exponent of \cref{lem:twoconst}, and put $R=\sup_{z\in\Omega_\delta}|z|$. By \cref{lem:lift}, every lifted composition is holomorphic on $\Omega_\delta$ with values in $\Omega_\delta$, the generators being holomorphic on an open set containing $\ol{\Omega_\delta}$ and mapping $\ol{\Omega_\delta}$ into $\Omega_\delta$, and restricts on $\ol{\Omega}$ to the corresponding composition of $\Phi$; since $B^o(x,\delta)\subseteq\Omega_\delta$ for $x\in\ol{\Omega}$, \cref{lem:cauchy} gives $|f_\iii^{(k)}(x)|\le k!\delta^{-k}R\le C_\delta$ for $0\le k\le3$, every $x\in\ol{\Omega}$, and every finite word $\iii$, with $C_\delta=6\max\{1,\delta^{-3}\}\max\{1,R\}$. Moreover, by \cref{lem:lift}, the weight-two dual projections of $\Phi_\C$ restrict on $\ol{\Omega}$ to those of $\Phi$; we refer to this as the restriction identity. For a finite word the projection of $\Phi_\C$ is the classical Schwarzian of the lifted composition, $S^{\Phi_\C}_\iii=S^{\C}_{f_\iii}$ on $\ol{\Omega_{\delta/2}}$, by \cref{sec:dual-ifs} for $d=2$, so the restriction identity reads $S^{\C}_{f_\iii}=S_\iii$ on $\ol{\Omega}$.

  The error $f_\iii-M_\ell\circ f_\jjj$ need not be holomorphic on all of $\Omega_\delta$, so we clear denominators before differentiating. The numerator $g_\ell=cf_\iii f_\jjj+df_\iii-af_\jjj-b$ is holomorphic on $\Omega_\delta$ with $\sup_{z\in\Omega_\delta}|g_\ell(z)|\le R(R+1)+R+1=(R+1)^2$, the coefficients being at most $1$ in modulus, and on $\ol{\Omega}$ it factorizes as $g_\ell=(cf_\jjj+d)(f_\iii-M_\ell\circ f_\jjj)$, so $\|g_\ell\|\le(R+1)\eta_\ell$. By \cref{lem:twoconst} and the monotonicity of $(m,M)\mapsto m^{\theta}M^{1-\theta}$, $\sup_{z\in\ol{\Omega_{\delta/2}}}|g_\ell(z)|\le((R+1)\eta_\ell)^{\theta}(R+1)^{2(1-\theta)}$; since $B^o(x,\delta/2)\subseteq\Omega_{\delta/2}$ for $x\in\ol{\Omega}$, \cref{lem:cauchy} gives
  \begin{equation*}
    \max_{0\le k\le3}\|g_\ell^{(k)}\| \le 6\max\{1,(2/\delta)^{3}\}((R+1)\eta_\ell)^{\theta}(R+1)^{2(1-\theta)} = \eta_\ell',
  \end{equation*}
  and $\log\eta_\ell'/n_\ell\to-\infty$, the exponent $\theta$ being fixed.

  Through \cref{lem:wronskian}, this smallness becomes closeness of the Schwarzians $S_\iii$ and $S_\jjj$. Fix $x\in\ol{\Omega}$ and let $W(x)$ be the matrix of \cref{eq:wronskian} with $u=f_\iii$ and $v=f_\jjj$, the lifted compositions, whose derivatives are nonvanishing on $\Omega_\delta$, being products of nonvanishing generator derivatives along orbits that stay in $\Omega_\delta$. Differentiating $g_\ell=cuv+du-av-b$ three times gives $W(x)w=(g_\ell(x),g_\ell'(x),g_\ell''(x),g_\ell'''(x))$ for the coefficient vector $w=(c,d,-a,-b)$, and multiplying by the adjugate matrix of $W(x)$ gives $(\det W(x))w=\operatorname{adj}(W(x))(g_\ell(x),g_\ell'(x),g_\ell''(x),g_\ell'''(x))$. Some entry of $w$ has modulus $1$; every entry of $W(x)$ is at most $8C_\delta^2$ in modulus, by the bound $|f_\iii^{(k)}(x)|\le C_\delta$, the product rule, and $C_\delta\ge1$; and every entry of the adjugate is, up to sign, a $3\times3$ minor of $W(x)$, of modulus at most $6(8C_\delta^2)^3$. Hence $|\det W(x)|\le4\cdot6(8C_\delta^2)^3\eta_\ell'$. By the restriction identity, $S^{\C}_{f_\iii}=S_\iii$ and $S^{\C}_{f_\jjj}=S_\jjj$ on $\ol{\Omega}$. On the other hand, the multiplicativity of the conformal factor recorded in \cref{sec:composition}, applied along an orbit that stays in $\ol{\Omega}$, gives $|f_\iii'|\ge\cmin^{n_\ell}$ and $|f_\jjj'|\ge\cmin^{n_\ell}$ on $\ol{\Omega}$, so \cref{eq:wronskian} yields
  \begin{equation*}
    \|S_\iii-S_\jjj\| \le 12(8C_\delta^2)^3\cmin^{-4n_\ell}\eta_\ell' = \eta_\ell'',
  \end{equation*}
  still with $\log\eta_\ell''/n_\ell\to-\infty$.

  It remains to transfer the closeness to the lift and to reach the contradiction through \cref{lem:coincidence}. The scalar Schwarzians $S^{\C}_{f_\iii}$ and $S^{\C}_{f_\jjj}$ of the lifted compositions are holomorphic on $\Omega_\delta$ and bounded there by $B_2=\beta_2^\delta/(1-\lambda^2)$, where $\beta_2^\delta=\max_{i \in \II}\sup_{z\in\ol{\Omega_\delta}}|S^{\C}_{\fii_i}(z)|<\infty$: iterating the classical composition law $S^{\C}_{g\circ f}=(S^{\C}_g\circ f)(f')^2+S^{\C}_f$ recorded after \cref{lem:chain} along the word, whose partial orbits stay in $\ol{\Omega_\delta}$, exhibits $S^{\C}_{f_\iii}$ as a sum of at most $|\iii|$ terms, the $k$-th bounded by $\beta_2^\delta \lambda^{2(k-1)}$, the prefix derivatives being at most $\lambda^{k-1}$ in modulus on $\ol{\Omega_\delta}$. Since $N\ge2$ and the restriction identity transfers the length-one instances of \cref{eq:mainhyp2} to the lift, not all lifted generator Schwarzians vanish, so $\beta_2^\delta>0$ and $B_2>0$. The difference $S^{\C}_{f_\iii}-S^{\C}_{f_\jjj}$ therefore satisfies the hypotheses of \cref{lem:twoconst} with $M\le2B_2$, while, by the bound on $\|S_\iii-S_\jjj\|$ and the restriction identity, $m\le\eta_\ell''$, so $\sup_{z\in\ol{\Omega_{\delta/2}}}|S^{\C}_{f_\iii}(z)-S^{\C}_{f_\jjj}(z)|\le(\eta_\ell'')^{\theta}(2B_2)^{1-\theta}=\eta_\ell'''$, still with $\log\eta_\ell'''/n_\ell\to-\infty$. The scalars $S^{\C}_{f_\iii}$ are the weight-two dual projections of $\Phi_\C$ on $\ol{\Omega_{\delta/2}}$, as recorded above; denoting by $K_\C$ the compact $K_\rho$ of \cref{sec:C2-machinery} for $\Phi_\C$, we have $K_\C\subseteq\ol{\Omega_{\delta/2}}$. Hence \cref{lem:coincidence}, applied to $\Phi_\C$ and its Schwarzian projections, admissible by \cref{lem:schw-admissible} read for $\Phi_\C$, with the errors $\eta_\ell'''$, produces distinct $\ppp,\qqq\in\II^\N\cup\II^*$ with $|\ppp|=|\qqq|$ and $S^{\Phi_\C}_{\ppp}\equiv S^{\Phi_\C}_{\qqq}$ on $\ol{\Omega_{\delta/2}}$. Restricting to $\ol{\Omega}$ through the restriction identity gives $S_{\ppp}\equiv S_{\qqq}$ on $\ol{\Omega}$ for the distinct equal-length pair $\ppp$ and $\qqq$, which is the desired contradiction with \cref{eq:mainhyp2}.
\end{proof}

\subsection{Comparison with B\'ar\'any, Kolossv\'ary, and Troscheit} \label{sec:C1-compare}

The line is where \cite{BaranyKolossvaryTroscheit} works, and the weight-one half of \cref{thm:main} meets their sufficient condition there. The first remark below compares the two statements; the second compares their proofs.

\begin{remark} \label{rem:BKT-comparison}
  The class treated by B\'ar\'any, Kolossv\'ary, and Troscheit sits inside ours, so that \cref{thm:main} with $d=1$ contains \cite[Theorem~1.5]{BaranyKolossvaryTroscheit}. Their class consists, for a fixed $\eps>0$, of maps $\R\to\R$ that extend complex-analytically to the neighborhood $[0,1]_{2\eps}$, map $[0,1]$ into itself and $\ol{[0,1]_\eps}$ into $[0,1]_\eps$, and satisfy $0<|f'|<1$ on $\ol{[0,1]_\eps}$, where $[0,1]_r=\{z\in\C : \dist(z,[0,1])<r\}$ for $r>0$. We call $[0,1]_\eps$ the \emph{collar}, these requirements the \emph{collar conditions}, and their class the \emph{collar class}. Such a system is a conformal iterated function system with $d=1$, $\Omega=(-\tau,1+\tau)$, and $\Omega'=(-\eps,1+\eps)$ for any $0<\tau\le\eps/2$: the derivative bound $|f_i'|<1$ holds on $\ol{\Omega}=[-\tau,1+\tau]\subseteq\ol{[0,1]_\eps}$, each map sends $\ol{\Omega}$ into $\Omega$, the distance from $f_i(x)$ to $f_i(y)\in[0,1]$, for $y$ the point of $[0,1]$ nearest to $x\in\ol{\Omega}$, being at most $\max|f_i'|$ over the segment between them, a number less than $1$, times $|x-y|\le\tau$, hence strictly less than $\tau$, the attractor lies in $[0,1]\subset\Omega$, and the dual projections restrict on $[0,1]$ to theirs, their lifted operator and cocycle sums being \cref{eq:dualdef} in its scalar form and the sums \cref{eq:Tline}. Their separation hypothesis, a supremum over $[0,1]$, thus implies \cref{eq:mainhyp}, a supremum over the larger $\ol{\Omega}$, and \cref{thm:main} gives the strong exponential separation condition with the supremum over $\ol{\Omega}$; one further application of \cref{lem:twoconst}, with $[0,1]$ in place of $\ol{\Omega}$ and $2\tau$ in place of $\delta$, its statement and proof using nothing about $\ol{\Omega}$ beyond its being a compact nondegenerate interval, returns the supremum to $[0,1]$ exactly as in the proof above and recovers their conclusion, the differences of their compositions being holomorphic and uniformly bounded on $[0,1]_{2\tau}\subseteq[0,1]_\eps$, since their maps send $\ol{[0,1]_\eps}$ into $[0,1]_\eps$, and the supremum over $\ol{\Omega}\subseteq\ol{[0,1]_\tau}$ being at most the left side of \cref{eq:twoconst}.
\end{remark}

\begin{remark} \label{rem:cleaner}
  The proof of \cite[Theorem~1.5]{BaranyKolossvaryTroscheit} and the proof of \cref{thm:main}\cref{it:main-pre} for $d=1$ above, through the planar case it invokes, follow the common strategy of \cref{sec:reduction} and part ways at its first step, the differentiation of the condensation. On the line, the smallness of $f_\iii-f_\jjj$ that a failure of separation supplies, the input of the argument by contradiction, is a smallness on a real interval only, where no disc is available for Cauchy's estimate, even though the maps extend holomorphically to a complex neighborhood of the interval. B\'ar\'any, Kolossv\'ary, and Troscheit meet this by differentiating on the interval, by real-variable means. The proof above moves the problem into that complex neighborhood instead: the planar case of \cref{thm:main} differentiates there through Cauchy's estimate, and \cref{lem:twoconst} carries the separation back to the interval, its two-constants estimate being the price of the move. The move simplifies the argument at two points.

  First, on the interval the super-exponential closeness of two compositions is passed to their first two derivatives by the Taylor estimate \cite[Lemma~3.1]{BaranyKolossvaryTroscheit}, which loses a square root at each order of differentiation and needs a uniform bound one order higher, and from the derivatives to the pre-Schwarzians by the quotient rule. The uniform bounds that the Taylor estimate needs, beyond the bound $|f_\iii''|\le C_0\cmax^{|\iii|}$ from the cocycle sum that both proofs use, come from the Fa\`a di Bruno formula for $T_\iii^{(k)}$ in \cite[Lemma~2.7]{BaranyKolossvaryTroscheit} and its consequences \cite[Lemma~2.8, Corollary~2.9, and Lemma~2.10]{BaranyKolossvaryTroscheit}, which also supply the continuity of the derivatives $T_\iii^{(k)}$ in the point and in the word that the endgame requires. In the plane, Cauchy's estimate, \cref{lem:cauchy}, does both jobs at once and at a loss linear in the error: it passes the closeness of the compositions to their pre-Schwarzians in \cref{lem:pre-diff}, and it turns the norm continuity of $\lll\mapsto T_\lll$ from \cref{lem:dual} into the uniform convergence of every derivative in the proof of \cref{lem:coincidence}; no formula for the higher derivatives is needed.

  Second, the endgame of the proof of \cite[Theorem~1.5]{BaranyKolossvaryTroscheit} splits into two cases according to whether the common tail of the condensing pair of words, which in the reversed notation \cref{eq:frev} is the common prefix $\kkk$, stays bounded or grows. A tail of bounded length is constant along a subsequence, so the projections of the limit pair agree on the image of $[0,1]$ under the tail composition, a nondegenerate interval, and the identity theorem finishes. A growing tail shrinks that image to a single point of the attractor, where all derivatives have to be compared; this is done, in effect, by applying \cite[Lemma~3.1]{BaranyKolossvaryTroscheit} $k$ times to the pre-Schwarzians composed with the tail and inverting the chain rule order by order, dividing out at each order the derivative of the tail composition, at the cost of raising the super-exponential error to the power $2^{-k}$, together with an exponential factor $\cmin^{-kn_\ell}$. In the proof of \cref{lem:coincidence}, the disc of \cref{lem:disc}, of radius at least $r_0b^{n_\ell}$ about $f_\kkk(z_0)$ inside $f_\kkk(K_\rho)$, is available whether the prefix length $m$ stays bounded or grows, and one Cauchy estimate on it, \cref{eq:allders}, delivers all derivatives together, so the case distinction disappears. The analyticity of the infinite-word projections, which the identity theorem needs in both endgames, is obtained alike in the two proofs, as the analyticity of a uniform limit of holomorphic functions on a complex neighborhood, by Morera's theorem in \cite[Lemma~2.4]{BaranyKolossvaryTroscheit} and by the Weierstrass convergence theorem in \cref{lem:dual}.
\end{remark}

\section{The higher-dimensional case} \label{sec:C3}

Throughout this section $d\ge3$, so that, by the classification of \cref{sec:conformal-maps}, the generators are restrictions of M\"obius transformations of $\RS^d$; by \cref{sec:dual-ifs}, the dual projection of a finite word $\iii$ is the pre-Schwarzian $T_\iii=T_{f_\iii}=\nabla\log\lambda_{f_\iii}$ of the reversed composition, a vector field on $\ol{\Omega}$, and the norm of $h\in\mathcal{C}(\ol{\Omega};\R^d)$ is $\|h\|=\sup_{x\in\ol{\Omega}}|h(x)|$. The section proves the case $d\ge3$ of \cref{thm:main}\cref{it:main-pre} on the skeleton of the planar case, but the function theory disappears: the disc of \cref{lem:disc} and the identity-theorem endgame of \cref{lem:coincidence} are replaced by the rigidity of the M\"obius class, and the Cauchy estimate of \cref{lem:cauchy} by the derivative bounds that this rigidity supplies together with an elementary Landau-type trade-off.

The Schwarzian layer is void here: every $f_\iii$ is itself M\"obius, so, as \cref{ex:esc-not-mobius}(1) records, the M\"obius relation $f_\iii=M\circ f_\jjj$ holds outright for every equal-length pair, the very relation whose failure the Schwarzian detects, by \cref{lem:kernel}, and \cref{lem:wronskian} quantifies on the line, and every Schwarzian projection vanishes by \cref{lem:mobius}. As in \cref{rem:S3}(1), the separation must therefore be read one level down, from what the Schwarzian forgets and the pre-Schwarzian retains: by \cref{lem:mobius}, a M\"obius transformation is either a similarity, with vanishing pre-Schwarzian, or has a unique finite pole, and its pre-Schwarzian is then the explicit field \cref{eq:pole}, which records the pole and nothing else. The section stands on two pillars: a \emph{pole gap}, by which contraction pushes every pole a definite distance away from $\ol{\Omega}$, and a \emph{pole dictionary}, by which a pole field recovers its pole, hence itself, from its value at a single point.

The machinery is collected in \cref{sec:C3-poles}: the constants and the M\"obius reading of the compositions, the pole gap of \cref{lem:CU}, which also bounds every derivative of every composition, the pole dictionary of \cref{lem:poledict}, and the Landau-type trade-off of \cref{lem:landau}, which differentiates the condensation in place of the Cauchy estimate. In \cref{sec:C3-pre} the proof of \cref{thm:main}\cref{it:main-pre} for $d\ge3$ then runs the common strategy of \cref{sec:reduction} on these three lemmas, and closes with \cref{rem:rigid}, on the economy that M\"obius rigidity buys relative to the planar proof and on the role of the pole at infinity. There is no counterpart here of \cref{sec:C2-mob} or \cref{sec:C1-mob}, and its absence is a theorem rather than an omission: by the preceding paragraph the weight-two hypothesis \cref{eq:mainhyp2} cannot hold for $d\ge3$, and the conclusion it would earn fails for every system there, as \cref{rem:S3}(1) records.

\subsection{The pole gap, the pole dictionary, and a Landau estimate} \label{sec:C3-poles}

The rigidity of the M\"obius class replaces the function theory that carried the planar proof, and this subsection assembles what that rigidity yields. Before turning to the pole gap, we fix the setting on which the estimates rest.

The M\"obius transformations, being the finite compositions of similarities and inversions, form a group of bijections of $\RS^d$. Each generator is the restriction to $\Omega'$ of a M\"obius transformation, unique since two M\"obius transformations that agree on a nonempty open subset of $\R^d$ agree on all of $\RS^d$ by the identity principle, and each $f_\iii$ agrees on $\ol{\Omega}$ with the composition of the corresponding transformations, the partial orbits staying in $\ol{\Omega}$; we write $f_\iii$ also for this M\"obius transformation and note that, by the multiplicativity of the conformal factor recorded in \cref{sec:composition}, applied along the orbit, $\cmin^{|\iii|}\le\lambda_{f_\iii}\le\cmax^{|\iii|}$ on $\ol{\Omega}$. Since $T_\varnothing\equiv0$, taking $\jjj=\varnothing$ in \cref{eq:dualbounds} gives $\|T_\iii\|\le2\max_{i \in \II}\|T_{\fii_i}\|/(1-\cmax)$ for every $\iii\in\II^\N\cup\II^*$. Recall from \cref{lem:mobius} that a M\"obius transformation $f$ that is not a similarity has a unique finite pole $p=f^{-1}(\infty)$ and, by \cref{eq:pole}, pre-Schwarzian $T_f(x)=-2(x-p)/|x-p|^2$, while a similarity has $T_f\equiv0$. Accordingly, for $p\in\R^d$ we call the vector field $T_p(x)=-2(x-p)/|x-p|^2$, real-analytic and nowhere zero on $\R^d\setminus\{p\}$, the \emph{pole field} of $p$, and we set $T_\infty\equiv0$; as in \cref{sec:dual-maps}, typewriter subscripts of dual projections name words, while italic subscripts of pole fields name points of $\RS^d$. Every finite word $\iii$ thus satisfies $T_\iii=T_{p_\iii}$ on $\ol{\Omega}$, where $p_\iii\in\RS^d$ is the \emph{pole} of $f_\iii$, read as $\infty$ when $f_\iii$ is a similarity; a finite pole lies off $\ol{\Omega}$, since $f_\iii$ maps $\ol{\Omega}$ into $\ol{\Omega}\subseteq\R^d$ and a M\"obius transformation is finite exactly off its pole, so the pole field $T_{p_\iii}$ is defined on all of $\ol{\Omega}$ and agrees there with $T_\iii=T_{f_\iii}$ by \cref{lem:mobius}. Finally, for $k\ge1$ and a map $f$ of an open subset of $\R^d$ into a finite-dimensional normed space, we write $\|D^kf(x)\|$ for the operator norm of the $k$-linear map $D^kf(x)$, the supremum of $|D^kf(x)[v_1,\dots,v_k]|$ over unit vectors $v_1,\dots,v_k$, which for $k=1$ is the operator norm already in use; for $f$ with values in $\R^d$ it is invariant under orthogonal maps, $\|D^k(O'\circ f\circ O)(x)\|=\|D^kf(Ox)\|$ for orthogonal $O$ and $O'$.

The pole gap comes from the dual bound: a pole close to $\ol{\Omega}$ would make the pre-Schwarzian large somewhere on $\ol{\Omega}$, while every dual projection is capped at $2\max_{i \in \II}\|T_{\fii_i}\|/(1-\cmax)$. Through the scaling structure of the inversion, the gap in turn bounds every derivative of every composition $f_\iii$, the counterpart of the planar derivative bounds that the same cap, together with the Cauchy estimate of \cref{lem:cauchy}, supplied in \cref{sec:C2}.

\begin{lemma} \label{lem:CU}
  Let $d\ge3$. If $\iii\in\II^*$ and $f_\iii$ is not a similarity, then $\max_{i \in \II}\|T_{\fii_i}\|>0$ and
  \begin{equation} \label{eq:polegap}
    \dist(p_\iii,\ol{\Omega}) \ge \frac{1-\cmax}{\max_{i \in \II}\|T_{\fii_i}\|}.
  \end{equation}
  Moreover, for every integer $k\ge1$ there is a constant $L_k>0$, depending only on $k$, $d$, $\cmax$, and $\max_{i \in \II}\|T_{\fii_i}\|$, such that
  \begin{equation} \label{eq:scaling}
    \sup_{x\in\ol{\Omega}}\|D^kf_\iii(x)\| \le L_k\cmax^{|\iii|}
  \end{equation}
  for every $\iii\in\II^*$.
\end{lemma}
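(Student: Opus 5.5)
The pole gap will come from combining the cap on dual projections with the explicit pole field. Write $\beta_1=\max_{i \in \II}\|T_{\fii_i}\|$. If $f_\iii$ is not a similarity, its pole $p=p_\iii$ is finite and lies off $\ol{\Omega}$. Then $T_\iii=T_p$ on $\ol{\Omega}$, and $|T_p(x)|=2/|x-p|$. Choose $x\in\ol{\Omega}$ nearest to $p$. The bound $\|T_\iii\|\le2\beta_1/(1-\cmax)$, recorded before the lemma, gives
\begin{equation*}
  \frac{2}{\dist(p,\ol{\Omega})}\le\frac{2\beta_1}{1-\cmax}.
\end{equation*}
This forces $\beta_1>0$, since the left side is positive and finite, and rearranging yields \cref{eq:polegap}.

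For \cref{eq:scaling} I will treat similarities and non-similarities separately. If $f_\iii$ is a similarity, then $Df_\iii$ is constant with norm $\lambda_{f_\iii}\le\cmax^{|\iii|}$ and all higher derivatives vanish, so any $L_k\ge1$ works.

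Otherwise I write $f_\iii=g\circ\iota_p$ as in the proof of \cref{lem:mobius}, with $g(x)=\rho Ox+b$ and $\iota_p(x)=p+(x-p)/|x-p|^2$. The key scaling is as follows. Since $g$ is affine, $D^kf_\iii=\rho O\,D^k\iota_p$, and $\|D^k\iota_p(x)\|=c_{k,d}|x-p|^{-k-1}$, where $c_{k,d}=\sup_{|u|=1}\|D^k\iota_0(u)\|$. This holds by translation and rotation invariance together with the homogeneity of $\iota_0(y)=y/|y|^2$ of degree $-1$, whose $k$-th derivative is homogeneous of degree $-1-k$. Hence
\begin{equation*}
  \|D^kf_\iii(x)\|=c_{k,d}\,\frac{\rho}{|x-p|^2}\,|x-p|^{1-k}=c_{k,d}\,\lambda_{f_\iii}(x)\,|x-p|^{1-k}.
\end{equation*}
Two bounds then finish the estimate. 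First, $\lambda_{f_\iii}(x)\le\cmax^{|\iii|}$ on $\ol{\Omega}$. Second, $|x-p|\ge(1-\cmax)/\beta_1$ by \cref{eq:polegap}, and for $k\ge1$ the exponent $1-k$ is nonpositive, so $|x-p|^{1-k}\le(\beta_1/(1-\cmax))^{k-1}$. Together these give $L_k=\max\{1,c_{k,d}(\beta_1/(1-\cmax))^{k-1}\}$.

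There is one edge case: if $\beta_1=0$, every generator is a similarity, so every $f_\iii$ is a similarity and this case is already covered. I will therefore define $L_k$ with the convention that the non-similarity term is absent when $\beta_1=0$.

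The main point needing care is the homogeneity and invariance claim for $D^k\iota_p$. I will verify it by writing $\iota_p=\tau_p\circ\iota_0\circ\tau_{-p}$, with $\tau$ denoting translations. Then I differentiate the identity $\iota_0(ty)=t^{-1}\iota_0(y)$ $k$ times to get $D^k\iota_0(ty)=t^{-1-k}D^k\iota_0(y)$. Finally, the equivariance $\iota_0(Oy)=O\iota_0(y)$ reduces the supremum over unit $y$ to a single constant $c_{k,d}<\infty$, which is finite by smoothness of $\iota_0$ away from the origin.
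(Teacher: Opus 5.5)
Your proof is correct and follows the paper's argument. The pole gap comes from the dual cap $\|T_\iii\|\le2\beta_1/(1-\cmax)$ combined with $\|T_{p_\iii}\|=2/\dist(p_\iii,\ol{\Omega})$, and the derivative bound comes from the factorization through the inversion $\iota_{p_\iii}$, which gives $\|D^kf_\iii(x)\|=c_k\lambda_{f_\iii}(x)|x-p_\iii|^{1-k}$. The only difference is that you get $\beta_1>0$ from the cap itself, since $\beta_1=0$ would force $T_\iii\equiv0$, which a pole field never is; the paper instead locates a non-similarity letter in the word, and both arguments work.
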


\begin{proof}
  Write $\beta_1=\max_{i \in \II}\|T_{\fii_i}\|$ and $C_0=2\beta_1/(1-\cmax)$, so that the cap recorded above reads $\|T_\iii\|\le C_0$ for every $\iii\in\II^\N\cup\II^*$. If $f_\iii$ is not a similarity, then at least one letter $\fii_{i_r}$ occurring in $\iii$ is not a similarity, since similarities are closed under composition; by \cref{lem:mobius} the pre-Schwarzian of $\fii_{i_r}$ is then a pole field, which vanishes nowhere on $\ol{\Omega}$, so $\beta_1\ge\|T_{\fii_{i_r}}\|>0$. For such a word, the pole $p_\iii$ lies outside $\ol{\Omega}$ and the identity $T_\iii=T_{p_\iii}$ on $\ol{\Omega}$ gives $\|T_\iii\|=\sup_{x\in\ol{\Omega}}2/|x-p_\iii|=2/\dist(p_\iii,\ol{\Omega})$. The bound $\|T_\iii\|\le C_0$ therefore gives $\dist(p_\iii,\ol{\Omega})\ge2/C_0=(1-\cmax)/\beta_1$, which is \cref{eq:polegap}.

  For \cref{eq:scaling}, let first $f_\iii$ be a non-similarity and let $\iota_\iii(x)=p_\iii+(x-p_\iii)/|x-p_\iii|^2$ be the inversion in the unit sphere centered at $p_\iii$, an involution of $\RS^d$ interchanging $p_\iii$ and $\infty$. The composition $f_\iii\circ\iota_\iii$ is a M\"obius transformation fixing $\infty$, hence a similarity $x\mapsto\lambda Ox+v$, so $f_\iii(x)=\lambda O\iota_\iii(x)+v$. The unit inversion $\iota_0(y)=y/|y|^2$ satisfies $\iota_0(ty)=t^{-1}\iota_0(y)$ and $\iota_0(O'y)=O'\iota_0(y)$ for $t>0$ and orthogonal $O'$. Differentiating these identities with respect to $y$ gives $\|D^k\iota_0(y)\|=c_k|y|^{-(k+1)}$, where $c_k=\|D^k\iota_0(e)\|$ is independent of the choice of unit vector $e$, and $c_1=1$. Differentiating $t\mapsto\iota_0(te)=t^{-1}e$ at $t=1$ gives $D^k\iota_0(e)[e,\dots,e]=(-1)^k k!e$, so $c_k\ge k!>0$. Since $\iota_\iii(x)=p_\iii+\iota_0(x-p_\iii)$, the chain rule gives $\|D^kf_\iii(x)\|=\lambda c_k|x-p_\iii|^{-(k+1)}$. Comparing at $k=1$ identifies $\lambda_{f_\iii}(x)=\|Df_\iii(x)\|=\lambda|x-p_\iii|^{-2}$, whence
  \begin{equation} \label{eq:scalingid}
    \|D^kf_\iii(x)\| = c_k\lambda_{f_\iii}(x)|x-p_\iii|^{-(k-1)},
  \end{equation}
  and on $\ol{\Omega}$ the two factors are bounded by $\lambda_{f_\iii}\le\cmax^{|\iii|}$ and, by \cref{eq:polegap}, $|x-p_\iii|^{-(k-1)}\le(C_0/2)^{k-1}$, which gives \cref{eq:scaling} with $L_1=1$ and $L_k=c_k\max\{1,(C_0/2)^{k-1}\}$ for $k\ge2$. If $f_\iii$ is a similarity, then $D^kf_\iii\equiv0$ for $k\ge2$ and $\|Df_\iii\|=\lambda_{f_\iii}\le\cmax^{|\iii|}$, so \cref{eq:scaling} holds trivially.
\end{proof}

In the plane, the endgame of \cref{lem:coincidence} makes all derivatives of the difference of two limit projections vanish at a point and upgrades the vanishing to the whole domain by the identity theorem. For $d\ge3$ the projections range over the finite-dimensional family of pole fields, and membership in the family survives uniform limits, so the value at a single point already determines everything: no derivatives and no identity theorem are needed. The dictionary strengthens the common analyticity of \cref{lem:dual}: every dual projection is the restriction of an explicit rational vector field on a quantitatively controlled neighborhood. Whenever $\max_{i \in \II}\|T_{\fii_i}\|>0$, we define the set of \emph{admissible poles} by
\begin{equation} \label{eq:admissible-poles-def}
  \mathcal P = \biggl\{p \in \R^d : \dist(p,\ol{\Omega}) \ge \frac{1-\cmax}{\max_{i \in \II}\|T_{\fii_i}\|}\biggr\} \cup \{\infty\}.
\end{equation}

\begin{lemma} \label{lem:poledict}
  Let $d\ge3$ and suppose that $\max_{i \in \II}\|T_{\fii_i}\|>0$. Then for every $\iii\in\II^\N\cup\II^*$ there is a unique $p_\iii\in\mathcal P$ with $T_\iii=T_{p_\iii}$ on $\ol{\Omega}$; in particular $T_\iii$ extends real-analytically to the neighborhood $\{x\in\R^d : \dist(x,\ol{\Omega})<(1-\cmax)/\max_{i \in \II}\|T_{\fii_i}\|\}$ of $\ol{\Omega}$. Moreover, for every finite $p\in\mathcal P$ and every $x_0\in\ol{\Omega}$,
  \begin{equation} \label{eq:polerecover}
    p = x_0+\frac{2T_p(x_0)}{|T_p(x_0)|^2}
  \end{equation}
  so that, for $\iii,\jjj\in\II^\N\cup\II^*$, the equality $T_\iii(x_0)=T_\jjj(x_0)$ at a single $x_0\in\ol{\Omega}$ already forces $p_\iii=p_\jjj$ and $T_\iii\equiv T_\jjj$ on $\ol{\Omega}$.
\end{lemma}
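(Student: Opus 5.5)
I would first settle the finite words, where \cref{lem:CU} and the preceding paragraph already do most of the work, and then pass to infinite words through the uniform convergence of \cref{lem:dual}, using that the admissible poles form a closed subset of $\RS^d$ on which the pole fields depend continuously. For a finite word $\iii$, \cref{lem:mobius} gives $T_\iii=T_{p_\iii}$ on $\ol{\Omega}$, with $p_\iii=\infty$ when $f_\iii$ is a similarity and $p_\iii$ finite otherwise. In the finite case \cref{eq:polegap} puts $p_\iii$ in $\mathcal P$, so existence holds for every finite word.

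The recovery formula \cref{eq:polerecover} is a direct computation. Write $u=x_0-p$. Then $T_p(x_0)=-2u/|u|^2$ and $|T_p(x_0)|^2=4/|u|^2$, so $2T_p(x_0)/|T_p(x_0)|^2=-u$, and adding $x_0$ gives $p$. Uniqueness follows from this formula together with the observation that a finite pole field vanishes nowhere while $T_\infty\equiv0$. Two admissible poles with the same field on $\ol{\Omega}$ are therefore either both $\infty$, or both finite, in which case \cref{eq:polerecover} at any point of $\ol{\Omega}$ identifies them. The same reasoning gives the final assertion: if $T_\iii(x_0)=T_\jjj(x_0)$, then either the common value is $0$ and both poles are $\infty$, or it is nonzero and \cref{eq:polerecover} forces $p_\iii=p_\jjj$. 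Either way $T_\iii\equiv T_\jjj$ on $\ol{\Omega}$.

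The main step is the infinite words. Take $\iii\in\II^\N$. By \cref{lem:dual}, $T_{\iii|_n}\to T_\iii$ uniformly on $\ol{\Omega}$, and each $T_{\iii|_n}=T_{p_n}$ with $p_n\in\mathcal P$. Since $\RS^d$ is compact, I pass to a subsequence with $p_n\to p$ in $\RS^d$; closedness of $\mathcal P$ there gives $p\in\mathcal P$. I then show $T_{p_n}\to T_p$ uniformly on $\ol{\Omega}$, treating two cases. If $p$ is finite, $|x-p_n|$ stays bounded below by $(1-\cmax)/\max_{i \in \II}\|T_{\fii_i}\|>0$ for $x\in\ol{\Omega}$, and on that region the map $(x,q)\mapsto T_q(x)$ is uniformly continuous. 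If $p=\infty$, then $\sup_{\ol{\Omega}}|T_{p_n}|=2/\dist(p_n,\ol{\Omega})\to0$, since $\ol{\Omega}$ is bounded. In both cases $T_\iii=T_p$ on $\ol{\Omega}$.

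The real-analytic extension is then immediate. For $p=\infty$ the extension is the zero field. For finite $p\in\mathcal P$, the pole field $T_p$ is real-analytic off $p$, and $p$ lies outside the stated neighborhood because $\dist(p,\ol{\Omega})$ is at least its radius. The only genuine care in the whole argument is the uniform convergence of the pole fields in the case $p_n\to\infty$, which the explicit formula $2/\dist(p_n,\ol{\Omega})$ handles.
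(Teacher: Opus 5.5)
Your proposal is correct and follows the paper's proof essentially step for step: existence for finite words from \cref{lem:mobius} and \cref{eq:polegap}, uniqueness and the single-point claim from \cref{eq:polerecover} together with $T_\infty\equiv0$, and infinite words through a subsequence of prefix poles converging in $\RS^d$ to a point of the closed set $\mathcal P$, split into the cases $p=\infty$ and $p$ finite. The only difference is cosmetic. Where you appeal to uniform continuity of $(x,q)\mapsto T_q(x)$ on $\{|x-q|\ge(1-\cmax)/\max_{i\in\II}\|T_{\fii_i}\|\}$, the paper makes this quantitative: once $|q_n-q|$ is at most half that radius, it applies the mean value inequality along the segment from $q_n$ to $q$ and obtains a Lipschitz bound in the pole.
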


\begin{proof}
  Write $C_0=2\max_{i \in \II}\|T_{\fii_i}\|/(1-\cmax)$, so that $\mathcal P=\{p\in\R^d : \dist(p,\ol{\Omega})\ge2/C_0\}\cup\{\infty\}$. For finite $p\in\mathcal P$ and $x_0\in\ol{\Omega}$ the vector $v=T_p(x_0)$ satisfies $|v|=2/|x_0-p|>0$ and $2v/|v|^2=-(x_0-p)$, which rearranges to \cref{eq:polerecover}. Hence the map $p\mapsto T_p(x_0)$ is injective on $\mathcal P$ for every fixed $x_0\in\ol{\Omega}$: distinct finite poles are told apart by \cref{eq:polerecover}, and every finite pole is told apart from $\infty$ by $T_p(x_0)\ne0=T_\infty(x_0)$. This proves the uniqueness of $p_\iii$ and the final claim, granted the existence of $p_\iii$, since $T_\iii(x_0)=T_\jjj(x_0)$ then reads $T_{p_\iii}(x_0)=T_{p_\jjj}(x_0)$.

  For finite $\iii$, the M\"obius classification and \cref{lem:mobius,lem:CU} give existence: a non-similarity word has $T_\iii=T_{p_\iii}$ with $\dist(p_\iii,\ol{\Omega})\ge2/C_0$, and a similarity word has $T_\iii\equiv0=T_\infty$ with $p_\iii=\infty$. Let then $\iii\in\II^\N$ and consider the poles $q_n=p_{\iii|_n}\in\mathcal P$ of the prefixes. Passing to a subsequence convergent in $\RS^d$, either $q_{n_j}\to\infty$, or $q_{n_j}\to q$ for a finite $q$, which then lies in $\mathcal P$, the finite part of $\mathcal P$ being closed. In the first case $\dist(q_{n_j},\ol{\Omega})\to\infty$ along the finite terms because $\ol{\Omega}$ is bounded, and hence $\|T_{q_{n_j}}\|\to0$, the supremum being $2/\dist(q_{n_j},\ol{\Omega})$ for finite $q_{n_j}$ and $0$ for $q_{n_j}=\infty$, so $T_{q_{n_j}}\to T_\infty$ uniformly on $\ol{\Omega}$. In the second case, $q_{n_j}$ is finite for all sufficiently large $j$, and once $|q_{n_j}-q|\le1/C_0$, every point $w$ on the segment from $q_{n_j}$ to $q$ satisfies $\dist(w,\ol{\Omega})\ge1/C_0$, the function $\dist(\,\cdot\,,\ol{\Omega})$ being $1$-Lipschitz, and on the region $|x-w|\ge1/C_0$ the $w$-gradient of $w\mapsto-2(x-w)/|x-w|^2$ is, by the second identity of \cref{eq:pole}, $2|x-w|^{-2}$ times a reflection, hence of norm $2/|x-w|^2\le2C_0^2$, so the mean value inequality along the segment gives $\|T_{q_{n_j}}-T_q\|\le2C_0^2|q_{n_j}-q|\to0$. In both cases $T_{\iii|_{n_j}}=T_{q_{n_j}}$ converges uniformly on $\ol{\Omega}$ to a pole field with pole in $\mathcal P$, while $T_{\iii|_n}\to T_\iii$ uniformly on $\ol{\Omega}$ by \cref{lem:dual}, so $T_\iii$ is that pole field. The asserted real-analytic extension is clear for $T_\infty$ and is given by $T_{p_\iii}$ for finite $p_\iii$, whose only singularity is $p_\iii$; this point lies outside the asserted neighborhood because $\dist(p_\iii,\ol{\Omega})\ge2/C_0$, while the neighborhood is defined by the strict inequality $\dist(x,\ol{\Omega})<2/C_0$.
\end{proof}

It remains to transfer the super-exponential closeness of the compositions to their first two derivatives, the data from which the pre-Schwarzian is built. In the plane the Cauchy estimate of \cref{lem:cauchy} did this; in $\R^d$ the elementary Landau-type trade-off below does, a higher-dimensional counterpart of \cite[Lemma~3.1]{BaranyKolossvaryTroscheit}: a function small in value and bounded in second derivative has a small first derivative one geometric-mean scale inward.

\begin{lemma} \label{lem:landau}
  Let $U\subseteq\R^d$ be open, let $E$ be a finite-dimensional normed space, and let $h\colon U\to E$ be a $\mathcal{C}^2$-map with $\sup_{x\in U}|h(x)|\le\eta$ and $\sup_{x\in U}\|D^2h(x)\|\le Q$ for some $0<\eta,Q<\infty$. Then
  \begin{equation} \label{eq:landau}
    \|Dh(x)\| \le \tfrac32\sqrt{\eta Q}
  \end{equation}
  for every $x\in U$ with $\dist(x,\R^d\setminus U)\ge\sqrt{\eta/Q}$.
\end{lemma}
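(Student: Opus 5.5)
The plan is the classical Landau argument: a second-order Taylor expansion along a segment, with the step length chosen to balance the value bound against the curvature bound. Fix $x\in U$ with $\dist(x,\R^d\setminus U)\ge t:=\sqrt{\eta/Q}$ and a unit vector $v\in\R^d$. The open ball $B^o(x,t)$ lies in $U$, but the closed segment from $x$ to $x+tv$ need not, since its endpoint may sit on $\partial U$. We therefore work with $0<s<t$, apply the estimate at step $s$, and let $s\to t$ at the end.

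For $0<s<t$, the segment $\{x+\tau v : \tau\in[0,s]\}$ lies in $B^o(x,t)\subseteq U$. Taylor's formula with integral remainder for the $\mathcal C^2$-curve $\tau\mapsto h(x+\tau v)$ gives
\begin{equation*}
  h(x+sv)-h(x)-sDh(x)v = \int_0^s(s-\tau)D^2h(x+\tau v)[v,v]\dd\tau .
\end{equation*}
The remainder has norm at most $Qs^2/2$, by the definition of $\|D^2h\|$ as a bilinear operator norm with $|v|=1$. The left-hand side contains $h(x+sv)-h(x)$, of norm at most $2\eta$. Hence
\begin{equation*}
  s|Dh(x)v| \le 2\eta+\tfrac12Qs^2,
  \qquad\text{that is,}\qquad
  |Dh(x)v| \le \frac{2\eta}{s}+\frac{Qs}{2}.
\end{equation*}
Letting $s\to t=\sqrt{\eta/Q}$ gives $|Dh(x)v|\le2\sqrt{\eta Q}+\tfrac12\sqrt{\eta Q}=\tfrac52\sqrt{\eta Q}$. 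This is weaker than the claimed $\tfrac32$.

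To reach $\tfrac32$ I will use a symmetric difference, which requires the point $x-sv$ as well. This is available, because the hypothesis is a full ball $B^o(x,t)\subseteq U$ and not only a half-ball. Subtracting the two Taylor expansions at $x\pm sv$ gives
\begin{equation*}
  h(x+sv)-h(x-sv)-2sDh(x)v = R_+ - R_-,
\end{equation*}
where each remainder $R_\pm$ has norm at most $Qs^2/2$. The difference $h(x+sv)-h(x-sv)$ has norm at most $2\eta$. Hence $2s|Dh(x)v|\le2\eta+Qs^2$, that is,
\begin{equation*}
  |Dh(x)v| \le \frac{\eta}{s}+\frac{Qs}{2}.
\end{equation*}
As $s\to t$ this tends to $\sqrt{\eta Q}+\tfrac12\sqrt{\eta Q}=\tfrac32\sqrt{\eta Q}$. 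Taking the supremum over unit vectors $v$ yields \cref{eq:landau}.

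The only delicate point is the boundary issue just described: the segment must stay inside $U$, and the limiting step $s=t$ may touch $\partial U$. The strict inequality $s<t$ followed by the limit handles this, so the argument should go through without further obstacles. Choosing $s=t$ rather than the exact optimizer $\sqrt{2\eta/Q}$ is what produces the constant $\tfrac32$ stated in \cref{eq:landau}.
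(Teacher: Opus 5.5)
Your proof is correct and is essentially the paper's argument: a symmetric difference of the two Taylor expansions with integral remainder at $x\pm s v$ for $0<s<\sqrt{\eta/Q}$ gives $|Dh(x)v|\le \eta/s+Qs/2$. Letting $s$ increase to $\sqrt{\eta/Q}$ and taking the supremum over unit vectors $v$ then yields the constant $\tfrac32$, exactly as in the paper.
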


\begin{proof}
  Put $s=\sqrt{\eta/Q}$ and fix $x$ as in the statement, a unit vector $e\in\R^d$, and $0<t<s$. The ball $B^o(x,s)$ lies in $U$, so the segment from $x-te$ to $x+te$ does as well, and subtracting the Taylor expansions with integral remainder $h(x\pm te)=h(x)\pm tDh(x)e+\int_0^t(t-u)D^2h(x\pm ue)[e,e]\dd u$ gives $|h(x+te)-h(x-te)-2tDh(x)e|\le t^2Q$. Since $|h(x+te)-h(x-te)|\le2\eta$, it follows that $2t|Dh(x)e|\le2\eta+t^2Q$, whence $|Dh(x)e|\le\eta/t+tQ/2$. Letting $t\uparrow s$ and taking the supremum over unit vectors $e$ gives $\|Dh(x)\|\le\eta/s+sQ/2=\tfrac32\sqrt{\eta Q}$.
\end{proof}

\subsection{Separation by the pre-Schwarzian} \label{sec:C3-pre}

With the pole gap, the pole dictionary, and the Landau trade-off assembled, we turn to the higher-dimensional case. A failure of the strong exponential separation condition again condenses the compositions super-exponentially along a sequence of distinct equal-length pairs; two applications of \cref{lem:landau} carry that smallness to the first two derivatives of the difference, the trace formula for the pre-Schwarzian converts it into closeness of the differences $T_\iii-T_\jjj$, and, once the common prefix is peeled, the value at a single point feeds \cref{lem:poledict}, the coincidence it returns contradicting \cref{eq:mainhyp}.

\begin{proof}[Proof of \cref{thm:main}\cref{it:main-pre} for $d\ge3$]
  The hypothesis \cref{eq:mainhyp} forces $\max_{i \in \II}\|T_{\fii_i}\|>0$: if $\max_{i \in \II}\|T_{\fii_i}\|=0$, then every generator has vanishing pre-Schwarzian on $\ol{\Omega}$ and is a similarity, since a non-similarity generator would have a nowhere-vanishing pole field as its pre-Schwarzian, so every composition is a similarity and $T_{12}\equiv T_{21}\equiv0$ for the distinct equal-length words $12$ and $21$, violating \cref{eq:mainhyp}. In particular \cref{lem:poledict} applies.

  Suppose now, contrary to the claim, that the strong exponential separation condition fails. Since reversal is a bijection of $\II^n$ and $f_\iii=\fii_{\overleftarrow{\iii}}$, the level-$n$ collection of pairwise supremum distances for the reversed compositions agrees, after relabelling, with that for the forward compositions. Hence the reversed compositions also fail the condition, so the plain version of \cref{lem:condensation-mobius}, with the identity in place of every M\"obius transformation as in \cref{sec:cifs-esc}, provides a strictly increasing sequence $(n_\ell)_{\ell \ge 1}$ and distinct $\iii_\ell,\jjj_\ell\in\II^{n_\ell}$ whose errors $\eta_{n_\ell}=\|f_{\iii_\ell}-f_{\jjj_\ell}\|$ satisfy $\log\eta_{n_\ell}/n_\ell\to-\infty$; the dependence of $\iii$ and $\jjj$ on $\ell$ is suppressed below. Every $\eta_{n_\ell}$ is positive, since $\eta_{n_\ell}=0$ would give $f_\iii=f_\jjj$ on $\ol{\Omega}$ and hence $T_\iii=T_\jjj$, contradicting \cref{eq:mainhyp}. As in \cref{sec:C2}, put $\rho=\tfrac12\dist(X,\partial\Omega)>0$ and $K_\rho=\{x\in\ol{\Omega} : \dist(x,\partial\Omega)\ge\rho\}$, and fix $z_0\in X$. Since $\dist(X,\partial\Omega)=2\rho$, we have $X\subseteq K_\rho$, so $z_0\in K_\rho$. Recall also that $f_\kkk(z_0)\in X$ for every $\kkk\in\II^*$ and that $\dist(x,\R^d\setminus\Omega)=\dist(x,\partial\Omega)$ for $x\in\Omega$.

  Each application of \cref{lem:landau} costs one scale inward, so the two together must still reach $K_\rho$. The difference $h=f_\iii-f_\jjj$ is real-analytic on $\Omega$ with $\sup_{x\in\Omega}|h(x)|\le\eta_{n_\ell}$. Since $\Omega\subseteq\ol{\Omega}$, \cref{eq:scaling} gives $\sup_{x\in\Omega}\|D^2h(x)\|\le2L_2\cmax^{n_\ell}=Q_2$ and $\sup_{x\in\Omega}\|D^3h(x)\|\le2L_3\cmax^{n_\ell}=Q_3$. Applying \cref{lem:landau} on $U=\Omega$ gives $\|Dh(x)\|\le2\sqrt{\eta_{n_\ell}Q_2}=\eta_{(1)}$ whenever $\dist(x,\partial\Omega)\ge s_1=\sqrt{\eta_{n_\ell}/Q_2}$. Apply it again to $Dh$, viewed as a map into the space of linear maps of $\R^d$ under the operator norm, on the open set $U_1=\{x\in\Omega : \dist(x,\partial\Omega)>s_1\}$; here $\|D(Dh)(x)\|=\|D^2h(x)\|$ and $\|D^2(Dh)(x)\|=\|D^3h(x)\|$, the supremum over the last unit vector being taken first, so the second-derivative bound for $Dh$ is $Q_3$. If $\dist(x,\partial\Omega)\ge s_1+s_2$, where $s_2=\sqrt{\eta_{(1)}/Q_3}$, then $\dist(x,\R^d\setminus U_1)\ge s_2$: for $y\notin\Omega$ this follows from $\dist(x,\R^d\setminus\Omega)=\dist(x,\partial\Omega)$, while for $y\in\Omega\setminus U_1$ the $1$-Lipschitz property of $\dist(\,\cdot\,,\partial\Omega)$ gives $|x-y|\ge\dist(x,\partial\Omega)-\dist(y,\partial\Omega)\ge s_2$. Therefore $\|D^2h(x)\|\le2\sqrt{\eta_{(1)}Q_3}=\eta_{(2)}$ whenever $\dist(x,\partial\Omega)\ge s_1+s_2$. Since $\eta_{(1)}\le2\sqrt{2L_2}\eta_{n_\ell}^{1/2}$ and $\eta_{(2)}\le4\sqrt{L_3}(2L_2)^{1/4}\eta_{n_\ell}^{1/4}$, both are super-exponentially small, and $s_1$ and $s_2$ tend to $0$; discarding finitely many $\ell$, we may assume $s_1+s_2\le\rho$, so that $\sup_{x\in K_\rho}\|Dh(x)\|\le\eta_{(1)}$ and $\sup_{x\in K_\rho}\|D^2h(x)\|\le\eta_{(2)}$.

  Once the pre-Schwarzian is written as a trace, the two derivative bounds pass to $T_\iii-T_\jjj$ through the resolvent identity. Conformality gives $\|Df\|=|\det Df|^{1/d}$ by \cref{sec:conformal-maps}, so the pre-Schwarzian is the determinant field $\tfrac1d\nabla\log|\det D\,\cdot\,|$, as noted in \cref{sec:kernels}, and Jacobi's formula in the form of the expansion $\det(A+tB)=\det A(1+t\tr(A^{-1}B)+O(t^2))$ for invertible $A$ gives, componentwise,
  \begin{equation} \label{eq:trace}
    (T_\iii(x))_j = \tfrac1d\tr((Df_\iii(x))^{-1}\partial_jDf_\iii(x)).
  \end{equation}
  For $x\in K_\rho$ abbreviate $A_\iii=Df_\iii(x)$ and $B_{\iii,j}=\partial_jDf_\iii(x)$, and likewise for $\jjj$. Conformality gives $\|A_\iii^{-1}\|=\lambda_{f_\iii}(x)^{-1}\le\cmin^{-n_\ell}$, so the resolvent identity $A_\iii^{-1}-A_\jjj^{-1}=A_\iii^{-1}(A_\jjj-A_\iii)A_\jjj^{-1}$ gives $\|A_\iii^{-1}-A_\jjj^{-1}\|\le\cmin^{-2n_\ell}\eta_{(1)}$. Combining this with $|\tr M|\le d\|M\|$, with $\|B_{\iii,j}\|\le\|D^2f_\iii(x)\|\le L_2\cmax^{n_\ell}$, and with $\|B_{\iii,j}-B_{\jjj,j}\|\le\|D^2h(x)\|\le\eta_{(2)}$, the difference of \cref{eq:trace} for $\iii$ and $\jjj$ is bounded by
  \begin{equation} \label{eq:C3quot}
    \sup_{x\in K_\rho}|T_\iii(x)-T_\jjj(x)| \le \sqrt{d}(L_2\cmax^{n_\ell}\cmin^{-2n_\ell}\eta_{(1)}+\cmin^{-n_\ell}\eta_{(2)}) = \eta_{n_\ell}',
  \end{equation}
  and $\log\eta_{n_\ell}'/n_\ell\to-\infty$, the new factors being only exponential in $n_\ell$.

  Peeling the common prefix, as in the common strategy of \cref{sec:reduction}, costs an exponential factor that this closeness absorbs. With $\kkk=\iii\land\jjj$, $m=|\kkk|<n_\ell$, $\hi=\sigma^m\iii$, $\hj=\sigma^m\jjj$, and $\Xi_\ell=T_{\hi}-T_{\hj}$, subtracting the peeling identities \cref{eq:peeling} gives $T_\iii-T_\jjj=(Df_\kkk)^{\top}(\Xi_\ell\circ f_\kkk)$, where $Df_\kkk(x)^{\top}$ is $\lambda_{f_\kkk}(x)$ times an orthogonal matrix and $\lambda_{f_\kkk}(x)\ge\cmin^m\ge\cmin^{n_\ell}$, so $|\Xi_\ell|\le\cmin^{-n_\ell}\eta_{n_\ell}'=\eta_{n_\ell}''$ on $f_\kkk(K_\rho)$, still with $\log\eta_{n_\ell}''/n_\ell\to-\infty$.

  It remains to pass from $\hi$ and $\hj$ to their limit pair and to finish from the value at a single point. The suffixes $\hi$ and $\hj$ are nonempty, have the common length $n_\ell-m$, and have distinct first letters. Since the alphabet is finite, pass first to a subsequence on which the ordered pair $(i_1',j_1')$ is constant, and then, by compactness of $(\II^\N\cup\II^*)^2$, to a further subsequence along which $\hi\to\ppp$ and $\hj\to\qqq$. If one limit is finite, then the corresponding suffix sequence is eventually constant, and the equality $|\hi|=|\hj|$ forces the other limit to be finite of the same length; otherwise both limits are infinite. Thus $|\ppp|=|\qqq|$ and $\ppp_1\ne\qqq_1$. Passing to a further subsequence, the points $x_\ell=f_\kkk(z_0)\in X$ converge to some $\zeta\in X$, while the prefix continuity of \cref{lem:dual} gives $\Xi_\ell\to \Xi_0=T_{\ppp}-T_{\qqq}$ uniformly on $\ol{\Omega}$. Then $|\Xi_0(\zeta)|\le|\Xi_0(\zeta)-\Xi_0(x_\ell)|+\|\Xi_0-\Xi_\ell\|+|\Xi_\ell(x_\ell)|$, where the first term tends to $0$ by the continuity of $\Xi_0$ on $\ol{\Omega}$, the second by uniform convergence, and the third by $|\Xi_\ell(x_\ell)|\le\eta_{n_\ell}''$, since $z_0\in K_\rho$ and hence $x_\ell=f_\kkk(z_0)\in f_\kkk(K_\rho)$. Hence $T_{\ppp}(\zeta)=T_{\qqq}(\zeta)$ at the single point $\zeta\in\ol{\Omega}$, so \cref{lem:poledict} gives $T_{\ppp}\equiv T_{\qqq}$ on $\ol{\Omega}$, and the distinct equal-length pair $\ppp$ and $\qqq$ violates \cref{eq:mainhyp}. This contradiction proves the theorem for $d\ge3$.
\end{proof}

We close the section with two observations on the higher-dimensional argument: the economy that M\"obius rigidity buys relative to the planar proof, and the role of the pole at infinity, which the hypothesis of \cref{thm:main} must accommodate.

\begin{remark} \label{rem:rigid}
  M\"obius rigidity pays twice: the pole gap of \cref{lem:CU} supplies through \cref{eq:scaling} the derivative bounds that the dual cap and \cref{lem:cauchy} supplied in the plane, and the pole dictionary of \cref{lem:poledict} finishes from the value at one point alone, where the endgame of \cref{lem:coincidence} needed every derivative at the point and the identity theorem; the corresponding comparison with the original argument on the line is recorded in \cref{rem:cleaner}. The point $\infty$ is a genuine member of $\mathcal P$: two distinct equal-length similarity words $\iii$ and $\jjj$ share the pole $\infty$ and the projection $T_\iii\equiv T_\jjj\equiv0$, so \cref{eq:mainhyp} fails outright for a system admitting such a pair, and along an infinite word the poles of the prefixes may escape to $\infty$, in which case the dual projection of the word is the zero field. The hypothesis of \cref{thm:main}, quantified over finite and infinite words alike, prices this in.
\end{remark}

\section{Uniform dual gaps and a transversality principle} \label{sec:gaps}

We now turn to \cref{thm:genericity}, whose three cases are proved one at a time in \cref{sec:gen-line-dense,sec:gen-plane-dense,sec:gen-higher-dense}. What the three proofs need in common is prepared here, so that each of them is left with what belongs to its own dimension.

For a fixed dimension $d \ge 1$, domains $\Omega \subset \R^d$ and $\Omega' \supset \overline{\Omega}$, and alphabet $\II$, write $\mathcal S$ for the corresponding space of conformal iterated function systems and use the $\mathcal C^2$-metric
\begin{equation} \label{eq:d2metric}
  \begin{split}
    d_2(\Phi,\Psi) &= \max_{i \in \II}\|\fii_i-\psi_i\|_{\mathcal C^2(\ol\Omega)} \\
    &= \max_{i \in \II}(\|\fii_i-\psi_i\|+\|D\fii_i-D\psi_i\|+\|D^2\fii_i-D^2\psi_i\|),
  \end{split}
\end{equation}
where the scalar derivatives are used in dimensions one and two. All norms are over $\ol{\Omega}$ unless another set is indicated, and all openness and density statements are relative to the indicated system space. Although $d_2$ sees only the restrictions of the generators to $\ol{\Omega}$, it is a metric on $\mathcal S$: two members agreeing on $\ol{\Omega}$ agree on the connected $\Omega'$ by the identity principle, the generators being real-analytic there. The two requirements of \cref{sec:cifs} are stable under small perturbations, so that a tuple $\Psi=(\psi_i)_{i \in \II}$ conformal on $\Omega'$ and $d_2$-close to a member $\Phi=(\fii_i)_{i \in \II}$ of $\mathcal S$ is again a member of $\mathcal S$: each $\fii_i(\ol{\Omega})$ is a compact subset of $\Omega$, at positive distance $\rho_i$ from $\partial\Omega$, so $d_2(\Phi,\Psi)<\min_{i\in\II}\rho_i$ gives $\psi_i(\ol{\Omega})\subseteq\Omega$, and $\|D\psi_i\|\le\cmax+d_2(\Phi,\Psi)<1$ once $d_2(\Phi,\Psi)<1-\cmax$. The line admits localized real-analytic perturbations and gives an unconditional result. In the plane the Cauchy estimates rule out localization at interior points, so the Schwarzian gap class is made dense by finite-dimensional transversality, with perturbations localized at boundary points by peaking functions, and the pre-Schwarzian gap class inherits the density through the inclusion \cref{eq:GSsubsetGT}; the openness there is unconditional, the density assumes that $\Omega$ is a Jordan domain and that $\Omega'$ is simply connected, and \cref{ex:nonconvex} shows that the method fails on multiply connected domains. In dimensions at least three the conformal family is finite dimensional and the perturbations act directly on the poles; the density there is conditional, and \cref{prop:nondense} shows that it must be.

The three cases of the program run by different tools, but they share their objects and part of their machinery. This section fixes the gap classes once and proves the five results that serve more than one of the cases: \cref{lem:gap} identifies membership in a gap class both with the absence of coincident projections indexed by pairs with distinct first letters and with the corresponding pointwise hypothesis of \cref{thm:main}; \cref{lem:Sclosed} supplies the openness of $\mathcal G_S$ in the two dimensions where the Schwarzian is defined, its hypothesis being verified in \cref{sec:gen-line-open} on the line and in \cref{sec:gen-plane-open} in the plane; \cref{lem:avoidance} isolates the counting step behind both transversality arguments, those of \cref{sec:gen-plane-dense,sec:gen-higher-dense}; \cref{lem:paircover} supplies the metric on the pair space and the covering numbers that the counting consumes in both; and \cref{lem:smallball} derives the small-ball hypothesis of the counting from a lower bound on the least singular value of the parameter derivative, again in both. The three cases themselves occupy \cref{sec:gen-line,sec:gen-plane,sec:gen-higher}. They come in the reverse of the order of \cref{sec:C2,sec:C1,sec:C3}. There the line was reached through the plane, its real-analytic generators lifting to a complex neighborhood by \cref{lem:lift}; here the line needs no such lift, and the plane and the dimensions $d\ge3$ follow it as successive retreats.

Let 
\begin{equation} \label{eq:Zdef}
  Z = \{(\iii,\jjj) \in (\II^\N\cup\II^*)^2 : |\iii|=|\jjj| \ge 1 \text{ and } i_1 \ne j_1\}.
\end{equation}
Since a convergent sequence of words is either eventually constant or has lengths tending to infinity, lengths and first letters persist in limits, and $Z$ is a closed, hence compact, subset of $(\II^\N\cup\II^*)^2$. Define
\begin{equation*}
  \Delta_T(\Phi) = \inf_{(\iii,\jjj)\in Z}\|T_\iii-T_\jjj\| \qquad\text{and}\qquad \mathcal G_T = \{\Phi\in\mathcal S : \Delta_T(\Phi)>0\}.
\end{equation*}
When $d\in\{1,2\}$, define similarly
\begin{equation*}
  \Delta_S(\Phi) = \inf_{(\iii,\jjj)\in Z}\|S_\iii-S_\jjj\| \qquad\text{and}\qquad \mathcal G_S = \{\Phi\in\mathcal S : \Delta_S(\Phi)>0\}.
\end{equation*}
The infima defining $\Delta_T$ and $\Delta_S$ range only over pairs with distinct first letters. Nothing is lost by the restriction: peeling the common prefix turns a coincidence between two equal-length words into one indexed by a pair in $Z$, so a positive gap is exactly the hypothesis of \cref{thm:main}, now carried by a single number attached to the system.

\begin{lemma} \label{lem:gap}
  Let $\Phi$ be a conformal iterated function system on $\Omega \subset \R^d$.
  \begin{enumerate}
    \item\label{it:gap-pre} The system $\Phi$ belongs to $\mathcal G_T$ if and only if $T_\iii\not\equiv T_\jjj$ on $\ol{\Omega}$ for every $(\iii,\jjj)\in Z$, and if and only if
    \begin{equation} \label{eq:t-mainhyp}
      \sup_{x\in\ol{\Omega}}|T_\iii(x)-T_\jjj(x)|>0
    \end{equation}
    for all distinct $\iii,\jjj\in\II^\N\cup\II^*$ with $|\iii|=|\jjj|$, which is the hypothesis \cref{eq:mainhyp} of \cref{thm:main}.
    \item\label{it:gap-schw} If $d\in\{1,2\}$, then $\Phi$ belongs to $\mathcal G_S$ if and only if $S_\iii\not\equiv S_\jjj$ on $\ol{\Omega}$ for every $(\iii,\jjj)\in Z$, and if and only if
    \begin{equation} \label{eq:s-mainhyp2}
      \sup_{x\in\ol{\Omega}}|S_\iii(x)-S_\jjj(x)|>0
    \end{equation}
    for all distinct $\iii,\jjj\in\II^\N\cup\II^*$ with $|\iii|=|\jjj|$, which is the hypothesis \cref{eq:mainhyp2} of \cref{thm:main}.
  \end{enumerate}
  Moreover,
  \begin{equation} \label{eq:GSsubsetGT}
    \mathcal G_S\subseteq\mathcal G_T
  \end{equation}
  when $d\in\{1,2\}$. Consequently members of $\mathcal G_T$ satisfy the strong exponential separation condition, while members of $\mathcal G_S$ satisfy it modulo M\"obius maps.
\end{lemma}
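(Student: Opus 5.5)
For \cref{it:gap-pre}, the plan is to prove the chain of three conditions in two steps.

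The first step is to show that positivity of $\Delta_T$ is equivalent to pointwise noncoincidence on $Z$. Compactness of $Z$ together with continuity of $(\iii,\jjj)\mapsto\|T_\iii-T_\jjj\|$ means the infimum defining $\Delta_T(\Phi)$ is attained. This continuity comes from \cref{lem:dual}, where the Hölder bound \cref{eq:dualbounds} makes $\iii\mapsto T_\iii$ continuous into the supremum-norm space. So $\Delta_T(\Phi)>0$ holds exactly when no pair in $Z$ has $T_\iii\equiv T_\jjj$. The converse direction is trivial, since a positive infimum bounds every term.

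The second step is to show that noncoincidence on $Z$ is equivalent to \cref{eq:t-mainhyp}. Every pair in $Z$ is distinct and of equal length, so \cref{eq:t-mainhyp} implies noncoincidence on $Z$. For the other direction, take distinct $\iii,\jjj$ of equal length. I will peel the common prefix $\kkk=\iii\land\jjj$, of length $m$, so that $(\sigma^m\iii,\sigma^m\jjj)\in Z$. Both suffixes are nonempty: they have equal length and differ, and for infinite words the suffixes are infinite. The peeling identity \cref{eq:peeling} gives $T_\iii-T_\jjj=\LL^{(1)}_\kkk B$ with $B=T_{\sigma^m\iii}-T_{\sigma^m\jjj}\not\equiv0$. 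Then I argue exactly as in the last part of the proof of \cref{prop:gencrit}. Because $Df_\kkk$ is invertible, vanishing of $\LL^{(1)}_\kkk B$ would force $B=0$ on the open set $f_\kkk(\Omega)$. The real-analytic extension of \cref{lem:dual} and the identity principle would then give $B\equiv0$ on $\Omega$, and hence on $\ol\Omega$ by continuity, a contradiction.

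Part \cref{it:gap-schw} is verbatim the same with $S$, $G$, and $\LL^{(2)}$ in place of $T$, $F$, and $\LL^{(1)}$. Continuity comes from \cref{lem:dual2}, the peeling identity from \cref{eq:peeling2}, and real-analyticity on $\Omega$ from the identity $S_\lll=T_\lll'-\tfrac12T_\lll^2$. The operator $\LL^{(2)}_\kkk$ is conjugation by the invertible $Df_\kkk$, or multiplication by $(f_\kkk')^2\ne0$ in the scalar picture.

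For \cref{eq:GSsubsetGT}, I would argue as in \cref{rem:S3}(2). If $\Phi\notin\mathcal G_T$, the first part gives a pair $(\iii,\jjj)\in Z$ with $T_\iii\equiv T_\jjj$ on $\ol\Omega$. Differentiating on $\Omega$ and applying $S=T'-\tfrac12T^2$ from \cref{lem:dual2} gives $S_\iii=S_\jjj$ on $\Omega$, and hence on $\ol\Omega$ by continuity. So $\Phi\notin\mathcal G_S$.

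The final sentence then follows from \cref{thm:main}, parts \cref{it:main-pre} and \cref{it:main-schw}, applied through the equivalences just proved.

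The only step that needs care is the peeling argument with infinite words. I have to check that the suffixes lie in $Z$, which requires them to be nonempty with distinct first letters, and that real-analyticity is available for infinite-word projections. Both are supplied by \cref{lem:dual,lem:dual2}, so I do not expect a real obstacle.
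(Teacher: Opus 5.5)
Your proposal is correct and follows the paper's proof essentially step for step. Both proofs obtain the equivalence with positivity of the gap from attainment of the infimum over the compact pair space, and get the equivalence with the hypothesis of the main theorem by peeling the common prefix, using injectivity of the pointwise multiplier and the identity principle. Both then derive the inclusion $\mathcal G_S\subseteq\mathcal G_T$ from $S_\lll=T_\lll'-\tfrac12T_\lll^2$, and read off the separation conclusions from the main theorem.
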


\begin{proof}
  The maps $(\iii,\jjj)\mapsto\|T_\iii-T_\jjj\|$ and, in dimensions one and two, $(\iii,\jjj)\mapsto\|S_\iii-S_\jjj\|$ are continuous on $Z$ by \cref{lem:dual,lem:dual2}. Their infima are therefore attained, and positivity is equivalent to the asserted absence of a coincidence.

  Let $P$ denote either $T$, in any dimension, or $S$, in dimensions one and two, and suppose that no two $P$-projections indexed by a pair in $Z$ coincide. Given distinct equal-length words $\iii,\jjj$, put $\kkk=\iii\land\jjj$ and $m=|\kkk|$. The peeled pair $(\sigma^m\iii,\sigma^m\jjj)$ lies in $Z$, so $B=P_{\sigma^m\iii}-P_{\sigma^m\jjj}$ is real-analytic on $\Omega$ and does not vanish identically. By \cref{eq:peeling,eq:peeling2}, we have $P_\iii-P_\jjj=\LL^{(w)}_\kkk B$, where $w=1$ when $P=T$ and $w=2$ when $P=S$. Suppose that this difference vanishes identically on $\ol{\Omega}$. At each $x\in\Omega$, put $A=Df_\kkk(x)$. In the vector and tensor formulations, the defining formulas give $(\LL^{(1)}_\kkk B)(x)=A^{\top}B(f_\kkk(x))$ and $(\LL^{(2)}_\kkk B)(x)=A^{\top}B(f_\kkk(x))A$; in the scalar formulation, the corresponding multipliers are $f_\kkk'(x)$ and $(f_\kkk'(x))^2$. Since $A$ is invertible, each of these pointwise maps is injective, and therefore $B$ vanishes on $f_\kkk(\Omega)$. Since $f_\kkk$ is a local diffeomorphism, this image is a nonempty open subset of $\Omega$, and the identity principle for real-analytic functions, applied componentwise, gives $B\equiv0$ on $\Omega$ and hence on $\ol{\Omega}$ by continuity, a contradiction. Thus $P_\iii\not\equiv P_\jjj$ on $\ol{\Omega}$. This proves that the absence of such coincidences implies \cref{eq:t-mainhyp}, respectively \cref{eq:s-mainhyp2}. Conversely, every pair in $Z$ consists of two distinct words of equal length, so \cref{eq:t-mainhyp}, respectively \cref{eq:s-mainhyp2}, excludes a coincidence of two $P$-projections indexed by a pair in $Z$. The separation conclusions follow from \cref{thm:main}.

  Finally, for $d\in\{1,2\}$ the identity $S_\lll=T_\lll'-\tfrac12T_\lll^2$ of \cref{lem:dual2}, valid also for infinite words, shows that a coincidence of two pre-Schwarzian projections forces a coincidence of the corresponding Schwarzian projections. Its contrapositive gives \cref{eq:GSsubsetGT}.
\end{proof}

For a system meeting the hypotheses of \cref{prop:gencrit}, membership in $\mathcal G_T$, respectively $\mathcal G_S$, follows outright. The estimate \cref{eq:gencritbound} bounds $\|T_\iii-T_\jjj\|$, respectively $\|S_\iii-S_\jjj\|$, from below at a single point of $\ol{\Omega}$ for every pair in $Z$, so with $\alpha$ as in \cref{prop:gencrit}\cref{it:gencrit-pre}, respectively \cref{prop:gencrit}\cref{it:gencrit-schw},
\begin{align*}
  \Delta_T(\Phi) &\ge \alpha-\frac{2\cmax}{1-\cmax}\max_{i \in \II}\|T_{\fii_i}\|, \\
  \Delta_S(\Phi) &\ge \alpha-\frac{2\cmax^2}{1-\cmax^2}\max_{i \in \II}\|S_{\fii_i}\|,
\end{align*}
the second bound when $d\in\{1,2\}$, and the hypothesis on $\alpha$ makes both right-hand sides positive.

The openness of $\mathcal G_S$ in the $\mathcal C^2$-topology does not follow from continuity of the Schwarzian, which involves a third derivative. It follows instead from the closedness of its coincidence relation.

\begin{lemma} \label{lem:Sclosed}
  Let $d\in\{1,2\}$, let $\Phi_n\to\Phi$ in $d_2$, and suppose that
  \begin{equation} \label{eq:Tparametercontinuity}
    \sup_{\lll\in\II^\N\cup\II^*}\|T^{\Phi_n}_\lll-T^\Phi_\lll\|\to0.
  \end{equation}
  If there are $(\iii_n,\jjj_n)\in Z$ such that $S^{\Phi_n}_{\iii_n}\equiv S^{\Phi_n}_{\jjj_n}$ for every $n$, then a subsequential limit $(\iii,\jjj)\in Z$ satisfies $S^\Phi_\iii\equiv S^\Phi_\jjj$. In particular, whenever \cref{eq:Tparametercontinuity} holds locally uniformly in the system, $\mathcal G_S$ is open.
\end{lemma}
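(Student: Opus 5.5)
The plan is to pass the coincidence to the limit through the pre-Schwarzian projections rather than the Schwarzian ones, using the identity $S_\lll=T_\lll'-\tfrac12T_\lll^2$ of \cref{lem:dual2}. First, by compactness of $Z$ (recorded before \cref{lem:gap}), I pass to a subsequence along which $(\iii_n,\jjj_n)\to(\iii,\jjj)\in Z$. Next I show that $T^{\Phi_n}_{\iii_n}\to T^\Phi_\iii$ uniformly on $\ol{\Omega}$. The triangle inequality splits the difference as $\|T^{\Phi_n}_{\iii_n}-T^\Phi_{\iii_n}\|+\|T^\Phi_{\iii_n}-T^\Phi_\iii\|$. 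The first term tends to zero by \cref{eq:Tparametercontinuity}, uniformly over words. The second tends to zero by the continuity of $\lll\mapsto T^\Phi_\lll$ from \cref{lem:dual}. The same argument applies to $\jjj_n$.

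The main step is to upgrade this uniform convergence to convergence of first derivatives on compact subsets of $\Omega$, since $S=T'-\tfrac12T^2$ involves one derivative of $T$. In the plane this is immediate: every $T^{\Phi_n}_\lll$ lies in $\mathcal H(\ol{\Omega})$ by \cref{lem:dual}, so \cref{lem:cauchy} applied to $T^{\Phi_n}_{\iii_n}-T^\Phi_\iii$ gives uniform convergence of derivatives on each $K_r$. I would rather use the interior version of Cauchy's estimate on discs inside $\Omega$ than the $K_r$ of the attractor, since the attractor moves with $n$.

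On the line the projections are only real-analytic on $\ol{\Omega}$, and uniform convergence alone does not control derivatives. This is where I expect the main obstacle. My first attempt is to bypass differentiation by integrating. On the interval $\Omega$ the coincidence $S^{\Phi_n}_{\iii_n}\equiv S^{\Phi_n}_{\jjj_n}$ says that $u_n=T^{\Phi_n}_{\iii_n}$ and $v_n=T^{\Phi_n}_{\jjj_n}$ solve $u'-\tfrac12u^2=v'-\tfrac12v^2$. Hence the difference $w_n=u_n-v_n$ satisfies the linear equation $w_n'=\tfrac12(u_n+v_n)w_n$. Equivalently,
\begin{equation*}
  w_n(x)=w_n(x_0)\exp\Bigl(\tfrac12\int_{x_0}^x(u_n+v_n)\Bigr),
\end{equation*}
which is an identity involving no derivatives. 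Since $u_n,v_n$ converge uniformly, this relation passes to the limit. The limit difference $w=T^\Phi_\iii-T^\Phi_\jjj$ therefore satisfies the same integral identity and hence $w'=\tfrac12(u+v)w$. That is exactly $S^\Phi_\iii\equiv S^\Phi_\jjj$ on $\Omega$, and on $\ol{\Omega}$ by continuity. The same integral trick works in the plane along paths from a base point, giving a uniform argument for both dimensions. In the plane the equation is holomorphic, so the Cauchy route is a fallback. Either way no third derivative of the generators is needed, which is the point of the lemma.

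For openness, I argue by contradiction. Suppose $\Phi\in\mathcal G_S$ and $\Phi_n\to\Phi$ with $\Phi_n\notin\mathcal G_S$. By \cref{lem:gap}\cref{it:gap-schw} each $\Phi_n$ has a coincident pair $(\iii_n,\jjj_n)\in Z$. The locally uniform validity of \cref{eq:Tparametercontinuity} supplies the hypothesis near $\Phi$. The first part then gives a coincidence for $\Phi$ indexed by a pair in $Z$, contradicting $\Delta_S(\Phi)>0$ via \cref{lem:gap}.
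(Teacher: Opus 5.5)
Your proposal is correct and is essentially the paper's proof. The paper likewise extracts a convergent subsequence in the compact $Z$ and obtains uniform convergence of $T^{\Phi_n}_{\iii_n}$ and $T^{\Phi_n}_{\jjj_n}$ from \cref{eq:Tparametercontinuity} and \cref{eq:dualbounds}; it then rewrites the Schwarzian coincidence through \cref{lem:dual2} as $(A_n-B_n)'=\tfrac12(A_n+B_n)(A_n-B_n)$, solves this by an integrating factor along segments in small balls $B^o(z_1,\delta)\subseteq\Omega$ (one argument covering $d=1$ and $d=2$, so no Cauchy fallback is needed), passes the derivative-free exponential identity to the limit, differentiates back, and proves openness by the same contradiction via \cref{lem:gap}.
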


\begin{proof}
  Pass to a subsequence for which $(\iii_n,\jjj_n)\to(\iii,\jjj)\in Z$. Then
  \begin{equation*}
    A_n=T^{\Phi_n}_{\iii_n}\to A=T^\Phi_\iii \qquad\text{and}\qquad B_n=T^{\Phi_n}_{\jjj_n}\to B=T^\Phi_\jjj
  \end{equation*}
  uniformly on $\ol{\Omega}$. By the triangle inequality, the first convergence reduces to $\|T^{\Phi_n}_{\iii_n}-T^\Phi_{\iii_n}\|\to0$ and $\|T^\Phi_{\iii_n}-T^\Phi_\iii\|\to0$. The first term tends to zero by \cref{eq:Tparametercontinuity} and the second by \cref{eq:dualbounds}; the argument for $B_n$ is identical. The identity of \cref{lem:dual2} turns the assumed Schwarzian coincidence into
  \begin{equation*}
    (A_n-B_n)'=\tfrac12(A_n+B_n)(A_n-B_n).
  \end{equation*}
  Fix $z_1\in\Omega$ and $\delta>0$ with $B^o(z_1,\delta)\subseteq\Omega$, the ball being an interval in the line case. For $z\in B^o(z_1,\delta)$ the segment from $z_1$ to $z$ lies in $B^o(z_1,\delta)$. Write $D_n=A_n-B_n$ and $C_n=\tfrac12(A_n+B_n)$, parametrize the segment by $\gamma(t)=z_1+t(z-z_1)$ for $t\in[0,1]$, and put $u_n(t)=D_n(\gamma(t))$, a $\mathcal C^1$ function on $[0,1]$ by \cref{lem:dual}. Differentiating along the path, by the complex chain rule when $d=2$, gives $u_n'(t)=D_n'(\gamma(t))(z-z_1)$, so the differential equation already recorded yields $u_n'(t)=a_n(t)u_n(t)$, where the coefficient $a_n(t)=C_n(\gamma(t))(z-z_1)$ is continuous on $[0,1]$. The integrating factor $E_n(t)=\exp(-\int_0^t a_n(s)\dd s)$ therefore satisfies $E_n'(t)=-a_n(t)E_n(t)$, and
  \begin{equation*}
    (E_n u_n)' = E_n'u_n+E_n u_n' = -a_n E_n u_n+E_n a_n u_n = 0.
  \end{equation*}
  Thus $E_n u_n$ is constant, so $u_n(t)=u_n(0)\exp(\int_0^t a_n(s)\dd s)$. The change of variables $w=\gamma(s)$ rewrites the integral as the segment integral of $C_n$ from $z_1$ to $\gamma(t)$. Evaluating at $t=1$ gives
  \begin{equation*}
    A_n(z)-B_n(z)=(A_n(z_1)-B_n(z_1))\exp\biggl(\frac12\int_{z_1}^{z}A_n(w)+B_n(w)\dd w\biggr).
  \end{equation*}
  Uniform convergence lets this identity pass to the limit on $B^o(z_1,\delta)$, where the exponent is a primitive of $\tfrac12(A+B)$, the integrand being real-analytic on $\Omega$ by \cref{lem:dual} and holomorphic when $d=2$. Differentiating the resulting identity with respect to $z$ gives $(A-B)'=\tfrac12(A+B)(A-B)$ on $B^o(z_1,\delta)$ and, since $z_1\in\Omega$ was arbitrary, on all of $\Omega$, which is precisely $S^\Phi_\iii=S^\Phi_\jjj$ on $\Omega$ by \cref{lem:dual2}, and on $\ol{\Omega}$ by continuity.

  For the final assertion, suppose that $\Phi_n\notin\mathcal G_S$ and $\Phi_n\to\Phi$. The local hypothesis gives \cref{eq:Tparametercontinuity}, and \cref{lem:gap} yields pairs $(\iii_n,\jjj_n)\in Z$ satisfying $S^{\Phi_n}_{\iii_n}\equiv S^{\Phi_n}_{\jjj_n}$. The preceding argument gives a subsequential limit $(\iii,\jjj)\in Z$ such that $S^\Phi_\iii\equiv S^\Phi_\jjj$, so another application of \cref{lem:gap} shows that $\Phi\notin\mathcal G_S$. Thus the complement of $\mathcal G_S$ is closed, and $\mathcal G_S$ is open.
\end{proof}

The following lemma isolates the counting step that the transversality arguments in the planar and pole settings have in common. The $k$-dimensional Lebesgue measure is denoted $\LL^k$.

\begin{lemma} \label{lem:avoidance}
  Let $(Y,\varrho)$ be compact, let $B\subseteq\R^M$ be a ball, and let $F\colon B\times Y\to\R^r$. Suppose that, for all sufficiently small $\delta>0$, the space $Y$ is covered by at most $C_3\delta^{-s}$ sets of $\varrho$-diameter at most $\delta$, that $|F(t,y)-F(t,y')|\le C_1\varrho(y,y')$, and that
  \begin{equation} \label{eq:smallball}
    \LL^M(\{t\in B : |F(t,y)|\le u\})\le C_2u^r
  \end{equation}
  for every $y\in Y$ and $u>0$. If $r>s$, then
  \begin{equation*}
    \LL^M(\{t\in B : F(t,y)=0\text{ for some }y\in Y\})=0.
  \end{equation*}
\end{lemma}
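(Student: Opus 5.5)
The plan is the standard covering-plus-union-bound argument. Fix a small $\delta>0$ and cover $Y$ by at most $C_3\delta^{-s}$ sets $Y_1,\ldots,Y_K$ of $\varrho$-diameter at most $\delta$. In each nonempty $Y_k$ choose a representative $y_k$.

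The first step is to use the Lipschitz hypothesis. Suppose $t\in B$ satisfies $F(t,y)=0$ for some $y\in Y_k$. Then
\begin{equation*}
  |F(t,y_k)|\le|F(t,y)|+C_1\varrho(y,y_k)\le C_1\delta .
\end{equation*}
Hence the bad set $E=\{t\in B : F(t,y)=0\text{ for some }y\in Y\}$ is contained in the union over $k$ of the sublevel sets $\{t\in B : |F(t,y_k)|\le C_1\delta\}$.

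The second step applies the small-ball hypothesis \cref{eq:smallball} with $u=C_1\delta$ to each of these sets and sums over $k$. This gives
\begin{equation*}
  \LL^M(E)\le C_3\delta^{-s}\,C_2(C_1\delta)^{r}=C_1^rC_2C_3\,\delta^{r-s}.
\end{equation*}
Since $r>s$, the right-hand side tends to $0$ as $\delta\to0$, and therefore $\LL^M(E)=0$.

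The only point needing care is measurability of $E$, because the outer-measure bound above is all the argument delivers. I would phrase the conclusion in terms of outer measure: $E$ is covered by measurable sets of arbitrarily small total measure, so $E$ is Lebesgue-null, and null sets are measurable. This is automatic once each sublevel set is measurable. That holds if $F(\cdot,y)$ is measurable, which is implicit in \cref{eq:smallball}. Alternatively, if $F$ is continuous, $E$ is a projection of a closed set along a compact factor and is therefore closed. No step is a genuine obstacle; the lemma is pure counting, and the work lies in verifying its hypotheses in later sections.
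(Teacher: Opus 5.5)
Your argument is correct and is the paper's proof: cover $Y$ at scale $\delta$, choose representatives, use the Lipschitz bound to place the bad set inside $\bigcup_k\{t\in B : |F(t,y_k)|\le C_1\delta\}$, and sum the small-ball bounds to get a total of order $\delta^{r-s}$. Measurability is handled as you do, through the measurability of the sublevel sets that the hypothesis presupposes. The one difference is that the paper uses the radius $(C_1+1)\delta$ rather than $C_1\delta$, so that when $C_1=0$ it never applies the small-ball hypothesis at $u=0$, which is assumed only for $u>0$; you can fix this the same way, or by monotonicity in $u$.
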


\begin{proof}
  If $Y$ is empty, there is nothing to prove. Let $E = \{t\in B : F(t,y) = 0\text{ for some }y\in Y\}$. Fix a sufficiently small $\delta > 0$, and let $U_1,\ldots,U_L$ cover $Y$, where $L \le C_3\delta^{-s}$ and each $U_j$ has $\varrho$-diameter at most $\delta$. After discarding empty sets, choose $y_j\in U_j$. If $t\in E$, then $F(t,y) = 0$ for some $y\in U_j$, and hence $|F(t,y_j)| \le C_1\varrho(y,y_j) \le C_1\delta < (C_1+1)\delta$. Therefore
  \begin{equation*}
    E\subseteq A_\delta = \bigcup_{j=1}^L\{t\in B : |F(t,y_j)|\le (C_1+1)\delta\}.
  \end{equation*}
  The set $A_\delta$ is measurable by \cref{eq:smallball}, and the union bound gives
  \begin{equation*}
    \LL^M(A_\delta) \le LC_2(C_1+1)^r\delta^r \le C_3C_2(C_1+1)^r\delta^{r-s}.
  \end{equation*}
  Since $r>s$, the set $E$ has Lebesgue measure zero.
\end{proof}

In both transversality arguments the compact space of \cref{lem:avoidance} is the pair space $Z$ of \cref{eq:Zdef}, metrized by a fixed base raised to the length of the shorter of the two common prefixes, so that the prefix estimates for the dual projections become Lipschitz bounds. The base is a power of a rate $\lambda\in(\cmax,1)$ that bounds the contraction of the system and of its perturbations: the square $\lambda^2$ in the plane, where the Schwarzian projections carry two derivative factors, and $\lambda$ itself in higher dimensions, where the poles are compared. The covering numbers that the counting needs are the same computation for every base, and we record it once.

\begin{lemma} \label{lem:paircover}
  Let $\beta\in(0,1)$, and for $w=(\iii,\jjj)$ and $w'=(\iii',\jjj')$ in $Z$ put $\varrho_\beta(w,w')=\beta^{\min\{|\iii\land\iii'|,|\jjj\land\jjj'|\}}$ when $w\ne w'$ and $\varrho_\beta(w,w)=0$. Then $\varrho_\beta$ is a metric inducing the topology of $Z$, so $(Z,\varrho_\beta)$ is compact, and for every $\delta\in(0,1)$ the space $Z$ is covered by at most $2\beta^{-s}\delta^{-s}$ sets of $\varrho_\beta$-diameter at most $\delta$, where $s=2\log N/\log(1/\beta)$.
\end{lemma}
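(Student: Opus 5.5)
The plan is to reduce everything to the standard ultrametric on words and then count cylinder pairs.

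\emph{Step 1: metric and topology.} For $w\ne w'$ put $m(w,w')=\min\{|\iii\land\iii'|,|\jjj\land\jjj'|\}$. Symmetry and positivity off the diagonal are immediate; note that $w\ne w'$ means $\iii\ne\iii'$ or $\jjj\ne\jjj'$, so $m$ is finite. For the ultrametric inequality, use that $|\iii\land\iii''|\ge\min\{|\iii\land\iii'|,|\iii'\land\iii''|\}$ and likewise for $\jjj$. Taking minima gives $m(w,w'')\ge\min\{m(w,w'),m(w',w'')\}$, hence $\varrho_\beta(w,w'')\le\max\{\varrho_\beta(w,w'),\varrho_\beta(w',w'')\}$; the case $w=w''$ is trivial. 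Thus $\varrho_\beta$ is the maximum of the two coordinate ultrametrics $\beta^{|\iii\land\iii'|}$, so it induces the product topology on $(\II^\N\cup\II^*)^2$ restricted to $Z$. By \cref{sec:cifs-esc} that topology does not depend on the base. Compactness then follows because $Z$ was shown closed in the compact product.

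\emph{Step 2: covering.} Given $\delta\in(0,1)$, let $k\ge1$ be the least integer with $\beta^k\le\delta$, so that $\beta^{k-1}>\delta$ and hence $\beta^{-k}<\beta^{-1}\delta^{-1}$. Partition $Z$ according to the pair of length-$k$ prefixes $(\iii|_k,\jjj|_k)$ when $|\iii|=|\jjj|\ge k$, giving at most $N^{2k}$ classes. Two elements of one class have $m\ge k$, so the class has diameter at most $\beta^k\le\delta$.

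The pairs of common length $n<k$ are finitely many points, and they are the only delicate part. I would not cover them by singletons, since that costs up to $\sum_{n<k}N^{2n}$ extra sets. Instead, I would merge each such short pair into the class of its own prefixes: a short pair $(\iii,\jjj)$ with $n<k$ together with the long pairs extending it. This class has diameter $\le\beta^{n}$, which may exceed $\delta$, so this merging does not obviously work. The simplest fix is to pay the extra count: there are at most $\sum_{n=1}^{k-1}N^{2n}\le N^{2k}$ short pairs (valid since $N\ge2$), for a total of at most $2N^{2k}$ sets.

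\emph{Step 3: conversion.} Since $N^{2k}=\beta^{-ks}$ with $s=2\log N/\log(1/\beta)$, the bound $\beta^{-k}<\beta^{-1}\delta^{-1}$ gives $2N^{2k}\le2\beta^{-s}\delta^{-s}$, which is the claim. The main obstacle is the bookkeeping for finite words of short length in Step 2; the geometric-sum estimate $\sum_{n<k}N^{2n}\le N^{2k}$ absorbs it into the factor $2$.
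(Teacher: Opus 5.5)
Your proof is correct and follows the paper's argument essentially step for step: the same prefix-pair classes of diameter at most $\beta^k$, the same singleton cover of the at most $\sum_{n<k}N^{2n}\le N^{2k}$ short pairs, and the same conversion $N^{2k}=\beta^{-ks}<(\beta\delta)^{-s}$. The one step to make explicit is in Step~1, where the ``Thus'' does not follow from the ultrametric inequality you just proved: writing $\varrho_\beta(w,w')=\max\{d_\beta(\iii,\iii'),d_\beta(\jjj,\jjj')\}$, with $d_\beta$ the genuine word ultrametric of \cref{sec:cifs-esc} (which vanishes on the diagonal), requires in the case $\iii=\iii'$, $\jjj\ne\jjj'$ the bound $|\jjj\land\jjj'|\le|\jjj|=|\iii|$. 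This is where the equal lengths in $Z$ enter, and without it the claim that $\varrho_\beta$ induces the product topology would fail.
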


\begin{proof}
  Distinct words have a finite common prefix, so $\varrho_\beta(w,w')>0$ for $w\ne w'$, and $\varrho_\beta$ is symmetric. Let $d_\beta$ be the ultrametric of \cref{sec:cifs-esc} with base $\beta$. For $w\ne w'$ we have $\varrho_\beta(w,w')=\max\{d_\beta(\iii,\iii'),d_\beta(\jjj,\jjj')\}$: when both pairs of words are distinct this is the identity $\beta^{\min\{a,b\}}=\max\{\beta^a,\beta^b\}$. When $\iii=\iii'$, the first coordinate distance is zero. Since $w\ne w'$, we have $\jjj\ne\jjj'$, and $|\jjj\land\jjj'|\le|\jjj|=|\iii|$ because the two coordinates of each point of $Z$ have equal length. Hence both sides equal $\beta^{|\jjj\land\jjj'|}$; the case $\jjj=\jjj'$ is symmetric. As the maximum of the pullbacks of $d_\beta$ under the two coordinate projections, $\varrho_\beta$ satisfies the ultrametric inequality and induces on $Z$ the topology inherited from $(\II^\N\cup\II^*)^2$, in which $Z$ is compact by the remark following \cref{eq:Zdef}. For the covering, let $p\ge1$. The pairs whose words have length at least $p$ are covered by the sets of pairs whose words have prescribed prefixes $\uuu$ and $\vvv$ with $|\uuu|=|\vvv|=p$, at most $N^{2p}$ sets, any two members of which have both common prefixes of length at least $p$, so that their $\varrho_\beta$-diameter is at most $\beta^p$; the pairs of words of length less than $p$ number at most $\sum_{k<p}N^{2k}\le N^{2p}$, as $N\ge2$ by the convention of \cref{sec:cifs-esc}, and are covered by singletons. Thus $Z$ is covered by at most $2N^{2p}=2(\beta^p)^{-s}$ sets of diameter at most $\beta^p$. Given $\delta\in(0,1)$, choose $p\ge1$ with $\beta^p\le\delta<\beta^{p-1}$; then $\beta^p>\beta\delta$, so the cover just described consists of at most $2(\beta\delta)^{-s}=2\beta^{-s}\delta^{-s}$ sets of diameter at most $\delta$.
\end{proof}

In the two transversality arguments the map $F$ of \cref{lem:avoidance} is differentiable in $t\in B$, and the small-ball hypothesis \cref{eq:smallball} comes from a lower bound on the least singular value of $D_tF$; we fix the meaning of that quantity here. For a linear map $L$ between finite-dimensional inner product spaces, with adjoint $L^{*}$, let $\sigma_{\min}(L)=\min_{|\zeta|=1}|L^{*}\zeta|$, the minimum being over the unit vectors $\zeta$ of the target space; this is the least singular value of $L$ when the singular values are taken to be the eigenvalues of $(LL^{*})^{1/2}$, so that there are as many of them as the dimension of the target. Thus $\sigma_{\min}(L)>0$ if and only if $L$ is onto, and
\begin{equation} \label{eq:sigmamin}
  |\sigma_{\min}(L+E)-\sigma_{\min}(L)| \le \|E\|
\end{equation}
for every linear map $E$ between the same spaces, by the triangle inequality and $\|E^{*}\|=\|E\|$.

The passage from a lower bound on the least singular value of $D_tF$ to the small-ball hypothesis is the same in both arguments, and the next lemma records it once; it is the mechanism of \cite[Lemma~7.7 and proof of Lemma~7.10]{PeresSchlag}. Its constant depends only on the dimensions, the radius of the ball, a bound on the derivative and on its Lipschitz constant, and the lower bound on its least singular value, which is what makes the resulting estimate uniform over the pair space.

\begin{lemma} \label{lem:smallball}
  Let $r\le M$ be positive integers, let $B\subseteq\R^M$ be an open ball of radius $r_B$, let $\sigma_*>0$ and $\Lambda<\infty$, and let $F\colon B\to\R^r$ be continuously differentiable with $\|DF(t)\|\le\Lambda$, $\|DF(t)-DF(t')\|\le\Lambda|t-t'|$, and $\sigma_{\min}(DF(t))\ge\sigma_*$ for all $t,t'\in B$. Then there is a constant $C<\infty$, depending only on $M$, $r$, $r_B$, $\Lambda$, and $\sigma_*$, such that
  \begin{equation*}
    \LL^M(\{t\in B : |F(t)|\le u\}) \le Cu^r
  \end{equation*}
  for every $u>0$.
\end{lemma}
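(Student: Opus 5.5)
The plan is a local linearization followed by a Fubini argument. Put $s_0=\min\{r_B,\sigma_*/(2\Lambda)\}$ and cover $B$ by at most $C(M,r_B/s_0)$ balls $B(t_k,s_0/2)$ with centers $t_k\in B$. It then suffices to bound the measure of $\{t\in B\cap B(t_k,s_0/2) : |F(t)|\le u\}$ by $C'u^r$ for each $k$, with $C'$ depending only on the listed quantities. Summing over $k$ gives the claim. For $u\ge\sigma_*s_0/8$ the statement is trivial, since the left side is at most $\LL^M(B)\le \LL^M(B)(8/(\sigma_*s_0))^ru^r$. So we may assume $u$ is small.

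Fix a center $t_0=t_k$ and write $A=DF(t_0)$. Since $\sigma_{\min}(A)\ge\sigma_*>0$, $A$ is onto. Split $\R^M=V\oplus W$, where $V=(\ker A)^\perp$ has dimension $r$ and $W=\ker A$. Then $A|_V\colon V\to\R^r$ is invertible, and $|Av|\ge\sigma_*|v|$ for $v\in V$, because the singular values of $A|_V$ coincide with those of $A$. Write $t=t_0+v+w$. By the Lipschitz bound on $DF$ and \cref{eq:sigmamin}, for $t$ in $B\cap B(t_0,s_0)$ we have $\|DF(t)-A\|\le\Lambda s_0\le\sigma_*/2$. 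Hence $|D_vF(t)v'|\ge|Av'|-\tfrac{\sigma_*}2|v'|\ge\tfrac{\sigma_*}2|v'|$ for $v'\in V$.

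Now freeze $w$ and consider the slice map $g_w(v)=F(t_0+v+w)$ on the set $\{v : t_0+v+w\in B\cap B(t_0,s_0)\}$. This set is convex, being an intersection of two balls sliced by an affine $r$-plane. For $v,v'$ in it, integrating along the segment gives $|g_w(v)-g_w(v')|\ge\tfrac{\sigma_*}2|v-v'|$. So $g_w$ is injective and expands distances by at least $\sigma_*/2$, and the preimage of the ball $B(0,u)\subseteq\R^r$ under $g_w$ has diameter at most $4u/\sigma_*$. Its $r$-dimensional measure is therefore at most $c_r(4u/\sigma_*)^r$. By Fubini over $w$, the $w$-projection of $B(t_0,s_0)$ having $(M-r)$-measure at most $c_{M-r}s_0^{M-r}$, the local sublevel set has measure at most $c_{M-r}s_0^{M-r}c_r(4/\sigma_*)^ru^r$. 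This is the required $C'u^r$.

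The main obstacle is keeping the slices convex, so that the expansion estimate integrates along segments. Intersecting with the ball $B$ rather than working in all of $B(t_0,s_0)$ is what guarantees this, and measurability follows from continuity of $F$. The bound $\|DF\|\le\Lambda$ enters only through the choice of $s_0$ and is otherwise not needed. All constants depend only on $M$, $r$, $r_B$, $\Lambda$, and $\sigma_*$.
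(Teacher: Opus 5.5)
Your proof is correct, and its architecture matches the paper's. In both arguments you cover $B$ by boundedly many balls of radius comparable to $\sigma_*/\Lambda$, freeze $M-r$ variables, and show that each slice map expands distances on a convex slice. You then bound each slice of the sublevel set by an $r$-dimensional ball and integrate by Fubini.

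The genuine difference is the splitting.

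\textbf{The paper's route.} It stays in coordinates. By the Cauchy--Binet formula it picks $r$ columns $J$ of $L=DF(t_0)$ with $|\det L_J|\ge\sigma_*^r\binom{M}{r}^{-1/2}$. It then needs the upper bound $\|DF\|\le\Lambda$ to turn this determinant bound into the lower bound $\Lambda^{1-r}|\det L_J|$ on the least singular value of $L_J$. Fubini then runs in the original coordinates after a mere permutation.

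\textbf{Your route.} You split orthogonally, $\R^M=(\ker A)^\perp\oplus\ker A$. This gives the expansion constant $\sigma_*/2$ directly from $\sigma_{\min}(A)\ge\sigma_*$, with no combinatorial factor. It also makes no real use of the sup bound on $DF$: only the Lipschitz bound on $DF$ enters, through $s_0$, and the sup bound merely guarantees $\Lambda\ge\sigma_*>0$. The price is that you must invoke rotation invariance of Lebesgue measure to apply Fubini in the rotated frame.

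Your route therefore yields a sharper constant under weaker hypotheses, while the paper's avoids the change of frame.

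\textbf{One point of wording.} The segment estimate must come from integrating $DF-A$, namely
\[
|g_w(v)-g_w(v')|\ge|A(v-v')|-\sup_{s}\|DF(\gamma(s))-A\|\,|v-v'|,
\]
where $\gamma$ is the segment. It does not follow from the pointwise bound $|DF(t)v'|\ge\tfrac{\sigma_*}{2}|v'|$. That bound alone does not survive integration, because the direction of $DF(\gamma(s))(v-v')$ may turn along the segment. You already have the needed uniform closeness $\|DF(t)-A\|\le\sigma_*/2$ on the convex slice; it is this, not \cref{eq:sigmamin}, that does the work.
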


\begin{proof}
  Since $\sigma_*\le\sigma_{\min}(DF(t))\le\|DF(t)^{*}\|=\|DF(t)\|\le\Lambda$ for $t\in B$, the number $\Lambda$ is positive. Put $a=\sigma_*^r\Lambda^{1-r}\binom{M}{r}^{-1/2}/2$ and $\eta=a/\Lambda$, let $\omega_k$ be the Lebesgue measure of the unit ball of $\R^k$ for $k\ge1$, and let $\omega_0=1$. The sets $\{t\in B : |F(t)|\le u\}$ are relatively closed in $B$, since $F$ is continuous, and hence Borel.

  Fix $t_0\in B$ and write $L=DF(t_0)$ as an $r\times M$ matrix, so that $L^{*}=L^{\top}$. Every unit vector $\zeta\in\R^r$ satisfies $\zeta^{\top}LL^{\top}\zeta=|L^{\top}\zeta|^2\ge\sigma_*^2$, so every eigenvalue of $LL^{\top}$ is at least $\sigma_*^2$, and the Cauchy--Binet formula gives
  \begin{equation*}
    \sum_J\det(L_J)^2 = \det(LL^{\top}) \ge \sigma_*^{2r},
  \end{equation*}
  where $J$ ranges over the $\binom{M}{r}$ subsets of $\{1,\ldots,M\}$ with $r$ elements and $L_J$ is the $r\times r$ matrix formed by the columns of $L$ indexed by $J$. Choose $J$ with $|\det L_J|\ge\sigma_*^r\binom{M}{r}^{-1/2}$ and put $A=L_J$. The number $|\det A|$ is the product of the $r$ singular values of $A$, each of which is at most $\|A\|\le\|L\|\le\Lambda$, so the least of them is at least $\Lambda^{1-r}|\det A|\ge2a$, and $|Av|\ge2a|v|$ for every $v\in\R^r$.

  After a permutation of the coordinates, which preserves $\LL^M$, we write $t=(x,z)$, where $x\in\R^r$ consists of the coordinates indexed by $J$ and $z\in\R^{M-r}$ of the others, and $t_0=(x_0,z_0)$. The set $W=B\cap B^o(t_0,\eta)$ is convex. Let $(x,z)$ and $(x',z)$ lie in $W$, and for $s\in[0,1]$ let $\tau(s)=(x'+s(x-x'),z)$ and let $A_s$ be the matrix formed by the columns of $DF(\tau(s))$ indexed by $J$, so that $\|A_s-A\|\le\|DF(\tau(s))-DF(t_0)\|\le\Lambda|\tau(s)-t_0|<a$. Then $F(x,z)-F(x',z)=\int_0^1A_s(x-x')\dd s$, and hence
  \begin{equation*}
    |F(x,z)-F(x',z)| \ge |A(x-x')|-a|x-x'| \ge a|x-x'|.
  \end{equation*}
  Consequently, for fixed $z$ and $u>0$, any two points of the slice $\{x\in\R^r : (x,z)\in W\text{ and }|F(x,z)|\le u\}$ lie at distance at most $2u/a$. If this slice is nonempty, fixing one of its points shows that it is contained in a ball of radius $2u/a$, so its $\LL^r$-measure is at most $\omega_r(2u/a)^r$. The slice is empty unless $|z-z_0|<\eta$, and Fubini's theorem, integrating this bound over the $(M-r)$-dimensional ball of radius $\eta$ centered at $z_0$, gives
  \begin{equation*}
    \LL^M(\{t\in W : |F(t)|\le u\}) \le \omega_{M-r}\eta^{M-r}\omega_r\biggl(\frac{2u}{a}\biggr)^r,
  \end{equation*}
  which for $M=r$ is the bound on the single slice.

  Finally, if $t_1,\ldots,t_n$ are points of $B$ at mutual distance at least $\eta$, then the balls $B^o(t_j,\eta/2)$ are pairwise disjoint and contained in the open ball of radius $r_B+\eta/2$ concentric with $B$, so that $n\le(1+2r_B/\eta)^M$. We may therefore choose such points with $n$ as large as possible, and the maximality of $n$ gives $B\subseteq\bigcup_{j=1}^nB^o(t_j,\eta)$. Applying the preceding estimate with $t_0=t_j$, summing over this cover, and using the bound on $n$ proves the lemma with $C=(1+2r_B/\eta)^M\omega_{M-r}\omega_r\eta^{M-r}(2/a)^r$.
\end{proof}

\section{Genericity on the line} \label{sec:gen-line}

We now let $d=1$ and show that $\mathcal G_S$, and with it $\mathcal G_T$, is open and dense in $(\mathcal S,d_2)$. The metric \cref{eq:d2metric} and the formulas defining $\Delta_T$ and $\Delta_S$ see only the restrictions of the generators to $\ol{\Omega}$, and the formulas make sense for every conformal iterated function system on $\Omega$, whatever its extension domain; we use them in that generality, while $\mathcal G_T$ and $\mathcal G_S$ remain subsets of $\mathcal S$. The localization below is purely real: its holomorphic extensions may grow away from the real axis, which is why no planar analogue is possible. \Cref{sec:gen-line-open} proves the openness, \cref{sec:gen-line-cyl} reduces the gap condition to a condition on words of one fixed length, \cref{sec:gen-line-loc} builds the localized perturbation that meets that condition, and \cref{sec:gen-line-dense} assembles the two into \cref{thm:C1denseS}. \Cref{thm:genericity}\cref{it:gen-line} is proved at the end of that subsection.

\subsection{Openness} \label{sec:gen-line-open}

The pre-Schwarzian projections vary uniformly in $d_2$, with a constant independent of the word; that is precisely the hypothesis of \cref{lem:Sclosed}, so the openness of $\mathcal G_S$ follows from the estimate, and that of $\mathcal G_T$ follows from it directly. Since the intermediate systems of the density proof need not extend conformally to the fixed $\Omega'$, the estimate is stated for arbitrary conformal iterated function systems on $\Omega$, each with its own extension domain. For a system $\Phi$ in this class, the projections $T^\Phi_\iii$ of infinite words are those supplied by \cref{lem:dual} applied to $\Phi$. On this class $d_2$ is only a pseudometric, two systems with the same restrictions to $\ol{\Omega}$ being at distance zero, and \cref{eq:cont1} is to be read as an estimate between tuples; the same reading applies to the distances in \cref{lem:entire} and in the proof of \cref{thm:C1denseS}, where only numerical bounds, convergence, and the triangle inequality are used, all of which hold for a pseudometric.

\begin{proposition} \label{prop:cont1}
  Let $\Phi=(\fii_i)_{i \in \II}$ be a conformal iterated function system on $\Omega \subset \R$. Then there are $r_0>0$ and $C<\infty$ such that
  \begin{equation} \label{eq:cont1}
    \|T^\Phi_\iii-T^\Psi_\iii\|\le Cd_2(\Phi,\Psi)
  \end{equation}
  for all $\iii\in\II^\N\cup\II^*$ and every conformal iterated function system $\Psi$ on $\Omega$ with $d_2(\Phi,\Psi)<r_0$. In particular, if $\Delta_S(\Phi)>0$, then there is $r_1\in(0,r_0]$ such that 
  \begin{equation*}
    \Delta_S(\Psi) > 0
  \end{equation*}
  for all conformal iterated function systems $\Psi$ on $\Omega$ with $d_2(\Phi,\Psi)<r_1$, the same implication holds with $\Delta_T$ in place of $\Delta_S$, and the sets $\mathcal G_S$ and $\mathcal G_T$ are open in $(\mathcal S,d_2)$.
\end{proposition}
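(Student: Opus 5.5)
The plan is to compare the two cocycle sums term by term. For a finite word $\iii=i_1\cdots i_n$, \cref{eq:Tline} writes $T^\Phi_\iii=\sum_{k=1}^n(T_{\fii_{i_k}}\circ f_{\iii|_{k-1}})f_{\iii|_{k-1}}'$, and likewise for $\Psi$. I first fix $r_0\le\min\{1-\cmax,\cmin\}/2$, small enough that $d_2(\Phi,\Psi)<r_0$ gives $|\psi_i'|\in[\cmin/2,\lambda]$ on $\ol{\Omega}$ with $\lambda=(1+\cmax)/2$. Then $\|T_{\psi_i}\|=\|\psi_i''/\psi_i'\|$ is bounded by some $\beta$ uniformly in $\Psi$, and $\|T_{\fii_i}-T_{\psi_i}\|\le C'd_2$ by the quotient rule. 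Each $\psi_i$ also maps $\ol{\Omega}$ into $\ol{\Omega}$; this is built into the class of systems on $\Omega$, so no perturbative argument is needed for it. The whole estimate then reduces to uniform bounds on the prefix compositions.

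The key intermediate bounds concern a prefix $\kkk$ of length $m$, writing $f=f^\Phi_\kkk$ and $g=f^\Psi_\kkk$. I will show
\begin{equation*}
  \|f-g\|\le C''d_2 \qquad\text{and}\qquad \|f'-g'\|\le C''m\lambda^{m-1}d_2 .
\end{equation*}
For the first, induct on $m$ using $f_{\kkk i}-g_{\kkk i}=(\fii_i\circ f-\fii_i\circ g)+(\fii_i-\psi_i)\circ g$. The first term is at most $\cmax\|f-g\|$, provided the mean value theorem applies on the interval $\ol{\Omega}$, which it does since $d=1$ and $\ol{\Omega}$ is convex; summing the geometric series gives $C''=1/(1-\cmax)$. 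For the derivative, $f'$ and $g'$ are products of $m$ factors $\fii_{i}'\circ f_{\cdot}$ and $\psi_i'\circ g_{\cdot}$, each at most $\lambda$. The telescoping product estimate then gives $\|f'-g'\|\le m\lambda^{m-1}\max_k\|\fii_{i_k}'\circ f_{\kkk|_{k-1}}-\psi_{i_k}'\circ g_{\kkk|_{k-1}}\|$. Each factor difference is at most $\|\fii''\|\,\|f-g\|+d_2\lesssim d_2$, using the first bound.

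With these bounds, the $k$-th summand difference splits into three pieces: $(T_{\fii}\circ f-T_{\fii}\circ g)f'$, $((T_\fii-T_\psi)\circ g)f'$, and $(T_\psi\circ g)(f'-g')$. These are bounded by $\|T_\fii'\|C''d_2\cmax^{k-1}$, $C'd_2\cmax^{k-1}$, and $\beta C''(k-1)\lambda^{k-2}d_2$ respectively. Summing over $k$ gives a convergent series independent of $n$, which yields \cref{eq:cont1} for finite words. Infinite words follow by passing to the limit along prefixes, using the uniform convergence of \cref{lem:dual} for both $\Phi$ and $\Psi$. The main obstacle is keeping everything uniform in $\Psi$ when $\Psi$ has its own extension domain. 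This is why I would work only with $\mathcal C^2$ quantities on $\ol{\Omega}$ and never with analytic continuations.

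For the consequences, the map $(\iii,\jjj)\mapsto\|T_\iii-T_\jjj\|$ gives $\Delta_T(\Psi)\ge\Delta_T(\Phi)-2Cd_2$, which settles $\mathcal G_T$ directly with $r_1=\min\{r_0,\Delta_T(\Phi)/(4C)\}$. For $\Delta_S$, I argue by contradiction. Take $\Psi_n\to\Phi$ with $\Delta_S(\Psi_n)=0$; the infimum is attained by continuity on $Z$, so there are pairs in $Z$ with $S^{\Psi_n}_{\iii_n}\equiv S^{\Psi_n}_{\jjj_n}$. Then \cref{eq:cont1} is exactly hypothesis \cref{eq:Tparametercontinuity}, and \cref{lem:Sclosed} gives a coincidence for $\Phi$, contradicting $\Delta_S(\Phi)>0$. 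The argument of \cref{lem:Sclosed} only uses the dual objects on $\ol{\Omega}$, so it applies to systems outside $\mathcal S$ as well. Openness of $\mathcal G_S$ and $\mathcal G_T$ in $(\mathcal S,d_2)$ then follows, since $\mathcal S$ is itself open by the stability remark at the start of \cref{sec:gaps}.
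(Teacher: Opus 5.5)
Your proof is correct and follows essentially the paper's route. The paper also compares the cocycle sums \eqref{eq:Tline} term by term, controls the orbit points by a geometric induction and the derivative cocycles by the telescoping product bound of order $k\lambda^{k-1}d_2$, and deduces the openness of $\mathcal G_S$ by contradiction from Lemma~\ref{lem:Sclosed}. The differences are only cosmetic: you contract with the generators of $\Phi$ rather than those of $\Psi$ when comparing orbits, and you reach infinite words through prefix limits rather than the convergent series. Your appeal to the openness of $\mathcal S$ is superfluous, since openness in $(\mathcal S,d_2)$ follows simply by restricting the estimate to members of $\mathcal S$.
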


\begin{proof}
  Let $\cmax < \lambda < 1$ and $m=\min_{i \in \II}\inf_{\ol{\Omega}}|\fii_i'|>0$. Choose $r_0>0$ so small that every conformal iterated function system $\Psi=(\psi_i)_{i \in \II}$ on $\Omega$ with $d_2(\Phi,\Psi)<r_0$ has $|\psi_i'|\le \lambda$ and $|\psi_i'|\ge m/2$ on $\ol{\Omega}$; below $C$ denotes a finite constant depending only on $\Phi$, whose value may change. Fix such a system $\Psi$. For $\iii=\varnothing$, both projections vanish and the claim is immediate. Let $\iii$ be a nonempty finite or infinite word. Write $T^\Phi_\iii=\sum_k A_k^\Phi$ and $T^\Psi_\iii=\sum_k A_k^\Psi$, using the finite sums of \cref{eq:Tword} for finite words and the uniformly convergent series of \cref{lem:dual} for infinite words, where $A_k^\Phi=(f^\Phi_{\iii|_{k-1}})'(T^\Phi_{i_k}\circ f^\Phi_{\iii|_{k-1}})$ and $A_k^\Psi$ is defined analogously. All partial orbits stay in $\ol{\Omega}$.

  Fix $x\in\ol{\Omega}$ and an admissible $k$. For $l\in\{1,\ldots,k\}$, put $u_l=f^\Phi_{\iii|_{l-1}}(x)$ and $v_l=f^\Psi_{\iii|_{l-1}}(x)$, with the empty prefix interpreted as the identity, and set $u=u_k$ and $v=v_k$. Thus $u_1=v_1=x$, and, for $l\in\{1,\ldots,k-1\}$, the mean value theorem on the interval $\ol{\Omega}$ gives $|u_{l+1}-v_{l+1}| \le |\fii_{i_l}(u_l)-\psi_{i_l}(u_l)|+|\psi_{i_l}(u_l)-\psi_{i_l}(v_l)| \le d_2(\Phi,\Psi) + \lambda|u_l-v_l|$. Induction yields $|u_l-v_l|\le(1-\lambda)^{-1}d_2(\Phi,\Psi)$ for all $l\in\{1,\ldots,k\}$: the case $l=1$ is $0\le(1-\lambda)^{-1}d_2$, and if the bound holds at $l$ then $|u_{l+1}-v_{l+1}|\le \lambda(1-\lambda)^{-1}d_2+d_2=(1-\lambda)^{-1}d_2$. In particular,
  \begin{equation*}
    |u-v|\le(1-\lambda)^{-1}d_2(\Phi,\Psi).
  \end{equation*}
  The derivative cocycles are $(f^\Phi_{\iii|_{k-1}})'(x)=\prod_{l<k}\fii_{i_l}'(u_l)$ and $(f^\Psi_{\iii|_{k-1}})'(x)=\prod_{l<k}\psi_{i_l}'(v_l)$, both equal $1$ when $k=1$. When $k\ge2$, replacing one factor at a time produces
  \begin{equation*}
    (f^\Phi_{\iii|_{k-1}})'(x)-(f^\Psi_{\iii|_{k-1}})'(x) = \sum_{j=1}^{k-1}\Biggl(\prod_{l=1}^{j-1}\psi_{i_l}'(v_l)\Biggr)(\fii_{i_j}'(u_j)-\psi_{i_j}'(v_j))\Biggl(\prod_{l=j+1}^{k-1}\fii_{i_l}'(u_l)\Biggr),
  \end{equation*}
  empty products again being $1$. The generators of $\Phi$ are real-analytic on a neighborhood of the compact set $\ol{\Omega}$, so each $\|\fii_i''\|<\infty$. Using $|\fii_{i_l}'(u_l)-\psi_{i_l}'(v_l)|\le\|\fii_{i_l}''\||u_l-v_l|+d_2(\Phi,\Psi)\le Cd_2(\Phi,\Psi)$ we see that the two cocycles differ by 
  \begin{equation} \label{eq:cocycle-diff}
    |(f^\Phi_{\iii|_{k-1}})'(x)-(f^\Psi_{\iii|_{k-1}})'(x)| \le C(k-1)\lambda^{k-2}d_2(\Phi,\Psi)\le Ck\lambda^{k-1}d_2(\Phi,\Psi).
  \end{equation}
  The real-analyticity of the fixed generators and compactness of $\ol{\Omega}$ give $\|(T^\Phi_i)'\|<\infty$. Since the derivative lower bounds and \cref{eq:d2metric} give
  \begin{align*}
    \|T^\Phi_i-T^\Psi_i\| &= \biggl\|\frac{\fii_i''}{\fii_i'}-\frac{\psi_i''}{\psi_i'}\biggr\| \le \biggl\|\frac{\fii_i''(\psi_i'-\fii_i')}{\fii_i'\psi_i'}\biggr\| + \biggl\|\frac{\fii_i''-\psi_i''}{\psi_i'}\biggr\| \\
    &\le \frac{2\|\fii_i''\|\|\fii_i'-\psi_i'\|}{m^2}+\frac{2\|\fii_i''-\psi_i''\|}{m}\le Cd_2(\Phi,\Psi).
  \end{align*}
  Therefore $|T^\Phi_ i(u)-T^\Psi_i(v)|\le\|(T^\Phi_i)'\||u-v|+\|T^\Phi_i-T^\Psi_i\|\le Cd_2(\Phi,\Psi)$, and also $\|T^\Psi_i\|\le\|T^\Phi_i\|+Cd_2(\Phi,\Psi)\le C$. 
  
  Since $A_k^\Phi(x)=(f^\Phi_{\iii|_{k-1}})'(x)T^\Phi_{i_k}(u)$ and $A_k^\Psi(x)=(f^\Psi_{\iii|_{k-1}})'(x)T^\Psi_{i_k}(v)$. Splitting the difference as $(f^\Phi_{\iii|_{k-1}})'(x)(T^\Phi_{i_k}(u)-T^\Psi_{i_k}(v))+((f^\Phi_{\iii|_{k-1}})'(x)-(f^\Psi_{\iii|_{k-1}})'(x))T^\Psi_{i_k}(v)$ and using $|(f^\Phi_{\iii|_{k-1}})'(x)|\le\cmax^{k-1}\le \lambda^{k-1}$, together with \cref{eq:cocycle-diff}, therefore gives
  \begin{equation*}
    |A_k^\Phi(x)-A_k^\Psi(x)| \le \lambda^{k-1}Cd_2(\Phi,\Psi)+Ck\lambda^{k-1}d_2(\Phi,\Psi)C \le Ck\lambda^{k-1}d_2(\Phi,\Psi)
  \end{equation*}
  on $\ol{\Omega}$. Taking the supremum and summing over the admissible $k$, with a finite sum for finite words, yields \cref{eq:cont1} since $\sum_{k\ge1}k\lambda^{k-1}=(1-\lambda)^{-2}$.

  Finally, let $\Delta_S(\Phi)>0$ and suppose that no such $r_1$ exists. Since $\Delta_S$ is nonnegative, there are then conformal iterated function systems $\Psi_n$ on $\Omega$ with $d_2(\Phi,\Psi_n)<\min\{r_0,1/n\}$ and $\Delta_S(\Psi_n)=0$, so that $\Psi_n\to\Phi$ in $d_2$. The infimum defining $\Delta_S(\Psi_n)$ is attained, as in the proof of \cref{lem:gap}, so there are $(\iii_n,\jjj_n)\in Z$ with $S^{\Psi_n}_{\iii_n}\equiv S^{\Psi_n}_{\jjj_n}$, and \cref{eq:cont1} gives the hypothesis \cref{eq:Tparametercontinuity} of \cref{lem:Sclosed}, which yields a pair $(\iii,\jjj)\in Z$ with $S^\Phi_\iii\equiv S^\Phi_\jjj$, contradicting $\Delta_S(\Phi)>0$. For $\Delta_T$ no closedness step is needed, the pre-Schwarzian being assembled from a second derivative, which $d_2$ controls through \cref{eq:cont1} directly. Let $\delta=\Delta_T(\Phi)>0$, take the constant $C$ of \cref{eq:cont1} positive, and let $\Psi$ be a conformal iterated function system on $\Omega$ with $d_2(\Phi,\Psi)<\min\{r_0,\delta/(3C)\}$. For every $(\iii,\jjj)\in Z$ the triangle inequality and two applications of \cref{eq:cont1} give
  \begin{align*}
    \|T^\Psi_\iii-T^\Psi_\jjj\| &\ge \|T^\Phi_\iii-T^\Phi_\jjj\| - \|T^\Phi_\iii-T^\Psi_\iii\| - \|T^\Phi_\jjj-T^\Psi_\jjj\| \\
    &\ge \|T^\Phi_\iii-T^\Phi_\jjj\| - 2Cd_2(\Phi,\Psi) > \delta/3,
  \end{align*}
  the last inequality because $\|T^\Phi_\iii-T^\Phi_\jjj\|\ge\delta$ over $Z$ and $2Cd_2(\Phi,\Psi)<2\delta/3$. Taking the infimum over $Z$ yields $\Delta_T(\Psi)\ge\delta/3>0$. Specializing to members of $\mathcal S$ proves the last claim.
\end{proof}

\subsection{The cylinder criterion} \label{sec:gen-line-cyl}

The gap condition quantifies over infinite words, and the density argument can only control finitely many. This subsection removes that gap: the projections of all words with a given prefix lie in an interval attached to that prefix, so a separation checked on the words of one fixed length already separates every pair.

For the density proof, write
\begin{equation*}
  G_ih=(\fii_i')^2(h\circ\fii_i)+S_i
\end{equation*}
for the weight-two dual maps \cref{eq:dualdefG} read on the line, where $D\fii_i$ is the scalar $\fii_i'$ and a symmetric matrix field is a member of $\mathcal C(\ol{\Omega})$. Their linear parts preserve order and have norms at most $\cmax^2$. Choose $L>0$ so large that
\begin{equation} \label{eq:lineL}
  \cmax^2L+\max_{i \in \II}\|S_{\fii_i}\|<L.
\end{equation}
Then every $G_i$ maps the closed order interval $\{h : -L\le h\le L\}$ into its interior. For a word $\iii$ and a point $x$, the values $(G_\iii h)(x)$ with $-L\le h\le L$ form the interval
\begin{equation} \label{eq:linecylinder}
  I_\iii(x)=[S_\iii(x)-L(f_\iii'(x))^2,S_\iii(x)+L(f_\iii'(x))^2].
\end{equation}
Whatever the continuation of a word, its Schwarzian projection lies in the interval \cref{eq:linecylinder} of each of its prefixes, since the projection of the tail lies in the invariant order interval. Disjointness of two such intervals at a single point therefore separates every pair of words extending those prefixes, and the infimum over infinitely many pairs defining $\Delta_S$ is reduced to finitely many conditions on words of bounded length.

\begin{lemma} \label{lem:linecylinders}
  Suppose that, for some $n\ge1$, every pair $\iii,\jjj\in\II^n$ with $i_1 \ne j_1$ has a point $x_{\iii,\jjj}\in\ol{\Omega}$ such that $I_\iii(x_{\iii,\jjj})$ and $I_\jjj(x_{\iii,\jjj})$ are disjoint. Suppose also that $S_\iii\not\equiv S_\jjj$ for all $m \in \{1,\ldots,n-1\}$ and all pairs $\iii,\jjj\in\II^m$ with $i_1 \ne j_1$. Then $\Delta_S(\Phi)>0$ and, in particular, $\Phi\in\mathcal G_S$ whenever $\Phi\in\mathcal S$.
\end{lemma}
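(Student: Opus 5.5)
By \cref{lem:gap}, $\Delta_S(\Phi)>0$ is equivalent to the absence of a coincidence $S_\iii\equiv S_\jjj$ for pairs $(\iii,\jjj)\in Z$, and the infimum over the compact $Z$ is attained by continuity. So it suffices to fix $(\iii,\jjj)\in Z$ and show $S_\iii\not\equiv S_\jjj$. We split into two cases according to the common length $|\iii|=|\jjj|$, either at least $n$ (including infinite) or in $\{1,\ldots,n-1\}$.

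In the first case put $\uuu=\iii|_n$ and $\vvv=\jjj|_n$, so that $u_1=i_1\ne j_1=v_1$ and the hypothesis supplies $x=x_{\uuu,\vvv}$ with $I_\uuu(x)\cap I_\vvv(x)=\emptyset$. We show $S_\iii(x)\in I_\uuu(x)$, and likewise $S_\jjj(x)\in I_\vvv(x)$, which forces $S_\iii(x)\ne S_\jjj(x)$.

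To place $S_\iii(x)$ in $I_\uuu(x)$, we use \cref{eq:peeling2}: $S_\iii=G_\uuu S_{\sigma^n\iii}$, where $\sigma^n\iii$ may be empty, in which case $S_{\sigma^n\iii}\equiv0$. We then need $-L\le S_\lll\le L$ pointwise for every word $\lll$. For finite $\lll$ this follows by induction on the length from \cref{eq:lineL}. Indeed, if $|h|\le L$ then
\begin{equation*}
  |G_ih|\le\cmax^2L+\|S_{\fii_i}\|<L,
\end{equation*}
and the induction starts from $0$. For infinite $\lll$ the bound passes to uniform limits by \cref{lem:dual2}. Finally, on the line the word map $G_\uuu h=(f_\uuu')^2(h\circ f_\uuu)+S_\uuu$ is affine and order preserving in $h$, by \cref{eq:wordops}. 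Hence $(G_\uuu h)(x)$ lies in the interval \cref{eq:linecylinder} whenever $|h|\le L$, which is the containment we wanted.

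In the second case, $|\iii|=|\jjj|=m\le n-1$ with $i_1\ne j_1$, and the second hypothesis gives $S_\iii\not\equiv S_\jjj$ directly.

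Combining the two cases, no pair in $Z$ has coincident Schwarzian projections, so $\Delta_S(\Phi)>0$ by the attainment argument of \cref{lem:gap}. The membership $\Phi\in\mathcal G_S$ for $\Phi\in\mathcal S$ is then the definition of $\mathcal G_S$. No step is a real obstacle. The only points needing care are the uniform invariant bound $|S_\lll|\le L$ including infinite words, the empty-tail case, and using compactness of $Z$ so that pointwise non-coincidence gives a positive infimum.
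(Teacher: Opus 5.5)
Your proof is correct and follows the paper's argument: peeling via \cref{eq:peeling2}, together with the invariant order interval $\{h : -L\le h\le L\}$, places $S_\iii(x)$ and $S_\jjj(x)$ in the disjoint intervals $I_\uuu(x)$ and $I_\vvv(x)$, and the shorter pairs are handled by the second hypothesis. The one difference is the final step. The paper bounds $\Delta_S(\Phi)$ below explicitly, by the minimum of the interval gaps over the finitely many level-$n$ pairs at their witness points together with the minimum of $\|S_\iii-S_\jjj\|$ over the finitely many shorter pairs, whereas you pass from pointwise non-coincidence to a positive infimum through the compactness argument of \cref{lem:gap}; both routes are valid.
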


\begin{proof}
  The distances between the finitely many level-$n$ interval pairs at their respective witness points have a positive minimum, denoted by $\delta_n$. If $\iii,\jjj\in\II^\N\cup\II^*$ satisfy $|\iii|=|\jjj|\ge n$ and $i_1 \ne j_1$, write $\iii = \uuu\iii'$ and $\jjj = \vvv\jjj'$ with $|\uuu| = |\vvv| = n$. The projections $S_{\iii'}$ and $S_{\jjj'}$ lie in the invariant order interval, with the empty tail interpreted as $0$, so \cref{lem:dual2} and, in particular, \cref{eq:peeling2} gives $S_\iii(x_{\uuu,\vvv}) \in I_\uuu(x_{\uuu,\vvv})$ and $S_\jjj(x_{\uuu,\vvv}) \in I_\vvv(x_{\uuu,\vvv})$. Consequently $|S_\iii(x_{\uuu,\vvv})-S_\jjj(x_{\uuu,\vvv})| \ge \delta_n$, and hence $\|S_\iii-S_\jjj\| \ge \delta_n$. If $n = 1$, this covers every pair in $Z$. If $n \ge 2$, the finitely many shorter pairs have positive norms by hypothesis and hence a positive minimum, denoted by $\delta_n'$. Therefore $\Delta_S(\Phi) \ge \delta_n > 0$ when $n = 1$, while $\Delta_S(\Phi) \ge \min\{\delta_n,\delta_n'\} > 0$ when $n \ge 2$. The final assertion follows from the definition of $\mathcal G_S$.
\end{proof}

\subsection{Localized real-analytic perturbations} \label{sec:gen-line-loc}

The distinctive feature of the line is that a real-analytic generator can be perturbed to prescribe its Schwarzian at finitely many points while leaving it untouched at finitely many others. \Cref{lem:localizedS} supplies such a perturbation, and the three lemmas after it prepare the ground on which it acts: \cref{lem:identityfree} removes exact overlaps, \cref{lem:lineorbits} selects base points with disjoint orbits, and \cref{lem:entire} returns the perturbed system to $\mathcal S$. No planar analogue of \cref{lem:localizedS} exists, as \cref{sec:gen-plane} explains.

The next lemma is the localized perturbation behind the density proof. The Schwarzian depends only on the first three derivatives of a map, linearly on the third, so a real-analytic correction with a zero of order three at a point moves the Schwarzian there by a prescribed amount, while a zero of order four moves nothing. Summing one correction for each target therefore gives prescribed shifts at finitely many target points, no change at finitely many guard points, and an arbitrarily small cost in the $\mathcal C^2$-distance.

\begin{lemma} \label{lem:localizedS}
  Let $f$ be real-analytic on a neighborhood of a compact interval $J$ with $\inf_J|f'|>0$. Let $\mathcal X,\mathcal Y\subseteq\inter(J)$ be finite disjoint sets, let $(\delta_y)_{y\in\mathcal Y}$ be real numbers, and set $M_\delta = \max(\{0\}\cup\{|\delta_y| : y \in \mathcal Y\})$. Then for every $\eta>0$ there is a real-analytic function $g$ on a neighborhood of $J$ such that
  \begin{equation} \label{eq:localizedStargets}
    g^{(k)}(y)=f^{(k)}(y) \qquad\text{and}\qquad S_g(y)-S_f(y)=\delta_y
  \end{equation}
  for all $y\in\mathcal Y$ and $k \in \{0,1,2\}$, while
  \begin{equation} \label{eq:localizedSguards}
    g^{(k)}(z)=f^{(k)}(z)
  \end{equation}
  for all $z\in\mathcal X$ and $k \in \{0,1,2,3\}$. Moreover $\|g-f\|_{\mathcal C^2(J)}<\eta$, and
  \begin{equation} \label{eq:localizedSbound}
    \|S_g-S_f\|_{\mathcal C(J)}\le C_fM_\delta+\eta,
  \end{equation}
  where $C_f$ depends on $f$ and $J$ but not on the finite sets. In particular, if $0<|f'|<1$ on $J$ and $f(J)$ lies at positive distance from $\R\setminus\inter(J)$, then $g$ may be chosen with the same two properties.
\end{lemma}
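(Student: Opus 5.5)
We take $g=f+\sum_{y\in\mathcal Y}c_y\,p_y$, with real-analytic correction polynomials $p_y$ and real coefficients $c_y$ still to be fixed, and then cut the $\mathcal C^2$-cost down by a scaling parameter. For each target $y$ put
\begin{equation*}
  Q_y(x)=(x-y)^3\prod_{z\in\mathcal X}(x-z)^4\prod_{y'\in\mathcal Y\setminus\{y\}}(x-y')^4 .
\end{equation*}
This polynomial vanishes to order three at $y$, to order four at every other point of $\mathcal X\cup\mathcal Y$, and its third derivative at $y$ is $6$ times the nonzero product $\prod(y-z)^4\prod(y-y')^4$. Since $S_g=g'''/g'-\tfrac32(g''/g')^2$, any correction leaving the $2$-jet at $y$ fixed shifts the Schwarzian there by exactly $c_yQ_y'''(y)/f'(y)$. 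Choosing $c_y=\delta_yf'(y)/Q_y'''(y)$ therefore gives \cref{eq:localizedStargets}, and the order-four zeros give \cref{eq:localizedSguards}.

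The difficulty is that $Q_y$ is not small in $\mathcal C^2(J)$, and the coefficient $c_y$ can be large when target points cluster; this would spoil both $\|g-f\|_{\mathcal C^2}<\eta$ and the uniformity of $C_f$ in \cref{eq:localizedSbound}. To fix this I will replace $Q_y$ by a function concentrated near $y$ that keeps the same finite jets. Concretely, set $p_y(x)=\tfrac16(x-y)^3e^{-\lambda(x-y)^2}R_y(x)$. Here $R_y$ is a real-analytic (Hermite-interpolation polynomial) factor equal to $1$ to order $0$ at $y$, chosen so that $p_y$ vanishes to order four at all the other points of $\mathcal X\cup\mathcal Y$. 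These finitely many conditions are linear and solvable. As $\lambda\to\infty$, the functions $p_y$ and their first two derivatives tend to $0$ uniformly on $J$, like $\lambda^{-1/2}$, while $p_y'''(y)=1$ stays fixed. Put $c_y=\delta_yf'(y)$, so that $|c_y|\le\|f'\|M_\delta$.

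The main obstacle is keeping $R_y$ and its interpolation from inflating the norms independently of the sets. I would try to handle this by making the Gaussian factor do the localization. Near each other point $w\ne y$ I only need to kill the $3$-jet of $(x-y)^3e^{-\lambda(x-y)^2}$, and that jet is already bounded by a multiple of $e^{-\lambda|w-y|^2/2}$. This suggests taking $R_y=1-H_y$, where $H_y$ is a small polynomial correction supported in jet-space near the $w$, with size controlled by $e^{-c\lambda}$ times constants depending on the finite sets. Because the sets are fixed before $\lambda$ is chosen, letting $\lambda\to\infty$ makes $H_y$ negligible.

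With this in place, $\|g-f\|_{\mathcal C^2(J)}\to0$, which gives the first estimate. For \cref{eq:localizedSbound}, write $S_g-S_f$ in terms of $g'''-f'''$ (of size at most $\sum|c_y||p_y'''|$) and lower-order terms that are small. Away from the targets the Gaussians decay, so at each point only the nearby bumps contribute. The constant $C_f$ then depends on $\|f'\|$, $\inf|f'|$ and $\sup|p_y'''|/\text{const}$, which is independent of the sets as long as the overlap of the bumps is controlled. I expect this overlap counting to be the delicate point, and the fallback is to take $\lambda$ depending on the minimal separation of the points. The final assertion follows by taking $\eta$ smaller than $1-\sup_J|f'|$ and smaller than $\dist(f(J),\R\setminus\inter(J))$.
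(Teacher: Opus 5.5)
Your construction, in its first form, is exactly the paper's. You take a cubic Gaussian bump $\tfrac16(x-y)^3e^{-\lambda(x-y)^2}$ times a fixed polynomial factor with value $1$ at $y$ and fourth-order zeros at the other points; the paper's factor is $Q_y=\prod_p((x-p)/(y-p))^4$. The coefficient is $c_y=\delta_yf'(y)$, and the width $\lambda^{-1/2}$ is sent to zero after the finite sets are fixed.

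The gap is in the paragraph meant to secure the set-independence of $C_f$. Since $(x-y)^3e^{-\lambda(x-y)^2}$ does not vanish at $w\ne y$, asking $p_y$ to vanish to fourth order at $w$ is the same as asking $R_y$ to vanish to fourth order there. That condition does not involve $\lambda$, and $R_y=1-H_y$ then forces $H_y(w)=1$, so $H_y$ cannot be small. The smallness of the $3$-jet of the Gaussian at $w$ would help only for an additive correction. One could take $p_y=\tfrac16(x-y)^3e^{-\lambda(x-y)^2}-K_y$, with $K_y$ the Hermite interpolant of those $O(e^{-c\lambda})$ jets having zero $3$-jet at $y$. That is a legitimate alternative, but it is not what you wrote. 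As written, your claim that $\sup|p_y'''|$ is independent of the sets is unsupported.

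No correction is needed; keep $R_y$ fixed. Let $B_k$ be the $k$-th derivative of $t\mapsto t^3e^{-t^2}$. The product rule gives
$p_y^{(k)}(x)=\tfrac16\sum_{j=0}^{k}\binom{k}{j}R_y^{(j)}(x)\lambda^{-(3-k+j)/2}B_{k-j}(\sqrt\lambda(x-y))$.

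Every term except the one with $k=3$, $j=0$ carries a positive power of $\lambda^{-1/2}$. So these terms tend to zero uniformly on $J$ however large the set-dependent norms of $R_y$ are. This is harmless: only $C_f$, not $\lambda$, has to be independent of the sets, so these terms are absorbed into the tolerance.

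The remaining term is $\tfrac16R_y(x)B_3(\sqrt\lambda(x-y))$. Since $R_y(y)=1$, continuity gives a small neighborhood $U_y$ of $y$ on which $|R_y|\le2$, so the term is bounded there by $\tfrac13\sup_\R|B_3|$. Off $U_y$ it tends to zero uniformly as $\lambda\to\infty$.

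Take the $U_y$ pairwise disjoint and $\lambda$ large; this is your fallback and is what the paper does. It yields $\|g'''-f'''\|_{\mathcal C(J)}\le\tfrac13\sup_\R|B_3|\,\|f'\|_{\mathcal C(J)}M_\delta+\theta$ for any prescribed $\theta>0$. Then \cref{eq:localizedSbound} follows with $C_f$ depending only on $f$ and $J$, and this also settles the overlap issue you flagged. For the final assertion, keep the tolerance below $\inf_J|f'|$ as well, so that $g'$ does not vanish on $J$ and $S_g$ is defined there.
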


\begin{proof}
  If $\mathcal Y$ is empty, take $g=f$. Then \cref{eq:localizedStargets} is vacuous, \cref{eq:localizedSguards} holds, $\|g-f\|_{\mathcal C^2(J)}=0<\eta$, and $M_\delta=0$, so \cref{eq:localizedSbound} holds. The final assertion is immediate. Henceforth assume $\mathcal Y$ is nonempty. For $y\in\mathcal Y$ define
  \begin{equation*}
    Q_y(x)=\prod_{p\in(\mathcal Y\setminus\{y\})\cup\mathcal X}\biggl(\frac{x-p}{y-p}\biggr)^4
  \end{equation*}
  and, for parameters $\sigma_y>0$, put
  \begin{equation} \label{eq:Hy}
    H_y(x)=\frac{f'(y)\delta_y}{6}(x-y)^3Q_y(x)e^{-(x-y)^2/\sigma_y^2},\qquad H=\sum_{y\in\mathcal Y}H_y,\qquad g=f+H.
  \end{equation}
  At its center the summand $H_y$ has a cubic zero, and both $Q_y$ and $e^{-(x-y)^2/\sigma_y^2}$ equal $1$ there. In the third derivative only the term carrying all three derivatives on $(x-y)^3$ survives at $x=y$, and the $3!$ it contributes cancels the denominator $6$, so $H_y'''(y)=f'(y)\delta_y$. Every summand $H_{y'}$ has a fourth-order zero at each point of $(\mathcal Y\setminus\{y'\})\cup\mathcal X$, since each factor of $Q_{y'}$ enters to the fourth power while the remaining factors of $H_{y'}$ do not vanish there. Thus $H^{(k)}(y)=0$ for $k \in \{0,1,2\}$, $H'''(y)=f'(y)\delta_y$, and $H^{(k)}(z)=0$ for $k \in \{0,1,2,3\}$. Differentiating the quotient gives $(f''/f')'=f'''/f'-(f''/f')^2$, so the classical Schwarzian $S_f=(f''/f')'-\tfrac12(f''/f')^2$ of \cref{eq:Sdef} is
  \begin{equation*}
    S_f = \frac{f'''}{f'}-\biggl(\frac{f''}{f'}\biggr)^2-\frac12\biggl(\frac{f''}{f'}\biggr)^2 = \frac{f'''}{f'}-\frac32\biggl(\frac{f''}{f'}\biggr)^2,
  \end{equation*}
  which at a target $y$ leaves the term $-\tfrac32(f''/f')^2$ unchanged, since $g$ and $f$ agree there to second order, and gives $S_g(y)-S_f(y)=(g'''(y)-f'''(y))/f'(y)=\delta_y$, the derivative $g'(y)=f'(y)$ being nonzero so that $S_g(y)$ is defined. This is \cref{eq:localizedStargets}, while the vanishing of $H$ and its first three derivatives at the guard points is \cref{eq:localizedSguards}.

  It remains to choose the widths. Put $m = \inf_J|f'|$. For $u\in\mathcal C^3(J)$, direct subtraction in the quotient formula for the Schwarzian gives
  \begin{equation*}
    S_{f+u}-S_f = \frac{u'''}{f'+u'} - \frac{f'''u'}{f'(f'+u')} - \frac32\biggl(\biggl(\frac{f''+u''}{f'+u'}\biggr)^2 - \biggl(\frac{f''}{f'}\biggr)^2\biggr).
  \end{equation*}
  If $\|u\|_{\mathcal C^2(J)}<\min\{1,m/2\}$, then $|f'+u'|\ge m/2$, and factoring the difference of squares shows that there is a constant $A_f\ge1$, depending only on $f$ and $J$, for which
  \begin{align*}
    \|S_{f+u}-S_f\|_{\mathcal C(J)} &\le \frac{2\|u'''\|_{\mathcal C(J)}}{m} + \frac{2}{m^2}\|f'''\|_{\mathcal C(J)}\|u\|_{\mathcal C^2(J)} \\
    &\qquad\qquad+ \frac{3(m+\|f''\|_{\mathcal C(J)})(3\|f''\|_{\mathcal C(J)}+2)\|u\|_{\mathcal C^2(J)}}{m^3} \\
    &\le A_f(\|u\|_{\mathcal C^2(J)}+\|u'''\|_{\mathcal C(J)}).
  \end{align*}
  Choose $\theta>0$ such that
  \begin{equation*}
    \theta<\min\biggl\{\eta,1,\frac{m}{2},\frac{\eta}{2A_f}\biggr\}.
  \end{equation*}
  Under the additional hypotheses in the final assertion, require also that $\theta<1-\sup_J|f'|$ and $\theta<\dist(f(J),\R\setminus\inter(J))$. Continuity of $f'$ on the compact interval $J$ makes $\sup_J|f'|<1$ whenever $0<|f'|<1$ on $J$, so a positive $\theta$ remains available.

  Write $B_k$ for the $k$-th derivative of $t\mapsto t^3e^{-t^2}$, a polynomial times $e^{-t^2}$ and hence bounded on $\R$ with limit zero at infinity. Since $b_\sigma(x)=x^3e^{-x^2/\sigma^2}=\sigma^3B_0(x/\sigma)$ and each differentiation in $x$ contributes a factor $\sigma^{-1}$, the derivatives of $b_\sigma$ satisfy $b_\sigma^{(k)}(x)=\sigma^{3-k}B_k(x/\sigma)$ for $k \in \{0,1,2,3\}$. Set $N = \max\{1,\#\mathcal Y\}$. Choose pairwise disjoint neighborhoods $U_y$ of the target points that avoid all other target and guard points and are small enough that $|Q_y|\le2$ on $U_y$. The product rule applied to $H_y(x)=\frac{1}{6}f'(y)\delta_yQ_y(x)b_{\sigma_y}(x-y)$ gives
  \begin{equation*}
    H_y^{(k)}(x)=\frac{f'(y)\delta_y}{6}\sum_{j=0}^k\binom{k}{j}Q_y^{(j)}(x)\sigma_y^{3-k+j}B_{k-j}\biggl(\frac{x-y}{\sigma_y}\biggr)
  \end{equation*}
  for all $k \in \{0,1,2,3\}$. On $U_y$, every term with $k\le2$ tends uniformly to zero as $\sigma_y\to0$. For $k=3$, the term with $j=0$ is bounded by $C_B|f'(y)\delta_y|$ with the absolute constant $C_B=\tfrac13\sup_\R|B_3|$, while every term with $j\ge1$ tends uniformly to zero. Since the derivatives of $Q_y$ are bounded on $U_y$, the widths may therefore be chosen so that
  \begin{equation*}
    \sup_{U_y}\max_{k \in \{0,1,2\}}|H_y^{(k)}|<\frac{\theta}{3N} \qquad\text{and}\qquad \sup_{U_y}|H_y'''|\le C_B|f'(y)\delta_y|+\frac{\theta}{N}.
  \end{equation*}
  Outside $U_y$, the argument of each $B_k$ tends uniformly to infinity as $\sigma_y\to0$. Shrinking the widths further makes the absolute values of $H_y$ and its first three derivatives smaller than $\theta/(3N)$ there. Consequently each summand has $\mathcal C^2(J)$-norm smaller than $\theta/N$. Since the neighborhoods $U_y$ are disjoint, at each point at most one third derivative has a central contribution. If $x\in U_y$, then the remaining third derivatives contribute less than $(N-1)\theta/(3N)$, and together with the error $\theta/N$ from $H_y'''$ this is at most $\theta$; outside $\bigcup_{y\in\mathcal Y}U_y$, the total contribution is less than $\theta/3$. Hence
  \begin{equation*}
    \|H\|_{\mathcal C^2(J)}<\theta \qquad\text{and}\qquad \|H'''\|_{\mathcal C(J)}\le C_B\|f'\|_{\mathcal C(J)}M_\delta+\theta.
  \end{equation*}
  Since $\theta<\min\{1,m/2\}$, the derivative $g'=f'+H'$ does not vanish on $J$, and the estimate above applies with $u=H$. Setting $C_f = A_fC_B\|f'\|_{\mathcal C(J)}$ gives
  \begin{equation*}
    \|S_g-S_f\|_{\mathcal C(J)}\le A_f(\|H\|_{\mathcal C^2(J)}+\|H'''\|_{\mathcal C(J)})\le C_fM_\delta+2A_f\theta\le C_fM_\delta+\eta,
  \end{equation*}
  which proves \cref{eq:localizedSbound}, while $\|g-f\|_{\mathcal C^2(J)}<\theta<\eta$. Each summand $H_y$ is a polynomial times $e^{-(x-y)^2/\sigma_y^2}$, so the correction $H$ is the restriction to $\R$ of an entire function and $g$ is real-analytic on the same neighborhood of $J$ as $f$. Under the additional hypotheses, the further restrictions on $\theta$ give $0<|g'|<1$ on $J$ and, writing $\rho=\dist(f(J),\R\setminus\inter(J))$, also $\dist(g(J),\R\setminus\inter(J))\ge\rho-\|H\|_{\mathcal C(J)}>\rho-\theta>0$.
\end{proof}

The density argument begins by removing exact overlaps. That the systems without them are dense has nothing to do with separation: any system is joined inside $\mathcal S$ to an affine one whose generator images are pairwise disjoint, and along that segment an identity between two fixed distinct words is an analytic condition on the parameter which fails at the affine endpoint.

\begin{lemma} \label{lem:identityfree}
  The systems $\Phi\in\mathcal S$ with $\fii_\uuu\not\equiv\fii_\vvv$ on $\ol{\Omega} \subset \R$ for all distinct $\uuu,\vvv\in\II^*$ are dense in $(\mathcal S,d_2)$.
\end{lemma}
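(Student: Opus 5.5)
Given $\Phi\in\mathcal S$ and $\eps>0$, I will exhibit an affine system $\Psi^1=(a_i x+b_i)_{i\in\II}$ in $\mathcal S$ and join it to $\Phi$ by the straight segment $\Phi^t=(1-t)\Phi+t\Psi^1$, $t\in[0,1]$, as suggested in the remark before the statement. Every member of this segment lies in $\mathcal S$. Indeed, each $\fii_i^t$ is real-analytic on $\Omega'$. The images $\fii_i^t(\ol\Omega)$ lie in the convex set $\Omega$, since each $\fii_i^t(x)$ is a convex combination of two points of the interval $\Omega$. For the derivative bound I will choose the slopes $a_i$ with the same sign as $\fii_i'$, which is constant on the interval. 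Then $(\fii_i^t)'=(1-t)\fii_i'+ta_i$ is a convex combination of two numbers of one sign, both of modulus at most $\max\{\cmax,|a_i|\}<1$. This keeps it nonvanishing and below one in modulus on $\ol\Omega$. I will choose the $a_i$ small and the $b_i$ so that the intervals $a_i\ol\Omega+b_i$ are pairwise disjoint and contained in $\Omega$, which is possible since $N$ small intervals fit in $\Omega$. The affine endpoint then satisfies the strong separation condition. By \cref{lem:osc}, or simply by injectivity together with disjointness of the first-level images, $\psi^1_\uuu\ne\psi^1_\vvv$ for all distinct $\uuu,\vvv$.

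The second step treats each fixed pair of distinct finite words $\uuu,\vvv$. Set $E_{\uuu,\vvv}=\{t\in[0,1]:\fii^t_\uuu\equiv\fii^t_\vvv\text{ on }\ol\Omega\}$. The map $(t,x)\mapsto\fii^t_\uuu(x)-\fii^t_\vvv(x)$ is real-analytic on a neighborhood of $[0,1]\times\ol\Omega$: it is polynomial in $t$ and real-analytic in $x$ on a neighborhood of $\ol\Omega$, and compositions stay inside that neighborhood by \cref{lem:tube}. Fix a point $x_0\in\Omega$. Then $t\mapsto\fii^t_\uuu(x_0)-\fii^t_\vvv(x_0)$, together with its $x$-derivatives at $x_0$, are real-analytic functions of $t$ on a neighborhood of $[0,1]$. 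The identity $\fii^t_\uuu\equiv\fii^t_\vvv$ holds if and only if all of these $x$-derivatives vanish at $x_0$, by the identity principle in $x$. So $E_{\uuu,\vvv}$ is the common zero set of a family of real-analytic functions of $t$. At $t=1$ the functions do not all vanish. Hence at least one of them is not identically zero, its zeros in $[0,1]$ are finite, and $E_{\uuu,\vvv}$ is finite.

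To conclude, note that the pairs of finite words are countable, so $E=\bigcup E_{\uuu,\vvv}$ is countable. Therefore there is some $t\in(0,\eps')\setminus E$. Since $d_2(\Phi,\Phi^t)=t\,d_2(\Phi,\Psi^1)$, taking $\eps'$ small puts $\Phi^t$ within $\eps$ of $\Phi$, and $\Phi^t$ has no exact overlaps. One technicality needs checking: compositions of different lengths are also compared in the statement, and I must confirm that the affine endpoint separates those too. This holds because the first letters differ or, after cancelling a common prefix via injectivity, a word $\uuu$ is compared with its proper prefix extension. For that case I expect to use the fact that $\psi^1_\uuu$ and $\psi^1_\vvv$ have different slopes when the lengths differ, provided the $a_i$ are chosen generic, for example multiplicatively independent. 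I expect this choice of slopes to be the only point needing care. The analyticity in $t$ is routine.
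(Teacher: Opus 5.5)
Your proposal is correct and follows the paper's proof essentially step by step: an affine endpoint whose slopes share the signs of the $\fii_i'$ and whose first-level images are pairwise disjoint, the straight segment inside $\mathcal S$, a real-analytic function of $t$ for each pair of words that is nonzero at the affine endpoint, and a countable exceptional set. Two small remarks: the sign argument also gives $(\fii_i^t)'\ne0$ on all of $\Omega'$, which membership in $\mathcal S$ requires. The generic slopes are unnecessary in the different-length case, since after cancelling the common prefix either the first letters differ or the identity of $\ol\Omega$ would equal a nonempty composition, whose slope has modulus less than $1$; the paper also reads the $t$-dependence off a single point where the affine compositions differ, rather than off all $x$-derivatives at a fixed point.
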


\begin{proof}
  Fix $\Phi=(\fii_i)_{i \in \II}\in\mathcal S$ and $\eta>0$. Each $\fii_i'$ is continuous and vanishes nowhere on the interval $\Omega'$, so it has a constant sign there. Choose pairwise disjoint nondegenerate compact intervals $K_i\subseteq\Omega$, $i \in \II$, and affine bijections $\alpha_i \colon \ol{\Omega}\to K_i$ whose derivative has the same sign as $\fii_i'$, each extended affinely to $\R$; every $K_i$ is a proper subinterval of $\ol{\Omega}$, so $|\alpha_i'|<1$. Write $A=(\alpha_i)_{i \in \II}$, a member of $\mathcal S$ with extension domain $\Omega'$, and write $\alpha_\uuu$ for its forward compositions.

  The system $A$ has no identities. If exactly one of two words is empty, equality is impossible because every nonempty composition maps $\ol{\Omega}$ into a proper subinterval. Every nonempty word $\uuu$ has $\alpha_\uuu(\ol{\Omega})\subseteq K_{u_1}$, the inner compositions mapping $\ol{\Omega}$ into itself, and the intervals $K_i$ are pairwise disjoint, so $\alpha_\uuu\equiv\alpha_\vvv$ forces $u_1=v_1$ for two nonempty words. Canceling the injective $\alpha_{u_1}$ and iterating either exhausts both words at once, giving $\uuu=\vvv$, or exhausts one of them and equates the identity of $\ol{\Omega}$ with a composition along a nonempty word, which is impossible because such a composition maps $\ol{\Omega}$ into one of the proper subintervals $K_i$.

  For $s\in[0,1]$ put $\fii_i^{(s)}=(1-s)\fii_i+s\alpha_i$ and $\Phi_s=(\fii_i^{(s)})_{i \in \II}$, so that $\Phi_0=\Phi$ and $\Phi_1=A$. The derivatives $\fii_i'$ and $\alpha_i'$ share their sign on $\Omega'$, so $(\fii_i^{(s)})'$ carries that sign and has modulus $(1-s)|\fii_i'|+s|\alpha_i'|$ there; it therefore vanishes nowhere on $\Omega'$, and on $\ol{\Omega}$ its modulus is less than $1$, both $|\fii_i'|$ and $|\alpha_i'|$ being less than $1$ there. Since $\Omega$ is an interval containing both $\fii_i(\ol{\Omega})$ and $K_i$, convexity gives $\fii_i^{(s)}(\ol{\Omega})\subseteq\Omega$. Hence $\Phi_s\in\mathcal S$, with the extension domain $\Omega'$ of $\Phi$, for every $s\in[0,1]$.

  Fix distinct $\uuu,\vvv\in\II^*$. Since $A$ has no identities, there is $x_{\uuu,\vvv}\in\ol{\Omega}$ with $\alpha_\uuu(x_{\uuu,\vvv})\ne\alpha_\vvv(x_{\uuu,\vvv})$, and we set
  \begin{equation*}
    h_{\uuu,\vvv}(s) = \fii^{(s)}_\uuu(x_{\uuu,\vvv})-\fii^{(s)}_\vvv(x_{\uuu,\vvv}).
  \end{equation*}
  Only finitely many maps are composed for this fixed pair, and each $(s,y)\mapsto\fii_i^{(s)}(y)$ is real-analytic on $\R\times\Omega'$. For every $s\in[0,1]$, all intermediate points in both finite compositions lie in $\ol{\Omega}$. Since $\ol{\Omega}$ is at positive distance from $\R\setminus\Omega'$, the finite number of intermediate maps and the compactness of $[0,1]$ imply by continuity that these points remain in $\Omega'$ for $s$ in an open interval $J$ containing $[0,1]$. The function $h_{\uuu,\vvv}$ is therefore real-analytic on $J$, and $h_{\uuu,\vvv}(1)\ne0$, so it does not vanish identically and its zero set in $[0,1]$ is finite.

  There are countably many pairs of distinct finite words, so the union $E$ of these zero sets over all such pairs is countable, and there are $s\notin E$ arbitrarily close to $0$. For such an $s$ no identity $\fii^{(s)}_\uuu\equiv\fii^{(s)}_\vvv$ between distinct finite words can hold, since it would give $h_{\uuu,\vvv}(s)=0$. Finally $\fii_i^{(s)}-\fii_i=s(\alpha_i-\fii_i)$ gives $d_2(\Phi_s,\Phi)=sd_2(A,\Phi)$, so an $s\notin E$ with $sd_2(A,\Phi)<\eta$ produces a system without identities within $\eta$ of $\Phi$, as claimed.
\end{proof}

The points at which the localized perturbation is applied are selected by the induction of \cite[Lemma~4.1]{BaranyKolossvaryTroscheit}, which we restate in the form that the proof of \cref{thm:C1denseS} requires. For a finite word $\iii$ and $x\in\ol{\Omega}$ write $\mathcal O_\iii(x)=\{f_{\iii|_k}(x) : k \in \{0,\ldots,|\iii|\}\}$ for the finite orbit of $x$ along the prefixes of $\iii$, where $f_{\iii|_0}$ is the identity. 

\begin{lemma} \label{lem:lineorbits}
  Let $\Phi$ be a conformal iterated function system on $\Omega \subset \R$ such that $f_\uuu\not\equiv f_\vvv$ for all distinct $\uuu,\vvv \in \II^*$, let $\mathcal P$ be a finite set of pairs $(\iii,\jjj)$ of nonempty finite words satisfying $i_1\ne j_1$, and for each $(\iii,\jjj)\in\mathcal P$ let $U_{\iii,\jjj}\subseteq\Omega$ be a nonempty open set. Then there are points $x_{\iii,\jjj}\in U_{\iii,\jjj}$, one for each $(\iii,\jjj)\in\mathcal P$, such that
  \begin{enumerate}
    \item\label{it:orbits-distinct} the points $f_{\iii|_k}(x_{\iii,\jjj})$, $k \in \{0,\ldots,|\iii|\}$, are pairwise distinct, as are the points $f_{\jjj|_k}(x_{\iii,\jjj})$, $k \in \{0,\ldots,|\jjj|\}$, and $\mathcal O_\iii(x_{\iii,\jjj})\cap\mathcal O_\jjj(x_{\iii,\jjj})=\{x_{\iii,\jjj}\}$,
    \item\label{it:orbits-disjoint} $(\mathcal O_\iii(x_{\iii,\jjj})\cup\mathcal O_\jjj(x_{\iii,\jjj}))\cap(\mathcal O_\kkk(x_{\kkk,\lll})\cup\mathcal O_\lll(x_{\kkk,\lll}))=\varnothing$ for all distinct $(\iii,\jjj),(\kkk,\lll)\in\mathcal P$.
  \end{enumerate}
\end{lemma}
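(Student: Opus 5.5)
The plan is to choose the points one pair at a time, by induction over an enumeration of $\mathcal P$. At each step we remove from $U_{\iii,\jjj}$ a finite union of proper real-analytic zero sets and then pick any remaining point. The basic tool is the following. Let $\uuu,\vvv$ be finite words with $f_\uuu\not\equiv f_\vvv$. The set $\{x\in\Omega : f_\uuu(x)=f_\vvv(x)\}$ is the zero set of a real-analytic function on the interval $\Omega$ that does not vanish identically on $\Omega$: it cannot vanish identically on $\Omega$, since by the identity principle on the connected $\Omega'$ it would then vanish on $\ol{\Omega}$. Hence this set is discrete in $\Omega$, so it is nowhere dense and countable. The same holds for sets of the form $\{x : f_\uuu(x)=c\}$ with $c$ a constant, because $f_\uuu$ is strictly monotone by the constant sign of its derivative, so such a set has at most one point.

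\textbf{Condition \cref{it:orbits-distinct}.} It asks that $f_{\iii|_k}(x)\ne f_{\iii|_l}(x)$ for $k\ne l$, the same for $\jjj$, and that $f_{\iii|_k}(x)\ne f_{\jjj|_l}(x)$ unless $k=l=0$. Each of these is a condition $f_\uuu(x)\ne f_\vvv(x)$ for a pair of distinct finite words. The words are distinct for the following reasons: for prefixes of one word, different lengths give different words; for the cross condition with $(k,l)\ne(0,0)$, either the lengths differ or the first letters $i_1\ne j_1$ differ. The standing hypothesis gives $f_\uuu\not\equiv f_\vvv$, so each excluded set is discrete. There are finitely many such conditions, so the good points form a cofinite-in-discrete, hence nonempty and open, subset of $U_{\iii,\jjj}$.

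\textbf{Condition \cref{it:orbits-disjoint}.} Suppose the points for the earlier pairs are already fixed. The orbits $\mathcal O_\kkk(x_{\kkk,\lll})\cup\mathcal O_\lll(x_{\kkk,\lll})$ of those pairs form a finite set $F$ of points. We must avoid $f_{\iii|_k}(x)\in F$ for every $k\le|\iii|$, and likewise for $\jjj$. By strict monotonicity, each condition $f_{\iii|_k}(x)=c$ with $c\in F$ excludes at most one point. This is exactly the constant case of the basic tool, so again only finitely many points are removed. When the later pairs are chosen, they are checked against this pair's orbit in the same way, so the relation in \cref{it:orbits-disjoint} is enforced symmetrically by the order of choice.

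\textbf{Main obstacle.} The only delicate point is the non-identity needed in \cref{it:orbits-distinct}. It comes from the hypothesis on $f_\uuu$ for distinct words, together with the observation that the compared words are genuinely distinct. The most careful case is the cross condition when $k=l$: there the two words have equal length but distinct first letters. The main hazard is confirming that an identity on a subinterval propagates to all of $\ol{\Omega}$, which needs real-analyticity on the connected $\Omega'$. Everything else is finite bookkeeping, since removing finitely many discrete sets from a nonempty open interval leaves points.
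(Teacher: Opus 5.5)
Your proof is correct and follows essentially the same route as the paper. You induct over an enumeration of $\mathcal P$, and at each step you discard the finitely many preimages of earlier orbit points (by strict monotonicity) and the coincidence sets $\{f_\uuu=f_\vvv\}$ for the distinct prefix words (by real-analyticity and the no-exact-overlap hypothesis), then pick any remaining point of $U_{\iii,\jjj}$. The only cosmetic differences are these. The paper gets finiteness of the coincidence sets from analyticity on a neighborhood of the compact $\ol{\Omega}$, whereas you settle for closed discreteness in $\Omega$, which suffices. The paper also rules out $f_{\iii|_k}\equiv\mathrm{id}$ by the bound $\cmax^k<1$, whereas you read it off the hypothesis applied to the empty word.
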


\begin{proof}
  This is the induction of \cite[Lemma~4.1]{BaranyKolossvaryTroscheit}; we give the argument because the common-length restriction there is unnecessary and the prescribed open set here is arbitrary. Order the pairs in $\mathcal P$, suppose that the points for the first $r$ pairs have been chosen, and let $Y_r$ be the union of their orbit sets, with $Y_0=\varnothing$. For the next pair $(\iii,\jjj)$, let $\mathcal F_{\iii,\jjj}$ consist of the maps $f_{\iii|_k}$, $k \in \{0,\ldots,|\iii|\}$, and $f_{\jjj|_\ell}$, $\ell \in \{0,\ldots,|\jjj|\}$, with the two copies of the identity identified. The maps in $\mathcal F_{\iii,\jjj}$ are pairwise distinct. Indeed, within either prefix list the nonempty words have different lengths, while nonempty prefixes from the two lists have different first letters. The absence of exact overlaps therefore distinguishes the corresponding nonempty compositions, and no such composition is the identity because its derivative has absolute value at most $\cmax^m<1$, where $m$ is its positive length.

  Every map in $\mathcal F_{\iii,\jjj}$ is strictly monotone, so the preimage under it of the finite set $Y_r$ is finite. For distinct $h,g\in\mathcal F_{\iii,\jjj}$, the function $h-g$ is nonzero and real-analytic on a common neighborhood of the compact interval $\ol{\Omega}$. Its zero set in $\ol{\Omega}$ is therefore finite. Thus only finitely many points of $U_{\iii,\jjj}$ either map into $Y_r$ under a member of $\mathcal F_{\iii,\jjj}$ or make two distinct members agree. Choose $x_{\iii,\jjj}$ outside this finite set. The values of the maps in $\mathcal F_{\iii,\jjj}$ at $x_{\iii,\jjj}$ are pairwise distinct and avoid $Y_r$; since the identity is the one map shared by the two prefix lists, this gives \cref{it:orbits-distinct,it:orbits-disjoint}. The induction completes the proof.
\end{proof}

The last ingredient returns a system to $\mathcal S$. The localized perturbation controls a generator only on the interval on which it is measured, and the derivative of the perturbed generator may vanish somewhere in the fixed $\Omega'$ even when it does not vanish near $\ol{\Omega}$. The following lemma replaces the generators, at an arbitrarily small cost in $d_2$, by restrictions to $\R$ of entire functions whose derivatives vanish nowhere in $\C$. Such a generator is real-analytic with nonvanishing derivative on all of $\R$, hence conformal on every extension domain $\Omega'$; the system even extends holomorphically to the whole plane, with nonvanishing derivatives there, although only its restriction to $\R$ is used below.

\begin{lemma} \label{lem:entire}
  Let $\Phi=(\fii_i)_{i \in \II}$ be a conformal iterated function system on $\Omega \subset \R$. For every $\delta>0$ there is a conformal iterated function system $\Psi=(\psi_i)_{i \in \II}$ on $\Omega$ with $d_2(\Phi,\Psi)<\delta$ whose generators are the restrictions to $\R$ of entire functions $\C\to\C$ having real Taylor coefficients and nowhere vanishing derivative in $\C$. In particular, the generators are real-analytic with nowhere vanishing derivative on $\R$, so $\Psi\in\mathcal S$ for every choice of the extension domain $\Omega'$.
\end{lemma}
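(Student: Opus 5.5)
We must approximate each real-analytic generator $\fii_i$ on $\ol{\Omega}$ in $\mathcal C^2$ by the restriction of an entire function $\psi_i=a+\int_0^z e^{P(w)}\dd w$ with $P$ a real polynomial. Such a function is entire, has real Taylor coefficients, and its derivative $e^{P}$ vanishes nowhere in $\C$. The approach goes through the logarithm of the derivative: since $\fii_i'$ has constant sign $\epsilon_i\in\{\pm1\}$ on the interval $\Omega'$, the function $u_i=\log|\fii_i'|$ is real-analytic, in particular $\mathcal C^1$, on a compact interval $J\supseteq\ol{\Omega}$ inside $\Omega'$. We take $\psi_i(x)=\fii_i(x_0)+\epsilon_i\int_{x_0}^x e^{P_i(t)}\dd t$ for a fixed $x_0\in\ol{\Omega}$, where $P_i$ is a real polynomial close to $u_i$. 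The sign $\epsilon_i$ is a real constant, so the coefficients stay real.

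The steps run as follows. First, Weierstrass approximation applied to $u_i'$ gives a real polynomial $q$ with $\|q-u_i'\|_{\mathcal C(\ol{\Omega})}<\varepsilon$. Integrating from $x_0$ with the matching constant gives $P_i$, with $\|P_i-u_i\|\le(1+\diam\Omega)\varepsilon$ and $\|P_i'-u_i'\|<\varepsilon$ on $\ol{\Omega}$. Then $\psi_i'=\epsilon_ie^{P_i}$ and $\fii_i'=\epsilon_ie^{u_i}$, so
\[
  \|\psi_i'-\fii_i'\|\le\|e^{u_i}\|\bigl(e^{\|P_i-u_i\|}-1\bigr).
\]
For the second derivatives, $\psi_i''-\fii_i''=\epsilon_i(P_i'e^{P_i}-u_i'e^{u_i})$ is bounded by the same kind of quantity plus $\varepsilon\|e^{P_i}\|$. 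Since the values agree at $x_0$, integrating $\psi_i'-\fii_i'$ along the interval bounds $\|\psi_i-\fii_i\|$ by $\diam(\Omega)\|\psi_i'-\fii_i'\|$. Hence $d_2(\Phi,\Psi)\to0$ as $\varepsilon\to0$.

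Second, we check that $\Psi$ is a conformal iterated function system on $\Omega$. The openness remark before \cref{eq:Zdef} in \cref{sec:gaps} applies verbatim to tuples with arbitrary extension domains. Once $d_2<\min_i\dist(\fii_i(\ol{\Omega}),\partial\Omega)$ and $d_2<1-\cmax$, we get $\psi_i(\ol{\Omega})\subseteq\Omega$ and $|\psi_i'|<1$ on $\ol{\Omega}$. Conformality on any extension domain is automatic, because $\psi_i$ is real-analytic on $\R$ with $\psi_i'=\epsilon_ie^{P_i}\ne0$.

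The only mildly delicate point is securing a real-analytic logarithm of $|\fii_i'|$ near $\ol{\Omega}$ with sign control. On the line this is immediate from the constant sign of $\fii_i'$ on $\Omega'$, and only $\mathcal C^1$-closeness on $\ol{\Omega}$ is needed anyway. No step here is a serious obstacle; the choice of $\varepsilon$ is just the routine bookkeeping above.
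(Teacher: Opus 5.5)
Your proposal is correct and is essentially the paper's proof. The paper likewise approximates $u'=\fii_i''/\fii_i'$ by a real polynomial via Stone--Weierstrass, sets $\psi_i(z)=\fii_i(x_1)+\sigma\int_{x_1}^{z}e^{p(w)}\dd w$ with $p$ its primitive normalized to agree with $\log(\sigma\fii_i')$ at a base point, bounds the three $\mathcal C^2$ terms exactly as you do, and uses the margins $\min_i\dist(\fii_i(\ol{\Omega}),\R\setminus\Omega)$ and $1-\cmax$ to land in $\mathcal S$.
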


\begin{proof}
  Fix $i\in\II$. The derivative $\fii_i'$ is continuous and nonvanishing on the interval $\ol{\Omega}$, hence of constant sign $\sigma\in\{-1,1\}$ there, and $u=\log(\sigma \fii_i')$ is continuously differentiable on $\ol{\Omega}$ with $u'=\fii_i''/\fii_i'$, $\fii_i'=\sigma e^{u}$, and $\fii_i''=\sigma u'e^{u}$. Put $K=\|u\|$, $M=\|u'\|$, and $D=\diam(\ol{\Omega})$, all norms being over $\ol{\Omega}$, and fix $x_1\in\Omega$. By the Stone--Weierstrass approximation theorem there is a real polynomial $q$ with $\eta=\|q-u'\|\le1/D$ as small as we please. Let $p(z)=u(x_1)+\int_{x_1}^{z}q(w)\dd w$ for $z\in\C$, a polynomial with real coefficients, and define the entire function
  \begin{equation*}
    \psi_i(z)=\fii_i(x_1)+\sigma\int_{x_1}^{z}e^{p(w)}\dd w,
  \end{equation*}
  the integral being along any path in $\C$, since the integrand is entire. Since $p$ has real coefficients and $x_1,\fii_i(x_1)\in\R$, the entire function $\psi_i$ has real Taylor coefficients at every real center and maps $\R$ into $\R$. Moreover, $\psi_i'=\sigma e^{p}$ vanishes nowhere in $\C$; in particular, the restriction of $\psi_i$ to $\R$ is real-analytic with nonvanishing derivative, that is, conformal, on every domain $\Omega'\subseteq\R$ containing $\ol{\Omega}$. Below $\psi_i$ denotes this restriction. On $\ol{\Omega}$, integrating $q-u'$ from $x_1$ gives $\|p-u\|\le D\eta\le1$, so $\|p\|\le K+1$, and the mean value theorem applied to the exponential gives $|e^{p}-e^{u}|\le e^{K+1}|p-u|$. Hence
  \begin{align*}
    \|\psi_i'-\fii_i'\| &= \|e^{p}-e^{u}\|\le e^{K+1}D\eta, \\
    \|\psi_i''-\fii_i''\| &= \|qe^{p}-u'e^{u}\|\le\|q-u'\|\|e^{p}\|+M\|e^{p}-e^{u}\|\le e^{K+1}(1+MD)\eta,
  \end{align*}
  and, since $\psi_i(x_1)=\fii_i(x_1)$, also $\|\psi_i-\fii_i\|\le D\|\psi_i'-\fii_i'\|\le e^{K+1}D^2\eta$. Each of the three norms is therefore at most a constant multiple of $\eta$, the constant depending only on $\fii_i$ and $\Omega$. Each $\fii_i(\ol{\Omega})$ is a compact subset of the open interval $\Omega$, so $\rho=\min_{i \in \II}\dist(\fii_i(\ol{\Omega}),\R\setminus\Omega)$ is positive, and $\cmax(\Phi)<1$. Since $\II$ is finite and the error $\eta$ can be made arbitrarily small for each $i$, choose the polynomials so that $d_2(\Phi,\Psi)<\min\{\delta,\rho,1-\cmax(\Phi)\}$. Then every $\psi_i$ sends $\ol{\Omega}$ into $\Omega$ and satisfies $|\psi_i'|<1$ on $\ol{\Omega}$, and the conformal extension to any $\Omega'$ is the restriction of the entire function. The final assertion holds because membership in $\mathcal S$ asks, beyond these two properties, only for conformality on $\Omega'$.
\end{proof}

\subsection{Density on the line} \label{sec:gen-line-dense}

The pieces assembled above now give the unconditional density of $\mathcal G_S$, and with it of $\mathcal G_T$. \Cref{rem:BKT} places the result beside the genericity theorem of B\'ar\'any, Kolossv\'ary, and Troscheit, whose open and dense set carries the plain condition rather than the condition modulo M\"obius maps. \Cref{rem:gencrit-necessity} applies it to the one-point criterion of \cref{prop:gencrit}: the contraction ceiling of \cref{rem:gencrit-range} is an artifact of word length one, and the criterion is not necessary for the separation it delivers.

We are now ready to prove the main result of this section. The argument starts from the system without exact overlaps that \cref{lem:identityfree} supplies, applies the perturbation of \cref{lem:localizedS} at the points that \cref{lem:lineorbits} selects, verifies the criterion of \cref{lem:linecylinders}, and returns to $\mathcal S$ through \cref{lem:entire}.

\begin{theorem} \label{thm:C1denseS}
  The sets $\mathcal G_S$ and $\mathcal G_T$ are dense in $(\mathcal S,d_2)$.
\end{theorem}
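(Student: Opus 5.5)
By \cref{eq:GSsubsetGT} it suffices to prove that $\mathcal G_S$ is dense. So we fix $\Phi\in\mathcal S$ and $\eta>0$, and we look for a member of $\mathcal G_S$ within $d_2$-distance $\eta$ of $\Phi$.

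First, \cref{lem:identityfree} moves $\Phi$ by less than $\eta/4$ to a system without exact overlaps. Reversal is a bijection, so the reversed compositions $f_\uuu$ are then also pairwise distinct. Next we choose the level $n$. Take $L$ as in \cref{eq:lineL}, with a little room so that it survives perturbations of size $O(\eta)$. The interval $I_\iii(x)$ of \cref{eq:linecylinder} has half-width $L(f_\iii'(x))^2\le L\cmax^{2n}$. We pick $n$ so large that $4L\cmax^{2n}$ is much smaller than a target shift size $\tau$, to be fixed below.

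We then apply \cref{lem:lineorbits} to the finite set $\mathcal P$ of all pairs $(\iii,\jjj)$ with $|\iii|=|\jjj|=m\le n$ and $i_1\ne j_1$. This gives base points $x_{\iii,\jjj}$ whose prefix orbits are internally distinct and disjoint across pairs. Now consider the cocycle sum \cref{eq:Sline} for $S_\iii(x_{\iii,\jjj})$. Its first summand is $S_{\fii_{i_1}}(x_{\iii,\jjj})$, and the later summands are evaluated at the later orbit points. We perturb the generator $\fii_{i_1}$ by \cref{lem:localizedS}, prescribing a Schwarzian shift $\delta$ at the target point $x_{\iii,\jjj}$. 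The guard set is the union of all other orbit points of all pairs, so that nothing else in any cocycle sum moves; the zero of order four fixes $f$ to third order, and hence fixes the Schwarzian, compositions and derivatives there. Since $i_1\ne j_1$, the shift acts on $S_\iii(x_{\iii,\jjj})$ but not on $S_\jjj(x_{\iii,\jjj})$. It is this step that needs the orbit disjointness: the base point $x_{\iii,\jjj}$ is shared by both words, but it is a target only for the letter $i_1$, while the letter $j_1$ has its own distinct generator. If $i_1$ recurs later along $\iii$, its later orbit points are guards.

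We choose the shifts $\delta\in\{0,\pm\tau\}$ so that, for the level-$n$ pairs, $|S_\iii-S_\jjj|\ge\tau/2$ at $x_{\iii,\jjj}$; this makes $I_\iii$ and $I_\jjj$ disjoint there. For the shorter pairs we need only $S_\iii\ne S_\jjj$ at $x_{\iii,\jjj}$, which a shift of generic size secures. Then \cref{lem:linecylinders} applies. The cost is controlled by \cref{eq:localizedSbound}, $C_f\tau+\eta$, together with the $\mathcal C^2$-bound. With $\tau$ small the system stays within $\eta/2$, and the invariant bound $L$ still holds.

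Last, \cref{lem:entire} returns the system to $\mathcal S$, with generators conformal on $\Omega'$. This moves the system by an arbitrarily small $d_2$-amount, and we use the openness statement of \cref{prop:cont1}, valid for arbitrary extension domains, so that $\Delta_S>0$ is preserved.

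The main obstacle is an order-of-quantifiers problem. The shift $\tau$ must dominate $L\cmax^{2n}$, but $n$ depends on $\tau$. The way around this is to fix $\tau$ first, small enough against $\eta/C_f$, and only then choose $n$; everything else works at level $n$. A second difficulty is that perturbing $\fii_{i_1}$ also changes $f_\kkk'$ and the orbits through the tail intervals. This is exactly what the guards exclude, up to the $\mathcal C^2$-small global change that \cref{lem:localizedS} allows away from the guards. That change moves the interval endpoints by $O(\eta')$, with $\eta'$ chosen after $n$, so it is absorbed into the $\tau/2$ margin.
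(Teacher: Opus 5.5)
Your proposal is correct and follows the paper's proof essentially step for step: identity-free approximation by \cref{lem:identityfree}, orbit selection by \cref{lem:lineorbits}, localized Schwarzian shifts of size $\pm\tau$ at the base points with all other orbit points guarded via \cref{lem:localizedS}, verification of \cref{lem:linecylinders} with $\tau$ fixed before $n$, and the return to $\mathcal S$ by \cref{lem:entire} and \cref{prop:cont1}. The differences are cosmetic. You decide whether to shift from the values at base points placed anywhere in $\Omega$, whereas the paper first places good and noncoincident pairs in the open sets $U_{\iii,\jjj}$. Your worry about an $O(\eta')$ drift of the interval endpoints is moot: agreement of the generators to first order at every orbit point keeps the radii and the centers at $x_{\iii,\jjj}$ exactly as computed, up to the prescribed shift.
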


\begin{proof}
  Fix $\Phi_0 \in \mathcal S$ and $\eps>0$. The system produced by the localized perturbation below is a conformal iterated function system on $\Omega$ whose extension domain may be smaller than $\Omega'$; the final step returns to $\mathcal S$.

  We first make a small perturbation without exact overlaps. By \cref{lem:identityfree} there is a system $\Psi_0=(f_i)_{i \in \II}\in\mathcal S$ with $d_2(\Phi_0,\Psi_0)<\eps/4$ whose distinct finite words have distinct forward compositions. Here and below $f_\iii$ denotes the reversed composition \cref{eq:frev} of the generators of $\Psi_0$, so that $f_{\iii|_k}=f_{i_k}\circ f_{\iii|_{k-1}}$, the one-letter words returning the generators. Since reversal is a length-preserving involution of $\II^*$, it follows that $f_\uuu\not\equiv f_\vvv$ for all distinct $\uuu,\vvv\in\II^*$, which is the hypothesis of \cref{lem:lineorbits}.

  Put $c_0 = \max_{i \in \II}\|f_i'\|$ and $M_0 = \max_{i \in \II}\|S_{f_i}\|$. Choose $\lambda \in (c_0,1)$ and $L>0$ such that $\lambda^2L+M_0<L$, and put $\gamma = L-\lambda^2L-M_0>0$. For each generator let $C_{f_i}$ be the constant of \cref{lem:localizedS} for $f_i$ and $J=\ol{\Omega}$, and put $C = \max_{i \in \II}C_{f_i}$. Choose $\tau,\eta_0>0$ so that
  \begin{equation*}
    C\tau+\eta_0<\gamma \qquad\text{and}\qquad \eta_0<\min\{\lambda-c_0,\eps/4\}.
  \end{equation*}
  Then choose $n$ so large that
  \begin{equation} \label{eq:linewidth}
    2L\lambda^{2n}<\tau.
  \end{equation}
  For $m\ge1$ let $Z_m=Z\cap(\II^m\times\II^m)$, where $Z$ is defined in \cref{eq:Zdef}, and put $\ol{Z}_n=\bigcup_{m=1}^{n}Z_m$; the pairs are ordered, and $(\iii,\jjj)$ and $(\jjj,\iii)$ are treated independently. Write $S_\iii^{\Psi_0}$ and $I_\iii^{\Psi_0}$ for the Schwarzian projections and the intervals \cref{eq:linecylinder} of $\Psi_0$, where $L$ satisfies \cref{eq:lineL} for $\Psi_0$ since $c_0<\lambda$. Call a pair $(\iii,\jjj)\in Z_n$ good if $I_\iii^{\Psi_0}(x)\cap I_\jjj^{\Psi_0}(x)=\varnothing$ for some $x\in\ol{\Omega}$, and bad otherwise, and call a pair $(\iii,\jjj)\in Z_m$ with $m<n$ coincident if $S_\iii^{\Psi_0}\equiv S_\jjj^{\Psi_0}$ on $\ol{\Omega}$. For $(\iii,\jjj)\in\ol{Z}_n$ define
  \begin{equation*}
    U_{\iii,\jjj}=
    \begin{cases}
      \{x\in\Omega : I_\iii^{\Psi_0}(x)\cap I_\jjj^{\Psi_0}(x)=\varnothing\}, &\text{if } (\iii,\jjj)\in Z_n \text{ is good}, \\
      \{x\in\Omega : S_\iii^{\Psi_0}(x)\ne S_\jjj^{\Psi_0}(x)\}, &\text{if } (\iii,\jjj)\in Z_m \text{ with } m<n \text{ is not coincident}, \\
      \Omega, &\text{otherwise}.
    \end{cases}
  \end{equation*}
  Each $U_{\iii,\jjj}$ is open, the endpoints of the intervals \cref{eq:linecylinder} and the projections being continuous in $x$, and nonempty: the defining condition holds at some point of $\ol{\Omega}$ by the definition of a good or a noncoincident pair, and, being open in $\ol{\Omega}$, it holds at points of $\Omega$ as well. Let $x_{\iii,\jjj}\in U_{\iii,\jjj}$, $(\iii,\jjj)\in\ol{Z}_n$, be the points provided by \cref{lem:lineorbits}, whose hypothesis holds because $\Psi_0$ has no exact overlaps.

  Let $B\subseteq\ol{Z}_n$ consist of the bad pairs in $Z_n$ and the coincident pairs in $Z_m$, $m<n$. For $(\iii,\jjj)\in B$ with $x=x_{\iii,\jjj}$ define the shift
  \begin{equation*}
    \delta_{\iii,\jjj}=
    \begin{cases}
      \tau, &\text{if } S_\iii^{\Psi_0}(x)\ge S_\jjj^{\Psi_0}(x), \\
      -\tau, &\text{if } S_\iii^{\Psi_0}(x)<S_\jjj^{\Psi_0}(x).
    \end{cases}
  \end{equation*}
  Along the selected orbits the generator $f_i$ acts at the points $f_{\iii|_{k-1}}(x_{\iii,\jjj})$ with $i_k=i$ and at the points $f_{\jjj|_{k-1}}(x_{\iii,\jjj})$ with $j_k=i$; let $\mathcal W_i\subseteq\Omega$ be the set of all such points, over all $(\iii,\jjj)\in\ol{Z}_n$ and all admissible $k$. For $i\in\II$ let
  \begin{equation*}
    \mathcal Y_i=\{x_{\iii,\jjj} : (\iii,\jjj)\in B \text{ and } i_1=i\} \qquad\text{and}\qquad \mathcal X_i=\mathcal W_i\setminus\mathcal Y_i
  \end{equation*}
  be the targets and the guards of $f_i$, and for $y=x_{\iii,\jjj}\in\mathcal Y_i$ put $\delta_y=\delta_{\iii,\jjj}$; the shifts are well-defined because distinct pairs have distinct starting points by \cref{lem:lineorbits}\cref{it:orbits-disjoint}, and $\mathcal Y_i\subseteq\mathcal W_i$ because $f_{i_1}$ acts at $x_{\iii,\jjj}$. By \cref{lem:lineorbits}\cref{it:orbits-distinct,it:orbits-disjoint}, a starting point $x=x_{\iii,\jjj}$ occurs along the selected orbits only as the initial point of $\mathcal O_\iii(x)$ and of $\mathcal O_\jjj(x)$, where the acting generators are $f_{i_1}$ and $f_{j_1}$, and no other point of $\mathcal O_\iii(x)\cup\mathcal O_\jjj(x)$ is a starting point. Moreover $x\notin\mathcal Y_{j_1}$, since $x\in\mathcal Y_{j_1}$ would require $x=x_{\iii',\jjj'}$ for a pair $(\iii',\jjj')\in B$ with $i'_1=j_1$, whence $(\iii',\jjj')=(\iii,\jjj)$ and $i_1=j_1$. Explicitly, for $k\ge2$ the points $f_{\iii|_{k-1}}(x)$ and $f_{\jjj|_{k-1}}(x)$ lie in $\mathcal X_{i_k}$ and $\mathcal X_{j_k}$, the point $x$ lies in $\mathcal X_{j_1}$, and $x$ lies in $\mathcal Y_{i_1}$ if $(\iii,\jjj)\in B$ and in $\mathcal X_{i_1}$ otherwise.

  Apply \cref{lem:localizedS} to every generator $f_i$ of $\Psi_0$, with $J=\ol{\Omega}$, the guards $\mathcal X_i$, the targets $\mathcal Y_i$, the shifts $(\delta_y)_{y\in\mathcal Y_i}$, for which $M_\delta\le\tau$, and the tolerance $\eta_0$, and let $g_i$ be the function it provides. The hypotheses of the lemma hold: each $f_i$ is real-analytic on a neighborhood of $\ol{\Omega}$ with $\inf_{\ol{\Omega}}|f_i'|>0$, the sets $\mathcal Y_i$ and $\mathcal X_i$ are finite, disjoint, and contained in $\Omega$, the orbit points of a point of $\Omega$ staying in $\Omega$, and $0<|f_i'|<1$ on $\ol{\Omega}$ with $f_i(\ol{\Omega})$ at positive distance from $\R\setminus\Omega$. The perturbed generators $g_i$ therefore send $\ol{\Omega}$ into $\Omega$, satisfy $0<|g_i'|<1$ there, and are real-analytic on a neighborhood of $\ol{\Omega}$, the perturbations of \cref{lem:localizedS} being restrictions to $\R$ of entire functions; as $g_i'$ is continuous and nonvanishing on the compact $\ol{\Omega}$, it vanishes nowhere on a neighborhood of $\ol{\Omega}$, which we take as the extension domain. The perturbed tuple $\Psi_1=(g_i)_{i \in \II}$ is thus a conformal iterated function system on $\Omega$, with an extension domain that may be smaller than $\Omega'$; we write $g_\iii$ for the reversed compositions of its generators. We have
  \begin{equation*}
    d_2(\Psi_0,\Psi_1)<\eta_0<\eps/4 \qquad\text{and}\qquad d_2(\Phi_0,\Psi_1)<\eps/2.
  \end{equation*}
  Moreover $\max_{i \in \II}\|g_i'\|<c_0+\eta_0<\lambda$ and, by \cref{eq:localizedSbound},
  \begin{equation*}
    \max_{i \in \II}\|S_{g_i}-S_{f_i}\|\le C\tau+\eta_0<\gamma.
  \end{equation*}
  Hence $\lambda^2L+\max_{i \in \II}\|S_{g_i}\|<L$, so \cref{eq:lineL} holds for $\Psi_1$ with the same $L$. Write $S^{\Psi_1}_\iii$ and $I^{\Psi_1}_\iii$ for the Schwarzian projections and the intervals \cref{eq:linecylinder} of $\Psi_1$, with this $L$.

  Fix $(\iii,\jjj)\in\ol{Z}_n$ and put $x=x_{\iii,\jjj}$. Since $g_i$ and $f_i$ agree together with their first derivatives at every point of $\mathcal Y_i\cup\mathcal X_i=\mathcal W_i$, by \cref{eq:localizedStargets,eq:localizedSguards}, and since $f_{\iii|_{k-1}}(x)\in\mathcal W_{i_k}$, induction on $k$ along $g_{\iii|_k}=g_{i_k}\circ g_{\iii|_{k-1}}$ gives
  \begin{equation*}
    g_{\iii|_k}(x)=f_{\iii|_k}(x) \qquad\text{and}\qquad g_{\iii|_k}'(x)=f_{\iii|_k}'(x)
  \end{equation*}
  for all $k\in\{0,\ldots,|\iii|\}$, and likewise along $\jjj$. In particular $I^{\Psi_1}_\iii(x)$ and $I_\iii^{\Psi_0}(x)$ have the same radius $L(f_\iii'(x))^2$, and so do $I^{\Psi_1}_\jjj(x)$ and $I_\jjj^{\Psi_0}(x)$. The cocycle \cref{eq:Sword}, read on the line for $\Psi_0$ and for $\Psi_1$ and combined with the agreement just established, gives
  \begin{equation*}
    S^{\Psi_1}_\iii(x)-S_\iii^{\Psi_0}(x)=\sum_{k=1}^{|\iii|}(f_{\iii|_{k-1}}'(x))^2(S_{g_{i_k}}-S_{f_{i_k}})(f_{\iii|_{k-1}}(x)),
  \end{equation*}
  and likewise for $\jjj$. The $k$-th summand vanishes when $f_{\iii|_{k-1}}(x)\in\mathcal X_{i_k}$, by \cref{eq:localizedSguards}, and equals $\delta_{\iii,\jjj}$ when $k=1$ and $x\in\mathcal Y_{i_1}$, by \cref{eq:localizedStargets} and $f_{\iii|_0}'=1$. Hence, by the three cases recorded above, $S^{\Psi_1}_\jjj(x)=S_\jjj^{\Psi_0}(x)$ for every pair, $S^{\Psi_1}_\iii(x)=S_\iii^{\Psi_0}(x)$ if $(\iii,\jjj)\notin B$, and $S^{\Psi_1}_\iii(x)=S_\iii^{\Psi_0}(x)+\delta_{\iii,\jjj}$ if $(\iii,\jjj)\in B$.

  We verify the hypotheses of \cref{lem:linecylinders} for $\Psi_1$. Let $(\iii,\jjj)\in Z_n$ and $x=x_{\iii,\jjj}$. If the pair is good, then $I^{\Psi_1}_\iii(x)=I_\iii^{\Psi_0}(x)$ and $I^{\Psi_1}_\jjj(x)=I_\jjj^{\Psi_0}(x)$ are disjoint since $x\in U_{\iii,\jjj}$. If the pair is bad, then $I^{\Psi_1}_\iii(x)=I_\iii^{\Psi_0}(x)+\delta_{\iii,\jjj}$ and $I^{\Psi_1}_\jjj(x)=I_\jjj^{\Psi_0}(x)$. The radii of these intervals are at most $Lc_0^{2n}<L\lambda^{2n}$, so their sum is less than $\tau$ by \cref{eq:linewidth}. By the choice of sign, the distance of the new centers is $|S_\iii^{\Psi_0}(x)+\delta_{\iii,\jjj}-S_\jjj^{\Psi_0}(x)|=|S_\iii^{\Psi_0}(x)-S_\jjj^{\Psi_0}(x)|+\tau\ge\tau$, which exceeds the sum of the radii, so the translated intervals are disjoint. Let $(\iii,\jjj)\in Z_m$ with $m<n$ and $x=x_{\iii,\jjj}$. If the pair is not coincident, then $S^{\Psi_1}_\iii(x)-S^{\Psi_1}_\jjj(x)=S_\iii^{\Psi_0}(x)-S_\jjj^{\Psi_0}(x)\ne0$ since $x\in U_{\iii,\jjj}$, and if it is coincident, then $S^{\Psi_1}_\iii(x)-S^{\Psi_1}_\jjj(x)=\delta_{\iii,\jjj}=\tau\ne0$. Thus $S^{\Psi_1}_\iii\not\equiv S^{\Psi_1}_\jjj$ for every $(\iii,\jjj)\in Z_m$ with $m<n$, and \cref{lem:linecylinders} gives $\Delta_S(\Psi_1)>0$.

  The system $\Psi_1$ need not belong to $\mathcal S$, its generators having been certified conformal only near $\ol{\Omega}$. By \cref{prop:cont1} there is $r_1>0$ such that every conformal iterated function system $\Psi$ on $\Omega$ with $d_2(\Psi_1,\Psi)<r_1$ has $\Delta_S(\Psi)>0$, and \cref{lem:entire} provides such a $\Psi$ in $\mathcal S$ with $d_2(\Psi_1,\Psi)<\min\{r_1,\eps/2\}$. Then $\Psi \in \mathcal G_S$ and
  \begin{equation*}
    d_2(\Phi_0,\Psi)\le d_2(\Phi_0,\Psi_1)+d_2(\Psi_1,\Psi)<\eps/2+\eps/2=\eps,
  \end{equation*}
  which proves the density of $\mathcal G_S$. By \cref{lem:gap}, the inclusion \cref{eq:GSsubsetGT} gives $\mathcal G_S\subseteq\mathcal G_T$ when $d=1$, and a set that contains a dense set is dense, so $\mathcal G_T$ is dense as well.
\end{proof}

We are now ready to prove the first item of \cref{thm:genericity}.

\begin{proof}[Proof of \cref{thm:genericity}\cref{it:gen-line}]
  The sets $\mathcal G_S$ and $\mathcal G_T$ are open in $(\mathcal S,d_2)$ by \cref{prop:cont1} and dense by \cref{thm:C1denseS}, and $d_2$ is the $\mathcal C^2$-distance of the statement. By \cref{lem:gap}, the members of $\mathcal G_S$ satisfy the strong exponential separation condition modulo M\"obius maps and those of $\mathcal G_T$ the strong exponential separation condition.
\end{proof}

The following remark compares the open-dense theorem of B\'ar\'any, Kolossv\'ary, and Troscheit with \cref{thm:C1denseS} in the space, the conclusion, and the method, and delimits what is new.

\begin{remark} \label{rem:BKT}
  The plain half of \cref{thm:genericity}\cref{it:gen-line} is the counterpart in $(\mathcal S,d_2)$ of the open-dense theorem of B\'ar\'any, Kolossv\'ary, and Troscheit \cite[Theorem~1.4]{BaranyKolossvaryTroscheit}, the companion of their sufficient condition \cite[Theorem~1.5]{BaranyKolossvaryTroscheit} that \cref{thm:main} contains on the line by \cref{rem:BKT-comparison}, and the line case of the program arrives at it independently of that theorem. The two differ in three respects.

  The first difference is the space. Their systems form the collar class of \cref{rem:BKT-comparison}, which is contained in $\mathcal S$ for $\Omega=(-\tau,1+\tau)$ and $\Omega'=(-\eps,1+\eps)$ with $0<\tau\le\eps/2$, the collar conditions being additional assumptions; the inclusion is proper, since a member of $\mathcal S$ has a complex neighborhood of its own, by \cref{lem:thicken}, but need not satisfy the collar conditions for the given $\eps$. Our result covers their openness result: their metric is the $\mathcal C^2$-distance over $[0,1]$ and ours the same distance over $\ol{\Omega}$, so ours dominates theirs, and on their class the two induce the same topology when $\tau<\eps/2$, the two-constants estimate of \cref{rem:BKT-comparison} together with Cauchy's estimates on discs inside the collar turning smallness on $[0,1]$ into smallness of the first two derivatives on $\ol{\Omega}$, so the openness of $\mathcal G_T$ and $\mathcal G_S$ in $\mathcal S$ restricts to openness in their class. Their density result, on the other hand, is proved inside the collar class, their approximants keeping the collar conditions because their perturbation is holomorphic and controlled on the collar. We do not need the collar for density and do not use it: our density statement is relative to the larger space $\mathcal S$, and the approximants in the proof of \cref{thm:C1denseS} are controlled on $\ol{\Omega}$ only and need not satisfy the collar conditions, so neither density statement is a consequence of the other.

  The second difference is the conclusion. Their open and dense set carries the strong exponential separation condition, which is the pre-Schwarzian statement, whereas \cref{thm:C1denseS} makes the smaller class $\mathcal G_S$ dense, whose members satisfy the separation modulo M\"obius maps, and the pre-Schwarzian statement follows through the inclusion \cref{eq:GSsubsetGT}.

  The third difference is the method. Their perturbation multiplies a generator by the exponential of a sum of Gaussians, a holomorphic bump on the collar that keeps the derivative nonvanishing there but is tied to that collar; ours is the localized real third-derivative perturbation of \cref{lem:localizedS}, whose holomorphic extensions grow away from the real axis, followed by the entire smoothing of \cref{lem:entire}, which is what returns the perturbed system to the fixed $\Omega'$. Their theorem is not used in the proof of \cref{thm:C1denseS}, the system without exact overlaps from which the density argument starts being supplied by \cref{lem:identityfree}.
\end{remark}

The density just proved bears on the one-point criterion of \cref{sec:gencrit}, whose contraction ceiling \cref{rem:gencrit-range} located.

\begin{remark} \label{rem:gencrit-necessity}
  That ceiling is a feature of testing the generators at word length one. The criterion of \cref{prop:gencrit} weighs the separation of two one-letter projections against the whole tail of the cocycle, which costs $2\beta\cmax^w/(1-\cmax^w)$ and forces $\cmax<\tfrac12$ at weight one and $\cmax<2^{-1/2}$ at weight two by \cref{eq:gencritrange}, whereas \cref{lem:linecylinders} runs the same comparison at word length $n$, weighing the separation against the squared derivative of a word of length $n$ rather than against $\cmax^2$. The density theorem that \cref{lem:linecylinders} feeds, \cref{thm:C1denseS}, carries no hypothesis on $\cmax$ at all.

  Nor is the criterion necessary for the separation it delivers. On $\Omega=(0,1)$ with $\Omega'=\R$ the pair of similarities $x\mapsto\tfrac{9}{10}x+\tfrac{1}{50}$ and $x\mapsto\tfrac{9}{10}x+\tfrac{2}{25}$ is a conformal iterated function system of contraction ratio $\tfrac{9}{10}$, and by \cref{thm:C1denseS}, read through \cref{lem:gap}\cref{it:gap-pre}, the systems satisfying \cref{eq:mainhyp} are $\mathcal C^2$-dense. Since $\cmax$ changes by at most the $\mathcal C^2$-distance, arbitrarily close to that pair there are systems with $\cmax>\tfrac12$ that satisfy \cref{eq:mainhyp} and for which no $\alpha$ makes \cref{prop:gencrit}\cref{it:gencrit-pre} available.
\end{remark}

\section{Genericity in the plane} \label{sec:gen-plane}

Throughout this section $d=2$. We keep the conventions of \cref{sec:C2-machinery}; in particular, put $C_0=2\max_{i \in \II}\|\fii_i''/\fii_i'\|/(1-\cmax)$ and recall the bounds $\cmin^{|\iii|}\le|f_\iii'|\le\cmax^{|\iii|}$ and $|f_\iii''|\le C_0\cmax^{|\iii|}$ on $\ol{\Omega}$. \Cref{sec:gen-plane-open} proves both gap classes open for every bounded $\Omega$, in \cref{prop:cont}, on a neighborhood of $\ol{\Omega}$ that \cref{lem:collar} builds and the density argument then reuses. The density is where the plane parts from the line: the localized perturbation of \cref{sec:gen-line-loc} has no interior analogue here, by Cauchy's estimates, so the perturbations must act from the boundary. \Cref{sec:gen-plane-chart} builds the finite-dimensional family in which one generator is perturbed and computes the derivative of the Schwarzian projections along it, and \cref{sec:gen-plane-peak}, where the hypothesis that $\Omega$ is a Jordan domain enters, supplies the sources of the perturbations, peaking polynomials localized at a boundary point, and packages them in \cref{lem:peakdata}. \Cref{sec:gen-plane-dense} assembles the two in \cref{prop:transversal} and adds the counting, proving in \cref{thm:C2dense} that both classes are dense whenever $\Omega$ is a Jordan domain and $\Omega'$ is simply connected; \cref{thm:genericity}\cref{it:gen-plane} is proved there, from that theorem and the openness. The hypotheses on the domains belong to the method: \cref{ex:nonconvex} shows that the parametrization through which the perturbations act is unavailable on an annulus, and \cref{q:nonconvex} asks whether the conclusion holds there nonetheless.

\subsection{Openness} \label{sec:gen-plane-open}

Openness in the plane needs no hypothesis on the domain, but it does need the dual projections on a neighborhood of $\ol{\Omega}$. \Cref{lem:collar} builds that neighborhood and \cref{prop:cont} is the resulting estimate; the neighborhood carries the whole density argument below, and the estimate is used again in \cref{sec:dimension}.

The perturbation theory needs the dual projections on a neighborhood of $\ol{\Omega}$, with bounds one derivative deep; the invariant thickening of \cref{lem:thicken} provides it. Let $r_\Phi$ be the radius \cref{lem:tube} provides for $\lambda \in (\cmax,1)$, so that $|\fii_i'|\le\lambda$ on the neighborhood of $\ol{\Omega}$ of radius $r_\Phi$ for every $i \in \II$. The same $\lambda$ thus serves twice, as a contraction rate on $\ol{\Omega}$ shared by every system close to $\Phi$ and as a derivative bound for $\Phi$ itself on a neighborhood of $\ol{\Omega}$, no relation between the two radii being needed.

\begin{lemma} \label{lem:collar}
  There are $\eps>0$ and $r'\in(2\eps,r_\Phi)$, with $r_\Phi$ as above, such that the open neighborhoods
  \begin{equation} \label{eq:collar}
    U=\{z\in\C : \dist(z,\ol{\Omega})<\eps\} \qquad\text{and}\qquad V=\{z\in\C : \dist(z,\ol{\Omega})<r'\}
  \end{equation}
  have compact closures satisfying $\ol{U}\subseteq V\subseteq\ol{V}\subseteq\Omega'$, and there are constants $c_U<1$, $c_V<1$, $K_U<\infty$, and $D_U<\infty$ with the following properties:
  \begin{enumerate}
    \item\label{it:collar-derivatives} The derivatives satisfy 
    \begin{equation*}
      \max_{i \in \II}\sup_{z\in\ol{U}}|\fii_i'(z)|\le c_U \qquad \text{and} \qquad \max_{i \in \II}\sup_{z\in\ol{V}}|\fii_i'(z)|\le c_V.
    \end{equation*}
    \item\label{it:collar-invariance} Every generator maps $\ol{U}$ into $U$, with the margin 
    \begin{equation*}
      \dist(\fii_i(\ol{U}),\partial U)\ge(1-c_U)\eps
    \end{equation*}
    for every $i \in \II$.
    \item\label{it:collar-paths} Every two points of $\ol{U}$ are joined by a polygonal path in $V$ of length at most $D_U$.
    \item\label{it:collar-extension} Every dual projection extends to a function, again denoted $T_\iii$, holomorphic on $U$ and continuous on $\ol{U}$, and these extensions satisfy
    \begin{equation} \label{eq:collarbounds}
      \sup_{z\in\ol{U}}|T_\iii(z)|\le K_U \qquad\text{and}\qquad \sup_{z\in\ol{U}}|T_\iii(z)-T_\jjj(z)|\le2K_Uc_U^{|\iii\land\jjj|}
    \end{equation}
    for all distinct $\iii,\jjj\in\II^\N\cup\II^*$.
  \end{enumerate}
  In particular,
  \begin{equation} \label{eq:collarderiv}
    \|T_\iii'\|\le K_U/\eps \qquad\text{and}\qquad \|T_\iii'-T_\jjj'\|\le(2K_U/\eps)c_U^{|\iii\land\jjj|}
  \end{equation}
  for all distinct $\iii,\jjj\in\II^\N\cup\II^*$.
\end{lemma}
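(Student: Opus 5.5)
The plan is to mirror the proof of \cref{lem:thicken}, with $\lambda$ fixed as above, and read everything in the plane. First choose $r'$ with $0<r'<r_\Phi$ small enough that $\ol{V}\subseteq\Omega'$. This is possible since $\ol{\Omega}$ is compact in the open $\Omega'$. Because $\ol{V}$ is contained in the closed $r'$-neighborhood, which lies inside the $r_\Phi$-neighborhood of $\ol{\Omega}$, we get $|\fii_i'|\le\lambda$ on $\ol{V}$ by the choice of $r_\Phi$. Hence $c_V=\lambda$ works, and after shrinking $r'$ slightly $c_V<1$ holds. Then take any $\eps\in(0,r'/2)$, so that $\ol U$ is contained in the closed $\eps$-neighborhood and hence in $V$, and put $c_U=\lambda$ as well. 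This gives \cref{it:collar-derivatives}, with compactness of the closures coming from the boundedness of $\Omega$.

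For \cref{it:collar-invariance}, repeat the segment argument of \cref{lem:tube}. For $z\in\ol U$, take a nearest point $x\in\ol\Omega$. The segment $[x,z]$ lies in $\ol U\subseteq V$, so integrating $\fii_i'$ along it gives $\dist(\fii_i(z),\ol\Omega)\le|\fii_i(z)-\fii_i(x)|\le\lambda\eps$, using $\fii_i(x)\in\ol\Omega$. Any point of $\partial U$ is at distance exactly $\eps$ from $\ol\Omega$, so $\dist(\fii_i(z),\partial U)\ge(1-\lambda)\eps$, which is the margin claimed.

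For \cref{it:collar-paths}, argue as in \cref{lem:tube}. The set $V$ is a connected open set, being the union of $\Omega$ with discs meeting it, so the polygonal path metric $\ell_V$ is continuous on $V\times V$ and therefore bounded on the compact $\ol U\times\ol U$. Call that bound $D_U$.

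For \cref{it:collar-extension}, the invariance $\fii_i(\ol U)\subseteq U$ keeps all partial orbits of $\ol U$ inside $\ol U$. Since $|\fii_i'|\le c_U$ there, the chain rule gives $|f_\kkk'|\le c_U^{|\kkk|}$ on $\ol U$. Put $B=\max_i\sup_{\ol U}|\fii_i''/\fii_i'|$, which is finite because $\ol U\subseteq\Omega'$ is compact and $\fii_i'\ne0$ there. The cocycle series \cref{eq:Tline}, read on $\ol U$, then has $k$-th term bounded by $Bc_U^{k-1}$. So it converges uniformly on $\ol U$, and its sum is holomorphic on $U$ and continuous on $\ol U$ by the Weierstrass convergence theorem. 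On $\ol\Omega$ the sum agrees with $T_\iii$, by \cref{lem:dual}. Setting $K_U=B/(1-c_U)$ gives the first bound in \cref{eq:collarbounds}. For the second, the first $|\iii\land\jjj|$ terms of the two series cancel, exactly as in the proof of \cref{lem:dual}, and the remaining tails sum to at most $2K_Uc_U^{|\iii\land\jjj|}$.

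Finally, \cref{eq:collarderiv} follows from \cref{lem:cauchy} with $k=1$. Each $x\in\ol\Omega$ has $B^o(x,\eps)\subseteq U$, so Cauchy's estimate at $x$ with radius $\eps$ turns the bounds of \cref{eq:collarbounds} into the derivative bounds with an extra factor $1/\eps$.

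The main subtlety is making one choice of $\lambda$ serve everywhere. This works because the derivative bound is needed on $\ol V$, which sits inside the $r_\Phi$-neighborhood, while invariance is needed only on $U$, where the segment argument uses the bound on $V$. No other step should be delicate.
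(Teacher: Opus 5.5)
Your proof is correct and follows the paper's proof essentially step by step. The paper takes $c_U$ to be the actual supremum of $|\fii_i'|$ on $\ol U$ rather than $\lambda$, which changes nothing. One small point: in \cref{it:collar-paths} you should take $D_U$ strictly larger than $\max_{\ol U\times\ol U}\ell_V$, as the paper does, because the infimum defining $\ell_V$ need not be attained by a polygonal path in the open set $V$.
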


\begin{proof}
  The proof of \cref{lem:tube}, run with $\lambda \in (\cmax,1)$, shows that the neighborhood $W=\{z\in\C : \dist(z,\ol{\Omega})<r_\Phi\}$ lies in $\Omega'$, that every generator maps $W$ into $W$, and that $|\fii_i'|\le\lambda$ on $W$. \Cref{lem:thicken} provides $\delta>0$ such that the closed $\delta$-neighborhood of $\ol{\Omega}$ lies in the common holomorphic neighborhood constructed there and $|\fii_i'|<1$ on it for every $i\in\II$. Choose $0<\eps<\min\{\delta,r_\Phi/4\}$ and $r'\in(2\eps,r_\Phi)$. The sets \cref{eq:collar} then have compact closures and satisfy $\ol{U}\subseteq V\subseteq\ol{V}\subseteq W\subseteq\Omega'$.

  Let us first prove \cref{it:collar-derivatives}. The neighborhood supplied by \cref{lem:dual} was chosen so that the generators have derivatives of modulus less than $1$ on it, and shrinking $\eps$ only shrinks $\ol{U}$, so $c_U=\max_{i \in \II}\sup_{z\in\ol{U}}|\fii_i'(z)|<1$. Since $\ol{V}\subseteq W$ and $|\fii_i'|\le\lambda$ on $W$, also $c_V=\max_{i \in \II}\sup_{z\in\ol{V}}|\fii_i'(z)|\le\lambda<1$.

  For \cref{it:collar-invariance}, let $i \in \II$ and $z\in\ol{U}$, and let $x\in\ol{\Omega}$ be a nearest point of $z$. The segment $[x,z]$ has length at most $\eps$ and stays in $\ol{U}$, so $|\fii_i(z)-\fii_i(x)|\le c_U\eps$, and $\fii_i(x)$ lies in $\ol{\Omega}$, whence $\dist(\fii_i(z),\ol{\Omega})\le c_U\eps<\eps$. This gives the invariance $\fii_i(\ol{U})\subseteq U$. Since $w\mapsto\dist(w,\ol{\Omega})$ is $1$-Lipschitz and equals $\eps$ on $\partial U$, the same estimate gives the margin $\dist(\fii_i(\ol{U}),\partial U)\ge(1-c_U)\eps$.

  The set $V$ is open and connected, being the union of the connected $\Omega$ with the balls $B^o(x,r')$, $x\in\ol{\Omega}$, and, exactly as in the proof of \cref{lem:tube}, the infimum $\ell_V(z,w)$ of the lengths of polygonal paths in $V$ joining $z$ to $w$ is continuous on $V\times V$. Choose $D_U>\max\{\ell_V(z,w) : z,w\in\ol{U}\}$, which is finite by the compactness of $\ol{U}\times\ol{U}$. By the definition of $\ell_V$, every two points of $\ol{U}$ are joined in $V$ by a polygonal path of length less than $D_U$. This proves \cref{it:collar-paths}.

  It remains to prove \cref{it:collar-extension}. Since the partial orbits of $\ol{U}$ stay in $\ol{U}$, the sum \cref{eq:Tline} converges absolutely and uniformly on $\ol{U}$, its $k$-th summand being bounded there by $\beta_Uc_U^{k-1}$ with $\beta_U=\max_{i \in \II}\sup_{z\in\ol{U}}|T_{\fii_i}(z)|<\infty$, and, for every word, defines a function holomorphic on $U$, continuous on $\ol{U}$, and equal to $T_\iii$ on $\ol{\Omega}$. The bound $K_U=\beta_U/(1-c_U)$ and the prefix bound of \cref{eq:collarbounds} follow exactly as in \cref{lem:dual}, the first $|\iii\land\jjj|$ summands agreeing. This shows \cref{it:collar-extension}, and, every $x\in\ol{\Omega}$ having $B^o(x,\eps)\subseteq U$, the claims in \cref{eq:collarderiv} follow from \cref{eq:cauchy} applied on $B^o(x,r)$ with $r<\eps$ and letting $r\to\eps$.
\end{proof}

The dual projections vary Lipschitz-continuously in $d_2$, with a constant independent of the word. A gap over the pair space $Z$ of \cref{eq:Zdef} is an infimum of distances between such projections, so this single estimate is all that the openness of the two gap classes needs: for $\mathcal G_T$ it preserves the gap outright, and for $\mathcal G_S$ it supplies the hypothesis of \cref{lem:Sclosed}, exactly as on the line.

\begin{proposition} \label{prop:cont}
  Let $\Phi=(\fii_i)_{i \in \II}$ be a conformal iterated function system on $\Omega \subset \R^2$. Then there are $r_0 > 0$ and $C < \infty$ such that
  \begin{equation} \label{eq:cont}
    \|T^{\Phi}_\iii-T^{\Psi}_\iii\| \le Cd_2(\Phi,\Psi)
  \end{equation}
  for all $\iii\in\II^\N\cup\II^*$ and every $\Psi\in\mathcal S$ with $d_2(\Phi,\Psi)<r_0$. In particular, if $\Delta_T(\Phi)>0$, then there is $r_1\in(0,r_0]$ such that
  \begin{equation*}
    \Delta_T(\Psi) > 0
  \end{equation*}
  for all $\Psi\in\mathcal S$ with $d_2(\Phi,\Psi)<r_1$, the same implication holds with $\Delta_S$ in place of $\Delta_T$, and the sets $\mathcal G_T$ and $\mathcal G_S$ are open in $(\mathcal S,d_2)$.
\end{proposition}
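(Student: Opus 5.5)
The plan follows the line case, \cref{prop:cont1}, but runs it on the neighborhood $U$ of \cref{lem:collar}, where the planar projections are holomorphic. This lets Cauchy's estimate replace the use of second-derivative bounds on the generators of $\Psi$; those bounds are not controlled by $d_2$ on $U$, only on $\ol{\Omega}$.

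First, fix $\lambda\in(\cmax,1)$ and the bound $m=\min_{i\in\II}\inf_{\ol{\Omega}}|\fii_i'|>0$. Choose $r_0$ so small that every $\Psi\in\mathcal S$ with $d_2(\Phi,\Psi)<r_0$ satisfies $m/2\le|\psi_i'|\le\lambda$ on $\ol{\Omega}$. Partial orbits of points of $\ol{\Omega}$ stay in $\ol{\Omega}$ for both systems. Fix $x\in\ol{\Omega}$, a word $\iii$, and an index $k$. Put $u_l=f^\Phi_{\iii|_{l-1}}(x)$ and $v_l=f^\Psi_{\iii|_{l-1}}(x)$.

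Second, compare the orbits. Since $\Omega$ need not be convex, the one-step mean value argument of the line fails, so I use \cref{lem:tube} instead. The recursion $u_{l+1}-v_{l+1}=(\fii_{i_l}(u_l)-\fii_{i_l}(v_l))+(\fii_{i_l}-\psi_{i_l})(v_l)$ is unrolled as a telescoping sum $u_{k}-v_{k}=\sum_{l}(f^\Phi_{\cdots}$-images of the one-step errors$)$. Each term is a $\Phi$-composition applied to two points of $\ol{\Omega}$ at distance at most $d_2$, so \cref{eq:tube} bounds the sum by $C_\Phi\sum_l\lambda^{k-1-l}d_2\le C_\Phi(1-\lambda)^{-1}d_2$.

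Third, compare the cocycle terms. The derivative cocycles are compared by the same one-factor-at-a-time telescoping as in \cref{eq:cocycle-diff}. For each factor, write $\fii_i'(u)-\psi_i'(v)=(\fii_i'(u)-\fii_i'(v))+(\fii_i'-\psi_i')(v)$. The first bracket is bounded via $\fii_i''$ along a path in the collar $V$ of \cref{lem:collar}, using $|u-v|$ small. When $|u-v|$ is not small, I fall back on the crude bound $2\lambda$ times $|u-v|/r_\Phi$, which is still $O(d_2)$. The one-letter pre-Schwarzians satisfy $\|T^\Phi_i-T^\Psi_i\|\le Cd_2$ by the quotient estimate. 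To compare $T^\Phi_i(u)$ with $T^\Phi_i(v)$, I use the Lipschitz bound of $T^\Phi_i$ along polygonal paths in $V$ of length $\le D_U$, obtained from Cauchy as in \cref{eq:collarderiv}, together with the short-distance/long-distance split above. This gives $|A^\Phi_k(x)-A^\Psi_k(x)|\le Ck\lambda^{k-1}d_2$, and summing over $k$ yields \cref{eq:cont}.

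The main obstacle is this non-convexity: controlling $|u_l-v_l|$ and the Lipschitz constants of $\fii_i'$ and $T^\Phi_i$ between points of $\ol{\Omega}$ that are not joined by a segment in $\ol{\Omega}$. \Cref{lem:tube} and \cref{lem:collar}\cref{it:collar-paths} resolve it, since they involve only $\Phi$.

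Finally, openness follows as in \cref{prop:cont1}. For $\Delta_T$, the triangle inequality gives $\Delta_T(\Psi)\ge\Delta_T(\Phi)-2Cd_2(\Phi,\Psi)$. For $\Delta_S$, argue by contradiction: \cref{eq:cont} supplies \cref{eq:Tparametercontinuity}, so \cref{lem:Sclosed} together with \cref{lem:gap} shows that the complement of $\mathcal G_S$ is closed.
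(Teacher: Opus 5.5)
Your proposal is correct and follows the paper's proof: the same termwise comparison of the cocycle series \cref{eq:Tline}, the same splitting of $A_k^\Phi-A_k^\Psi$ into an orbit-and-cocycle part and a one-letter pre-Schwarzian part, and the same openness argument via the triangle inequality for $\Delta_T$ and \cref{lem:Sclosed} with \cref{lem:gap} for $\Delta_S$. The only variation is the orbit comparison, where your hybrid telescoping with \cref{eq:tube} is valid. The paper instead takes $r_0\le(1-c_U)\eps/2$, so that induction keeps $|u_l-v_l|\le\eps/2$ and every segment $[u_l,v_l]$ lies in the collar $U$. There $|\fii_i'|\le c_U$ and $\fii_i''$ and $(T^\Phi_i)'$ are bounded, so no long-distance fallback is needed. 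Note that these bounds come from holomorphy on $\Omega'\supseteq\ol U$, not from \cref{eq:collarderiv}, which only controls derivatives on $\ol\Omega$.
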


\begin{proof}
  With $U$, $\eps$, and $c_U$ from \cref{lem:collar}, let $\lambda\in(\cmax,1)$ and choose $r_0>0$ so small that $r_0\le(1-c_U)\eps/2$ and that every $\Psi=(\psi_i)_{i \in \II}\in\mathcal S$ with $d_2(\Phi,\Psi)<r_0$ has $\cmax(\Psi)\le \lambda$ and $\cmin(\Psi)\ge\cmin/2$; below $C$ denotes a finite constant depending only on $\Phi$, whose value may change. Fix such a $\Psi$ and a nonempty word $\iii$, and write $T^\Phi_\iii=\sum_kA_k^\Phi$ and $T^\Psi_\iii=\sum_kA_k^\Psi$ as in \cref{eq:Tline}, with $A_k^\Phi=(f^\Phi_{\iii|_{k-1}})'(T^\Phi_{i_k}\circ f^\Phi_{\iii|_{k-1}})$ and $A_k^\Psi$ defined analogously; for the empty word both projections vanish, and all partial orbits stay in $\ol{\Omega}$.

  Fix $x\in\ol{\Omega}$ and an admissible $k$, and put $u_l=f^\Phi_{\iii|_{l-1}}(x)$ and $v_l=f^\Psi_{\iii|_{l-1}}(x)$, points of $\ol{\Omega}$ with $u_1=v_1=x$. Then $|u_{l+1}-v_{l+1}|\le|\fii_{i_l}(u_l)-\fii_{i_l}(v_l)|+|\fii_{i_l}(v_l)-\psi_{i_l}(v_l)|\le c_U|u_l-v_l|+d_2(\Phi,\Psi)$ by \cref{lem:collar}\cref{it:collar-derivatives}: the perturbed generator is evaluated only at the point $v_l$ of $\ol{\Omega}$, where $d_2$ controls it, and the fixed generator is integrated along the segment $[u_l,v_l]$, which stays in $U$ because every point of it is within $|u_l-v_l|$ of $\ol{\Omega}$ and induction gives $|u_l-v_l|\le(1-c_U)^{-1}d_2(\Phi,\Psi)\le\eps/2$. When $k\ge2$, replacing one factor at a time in the derivative cocycles $(f^\Phi_{\iii|_{k-1}})'(x)=\prod_{l<k}\fii_{i_l}'(u_l)$ and $(f^\Psi_{\iii|_{k-1}})'(x)=\prod_{l<k}\psi_{i_l}'(v_l)$, whose factors are bounded by $\cmax$ and $\lambda$, and using $|\fii_{i_l}'(u_l)-\psi_{i_l}'(v_l)|\le\sup_U|\fii_{i_l}''||u_l-v_l|+d_2(\Phi,\Psi)\le Cd_2(\Phi,\Psi)$, the second derivatives being bounded on the compact $\ol{U}\subseteq\Omega'$, shows that the two cocycles differ by at most $Ck\lambda^{\,k-1}d_2(\Phi,\Psi)$. Moreover $|T^\Phi_{i_k}(u_k)-T^\Psi_{i_k}(v_k)|\le\sup_U|(T^\Phi_{i_k})'||u_k-v_k|+\|T^\Phi_{i_k}-T^\Psi_{i_k}\|\le Cd_2(\Phi,\Psi)$, since $T^\Phi_i=\fii_i''/\fii_i'$ is holomorphic on $\Omega'\supseteq U$ and $\|T^\Phi_i-T^\Psi_i\|\le2\cmin^{-2}\|\fii_i''\|\|\fii_i'-\psi_i'\|+2\cmin^{-1}\|\fii_i''-\psi_i''\|\le Cd_2(\Phi,\Psi)$ on $\ol{\Omega}$, and also $\|T^\Psi_i\|\le C$. Combining the bounds gives $|A_k^\Phi-A_k^\Psi|\le Ck\lambda^{\,k-1}d_2(\Phi,\Psi)$ on $\ol{\Omega}$, and summation over $k$ with $\sum_{k\ge1}k\lambda^{\,k-1}=(1-\lambda)^{-2}$ yields \cref{eq:cont}.

  Finally, let $\Delta_T(\Phi)>0$, write $\delta=\Delta_T(\Phi)$, enlarge $C$ if necessary so that $C>0$, and let $\Psi\in\mathcal S$ satisfy $d_2(\Phi,\Psi)<r_1=\min\{r_0,\delta/(3C)\}$. For every $(\iii,\jjj)\in Z$, two applications of \cref{eq:cont} give $\|T^\Psi_\iii-T^\Psi_\jjj\|\ge\|T^\Phi_\iii-T^\Phi_\jjj\|-2Cd_2(\Phi,\Psi)>\delta/3$, and taking the infimum over $Z$ yields $\Delta_T(\Psi)\ge\delta/3>0$. For the Schwarzian gap, let $\Delta_S(\Phi)>0$ and suppose that there is no $r\in(0,r_0]$ with $\Delta_S(\Psi)>0$ for every $\Psi\in\mathcal S$ satisfying $d_2(\Phi,\Psi)<r$. Since $\Delta_S$ is nonnegative, there are then $\Psi_n\in\mathcal S$ with $d_2(\Phi,\Psi_n)<\min\{r_0,1/n\}$ and $\Delta_S(\Psi_n)=0$, so that $\Psi_n\to\Phi$ in $d_2$. Since $Z$ is compact and the Schwarzian projections are continuous in the word variable by \cref{lem:dual2}, the infimum defining $\Delta_S(\Psi_n)$ is attained, so there are $(\iii_n,\jjj_n)\in Z$ with $S^{\Psi_n}_{\iii_n}\equiv S^{\Psi_n}_{\jjj_n}$, and \cref{eq:cont} gives the hypothesis \cref{eq:Tparametercontinuity} of \cref{lem:Sclosed}, which yields a pair $(\iii,\jjj)\in Z$ with $S^\Phi_\iii\equiv S^\Phi_\jjj$, contradicting $\Delta_S(\Phi)>0$. Specializing to members of $\mathcal S$ proves the last claim.
\end{proof}

\subsection{Linearizing the Schwarzian along a perturbation} \label{sec:gen-plane-chart}

We turn to the density of $\mathcal G_T$ and $\mathcal G_S$. The density argument perturbs one generator inside a finite-dimensional family and reads the effect on the Schwarzian projections. This subsection builds that family and computes the derivative of the projections along it: \cref{lem:perturbed} keeps the perturbed systems in $\mathcal S$, \cref{lem:Scollar} controls the projections on the neighborhood $U$ of \cref{lem:collar}, and \cref{lem:linearizeS} is the linearization itself.

From here to the end of the proof of \cref{thm:C2dense} we assume the two hypotheses of that theorem, that $\Omega$ is a Jordan domain and that $\Omega'$ is simply connected; the openness of \cref{prop:cont} needs neither. The first enters only through \cref{lem:access,lem:peaking}, and the second makes every primitive below single-valued. We perturb the nonlinearities globally. Fix $z_0\in\Omega$. Given $i\in\II$ and a holomorphic function $\theta_i$ on $\Omega'$, let $\Theta_i$ be the primitive of $\theta_i$ on the simply connected $\Omega'$ with $\Theta_i(z_0)=0$, the integrals from $z_0$ below being along arbitrary paths in $\Omega'$ with path-independent values, and define $g_i$ on $\Omega'$ by
\begin{equation} \label{eq:gdef}
  g_i(z) = \fii_i(z_0)+\int_{z_0}^{z}\fii_i'(w)e^{\Theta_i(w)}\dd w,
\end{equation}
so that $g_i'=\fii_i'e^{\Theta_i}$ vanishes nowhere and $g_i''/g_i'=\fii_i''/\fii_i'+\theta_i$: the dual projection of the letter $i$ shifts by $\theta_i$. For a tuple $\theta=(\theta_i)_{i\in\II}$ and small $t\in\R$ write $\Phi_t=(g_i^t)$ for the tuple built by \cref{eq:gdef} with $t\theta_i$ in place of $\theta_i$, so that $T^{\Phi_t}_i=T_i+t\theta_i$ and $\Phi_0=\Phi$. The construction makes sense for complex $t$ as well, and the next lemma records that the perturbed generators stay under control on the neighborhood $U$ of \cref{lem:collar} for all small complex $t$; the complex parameter is used to prove that the projections of $\Phi_t$ depend analytically on $t$.

\begin{lemma} \label{lem:perturbed}
  Let $\theta=(\theta_i)_{i\in\II}$ be a tuple of functions holomorphic on $\Omega'$, let $\Theta_i$ be the normalized primitive of $\theta_i$ on $\Omega'$ for each $i\in\II$, and let $U$, $V$, $\eps$, $c_U$, $c_V$, and $D_U$ be as in \cref{lem:collar}. For complex $t$ the tuple $t\theta$ is again a tuple of functions holomorphic on $\Omega'$, so \cref{eq:gdef} builds $\Phi_t=(g_i^t)$ for such $t$ as well, jointly holomorphically in $(t,z)$, with the nonvanishing derivative $(g_i^t)'=\fii_i'e^{t\Theta_i}$, and with $M_U=\max_{i\in\II}\sup_{z\in\ol{V}}|\Theta_i(z)|$ the generators satisfy
  \begin{equation} \label{eq:gbounds}
    |(g_i^t)'|\le c_Ue^{|t|M_U} \qquad\text{and}\qquad |g_i^t-\fii_i|\le D_Uc_V(e^{|t|M_U}-1)
  \end{equation}
  on $U$. Moreover, there are $t_0>0$ and $q_0<1$ such that, for all complex $t$ with $|t|\le t_0$, every $g_i^t$ sends $U$ into $U$ with $\sup_{z\in U}|(g_i^t)'(z)|\le q_0$, and $\Phi_t\in\mathcal S$ for real $|t|\le t_0$.
\end{lemma}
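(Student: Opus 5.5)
The plan is to treat the statement in three layers: the holomorphic structure of $\Phi_t$, the two pointwise bounds \cref{eq:gbounds}, and then the choice of $t_0$ and $q_0$ that yields invariance and membership in $\mathcal S$.

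\emph{Holomorphic structure.} Since $t\Theta_i$ is the normalized primitive of $t\theta_i$, the definition \cref{eq:gdef} gives
\begin{equation*}
  g_i^t(z)=\fii_i(z_0)+\int_{z_0}^{z}\fii_i'(w)e^{t\Theta_i(w)}\dd w .
\end{equation*}
The integrand is jointly holomorphic in $(t,w)$ on $\C\times\Omega'$. The simple connectivity of $\Omega'$ makes the integral path independent. Differentiating under the integral sign along a fixed path, or invoking Morera/Hartogs in each variable, then gives joint holomorphy in $(t,z)$, and $(g_i^t)'=\fii_i'e^{t\Theta_i}$ vanishes nowhere. Each $\Theta_i$ is holomorphic on $\Omega'\supseteq\ol{V}$, so $M_U$ is finite by compactness.

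\emph{The two bounds.} The derivative bound is immediate on $U\subseteq\ol{V}$:
\begin{equation*}
  |(g_i^t)'|\le|\fii_i'|\,|e^{t\Theta_i}|\le c_Ue^{|t|M_U},
\end{equation*}
using \cref{lem:collar}\cref{it:collar-derivatives} and $|e^{t\Theta_i}|\le e^{|t||\Theta_i|}$. For the displacement, note that $g_i^t-\fii_i$ vanishes at $z_0\in\Omega\subseteq\ol{U}$. For $z\in U$, \cref{lem:collar}\cref{it:collar-paths} supplies a polygonal path from $z_0$ to $z$ inside $V$ of length at most $D_U$. Integrating along it the derivative $\fii_i'(e^{t\Theta_i}-1)$ and using $|e^{a}-1|\le e^{|a|}-1$ gives
\begin{equation*}
  |\fii_i'|\,|e^{t\Theta_i}-1|\le c_V(e^{|t|M_U}-1)
\end{equation*}
on $V$, hence $|g_i^t-\fii_i|\le D_Uc_V(e^{|t|M_U}-1)$. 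The path lying in $V$ rather than $U$ is exactly why $c_V$ and the supremum over $\ol{V}$ appear.

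\emph{Choice of $t_0$ and $q_0$.} I would take $t_0>0$ so small that two conditions hold. First, $q_0=c_Ue^{t_0M_U}<1$. Second, $D_Uc_V(e^{t_0M_U}-1)<(1-c_U)\eps$. Then for $|t|\le t_0$ the derivative bound gives $\sup_U|(g_i^t)'|\le q_0$. The margin in \cref{lem:collar}\cref{it:collar-invariance} places each $\fii_i(z)$, $z\in U$, at distance at least $(1-c_U)\eps$ from $\partial U$ inside $U$. A perturbation smaller than that margin keeps $g_i^t(z)$ in $U$, and the same applies on $\ol{U}$ by continuity.

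\emph{Membership in $\mathcal S$.} For real $t$ with $|t|\le t_0$, the maps $g_i^t$ are holomorphic with nonvanishing derivative on $\Omega'$, hence conformal there. It remains to check $g_i^t(\ol{\Omega})\subseteq\Omega$ and $|(g_i^t)'|<1$ on $\ol{\Omega}$. The second follows from $q_0<1$. The first is the subtle step, since invariance of $U$ does not by itself give invariance of $\Omega$. I would use compactness: $\fii_i(\ol{\Omega})$ is at positive distance $\rho_i$ from $\partial\Omega$. Shrinking $t_0$ further so that also $D_Uc_V(e^{t_0M_U}-1)<\min_i\rho_i$, the displacement bound then keeps $g_i^t(\ol{\Omega})$ inside $\Omega$.

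I expect no step to be a real obstacle. The only point needing care is remembering this extra shrinking for the $\ol{\Omega}\to\Omega$ requirement, which is separate from the $U$-invariance.
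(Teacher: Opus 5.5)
Your proposal is correct and follows the paper's proof essentially step for step. It uses the same joint-holomorphy observation and takes the derivative bound from \cref{lem:collar}\cref{it:collar-derivatives}. It obtains the displacement bound by integrating $\fii_i'(e^{t\Theta_i}-1)$ along a path in $V$ of length at most $D_U$, fixes $t_0$ and $q_0$ by the same two conditions, and gets membership in $\mathcal S$ by shrinking $t_0$ once more against the margin of $\fii_i(\ol{\Omega})$ inside $\Omega$.
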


\begin{proof}
  For complex $t$ the function $t\Theta_i$ is holomorphic on $\Omega'$, so \cref{eq:gdef} defines $g_i^t$ on $\Omega'$, jointly holomorphically in $(t,z)$, the integrand being jointly holomorphic, with $(g_i^t)'=\fii_i'e^{t\Theta_i}$, which vanishes nowhere. The constant $M_U$ is finite, the primitives being continuous on the compact $\ol{V}\subseteq\Omega'$. On $U$ the first bound in \cref{eq:gbounds} follows from \cref{lem:collar}\cref{it:collar-derivatives}. For the second, integrate $(g_i^t)'-\fii_i'=\fii_i'(e^{t\Theta_i}-1)$ from $z_0$ to $z\in U$ along a polygonal path in $V$ of length at most $D_U$, which \cref{lem:collar}\cref{it:collar-paths} supplies; the two maps agree at $z_0$, and $|\fii_i'|\le c_V$ and $|e^{t\Theta_i}-1|\le e^{|t|M_U}-1$ on $\ol{V}$. Choose $t_0>0$ with $q_0=c_Ue^{t_0M_U}<1$ and $D_Uc_V(e^{t_0M_U}-1)<(1-c_U)\eps$. For complex $|t|\le t_0$ the first bound gives $\sup_{z\in U}|(g_i^t)'(z)|\le q_0$, and the second, together with the margin $\dist(\fii_i(\ol{U}),\partial U)\ge(1-c_U)\eps$ of \cref{lem:collar}\cref{it:collar-invariance}, shows that $g_i^t$ sends $U$ into $U$. Since each $\fii_i(\ol{\Omega})$ is a compact subset of the open $\Omega$, decreasing $t_0$ makes the displacement bound also give $g_i^t(\ol{\Omega})\subseteq\Omega$, and, $|(g_i^t)'|\le q_0<1$ holding on $\ol{\Omega}\subseteq U$, the system $\Phi_t$ lies in $\mathcal S$ for real $|t|\le t_0$.
\end{proof}

Varying a generator changes not only that generator's own Schwarzian but also the points and the derivatives at which the tail Schwarzian projections are read. This calls for bounds on the Schwarzian projections and their derivatives that are uniform in the word, and we collect these first. Put
\begin{equation} \label{eq:KSCS}
  K_S = \frac{\max_{i \in \II}\|S_i\|}{1-\cmax^2} \qquad \text{and} \qquad C_S = \frac{2C_0\max_{i \in \II}\|S_i\|}{1-\cmax^2}+\frac{\max_{i \in \II}\|S_i'\|}{1-\cmax^3}.
\end{equation}

\begin{lemma} \label{lem:Scollar}
  Every Schwarzian projection satisfies $\|S_\iii\|\le K_S$ and $\|S_\iii'\|\le C_S$. With $U$ and $c_U$ from \cref{lem:collar}, there are constants $K_{S,U},C_{S,U}<\infty$ such that
  \begin{equation} \label{eq:Scollar}
    \sup_{z\in U}|S_\iii(z)-S_\jjj(z)|\le2K_{S,U}c_U^{2|\iii\land\jjj|} \qquad \text{and} \qquad \|S_\iii'-S_\jjj'\|\le C_{S,U}c_U^{2|\iii\land\jjj|}
  \end{equation}
  for all distinct $\iii,\jjj\in\II^\N\cup\II^*$.
\end{lemma}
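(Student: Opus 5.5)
The plan is to read everything off the scalar cocycle \cref{eq:Sline}, namely $S_\iii=\sum_{k}(S_{i_k}\circ f_{\iii|_{k-1}})(f_{\iii|_{k-1}}')^2$, on $\ol{\Omega}$ and on the invariant neighborhood $U$ of \cref{lem:collar}. For finite words we argue termwise; infinite words then follow from the uniform convergence recorded in \cref{lem:dual2}, with derivatives handled via the Weierstrass convergence theorem on $U$.

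\emph{Bound on $\|S_\iii\|$.} On $\ol{\Omega}$ the $k$-th summand is at most $\max_i\|S_i\|\cmax^{2(k-1)}$, since partial orbits stay in $\ol{\Omega}$. Summing the geometric series gives $K_S$, exactly as in the proof of \cref{lem:dual2}.

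\emph{Bound on $\|S_\iii'\|$.} We differentiate the $k$-th summand and split it into two pieces.
\begin{itemize}
\item The first piece is $(S_{i_k}'\circ f_{\iii|_{k-1}})(f_{\iii|_{k-1}}')^3$, bounded by $\max_i\|S_i'\|\cmax^{3(k-1)}$.
\item The second piece is $2(S_{i_k}\circ f_{\iii|_{k-1}})f_{\iii|_{k-1}}'f_{\iii|_{k-1}}''$. Using $|f''_\kkk|\le C_0\cmax^{|\kkk|}$ from \cref{sec:C2-machinery}, it is bounded by $2C_0\max_i\|S_i\|\cmax^{2(k-1)}$.
\end{itemize}
Summing the two geometric series gives exactly $C_S$ of \cref{eq:KSCS}. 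Here $\|S_i'\|$ is finite because $S_i$ is holomorphic on $\Omega'\supseteq\ol{\Omega}$.

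\emph{Difference bounds on $U$.} By \cref{lem:collar}, each generator maps $\ol U$ into $U$ with $|\fii_i'|\le c_U$ there, so the cocycle sum extends holomorphically to $U$. Put $\beta_{2,U}=\max_i\sup_{\ol U}|S_i|<\infty$; its finiteness follows because $\ol U\subseteq\Omega'$ is compact. Then the $k$-th summand is at most $\beta_{2,U}c_U^{2(k-1)}$ on $U$. With $K_{S,U}=\beta_{2,U}/(1-c_U^2)$, the first $m=|\iii\land\jjj|$ summands of $S_\iii$ and $S_\jjj$ coincide, and the two tails give $\sup_U|S_\iii-S_\jjj|\le 2K_{S,U}c_U^{2m}$. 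This is the same argument as for \cref{eq:collarbounds}.

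For the derivative difference, the function $S_\iii-S_\jjj$ is holomorphic on $U$, and every $x\in\ol{\Omega}$ has $B^o(x,\eps)\subseteq U$. Cauchy's estimate \cref{eq:cauchy}, applied on discs of radius $r<\eps$ and then letting $r\to\eps$, gives $\|S_\iii'-S_\jjj'\|\le(2K_{S,U}/\eps)c_U^{2m}$. So $C_{S,U}=2K_{S,U}/\eps$ works, mirroring \cref{eq:collarderiv}.

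The only point that needs care is that for infinite words the extension to $U$ be the normal limit of the finite sums, so that the bounds pass to the limit. This holds because the termwise bounds on $\ol U$ are summable, which makes the series converge normally there.
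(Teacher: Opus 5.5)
Your proposal is correct and follows essentially the same route as the paper: the geometric bounds on the cocycle \cref{eq:Sline} give $K_S$; differentiating each summand into $(S_{i_k}'\circ f_{\iii|_{k-1}})(f_{\iii|_{k-1}}')^3+2(S_{i_k}\circ f_{\iii|_{k-1}})f_{\iii|_{k-1}}'f_{\iii|_{k-1}}''$ gives $C_S$; normal convergence on $U$ with constant $K_{S,U}=\max_i\sup_{\ol U}|S_i|/(1-c_U^2)$ gives the tail bound; and Cauchy's estimate on $B^o(x,\eps)$ gives $C_{S,U}=2K_{S,U}/\eps$. The only cosmetic difference is in the justification of termwise differentiation for infinite words. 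You pass to the limit by the Weierstrass theorem on $U$, whereas the paper uses the uniform convergence of the differentiated majorant series on $\ol{\Omega}$; either suffices.
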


\begin{proof}
  The series \cref{eq:Sline} and $|f_{\iii|_{k-1}}'|\le\cmax^{k-1}$ give $\|S_\iii\|\le\sum_{k\ge1}\max_{i \in \II}\|S_i\|\cmax^{2(k-1)}=K_S$. Differentiating its $k$-th summand gives
  \begin{equation*}
    (S_{i_k}'\circ f_{\iii|_{k-1}})(f_{\iii|_{k-1}}')^3+2(S_{i_k}\circ f_{\iii|_{k-1}})f_{\iii|_{k-1}}'f_{\iii|_{k-1}}'',
  \end{equation*}
  whose norm is at most $\max_{i \in \II}\|S_i'\|\cmax^{3(k-1)}+2C_0\max_{i \in \II}\|S_i\|\cmax^{2(k-1)}$. These majorants are summable, so the differentiated series converges uniformly on $\ol{\Omega}$; together with the convergence of the original series, this justifies termwise differentiation for infinite words. Summing gives $\|S_\iii'\|\le C_S$. On the neighborhood $U$, invariant by \cref{lem:collar}\cref{it:collar-invariance}, put $\beta_{S,U}=\max_{i \in \II}\sup_{z\in\ol{U}}|S_i(z)|$ and $K_{S,U}=\beta_{S,U}/(1-c_U^2)$. The same series with the bound $c_U$ of \cref{lem:collar}\cref{it:collar-derivatives} in place of $\cmax$ converges normally, and if two distinct words have a common prefix of length $q$, their first $q$ summands agree while their two tails have total norm at most $2K_{S,U}c_U^{2q}$. Cauchy's estimate on the discs $B^o(x,\eps)\subseteq U$, with $x\in\ol{\Omega}$ and $\eps$ from \cref{lem:collar}, gives the derivative estimate in \cref{eq:Scollar} with $C_{S,U}=2K_{S,U}/\eps$.
\end{proof}

The parametrization \cref{eq:gdef} can realize arbitrary infinitesimal changes of the generator Schwarzians. For $i\in\II$ and $h$ holomorphic on $\Omega'$ define
\begin{equation} \label{eq:Qa}
  Q_ih(z) = \fii_i'(z)\int_{z_0}^{z}\frac{h(w)}{\fii_i'(w)}\dd w,
\end{equation}
the integral being single-valued since $\fii_i'$ has no zeros on the simply connected $\Omega'$; differentiating $Q_ih$ gives $(Q_ih)'-T_iQ_ih=h$. The paths of \cref{lem:collar}\cref{it:collar-paths} lie in $V$ and have length at most $D_U$, and $|\fii_i'|\le c_V$ on $\ol{V}$, so estimating the integral in \cref{eq:Qa} along them gives
\begin{equation} \label{eq:CQ}
  \sup_{z\in U}|Q_ih(z)|\le C_Q\sup_{z\in\ol{V}}|h(z)|,
\end{equation}
where $C_Q = c_VD_U\max_{j \in \II}\sup_{z\in\ol{V}}|1/\fii_j'(z)|$. For a tuple $h=(h_i)_{i\in\II}$ of functions holomorphic on $\Omega'$, taking $\theta_i=Q_ih_i$ in \cref{eq:gdef} yields
\begin{equation} \label{eq:Sgenshift}
  S_{g_i^t} = S_i+th_i-\tfrac12t^2\theta_i^2.
\end{equation}

\begin{lemma} \label{lem:linearizeS}
  Let $h=(h_i)_{i\in\II}$, $\theta_i=Q_ih_i$, and $\Phi_t$ be as above. For each nonempty $\iii\in\II^\N\cup\II^*$ the map $t\mapsto S^{\Phi_t}_\iii\in\mathcal{H}(\ol{\Omega})$ is real-analytic near $t=0$, and its derivative is the sum of the convergent series,
  \begin{equation} \label{eq:Slinseries}
    \partial_tS^{\Phi_t}_\iii|_{t=0} = \sum_{k=0}^{|\iii|-1}\LL^{(2)}_{\iii|_k}(h_{i_{k+1}}+\Gamma_{i_{k+1},\sigma^{k+1}\iii}),
  \end{equation}
  where
  \begin{equation*}
    \Gamma_{i,\lll} = 2(\fii_i')^2\Theta_i(S_\lll\circ\fii_i)+(\fii_i')^2(S_\lll'\circ\fii_i)\xi_i,
  \end{equation*}
  the function $\Theta_i$ is the primitive of $\theta_i$ on $\Omega'$ with $\Theta_i(z_0)=0$, the function $\xi_i$ is the primitive of $\fii_i'\Theta_i$ on $\Omega'$ with $\xi_i(z_0)=0$, and $\LL^{(2)}_{\iii|_k}$ is the linear part of the dual map $G_{\iii|_k}$, given by \cref{eq:wordops}. Moreover,
  \begin{equation*}
    |\Gamma_{i,\lll}(w)|\le\cmax^2(2K_S|\Theta_i(w)|+C_S|\xi_i(w)|)
  \end{equation*}
  for all $w\in\ol{\Omega}$, $i\in\II$, and $\lll\in\II^\N\cup\II^*$.
\end{lemma}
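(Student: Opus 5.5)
The plan is to differentiate the weight-two peeling recursion for $\Phi_t$ at $t=0$. For a finite word I first use the one-letter identity $S^{\Phi_t}_{i\lll}=((g_i^t)')^2(S^{\Phi_t}_\lll\circ g_i^t)+S_{g_i^t}$ from \cref{lem:dual2}. Differentiating it at $t=0$ requires three inputs.

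\begin{enumerate}
\item By \cref{eq:Sgenshift}, $\partial_tS_{g_i^t}|_0=h_i$.
\item Since $(g_i^t)'=\fii_i'e^{t\Theta_i}$, we get $\partial_t((g_i^t)')^2|_0=2(\fii_i')^2\Theta_i$.
\item Differentiating \cref{eq:gdef} under the integral gives $\partial_tg_i^t|_0=\int_{z_0}^z\fii_i'\Theta_i=\xi_i$.
\end{enumerate}

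The chain rule then gives
\begin{equation*}
  \partial_tS^{\Phi_t}_{i\lll}|_0=h_i+\Gamma_{i,\lll}+\LL^{(2)}_i(\partial_tS^{\Phi_t}_\lll|_0),
\end{equation*}
where the $\Gamma$-term collects the derivative of the multiplier and of the evaluation point. Iterating this along $\iii$, with $\LL^{(2)}_{\iii|_k}=\LL^{(2)}_{i_1}\cdots\LL^{(2)}_{i_k}$, unfolds it into the finite sum \cref{eq:Slinseries}.

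For the analyticity in $t$ and for infinite words, I use the complex parameter of \cref{lem:perturbed}. For $|t|\le t_0$, complex, every $g_i^t$ maps $U$ into itself with $|(g_i^t)'|\le q_0<1$, and it is jointly holomorphic. Hence each summand of the Schwarzian cocycle series \cref{eq:Sline} for $\Phi_t$, evaluated on $\ol{\Omega}$, is holomorphic in $t$, and it is bounded by $\sup_{|t|\le t_0,z\in\ol U}|S_{g_i^t}(z)|\,q_0^{2(k-1)}$. The first factor is finite by \cref{eq:Sgenshift} and compactness.

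The series therefore converges uniformly in $(t,x)$, so $t\mapsto S^{\Phi_t}_\iii\in\mathcal H(\ol\Omega)$ is holomorphic on the disc. Its restriction to real $t$ is real-analytic, and its $t$-derivative is the termwise derivative (Weierstrass convergence theorem, applied to Banach-valued holomorphic maps or pointwise with a uniform Cauchy estimate in $t$). Differentiating the finite-word formula and passing to the limit $n\to\infty$ along prefixes gives \cref{eq:Slinseries} for infinite $\iii$. The $k$-th term is bounded on $\ol\Omega$ by $\cmax^{2k}$ times a uniform bound, since $\|\LL^{(2)}_{\iii|_k}\|\le\cmax^{2k}$, so the series converges. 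The derivatives at $t=0$ of the prefix projections converge to that of the limit by Cauchy's estimate in $t$ on the disc, which turns uniform convergence of holomorphic functions into convergence of $t$-derivatives.

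The bound on $\Gamma_{i,\lll}$ follows directly: $|\fii_i'|\le\cmax$ on $\ol\Omega$, $\fii_i(\ol\Omega)\subseteq\ol\Omega$, and \cref{lem:Scollar} gives $\|S_\lll\|\le K_S$ and $\|S_\lll'\|\le C_S$.

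The main obstacle is justifying the interchange of $\partial_t$ with the infinite composition, including differentiation of $S^{\Phi_t}_\lll\circ g_i^t$ in $t$. This needs $S^{\Phi_t}_\lll$ to be holomorphic and controlled on a neighborhood (namely $U$) of $g_i(\ol\Omega)$, uniformly in $t$ and $\lll$. I would handle this through the joint holomorphy on $U$ provided by \cref{lem:perturbed} and the normal convergence argument above. Once both dependences are holomorphic on a product domain, the chain rule is automatic.
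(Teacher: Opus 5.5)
Your proposal is correct and follows the paper's proof. In both, the complex parameter of \cref{lem:perturbed} gives normal convergence of the series \cref{eq:Sline} on $\{|t|\le t_0\}\times U$, hence holomorphy of $t\mapsto S^{\Phi_t}_\lll$ as an $\mathcal{H}(\ol{\Omega})$-valued map, and differentiating the one-letter recursion at $t=0$ gives $v_{i\lll}=h_i+\Gamma_{i,\lll}+\LL^{(2)}_iv_\lll$, which unfolds to \cref{eq:Slinseries}.

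The only difference is at infinite words. The paper applies the recursion to the infinite word itself and kills the remainder $\LL^{(2)}_{\iii|_n}v_{\sigma^n\iii}$ with the word-uniform Cauchy bound $\|v_\lll\|\le M_S/t_0$. Your limit along prefixes additionally needs $\Gamma_{i,\sigma^{k+1}(\iii|_n)}\to\Gamma_{i,\sigma^{k+1}\iii}$, a step you do not state. It holds because $S_{\lll|_m}\to S_\lll$ and $S_{\lll|_m}'\to S_\lll'$ uniformly on $\ol{\Omega}$ by \cref{lem:Scollar}.
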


\begin{proof}
  For complex $t$, interpret $S_\lll^{\Phi_t}$ as the sum of the series \cref{eq:Sline} formed from the generators of $\Phi_t$, the compositions $f^{\Phi_t}_{\lll|_{k-1}}$ being the reversed compositions \cref{eq:frev} of these generators. Let $t_0$ and $q_0$ be as in \cref{lem:perturbed}, so that for complex $|t|\le t_0$ the partial orbits of $U$ under the generators of $\Phi_t$ stay in $U$ and $|(f^{\Phi_t}_{\lll|_{k-1}})'|\le q_0^{k-1}$ on $U$. Put
  \begin{equation*}
    B_S = \max_{i\in\II}\sup_{z\in\ol{U}}(|S_i(z)|+t_0|h_i(z)|+\tfrac12t_0^2|\theta_i(z)|^2).
  \end{equation*}
  This constant is finite because $S_i$, $h_i$, and $\theta_i$ are holomorphic on $\Omega'$ and $\ol{U}\subseteq\Omega'$. By \cref{eq:Sgenshift}, $|S_{g_i^t}|\le B_S$ on $\ol{U}$ for $|t|\le t_0$, so the $k$-th summand of the series is bounded by $B_Sq_0^{2(k-1)}$ on $\{|t|\le t_0\}\times U$, and it is jointly holomorphic in $(t,z)$ there by \cref{lem:perturbed}. A bounded function jointly holomorphic in $(t,z)$ on $\{|t|<t_0\}\times U$ is, as a function of $t$, a holomorphic $\mathcal{H}(\ol{\Omega})$-valued function: its $t$-Taylor coefficients are holomorphic in $z$ by the Cauchy integral formula, and the Taylor series converges in $\mathcal{H}(\ol{\Omega})$ by the Cauchy estimates. Consequently the series converges in $\mathcal H(\ol{\Omega})$ uniformly in $|t|\le t_0$, and its sum is a holomorphic $\mathcal{H}(\ol{\Omega})$-valued function of $t$ on $|t|<t_0$ by the Weierstrass convergence theorem, whose Cauchy-integral proof \cite[Theorem 5.1]{Ahlfors} applies to Banach-space-valued functions unchanged, in particular real-analytic in real $t$; moreover, with $M_S=B_S/(1-q_0^2)$, one has $\sup_{|t|\le t_0}\|S^{\Phi_t}_\lll\|\le M_S$ for every nonempty word $\lll$. Write $v_\lll=\partial_tS^{\Phi_t}_\lll|_{t=0}$ for the derivative at $t=0$. For every $r\in(0,t_0)$ the Cauchy estimate \cite[Chapter~4, (25)]{Ahlfors} gives $\|v_\lll\|\le M_S/r$, and letting $r\to t_0$ gives $\|v_\lll\|\le M_S/t_0$. The recursion
  \begin{equation*}
    S_\iii^{\Phi_t} = ((g_{i_1}^t)')^2(S_{\sigma\iii}^{\Phi_t}\circ g_{i_1}^t)+S_{g_{i_1}^t},
  \end{equation*}
  the weight-two half of \cref{eq:dualrec} for $\Phi_t$, holds for the series-defined sums by the reindexing of \cref{eq:Sline}, and the same series define $S^{\Phi_t}_{\sigma\iii}$ as a function jointly holomorphic in $(t,z)$ on $\{|t|<t_0\}\times U$; differentiating the recursion at $t=0$, using $\partial_tg_i^t|_{t=0}=\xi_i$, $\partial_t(g_i^t)'|_{t=0}=\fii_i'\Theta_i$, and \cref{eq:Sgenshift}, gives the recursion $v_\iii=h_{i_1}+\LL^{(2)}_{i_1}v_{\sigma\iii}+\Gamma_{i_1,\sigma\iii}$. Unfolding it $n$ times leaves the remainder $\LL^{(2)}_{\iii|_n}v_{\sigma^n\iii}$, which is $0$ for a finite word once $n=|\iii|$, the empty-word projection vanishing identically in $t$, and has norm at most $\cmax^{2n}M_S/t_0$ in general; letting $n\to\infty$ gives \cref{eq:Slinseries}. Finally, $|\fii_i'|\le\cmax$ on $\ol{\Omega}$, and $|S_\lll\circ\fii_i|\le K_S$ and $|S_\lll'\circ\fii_i|\le C_S$ there by \cref{lem:Scollar}, which gives the pointwise bound on $\Gamma_{i,\lll}$.
\end{proof}

\subsection{Boundary access and peaking polynomials} \label{sec:gen-plane-peak}

This is where the hypothesis that $\Omega$ is a Jordan domain enters, through \cref{lem:access,lem:peaking} below; \cref{lem:peakdata} then packages the polynomials and their evaluation points, together with the bounds on the primitives that \cref{lem:linearizeS} attaches to them through the operator \cref{eq:Qa}, in the form the transversality argument of \cref{sec:gen-plane-dense} consumes. The localized perturbation of \cref{sec:gen-line-loc} has no interior analogue in the plane: by Cauchy's estimates a holomorphic function small on the boundary of a disc in $\Omega$ is small at its center with all its derivatives. A substitute exists at the boundary: a \emph{peaking function} at $x\in\partial\Omega$ is one holomorphic on $\Omega$, continuous on $\ol{\Omega}$, equal to $1$ at $x$ and of smaller modulus on the rest of $\ol{\Omega}$, and its powers still equal $1$ at $x$, stay bounded by $1$, and are exponentially small on every compact subset of $\ol{\Omega}$ omitting $x$. \Cref{lem:peaking} makes them polynomials, as the parametrizations \cref{eq:gdef,eq:Qa} need sources holomorphic on $\Omega'$, and these localize where the cocycle reads them: in \cref{eq:Slinseries} every term but the first reads its source at a point of the compact set $\bigcup_{i\in\II}\fii_i(\ol{\Omega})\subseteq\Omega$, only the first at an evaluation point near $x$. The Jordan hypothesis enters twice, through \cref{lem:access} for these functions and the rectifiable paths along which the primitives are estimated there, and through Mergelyan's theorem for the polynomials of \cref{lem:peaking}.

\begin{lemma} \label{lem:access}
  Let $\Omega$ be a Jordan domain and let $z_0\in\Omega$ and $m\in\N$. Then there are distinct points $x_1,\ldots,x_m\in\partial\Omega$, peaking functions $u_1,\ldots,u_m$ at $x_1,\ldots,x_m$, and injective rectifiable access paths $\alpha_l\colon[0,1]\to\Omega\cup\{x_l\}$ with $\alpha_l(0)=z_0$, $\alpha_l(1)=x_l$, and $\alpha_l([0,1))\subseteq\Omega$.
\end{lemma}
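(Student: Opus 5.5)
The plan is to reduce everything to the unit disc through the Riemann map and Carathéodory's theorem. Let $F\colon\mathbb D\to\Omega$ be the Riemann map with $F(0)=z_0$. Since $\Omega$ is a Jordan domain, Carathéodory's theorem extends $F$ to a homeomorphism $\ol{\mathbb D}\to\ol{\Omega}$, which carries $\partial\mathbb D$ onto $\partial\Omega$. Choose distinct points $\zeta_1,\ldots,\zeta_m\in\partial\mathbb D$ and set $x_l=F(\zeta_l)$; injectivity of the extension makes the $x_l$ distinct.

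For the peaking functions, take $v_l(w)=(1+\ol{\zeta_l}w)/2$ on $\ol{\mathbb D}$. It is a polynomial with $v_l(\zeta_l)=1$, and $|v_l(w)|<1$ for every $w\in\ol{\mathbb D}\setminus\{\zeta_l\}$, since $|1+\ol{\zeta_l}w|=2$ forces $\ol{\zeta_l}w=1$. Then $u_l=v_l\circ F^{-1}$ is holomorphic on $\Omega$ and continuous on $\ol{\Omega}$ by continuity of the inverse homeomorphism. It equals $1$ at $x_l$ and has modulus less than $1$ on $\ol{\Omega}\setminus\{x_l\}$, so it is a peaking function at $x_l$.

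The access paths are the one delicate step, because a radial image $t\mapsto F(t\zeta_l)$ is injective and lands at $x_l$ but need not be rectifiable, the boundary being an arbitrary Jordan curve. I would fix this with the classical fact that $F'$ lies in the Hardy space $H^1$ only when the boundary is rectifiable, which is not assumed here. Instead I would use Beurling's theorem (or the Fejér–Riesz-type result) that the radial image $F([0,1)\zeta)$ has finite length for almost every $\zeta\in\partial\mathbb D$, since $\int_0^1|F'(r\zeta)|\dd r<\infty$ for almost every $\zeta$; this holds for any conformal map of the disc onto a bounded domain. I therefore choose the $\zeta_l$ in this full-measure set, which is possible because it is uncountable, and take $\alpha_l(t)=F(t\zeta_l)$ on $[0,1]$.

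These paths have the required properties. Each $\alpha_l$ is injective because $F$ is injective on $\ol{\mathbb D}$. It starts at $\alpha_l(0)=F(0)=z_0$ and ends at $\alpha_l(1)=x_l$. Its image lies in $\Omega$ for $t<1$, and its length equals the finite integral above.

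If I prefer to avoid Beurling, a fallback is to build the path by hand. Pick a sequence of points $F(r_k\zeta_l)$ with $r_k\uparrow1$ and join consecutive ones by segments or short polygonal arcs inside $\Omega$. These arcs can be made to have lengths comparable to $|F(r_{k+1}\zeta_l)-F(r_k\zeta_l)|$ via Koebe distortion on hyperbolic discs, but summability then again needs a radial length estimate, so Beurling's almost-everywhere finiteness is the cleanest route. After making the path polygonal, injectivity can be restored by excising loops.
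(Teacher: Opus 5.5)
Your proposal is correct and is essentially the paper's proof: the Riemann map $F$ with $F(0)=z_0$ extended by Carath\'eodory's theorem, the same peaking functions $\tfrac12(1+\ol{\zeta_l}F^{-1})$, and the radial images $t\mapsto F(t\zeta_l)$ at boundary points where $\int_0^1|F'(t\zeta_l)|\dd t<\infty$. The only difference is that the paper proves this almost-everywhere finiteness directly instead of citing Beurling: it applies Cauchy--Schwarz on $[\tfrac12,1]$ and bounds $\int_{B^o(0,1)}|F'|^2\dd\LL^2$ by the area $\LL^2(\Omega)$, which is all that is needed, so your fallback paragraph is unnecessary.
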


\begin{proof}
  Let $\fii\colon B^o(0,1)\to\Omega$ be a conformal bijection with $\fii(0)=z_0$, provided by the Riemann mapping theorem; see \cite[Theorem~6.1]{Ahlfors}. Since $\partial\Omega$ is a Jordan curve, $\fii$ extends to a homeomorphism of $B(0,1)$ onto $\ol{\Omega}$ by Carath\'eodory's theorem; see \cite[Theorem~3.1]{GarnettMarshall}. Let $\psi\colon\ol{\Omega}\to B(0,1)$ be the inverse homeomorphism, holomorphic on $\Omega$. For $\theta\in[0,2\pi)$ the radial path $t\mapsto\fii(te^{i\theta})$, $t\in[0,1]$, is injective, starts at $z_0$, stays in $\Omega$ for $t<1$, and ends at the boundary point $\fii(e^{i\theta})$; distinct angles give distinct endpoints. Its length, possibly infinite, is $\int_0^1|\fii'(te^{i\theta})|\dd t$. To justify the endpoint $t=1$, apply the Cauchy--Schwarz inequality first on $[1/2,r]$ for $r<1$ and then let $r\to1$ by monotone convergence. This gives
  \begin{equation*}
    \int_0^{2\pi}\biggl(\int_{1/2}^{1}|\fii'(te^{i\theta})|\dd t\biggr)^2\dd\theta\le\log2\int_0^{2\pi}\int_{1/2}^{1}|\fii'(te^{i\theta})|^2t\dd t\dd\theta\le\LL^2(\Omega)\log2<\infty,
  \end{equation*}
  Since $\fii$ is injective and its Jacobian is $|\fii'|^2$, the change-of-variables formula identifies the middle integral with the area of the image of the annulus $\{\tfrac12<|w|<1\}$. It follows that $\int_0^1|\fii'(te^{i\theta})|\dd t<\infty$ for almost every $\theta$, the integral over $[0,\tfrac12]$ being bounded by the maximum of $|\fii'|$ on $B(0,\tfrac12)$. Choose $m$ distinct angles with finite radial integrals, put $x_l=\fii(e^{i\theta_l})$, and let $\alpha_l$ be the corresponding radial paths, which are rectifiable by the finiteness of their radial integrals. Finally, put $w_l=e^{i\theta_l}$ and
  \begin{equation*}
    u_l = \tfrac12(1+\ol{w_l}\psi).
  \end{equation*}
  Each $u_l$ is continuous on $\ol{\Omega}$ and holomorphic on $\Omega$ with $|u_l|\le\tfrac12(1+|\psi|)\le1$, and $|u_l(z)|=1$ forces $|1+\ol{w_l}\psi(z)|=2$ with $|\ol{w_l}\psi(z)|\le1$, hence $\ol{w_l}\psi(z)=1$ and $z=\fii(w_l)=x_l$; conversely $u_l(x_l)=1$.
\end{proof}

The powers of a peaking function already peak and decay, but they are holomorphic on $\Omega$ alone. Mergelyan's theorem buys polynomials, entire and so admissible as sources, at the price of a fixed loss at the peak, where the value $1$ becomes a modulus at least $e^{-3}$, and of a halved rate of decay. Two forms of decay are recorded because the access paths of \cref{lem:access} run into $x$: a uniform bound on each compact set omitting $x$, and a bound on each sublevel set of $|u|$, whose complementary peak region decreases to $\{x\}$ and therefore meets such a path in a piece of vanishing length.

\begin{lemma} \label{lem:peaking}
  Let $\Omega$ be a Jordan domain, let $x\in\partial\Omega$, and let $u$ be a peaking function at $x$. Then there is a sequence of polynomials $(P_n)_{n\in\N}$ such that $|P_n|\le1$ on $\ol{\Omega}$ for every $n\in\N$ and $|P_n(x)|\ge e^{-3}$ for every $n\ge2$, and, for every nonempty compact set $K\subseteq\ol{\Omega}\setminus\{x\}$ and every $n\ge1/c_K$, where $c_K>0$ is defined by $\sup_K|u|=1-2c_K$,
  \begin{equation} \label{eq:peakcompact}
    \sup_K|P_n|\le e^{-c_Kn},
  \end{equation}
  and, for every $\vartheta\in(0,1)$, every $n\ge2/\vartheta$, and every $z\in\ol{\Omega}$ satisfying $|u(z)|\le1-\vartheta$,
  \begin{equation} \label{eq:peaklocal}
    |P_n(z)|\le e^{-\vartheta n/2}.
  \end{equation}
\end{lemma}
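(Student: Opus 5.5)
The plan is to approximate the powers $u^n$ by polynomials via Mergelyan's theorem, with an error chosen so small that it is swamped by the decay bounds. Since $\Omega$ is a Jordan domain, $\ol{\Omega}$ is compact with connected complement: the complement of a closed Jordan region in $\RS^2$ is the other Jordan component together with $\infty$, which is connected. Each $u^n$ is continuous on $\ol{\Omega}$ and holomorphic on $\Omega$, so Mergelyan's theorem supplies a polynomial $R_n$ with $\|u^n-R_n\|\le e^{-n}$ on $\ol{\Omega}$. To keep the modulus at most $1$, I will rescale and set $P_n=R_n/(1+e^{-n})$. Then $|P_n|\le(|u|^n+e^{-n})/(1+e^{-n})\le1$ on $\ol{\Omega}$.

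At the peak we have $|R_n(x)|\ge1-e^{-n}$, so $|P_n(x)|\ge(1-e^{-n})/(1+e^{-n})$. For $n\ge2$ this quotient is at least $(1-e^{-2})/(1+e^{-2})\approx0.76>e^{-3}$. The constant $e^{-3}$ is therefore generous, and I would not look for anything sharper.

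For \cref{eq:peakcompact}, note first that $K$ is compact, avoids $x$, and $|u|<1$ off $x$, so $\sup_K|u|<1$. Hence $c_K>0$, and also $c_K\le\frac12$. On $K$ the bound reads $|P_n|\le|u|^n+e^{-n}\le(1-2c_K)^n+e^{-n}\le e^{-2c_Kn}+e^{-n}$. I want this to be at most $e^{-c_Kn}$. Since $c_K\le\frac12$, we have $e^{-n}\le e^{-2c_Kn}$, so it suffices that $2e^{-2c_Kn}\le e^{-c_Kn}$, that is, $e^{-c_Kn}\le\frac12$. This holds as soon as $c_Kn\ge\log2$, which the hypothesis $n\ge1/c_K$ guarantees.

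The estimate \cref{eq:peaklocal} is the same computation with $\vartheta$ in place of $2c_K$. If $|u(z)|\le1-\vartheta$, then $|P_n(z)|\le e^{-\vartheta n}+e^{-n}\le2e^{-\vartheta n}$, using $\vartheta<1$. This is at most $e^{-\vartheta n/2}$ once $e^{-\vartheta n/2}\le\frac12$, that is, once $\vartheta n\ge2\log2$, and $n\ge2/\vartheta$ gives $\vartheta n\ge2>2\log2$.

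The only genuinely nontrivial input is the applicability of Mergelyan's theorem, namely that $\C\setminus\ol{\Omega}$ is connected for a Jordan domain. This follows from the Jordan curve theorem: $\C\setminus\ol{\Omega}$ is exactly the unbounded complementary component of $\partial\Omega$. The rest is bookkeeping of constants, and the normalization by $1+e^{-n}$ is the one point that needs care, so that the uniform bound $1$ holds exactly rather than up to an error.
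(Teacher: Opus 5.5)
Your proof is correct, and it differs from the paper's in how the polynomials are built. The paper applies Mergelyan's theorem to $u$ itself, with $\|p_n-u\|\le1/n$, and sets $P_n=(p_n/(1+1/n))^n$. You instead approximate the power $u^n$ directly to within $e^{-n}$ and divide by $1+e^{-n}$.

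In the paper's version the error $1/n$ is raised to the $n$-th power at the peak, which is where $((n-1)/(n+1))^n\ge\tfrac1{16}\ge e^{-3}$ comes from. The same error also uses up half of the margin $2c_K$ or $\vartheta$, which is why the rates in \cref{eq:peakcompact,eq:peaklocal} are $c_K$ and $\vartheta/2$, and why the thresholds $n\ge1/c_K$ and $n\ge2/\vartheta$ are genuinely needed there.

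In your version the error is additive and exponentially small. You therefore get the sharper bounds $|P_n(x)|\ge\tanh1$ and $\sup_K|P_n|\le2e^{-2c_Kn}$, and the stated thresholds serve only to absorb the factor $2$, so the lemma as stated is a weakening of what you prove. Both routes apply Mergelyan once for each $n$, so neither is cheaper in its input. The later use in \cref{lem:peakdata} relies only on the listed properties, not on $P_n$ being a power.

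One detail should be added. Mergelyan's theorem requires the target to be holomorphic on the interior of the compact set $\ol{\Omega}$, not merely on $\Omega$. So you need the interior of $\ol{\Omega}$ to equal $\Omega$. This holds because the Jordan curve $\partial\Omega$ is also the boundary of the exterior component, so none of its points is interior to $\ol{\Omega}$; the paper records this explicitly before invoking Mergelyan.
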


\begin{proof}
  A Jordan curve is the boundary of both of its complementary regions, so no point of $\partial\Omega$ is interior to $\ol{\Omega}$ and $\inter(\ol{\Omega})=\Omega$; moreover $\C\setminus\ol{\Omega}$, the exterior region, is connected. Mergelyan's theorem \cite[Theorem~20.5]{Rudin} therefore applies on $\ol{\Omega}$ and provides polynomials $p_n$ with $\|p_n-u\|\le1/n$. Put $P_n=(p_n/(1+1/n))^n$. On $\ol{\Omega}$ we have $|p_n|\le|u|+1/n\le1+1/n$, so $|P_n|\le1$; and $|p_n(x)|\ge1-1/n$, so $|P_n(x)|\ge((n-1)/(n+1))^n\ge((n-1)/n)^{2n}\ge\tfrac1{16}\ge e^{-3}$ for $n\ge2$, since $n^2\ge(n-1)(n+1)$ and $((n-1)/n)^n$ increases with $n$ from its value $\tfrac14$ at $n=2$. For \cref{eq:peakcompact}, compactness gives $\sup_K|u|<1$, so $c_K>0$, and on $K$ we have $|p_n|\le1-2c_K+1/n\le1-c_K$ once $n\ge1/c_K$, whence $\sup_K|P_n|\le(1-c_K)^n\le e^{-c_Kn}$. For \cref{eq:peaklocal}, suppose that $\vartheta\in(0,1)$, $n\ge2/\vartheta$, and $|u(z)|\le1-\vartheta$. Then $1/n\le\vartheta/2$ and $|p_n(z)|\le1-\vartheta+1/n\le1-\vartheta/2$, so $|P_n(z)|\le((1-\vartheta/2)/(1+1/n))^n\le(1-\vartheta/2)^n\le e^{-\vartheta n/2}$.
\end{proof}

The transversality argument of \cref{sec:gen-plane-dense} reads the peaking polynomials at $m$ evaluation points, one on each access path near its peak, and feeds them into the parametrization \cref{eq:gdef} through the operator $Q_i$ of \cref{eq:Qa}. What it needs from them is collected in the next lemma: each polynomial has modulus at most one on $\ol{\Omega}$ and modulus at least $\tfrac12e^{-3}$ at its own evaluation point, it is exponentially small on the images $\fii_i(\ol{\Omega})$ and at the other evaluation points, and the primitives that \cref{lem:linearizeS} attaches to it are small wherever the linearization reads them. The last property is the reason for the two forms of decay in \cref{lem:peaking}: the primitives are integrals along paths, and along the access path to the peak the polynomial is not small, but the part of the path on which it is not small has vanishing length.

\begin{lemma} \label{lem:peakdata}
  Let $\Omega$ be a Jordan domain, let $\Omega'$ be simply connected, let $\Phi\in\mathcal S$ and $m\in\N$, let $z_0\in\Omega$ be the base point of \cref{eq:gdef}, and let $K_\Phi=\bigcup_{i\in\II}\fii_i(\ol{\Omega})$. Then there are $c\in(0,\tfrac15]$ and numbers $\kappa_n>0$ for $n\ge1/c$ with $\kappa_n\to0$ as $n\to\infty$ such that for every integer $n\ge1/c$ there are polynomials $P_{1,n},\ldots,P_{m,n}$ and points $y_{1,n},\ldots,y_{m,n}\in\Omega$ with the following properties.
  \begin{enumerate}
    \item\label{it:peakdata-size} For every $l\in\{1,\ldots,m\}$, we have $|P_{l,n}|\le1$ on $\ol{\Omega}$ and $|P_{l,n}(y_{l,n})|\ge\tfrac12e^{-3}$.
    \item\label{it:peakdata-decay} For every $l\in\{1,\ldots,m\}$, we have $|P_{l,n}|\le e^{-cn}$ on $K_\Phi$, and $|P_{l,n}(y_{l',n})|\le e^{-cn}$ for all $l'\in\{1,\ldots,m\}\setminus\{l\}$.
    \item\label{it:peakdata-prim} For all $i\in\II$ and $l\in\{1,\ldots,m\}$, the primitives $\Theta$ of $Q_iP_{l,n}$ and $\xi$ of $\fii_i'\Theta$ on $\Omega'$ with $\Theta(z_0)=\xi(z_0)=0$, which are the functions $\Theta_i$ and $\xi_i$ of \cref{lem:linearizeS} for the source tuple with $i$-th entry $P_{l,n}$ and all other entries $0$, satisfy $|\Theta|\le\kappa_n$ and $|\xi|\le\kappa_n$ on $K_\Phi\cup\{y_{1,n},\ldots,y_{m,n}\}$.
  \end{enumerate}
\end{lemma}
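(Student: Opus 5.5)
Our plan is to build the polynomials from \cref{lem:access} and \cref{lem:peaking}. Apply \cref{lem:access} to obtain distinct boundary points $x_1,\ldots,x_m$, peaking functions $u_1,\ldots,u_m$, and rectifiable access paths $\alpha_l$ from $z_0$ to $x_l$. For each $l$ let $(P_{l,n})_n$ be the polynomials of \cref{lem:peaking} for $u_l$. The evaluation points are placed on the access paths. Since $P_{l,n}$ is continuous and $|P_{l,n}(x_l)|\ge e^{-3}$, there is a point $y_{l,n}=\alpha_l(s_{l,n})$ with $s_{l,n}<1$ and $|P_{l,n}(y_{l,n})|\ge\tfrac12e^{-3}$. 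We choose $s_{l,n}\to1$, so that $y_{l,n}\to x_l$, and require in addition $|u_{l'}(y_{l,n})|\le1-\vartheta_0$ for all $l'\ne l$. This is possible with a fixed $\vartheta_0>0$: the sets $\{x_l\}$ are pairwise distinct, so each $u_{l'}$ with $l'\ne l$ is bounded by some $1-2\vartheta_0$ on a fixed neighbourhood of $x_l$. Item \cref{it:peakdata-size} then holds.

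Item \cref{it:peakdata-decay} follows from \cref{eq:peakcompact}. The compact set $K_\Phi\subseteq\Omega$ omits every $x_l$, so $\sup_{K_\Phi}|u_l|=1-2c_l$ with $c_l>0$, and \cref{eq:peakcompact} gives $|P_{l,n}|\le e^{-c_ln}$ there. At the other evaluation points we use \cref{eq:peaklocal} with $\vartheta_0$, which gives $|P_{l,n}(y_{l',n})|\le e^{-\vartheta_0n/2}$. We then take $c=\min\{\tfrac15,c_l,\vartheta_0/2\}$ and require $n\ge1/c$.

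The main work is item \cref{it:peakdata-prim}. We have $Q_iP(z)=\fii_i'(z)\int_{z_0}^z P/\fii_i'$. We estimate $\Theta(w)=\int_{z_0}^w Q_iP$ and $\xi(w)=\int_{z_0}^w\fii_i'\Theta$ along paths in $\Omega'$ from $z_0$. The target points are of two kinds.
\begin{enumerate}
\item For $w\in K_\Phi\cup\{y_{l',n}\}$ with $l'\ne l$, we use fixed paths that avoid a neighbourhood $B_l$ of $x_l$. There $|P_{l,n}|$ is uniformly exponentially small, by \cref{eq:peakcompact} applied to a compact subset of $\ol{\Omega}\setminus\{x_l\}$. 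We keep the paths inside $\ol{\Omega}$ plus a thin piece, choosing a fixed compact path family in $\Omega\cup\{y\text{'s}\}$ whose lengths are uniformly bounded. The primitive $Q_iP$ also has to be evaluated at points on these paths, through inner integrals. Those inner integrals are again taken along paths avoiding $B_l$, so all integrands are $O(e^{-cn})$ and all lengths are bounded.
\item For $w=y_{l,n}$ we integrate along $\alpha_l|_{[0,s_{l,n}]}$, which has length at most the length of $\alpha_l$. Fix $\vartheta\in(0,1)$. On the part of $\alpha_l$ where $|u_l|\le1-\vartheta$, the bound \cref{eq:peaklocal} makes $P$ small. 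The remaining part lies in $\{|u_l|>1-\vartheta\}\cap\alpha_l$. As $\vartheta\to0$ this set shrinks to the single point $x_l$, because $\alpha_l$ is injective and $u_l$ peaks only at $x_l$. Its length therefore tends to $0$, and there $|P|\le1$.
\end{enumerate}

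Combining the two regimes, the integrals are bounded by $\mathrm{length}(\{|u_l|>1-\vartheta\}\cap\alpha_l)+e^{-\vartheta n/2}\mathrm{length}(\alpha_l)$, up to constants given by the bounds on $|\fii_i'|$ and $|1/\fii_i'|$ over the compact closure. Choosing $\vartheta=\vartheta_n\to0$ slowly, for example $\vartheta_n=n^{-1/2}$, makes both terms tend to $0$, and $\kappa_n$ is the resulting maximum over $i,l$ and the finitely many point types.

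The expected obstacle is the nesting: $\xi$ integrates $\Theta$, which integrates $Q_iP$, which itself involves an integral. We handle this by always integrating along initial segments of the same path from $z_0$, so that each inner quantity is evaluated on that path and inherits the same split bound.
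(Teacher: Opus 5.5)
Your proposal is correct and is essentially the paper's argument. It uses the same access paths and peaking polynomials from \cref{lem:access,lem:peaking}, places the evaluation points on the access paths, and integrates the nested primitives along initial segments of one path. It also splits $\alpha_l$ the same way: into the peak region $\{|u_l|>1-\vartheta\}$, whose length tends to $0$, and its complement, where $|P_{l,n}|\le e^{-\vartheta n/2}$, with $\vartheta$ of order $n^{-1/2}$.

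The only difference is bookkeeping. The paper integrates to $y_{l',n}$ along initial segments of $\alpha_{l'}$ and applies \cref{eq:peakcompact} once on the compact set $K\cup\bigcup_{l'\ne l}\alpha_{l'}([0,1])\subseteq\ol{\Omega}\setminus\{x_l\}$, where $K\subseteq\Omega$ is a compact set containing $K_\Phi\cup\{z_0\}$ and bounded-length paths joining them. This gives the decay at the other evaluation points and along all relevant paths at once, so your extra constraint $|u_{l'}(y_{l,n})|\le1-\vartheta_0$ is unnecessary. It is also the precise form your ``fixed compact path family'' must take: the points $y_{l',n}$ accumulate at $x_{l'}\in\partial\Omega$, so that family cannot lie in a compact subset of $\Omega$.
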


\begin{proof}
  Fix a compact set $K\subseteq\Omega$ containing $K_\Phi\cup\{z_0\}$ and a constant $D\ge1$ such that every point of $K_\Phi$ is joined to $z_0$ by a path in $K$ of length at most $D$: cover the compact $K_\Phi\cup\{z_0\}$ by finitely many discs with closures in $\Omega$, join their centers to $z_0$ by polygonal paths in $\Omega$, and let $K$ be the union of the closed discs and the path images. By \cref{lem:access}, applied with the base point $z_0$ of \cref{eq:gdef}, fix distinct boundary points $x_1,\ldots,x_m$, peaking functions $u_l$, and access paths $\alpha_l$ from $z_0$, whose lengths we may assume to be at most $D$ by enlarging $D$, and by \cref{lem:peaking} polynomials $P_{l,n}$ for the pairs $(x_l,u_l)$. For each $l$ the set $K_l=K\cup\bigcup_{l'\ne l}\alpha_{l'}([0,1])$ is a compact subset of $\ol{\Omega}$ avoiding $x_l$, so $\max_l\sup_{K_l}|u_l|<1$, and $c=\min\{\tfrac15,\tfrac12(1-\max_l\sup_{K_l}|u_l|)\}$ is positive and at most the constant $c_{K_l}$ of \cref{lem:peaking} for $K_l$ and $u_l$ for every $l$, so that \cref{eq:peakcompact} gives $|P_{l,n}|\le e^{-cn}$ on $K_l$ for every $l$ and $n\ge1/c$. For each $n\ge1/c$ choose $y_{l,n}\in\alpha_l([0,1))\subseteq\Omega$ with $|P_{l,n}(y_{l,n})|\ge\tfrac12e^{-3}$, possible since $n\ge5$, the polynomial $P_{l,n}$ is continuous at $x_l=\alpha_l(1)$, and $|P_{l,n}(x_l)|\ge e^{-3}$. This gives \cref{it:peakdata-size}, as $|P_{l,n}|\le1$ on $\ol{\Omega}$ by \cref{lem:peaking}, and \cref{it:peakdata-decay} follows from the decay on $K_l$, since $K_\Phi\subseteq K\subseteq K_l$ and $y_{l',n}\in\alpha_{l'}([0,1))\subseteq K_l$ for $l'\ne l$. All estimates below are uniform in these choices, and we suppress the $n$-dependence by writing $y_l=y_{l,n}$.

  For \cref{it:peakdata-prim}, fix $i\in\II$ and $l\in\{1,\ldots,m\}$, let $n\ge1/c$, put $g=P_{l,n}$ and $\theta=Q_ig$, and let $\Theta$ and $\xi$ be the primitives of the statement. The primitives are path-independent, so they may be evaluated along paths chosen per point, on which $g$ is controlled. Along a path in $\ol{\Omega}$ starting at $z_0$, where $\cmin\le|\fii_i'|\le\cmax$, the representation \cref{eq:Qa} bounds $|\theta|$ at the endpoint by $\cmax/\cmin$ times the integral of $|g|$ along the path. For $z\in K_\Phi$ take the path of length at most $D$ from $z_0$ to $z$ in $K$, where $|g|\le e^{-cn}$: the subpaths obeying the same bound, $|\theta|\le(\cmax/\cmin)De^{-cn}$ at every point of the path, hence $|\Theta(z)|\le(\cmax/\cmin)D^2e^{-cn}$ and $|\xi(z)|\le(\cmax^2/\cmin)D^3e^{-cn}$. For $l'\ne l$ take the initial part of $\alpha_{l'}$ ending at $y_{l'}$, a path in $\Omega$ of length at most $D$ on which likewise $|g|\le e^{-cn}$, so that the same three bounds hold at $y_{l'}$. Along $\alpha_l$ itself $|g|\le1$, and for $\vartheta\in(0,1)$ and $n\ge2/\vartheta$ the bound \cref{eq:peaklocal} gives $|g|\le e^{-\vartheta n/2}$ outside $R_\vartheta=\{z\in\ol{\Omega} : |u_l(z)|>1-\vartheta\}$, so the integral of $|g|$ along $\alpha_l$, or along any of its initial parts, is at most $De^{-\vartheta n/2}+\ell_l(\vartheta)$, where $\ell_l(\vartheta)$ is the length of the part of $\alpha_l$ inside $R_\vartheta$. Let $\mu_l$ be the finite arc-length measure on $[0,1]$ induced by $\alpha_l$. Its distribution function is the arc-length function of the continuous rectifiable path $\alpha_l$, hence it is continuous and $\mu_l(\{1\})=0$. As $\vartheta\to0$, the sets $R_\vartheta$ decrease to $\{x_l\}$ because $|u_l|<1$ on $\ol{\Omega}\setminus\{x_l\}$, and the injectivity of $\alpha_l$ gives $\alpha_l^{-1}(R_\vartheta)\downarrow\{1\}$. Therefore, by continuity from above, $\ell_l(\vartheta)=\mu_l(\alpha_l^{-1}(R_\vartheta))\to0$. Taking $\vartheta=2/\sqrt n$, admissible as $n\ge5$, gives $|\theta|\le(\cmax/\cmin)\tau_n$ along $\alpha_l$ with $\tau_n=De^{-\sqrt n}+\max_l\ell_l(2/\sqrt n)\to0$, hence $|\Theta(y_l)|\le(\cmax/\cmin)D\tau_n$ and $|\xi(y_l)|\le(\cmax^2/\cmin)D^2\tau_n$. Since $D\ge1$ and $\cmax<1$, every bound on $\Theta$ and $\xi$ obtained, on $K_\Phi$ and at the points $y_1,\ldots,y_m$, is at most
  \begin{equation*}
    \kappa_n = \frac{\cmax}{\cmin}D^3(e^{-cn}+\tau_n),
  \end{equation*}
  which tends to $0$ as $n\to\infty$ and depends on neither $i$ nor $l$, the paths $\alpha_l$ and the constants $c$ and $D$ having been fixed before $n$. This proves \cref{it:peakdata-prim}.
\end{proof}

\subsection{Density on Jordan domains} \label{sec:gen-plane-dense}

The parametrization of \cref{sec:gen-plane-chart} and the peaking polynomials of \cref{sec:gen-plane-peak} now combine, through the counting lemma \cref{lem:avoidance}, into the density of both gap classes. \Cref{ex:nonconvex} shows that the method, and not merely this proof of it, fails on a multiply connected domain.

The density is proved by transversality for the Schwarzian projections; the pre-Schwarzian gap class then inherits it through the inclusion \cref{eq:GSsubsetGT}, as on the line in \cref{thm:C1denseS}. The argument has two halves. \Cref{prop:transversal} builds from the data of \cref{lem:peakdata} a finite-dimensional family of systems through $\Phi$ along which the differences of the Schwarzian projections, read at the evaluation points, form a map that is Lipschitz in the pair of words and satisfies a small-ball estimate in the parameter, uniformly over the pair space. Perturbing one generator by a polynomial peaking at one of $m$ boundary points moves the difference of two Schwarzian projections, read at the corresponding evaluation point, by the value of that polynomial there, up to an error that vanishes as the peak sharpens; the $m\times m$ matrix of these derivatives is then a small perturbation of an invertible diagonal one, so the parameter map is a submersion, uniformly over the pair space, and the small-ball estimate follows from \cref{lem:smallball}. \Cref{thm:C2dense} then applies \cref{lem:avoidance}, which makes the parameters admitting a coincidence a null set as soon as $2m$ exceeds the Minkowski dimension of the pair space. The openness is \cref{prop:cont}, needing neither hypothesis.

The proposition below is the first half. Its statement records, besides the two estimates on $F$, that the family comes from the parametrization \cref{eq:gdef} with a shift depending linearly on the parameter, and that it converges to $\Phi$ in $\mathcal C^2$ on the neighborhood $U$ of \cref{lem:collar} and not only on $\ol{\Omega}$; \cref{sec:dimension} needs the convergence on $U$.

\begin{proposition} \label{prop:transversal}
  Let $\Omega$ be a Jordan domain, let $\Omega'$ be simply connected, let $\Phi\in\mathcal S$, let $m\in\N$, let $\lambda\in(\cmax,1)$, and put $M=2mN$. Then there are points $y_1,\ldots,y_m\in\Omega$, a ball $B\subseteq\R^M$ centered at $0$, a constant $C<\infty$, and tuples $\theta^{(1)},\ldots,\theta^{(M)}$ of functions holomorphic on $\Omega'$ such that, for $t\in B$, the system $\Phi_t=(g_i^t)_{i\in\II}$ that \cref{eq:gdef} builds from $\theta^t=\sum_{j=1}^{M}t_j\theta^{(j)}$, so that $T^{\Phi_t}_i=T_i+\theta^t_i$ and $\Phi_0=\Phi$, belongs to $\mathcal S$ and satisfies $\|g_i^t-\fii_i\|_{\mathcal C^2(\ol{U})}\le C|t|$ for every $i\in\II$, where $U$ is the neighborhood of \cref{lem:collar}, so that in particular $d_2(\Phi_t,\Phi)\le C|t|$. Moreover, the map $F\colon B\times Z\to\C^m=\R^{2m}$ defined by
  \begin{equation} \label{eq:Fdef}
    F(t,(\iii,\jjj)) = (S^{\Phi_t}_\iii(y_l)-S^{\Phi_t}_\jjj(y_l))_{l=1}^{m}
  \end{equation}
  satisfies
  \begin{equation} \label{eq:Fsmallball}
    \LL^M(\{t\in B : |F(t,w)|\le u\}) \le Cu^{2m}
  \end{equation}
  for all $u>0$ and $w\in Z$, and $|F(t,w)-F(t,w')|\le C\varrho_{\lambda^2}(w,w')$ for all $t\in B$ and $w,w'\in Z$, where $\varrho_{\lambda^2}$ is the metric of \cref{lem:paircover}.
\end{proposition}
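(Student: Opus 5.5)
We take the points $y_l=y_{l,n}$ and polynomials $P_{l,n}$ of \cref{lem:peakdata}, with $n$ fixed large at the end. The $M=2mN$ source tuples are indexed by a letter $i\in\II$, a peak $l\in\{1,\ldots,m\}$ and a real-or-imaginary label $\epsilon\in\{1,\sqrt{-1}\}$. The source tuple for $(i,l,\epsilon)$ has $i$-th entry $\epsilon P_{l,n}$ and all other entries $0$. We set $\theta^{(j)}_i=Q_i(\epsilon P_{l,n})$ through \cref{eq:Qa}, and the shift $\theta^t=\sum_j t_j\theta^{(j)}$ is then linear in $t$. For small $t$, \cref{lem:perturbed}, run with the tuple $\theta^t$ in place of $t\theta$, keeps $\Phi_t$ in $\mathcal S$ and the generators self-mapping $U$. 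The bound $\|g_i^t-\fii_i\|_{\mathcal C^2(\ol U)}\le C|t|$ follows from \cref{eq:gbounds}, combined with Cauchy's estimate on the slightly larger $V$ for the first two derivatives; the bound on $d_2$ is a special case.

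The Lipschitz estimate in the word is uniform in $t$. By \cref{lem:Scollar}, applied to $\Phi_t$ with the constants $c_U$ and $q_0$ chosen uniformly over the ball, we get $|S^{\Phi_t}_\iii(y_l)-S^{\Phi_t}_{\iii'}(y_l)|\le2K_{S,U}q_0^{2|\iii\land\iii'|}$. Shrinking the ball makes $q_0\le\lambda$ by continuity, so this is at most a constant times $\varrho_{\lambda^2}$. The same applies to the $\jjj$-coordinates, and the triangle inequality gives the Lipschitz bound for $F$.

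For the small-ball estimate we use \cref{lem:smallball}, with $r=2m$, at each fixed $w=(\iii,\jjj)\in Z$. The uniform bounds on $D_tF$ and its Lipschitz constant come from the analyticity in $t$ in \cref{lem:linearizeS}: the projections are holomorphic in a complex polydisc of fixed size and bounded there by the constant $M_S$, so Cauchy estimates bound the first and second $t$-derivatives uniformly in $w$.

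The \textbf{main obstacle} is a lower bound $\sigma_{\min}(D_tF(t,w))\ge\sigma_*$, uniform over $w$ and over the ball. We prove it first at $t=0$ and then extend it to the ball by \cref{eq:sigmamin}, shrinking the ball so that $\|D_tF(t)-D_tF(0)\|\le\sigma_*/2$. Fix $w$ with $i_1\ne j_1$. Consider the derivative in the direction $(i_1,l,\epsilon)$, evaluated at $y_{l'}$. By \cref{eq:Slinseries}, the $k=0$ term of the $\iii$-series is $\epsilon P_{l,n}(y_{l'})+\Gamma$. The $\jjj$-series has a source only where a later letter equals $i_1$. Every term with $k\ge1$ reads the source at a point of $f_{\iii|_k}(y_{l'})\in K_\Phi$, where \cref{lem:peakdata}\cref{it:peakdata-decay} gives the bound $e^{-cn}$, with summable weights $\cmax^{2k}$. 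Every $\Gamma$ is read at points of $K_\Phi\cup\{y_{l'}\}$ and is bounded by a constant times $\kappa_n$ by \cref{lem:peakdata}\cref{it:peakdata-prim}. Hence the $2m\times2m$ real block of $D_tF(0,w)$ in the $i_1$-directions equals $\operatorname{diag}(P_{l,n}(y_l))$, read as a real-linear map $\C^m\to\C^m$, up to an error $O(e^{-cn}+\kappa_n)$ uniform in $w$. The off-diagonal entries are $O(e^{-cn})$ by \cref{lem:peakdata}\cref{it:peakdata-decay}, and the diagonal entries have modulus at least $\tfrac12e^{-3}$. For $n$ large this block therefore has least singular value at least $\tfrac14e^{-3}$. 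Since $\sigma_{\min}$ of the full $2m\times M$ matrix dominates that of any square column block, we may take $\sigma_*=\tfrac14e^{-3}$ at $t=0$ and $\sigma_*=\tfrac18e^{-3}$ on the shrunken ball, which yields \cref{eq:Fsmallball}.
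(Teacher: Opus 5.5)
Your proposal follows the paper's proof essentially step for step. It uses the same $M=2mN$ real and imaginary source directions built from the peaking polynomials of \cref{lem:peakdata}. It reads \cref{eq:Slinseries} the same way, with only the $k=0$ term of the $\iii$-series seeing an unsuppressed source. It then uses the realified, nearly diagonal block with least singular value at least $\tfrac14e^{-3}$ for large $n$, the monotonicity of $\sigma_{\min}$ under enlarging the domain, Cauchy estimates in the complexified parameter, and \cref{lem:smallball}.

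One step needs repair: the Lipschitz bound in $w$. You use the collar estimate of \cref{lem:Scollar} with rate $q_0$ and claim that shrinking the ball gives $q_0\le\lambda$. But $q_0=c_Ue^{t_0M_U}$ decreases only to $c_U=\max_i\sup_{\ol U}|\fii_i'|$, a supremum over the complex collar. That collar is fixed by \cref{lem:collar} without reference to the prescribed $\lambda$, so nothing forces $c_U<\lambda$. When $\lambda$ is close to $\cmax$ you would only get Lipschitz continuity for $\varrho_{c_U^2}$. The statement needs $\varrho_{\lambda^2}$ for every $\lambda\in(\cmax,1)$, and so does \cref{thm:C2dense}, which ties $m$ to $\lambda$.

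The fix is easy because the $y_l$ lie in $\Omega$, so the collar is not needed. Apply \cref{eq:dualbounds2} to $\Phi_t$ on $\ol\Omega$. Its rate is $\cmax(\Phi_t)\le\cmax+d_2(\Phi_t,\Phi)\le\lambda$ for small $t$, and its constant is uniform because $\max_i\|S_{g_i^t}\|$ is bounded on the ball. This is what the paper does.

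A smaller point concerns the $\mathcal C^2(\ol U)$ bound. \cref{eq:gbounds} bounds $g_i^t-\fii_i$ only on $U$, so Cauchy's estimate at points of $\ol U$ first requires a displacement bound on a larger neighborhood. It is simpler to differentiate directly, using $(g_i^t)'-\fii_i'=\fii_i'(e^{\Theta^t_i}-1)$ and $(g_i^t)''=(g_i^t)'(T_i+\theta^t_i)$. Here $|\Theta^t_i|\le C|t|$ on $\ol U$, and $|\theta^t_i|\le C|t|$ on $U$ by \cref{eq:CQ}.
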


\begin{proof}
  Below $C$ denotes a finite constant, whose value may change from one occurrence to the next, depending only on $\Phi$, on $m$, and on the integer $n$ fixed below. Put $\rho_\Phi=\min_{i \in \II}\dist(\fii_i(\ol{\Omega}),\partial\Omega)$, which is positive, each $\fii_i(\ol{\Omega})$ being a compact subset of the open $\Omega$; hence every tuple $\Psi=(\psi_i)_{i\in\II}$ of maps conformal on $\Omega'$ with $\max_{i\in\II}\|\psi_i-\fii_i\|<\rho_\Phi/2$ and $|\psi_i'|<1$ on $\ol{\Omega}$ maps $\ol{\Omega}$ into $\Omega$, so $\Psi$ lies in $\mathcal S$. Let $c$, $\kappa_n$, $P_{l,n}$, and $y_{l,n}$ be as in \cref{lem:peakdata}, with $K_\Phi=\bigcup_{i\in\II}\fii_i(\ol{\Omega})$, and put $\sigma_0=\tfrac14e^{-3}$. All estimates below are uniform in $n\ge1/c$ until $n$ is fixed, and we suppress the $n$-dependence by writing $y_l=y_{l,n}$.

  Fix $l\in\{1,\ldots,m\}$ and $i\in\II$, put $g=P_{l,n}$, and let $h=h^{(i,l)}$ be the source tuple with $h_i=g$ and $h_j=0$ for $j\ne i$. A source $h_j$ determines the shift $\theta_j=Q_jh_j$ that \cref{eq:gdef} applies to the dual projection of the letter $j$, with $Q_j$ being the operator of \cref{eq:Qa}, and \cref{lem:linearizeS} builds from $\theta_j$ the primitives $\Theta_j$ of $\theta_j$ and $\xi_j$ of $\fii_j'\Theta_j$, both vanishing at $z_0$, and the correction $\Gamma_{j,\lll}$ that \cref{eq:Slinseries} adds to $h_j$. All four vanish when $h_j$ does, so that $\theta_i=Q_ig$ while $\theta_j=\Theta_j=\xi_j=0$ and $\Gamma_{j,\lll}=0$ for $j\ne i$, and $\Theta_i$ and $\xi_i$ are the primitives of \cref{lem:peakdata}\cref{it:peakdata-prim}, so that $|\Theta_i|\le\kappa_n$ and $|\xi_i|\le\kappa_n$ on $K_\Phi\cup\{y_1,\ldots,y_m\}$.

  Let $\iii$ be a nonempty word, let $l'\in\{1,\ldots,m\}$, and write $v_\iii$ for the derivative in \cref{lem:linearizeS} along the source tuple $h=h^{(i,l)}$ fixed above, the left-hand side of \cref{eq:Slinseries}. By \cref{eq:wordops}, the $k$-th term of \cref{eq:Slinseries} at $y_{l'}$ is $(f_{\iii|_k}'(y_{l'}))^2$ times the value of $h_{i_{k+1}}+\Gamma_{i_{k+1},\sigma^{k+1}\iii}$ at $f_{\iii|_k}(y_{l'})$, a point of $K_\Phi$ when $k\ge1$ and equal to $y_{l'}$ when $k=0$. There $|h_{i_{k+1}}|\le e^{-cn}$ when $k\ge1$ by \cref{lem:peakdata}\cref{it:peakdata-decay}, the value of $h_{i_1}$ at $y_{l'}$ is $\delta_{i,i_1}g(y_{l'})$ with $\delta_{i,i_1}$ being the Kronecker delta, and $|\Gamma_{i_{k+1},\sigma^{k+1}\iii}|\le\cmax^2(2K_S+C_S)\kappa_n$ for every $k\ge0$ by the pointwise bound of \cref{lem:linearizeS} and the bounds on $\Theta_i$ and $\xi_i$ just recorded. Since $|f_{\iii|_k}'|\le\cmax^k$, summing over $k$ gives
  \begin{equation} \label{eq:keyS}
    |v_\iii(y_{l'})-\delta_{i,i_1}g(y_{l'})|\le\gamma_n = \frac{\cmax^2(e^{-cn}+(2K_S+C_S)\kappa_n)}{1-\cmax^2},
  \end{equation}
  and $\gamma_n$, which depends on none of $\iii$, $i$, $l$, and $l'$, tends to $0$ as $n\to\infty$.

  Now let $w=(\iii,\jjj)\in Z$, so that the first letters satisfy $i_1\ne j_1$. By \cref{lem:linearizeS}, for every tuple $h=(h_j)_{j\in\II}$ of functions holomorphic on $\Omega'$ the derivative at $t=0$, along the one-parameter family that the lemma builds from $h$, of the difference of the Schwarzian projections of $\iii$ and $\jjj$ is a function $D_wh$, which is complex-linear in $h$ because the right-hand side of \cref{eq:Slinseries} is, and which equals $v_\iii-v_\jjj$ when $h=h^{(i,l)}$. Applying \cref{eq:keyS} with $i=i_1$ to $\iii$, whose first letter is $i_1$, and to $\jjj$, whose first letter is not, gives $|D_wh^{(i_1,l)}(y_{l'})-P_{l,n}(y_{l'})|\le2\gamma_n$. Hence the complex $m\times m$ matrix $M_w=(D_wh^{(i_1,l)}(y_{l'}))_{l',l}$ is $A+E_w$, where $A$ is the diagonal matrix with entries $P_{l,n}(y_l)$, of moduli at least $\tfrac12e^{-3}$ by \cref{lem:peakdata}\cref{it:peakdata-size}, and every entry of $E_w$ has modulus at most $2\gamma_n+e^{-cn}$ by \cref{lem:peakdata}\cref{it:peakdata-decay}. Fix $n\ge1/c$ so large that $m(2\gamma_n+e^{-cn})\le\sigma_0$; then $\|E_w\|\le\sigma_0$, the operator norm of an $m\times m$ matrix being at most $m$ times the largest modulus of its entries, while the least singular value of the diagonal matrix $A$ is the least modulus of its diagonal entries, which is at least $2\sigma_0$; the perturbation bound \cref{eq:sigmamin} therefore gives $\sigma_{\min}(M_w)\ge\sigma_0$ for every $w\in Z$. Let $\mathcal V$ be the real span of the tuples $h^{(k,l)}$ and their imaginary multiples $ih^{(k,l)}$, $k\in\II$ and $l\in\{1,\ldots,m\}$, of real dimension $M=2mN$, the tuples being linearly independent since those with distinct $k$ have disjoint supports and the matrix $(P_{l,n}(y_{l'}))_{l',l}$ is invertible by the same singular-value estimate, with the inner product making these tuples orthonormal, and consider $\Xi_w\colon\mathcal V\to\R^{2m}=\C^m$, $h\mapsto(D_wh(y_l))_{l=1}^m$. As $D_w$ is complex-linear, on the subspace $\mathcal V_{i_1}$ spanned by the $h^{(i_1,l)}$ and $ih^{(i_1,l)}$ the map $\Xi_w$ is the realification of $\zeta\mapsto M_w\zeta$ on $\C^m$, whose singular values are those of $M_w$, each repeated twice, so $\Xi_w|_{\mathcal V_{i_1}}$ is onto with least singular value at least $\sigma_0$. For a subspace $W\subseteq\mathcal V$ with orthogonal projection $P_W$ one has $(\Xi_w|_W)^{*}=P_W(\Xi_w|_{\mathcal V})^{*}$, so $\sigma_{\min}(\Xi_w|_W)\le\sigma_{\min}(\Xi_w|_{\mathcal V})$: enlarging the domain cannot lower the least singular value. Thus $\Xi_w|_{\mathcal V}$ is onto with least singular value at least $\sigma_0$, uniformly in $w\in Z$.

  Enumerate the tuples $h^{(k,l)}$ and $ih^{(k,l)}$ as $e_1,\ldots,e_M$, put $\theta^{(j)}=(Q_i(e_j)_i)_{i\in\II}$, and for $t\in\R^M$ let $\Phi_t=(g_i^t)_{i\in\II}$ be built by \cref{eq:gdef} from $\theta^t=\sum_{j=1}^{M}t_j\theta^{(j)}$, so that $T^{\Phi_t}_i=T_i+\theta^t_i$ and, by \cref{eq:Sgenshift} applied at parameter value $1$ to the tuple $\sum_{j=1}^{M}t_je_j$, whose shift is $\theta^t$ by the linearity of the operators $Q_i$, $S_{g_i^t}=S_i+\sum_{j=1}^{M}t_j(e_j)_i-\tfrac12(\theta^t_i)^2$; from now on $\Phi_t$ and $g_i^t$ refer to this $M$-parameter family, the one-parameter families of \cref{lem:linearizeS} entering only through the derivatives $D_w$ defined above. Let $F$ be the map \cref{eq:Fdef} with these points $y_l$. The sources being polynomials, each $\theta^{(j)}$ is holomorphic on $\Omega'$, and $\Phi_t$ is the system that \cref{lem:perturbed} builds from the tuple $\theta^t$ at parameter value $1$, whose constant $M_U$ is at most $C|t|$, the primitive $\Theta^t_i$ of $\theta^t_i$ vanishing at $z_0$ being linear in $t$; the construction and the bounds \cref{eq:gbounds} therefore extend to complex $t\in\C^M$ and read $|(g_i^t)'|\le c_Ue^{C|t|}$ and $|g_i^t-\fii_i|\le C(e^{C|t|}-1)$ on $U$, and on $\ol{\Omega}$ also $|(g_i^t)'|\le|\fii_i'|e^{C|t|}$. For real $t$ with $|t|\le1$ this gives $\|g_i^t-\fii_i\|_{\mathcal C^2(\ol{U})}\le C|t|$: the displacement is at most $C(e^{C|t|}-1)\le C|t|$, the derivatives differ by $(g_i^t)'-\fii_i'=\fii_i'(e^{\Theta^t_i}-1)$ with $|\Theta^t_i|\le M_U\le C|t|$ on $\ol{U}$, and the second derivatives by $((g_i^t)'-\fii_i')T_i+(g_i^t)'\theta^t_i$, through $(g_i^t)''=(g_i^t)'(T_i+\theta^t_i)$, where $T_i=\fii_i''/\fii_i'$ is bounded on $\ol{U}\subseteq\Omega'$ and $|\theta^t_i|\le C|t|$ on $U$ by \cref{eq:CQ}; in particular $d_2(\Phi_t,\Phi)\le C|t|$, as $\ol{\Omega}\subseteq U$.

  Fix a ball $B_0\subseteq\R^M$ centered at $0$ and of radius at most $1$ so small that, for $t\in B_0$, the system $\Phi_t$ lies in $\mathcal S$ with $\cmax(\Phi_t)\le\lambda$, which the margin $\rho_\Phi$ and these bounds guarantee, and so small that, by the margin $(1-c_U)\eps$ of \cref{lem:collar}\cref{it:collar-invariance}, on a complex polydisc $\Pi\subseteq\C^M$ containing a neighborhood of the closure of $B_0$ the maps $g^t_i$ send $U$ into $U$ with $\sup_{z\in U}|(g^t_i)'(z)|\le q_0$ for some $q_0<1$. Exactly as in the proof of \cref{lem:linearizeS}, the series \cref{eq:Sline} formed from the generators of $\Phi_t$ then converges uniformly on $\Pi\times\ol{\Omega}$ and uniformly in the word, so $F$ is holomorphic on $\Pi$ and bounded uniformly in $w$, and the Cauchy estimates give a constant $\Lambda<\infty$ bounding the operator norms of its first two parameter derivatives on $B_0\times Z$. By \cref{lem:linearizeS}, applied to the tuple $e_j$, the partial derivative of $F$ with respect to $t_j$ at $t=0$ is $(D_we_j(y_l))_{l=1}^m=\Xi_we_j$, so, $F$ being differentiable in $t$, $D_tF(0,w)=\Xi_w|_{\mathcal V}$ under the isometry $t\mapsto\sum_jt_je_j$ of $\R^M$ onto $\mathcal V$. The bound on the second parameter derivatives makes the map $t\mapsto D_tF(t,w)$ Lipschitz with constant $\Lambda$ on the convex set $B_0$ for every $w$, and the perturbation bound \cref{eq:sigmamin} shows that the least singular value of $D_tF(t,w)$ is at least $\sigma_0/2$ for every $(t,w)\in\ol{B}\times Z$, where $B$ is an open ball centered at $0$ with $\ol{B}\subseteq B_0$ and radius at most $\sigma_0/(2\Lambda)$; the polydisc and every constant below depend on $n$, which has been fixed. For every $w\in Z$ the map $t\mapsto F(t,w)$ on $B$ therefore satisfies the hypotheses of \cref{lem:smallball} with $2m$ in place of $r$ and $\sigma_0/2$ in place of $\sigma_*$, and this lemma gives \cref{eq:Fsmallball} for all $u>0$, with a constant that does not depend on $w$.

  Finally, the prefix estimate \cref{eq:dualbounds2} for $\Phi_t$, whose constant is bounded for $t\in B$ since $\cmax(\Phi_t)\le\lambda$ and $\max_i\|S_{g_i^t}\|$ is bounded, the sources satisfying $|P_{l,n}|\le1$ on $\ol{\Omega}$ by \cref{lem:peakdata}\cref{it:peakdata-size} and $\sup_U|Q_i(e_j)_i|<\infty$ by \cref{eq:CQ}, makes $F$ Lipschitz in $w$ for the metric $\varrho_{\lambda^2}$, uniformly in $t\in B$: for $w=(\iii,\jjj)$ and $w'=(\iii',\jjj')$ in $Z$, each coordinate of $F(t,w)-F(t,w')$ is bounded in modulus by $\|S^{\Phi_t}_\iii-S^{\Phi_t}_{\iii'}\|+\|S^{\Phi_t}_\jjj-S^{\Phi_t}_{\jjj'}\|\le C\lambda^{2\min\{|\iii\land\iii'|,|\jjj\land\jjj'|\}}$, each of the two norms vanishing when its own two words coincide, so that $|F(t,w)-F(t,w')|\le\sqrt{m}C\varrho_{\lambda^2}(w,w')$. The ball $B$, the points $y_l$, the tuples $\theta^{(j)}$, and the largest of the constants $C$ above therefore have the properties claimed.
\end{proof}

The second half is the counting, and it is where the number $m$ of peaks is fixed: \cref{lem:avoidance} needs the dimension $2m$ of the target of $F$ to exceed the covering exponent of the pair space in the metric $\varrho_{\lambda^2}$, which \cref{lem:paircover} computes to be $s=\log N/\log(1/\lambda)$. The second statement of the theorem is the form in which \cref{sec:dimension} uses the density.

\begin{theorem} \label{thm:C2dense}
  Let $\Omega$ be a Jordan domain and let $\Omega'$ be simply connected. Then the sets $\mathcal G_T$ and $\mathcal G_S$ are dense in $(\mathcal S,d_2)$. Moreover, let $\Phi\in\mathcal S$, let $\lambda\in(\cmax,1)$, let $m$ be an integer with $m>\log N/(2\log(1/\lambda))$, and put $M=2mN$. If $y_1,\ldots,y_m\in\Omega$, if $B\subseteq\R^M$ is a ball, if $C<\infty$, and if $\Phi_t\in\mathcal S$ for $t\in B$ are such that the map $F$ of \cref{eq:Fdef} satisfies \cref{eq:Fsmallball} for all $u>0$ and $w\in Z$, and 
  \begin{equation*}
    |F(t,w)-F(t,w')|\le C\varrho_{\lambda^2}(w,w')
  \end{equation*}
  for all $t\in B$ and $w,w'\in Z$, then $\Phi_t\in\mathcal G_S$ for $\LL^M$-almost every $t\in B$.
\end{theorem}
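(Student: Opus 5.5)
I will prove the second statement first and then deduce density from it. Assume the data $y_l$, $B$, $C$, $\Phi_t$ are given with the two estimates on $F$. Apply \cref{lem:avoidance} with $Y=Z$ metrized by $\varrho_{\lambda^2}$, with $r=2m$, with $F$ read as an $\R^{2m}$-valued map, and with the Lipschitz constant $C_1=C$ and the small-ball constant $C_2=C$ taken from the hypotheses. \Cref{lem:paircover} with $\beta=\lambda^2$ covers $Z$ by at most $2\lambda^{-2s}\delta^{-s}$ sets of diameter at most $\delta$, where $s=2\log N/\log(1/\lambda^2)=\log N/\log(1/\lambda)$, and the choice of $m$ gives $2m>s$. \Cref{lem:avoidance} therefore shows that the set of $t\in B$ with $F(t,w)=0$ for some $w\in Z$ is $\LL^M$-null.

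For $t$ outside this null set, every pair $(\iii,\jjj)\in Z$ has $S^{\Phi_t}_\iii(y_l)\ne S^{\Phi_t}_\jjj(y_l)$ for some $l$. In particular $S^{\Phi_t}_\iii\not\equiv S^{\Phi_t}_\jjj$ on $\ol{\Omega}$, because $y_l\in\Omega$. \Cref{lem:gap}\cref{it:gap-schw} then gives $\Phi_t\in\mathcal G_S$, using that $\Phi_t\in\mathcal S$ by hypothesis.

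For density, fix $\Phi\in\mathcal S$ and $\eta>0$. Pick $\lambda\in(\cmax,1)$ and an integer $m>\log N/(2\log(1/\lambda))$. \Cref{prop:transversal} supplies $y_l$, a ball $B$ centered at $0$, a constant $C$ and a family $\Phi_t\in\mathcal S$ that satisfy exactly the hypotheses of the second statement, together with $d_2(\Phi_t,\Phi)\le C|t|$. The ball $B'=B\cap B^o(0,\eta/C)$ has positive measure, and the hypotheses restrict to it: the small-ball bound only decreases and the Lipschitz bound is pointwise. Hence some $t\in B'$ has $\Phi_t\in\mathcal G_S$, and this system lies within $\eta$ of $\Phi$, so $\mathcal G_S$ is dense. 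By \cref{eq:GSsubsetGT} we have $\mathcal G_S\subseteq\mathcal G_T$, so $\mathcal G_T$ is dense as well.

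The substantive work lives in \cref{prop:transversal}, which is assumed here. The only delicate points are therefore matching exponents and restricting to a subball. The exponent check is the main thing to get right: the metric base is $\lambda^2$, so the covering exponent is $\log N/\log(1/\lambda)$, and the target has real dimension $2m$. This is exactly why the threshold on $m$ carries the factor $2$ in the denominator.
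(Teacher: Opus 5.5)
Your proposal is correct and follows the paper's proof essentially step for step. Like the paper, you use \cref{lem:paircover} with base $\lambda^2$ to get the covering exponent $s=\log N/\log(1/\lambda)<2m$, apply \cref{lem:avoidance} with $r=2m$ to make the exceptional parameters null, conclude with \cref{lem:gap}\cref{it:gap-schw}, and obtain density from \cref{prop:transversal} (with $B$ centered at $0$ and $d_2(\Phi_t,\Phi)\le C|t|$) together with \cref{eq:GSsubsetGT}. Your restriction to the subball $B\cap B^o(0,\eta/C)$ is harmless but not needed, since the almost-everywhere conclusion on $B$ already gives good parameters arbitrarily close to $0$.
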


\begin{proof}
  Let $\Phi\in\mathcal S$, let $\lambda\in(\cmax,1)$, put $s=\log N/\log(1/\lambda)$, and let $m$, $y_1,\ldots,y_m$, $B$, $C$, and $\Phi_t$ be as in the second statement, so that $2m>s$. By \cref{lem:paircover} with $\beta=\lambda^2$, for which $2\log N/\log(1/\beta)=s$, the space $Z$ is compact in the metric $\varrho_{\lambda^2}$ and, for every $\delta\in(0,1)$, covered by at most $2\lambda^{-2s}\delta^{-s}$ sets of diameter at most $\delta$. Together with the assumed properties of $F$, this shows that the map $F$ of \cref{eq:Fdef} satisfies the hypotheses of \cref{lem:avoidance} with $Y=Z$, $r=2m$, $C_1=C_2=C$, and $C_3=2\lambda^{-2s}$, and $r>s$, so the set of $t\in B$ for which $F(t,w)=0$ for some $w\in Z$ is a null set. For every other $t\in B$ and every $(\iii,\jjj)\in Z$ the projections $S^{\Phi_t}_\iii$ and $S^{\Phi_t}_\jjj$ differ at one of the points $y_l\in\Omega$, so $\Phi_t\in\mathcal G_S$ by \cref{lem:gap}\cref{it:gap-schw}, the system $\Phi_t$ being a member of $\mathcal S$. This proves the second statement. For the first, fix an integer $m>s/2$, and let $y_1,\ldots,y_m$, $B$, $C$, and $\Phi_t$, $t\in B$, be the points, the ball, the constant, and the systems that \cref{prop:transversal} provides; they satisfy the hypotheses just made, and in addition $B$ is centered at $0$ and $d_2(\Phi_t,\Phi)\le C|t|$. By the second statement, $\Phi_t\in\mathcal G_S$ for almost every $t\in B$, so there are such $t$ arbitrarily close to $0$; thus members of $\mathcal G_S$ lie arbitrarily close to $\Phi$, and since $\mathcal G_S\subseteq\mathcal G_T$ by \cref{eq:GSsubsetGT}, the same holds for $\mathcal G_T$.
\end{proof}

We are now ready to prove the second item of \cref{thm:genericity}.

\begin{proof}[Proof of \cref{thm:genericity}\cref{it:gen-plane}]
  Let $\Omega$ be a Jordan domain and let $\Omega'$ be simply connected. The sets $\mathcal G_S$ and $\mathcal G_T$ are open in $(\mathcal S,d_2)$ by \cref{prop:cont} and dense by \cref{thm:C2dense}, and by \cref{lem:gap} the members of $\mathcal G_S$ satisfy the strong exponential separation condition modulo M\"obius maps and those of $\mathcal G_T$ the strong exponential separation condition.
\end{proof}

\Cref{thm:C2dense} assumes a Jordan domain with simply connected extension domain, and the example below locates where multiple connectivity breaks the argument.

\begin{example} \label{ex:nonconvex}
  The topological hypotheses of \cref{thm:C2dense} cannot be dropped without replacement. Let $\Omega = \{z\in\C : 1<|z|<2\}$ and $\Omega' = \{z\in\C : \tfrac12<|z|<3\}$, and consider, for small $\eps>0$, the M\"obius generators $\fii_1(z) = \tfrac32-\eps/z$ and $\fii_2(z) = -\tfrac32-\eps/(z-\tfrac3{10})$, conformal and injective on $\Omega'$, their poles $0$ and $\tfrac3{10}$ lying outside $\Omega'$. For $\eps<\tfrac18$ the image $\fii_i(\ol{\Omega})$ lies in the disc of radius $2\eps$ about $(-1)^{i-1}\tfrac32$, hence in $\Omega$, and $\cmax=\tfrac{100}{49}\eps<1$, so $\Phi=(\fii_1,\fii_2)$ is a conformal iterated function system on the pair $(\Omega,\Omega')$ to which the density theorem does not apply, both domains being multiply connected.

  On the annular $\Omega$ the parametrization \cref{eq:gdef} is unavailable. The argument principle gives $\oint_\gamma T_g\dd z = 2\pi ik$ for every conformal $g$ on $\Omega$ and every cycle $\gamma$ in $\Omega$, where $k\in\Z$ is the winding number of the zero-free $g'\circ\gamma$ about the origin, so the periods of pre-Schwarzians are locked in $2\pi i\Z$: along every family of conformal maps continuous in the metric \cref{eq:d2metric} the period is constant, and every infinitesimal direction $\theta$ realized by such a family satisfies $\oint_\gamma\theta\dd z=0$, so no such family realizes a direction with a nonzero period. The vanishing of the period does not suffice either: a conformal $g$ on $\Omega$ with $T_g=T_{\fii_1}+t$ for some $t\ne0$ would have $g'=c\fii_1'e^{tz}$ with $c\ne0$, the zero-free ratio $g'/\fii_1'$ having constant logarithmic derivative $t$, yet 
  \begin{equation*}
    \oint_{|z|=3/2}\fii_1'(z)e^{tz}\dd z=\eps\oint_{|z|=3/2}z^{-2}e^{tz}\dd z=2\pi i\eps t\ne0,  
  \end{equation*}
  so $g'$ admits no single-valued primitive and no such $g$ exists; the exact shifts $T^{\Phi_t}_i=T_i+t\theta_i$ that the parametrization \cref{eq:gdef} produces are therefore unavailable already for the constant polynomial direction $\theta_1\equiv1$, and \cref{thm:C2dense} rests on the same parametrization, which enters \cref{prop:transversal} directly and \cref{lem:peakdata} through the operator \cref{eq:Qa}.
\end{example}

\begin{question} \label{q:nonconvex}
  Does the conclusion of \cref{thm:C2dense} hold on multiply connected planar domains? \Cref{ex:nonconvex} refutes only the method, and only its parametrization: the annulus admits peaking functions at every boundary point $x$, namely $z\mapsto\exp(n(z\ol{x}/4-1))$ when $|x|=2$ and $z\mapsto\exp(n(x/z-1))$ when $|x|=1$, whereas the period constraints are locally constant in $(\mathcal S,d_2)$, being integer multiples of $2\pi i$ that vary continuously with the system, so they can never force two dual projections to coincide, and a parametrization corrected by finitely many period conditions might restore the transversality argument on finitely connected domains. The hypotheses on $\Omega$ are not free of charge either: for the slit annulus, a simply connected domain whose closure is the closed annulus, the complement of $\ol{\Omega}$ is disconnected, so Mergelyan's theorem is unavailable, and a function holomorphic on a simply connected $\Omega'$, which necessarily contains the hole, cannot peak at an inner-boundary point, by the maximum principle.
\end{question}

\section{Genericity in higher dimensions} \label{sec:gen-higher}

Throughout this section $d\ge3$. We keep the conventions of \cref{sec:C3-poles}, in particular $C_0=2\max_{i \in \II}\|T_{\fii_i}\|/(1-\cmax)$, the pole fields $T_p$, and the norm convention for higher derivatives. When $\max_{i \in \II}\|T_{\fii_i}\|>0$, the pole $p_\iii\in\mathcal P$ is the one supplied by \cref{lem:poledict}; when $\max_{i \in \II}\|T_{\fii_i}\|=0$, we set $p_\iii=\infty$ for every $\iii\in\II^\N\cup\II^*$.

By Liouville's theorem every member of $\mathcal S$ is a tuple of M\"obius transformations, so $\mathcal S$ is inside a finite-dimensional family, and the pre-Schwarzian gap of $\mathcal G_T$ is a pole-separation condition. \Cref{sec:gen-higher-open} proves that equivalence in \cref{lem:gap3} and the openness of $\mathcal G_T$ in \cref{prop:cont3}, and records how conjugation by a similarity of $\R^d$ acts on the gap class. \Cref{sec:gen-higher-chordal} then develops the chordal geometry and the pole cocycle on which the rest of the section runs. \Cref{sec:gen-higher-dense} proves the density under a threshold on the contraction ratio in \cref{thm:C3dense}, recasts its hypotheses in \cref{cor:C3dense}, and deduces \cref{thm:genericity}\cref{it:gen-higher}. Some hypothesis of this kind is unavoidable, and \cref{sec:gen-higher-obstruction} locates the obstruction in a packing bound on the sphere: for $d\ge3$ the gap class is not dense in general, and on convex domains it misses an open set of systems. What survives with no hypothesis at all is the gap for finite words, and \cref{sec:gen-higher-finite} proves it dense among the systems whose generators have pairwise distinct poles, then examines the range of contraction ratios that the density theorem and the obstruction leave between them.

\subsection{The gap as a pole separation, and openness} \label{sec:gen-higher-open}

\Cref{lem:gap3} is what makes every argument of this section a statement about finitely many points of $\RS^d$ rather than about functions on $\ol{\Omega}$, and \cref{prop:cont3} is the openness that follows, the counterpart of \cref{prop:cont1} of \cref{sec:gen-line-open} and of \cref{prop:cont} of \cref{sec:gen-plane-open}.

\begin{lemma} \label{lem:gap3}
  The system $\Phi$ belongs to $\mathcal G_T$ if and only if $p_\iii\ne p_\jjj$ for every $(\iii,\jjj)\in Z$. Moreover, $\max_{i \in \II}\|T_{\fii_i}\|>0$ for every $\Phi\in\mathcal G_T$ when $N\ge2$.
\end{lemma}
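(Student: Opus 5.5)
The plan is to reduce membership in $\mathcal G_T$ to the pointwise coincidence criterion of \cref{lem:gap}\cref{it:gap-pre}, and then to translate coincidences of projections into coincidences of poles through the dictionary of \cref{lem:poledict}. By \cref{lem:gap}, $\Phi\in\mathcal G_T$ if and only if $T_\iii\not\equiv T_\jjj$ on $\ol{\Omega}$ for every $(\iii,\jjj)\in Z$. So what has to be shown is that, for a pair in $Z$, the identity $T_\iii\equiv T_\jjj$ holds exactly when $p_\iii=p_\jjj$.

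First take the case $\max_{i\in\II}\|T_{\fii_i}\|>0$. Here \cref{lem:poledict} supplies unique poles $p_\iii,p_\jjj\in\mathcal P$ with $T_\iii=T_{p_\iii}$ and $T_\jjj=T_{p_\jjj}$ on $\ol{\Omega}$. If $p_\iii=p_\jjj$, the two projections are the same pole field, so they coincide. Conversely, if $T_\iii\equiv T_\jjj$, then they agree at any single point $x_0\in\ol{\Omega}$, and the final clause of \cref{lem:poledict} (the recovery formula \cref{eq:polerecover}, with $T_\infty\equiv0$ distinguished from every finite pole field because the latter vanishes nowhere) forces $p_\iii=p_\jjj$.

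Next take the degenerate case $\max_i\|T_{\fii_i}\|=0$. By \cref{lem:mobius} every generator is then a similarity, so every $T_\iii$ vanishes, and by convention every $p_\iii=\infty$. Both conditions therefore fail for every pair, and $Z$ is nonempty because $N\ge2$ (it contains $(1,2)$). The equivalence holds trivially in this case, and this also gives the second claim: if the maximum were $0$, the pair $(1,2)\in Z$ would have $T_1\equiv T_2\equiv0$, contradicting membership in $\mathcal G_T$. So $\max_i\|T_{\fii_i}\|>0$ for every $\Phi\in\mathcal G_T$.

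I expect no real obstacle here. The only points needing care are the bookkeeping for the pole at $\infty$, and noting that the uniqueness in \cref{lem:poledict} is exactly what makes $p_\iii$ well defined for infinite words, so that the single-point recovery applies to limit projections as well as to finite ones.
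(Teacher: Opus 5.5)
Your proposal is correct and follows the paper's proof essentially step for step: the degenerate case $\max_{i\in\II}\|T_{\fii_i}\|=0$ is settled by the pair $(1,2)\in Z$ together with the convention $p_\iii=\infty$, and the nondegenerate case combines the coincidence criterion (compactness of $Z$ plus continuity in the word, which you take from \cref{lem:gap}) with the single-point recovery of \cref{lem:poledict}. The only difference is that you deduce "every generator is a similarity" from the nowhere-vanishing pole field of \cref{lem:mobius} rather than from \cref{lem:kernel}; both are valid for $d\ge3$.
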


\begin{proof}
  Put $\beta_1=\max_{i \in \II}\|T_{\fii_i}\|$. If $\beta_1=0$, then every generator is a similarity by \cref{lem:kernel}\cref{it:tkernel}, and hence every dual projection vanishes. Since $N\ge2$, the pair $(1,2)$ belongs to $Z$ and satisfies $T_1=T_2$ and $p_1=p_2=\infty$. Thus $\Phi\notin\mathcal G_T$, the pole-separation condition fails, and the final assertion follows.

  Suppose that $\beta_1>0$. By the compactness of $Z$ and the continuity in \cref{eq:dualbounds}, the system $\Phi$ belongs to $\mathcal G_T$ if and only if $T_\iii\not\equiv T_\jjj$ for every $(\iii,\jjj)\in Z$. By \cref{lem:poledict}, this is equivalent to $p_\iii\ne p_\jjj$ for every $(\iii,\jjj)\in Z$.
\end{proof}

Openness rests, as in the plane, on a Lipschitz estimate for the dual projections whose constant does not depend on the word, the gap $\Delta_T$ being an infimum over $Z$ of distances between such projections; only $\mathcal G_T$ is at stake here, the Schwarzian gap class being defined only in dimensions one and two. What has to be replaced is the holomorphic neighborhood $U$ of \cref{lem:collar}, which has no counterpart in $\R^d$, and its two services are supplied separately: the two-point identity \cref{eq:twopoint} makes every generator contract distances on $\ol{\Omega}$ pointwise, so the orbits of two nearby systems stay close with no neighborhood to integrate over, while the pole gap of \cref{lem:CU} holds the generator poles off $\ol{\Omega}$ and so bounds the second derivatives of the generators on a fixed neighborhood of $\ol{\Omega}$, on which the mean value inequality then runs along the segments joining the two orbits.

\begin{proposition} \label{prop:cont3}
  Let $\Phi=(\fii_i)_{i \in \II}$ be a conformal iterated function system on $\Omega \subset \R^d$, $d \ge 3$. Then there are $r_0>0$ and $C<\infty$ such that 
  \begin{equation*}
    \|T^{\Phi}_\iii-T^{\Psi}_\iii\|\le Cd_2(\Phi,\Psi)
  \end{equation*}
  for every $\iii\in\II^\N\cup\II^*$ and every $\Psi\in\mathcal S$ with $d_2(\Phi,\Psi)<r_0$. Consequently, the set $\mathcal G_T$ is open in $(\mathcal S,d_2)$.
\end{proposition}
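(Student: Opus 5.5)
We follow the scheme of \cref{prop:cont1,prop:cont}: expand both projections as the cocycle sums \cref{eq:Tword} and compare them summand by summand. The target is a bound $Ck\lambda^{k-1}d_2(\Phi,\Psi)$ on the $k$-th difference, for a fixed $\lambda\in(\cmax,1)$; summing over $k$ then gives the estimate with $C(1-\lambda)^{-2}$.

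First we choose $r_0$. Take it so small that every $\Psi\in\mathcal S$ with $d_2(\Phi,\Psi)<r_0$ has $\cmax(\Psi)\le\lambda$ and $\cmin(\Psi)\ge\cmin/2$; this uses only the $\mathcal C^1$ part of $d_2$.

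Next we compare the orbits. Fix $x\in\ol{\Omega}$ and put $u_l=f^\Phi_{\iii|_{l-1}}(x)$ and $v_l=f^\Psi_{\iii|_{l-1}}(x)$, both in $\ol{\Omega}$. Then
\[
|u_{l+1}-v_{l+1}|\le|\fii_{i_l}(u_l)-\fii_{i_l}(v_l)|+d_2(\Phi,\Psi).
\]
By the two-point identity \cref{eq:twopoint}, the first term is $\sqrt{\lambda_{\fii_{i_l}}(u_l)\lambda_{\fii_{i_l}}(v_l)}\,|u_l-v_l|\le\cmax|u_l-v_l|$. This holds with no convexity of $\Omega$, since both factors are evaluated at points of $\ol{\Omega}$. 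Induction gives $|u_l-v_l|\le(1-\cmax)^{-1}d_2(\Phi,\Psi)$.

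To estimate derivatives along the segments $[u_l,v_l]$, we need bounds on a neighborhood of $\ol{\Omega}$. Every generator pole that is finite lies at distance at least $2/C_0$ from $\ol{\Omega}$, by \cref{lem:CU} applied to one-letter words. So on $W=\{y:\dist(y,\ol{\Omega})<1/C_0\}$ the scaling identity \cref{eq:scalingid} bounds $D\fii_i$, $D^2\fii_i$, $D^3\fii_i$, and also $DT_{\fii_i}$, which is given by \cref{eq:pole}. Shrinking $r_0$ so that $(1-\cmax)^{-1}r_0<1/C_0$ puts every segment $[u_l,v_l]$ inside $W$. The mean value inequality then gives
\[
\|D\fii_{i_l}(u_l)-D\psi_{i_l}(v_l)\|\le Cd_2(\Phi,\Psi)
\qquad\text{and}\qquad
|T^\Phi_{i_k}(u_k)-T^\Psi_{i_k}(v_k)|\le Cd_2(\Phi,\Psi).
\]
For the second bound we also need $\|T^\Phi_i-T^\Psi_i\|\le Cd_2(\Phi,\Psi)$ on $\ol{\Omega}$. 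We obtain it from the trace formula \cref{eq:trace}, $T_f=\tfrac1d\tr((Df)^{-1}\partial_\cdot Df)$, together with the resolvent identity, as in \cref{eq:C3quot}, using the lower bound on $\cmin(\Psi)$.

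The multipliers $(Df_{\iii|_{k-1}})^{\top}$ are now matrix products rather than scalars. We telescope them one factor at a time. Each factor is $\lambda_{\fii}$ times an orthogonal matrix, of norm at most $\lambda$, so the difference is at most $(k-1)\lambda^{k-2}Cd_2(\Phi,\Psi)$. Splitting $A^\Phi_k-A^\Psi_k$ as in \cref{prop:cont1} yields the target bound $Ck\lambda^{k-1}d_2(\Phi,\Psi)$; infinite words are handled by the uniform convergence of the series in \cref{lem:dual}.

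Openness of $\mathcal G_T$ follows exactly as before. With $\delta=\Delta_T(\Phi)>0$ and $d_2(\Phi,\Psi)<\min\{r_0,\delta/(3C)\}$, two applications of the estimate give $\Delta_T(\Psi)\ge\delta/3$.

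The main obstacle is the step that replaces the holomorphic collar. The argument needs $W$, and the segments $[u_l,v_l]$, to lie inside $\Omega'$, where the generators are defined as restrictions of M\"obius maps. If $\dist(\ol{\Omega},\R^d\setminus\Omega')$ is smaller than $1/C_0$, we shrink $W$ to that radius. Alternatively, since the M\"obius extensions are defined off their poles, we can work with them directly on $W$, noting that $d_2$ is still computed on $\ol{\Omega}$.
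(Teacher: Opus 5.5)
Your proposal is correct and follows the paper's proof essentially step by step. It uses the two-point identity \cref{eq:twopoint} for the orbit comparison, and the pole gap to obtain a fixed neighborhood of $\ol{\Omega}$ on which the M\"obius extensions have bounded second derivatives; using those extensions directly off their poles, your second alternative, is what the paper does. It also uses the trace formula with the resolvent identity for $\|T_{\fii_i}-T_{\psi_i}\|$, and telescopes the matrix cocycles one factor at a time. The only case you leave implicit is $\max_{i\in\II}\|T_{\fii_i}\|=0$, where $C_0=0$ and every generator is a similarity, so $D^2\fii_i\equiv0$ and $T_{\fii_i}\equiv0$ and no neighborhood is needed; the paper treats this case separately in one line.
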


\begin{proof}
  Write $\beta_1=\max_{i \in \II}\|T_{\fii_i}\|$, let $\lambda \in (\cmax,1)$, and choose $r_0$ so small that $\cmax(\Psi)\le \lambda$ and $\cmin(\Psi)\ge\cmin/2$ on the $r_0$-ball; $C$ below depends only on $\Phi$. The proof of \cref{prop:cont} transfers with the following changes. The orbit points move by at most $(1-\lambda)^{-1}d_2$: the telescoping only needs every generator of either system to contract distances on $\ol{\Omega}$ with the constant $\lambda$, which the two-point identity \cref{eq:twopoint} supplies pointwise from the bound $\lambda$ on the conformal factors, no segment being involved. Shrink $r_0$ further so that $(1-\lambda)^{-1}r_0\le1/\beta_1$ when $\beta_1>0$. If $\beta_1>0$, then the segment joining any two points of $\ol{\Omega}$ at distance at most $(1-\lambda)^{-1}d_2$ stays in the closed $(1/\beta_1)$-neighborhood of $\ol{\Omega}$, which is compact, and this neighborhood keeps the distance $1/\beta_1$ from the pole of every non-similarity generator, whose distance from $\ol{\Omega}$ is at least $2/\beta_1$ by the identity $\|T_{\fii_i}\|=2/\dist(p_i,\ol{\Omega})$ from the proof of \cref{lem:CU} and the cap $\|T_{\fii_i}\|\le\beta_1$. If $\beta_1=0$, every generator is a similarity, so $D^2\fii_i\equiv0$ and $T_{\fii_i}\equiv0$ for every $i\in\II$; hence $\|D\fii_i(u)-D\fii_i(v)\|=0$ and $|T_{\fii_i}(u)-T_{\fii_i}(v)|=0$ for all $u,v\in\R^d$, and no neighborhood is needed.

  The matrix cocycles $Df_{\iii|_{k-1}}(x)=D\fii_{i_{k-1}}\cdots D\fii_{i_1}$ along the two orbits then differ by at most $Ck\lambda^{\,k-1}d_2$, replacing one factor at a time, the unreplaced factors contributing at most $\lambda^{\,k-2}\le(2/\cmin)\lambda^{\,k-1}$ and each replacement error being at most $\|D\fii_{i_l}(u_l)-D\fii_{i_l}(v_l)\|+d_2\le Cd_2$; when $\beta_1>0$, the mean value inequality applies along the segment from $u_l$ to $v_l$, on which $\|D^2\fii_{i_l}\|$ is bounded, the finitely many fixed generators being real-analytic on the neighborhood, off their poles, while for $\beta_1=0$ the fixed derivatives are constant. For the generator terms, first $|T_{\fii_i}(u)-T_{\fii_i}(v)|\le6\beta_1^2|u-v|$: for a non-similarity generator the derivative of the pole field, read off \cref{eq:pole}, is bounded in norm by $6/|x-p_i|^2\le6\beta_1^2$ on the neighborhood, and the mean value inequality applies along the segment from $u$ to $v$, while a similarity generator has $T_{\fii_i}\equiv0$. Second, $\|T_{\fii_i}-T_{\psi_i}\|\le Cd_2$ by the estimate of \cref{eq:C3quot} run on the single maps: with $A=D\fii_i(x)$, $\widehat A=D\psi_i(x)$, $B_j=\partial_jD\fii_i(x)$, and $\widehat B_j=\partial_jD\psi_i(x)$, the componentwise formula \cref{eq:trace} and the resolvent identity give $|T_{\fii_i}(x)-T_{\psi_i}(x)|\le\sqrt d(\|A^{-1}\|\|\widehat A^{-1}\|\|A-\widehat A\|\max_j\|B_j\|+\|\widehat A^{-1}\|\max_j\|B_j-\widehat B_j\|)\le Cd_2$, using $\|A^{-1}\|\le\cmin^{-1}$, $\|\widehat A^{-1}\|\le2\cmin^{-1}$, and $\max_j\|B_j\|\le L_2\cmax$ from \cref{eq:scaling}. Summing $\sum_kk\lambda^{\,k-1}<\infty$ over the summands of \cref{eq:Tword} gives the bound for finite words. For infinite words, apply the finite-word bound to the prefixes and pass to their uniform limits from \cref{lem:dual}.

  Finally, suppose that $\Phi\in\mathcal G_T$ and put $\delta=\Delta_T(\Phi)>0$. Enlarge $C$ if necessary so that $C>0$, and let $\Psi\in\mathcal S$ satisfy $d_2(\Phi,\Psi)<\min\{r_0,\delta/(3C)\}$. For every $(\iii,\jjj)\in Z$, two applications of the preceding bound give $\|T^\Psi_\iii-T^\Psi_\jjj\|\ge\|T^\Phi_\iii-T^\Phi_\jjj\|-2Cd_2(\Phi,\Psi)>\delta/3$. Taking the infimum over $Z$ gives $\Delta_T(\Psi)\ge\delta/3>0$, so $\mathcal G_T$ is open.
\end{proof}

Moving the domain by a similarity of $\R^d$ rescales the gap condition but does not otherwise affect it, and the next lemma records this. For a similarity $S$ of $\R^d$ and $\Phi=(\fii_i)_{i\in\II}\in\mathcal S$ write $\Phi^S=(S^{-1}\circ\fii_i\circ S)_{i\in\II}$, and for $c\in\R^d$ write $\tau_c(x)=x+c$.

\begin{lemma} \label{lem:similarity}
  Let $S(x)=\lambda Ox+b$ be a similarity of $\R^d$, with $\lambda>0$, $O\in O(d)$, and $b\in\R^d$. Then $\Phi\mapsto\Phi^S$ is a bijection of $\mathcal S$ onto the system space of the pair $(S^{-1}(\Omega),S^{-1}(\Omega'))$, the norms of the conjugated systems being taken over $S^{-1}(\ol{\Omega})$, with
  \begin{equation*}
    \min\{\lambda,\lambda^{-1}\}d_2(\Phi,\Psi) \le d_2(\Phi^S,\Psi^S) \le \max\{\lambda,\lambda^{-1}\}d_2(\Phi,\Psi)
  \end{equation*}
  for all $\Phi,\Psi\in\mathcal S$, and $\cmax(\Phi^S)=\cmax(\Phi)$ and $\max_{i\in\II}\|T_{\fii^S_i}\|=\lambda\max_{i\in\II}\|T_{\fii_i}\|$. Moreover,
  \begin{equation*}
    T^{\Phi^S}_\iii=\lambda O^{\top}(T^\Phi_\iii\circ S) \qquad\text{and}\qquad p^{\Phi^S}_\iii=S^{-1}(p^\Phi_\iii)
  \end{equation*}
  for every $\iii\in\II^\N\cup\II^*$, where $S^{-1}(\infty)=\infty$. In particular $\Delta_T(\Phi^S)=\lambda\Delta_T(\Phi)$, so $\Phi\mapsto\Phi^S$ carries $\mathcal G_T$ onto the gap class of $(S^{-1}(\Omega),S^{-1}(\Omega'))$, and for a translation $S=\tau_c$ it is an isometry.
\end{lemma}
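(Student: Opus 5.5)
The whole lemma reduces to direct computation with the composition laws of \cref{lem:chain}, applied to the three-fold composition $S^{-1}\circ\fii_i\circ S$. We begin with membership. Each $\fii^S_i=S^{-1}\circ\fii_i\circ S$ is conformal on $S^{-1}(\Omega')$, being a composition of conformal maps. It sends $S^{-1}(\ol{\Omega})$ into $S^{-1}(\Omega)$. Its conformal factor is $\lambda^{-1}(\lambda_{\fii_i}\circ S)\lambda=\lambda_{\fii_i}\circ S$ by \cref{eq:lambdachain}, which gives $\cmax(\Phi^S)=\cmax(\Phi)$ and keeps the derivative bound below one. The inverse map is $\Psi\mapsto\Psi^{S^{-1}}$, so $\Phi\mapsto\Phi^S$ is a bijection.

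For the metric comparison we differentiate directly. Writing $S^{-1}(y)=\lambda^{-1}O^{\top}(y-b)$, we get
\[
(\fii_i^S-\psi_i^S)(x)=\lambda^{-1}O^{\top}\bigl((\fii_i-\psi_i)(Sx)\bigr),
\]
and the $k$-th derivative of this difference is $\lambda^{k-1}O^{\top}D^k(\fii_i-\psi_i)(Sx)[O\,\cdot,\dots]$. The three terms of \cref{eq:d2metric} therefore scale by $\lambda^{-1}$, $1$ and $\lambda$. Each of these lies in $[\min\{\lambda,\lambda^{-1}\},\max\{\lambda,\lambda^{-1}\}]$, and the suprema over $S^{-1}(\ol{\Omega})$ match those over $\ol{\Omega}$. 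This gives both inequalities.

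Next come the pre-Schwarzians. By \cref{eq:Tchain}, using that similarities have $T\equiv0$ and $DS=\lambda O$, we get $T_{\fii_i^S}=(DS)^{\top}(T_{\fii_i}\circ S)=\lambda O^{\top}(T_{\fii_i}\circ S)$. Taking suprema gives $\max_i\|T_{\fii_i^S}\|=\lambda\max_i\|T_{\fii_i}\|$. The reversed compositions conjugate in the same way, $f^{\Phi^S}_\iii=S^{-1}\circ f_\iii\circ S$, so the same identity holds for every finite word; equivalently, each summand of \cref{eq:Tword} transforms the same way. For infinite words we pass to uniform limits via \cref{lem:dual}.

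For the poles we use $f^{\Phi^S}_\iii=S^{-1}\circ f_\iii\circ S$. The pole of this composition is $S^{-1}(p_\iii)$, and it stays at $\infty$ exactly when $f_\iii$ is a similarity. For infinite words we can either verify directly that $\lambda O^{\top}T_p(Sx)=T_{S^{-1}p}(x)$, which is a one-line computation from \cref{eq:pole}, or invoke uniqueness in \cref{lem:poledict}. The degenerate case $\beta_1=0$ is preserved by the factor $\lambda$.

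Finally, since $O^{\top}$ is an isometry and $S$ maps $S^{-1}(\ol{\Omega})$ onto $\ol{\Omega}$, we have $\|T^{\Phi^S}_\iii-T^{\Phi^S}_\jjj\|=\lambda\|T^\Phi_\iii-T^\Phi_\jjj\|$. Taking the infimum over $Z$ gives $\Delta_T(\Phi^S)=\lambda\Delta_T(\Phi)$, and hence $\mathcal G_T$ is carried onto the corresponding gap class. For a translation $\lambda=1$, so the map is an isometry. The only step that needs care is the bookkeeping of the derivative scalings in $d_2$; everything else is formal.
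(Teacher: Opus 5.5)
Your proposal is correct and follows essentially the same route as the paper: the conformal factor transforms as $\lambda_{\fii_i}\circ S$, the three terms of $d_2$ scale by $\lambda^{-1}$, $1$, and $\lambda$, two applications of \cref{eq:Tchain} give the pre-Schwarzian identity, and \cref{lem:dual} carries it to infinite words. At the pole step, the identity $\lambda O^{\top}(T_p\circ S)=T_{S^{-1}(p)}$ and the uniqueness of \cref{lem:poledict} are not alternatives but two halves of one argument. To apply that uniqueness as stated, you should also note, as the paper does, that $S^{-1}(\mathcal P)$ is the admissible pole set of $\Phi^S$, because both the distance to $S^{-1}(\ol{\Omega})$ and the threshold in \cref{eq:admissible-poles-def} scale by $\lambda^{-1}$.
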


\begin{proof}
  The generator $\fii^S_i=S^{-1}\circ\fii_i\circ S$ is conformal on $S^{-1}(\Omega')$ and sends $S^{-1}(\ol{\Omega})$ into $S^{-1}(\Omega)$, and since $DS=\lambda O$ and $DS^{-1}=\lambda^{-1}O^{\top}$ are constant, the chain rule gives $D\fii^S_i(x)=O^{\top}D\fii_i(Sx)O$ and $D^2\fii^S_i(x)[u,v]=\lambda O^{\top}D^2\fii_i(Sx)[Ou,Ov]$, while $\fii^S_i(x)-\psi^S_i(x)=\lambda^{-1}O^{\top}(\fii_i(Sx)-\psi_i(Sx))$ for $\Phi,\Psi\in\mathcal S$. Hence $\|D\fii^S_i\|=\|D\fii_i\|\circ S$, so $\Phi^S$ lies in the system space of $(S^{-1}(\Omega),S^{-1}(\Omega'))$ with $\cmax(\Phi^S)=\cmax(\Phi)$, and the three supremum norms in \cref{eq:d2metric} are multiplied by $\lambda^{-1}$, $1$, and $\lambda$, which gives the two-sided bound; the map is onto because the same construction with $S^{-1}$ in place of $S$, on the system space of $(S^{-1}(\Omega),S^{-1}(\Omega'))$, inverts it. Two applications of the composition law \cref{eq:Tchain}, with $T_S\equiv0$ and $T_{S^{-1}}\equiv0$ by \cref{lem:mobius}, give $T_{\fii^S_i}=\lambda O^{\top}(T_{\fii_i}\circ S)$. Since the reversed composition of $\Phi^S$ associated to $\iii$ is $S^{-1}\circ f_\iii\circ S$ by \cref{eq:frev}, the same calculation gives $T^{\Phi^S}_\iii=\lambda O^{\top}(T^\Phi_\iii\circ S)$ for every finite word and, by the uniform convergence of \cref{lem:dual}, for every infinite word. Since $O^{\top}$ is an isometry and the supremum norms over $S^{-1}(\ol{\Omega})$ and over $\ol{\Omega}$ correspond under $S$, the generator norms and $\Delta_T$ are multiplied by $\lambda$. It follows that $\Phi\mapsto\Phi^S$ carries $\mathcal G_T$ onto the corresponding gap class; if $S=\tau_c$, then $\lambda=1$ and the metric bounds are equalities, so the map is an isometry. For the poles, if $\max_{i\in\II}\|T_{\fii_i}\|=0$, then every pole of either system is $\infty$ by convention. Otherwise every pole field satisfies $\lambda O^{\top}(T_p\circ S)=T_{S^{-1}(p)}$, trivially for $p=\infty$ and for finite $p$ because $Sx-p=\lambda O(x-S^{-1}(p))$, so $T^{\Phi^S}_\iii=T_{S^{-1}(p_\iii)}$ on $S^{-1}(\ol{\Omega})$, where $S^{-1}(p_\iii)$ lies in the set of admissible poles of $\Phi^S$, which is $S^{-1}(\mathcal P)$ because $\dist(S^{-1}(p),S^{-1}(\ol{\Omega}))=\lambda^{-1}\dist(p,\ol{\Omega})$ while the bound in \cref{eq:admissible-poles-def} is multiplied by $\lambda^{-1}$; the uniqueness clause of \cref{lem:poledict}, applied to $\Phi^S$, identifies it as $p^{\Phi^S}_\iii$.
\end{proof}

\subsection{Chordal geometry and the pole cocycle} \label{sec:gen-higher-chordal}

Poles may lie at or near $\infty$, where the Euclidean metric is blind, and we measure them by the chordal metric $\varsigma$ on $\RS^d=\R^d\cup\{\infty\}$, given by
\begin{equation*}
  \varsigma(x,y) = \frac{2|x-y|}{\sqrt{(1+|x|^2)(1+|y|^2)}}
\end{equation*}
and $\varsigma(x,\infty)=2(1+|x|^2)^{-1/2}$, of diameter $2$, together with the \emph{chordal factor}
\begin{equation*}
  f^{\#}(x)=\|Df(x)\| \frac{(1+|x|^2)}{(1+|f(x)|^2)}
\end{equation*}
of a M\"obius transformation. Under the stereographic identification $f^{\#}$ is the conformal factor of $f$ acting on the round sphere, so it extends to a smooth positive function on $\RS^d$, and the chain rule gives $(f\circ h)^{\#}=(f^{\#}\circ h)h^{\#}$ and, for the inverse $g=f^{-1}$, the reciprocal rule $g^{\#}(y)f^{\#}(g(y))=1$. Every M\"obius transformation obeys the two-point identity \cref{eq:twopoint} away from its pole, as established in the proof of \cref{lem:osc}. Substituting \cref{eq:twopoint} into the definition of $\varsigma$ turns it into the chordal form
\begin{equation*}
  \varsigma(f(u),f(v))=\sqrt{f^{\#}(u)f^{\#}(v)}\varsigma(u,v),
\end{equation*}
which extends to $\RS^d$ by continuity. In particular $\sup_Af^{\#}$ is a $\varsigma$-Lipschitz constant for $f$ on $A$, for every $A\subseteq\RS^d$.

The chordal factor of a generator is small at its attracting fixed point, where it agrees with the Euclidean factor and is therefore at most $\cmax$, but it is never small everywhere: a M\"obius contraction has a second fixed point on $\RS^d$, at which the chordal factor is the reciprocal of that at the first. This is the fact behind the lower bound on the chordal stretch in \cref{sec:gen-higher-obstruction}.

\begin{lemma} \label{lem:secondfixed}
  Let $\Phi=(\fii_i)_{i\in\II}\in\mathcal S$, let $i\in\II$, and let $x_i\in\ol{\Omega}$ be the fixed point of $\fii_i$. Then $\fii_i$ has a fixed point $y_i\in\RS^d\setminus\{x_i\}$ with $\fii_i^{\#}(y_i)=1/\|D\fii_i(x_i)\|\ge1/\cmax$. In particular, $\sup_{x\in\RS^d}\fii_i^{\#}(x)\ge1/\cmax$ for every $i\in\II$.
\end{lemma}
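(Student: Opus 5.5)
The plan is to identify $\fii_i$ with the M\"obius transformation of $\RS^d$ it restricts to (Liouville, as in \cref{sec:C3-poles}) and to work on the round sphere, where $\fii_i^{\#}$ is the conformal factor. First locate the fixed points. The attracting fixed point $x_i\in\ol{\Omega}$ is supplied by \cref{sec:cifs-measure}, with $\|D\fii_i(x_i)\|\le\cmax<1$. For the second fixed point, conjugate by a M\"obius map $h$ sending $x_i$ to $\infty$, so that $g=h\circ\fii_i\circ h^{-1}$ fixes $\infty$ and is therefore a similarity $g(x)=\mu Ox+b$. At $\infty$ the chordal factor of $g$ is $1/\mu$, and since chordal factors at fixed points are invariant under conjugation (chain rule for $\#$ together with the reciprocal rule), $1/\mu=\fii_i^{\#}(x_i)=\|D\fii_i(x_i)\|<1$, the chordal and Euclidean factors agreeing at a finite fixed point because $(1+|x|^2)/(1+|f(x)|^2)=1$ there. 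Hence $\mu>1$, so $g$ is an expanding similarity with the unique finite fixed point $z=(I-\mu O)^{-1}b$, which is well defined because $\mu O$ has no eigenvalue $1$. Set $y_i=h^{-1}(z)\ne x_i$.

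Next compute the factor at $y_i$. At the finite fixed point $z$ one has $g^{\#}(z)=\|Dg(z)\|=\mu$, and conjugation invariance gives $\fii_i^{\#}(y_i)=\mu=1/\|D\fii_i(x_i)\|\ge1/\cmax$. The same conclusion follows more symmetrically from the chordal two-point identity: evaluating it at $u=x_i$, $v=y_i$ gives $\varsigma(x_i,y_i)=\sqrt{\fii_i^{\#}(x_i)\fii_i^{\#}(y_i)}\,\varsigma(x_i,y_i)$ with $\varsigma(x_i,y_i)>0$, so $\fii_i^{\#}(x_i)\fii_i^{\#}(y_i)=1$ directly. I would use this as the main argument, since it avoids tracking conjugation invariance of $\#$, while keeping the conjugation only to show that a second fixed point exists. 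The final sentence of the statement is then immediate by taking $x=y_i$.

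The main obstacle is the bookkeeping at $\infty$. If $y_i=\infty$, which happens exactly when $\fii_i$ is a similarity, then $\fii_i^{\#}(\infty)$ has to be read as the continuous extension. Checking this case directly, for $f(x)=\cmax' Ox+b$ one has $f^{\#}(x)\to1/\cmax'$ as $x\to\infty$, and this matches the general formula. The two-point identity extended by continuity covers this case as well, since it was stated to extend to $\RS^d$.
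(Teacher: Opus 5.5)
Your proposal is correct and follows the paper's proof: conjugate $x_i$ to $\infty$ (the paper uses the inversion $\iota_{x_i}$ specifically), read off the similarity ratio $1/\|D\fii_i(x_i)\|>1$ from the chordal factor at $\infty$, take its unique finite fixed point, and pull it back to obtain $y_i\ne x_i$. Your chordal two-point identity shortcut for $\fii_i^{\#}(y_i)$ is a valid replacement for the paper's chain-rule and reciprocal-rule computation. It does not remove the need for conjugation invariance, though: you still use that invariance at $x_i$ to get $\mu=1/\|D\fii_i(x_i)\|\ne1$, and hence the existence of $y_i$.
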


\begin{proof}
  Put $k=\|D\fii_i(x_i)\|$. Since $\fii_i$ is conformal at $x_i$, we have $0<k\le\cmax<1$. Conjugate by the inversion $\iota_{x_i}(x)=x_i+(x-x_i)/|x-x_i|^2$ in the unit sphere centered at $x_i$, an involution of $\RS^d$ interchanging $x_i$ and $\infty$. The map $u=\iota_{x_i}\circ\fii_i\circ\iota_{x_i}$ is a M\"obius transformation fixing $\infty$, hence a similarity $u(x)=\lambda Ox+v$. Since $\iota_{x_i}$ is an involution, the reciprocal rule gives $\iota_{x_i}^{\#}(x_i)\iota_{x_i}^{\#}(\infty)=1$, while $\fii_i$ fixes $x_i$ and therefore $\fii_i^{\#}(x_i)=\|D\fii_i(x_i)\|=k$. The chain rule thus gives $u^{\#}(\infty)=\iota_{x_i}^{\#}(x_i)\fii_i^{\#}(x_i)\iota_{x_i}^{\#}(\infty)=k$. On the other hand $u^{\#}(\infty)=\lim_{x\to\infty}\lambda(1+|x|^2)/(1+|\lambda Ox+v|^2)=1/\lambda$. Thus $\lambda=1/k>1$. If $(I-\lambda O)z=0$, then $|z|=\lambda|Oz|=\lambda|z|$, and hence $z=0$. Thus $I-\lambda O$ is invertible and $u$ has the finite fixed point $y=(I-\lambda O)^{-1}v$, at which its chordal factor is $\|Du(y)\|=\lambda$. Set $y_i=\iota_{x_i}(y)$. Since $y$ is finite whereas $\iota_{x_i}^{-1}(x_i)=\infty$, we have $y_i\ne x_i$; moreover, $y_i=\infty$ exactly when $y=x_i$. The point $y_i$ is fixed by $\fii_i=\iota_{x_i}\circ u\circ\iota_{x_i}$, and the chain rule and the reciprocal rule give $\fii_i^{\#}(y_i)=\iota_{x_i}^{\#}(u(y))u^{\#}(y)\iota_{x_i}^{\#}(y_i)=\iota_{x_i}^{\#}(y)\iota_{x_i}^{\#}(\iota_{x_i}(y))\lambda=\lambda=1/k$, which is at least $1/\cmax$ and proves both claims.
\end{proof}

The supremum of the chordal factor over $\RS^d$ is a global quantity, whereas the metric $d_2$ sees a generator only on $\ol{\Omega}$. The next lemma bridges the two, a M\"obius transformation being determined by its value, its derivative, and its pre-Schwarzian at a single point; it is what makes the chordal stretch of \cref{sec:gen-higher-obstruction} continuous in $d_2$.

\begin{lemma} \label{lem:jet}
  Let $x_0\in\R^d$. A M\"obius transformation $f$ with $f(x_0)\ne\infty$ is determined by $f(x_0)$, $Df(x_0)$, and $T_f(x_0)$. On the set of triples arising in this way, equipped with the subspace topology inherited from $\R^d\times\R^{d\times d}\times\R^d$, the quantity $\sup_{x\in\RS^d}f^{\#}(x)$ depends continuously on the three entries.
\end{lemma}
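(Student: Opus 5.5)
The plan has two parts: an explicit reconstruction of $f$ from the jet $(f(x_0),Df(x_0),T_f(x_0))$, and then a continuity argument for the supremum.

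For the determination, I use the classification of \cref{lem:mobius}. If $T_f(x_0)=0$, then $f$ cannot have a finite pole, since a pole field vanishes nowhere. Hence $f$ is a similarity, and a similarity is fixed by its value and derivative at $x_0$: $f(x)=f(x_0)+Df(x_0)(x-x_0)$. If $T_f(x_0)=v\ne0$, then $f$ has a finite pole, and \cref{eq:polerecover} recovers it as $p=x_0+2v/|v|^2$. Write $f=g\circ\iota_p$ with $g(x)=\rho Ox+b$ a similarity, as in the proof of \cref{lem:mobius}. The chain rule gives $Df(x_0)=\rho O\,D\iota_p(x_0)$, where $D\iota_p(x_0)$ is invertible and depends only on $p$. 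So $\rho O=Df(x_0)D\iota_p(x_0)^{-1}$, and then $b=f(x_0)-\rho O\iota_p(x_0)$. In every case $f$ is an explicit expression in the three entries.

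For continuity, I let the triples $(a_n,A_n,v_n)$ of transformations $f_n$ converge to $(a,A,v)$, the triple of $f$, and show $\sup f_n^{\#}\to\sup f^{\#}$. A clean route is to write the whole family uniformly as $f=g\circ\mu$, with $\mu$ a Möbius map depending continuously on $v$ through $v=0$. One natural choice: translate $x_0$ to the origin, then use the map $x\mapsto x/(1-\langle w,x\rangle\ldots)$-type special conformal transformation $x\mapsto (x-|x|^2w)/(1-2\langle w,x\rangle+|w|^2|x|^2)$ with $w=-v/2$. This map is the identity at $w=0$, is Möbius, fixes the origin with derivative $I$ there, and its pole $w/|w|^2$ corresponds to the pole $p$ above. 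Its pre-Schwarzian at the origin is a fixed linear function of $w$, to be matched with $v$. Then $f=g\circ\mu_w\circ\tau_{-x_0}$, where $g$ is affine with linear part $A$ and value $a$ at the origin. The Möbius map $f$ thus depends continuously on $(a,A,v)$ as an element of the Möbius group $\mathrm{M\ddot ob}(\RS^d)$, whose topology is uniform convergence in $\varsigma$. Smooth dependence in fact holds, since via stereographic projection the group is the Lie group $O^+(d+1,1)$ and the coefficients are continuous in the entries.

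The chordal factor $f^{\#}$ is the conformal factor on the round sphere $\mathbb S^d$, a compact manifold. The map $(f,\xi)\mapsto f^{\#}(\xi)$ is continuous on $\mathrm{M\ddot ob}\times\mathbb S^d$, as a norm of the differential of a smooth group action. A supremum over a compact set of a jointly continuous function is continuous in the parameter, and this gives the claim.

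The main obstacle is the continuity through $v=0$, where the pole escapes to $\infty$ and the naive parametrization by $p$ breaks down. The special conformal parametrization above, or equivalently working in the Lorentz model, is exactly what I would use to absorb this degeneration. I would verify it by checking that $\mu_w\to\mathrm{id}$ uniformly in $\varsigma$ together with its derivative on the sphere as $w\to0$.
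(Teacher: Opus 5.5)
Your proof is correct. It shares its key device with the paper, namely the special conformal map $\iota_0\circ\tau_{\mp w}\circ\iota_0$, which absorbs the escape of the pole to $\infty$ as $T_f(x_0)\to0$.

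There is a sign slip. Your $\mu_w=\iota_0\circ\tau_{-w}\circ\iota_0$ has conformal factor $1/(1-2\langle w,x\rangle+|w|^2|x|^2)$, so $T_{\mu_w}(0)=2w$ and its pole is $w/|w|^2$. Matching both $v$ and the pole $x_0+2v/|v|^2$ therefore requires $w=v/2$, not $-v/2$. With that choice, $\mu_{v/2}$ is exactly the paper's $K_w(y)=(y+|y|^2w)/(1+2\langle w,y\rangle+|w|^2|y|^2)$ with $w=-\tfrac12T_f(x_0)$. The identity $f=g\circ\mu_{v/2}\circ\tau_{-x_0}$ then follows from your uniqueness part, since the three jets agree.

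The routes differ in two places.

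\emph{Determination.} The paper avoids your case split. It shows that $u\circ K_w^{-1}$, with $u=Df(x_0)^{-1}(f(\,\cdot\,+x_0)-f(x_0))$, fixes $0$ with derivative $I$ and vanishing pre-Schwarzian. Hence it is the identity, which gives $f(x)=f(x_0)+Df(x_0)K_w(x-x_0)$ in one stroke.

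\emph{Continuity.} The paper stays elementary. It writes $f^{\#}$ explicitly as a quotient of two quadratic polynomials in $y=x-x_0$, with coefficients continuous in $(f(x_0),Df(x_0),w)$. It checks that the denominator is positive on $\R^d$, the pole included, and that its leading coefficient is positive. It then passes to $x=\infty$ through the chart $y=z/|z|^2$, obtaining joint continuity in the data and the point before taking the supremum over the compact $\RS^d$. You instead import two facts: the identification of the M\"obius group with $O^+(d+1,1)$ acting smoothly on the round sphere, and the identification of $f^{\#}$ with the round conformal factor.

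Your route is shorter and gives more, namely smooth dependence of $f$ itself. It also makes your proposed separate check of $\mu_w\to\mathrm{id}$ superfluous, since $w\mapsto\iota_0\circ\tau_{-w}\circ\iota_0$ is already a continuous path in the group. Just be clear that it is the Lie topology, not bare uniform $\varsigma$-convergence, that yields joint continuity of $(f,\xi)\mapsto f^{\#}(\xi)$; the two topologies happen to coincide on the M\"obius group.
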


\begin{proof}
  We show that
  \begin{equation} \label{eq:jet}
    f(x) = f(x_0)+Df(x_0)\frac{y+|y|^2w}{1+2\langle w,y\rangle+|w|^2|y|^2},
  \end{equation}
  for every $x\in\R^d$ other than the pole of $f$, where $y=x-x_0$ and $w=-\tfrac12 T_f(x_0)$; this proves the first claim, the right-hand side involving only the three data. Write $K_w$ for the map sending $y$ to the fraction in \cref{eq:jet}, which is the M\"obius transformation $\iota_0\circ\tau_w\circ\iota_0$ with $\iota_0(y)=y/|y|^2$ the unit inversion. Then \cref{eq:lambdachain} gives $\lambda_{K_w}(y)=1/\theta_w(y)$ for $y\ne0$ off the pole of $K_w$, hence at $y=0$ as well by continuity, where $\theta_w(y)=1+2\langle w,y\rangle+|w|^2|y|^2$; so $K_w(0)=0$, $DK_w(0)=I$ by the expansion $K_w(y)=y+O(|y|^2)$, and $T_{K_w}(0)=-\nabla\theta_w(0)=-2w=T_f(x_0)$. The M\"obius transformation $u=Df(x_0)^{-1}(f(\,\cdot\,+x_0)-f(x_0))$ satisfies $u(0)=0$, $Du(0)=I$, and $T_u(0)=T_f(x_0)$, the last by \cref{eq:Tchain}. Since $K_w(0)=0$ and $DK_w(0)=I$, the map $v=u\circ K_w^{-1}$ fixes $0$ with $Dv(0)=I$, and \cref{eq:Tchain}, applied to $u=v\circ K_w$ at $0$, gives $T_v(0)=T_u(0)-T_{K_w}(0)=0$. By \cref{eq:pole}, a M\"obius transformation whose pre-Schwarzian vanishes at a finite point where it is defined is a similarity. Thus $v$ is a similarity, and the identities $v(0)=0$ and $Dv(0)=I$ show that $v$ is the identity. Therefore $u=K_w$, which gives \cref{eq:jet}.

  Since $\|Df(x)\|=\|Df(x_0)\|/\theta_w(y)$ and $|y+|y|^2w|^2=|y|^2\theta_w(y)$, the chordal factor of $f$ at every $x\in\R^d$ other than the pole of $f$ is
  \begin{equation} \label{eq:jetfactor}
    \begin{split}
      f^{\#}(x) &= \frac{\|Df(x_0)\|(1+|x|^2)}{\theta_w(y)(1+|f(x)|^2)} \\
      &= \frac{\|Df(x_0)\|(1+|x|^2)}{(1+|f(x_0)|^2)\theta_w(y)+2\langle f(x_0),Df(x_0)(y+|y|^2w)\rangle+\|Df(x_0)\|^2|y|^2}.
    \end{split}
  \end{equation}
  The right-hand side of \cref{eq:jetfactor} is a quotient of two polynomials of degree two in $y$ whose coefficients depend continuously on $(f(x_0),Df(x_0),w)$. Its denominator is positive on $\R^d$: off the pole of $f$ it is $\theta_w(y)(1+|f(x)|^2)$, where $\theta_w(0)=1$ and $\theta_w(y)=|y|^2|\iota_0(y)+w|^2>0$ for $y\ne0$ off the pole, and at the pole, where $y+|y|^2w=0$, it is $\|Df(x_0)\|^2|y|^2$; so is its leading coefficient $(1+|f(x_0)|^2)|w|^2+2\langle f(x_0),Df(x_0)w\rangle+\|Df(x_0)\|^2\ge|w|^2+(|f(x_0)||w|-\|Df(x_0)\|)^2$, which is positive because $\|Df(x_0)\|>0$. The quadratic parts of the numerator and denominator are scalar multiples of $|y|^2$. To verify joint continuity at infinity, write $y=z/|z|^2$ and multiply both polynomials by $|z|^2$. The resulting numerator and denominator are polynomials in $z$ whose coefficients depend continuously on $(f(x_0),Df(x_0),w)$, and at $z=0$ the denominator is the positive leading coefficient displayed above. Their quotient therefore extends jointly continuously to $x=\infty$, with value equal to the ratio of the leading coefficients. Since $f^{\#}$ is continuous on $\RS^d$, it agrees with this continuous extension also at the pole of $f$ and at $\infty$. Consequently, the extended right-hand side of \cref{eq:jetfactor} is jointly continuous in $(f(x_0),Df(x_0),w,x)$. Since $w=-\tfrac12T_f(x_0)$, taking the supremum over the compact $\RS^d$ proves the claimed continuity in the three entries.
\end{proof}

The perturbations that prove the density live in the M\"obius family itself, as translations in the source, and their effect is read on the poles through the inverse maps $g_i=\fii_i^{-1}$, M\"obius transformations of $\RS^d$ that send $\RS^d\setminus\Omega$ into itself. The next lemma collects what the density argument of \cref{sec:gen-higher-dense,lem:packing,prop:finite} need about the poles: how a pole unfolds along a word, how it depends on the word, an exact formula for the conformal factor of an inverse composition, and a Euclidean bound on the distance between the poles of two words with a long common prefix. Only the pole gap of \cref{lem:CU} and the dual bound \cref{eq:dualbounds} enter; no contraction of the inverse maps is assumed.

\begin{lemma} \label{lem:polerec}
  Suppose that $\max_{i \in \II}\|T_{\fii_i}\|>0$, and let $g_i=\fii_i^{-1}$ for $i\in\II$. Then $g_i(\RS^d\setminus\Omega)\subseteq\RS^d\setminus\Omega$ for every $i\in\II$, the map $\iii\mapsto p_\iii$ is continuous from $\II^\N\cup\II^*$ to $(\RS^d,\varsigma)$, and
  \begin{equation} \label{eq:polerec}
    p_{\kkk\lll} = g_\kkk(p_\lll)
  \end{equation}
  for all $\kkk\in\II^*$ and $\lll\in\II^\N\cup\II^*$, where $g_\kkk=g_{k_1}\circ\cdots\circ g_{k_m}$ for $\kkk=k_1\cdots k_m$ and $g_\varnothing$ is the identity. In particular $p_\iii=g_{i_1}\circ\cdots\circ g_{i_n}(\infty)$ for every $\iii\in\II^n$, and the set $\widehat P=\{p_\iii : \iii\in\II^\N\}$ is compact with $g_\kkk(\widehat P)=\{p_{\kkk\lll} : \lll\in\II^\N\}$ for every $\kkk\in\II^*$. If $\kkk\in\II^*$ is nonempty and $f_\kkk$ is not a similarity, then the \emph{co-pole} $b_\kkk=f_\kkk(\infty)$ is finite and
  \begin{equation} \label{eq:cocycle}
    \lambda_{g_\kkk}(y) = \frac{\rho_{f_\kkk}}{|y-b_\kkk|^2} \qquad\text{and}\qquad \rho_{f_\kkk} = \lambda_{f_\kkk}(x)|x-p_\kkk|^2
  \end{equation}
  for all $y\in\R^d\setminus\{b_\kkk\}$ and $x\in\R^d\setminus\{p_\kkk\}$, where $\rho_{f_\kkk}$ is the constant of \cref{lem:mobius}. Finally, if $\iii,\jjj\in\II^\N\cup\II^*$ are distinct and $x_0\in\ol{\Omega}$ and $M>0$ satisfy $p_\iii,p_\jjj\in\R^d$, $|x_0-p_\iii|\le M$, and $|x_0-p_\jjj|\le M$, then
  \begin{equation} \label{eq:poleprefix}
    |p_\iii-p_\jjj| \le \frac{M^2}{2}|T_\iii(x_0)-T_\jjj(x_0)| \le \frac{M^2\max_{i \in \II}\|T_{\fii_i}\|}{1-\cmax}\cmax^{|\iii\land\jjj|}.
  \end{equation}
\end{lemma}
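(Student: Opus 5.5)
I would prove the assertions of \cref{lem:polerec} in the order stated, with the recursion \cref{eq:polerec} carrying most of the weight.

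\emph{Invariance of the complement.} Each $\fii_i$ is a M\"obius bijection of $\RS^d$ with $\fii_i(\ol{\Omega})\subseteq\Omega$. If $y\notin\Omega$ had $g_i(y)\in\Omega$, then $y=\fii_i(g_i(y))\in\fii_i(\Omega)\subseteq\Omega$, a contradiction. Hence $g_i(\RS^d\setminus\Omega)\subseteq\RS^d\setminus\Omega$.

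\emph{The recursion for finite words.} Since $f_{\kkk\lll}=f_\lll\circ f_\kkk$ and $f_\kkk$ is a bijection of $\RS^d$, the pole is
\[
  p_{\kkk\lll}=f_{\kkk\lll}^{-1}(\infty)=f_\kkk^{-1}(f_\lll^{-1}(\infty))=f_\kkk^{-1}(p_\lll).
\]
With the reading $p=\infty$ for similarities, this also covers the case where $f_\kkk$ or $f_\lll$ is a similarity. The reversal convention \cref{eq:frev} gives $f_\kkk^{-1}=g_{k_1}\circ\cdots\circ g_{k_m}=g_\kkk$. Taking $\lll=\varnothing$, whose pole is $\infty$, gives $p_\iii=g_{i_1}\circ\cdots\circ g_{i_n}(\infty)$. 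One check is needed: the poles produced this way must agree with those that \cref{lem:poledict} attaches through $T_\iii=T_{p_\iii}$. This holds by \cref{lem:mobius} and the uniqueness clause of \cref{lem:poledict}.

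\emph{Continuity and infinite words.} For an infinite word I would first prove continuity of $\iii\mapsto p_\iii$ into $(\RS^d,\varsigma)$. Suppose $\iii_n\to\iii$. Then $T_{\iii_n}\to T_\iii$ uniformly by \cref{eq:dualbounds}. All poles lie in the compact set $\mathcal P\subset\RS^d$, so it suffices to show that every subsequential $\varsigma$-limit $q$ of $p_{\iii_n}$ equals $p_\iii$. The uniform convergence $T_{p}\to T_q$ for $p\to q$ in $\mathcal P$ was shown in the proof of \cref{lem:poledict}, in both the finite case and the case $q=\infty$. Hence $T_\iii=T_q$, and uniqueness gives $q=p_\iii$. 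The recursion then passes to infinite $\lll$: $g_\kkk$ is continuous on $\RS^d$, and $p_{\kkk(\lll|_n)}\to p_{\kkk\lll}$ by continuity. Compactness of $\widehat P$ follows as the continuous image of $\II^\N$, and the identity $g_\kkk(\widehat P)=\{p_{\kkk\lll}\}$ is immediate from the recursion.

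\emph{The cocycle formula \cref{eq:cocycle}.} Take $f=f_\kkk$, not a similarity, with pole $p=p_\kkk$. By the proof of \cref{lem:mobius}, $f=A\circ\iota_p$ with $A(x)=\rho Ox+v$. Then $b_\kkk=f(\infty)=A(p)$ is finite, and
\[
  g_\kkk=f^{-1}=\iota_p\circ A^{-1}.
\]
The conformal factor is $\lambda_{g_\kkk}(y)=\rho^{-1}|A^{-1}y-p|^{-2}=\rho/|y-b_\kkk|^2$, because $|A^{-1}y-p|=\rho^{-1}|y-A(p)|$. The second identity in \cref{eq:cocycle} is the definition of $\rho_f$ from \cref{lem:mobius}.

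\emph{The prefix bound \cref{eq:poleprefix}.} I expect this to be the main obstacle. For finite $p,q$ and $x=x_0$, the recovery formula \cref{eq:polerecover} writes $p-x=-2(x-p)\cdot$ as the inversion image $2T_p(x)/|T_p(x)|^2$. Inversion $v\mapsto 2v/|v|^2$ satisfies
\[
  \Bigl|\tfrac{2a}{|a|^2}-\tfrac{2b}{|b|^2}\Bigr|=\frac{2|a-b|}{|a||b|}.
\]
Here $|T_p(x_0)|=2/|x_0-p|\ge2/M$, and likewise for $q$, so
\[
  |p-q|\le\frac{2|a-b|}{(2/M)^2}=\frac{M^2}{2}|a-b|,
\]
which is the first inequality. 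The second follows from \cref{eq:dualbounds} evaluated at the point $x_0$.
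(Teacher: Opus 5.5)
Your proof is correct and follows the paper's argument essentially step for step: invariance by contradiction, the recursion from $f_{\kkk\lll}=f_\lll\circ f_\kkk$, the factorization $f_\kkk=A\circ\iota_{p_\kkk}$ for \cref{eq:cocycle}, and the two-point identity for inversion together with $|T_{p}(x_0)|\ge 2/M$ for \cref{eq:poleprefix}. The only variation is the continuity step. The paper writes $p_\iii=\Theta(T_\iii(x_0))$ with the explicit continuous map $\Theta(v)=x_0+2v/|v|^2$, $\Theta(0)=\infty$. Your argument instead uses subsequential limits in the compact set $\mathcal P$, the uniform convergence of pole fields from the proof of \cref{lem:poledict}, and its uniqueness clause, and it is equally valid.
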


\begin{proof}
  The invariance of $\RS^d\setminus\Omega$ is immediate: if $y\in\RS^d\setminus\Omega$ had $g_i(y)\in\Omega$, then $y=\fii_i(g_i(y))\in\fii_i(\Omega)\subseteq\Omega$, a contradiction. For \cref{eq:polerec} with a finite $\lll$, the reversed compositions satisfy $f_{\kkk\lll}=f_\lll\circ f_\kkk$ by \cref{eq:frev}, so that $f_{\kkk\lll}^{-1}=g_\kkk\circ f_\lll^{-1}$; evaluating at $\infty$ gives the claim, the empty composition being the identity with $p_\varnothing=\infty$, and the choice $\lll=\varnothing$ gives the unfolded form.

  The continuity is read off the pole dictionary. Fix $x_0\in\ol{\Omega}$ and define $\Theta\colon\R^d\to\RS^d$ by $\Theta(v)=x_0+2v/|v|^2$ for $v\ne0$ and $\Theta(0)=\infty$. The map $\Theta$ is continuous, being a composition of continuous maps on $\R^d\setminus\{0\}$ and satisfying $|\Theta(v)|\ge2/|v|-|x_0|\to\infty$ as $v\to0$. By \cref{lem:poledict} we have $T_\iii=T_{p_\iii}$ on $\ol{\Omega}$ for every word, so $p_\iii=\Theta(T_\iii(x_0))$: for a finite pole this is \cref{eq:polerecover}, and for $p_\iii=\infty$ it is the vanishing of $T_\infty$. Since $\iii\mapsto T_\iii(x_0)$ is continuous by \cref{lem:dual}, so is $\iii\mapsto p_\iii$, and $\widehat P$ is compact as the image of the compact $\II^\N$. For an infinite $\lll$, the finite case gives $p_{\kkk(\lll|_n)}=g_\kkk(p_{\lll|_n})$ for every $n$, and letting $n\to\infty$ proves \cref{eq:polerec}, by the continuity just established and that of $g_\kkk$ on $\RS^d$; the description of $g_\kkk(\widehat P)$ follows.

  We turn to \cref{eq:cocycle}, where $\kkk$ is nonempty and $f_\kkk$ is not a similarity. By \cref{lem:mobius} the composition factors as $f_\kkk=S\circ\iota_{p_\kkk}$, where $\iota_{p_\kkk}(x)=p_\kkk+(x-p_\kkk)/|x-p_\kkk|^2$ is the inversion in the unit sphere centered at $p_\kkk$, of conformal factor $|x-p_\kkk|^{-2}$, and $S$ is a similarity of ratio $\rho_{f_\kkk}$; the second identity in \cref{eq:cocycle} is the formula $\lambda_{f_\kkk}(x)=\rho_{f_\kkk}/|x-p_\kkk|^2$ of that lemma. The co-pole is finite because $\iota_{p_\kkk}(\infty)=p_\kkk$, so that $b_\kkk=S(p_\kkk)$, and the inversion is an involution, so that $g_\kkk=\iota_{p_\kkk}\circ S^{-1}$. Now $S^{-1}$ scales distances by $\rho_{f_\kkk}^{-1}$ and has the constant conformal factor $\rho_{f_\kkk}^{-1}$, whence $|S^{-1}(y)-p_\kkk|=\rho_{f_\kkk}^{-1}|y-b_\kkk|$ and, by \cref{eq:lambdachain}, $\lambda_{g_\kkk}(y)=\rho_{f_\kkk}^{-1}|S^{-1}(y)-p_\kkk|^{-2}=\rho_{f_\kkk}/|y-b_\kkk|^2$.

  For the last claim, let $\iii$, $\jjj$, $x_0$, and $M$ be as in the statement and put $v_\iii=T_\iii(x_0)$ and $v_\jjj=T_\jjj(x_0)$. Both poles are finite, so \cref{eq:polerecover} gives $p_\iii=x_0+2\iota_0(v_\iii)$ and $p_\jjj=x_0+2\iota_0(v_\jjj)$ with the unit inversion $\iota_0(v)=v/|v|^2$, while $|v_\iii|=2/|x_0-p_\iii|\ge2/M$ and likewise $|v_\jjj|\ge2/M$. The two-point identity for $\iota_0$, computed in the proof of \cref{lem:osc}, reads $|\iota_0(u)-\iota_0(v)|=|u-v|/(|u||v|)$, so that
  \begin{equation*}
    |p_\iii-p_\jjj| = \frac{2|v_\iii-v_\jjj|}{|v_\iii||v_\jjj|} \le \tfrac12M^2|v_\iii-v_\jjj|,
  \end{equation*}
  which is the first inequality in \cref{eq:poleprefix}. The second is \cref{eq:dualbounds}.
\end{proof}

\subsection{Density under a contraction threshold} \label{sec:gen-higher-dense}

The density is proved by transversality for the poles. Translating the source of a generator moves the pole of that letter rigidly, and the pole of a longer word through a cocycle of the inverse maps. The terms of that cocycle decay geometrically, by \cref{eq:cocycle}, once the co-poles are trapped in $\Omega$ and the poles stay bounded. The first letter of a word therefore governs its pole, and a pair in $Z$, whose first letters differ, is pulled apart along a direction balanced between the two letters.

The argument has three steps, among which the hypotheses of \cref{thm:C3dense} divide. \Cref{lem:trapped} records what the hypothesis on the co-poles gives for a single system: the poles of nonempty words stay in a compact set off $\Omega$, and the inverse compositions contract at the poles at a geometric rate. \Cref{prop:transversal3} translates the sources of the generators, the hypothesis on the poles keeping the translated generators conformal on $\Omega'$, and uses a bound on $\cmax$ in terms of the pole geometry to make the two leading terms of the cocycle dominate its tail uniformly over the pair space; the balanced direction is then a right inverse of the derivative of the pole difference, and the parameter map is a submersion. The proof of \cref{thm:C3dense} then counts, which needs a further bound on $\cmax$, in terms of the cardinality of the alphabet. \Cref{cor:C3dense} below trades the hypothesis on the co-poles and the bound $\cmax(1+K)<1$ for a single lower bound on the pre-Schwarzians of the generators, read off on $\ol{\Omega}$, and restates the hypothesis on the poles as the boundedness of the generators on $\Omega'$.

The trapping is a property of a single system, which the density argument uses both for $\Phi$ and for its translates, and we record it with explicit constants. Suppose that $\fii_i(\infty)\in\Omega$ for every $i\in\II$. Then no generator fixes $\infty$, so none is a similarity, and \cref{lem:CU} gives $\max_{i \in \II}\|T_{\fii_i}\|>0$; thus $0<C_0<\infty$, and the poles $p_\iii$ of \cref{lem:poledict}, the inverse maps $g_i=\fii_i^{-1}$ and their compositions $g_\kkk$ of \cref{lem:polerec}, and the co-poles $b_i=\fii_i(\infty)$ are defined. With the convention $\dist(\infty,A)=\infty$ for $A\subseteq\R^d$, we measure the pole geometry of such a system by the constants
\begin{equation} \label{eq:polegeom}
  \begin{aligned}
    \rho_\Phi &= \min_{i \in \II}\dist(\fii_i(\ol{\Omega}),\partial\Omega), \qquad& \rho_b &= \min\{\min_{i \in \II}\dist(b_i,\partial\Omega),\rho_\Phi\}, \\
    R_* &= \max_{i \in \II}\sup_{y\in\RS^d\setminus\Omega}\dist(g_i(y),\ol{\Omega}), \qquad& R &= \sup\{\dist(p_\kkk,\ol{\Omega}) : \kkk\in\II^*\setminus\{\varnothing\}\}, \\
    K &= \frac{R^2}{\max\{\rho_b,2/C_0\}^2}.
  \end{aligned}
\end{equation}

\begin{lemma} \label{lem:trapped}
  Let $\Phi=(\fii_i)_{i \in \II}$ be a conformal iterated function system on $\Omega \subset \R^d$, $d \ge 3$, such that $b_i=\fii_i(\infty)\in\Omega$ for every $i\in\II$, and let $\rho_\Phi$, $\rho_b$, $R_*$, $R$, and $K$ be as in \cref{eq:polegeom}. Then $\rho_b>0$, and the following hold.
  \begin{enumerate}
    \item\label{it:trapped-poles} For every $i\in\II$ the set $g_i(\RS^d\setminus\Omega)$ is a compact subset of $\R^d\setminus\Omega$, so that $R_*<\infty$. For every nonempty $\iii\in\II^\N\cup\II^*$ the pole $p_\iii$ lies in $g_{i_1}(\RS^d\setminus\Omega)$ and satisfies $2/C_0\le\dist(p_\iii,\ol{\Omega})\le R_*$. In particular $2/C_0\le R\le R_*$, so that $0<K<\infty$, and $f_\kkk$ is not a similarity for any nonempty $\kkk\in\II^*$.
    \item\label{it:trapped-copoles} For every nonempty $\kkk=k_1\cdots k_m\in\II^*$ the co-pole $b_\kkk=f_\kkk(\infty)$ equals $b_{k_1}$ if $m=1$ and lies in $\fii_{k_m}(\ol{\Omega})$ if $m\ge2$. In particular $b_\kkk\in\Omega$ and $\dist(b_\kkk,\partial\Omega)\ge\rho_b$.
    \item\label{it:trapped-decay} For every nonempty $\kkk\in\II^*$ and every nonempty $\lll\in\II^\N\cup\II^*$ we have $\rho_{f_\kkk}\le R^2\cmax^{|\kkk|}$ and $|p_\lll-b_\kkk|\ge\max\{\rho_b,2/C_0\}$, and consequently
    \begin{equation} \label{eq:gdecay}
      \|Dg_\kkk(p_\lll)\| \le K\cmax^{|\kkk|}.
    \end{equation}
  \end{enumerate}
\end{lemma}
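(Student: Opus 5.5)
The plan is to prove the three items in order, beginning with $\rho_b>0$. Each $b_i$ lies in the open set $\Omega$, so $\dist(b_i,\partial\Omega)>0$. Each $\fii_i(\ol\Omega)$ is a compact subset of $\Omega$, so $\rho_\Phi>0$. The minimum of finitely many positive numbers is positive.

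For \cref{it:trapped-poles}, I first note that $\RS^d\setminus\Omega$ is compact in $\RS^d$. Hence $g_i(\RS^d\setminus\Omega)$ is compact in $\RS^d$, and it lies in $\RS^d\setminus\Omega$ by \cref{lem:polerec}. It avoids $\infty$ because $g_i(y)=\infty$ forces $y=\fii_i(\infty)=b_i\in\Omega$. A compact subset of $\R^d$ has bounded distance to $\ol\Omega$, so $R_*<\infty$.

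Next I take a nonempty $\iii$ with first letter $i_1$. For finite $\iii$, \cref{eq:polerec} gives $p_\iii=g_{i_1}(p_{\sigma\iii})$. Here $p_{\sigma\iii}$ is either $\infty$ (empty tail) or, by \cref{lem:CU} together with the fact that finite poles lie off $\ol\Omega$, a point of $\RS^d\setminus\Omega$; hence $p_\iii\in g_{i_1}(\RS^d\setminus\Omega)$. Infinite words follow by continuity of $\iii\mapsto p_\iii$ and compactness of that image. This gives $p_\iii$ finite with $\dist(p_\iii,\ol\Omega)\le R_*$, while the lower bound $2/C_0$ is the admissibility $p_\iii\in\mathcal P$ from \cref{lem:poledict}. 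A finite pole means $f_\kkk$ is not a similarity, and the remaining bounds on $R$ and $K$ follow at once.

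For \cref{it:trapped-copoles}, I use $f_\kkk=\fii_{k_m}\circ f_{\kkk|_{m-1}}$. When $m=1$ we have $b_\kkk=b_{k_1}$. When $m\ge2$, we have $f_{\kkk|_{m-1}}(\infty)=b_{\kkk|_{m-1}}$, which by induction lies in $\Omega$. Applying $\fii_{k_m}$ places $b_\kkk$ in $\fii_{k_m}(\ol\Omega)$, whose distance to $\partial\Omega$ is at least $\rho_\Phi\ge\rho_b$.

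Item \cref{it:trapped-decay} is the substantive step. By \cref{eq:cocycle}, $\rho_{f_\kkk}=\lambda_{f_\kkk}(x)|x-p_\kkk|^2$ for any $x$. I will choose $x\in\ol\Omega$ as a nearest point to $p_\kkk$. Then $\lambda_{f_\kkk}(x)\le\cmax^{|\kkk|}$ and $|x-p_\kkk|\le R$, which gives $\rho_{f_\kkk}\le R^2\cmax^{|\kkk|}$.

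For the separation $|p_\lll-b_\kkk|$, I argue with the segment from $b_\kkk$ to $p_\lll$. Since $b_\kkk\in\Omega$ and $p_\lll\notin\Omega$, this segment crosses $\partial\Omega$, so $|p_\lll-b_\kkk|\ge\dist(b_\kkk,\partial\Omega)\ge\rho_b$. Also $b_\kkk\in\ol\Omega$, so $|p_\lll-b_\kkk|\ge\dist(p_\lll,\ol\Omega)\ge2/C_0$. Then \cref{eq:cocycle} gives $\|Dg_\kkk(p_\lll)\|=\rho_{f_\kkk}/|p_\lll-b_\kkk|^2\le K\cmax^{|\kkk|}$; note $p_\lll\ne b_\kkk$, so the formula applies.

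I expect the main obstacle to be the bookkeeping at $\infty$ in \cref{it:trapped-poles}: ensuring that tails with pole $\infty$ (the empty tail, or infinite limits) are handled and that finite poles of $\sigma\iii$ lie in $\RS^d\setminus\Omega$. I would settle this directly from \cref{lem:polerec} and continuity.
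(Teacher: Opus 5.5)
Your proof is correct and follows the paper's argument essentially step for step. You use the same compactness of $g_i(\RS^d\setminus\Omega)$, the same appeal to \cref{eq:polerec} and admissibility in $\mathcal P$ for the poles, and the same nearest-point and segment-crossing estimates in \cref{it:trapped-decay}. The differences are only cosmetic: you reach infinite words through continuity of $\iii\mapsto p_\iii$, whereas the paper applies \cref{eq:polerec} to infinite tails directly; you peel off the last letter rather than the first for the co-poles; and you leave implicit the standing observation that $\max_{i}\|T_{\fii_i}\|>0$, which holds because no generator fixes $\infty$ and is what makes \cref{lem:poledict,lem:polerec} available.
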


\begin{proof}
  As observed before \cref{eq:polegeom}, $\max_{i \in \II}\|T_{\fii_i}\|>0$, so that \cref{lem:poledict,lem:polerec} apply to $\Phi$. The set $\RS^d\setminus\Omega$ is compact, and each $g_i$ is a homeomorphism of $\RS^d$ that maps it into itself by \cref{lem:polerec} and sends only $b_i$ to $\infty$. Hence $g_i(\RS^d\setminus\Omega)$ is a compact subset of $\RS^d\setminus\Omega$ that omits $\infty$, that is, a compact subset of $\R^d\setminus\Omega$, and $R_*<\infty$ because $\dist(\,\cdot\,,\ol{\Omega})$ is continuous. Now let $\iii\in\II^\N\cup\II^*$ be nonempty. By \cref{eq:polerec}, $p_\iii=g_{i_1}(p_{\sigma\iii})$, where $p_{\sigma\iii}$ is $p_\varnothing=\infty$ if $\iii$ has length one and otherwise a point of $\mathcal P$, whose finite points lie at distance at least $2/C_0$ from $\ol{\Omega}$. In either case $p_{\sigma\iii}\in\RS^d\setminus\Omega$, so that $p_\iii\in g_{i_1}(\RS^d\setminus\Omega)$, and $p_\iii$ is a finite point of $\mathcal P$ with $2/C_0\le\dist(p_\iii,\ol{\Omega})\le R_*$. Taking the supremum over the nonempty finite words gives $2/C_0\le R\le R_*$, and since a finite word has the pole $\infty$ exactly when its composition is a similarity, no composition along a nonempty finite word is a similarity. This proves \cref{it:trapped-poles}.

  For \cref{it:trapped-copoles}, let $\kkk=k_1\cdots k_m$ be nonempty. The identity $f_\kkk=f_{\sigma\kkk}\circ\fii_{k_1}$ of \cref{eq:frev} gives $b_\kkk=f_{\sigma\kkk}(b_{k_1})$, which is $b_{k_1}$ when $m=1$. When $m\ge2$, we have $f_{\sigma\kkk}=\fii_{k_m}\circ f_{k_2\cdots k_{m-1}}$ by \cref{eq:frev}, the inner composition being the identity when $m=2$, and $f_{k_2\cdots k_{m-1}}(b_{k_1})\in\ol{\Omega}$, every composition sending $\ol{\Omega}$ into itself; so $b_\kkk\in\fii_{k_m}(\ol{\Omega})$. In both cases $b_\kkk\in\Omega$ and $\dist(b_\kkk,\partial\Omega)\ge\rho_b$. Moreover $\rho_b>0$, each $b_i$ lying in the open set $\Omega$ and each $\fii_i(\ol{\Omega})$ being a compact subset of it.

  For \cref{it:trapped-decay}, let $\kkk$ and $\lll$ be as there. By \cref{it:trapped-poles} the composition $f_\kkk$ is not a similarity, so \cref{eq:cocycle} applies to it. Evaluating its second identity at a point $x\in\ol{\Omega}$ nearest to $p_\kkk$ and using $\lambda_{f_\kkk}\le\cmax^{|\kkk|}$ on $\ol{\Omega}$ gives $\rho_{f_\kkk}=\lambda_{f_\kkk}(x)\dist(p_\kkk,\ol{\Omega})^2\le R^2\cmax^{|\kkk|}$. The pole $p_\lll$ lies in $\R^d\setminus\Omega$ by \cref{it:trapped-poles} and the co-pole $b_\kkk$ in $\Omega$ by \cref{it:trapped-copoles}, so the segment joining them meets $\partial\Omega$, and $|p_\lll-b_\kkk|\ge\dist(b_\kkk,\partial\Omega)\ge\rho_b$; moreover $|p_\lll-b_\kkk|\ge\dist(p_\lll,\ol{\Omega})\ge2/C_0$, as $b_\kkk\in\ol{\Omega}$. The first identity of \cref{eq:cocycle} at $y=p_\lll$ now gives $\|Dg_\kkk(p_\lll)\|=\lambda_{g_\kkk}(p_\lll)=\rho_{f_\kkk}/|p_\lll-b_\kkk|^2\le K\cmax^{|\kkk|}$, which is \cref{eq:gdecay}.
\end{proof}

The translated systems inherit the trapping with uniform constants, since translating the source of a generator moves its pole rigidly and leaves its co-pole in place. The next proposition, the counterpart of \cref{prop:transversal}, combines this with the domination of the pole cocycle by its two leading terms and derives the two estimates on the pole differences that the counting consumes.

\begin{proposition} \label{prop:transversal3}
  Let $\Phi=(\fii_i)_{i \in \II}$ be a conformal iterated function system on $\Omega \subset \R^d$, $d \ge 3$. Suppose that no generator $\fii_i$ is a similarity, that $\dist(p_i,\Omega')>0$ and $b_i=\fii_i(\infty)\in\Omega$ for every $i\in\II$, and that $\cmax(1+K)<1$, where $K$ is the constant of \cref{eq:polegeom}. Let $\lambda\in(\cmax,1)$. Then there are an open ball $B\subseteq\R^{Nd}$ centered at $0$ and a constant $C<\infty$ such that, for every $t=(t_i)_{i\in\II}\in B$, the system $\Phi_t=(\fii_i\circ\tau_{-t_i})_{i\in\II}$ belongs to $\mathcal S$ and satisfies $d_2(\Phi_t,\Phi)\le C|t|$ and $\max_{i \in \II}\|T_{\fii_i\circ\tau_{-t_i}}\|>0$, and its poles $p^t_\iii$ are finite for all nonempty $\iii\in\II^\N\cup\II^*$. Moreover, the map $F\colon B\times Z\to\R^d$ defined by
  \begin{equation} \label{eq:F3def}
    F(t,(\iii,\jjj)) = p^t_\iii-p^t_\jjj
  \end{equation}
  satisfies
  \begin{equation} \label{eq:F3smallball}
    \LL^{Nd}(\{t\in B : |F(t,w)|\le u\}) \le Cu^d
  \end{equation}
  for all $u>0$ and $w\in Z$, and $|F(t,w)-F(t,w')|\le C\varrho_\lambda(w,w')$ for all $t\in B$ and $w,w'\in Z$, where $\varrho_\lambda$ is the metric of \cref{lem:paircover}.
\end{proposition}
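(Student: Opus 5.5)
The plan is to follow the two-step structure of \cref{prop:transversal}. First I establish the uniform properties of the translated family. Then I compute the parameter derivative of the pole difference and show that it has a right inverse whose norm is bounded uniformly over $Z$, so that \cref{lem:smallball} applies.

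\emph{The translated family.} The generator $\fii_i\circ\tau_{-t_i}$ has pole $p_i+t_i$ and unchanged co-pole $b_i$, and its inverse is $\tau_{t_i}\circ g_i$. I choose the radius of $B$ below $\dist(p_i,\Omega')$. The translated generators are then M\"obius maps that are conformal on $\Omega'$. Their $\mathcal C^2$-distance to $\fii_i$ on $\ol{\Omega}$ is at most $C|t_i|$, by the mean value inequality applied to $\fii_i$ and its first two derivatives, which are bounded on a neighborhood of $\ol{\Omega}$ by \cref{eq:scaling}. The stability remark before \cref{eq:Zdef} places $\Phi_t$ in $\mathcal S$ for small $t$. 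Since no translated generator fixes $\infty$, the system has positive generator pre-Schwarzians, and \cref{lem:trapped} applies to $\Phi_t$. I then check that $\rho_b$, $R_*$, $R$, $K$ and $C_0$ depend continuously on $t$, so they stay uniform on a smaller ball. Here $b_i$ is fixed, and $\fii_i\circ\tau_{-t_i}(\ol{\Omega})$ and $R_*$ move continuously. Consequently all poles $p^t_\iii$ of nonempty words are finite, and $\cmax(\Phi_t)(1+K_t)\le q<1$ holds uniformly.

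\emph{Derivative of the poles.} By \cref{eq:polerec}, $p^t_\iii=\tau_{t_{i_1}}g_{i_1}(p^t_{\sigma\iii})$, with $p^t_\varnothing=\infty$. Differentiating in the direction $s\in\R^{Nd}$ gives the recursion $\partial_s p^t_\iii=s_{i_1}+Dg_{i_1}(p^t_{\sigma\iii})\,\partial_sp^t_{\sigma\iii}$, where $\partial_s p^t_{i}=s_i$ for words of length one, because $g_i(\infty)$ is independent of $t$. Unfolding the recursion, the $k$-th term is $Dg^t_{\iii|_k}(p^t_{\sigma^k\iii})s_{i_{k+1}}$. By \eqref{eq:gdecay} each such term has norm at most $K\cmax^k|s|$, and the series converges uniformly for infinite words, so differentiation under the limit is justified. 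The same holds for holomorphic dependence on complex $t$, which gives the second-derivative Lipschitz bound of \cref{lem:smallball} via Cauchy estimates, exactly as in \cref{prop:transversal}. For $w=(\iii,\jjj)\in Z$ with $i_1\neq j_1$, I take the balanced direction $s$ with $s_{i_1}=v/2$, $s_{j_1}=-v/2$, and all other coordinates zero. Then
\[
D_tF(t,w)s=v+E,\qquad |E|\le \sum_{k\ge1}K\cmax^k|v|\cdot\tfrac12\cdot 2=\frac{K\cmax}{1-\cmax}|v|.
\]
Since $|s|=|v|/\sqrt2$ and the error is at most $\frac12\,\frac{2K\cmax}{1-\cmax}|v|$, I expect the condition $\cmax(1+K)<1$ to be exactly what makes the error strictly smaller than $|v|$. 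This yields $\sigma_{\min}(D_tF)\ge\sigma_*>0$, uniformly in $w$ and $t$. This is the main obstacle: the sharp accounting of the $k\ge1$ terms. I would try to bound the error against the leading term, possibly by using the $(i_1,j_1)$ coordinates directly instead of the crude factor, and shrink $B$ to absorb the continuity errors by \cref{eq:sigmamin}. \Cref{lem:smallball} with $r=d$ and $M=Nd$ then gives \eqref{eq:F3smallball}.

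\emph{Lipschitz in $w$.} For the Lipschitz bound, \eqref{eq:poleprefix} applied to $\Phi_t$ with $M$ a uniform bound on $|x_0-p^t_\iii|$ (for instance $\diam\Omega+R_*$) gives $|p^t_\iii-p^t_{\iii'}|\le C\lambda^{|\iii\land\iii'|}$, since $\cmax(\Phi_t)\le\lambda$. Summing the two coordinates yields $|F(t,w)-F(t,w')|\le C\varrho_\lambda(w,w')$.
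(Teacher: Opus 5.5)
Your outline matches the paper's proof. You translate the sources so that $g_i^t=g_i+t_i$, unfold \eqref{eq:polerec} into the pole cocycle, pull each pair in $Z$ apart along the direction carrying $\pm v/2$ on the two first letters, and feed \cref{lem:smallball}. The Lipschitz bound in $w$ via \eqref{eq:poleprefix} is also the paper's.

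What you call the main obstacle is already settled by your own display. The bound $K\cmax/(1-\cmax)<1$ is equivalent to $\cmax(1+K)<1$. So if $\mathcal Q_w$ is the balanced map $v\mapsto s$, then $D_tF(0,w)\mathcal Q_w=I+E_w$ with $\|E_w\|\le K\cmax/(1-\cmax)<1$. A Neumann series gives a right inverse of norm at most $(1-\cmax)/(1-\cmax(1+K))$, and a right inverse $V$ of $L$ forces $\sigma_{\min}(L)\ge1/\|V\|$. No finer accounting is needed.

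The real gap is the uniformity in $t$ that \cref{lem:smallball} needs: a bound on $t\mapsto D_tF(t,w)$ and on its Lipschitz constant that does not depend on $w$.

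Your justification, holomorphy in complex $t$ ``exactly as in \cref{prop:transversal}'', does not carry over. There the perturbed generators contract an invariant complex neighborhood $U$. Here the cocycle runs through the inverse maps $g_i$, for which no invariant neighborhood with uniform contraction is available; \eqref{eq:gdecay} controls $Dg_\kkk$ only at the real poles, via \eqref{eq:cocycle}. You would first have to build a complex neighborhood of the pole set that the complexified $g_i^t$ preserve and on which $Dg^t_\kkk$ decays uniformly in $\kkk$. Nothing in your proposal does this.

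Your alternative, continuity of $K$ in $t$, has the same hole. $R$ is a supremum over infinitely many words, so its continuity needs equicontinuity of $t\mapsto p^t_\kkk$, which is exactly the derivative bound in question.

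The paper avoids both problems by real differentiation with crude constants.
\begin{itemize}
\item For all small $t$ it uses only $\cmax(\Phi_t)\le\lambda$, $\rho_b(\Phi_t)\ge\rho_b/2$ and $R(\Phi_t)\le R_*(\Phi_t)\le R_*+1$. The last holds since $g_i^t=g_i+t_i$.
\item With these, \cref{lem:trapped} gives $\|Dg^t_\kkk(p^t_\lll)\|\le K_1\lambda^{|\kkk|}$ with $K_1=4(R_*+1)^2/\rho_b^2$. This yields a bound on $\partial_tp^t_\iii$ uniform in $t$ and $\iii$.
\item Differentiating the recursion a second time in real $t$ involves only $D^2g_i$ evaluated at poles. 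All of these lie in the compact set $\{y\in\R^d\setminus\Omega : \dist(y,\ol{\Omega})\le R_*+1\}$, away from the co-poles.
\item Hence $D_tF(\,\cdot\,,w)$ is Lipschitz with a constant independent of $w$. For infinite words this follows by passing to uniform limits of the truncations.
\end{itemize}
The sharp constants $K$ and $\cmax$ then enter only at $t=0$. Applying \eqref{eq:sigmamin} on a suitably smaller ball gives $\sigma_{\min}(D_tF(t,w))\ge\sigma_0/2$ with $\sigma_0=(1-\cmax(1+K))/(1-\cmax)$. This is the ``shrink $B$'' step you sketched, now with the Lipschitz bound it requires.
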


\begin{proof}
  Below $C$ denotes a finite constant depending only on $\Phi$ and $\lambda$, whose value may change from one occurrence to the next. By \cref{lem:trapped}, the constants of \cref{eq:polegeom} are finite and $\rho_b>0$. Each generator pole $p_i$ is finite and satisfies $\dist(p_i,\ol{\Omega})\ge2/C_0$ by \cref{eq:polegap}, and each $g_i$ is a M\"obius transformation with pole $b_i\in\Omega$.

  We begin with the perturbations. For $t=(t_i)_{i\in\II}\in\R^{Nd}$, with the Euclidean norm, so that $|t_i|\le|t|$, put $\fii_i^t=\fii_i\circ\tau_{-t_i}$ and $\Phi_t=(\fii_i^t)_{i\in\II}$. Then $\fii_i^t$ is a M\"obius transformation with pole $p_i+t_i$, co-pole $\fii_i^t(\infty)=b_i$, and inverse $g_i^t=\tau_{t_i}\circ g_i=g_i+t_i$, and it is conformal on $\Omega'$ whenever $|t|<\min_{i \in \II}\dist(p_i,\Omega')$, a positive bound by hypothesis. Let $U_0$ be the closed $(1/C_0)$-neighborhood of $\ol{\Omega}$, a compact set at distance at least $1/C_0$ from every $p_i$. The generators are real-analytic off their poles, so their derivatives up to order three are bounded on $U_0$, and for $|t|\le1/C_0$ the mean value inequality, applied to $D^k\fii_i(x-t_i)-D^k\fii_i(x)$ with $x\in\ol{\Omega}$ and $k\in\{0,1,2\}$ along the segment from $x$ to $x-t_i$, which lies in $U_0$, gives $d_2(\Phi_t,\Phi)\le C'|t|$ for a constant $C'$ depending only on $\Phi$. Moreover, $T_{\fii_i^t}$ is the pole field of $p_i+t_i$ by \cref{lem:mobius}, and its supremum over $\ol{\Omega}$ is $2/\dist(p_i+t_i,\ol{\Omega})$, so that $0<\max_{i \in \II}\|T_{\fii_i^t}\|\le2C_0$ for $|t|\le1/C_0$.

  Fix an open ball $B_0\subseteq\R^{Nd}$ centered at $0$ whose radius $r_0$ satisfies $r_0\le\min\{1,1/C_0\}$, $r_0<\min_{i \in \II}\dist(p_i,\Omega')$, and $C'r_0\le\min\{\rho_\Phi/2,\lambda-\cmax\}$, and let $t\in B_0$. Then $d_2(\Phi_t,\Phi)<\min\{\rho_\Phi,1-\cmax\}$, so $\Phi_t\in\mathcal S$ and $\cmax(\Phi_t)\le\cmax+d_2(\Phi_t,\Phi)\le\lambda$ by the stability of the two requirements of \cref{sec:cifs} observed after \cref{eq:d2metric}. Since $\fii_i^t(\infty)=b_i\in\Omega$ for every $i\in\II$, \cref{lem:trapped} applies to $\Phi_t$, and we write $p^t_\iii$ and $g^t_\kkk$ for the poles and the inverse compositions of $\Phi_t$, and $\rho_b(\Phi_t)$, $R_*(\Phi_t)$, $R(\Phi_t)$, and $K(\Phi_t)$ for its constants in \cref{eq:polegeom}. The co-poles of the generators do not move, and $\dist(\fii_i^t(\ol{\Omega}),\partial\Omega)\ge\rho_\Phi-C'|t|\ge\rho_\Phi/2$ for every $i$, so that $\rho_b(\Phi_t)\ge\rho_b/2$. The identity $g_i^t=g_i+t_i$ and the $1$-Lipschitz property of $\dist(\,\cdot\,,\ol{\Omega})$ give $R_*(\Phi_t)\le R_*+1=R_1$, and hence $R(\Phi_t)\le R_1$ by \cref{lem:trapped}\cref{it:trapped-poles}. By the same item for $\Phi_t$, every pole $p^t_\iii$ of a nonempty word $\iii\in\II^\N\cup\II^*$ lies in the compact set $P_*=\{y\in\R^d\setminus\Omega : \dist(y,\ol{\Omega})\le R_1\}$, which omits every $b_i$, and \cref{lem:trapped}\cref{it:trapped-decay} for $\Phi_t$, with $K(\Phi_t)\le R_1^2/(\rho_b/2)^2$ and $\cmax(\Phi_t)\le\lambda$, gives
  \begin{equation} \label{eq:Jbound}
    \|Dg^t_\kkk(p^t_\lll)\| \le K_1\lambda^{|\kkk|}, \qquad\text{where}\qquad K_1 = \frac{4R_1^2}{\rho_b^2},
  \end{equation}
  for all nonempty $\kkk\in\II^*$ and all nonempty $\lll\in\II^\N\cup\II^*$, while at $t=0$ the same item gives the sharper bound \cref{eq:gdecay}.

  Two further consequences of the translation will be used. Since $g_i^t=g_i+t_i$ and $g_i(\infty)=p_i$, the identity \cref{eq:polerec} for $\Phi_t$ reads $p^t_\iii=g_{i_1}(p^t_{\sigma\iii})+t_{i_1}$ for every nonempty $\iii\in\II^\N\cup\II^*$, with $p^t_\varnothing=\infty$. Moreover, $|x_0-p^t_\iii|\le M_1=R_1+\diam(\ol{\Omega})$ for $x_0\in\ol{\Omega}$ and every nonempty word $\iii$, so \cref{eq:poleprefix} for $\Phi_t$, with $M_1$ in place of $M$ and with the bounds $\max_{i \in \II}\|T_{\fii_i^t}\|\le2C_0$ and $\cmax(\Phi_t)\le\lambda$, its right-hand side being increasing in $\cmax$, gives
  \begin{equation} \label{eq:prefix3}
    |p^t_\iii-p^t_\jjj| \le C\lambda^{|\iii\land\jjj|}
  \end{equation}
  for all distinct nonempty $\iii,\jjj\in\II^\N\cup\II^*$ and all $t\in B_0$.

  We next differentiate the poles. Let $\iii$ be a nonempty finite word of length $n$. Unfolding the recursion expresses $p^t_\iii$ through the maps $g_{i_1},\ldots,g_{i_{n-1}}$, each real-analytic off its pole in $\Omega$, evaluated at the poles $p^t_{\sigma^l\iii}\in P_*$ with $1\le l<n$, starting from $p^t_{\sigma^{n-1}\iii}=p_{i_n}+t_{i_n}$, so $t\mapsto p^t_\iii$ is real-analytic on $B_0$. Differentiating the recursion and unfolding gives the \emph{pole cocycle}
  \begin{equation} \label{eq:polecocycle}
    \partial_tp^t_\iii[\theta] = \sum_{l=1}^{|\iii|}D(g^t_{\iii|_{l-1}})(p^t_{\sigma^{l-1}\iii})\theta_{i_l}
  \end{equation}
  for $\theta=(\theta_i)_{i\in\II}\in\R^{Nd}$, where $g^t_{\iii|_{l-1}}=g^t_{i_1}\circ\cdots\circ g^t_{i_{l-1}}$, the empty composition being the identity; indeed, $Dg_i^t=Dg_i$ and $g^t_{i_m}(p^t_{\sigma^m\iii})=p^t_{\sigma^{m-1}\iii}$, so the chain rule identifies $D(g^t_{\iii|_{l-1}})(p^t_{\sigma^{l-1}\iii})$ with the ordered product $Dg_{i_1}(p^t_{\sigma\iii})\cdots Dg_{i_{l-1}}(p^t_{\sigma^{l-1}\iii})$. For $2\le l\le n$ the word $\sigma^{l-1}\iii$ is nonempty, so the operator in the $l$-th term has norm at most $K_1\lambda^{l-1}$ by \cref{eq:Jbound}, and at most $K\cmax^{l-1}$ when $t=0$ by \cref{eq:gdecay}; in particular $\|\partial_tp^t_\iii\|\le\Lambda_1=1+K_1\lambda/(1-\lambda)$. Differentiating the recursion once more, with $g_i^t$ and the terminal pole $p^t_{\sigma^{n-1}\iii}$ affine in $t$, gives
  \begin{equation*}
    \partial_t^2p^t_\iii[\theta,\theta'] = \sum_{l=1}^{n-1}D(g^t_{\iii|_{l-1}})(p^t_{\sigma^{l-1}\iii})D^2g_{i_l}(p^t_{\sigma^l\iii})[A_l\theta,A_l\theta']
  \end{equation*}
  for $\theta,\theta'\in\R^{Nd}$, where $A_l=\partial_tp^t_{\sigma^l\iii}$. The number $\Lambda_2=\max_{i \in \II}\sup_{y\in P_*}\|D^2g_i(y)\|$ is finite, every $g_i$ being real-analytic on a neighborhood of the compact set $P_*$, so that $\|\partial_t^2p^t_\iii\|\le\Lambda_2\Lambda_1^3$. With $\Lambda=\max\{\Lambda_1,\Lambda_2\Lambda_1^3\}$, the derivative $\partial_tp^t_\iii$ is therefore bounded by $\Lambda$ and, $B_0$ being convex, $\Lambda$-Lipschitz on $B_0$.

  Let now $\iii\in\II^\N$. By \cref{eq:prefix3}, we have $|p^t_\iii-p^t_{\iii|_n}|\le C\lambda^n$ for $t\in B_0$, so the truncations converge to $p^t_\iii$ uniformly on $B_0$. Their derivatives converge uniformly on $B_0$ to the series \cref{eq:polecocycle} formed for $\iii$. Indeed, the terms with $l\ge2$ of all these sums and of the series have norm at most $K_1\lambda^{l-1}$ by \cref{eq:Jbound}, so the tails are uniformly small; and for fixed $l$ and $n\ge l$ the operator in the $l$-th term of the truncation of length $n$ is the ordered product of the derivatives $Dg_{i_m}$, $1\le m<l$, evaluated at the poles $p^t_{\sigma^m(\iii|_n)}$, which converge to $p^t_{\sigma^m\iii}$ uniformly on $B_0$ by \cref{eq:prefix3}, all these poles lying in the compact set $P_*$, on which each $Dg_i$ is bounded and uniformly continuous. Write $G_n$ for the derivative $\partial_tp^t_{\iii|_n}$, as a function of $t$ on $B_0$, and let $G$ be its uniform limit there. For $s,t\in B_0$, the convexity of $B_0$ and the fundamental theorem of calculus give
  \begin{equation*}
    p^t_{\iii|_n}-p^s_{\iii|_n} = \int_0^1 G_n(s+r(t-s))[t-s]\dd r.
  \end{equation*}
  Passing uniformly to the limit gives the same identity with $p^t_\iii$ and $p^s_\iii$ on the left-hand side and $G$ in the integrand. Since $G$ is continuous, this identity shows that $t\mapsto p^t_\iii$ is continuously differentiable on $B_0$ with derivative $G$, which is the series in \cref{eq:polecocycle}. Furthermore, $\partial_tp^t_\iii$, as a uniform limit of maps bounded by $\Lambda$ and $\Lambda$-Lipschitz, is itself bounded by $\Lambda$ and $\Lambda$-Lipschitz on $B_0$. For $w=(\iii,\jjj)\in Z$ and $t\in B_0$ we now define $F(t,w)=p^t_\iii-p^t_\jjj$ as in \cref{eq:F3def}; thus $t\mapsto F(t,w)$ is continuously differentiable on $B_0$ for every $w\in Z$, with derivative $D_tF(t,w)$ of norm at most $2\Lambda$ and $2\Lambda$-Lipschitz in $t$.

  We come to the transversality at $t=0$. Let $w=(\iii,\jjj)\in Z$. By \cref{eq:polecocycle}, $D_tF(0,w)\theta=(\theta_{i_1}-\theta_{j_1})+\mathcal E_w\theta$ with $\mathcal E_w\theta=\sum_{l\ge2}D(g_{\iii|_{l-1}})(p_{\sigma^{l-1}\iii})\theta_{i_l}-\sum_{l\ge2}D(g_{\jjj|_{l-1}})(p_{\sigma^{l-1}\jjj})\theta_{j_l}$, the sums running up to the common length of the two words, an operator built from the tail of the cocycle alone. For $h\in\R^d$, the balanced tuple $\mathcal Q_wh$ with $(\mathcal Q_wh)_{i_1}=h/2$, $(\mathcal Q_wh)_{j_1}=-h/2$, and zeros elsewhere therefore satisfies $D_tF(0,w)\mathcal Q_wh=h+\mathcal E_w\mathcal Q_wh$, the first letters being distinct, and every entry of $\mathcal Q_wh$ has norm at most $|h|/2$, so that \cref{eq:gdecay} gives
  \begin{align*}
    |\mathcal E_w\mathcal Q_wh| &\le \sum_{l\ge2}(\|D(g_{\iii|_{l-1}})(p_{\sigma^{l-1}\iii})\|+\|D(g_{\jjj|_{l-1}})(p_{\sigma^{l-1}\jjj})\|)\tfrac12|h| \\
    &\le \sum_{l\ge2}K\cmax^{\,l-1}|h| = \frac{K\cmax}{1-\cmax}|h|,
  \end{align*}
  and the factor is smaller than $1$ exactly because $\cmax(1+K)<1$. The operator $I+\mathcal E_w\mathcal Q_w$ is therefore invertible on $\R^d$ by the Neumann series, with inverse of norm at most $(1-K\cmax/(1-\cmax))^{-1}$, and $V_w=\mathcal Q_w(I+\mathcal E_w\mathcal Q_w)^{-1}$ is a right inverse of $D_tF(0,w)\colon\R^{Nd}\to\R^d$ of norm at most $(1-\cmax)/(1-\cmax(1+K))$, since $\|\mathcal Q_w\|=1/\sqrt2\le1$. A right inverse $V$ of a linear map $L$ satisfies $|\zeta|=|V^{*}L^{*}\zeta|\le\|V\||L^{*}\zeta|$ for every $\zeta$, and hence $\sigma_{\min}(D_tF(0,w))\ge\sigma_0$ for every $w\in Z$, where $\sigma_0=(1-\cmax(1+K))/(1-\cmax)>0$. Since $D_tF(\,\cdot\,,w)$ is $2\Lambda$-Lipschitz on $B_0$, \cref{eq:sigmamin} now gives $\sigma_{\min}(D_tF(t,w))\ge\sigma_0/2$ for all $w\in Z$ and all $t$ in the open ball $B$ centered at $0$ of radius $\min\{r_0,\sigma_0/(4\Lambda)\}$.

  It remains to derive the two estimates on $B$. For every $w\in Z$, the map $t\mapsto F(t,w)$ on $B$ satisfies the hypotheses of \cref{lem:smallball} with $Nd$, $d$, $2\Lambda$, and $\sigma_0/2$ in place of $M$, $r$, $\Lambda$, and $\sigma_*$, and this lemma gives \cref{eq:F3smallball} with a constant that does not depend on $w$. For the Lipschitz bound, let $w=(\iii,\jjj)$ and $w'=(\iii',\jjj')$ be distinct points of $Z$ and let $t\in B$. Each of the differences $p^t_\iii-p^t_{\iii'}$ and $p^t_\jjj-p^t_{\jjj'}$ vanishes if its two words coincide and is bounded by \cref{eq:prefix3} otherwise. If $\iii\ne\iii'$ and $\jjj\ne\jjj'$, this gives $|F(t,w)-F(t,w')|\le2C\lambda^{\min\{|\iii\land\iii'|,|\jjj\land\jjj'|\}}=2C\varrho_\lambda(w,w')$. If $\iii=\iii'$, then $\jjj\ne\jjj'$ and $|\jjj\land\jjj'|\le|\jjj|=|\iii|$, so that $\varrho_\lambda(w,w')=\lambda^{|\jjj\land\jjj'|}$ and $|F(t,w)-F(t,w')|\le C\varrho_\lambda(w,w')$; the case $\jjj=\jjj'$ is symmetric. The ball $B$, the restriction of $F$ to $B\times Z$, and the largest of $C'$ and the constants $C$ above therefore have the properties claimed, the remaining assertions having been established on $B_0\supseteq B$.
\end{proof}

The last step is the counting, and it is where the bound $\cmax<N^{-2/d}$ enters: \cref{lem:avoidance} needs the dimension $d$ of the target of the map $F$ of \cref{prop:transversal3} to exceed the covering exponent $2\log N/\log(1/\lambda)$ of the pair space in the metric $\varrho_\lambda$, which \cref{lem:paircover} computes, for some $\lambda\in(\cmax,1)$.

\begin{theorem} \label{thm:C3dense}
  Let $\Phi=(\fii_i)_{i \in \II}$ be a conformal iterated function system on $\Omega \subset \R^d$, $d \ge 3$. Suppose that no generator $\fii_i$ is a similarity, that $\dist(p_i,\Omega')>0$ and $b_i=\fii_i(\infty)\in\Omega$ for every $i\in\II$, and that
  \begin{equation} \label{eq:strongcontr}
    \cmax < \min\biggl\{\frac{1}{N^{2/d}},\frac{1}{1+K}\biggr\}, \qquad\text{where}\qquad K = \frac{R^2}{\max\{\rho_b,2/C_0\}^2},
  \end{equation}
  with $R$ and $\rho_b$ the constants of \cref{eq:polegeom}, which are finite and positive by \cref{lem:trapped}. Then $\Phi$ lies in the $d_2$-closure of $\mathcal G_T$.
\end{theorem}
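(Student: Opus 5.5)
The plan mirrors the planar \cref{thm:C2dense}. First I fix $\lambda\in(\cmax,1)$ with $\lambda<N^{-2/d}$, which the first bound in \cref{eq:strongcontr} permits. I keep $\lambda$ in the gap so that $s=2\log N/\log(1/\lambda)<d$. The hypotheses of \cref{prop:transversal3} are exactly the standing assumptions together with $\cmax(1+K)<1$, and this is the second bound in \cref{eq:strongcontr}. So that proposition supplies an open ball $B\subseteq\R^{Nd}$ centered at $0$, a constant $C$, and translated systems $\Phi_t\in\mathcal S$ with $d_2(\Phi_t,\Phi)\le C|t|$ and $\max_i\|T_{\fii_i\circ\tau_{-t_i}}\|>0$. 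It also gives finite poles $p^t_\iii$ for all nonempty words, and the pole-difference map $F(t,(\iii,\jjj))=p^t_\iii-p^t_\jjj$ obeying the small-ball bound \cref{eq:F3smallball} with exponent $d$ and the Lipschitz bound in $\varrho_\lambda$.

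Next comes the counting. By \cref{lem:paircover} with $\beta=\lambda$, the space $(Z,\varrho_\lambda)$ is compact. For $\delta\in(0,1)$ it is covered by at most $2\lambda^{-s}\delta^{-s}$ sets of diameter at most $\delta$. So \cref{lem:avoidance} applies with $Y=Z$, $M=Nd$, $r=d$, $C_1=C_2=C$ and $C_3=2\lambda^{-s}$. Since $r=d>s$, the set of $t\in B$ for which $p^t_\iii=p^t_\jjj$ for some $(\iii,\jjj)\in Z$ is $\LL^{Nd}$-null.

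Finally, take any $t\in B$ outside this null set. Since $\max_i\|T^{\Phi_t}_{i}\|>0$, the poles of $\Phi_t$ are those of \cref{lem:poledict}, and they are pairwise distinct over $Z$. Hence \cref{lem:gap3} gives $\Phi_t\in\mathcal G_T$. Such $t$ exist arbitrarily close to $0$, and $d_2(\Phi_t,\Phi)\le C|t|$, so $\Phi$ lies in the $d_2$-closure of $\mathcal G_T$.

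There is no real obstacle left, since the transversality has been absorbed into \cref{prop:transversal3}. The one point needing care is choosing $\lambda$ strictly between $\cmax$ and $N^{-2/d}$. The Lipschitz estimate is stated for an arbitrary $\lambda\in(\cmax,1)$, so this choice is legitimate, and it is what makes the covering exponent fall below the target dimension $d$.
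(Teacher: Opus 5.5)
Your proposal is correct and follows the paper's proof of \cref{thm:C3dense} essentially step for step. The ingredients and constants match. You choose $\lambda$ strictly between $\cmax$ and $N^{-2/d}$ so that $s<d$, and invoke \cref{prop:transversal3} for that $\lambda$. You then use \cref{lem:paircover} with $\beta=\lambda$ and \cref{lem:avoidance} with $Y=Z$, $M=Nd$, $r=d$, $C_3=2\lambda^{-s}$. The conclusion that $\Phi_t\in\mathcal G_T$ off a null set via \cref{lem:gap3}, together with $d_2(\Phi_t,\Phi)\le C|t|$, is also exactly as in the paper.
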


\begin{proof}
  Fix $\lambda$ with $\cmax<\lambda<N^{-2/d}$, the interval being nonempty by \cref{eq:strongcontr}, and put $s=2\log N/\log(1/\lambda)$, so that $s<d$. The second bound in \cref{eq:strongcontr} is the hypothesis $\cmax(1+K)<1$ of \cref{prop:transversal3}, and we let $B$, $C$, and $F$ be the ball, the constant, and the map \cref{eq:F3def} that this proposition provides for $\lambda$. By \cref{lem:paircover} with $\beta=\lambda$, the space $Z$ is compact in the metric $\varrho_\lambda$ and, for every $\delta\in(0,1)$, covered by at most $2\lambda^{-s}\delta^{-s}$ sets of diameter at most $\delta$. Together with the two estimates of \cref{prop:transversal3}, this shows that $F$ satisfies the hypotheses of \cref{lem:avoidance} with $Y=Z$, with $Nd$ in place of $M$, and with $r=d$, $C_1=C_2=C$, and $C_3=2\lambda^{-s}$, and $r>s$, so the set of $t\in B$ for which $p^t_\iii=p^t_\jjj$ for some $(\iii,\jjj)\in Z$ is a null set. For every other $t\in B$ the system $\Phi_t$ is a member of $\mathcal S$ with $\max_{i \in \II}\|T_{\fii_i\circ\tau_{-t_i}}\|>0$ whose poles satisfy $p^t_\iii\ne p^t_\jjj$ for every $(\iii,\jjj)\in Z$, so $\Phi_t\in\mathcal G_T$ by \cref{lem:gap3}. Since the exceptional set is null, it contains no nonempty open subset of $B$. Hence there is a sequence $t_n\in B$ outside the exceptional set with $t_n\to0$, and $d_2(\Phi_{t_n},\Phi)\le C|t_n|\to0$. Thus members of $\mathcal G_T$ lie arbitrarily close to $\Phi$.
\end{proof}

\Cref{thm:C3dense} asks for three things that are not read off the generators on $\ol{\Omega}$: the position of the poles relative to $\Omega'$, that of the co-poles relative to $\Omega$, and the threshold $K$, into which every pole enters. The first is the boundedness of the generators on $\Omega'$, and the other two follow from a single lower bound on the pre-Schwarzians of the generators; this gives the form of the theorem stated in the introduction.

\begin{corollary} \label{cor:C3dense}
  Let $\Phi=(\fii_i)_{i \in \II}$ be a conformal iterated function system on $\Omega \subset \R^d$, $d \ge 3$, and let $\rho_\Phi=\min_{i \in \II}\dist(\fii_i(\ol{\Omega}),\partial\Omega)$. Suppose that every generator is bounded on $\Omega'$ and satisfies
  \begin{equation} \label{eq:C3neat}
    \|T_{\fii_i}\| > \frac{4}{\rho_\Phi}\sqrt{\frac{\cmax}{1-\cmax}},
  \end{equation}
  and that $\cmax<N^{-2/d}$. Then $\Phi$ lies in the $d_2$-closure of $\mathcal G_T$.
\end{corollary}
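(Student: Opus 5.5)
The plan is to verify the hypotheses of \cref{thm:C3dense} and then invoke it. First, since each generator is bounded on $\Omega'$, its pole $p_i$ cannot lie in $\ol{\Omega'}$: a M\"obius map blows up near its pole, and if $p_i\in\partial\Omega'$ the generator would be unbounded on $\Omega'$. Hence $\dist(p_i,\Omega')>0$. Also \cref{eq:C3neat} makes every $\|T_{\fii_i}\|>0$, so no generator is a similarity by \cref{lem:kernel}, and $p_i$ is finite with $\|T_{\fii_i}\|=2/\dist(p_i,\ol{\Omega})$, as in the proof of \cref{lem:CU}.

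Next I locate the co-poles. I will apply \cref{eq:cocycle} to the single letter, $\lambda_{g_i}(y)=\rho_i/|y-b_i|^2$ with $\rho_i=\lambda_{\fii_i}(x)|x-p_i|^2$. Put $\delta_i=\dist(p_i,\ol{\Omega})=2/\|T_{\fii_i}\|$. The aim is $b_i\in\Omega$ with $\dist(b_i,\partial\Omega)$ controlled. My first attempt uses the two-point identity \cref{eq:twopoint} with $v\to\infty$: for $u\in\ol{\Omega}$, $|\fii_i(u)-b_i|=\sqrt{\rho_i}/|u-p_i|$, since $\lambda_{\fii_i}(v)|v|^2\to\rho_i$. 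Taking $u$ nearest to $p_i$ and bounding $\rho_i\le\cmax\delta_i^2$ gives $|\fii_i(u)-b_i|\le\sqrt{\cmax}$, which is not scale-invariant. So I will instead use a point $u$ at distance comparable to $\delta_i$ from $p_i$ and $\lambda_{\fii_i}(u)\le\cmax$. The expected outcome is $\dist(b_i,\fii_i(\ol{\Omega}))\lesssim\delta_i\sqrt{\cmax}$ up to the geometry, and \eqref{eq:C3neat}, read as $\delta_i<\tfrac{\rho_\Phi}{2}\sqrt{(1-\cmax)/\cmax}$, should give $\dist(b_i,\fii_i(\ol{\Omega}))<\rho_\Phi$. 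That places $b_i\in\Omega$ with $\dist(b_i,\partial\Omega)$ of order $\rho_\Phi$.

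The main obstacle is the threshold $\cmax(1+K)<1$, that is $K<(1-\cmax)/\cmax$ with $K=R^2/\max\{\rho_b,2/C_0\}^2$. It suffices to show $R\le\rho_b\sqrt{(1-\cmax)/\cmax}$. For this I bound $\dist(p_\kkk,\ol{\Omega})$ for every nonempty word. The recursion $p_\kkk=g_{k_1}(p_{\sigma\kkk})$ from \cref{eq:polerec} reduces this to how far $g_i$ throws points of $\RS^d\setminus\Omega$. Since $g_i$ is the inversion-type map with pole $b_i$ and factor $\rho_i/|y-b_i|^2$, and $|y-b_i|\ge\dist(b_i,\partial\Omega)$ for $y\notin\Omega$, the image $g_i(\RS^d\setminus\Omega)$ lies in the ball about $p_i=g_i(\infty)$ of radius $\rho_i/\dist(b_i,\partial\Omega)$. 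Then $R\le\max_i(\delta_i+\cmax\delta_i^2/\rho_b)$ plus $\diam$ effects, and combining with the estimate $\delta_i<\tfrac{\rho_\Phi}{2}\sqrt{(1-\cmax)/\cmax}$ and $\rho_b\ge$ a fixed fraction of $\rho_\Phi$ should close the inequality. The constant $4$ in \eqref{eq:C3neat} is presumably tuned exactly for this. Getting these constants to match is where I expect the real work to lie. With $\cmax<N^{-2/d}$ given, \cref{thm:C3dense} then yields the conclusion.
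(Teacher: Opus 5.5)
Your outline follows the paper's proof step for step: verify the hypotheses of \cref{thm:C3dense}, get $\dist(p_i,\Omega')>0$ from boundedness, locate the co-poles through \eqref{eq:twopoint} with one point sent to $\infty$, and bound $R$ through $p_\kkk=g_{k_1}(p_{\sigma\kkk})$ and the inversion form of $g_i$. The quantitative co-pole step that everything hinges on is missing, however, because of an algebraic slip.

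Letting $v\to\infty$ in \eqref{eq:twopoint} gives $|\fii_i(u)-b_i|=\sqrt{\lambda_{\fii_i}(u)\rho_i}=\rho_i/|u-p_i|$, not $\sqrt{\rho_i}/|u-p_i|$. You dropped a factor $\sqrt{\rho_i}$: the quantity $\sqrt{\lambda_{\fii_i}(u)}=\sqrt{\rho_i}/|u-p_i|$ must still be multiplied by $\sqrt{\rho_i}$. With the correct identity your first attempt already works and is scale-invariant. At the point $v_i\in\ol{\Omega}$ nearest $p_i$ one has $\rho_i\le\cmax\delta_i^2$, hence $|\fii_i(v_i)-b_i|=\rho_i/\delta_i\le\cmax\delta_i=\theta_i\rho_\Phi$ with $\theta_i=\cmax\delta_i/\rho_\Phi$. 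Moreover, \eqref{eq:C3neat} says exactly $\theta_i<\tfrac12\sqrt{\cmax(1-\cmax)}\le\tfrac14$. Since $\dist(\fii_i(v_i),\partial\Omega)\ge\rho_\Phi$, this gives $b_i\in\Omega$ with $\dist(b_i,\partial\Omega)\ge(1-\theta_i)\rho_\Phi$, so $\rho_b\ge(1-\theta)\rho_\Phi$ for $\theta=\max_i\theta_i$.

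Your replacement estimate, $\dist(b_i,\fii_i(\ol{\Omega}))\lesssim\delta_i\sqrt{\cmax}$, is not derived. It is also too weak. For small $\cmax$ it yields only $\rho_b\gtrsim\rho_\Phi(1-\tfrac12\sqrt{1-\cmax})\approx\rho_\Phi/2$, while $\delta_i$ may be nearly $\tfrac{\rho_\Phi}{2}\sqrt{(1-\cmax)/\cmax}$. Your bound on $R/\rho_b$ then fails to drop below $\sqrt{(1-\cmax)/\cmax}$, and $K<(1-\cmax)/\cmax$ does not follow.

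With the correct bound, your $R$-estimate closes, and no diameter terms enter. Let $\kkk$ be nonempty with $y=p_{\sigma\kkk}$ finite; if $y=\infty$ then $p_\kkk=p_{k_1}$. Then $y\notin\Omega$, and the segment from $b_{k_1}\in\Omega$ to $y$ meets $\partial\Omega$, so $|y-b_{k_1}|\ge(1-\theta_{k_1})\rho_\Phi$. The same identity at $x=p_\kkk$, where $\fii_{k_1}(p_\kkk)=y$, gives $|p_\kkk-p_{k_1}|=\rho_{k_1}/|y-b_{k_1}|\le\theta_{k_1}\delta_{k_1}/(1-\theta_{k_1})$. Hence $\dist(p_\kkk,\ol{\Omega})\le\delta_{k_1}/(1-\theta_{k_1})$. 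By monotonicity of $t\mapsto t/(1-\cmax t/\rho_\Phi)$, this gives $R\le D/(1-\theta)$ with $D=\max_i\delta_i$. Therefore $K\le R^2/\rho_b^2\le\theta^2/(\cmax^2(1-\theta)^4)<(1-\cmax)/(4\cmax(1-\theta)^4)<(1-\cmax)/\cmax$, because $(1-\theta)^4\ge(3/4)^4>1/4$. This is where the constant $4$ is spent, and it gives $\cmax(1+K)<1$, which together with $\cmax<N^{-2/d}$ is \eqref{eq:strongcontr}.
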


\begin{proof}
  We verify the hypotheses of \cref{thm:C3dense}. A similarity has vanishing pre-Schwarzian by \cref{lem:mobius}, so by \cref{eq:C3neat} no generator is a similarity, and each $\fii_i$ has a finite pole $p_i$, which lies off $\ol{\Omega}$, and a finite co-pole $b_i=\fii_i(\infty)$; in particular $\max_{i \in \II}\|T_{\fii_i}\|>0$ and \cref{lem:polerec} is available. Write $D_i=\dist(p_i,\ol{\Omega})$, choose $v_i\in\ol{\Omega}$ with $|v_i-p_i|=D_i$, and put $\theta_i=\cmax D_i/\rho_\Phi$. Since $T_{\fii_i}$ is the pole field of $p_i$, its supremum over $\ol{\Omega}$ is $\|T_{\fii_i}\|=2/D_i$, so \cref{eq:C3neat} reads
  \begin{equation*}
    \theta_i < \tfrac12\sqrt{\cmax(1-\cmax)} \le \tfrac14.
  \end{equation*}
  The pole lies off $\ol{\Omega'}$: a M\"obius transformation is continuous on $\RS^d$ and sends its pole to $\infty$, so a sequence in $\Omega'$ converging to $p_i$ would make $\fii_i$ unbounded on $\Omega'$. Hence $\dist(p_i,\Omega')>0$.

  The remaining hypotheses rest on one identity. For $x\in\R^d\setminus\{p_i\}$ and $u\in\R^d\setminus\{p_i\}$, the two-point identity \cref{eq:twopoint} for $\fii_i$, with $\lambda_{\fii_i}(u)=\rho_{\fii_i}/|u-p_i|^2$ from \cref{lem:mobius}, reads $|\fii_i(u)-\fii_i(x)|=\sqrt{\rho_{\fii_i}\lambda_{\fii_i}(x)}|u-x|/|u-p_i|$, and letting $u\to\infty$, with $\fii_i(u)\to b_i$ by the continuity on $\RS^d$, gives
  \begin{equation} \label{eq:copole}
    |\fii_i(x)-b_i| = \sqrt{\rho_{\fii_i}\lambda_{\fii_i}(x)} = \frac{\rho_{\fii_i}}{|x-p_i|},
  \end{equation}
  where $\rho_{\fii_i}=\lambda_{\fii_i}(x)|x-p_i|^2$ is the constant of \cref{lem:mobius}; evaluating it at $v_i$ gives $\rho_{\fii_i}\le\cmax D_i^2$. At $x=v_i$, \cref{eq:copole} gives $|\fii_i(v_i)-b_i|\le\cmax D_i=\theta_i\rho_\Phi<\rho_\Phi\le\dist(\fii_i(v_i),\partial\Omega)$, so the segment from $\fii_i(v_i)\in\Omega$ to $b_i$ does not meet $\partial\Omega$, whence $b_i\in\Omega$, and $\dist(b_i,\partial\Omega)\ge(1-\theta_i)\rho_\Phi$, the function $\dist(\,\cdot\,,\partial\Omega)$ being $1$-Lipschitz. Thus $\rho_b\ge(1-\theta)\rho_\Phi$, where $\theta=\max_{i \in \II}\theta_i=\cmax D/\rho_\Phi$ with $D=\max_{i \in \II}D_i$.

  It remains to bound the constant $K$ of \cref{eq:polegeom}. Since the co-poles lie in $\Omega$, \cref{lem:trapped} applies to $\Phi$. Let $\iii$ be a nonempty finite word. By \cref{eq:polerec}, $p_\iii=g_{i_1}(p_{\sigma\iii})$, where $p_{\sigma\iii}$ is $\infty$ or a point of $\R^d\setminus\Omega$, and $p_\iii$ is finite, by \cref{lem:trapped}\cref{it:trapped-poles}. If $p_{\sigma\iii}=\infty$, then $p_\iii=p_{i_1}$ and $\dist(p_\iii,\ol{\Omega})=D_{i_1}$. Otherwise $y=p_{\sigma\iii}$ is finite, $|y-b_{i_1}|\ge\dist(b_{i_1},\partial\Omega)\ge(1-\theta_{i_1})\rho_\Phi$, the segment from $b_{i_1}\in\Omega$ to $y\notin\Omega$ meeting $\partial\Omega$, and \cref{eq:copole} at $x=g_{i_1}(y)=p_\iii$ gives
  \begin{equation*}
    |p_\iii-p_{i_1}| = \frac{\rho_{\fii_{i_1}}}{|y-b_{i_1}|} \le \frac{\cmax D_{i_1}^2}{(1-\theta_{i_1})\rho_\Phi} = \frac{\theta_{i_1}D_{i_1}}{1-\theta_{i_1}},
  \end{equation*}
  so that $\dist(p_\iii,\ol{\Omega})\le D_{i_1}+|p_\iii-p_{i_1}|\le D_{i_1}/(1-\theta_{i_1})$. Since $t\mapsto t/(1-\cmax t/\rho_\Phi)$ is increasing on $[0,\rho_\Phi/\cmax)$, this gives $R\le D/(1-\theta)$, and hence
  \begin{equation*}
    K \le \frac{R^2}{\rho_b^2} \le \frac{D^2}{(1-\theta)^4\rho_\Phi^2} = \frac{\theta^2}{\cmax^2(1-\theta)^4} < \frac{1-\cmax}{4\cmax(1-\theta)^4} < \frac{1-\cmax}{\cmax},
  \end{equation*}
  the last inequality because $(1-\theta)^4\ge(3/4)^4>1/4$. Thus $\cmax(1+K)<1$, which together with $\cmax<N^{-2/d}$ is \cref{eq:strongcontr}, and \cref{thm:C3dense} applies.
\end{proof}

We are now ready to prove the third item of \cref{thm:genericity}.

\begin{proof}[Proof of \cref{thm:genericity}\cref{it:gen-higher}]
  The first assertion is \cref{ex:esc-not-mobius}(1). For the second, the set $\mathcal G_T$ is open in $(\mathcal S,d_2)$ by \cref{prop:cont3} and its members satisfy the strong exponential separation condition by \cref{lem:gap}, while its closure contains every system satisfying the hypotheses of \cref{cor:C3dense}. These are the hypotheses of \cref{it:gen-higher}: the norm $\|T_{\fii_i}\|$ is the supremum of $|T_{\fii_i}|$ over $\ol{\Omega}$ and $\rho_\Phi$ is the minimum written out in \cref{eq:genhigher}, so \cref{eq:C3neat} is \cref{eq:genhigher}.
\end{proof}

\subsection{The packing obstruction} \label{sec:gen-higher-obstruction}

On the line, \cref{thm:C1denseS} needs no hypothesis, and in the plane the hypotheses of \cref{thm:C2dense} concern the domain alone. The threshold of \cref{thm:C3dense} is of a different kind, a constraint on the system itself, and this subsection shows that some constraint of this kind is unavoidable: for $d\ge3$ the gap class $\mathcal G_T$, which by \cref{lem:gap} is the set of systems satisfying the hypothesis \cref{eq:mainhyp} of \cref{thm:main}, is not dense in general, and on convex domains it misses an open set of systems. The mechanism is the packing bound of \cref{lem:packing}. A system in $\mathcal G_T$ keeps the poles of any two infinite words with different first letters apart, and the finite area of the sphere then forces the generators to stretch it, in the chordal metric centered at any point of $\R^d$, by at least $N^{1/d}$. The systems that, for some center, stretch it less form an open set which the closure of $\mathcal G_T$ cannot meet, and on convex domains this set contains every tuple of homotheties of a common ratio above $N^{-1/d}$ whose centers lie close to one point. The obstruction thus sits at the exponent $1/d$ and the counting threshold of \cref{thm:C3dense} at $2/d$; the range between them, and what survives of the gap condition without any hypothesis, are taken up in \cref{sec:gen-higher-finite}. What is ruled out is the density of the gap condition, not that of the strong exponential separation condition itself, which we leave undecided.

For $\Phi=(\fii_i)_{i\in\II}\in\mathcal S$ and $c\in\R^d$, with the translation $\tau_c(x)=x+c$, we call
\begin{equation*}
  \varkappa_c(\Phi) = \max_{i \in \II}\sup_{x\in\RS^d}(\tau_{-c}\circ\fii_i\circ\tau_c)^{\#}(x)
\end{equation*}
the \emph{chordal stretch} of $\Phi$ centered at $c$. It is the maximal chordal factor of a generator for the chordal metric centered at $c$, that is, for the metric $(x,y)\mapsto\varsigma(x-c,y-c)$, and it is finite because each chordal factor is continuous on the compact $\RS^d$. Since the maps $\tau_{-c}\circ\fii_i\circ\tau_c$ are the generators of the conjugated system $\Phi^{\tau_c}$ of \cref{lem:similarity}, we have $\varkappa_c(\Phi)=\varkappa_0(\Phi^{\tau_c})$, where the right-hand side refers to the system space of $(\Omega-c,\Omega'-c)$.

The obstruction is a pigeonhole principle on the sphere. If, for a system $\Phi$, the poles of infinite words with different first letters were pairwise distinct, then for a center $c$, a fixed $\lll_0\in\II^\N$, and every $n$ the $N^n$ poles $p_{\iii\lll_0}$, $\iii\in\II^n$, would be pairwise separated, in the chordal metric centered at $c$, by at least a constant times $\varkappa_c(\Phi)^{-n}$, while the finite area of the sphere has room for only a constant times $\varkappa_c(\Phi)^{dn}$ such points.

\begin{lemma} \label{lem:packing}
  Let $N\ge2$. If $\Phi\in\mathcal S$ and $c\in\R^d$ satisfy $\varkappa_c(\Phi)<N^{1/d}$, then there are $\iii,\jjj\in\II^\N$ with $i_1\ne j_1$ and $p_\iii=p_\jjj$, and hence with $T_\iii\equiv T_\jjj$ on $\ol{\Omega}$. In particular,
  \begin{equation} \label{eq:packing}
    \varkappa_c(\Phi) \ge N^{1/d}
  \end{equation}
  for every $\Phi\in\mathcal G_T$ and $c\in\R^d$.
\end{lemma}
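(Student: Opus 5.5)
The plan is to reduce to the center $c=0$, argue by contradiction, and turn the assumed separation of poles into a packing of the round sphere that is too dense once $\varkappa_0(\Phi)<N^{1/d}$.

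\textbf{Reductions.} By \cref{lem:similarity} with the translation $S=\tau_c$, the conjugated system $\Phi^{\tau_c}$ satisfies:
\begin{itemize}
\item its poles are $p^{\Phi^{\tau_c}}_\iii=p^\Phi_\iii-c$;
\item its dual projections are the translates $T^\Phi_\iii\circ\tau_c$;
\item $\varkappa_c(\Phi)=\varkappa_0(\Phi^{\tau_c})$.
\end{itemize}
So both the hypothesis and the conclusion transfer, and it suffices to treat $c=0$.

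If $\max_{i\in\II}\|T_{\fii_i}\|=0$, every pole is $\infty$ by convention. Every projection then vanishes, and any two infinite words with first letters $1$ and $2$ already give the conclusion. We may therefore assume $\max_{i\in\II}\|T_{\fii_i}\|>0$, so that \cref{lem:poledict,lem:polerec} apply.

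Once the pole coincidence $p_\iii=p_\jjj$ is found, $T_\iii\equiv T_\jjj$ on $\ol{\Omega}$ follows from \cref{lem:poledict}. The ``in particular'' clause \cref{eq:packing} is the contrapositive, read through \cref{lem:gap3}: a member of $\mathcal G_T$ has no such pair in $Z$.

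\textbf{Separation at level one.} Suppose, for contradiction, that $p_\iii\ne p_\jjj$ for all $\iii,\jjj\in\II^\N$ with $i_1\ne j_1$. By \cref{lem:polerec}, $g_i(\widehat P)=\{p_{i\lll}:\lll\in\II^\N\}$ is compact for each $i$. The assumption makes these $N$ sets pairwise disjoint. Hence their mutual chordal distance $\delta=\min_{a\ne b}\varsigma(g_a(\widehat P),g_b(\widehat P))$ is positive.

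\textbf{Separation at level $n$.} Fix $\lll_0\in\II^\N$ and $n\ge1$, and let $\iii\ne\jjj$ be words in $\II^n$ with common prefix $\kkk$ of length $m<n$. By \cref{eq:polerec}, $p_{\iii\lll_0}=g_\kkk(u)$ and $p_{\jjj\lll_0}=g_\kkk(v)$. Here $u\in g_a(\widehat P)$ and $v\in g_b(\widehat P)$ with $a\ne b$, so $\varsigma(u,v)\ge\delta$.

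The chordal form of the two-point identity gives $\varsigma(g_\kkk(u),g_\kkk(v))=\sqrt{g_\kkk^{\#}(u)g_\kkk^{\#}(v)}\,\varsigma(u,v)$. The reciprocal rule gives $g_i^{\#}(y)=1/\fii_i^{\#}(g_i(y))\ge1/\varkappa$ with $\varkappa=\varkappa_0(\Phi)$. The chain rule then gives $g_\kkk^{\#}\ge\varkappa^{-m}\ge\varkappa^{-n}$ on $\RS^d$; note $\varkappa\ge1$ by \cref{lem:secondfixed}. Consequently the $N^n$ points $p_{\iii\lll_0}$, $\iii\in\II^n$, are pairwise at chordal distance at least $\delta\varkappa^{-n}$.

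\textbf{Packing.} Transport the chordal metric to the unit sphere $\mathbb S^d\subset\R^{d+1}$ by stereographic projection, where it is the chord metric. Chord balls of radius $r\le2$ have spherical measure at least $c_dr^d$ for a dimensional constant $c_d>0$. The balls of radius $\tfrac12\delta\varkappa^{-n}$ about our points are disjoint, so
\begin{equation*}
  N^n c_d\bigl(\tfrac12\delta\varkappa^{-n}\bigr)^d\le|\mathbb S^d|,
\end{equation*}
that is, $(N\varkappa^{-d})^n\le C\delta^{-d}$ for every $n$. Since $\varkappa^d<N$, the left side tends to infinity, a contradiction. Hence some pair of infinite words with distinct first letters shares its pole.

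\textbf{Main obstacle.} The step needing the most care is the uniform lower bound on $g_\kkk^{\#}$ over all of $\RS^d$, including at and near $\infty$, where Euclidean estimates fail. I would justify it through the reciprocal and chain rules for the chordal factor on the sphere, since $\fii_i^{\#}$ extends continuously to $\RS^d$. The spherical volume bound for chord balls is standard.
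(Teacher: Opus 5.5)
Your proposal is correct and follows essentially the same route as the paper: reduce to $c=0$ by translation conjugacy, separate the compact sets $g_i(\widehat P)$ by a chordal gap $\delta>0$, and push that gap to level $n$ through the chordal two-point identity and the reciprocal rule $g_i^{\#}\ge 1/\varkappa_0(\Phi)$; a cap-packing count on the sphere then gives the contradiction. The differences are cosmetic: you use $\varkappa^{-n}$ where the paper uses $\varkappa^{-(n-1)}$, and you obtain the final clause from the pole characterization of $\mathcal G_T$ rather than directly from $\Delta_T(\Phi)>0$.
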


\begin{proof}
  By \cref{lem:similarity}, applied to the translation $\tau_c$, the system $\Phi^{\tau_c}$ lies in the system space of $(\Omega-c,\Omega'-c)$ and its poles are $p_\iii-c$, with $\infty-c=\infty$, while $\varkappa_0(\Phi^{\tau_c})=\varkappa_c(\Phi)$ by definition. Two poles of $\Phi$ therefore coincide if and only if the corresponding poles of $\Phi^{\tau_c}$ do, and, the pair of domains being arbitrary, it suffices to find the two words when $c=0$. Let now $c=0$ and $\varkappa_0(\Phi)<N^{1/d}$, and suppose, contrary to the claim, that $p_\iii\ne p_\jjj$ for all $\iii,\jjj\in\II^\N$ with $i_1\ne j_1$. Then $\max_{i \in \II}\|T_{\fii_i}\|>0$, since otherwise every pole is $\infty$ by convention and, as $N\ge2$, the words $111\cdots$ and $222\cdots$ share it; so \cref{lem:polerec} is available.

  Write $g_i=\fii_i^{-1}$ and recall that chordal factors multiply along compositions and that $g_i^{\#}(y)\fii_i^{\#}(g_i(y))=1$ for every $y\in\RS^d$; in particular $g_i^{\#}\ge1/\varkappa_0(\Phi)$ on $\RS^d$. By \cref{lem:polerec}, the sets $g_i(\widehat P)=\{p_{i\lll} : \lll\in\II^\N\}$, $i\in\II$, with $\widehat P=\{p_\iii : \iii\in\II^\N\}$, are compact, as continuous images of the compact $\widehat P$, and by the assumption they are pairwise disjoint, so that, with $\dist_\varsigma$ denoting the distance between sets in the chordal metric,
  \begin{equation*}
    \delta = \min\{\dist_\varsigma(g_i(\widehat P),g_j(\widehat P)) : i,j\in\II\text{ such that } i\ne j\} > 0.
  \end{equation*}
  Fix $\lll_0\in\II^\N$, let $n\ge1$, and put $q_\iii=p_{\iii\lll_0}=g_\iii(p_{\lll_0})$ for $\iii\in\II^n$, by \cref{eq:polerec}. We claim that these $N^n$ points are far apart in the chordal metric. Let $\iii,\jjj\in\II^n$ be distinct, let $\kkk=\iii\land\jjj$ and $m=|\kkk|\le n-1$, and let $a=i_{m+1}$ and $a'=j_{m+1}$, so that $a\ne a'$. Then, by \cref{eq:polerec} and the description of $g_a(\widehat P)$ in \cref{lem:polerec}, $q_\iii=g_\kkk(u)$ and $q_\jjj=g_\kkk(v)$ with $u=p_{(\sigma^m\iii)\lll_0}\in g_a(\widehat P)$ and $v=p_{(\sigma^m\jjj)\lll_0}\in g_{a'}(\widehat P)$, so that $\varsigma(u,v)\ge\delta$. The chordal two-point identity now gives $\varsigma(q_\iii,q_\jjj)=\sqrt{g_\kkk^{\#}(u)g_\kkk^{\#}(v)}\varsigma(u,v)\ge\varkappa_0(\Phi)^{-m}\delta$, the factor $g_\kkk^{\#}$ being a product of $m$ factors $g_{k_l}^{\#}$, each at least $1/\varkappa_0(\Phi)$. Since $\varkappa_0(\Phi)\ge1/\cmax>1$ by \cref{lem:secondfixed} and $m\le n-1$, the points $q_\iii$, $\iii\in\II^n$, are pairwise at chordal distance at least $\eta_n=\delta\varkappa_0(\Phi)^{-(n-1)}$.

  Under the stereographic identification $\varsigma$ is the Euclidean distance on the unit sphere of $\R^{d+1}$, so the open balls of radius $\eta_n/2$ about these points are pairwise disjoint, and each meets the sphere in a cap of $d$-dimensional area at least $\gamma_d\eta_n^d$, with $\gamma_d>0$ depending only on $d$, as $\eta_n\le\delta\le2$, the chordal diameter of $\RS^d$ being $2$. Indeed, a chordal ball of radius $r\le1$ corresponds to a spherical cap of angular radius $2\arcsin(r/2)$, whose $d$-dimensional area is bounded below by a constant depending only on $d$ times $r^d$. The sphere having finite area, $N^n\le C\eta_n^{-d}=C\delta^{-d}\varkappa_0(\Phi)^{d(n-1)}$ with $C$ independent of $n$. Taking $n$-th roots and letting $n\to\infty$ gives $N\le\varkappa_0(\Phi)^d$, contradicting $\varkappa_0(\Phi)<N^{1/d}$.

  This contradiction produces the two words for $c=0$ and hence, by the reduction, for the original system $\Phi$ and every center: if $\varkappa_c(\Phi)<N^{1/d}$, then there are $\iii,\jjj\in\II^\N$ with $i_1\ne j_1$ and $p_\iii=p_\jjj$. For such words $T_\iii\equiv T_\jjj$ on $\ol{\Omega}$, by \cref{lem:poledict} when $\max_{i \in \II}\|T_{\fii_i}\|>0$, both being the pole field of the common pole, and because every dual projection vanishes otherwise, as in the proof of \cref{lem:gap3}. Finally, if $\Phi\in\mathcal G_T$, then $\|T_\iii-T_\jjj\|\ge\Delta_T(\Phi)>0$ for every pair of infinite words with $i_1\ne j_1$, such a pair belonging to $Z$; so $\varkappa_c(\Phi)<N^{1/d}$ holds for no $c\in\R^d$, which is \cref{eq:packing}.
\end{proof}

The systems whose chordal stretch falls below $N^{1/d}$ for some center therefore lie outside $\mathcal G_T$. By \cref{lem:jet}, the chordal stretch is a continuous function of the values, derivatives, and pre-Schwarzians of the generators at a single point, so these systems form an open set, which on a convex domain contains the tuples of homotheties of a common ratio above $N^{-1/d}$ whose centers lie close to one point.

\begin{proposition} \label{prop:nondense}
  Let $d\ge3$ and $N\ge2$. The set
  \begin{equation*}
    \mathcal U = \bigcup_{c\in\R^d}\{\Psi\in\mathcal S : \varkappa_c(\Psi)<N^{1/d}\}
  \end{equation*}
  is open in $(\mathcal S,d_2)$, disjoint from the closure of $\mathcal G_T$, and contained in $\{\Psi\in\mathcal S : \cmax(\Psi)>N^{-1/d}\}$. If $\Omega$ is convex, $c\in\Omega$, and $N^{-1/d}<r<1$, then there is $\eps>0$, depending only on $r$, $N$, and $d$, such that the homotheties $\fii_i(x)=r(x-c_i)+c_i$, $i\in\II$, with $c_i\in\Omega$ and $|c_i-c|<\eps$ form a member of $\mathcal U$. In particular, $\mathcal G_T$ is not dense in $(\mathcal S,d_2)$ when $\Omega$ is convex.
\end{proposition}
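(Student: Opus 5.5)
The proof splits into four claims, taken in order: openness, disjointness from the closure of $\mathcal G_T$, the bound $\cmax>N^{-1/d}$, and the homothety example. The last sentence is then immediate.

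\emph{Openness.} Fix $\Psi\in\mathcal U$ with a centre $c$ such that $\varkappa_c(\Psi)<N^{1/d}$, and fix any $x_0\in\ol{\Omega}$. Conjugating by $\tau_c$ and using \cref{lem:similarity}, $\varkappa_c(\Psi)$ is a maximum over $i$ of $\sup(\tau_{-c}\circ\psi_i\circ\tau_c)^{\#}$. By \cref{lem:jet} at the point $x_0-c$, each of these suprema depends continuously on the triple $(\psi_i(x_0)-c,D\psi_i(x_0),T_{\psi_i}(x_0))$.
\begin{itemize}
\item The value and the derivative are $d_2$-continuous by definition of $d_2$.
\item The pre-Schwarzian at $x_0$ is $d_2$-continuous by the trace formula \cref{eq:trace}, with the resolvent estimate used in the proof of \cref{prop:cont3}.
\end{itemize}
Hence $\Psi'\mapsto\varkappa_c(\Psi')$ is continuous at $\Psi$, with $c$ held fixed, so a whole $d_2$-ball around $\Psi$ lies in $\mathcal U$.

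\emph{Disjointness.} By \cref{eq:packing} in \cref{lem:packing}, $\mathcal U\cap\mathcal G_T=\varnothing$. Since $\mathcal U$ is open, it is also disjoint from the closure of $\mathcal G_T$.

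\emph{Contraction bound.} Let $\Psi\in\mathcal U$ with centre $c$. The conjugated system $\Psi^{\tau_c}$ has the same $\cmax$, by \cref{lem:similarity}. Apply \cref{lem:secondfixed} to $\Psi^{\tau_c}$: it gives $\varkappa_c(\Psi)\ge 1/\cmax(\Psi)$. Combining with $\varkappa_c(\Psi)<N^{1/d}$ yields $\cmax(\Psi)>N^{-1/d}$.

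\emph{Homothety example.} Let $\Omega$ be convex, $c\in\Omega$ and $N^{-1/d}<r<1$.
\begin{itemize}
\item Membership in $\mathcal S$: convexity gives $\fii_i(\ol{\Omega})\subseteq\Omega$, as a convex combination of points of $\ol{\Omega}$ with the interior point $c_i$. Homotheties are conformal on every $\Omega'$, and $\cmax=r<1$.
\item Explicit stretch: conjugate by $\tau_c$, so each map becomes $y\mapsto ry+(1-r)a_i$ with $a_i=c_i-c$. Its chordal factor is
\begin{equation*}
  \frac{r(1+|y|^2)}{1+|ry+(1-r)a_i|^2}.
\end{equation*}
\item Case $a_i=0$: the factor is $r(1+|y|^2)/(1+r^2|y|^2)$, whose supremum is the value at $\infty$, namely $1/r$.
\item Continuity: by the continuity part of \cref{lem:jet}, or by direct inspection of this rational function, uniformly on $\RS^d$, the supremum is continuous in $a_i$ at $0$. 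It also depends only on $|a_i|$, $r$ and $d$.
\item Choice of $\eps$: since $1/r<N^{1/d}$, pick $\eps=\eps(r,N,d)>0$ such that $|a_i|<\eps$ keeps the supremum below $N^{1/d}$. Then $\varkappa_c<N^{1/d}$, and the system lies in $\mathcal U$.
\end{itemize}

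\emph{Last sentence.} The homothety example shows $\mathcal U\ne\varnothing$. A nonempty open set disjoint from the closure of $\mathcal G_T$ exists, so $\mathcal G_T$ is not dense.

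The main obstacle is making the $\eps$ depend only on $(r,N,d)$ and not on $\Omega$. The proposal handles this by writing the supremum explicitly as a function of $|a_i|$ and $r$ alone, and by the uniform continuity of that supremum at infinity on the compact $\RS^d$.
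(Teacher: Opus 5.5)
Your proof is correct and follows essentially the same route as the paper. Each $\varkappa_c$ is continuous by \cref{lem:jet} together with the trace formula, disjointness comes from \cref{lem:packing}, the bound $\varkappa_c\ge 1/\cmax$ comes from \cref{lem:secondfixed} applied to the translated system, and the example uses the conjugated homotheties $y\mapsto ry+(1-r)(c_i-c)$. The only difference is cosmetic: the paper bounds the chordal factor explicitly by $(1+|t_i|^2+|t_i|)/r$ with $t_i=(1-r)(c_i-c)$ to get an explicit $\eps$, whereas you get $\eps$ from continuity at $0$ of the rotation-invariant supremum, which equally shows it depends only on $r$, $N$, and $d$.
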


\begin{proof}
  Fix $c\in\R^d$ and $x_0\in\Omega$. For $\Psi=(\psi_i)_{i\in\II}\in\mathcal S$ the M\"obius transformation $\tau_{-c}\circ\psi_i\circ\tau_c$ takes at $x_0-c$ the finite value $\psi_i(x_0)-c$, with derivative $D\psi_i(x_0)$ and, by \cref{eq:Tchain}, pre-Schwarzian $T_{\psi_i}(x_0)$, translations having the identity as derivative and vanishing pre-Schwarzian. If $\Psi_n=(\psi_{i,n})_{i\in\II}\to\Psi$ in $d_2$, then $\psi_{i,n}(x_0)$, $D\psi_{i,n}(x_0)$, and $T_{\psi_{i,n}}(x_0)$ converge to the corresponding data of $\psi_i$ for every $i$, the pre-Schwarzians by the trace formula \cref{eq:trace}, whose derivation applies to every conformal map; so $\varkappa_c(\Psi_n)\to\varkappa_c(\Psi)$ by \cref{lem:jet}, applied at the point $x_0-c$, a maximum of finitely many continuous functions being continuous. Thus $\varkappa_c$ is continuous on $(\mathcal S,d_2)$. For the lower bound, let $\Phi\in\mathcal S$. By \cref{lem:similarity}, $\Phi^{\tau_c}$ is a conformal iterated function system on $\Omega-c$ with extension domain $\Omega'-c$ and $\cmax(\Phi^{\tau_c})=\cmax(\Phi)$, so \cref{lem:secondfixed}, the pair of domains there being arbitrary, gives $\sup_{x\in\RS^d}(\tau_{-c}\circ\fii_i\circ\tau_c)^{\#}(x)\ge1/\cmax(\Phi)$ for every $i\in\II$, and hence $\varkappa_c(\Phi)\ge1/\cmax(\Phi)$.

  The set $\mathcal U$ is therefore open, as the union of the sets $\{\Psi\in\mathcal S : \varkappa_c(\Psi)<N^{1/d}\}$, $c\in\R^d$, each open by the continuity of $\varkappa_c$. It is disjoint from $\mathcal G_T$ by \cref{lem:packing}, hence, being open, from the closure of $\mathcal G_T$, and it lies in $\{\Psi\in\mathcal S : \cmax(\Psi)>N^{-1/d}\}$, since $\Psi\in\mathcal U$ gives $1/\cmax(\Psi)\le\varkappa_c(\Psi)<N^{1/d}$ for some $c\in\R^d$ by the lower bound.

  It remains to exhibit members of $\mathcal U$ on a convex domain. Let $\Omega$ be convex, let $c\in\Omega$ and $N^{-1/d}<r<1$, let $c_1,\ldots,c_N\in\Omega$, and put $\fii_i(x)=r(x-c_i)+c_i$. For $x\in\ol{\Omega}$ the point $\fii_i(x)=(1-r)c_i+rx$ lies in $\Omega$, being a convex combination of the interior point $c_i$ and the point $x$ of the closure with positive weight on the former, by \cite[Theorem~6.1]{Rockafellar}, the relative interior of the open convex set $\Omega$ being $\Omega$ itself. Moreover $\|D\fii_i\|=r<1$, and the pole of $\fii_i$ is $\infty\notin\Omega'$, so that $\fii_i$ is conformal on $\Omega'$; hence $\Phi=(\fii_i)_{i\in\II}\in\mathcal S$. The generators of $\Phi^{\tau_c}$ are the maps $x\mapsto rx+t_i$ with $t_i=(1-r)(c_i-c)$. Writing $u=rx+t_i$, so that $1+|x|^2=(r^2+|u-t_i|^2)/r^2$, and expanding $|u-t_i|^2$,
  \begin{equation*}
    (\tau_{-c}\circ\fii_i\circ\tau_c)^{\#}(x) = \frac{r^2+|u-t_i|^2}{r(1+|u|^2)} = \frac{r^2+|u|^2}{r(1+|u|^2)}+\frac{|t_i|^2-2\langle u,t_i\rangle}{r(1+|u|^2)} \le \frac{1+|t_i|^2+|t_i|}{r},
  \end{equation*}
  by $r^2\le1$ and $2|u|\le1+|u|^2$, and the bound persists at $x=\infty$ by continuity. Since $1/r<N^{1/d}$, every $\eps$ satisfying $0<\eps<(\sqrt{4rN^{1/d}-3}-1)/(2(1-r))$ also satisfies $(1+(1-r)^2\eps^2+(1-r)\eps)/r<N^{1/d}$, and then $|c_i-c|<\eps$ for every $i$ gives $\varkappa_c(\Phi)<N^{1/d}$, so that $\Phi\in\mathcal U$. The last claim follows, $\mathcal U$ being open, disjoint from the closure of $\mathcal G_T$, and nonempty, as the choice $c_1=\cdots=c_N=c$ with any $r\in(N^{-1/d},1)$ shows.
\end{proof}

By \cref{lem:packing}, every $\Psi\in\mathcal U$ fails the gap condition through a coincidence $T^\Psi_\iii\equiv T^\Psi_\jjj$ on $\ol{\Omega}$ between the dual projections of two infinite words with different first letters; for a tuple of similarities this is the trivial coincidence of vanishing projections. The upper endpoint for $\eps$ in the proof above tends to $\infty$ as $r\to1$, so $\eps$ may be taken larger than $\diam(\Omega)$ when $r$ is close enough to $1$, and every choice of centers in $\Omega$ is then admitted.

\subsection{The finite-word gap and the contraction range} \label{sec:gen-higher-finite}

\Cref{prop:nondense} leaves two questions, and this subsection takes up both. The first is how much of the gap condition survives without any hypothesis on the domains or on $\cmax$, and for finite words the answer is essentially all of it. For finite words, $p_\iii=p_\jjj$ says that $f_\iii\circ f_\jjj^{-1}$ fixes $\infty$, that is, that $f_\iii=A\circ f_\jjj$ for a similarity $A$, and by \cref{eq:frev} a common prefix $\kkk$ cancels from such a relation, $f_{\kkk\iii'}=A\circ f_{\kkk\jjj'}$ holding if and only if $f_{\iii'}=A\circ f_{\jjj'}$. The class of \cref{prop:finite} therefore consists of the systems in which no two distinct equal-length finite compositions differ by a similarity, which by \cref{lem:kernel} is the finite-word part of \cref{eq:mainhyp}, and it is dense in the open set of systems whose generators have pairwise distinct poles, with no hypothesis on $\Omega$, on $\Omega'$, or on $\cmax$. The perturbation that achieves this post-composes the generators by similarities, and it is available also where the perturbations of \cref{sec:gen-higher-dense} are not: it leaves the poles of the generators in place, so it asks nothing of $\Omega'$, and the poles of finite words then depend real-analytically on the perturbation parameter, so that a coincidence between two of them is an analytic condition, which fails for almost every parameter once it fails for one. 

The second question is the range of contraction ratios between the counting threshold $\cmax<N^{-2/d}$ of \cref{thm:C3dense} and the exponent $1/d$ at which \cref{prop:nondense} locates the obstruction. \Cref{rem:C3gen} examines the two bounds of \cref{eq:strongcontr} and shows by an explicit family that the threshold, under which \cref{thm:C3dense} places a system in the closure of $\mathcal G_T$, is not necessary for membership in $\mathcal G_T$ itself, just as the threshold of \cref{prop:gencrit} is not necessary for \cref{eq:mainhyp} by \cref{rem:gencrit-necessity}. \Cref{rem:C3domain} records how the density of $\mathcal G_T$ depends on the extension domain, and \cref{q:C3dense} states what remains open, which \cref{prop:finite} localizes to coincidences among the poles of infinite words.

\begin{proposition} \label{prop:finite}
  Let $d\ge3$ and $N\ge2$, let $\mathcal S^*$ be the set of $\Phi\in\mathcal S$ whose generators have pairwise distinct poles in $\RS^d$, and let $\mathcal G_T^{\mathrm{fin}}$ be the set of $\Phi\in\mathcal S$ with $p_\iii\ne p_\jjj$ for all $(\iii,\jjj)\in Z$ with $\iii,\jjj\in\II^*$. Then 
  \begin{equation*}
    \mathcal G_T\subseteq\mathcal G_T^{\mathrm{fin}}\subseteq\mathcal S^*,
  \end{equation*}
  the set $\mathcal S^*$ is open in $(\mathcal S,d_2)$, and $\mathcal G_T^{\mathrm{fin}}$ is dense in $\mathcal S^*$. More precisely, let $\mathcal A=(0,\infty)\times\mathrm{SO}(d)\times\R^d$ and, for $\beta=(\lambda,O,v)\in\mathcal A$, let $A_\beta(x)=\lambda Ox+v$. Then for every $\Phi\in\mathcal S^*$ there is a neighborhood $\mathcal N$ of the identity tuple $(1,I,0)$ in $\mathcal A^N$ such that $\Phi^\alpha=(A_{\alpha_i}\circ\fii_i)_{i\in\II}$ lies in $\mathcal S^*$ for every $\alpha=(\alpha_i)_{i\in\II}\in\mathcal N$, with $d_2(\Phi^\alpha,\Phi)\to0$ as $\alpha$ tends to the identity tuple, and the set of $\alpha\in\mathcal N$ with $\Phi^\alpha\notin\mathcal G_T^{\mathrm{fin}}$ has measure zero in every chart of $\mathcal A^N$.
\end{proposition}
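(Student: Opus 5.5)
The plan is to prove the inclusions, the openness, and then the density. The density will come from the analytic parametrization described after the statement.

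\emph{Inclusions.} $\mathcal G_T\subseteq\mathcal G_T^{\mathrm{fin}}$ is immediate from \cref{lem:gap3}, since pairs of finite words in $Z$ are pairs in $Z$. For $\mathcal G_T^{\mathrm{fin}}\subseteq\mathcal S^*$, the one-letter pairs $(i,j)$, $i\ne j$, lie in $Z$ and have poles $p_i$, $p_j$. If all generators are similarities, the convention $p_\iii=\infty$ makes every pair coincide, so those systems are excluded as well.

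\emph{Openness of $\mathcal S^*$.} Use the jet description of \cref{lem:jet}. The pole of a generator is recovered from $T_{\fii_i}(x_0)$ by \cref{eq:polerecover}, read in $\RS^d$ as $x_0+2v/|v|^2$ with value $\infty$ at $v=0$, exactly as the map $\Theta$ in the proof of \cref{lem:polerec}. The pre-Schwarzian at $x_0$ is continuous in $d_2$ by \cref{eq:trace}, so each pole is $d_2$-continuous into $(\RS^d,\varsigma)$. Distinctness of finitely many poles is therefore an open condition.

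\emph{Perturbation family.} For $\alpha$ near the identity, $A_{\alpha_i}\circ\fii_i$ is conformal on $\Omega'$, because $A_{\alpha_i}$ is affine and has no pole. It is $d_2$-close to $\fii_i$, being a smooth function of $\alpha$ applied to finitely many bounded derivatives on $\ol{\Omega}$. By the stability remark after \cref{eq:d2metric} and the openness above, it lies in $\mathcal S^*$ on a neighborhood $\mathcal N$, which we take connected. Its pole equals the pole $p_i$ of $\fii_i$, since $A$ fixes $\infty$. Its inverse is $g_i^\alpha=g_i\circ A_{\alpha_i}^{-1}$. For a finite word, \cref{lem:polerec} gives $p^\alpha_\iii=g^\alpha_{i_1}\circ\cdots\circ g^\alpha_{i_n}(\infty)$, with the similarity convention matching this formula. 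This is real-analytic in $\alpha$ as a map into $\RS^d$, because it is a composition of M\"obius maps depending analytically on $\alpha$.

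\emph{Null exceptional set.} There are countably many pairs $(\iii,\jjj)\in Z$ of finite words. For each pair, the coincidence set $\{\alpha : p^\alpha_\iii=p^\alpha_\jjj\}$ is the zero set of the analytic function $\varsigma$-difference, or equivalently of $p\mapsto$ the chordal chart squared distance, which is analytic in local charts of $\RS^d\times\RS^d$. On the connected manifold $\mathcal N$ such a zero set is null unless it is all of $\mathcal N$. So it suffices to exhibit one $\alpha\in\mathcal N$ at which the two poles differ.

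\emph{Main obstacle.} Finding that witness $\alpha$ is the hard step. Write $\iii=\kkk\iii'$ and $\jjj=\kkk\jjj'$ is impossible here, since $i_1\ne j_1$. Coincidence means $g^\alpha_{i_1}(q)=g^\alpha_{j_1}(q')$, with $q=p^\alpha_{\sigma\iii}$ and $q'=p^\alpha_{\sigma\jjj}$. My first attempt is to vary only $\alpha_{i_1}$, keeping the others at the identity. If $i_1$ does not occur in $\sigma\iii$ or $\sigma\jjj$, then $q$ and $q'$ are fixed. As $\alpha_{i_1}$ ranges over translations, $A^{-1}_{\alpha_{i_1}}q$ moves over all of $\R^d$ when $q$ is finite, while the right side stays fixed. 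Then $g_{i_1}$, a bijection, gives distinct values for most translations. If $q=\infty$, then $g^\alpha_{i_1}(q)=p_{i_1}$, which is constant. In that case I vary the right side instead, or use $p_{i_1}\ne p_{j_1}$ when both suffixes are empty.

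When $i_1$ recurs, I will use the dilation $\lambda\to0$ or $\to\infty$ in $\alpha_{i_1}$ and track limits in $\RS^d$, since analyticity extends to a continuation argument along a path in $\mathcal N$. The cleaner alternative is to treat the poles as analytic functions of a single complex-free scalar path, $\alpha(s)$ with dilation $e^s$ on all letters. I would argue by contradiction: identical poles along all $\alpha$ force $f^\alpha_\iii=A\circ f^\alpha_\jjj$ for all $\alpha$. Differentiating in $\alpha_{i_1}$ at the identity then gives an infinitesimal relation, and evaluating it at $\infty$ via the co-pole contradicts $p_{i_1}\ne p_{j_1}$. I expect this case analysis over recurring letters to be the main difficulty.
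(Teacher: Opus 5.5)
\paragraph{Verdict.} Your reduction is correct as far as it goes and matches the paper's. That covers the inclusions via \cref{lem:gap3} and the one-letter pairs, and the openness of $\mathcal S^*$ through $p_i=\Theta(T_{\fii_i}(x_0))$ and \cref{eq:trace}. It also covers the family $A_{\alpha_i}\circ\fii_i$ with unchanged generator poles, the real-analyticity of $\alpha\mapsto p^\alpha_\iii$, and the countable union of null zero sets. But the step that carries all the content of the density claim is not proved: for every pair $(\iii,\jjj)\in Z$ of finite words, the function $\alpha\mapsto\varsigma(p^\alpha_\iii,p^\alpha_\jjj)^2$ must not vanish identically. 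Your sketches do not close this step, for three reasons.

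\paragraph{Why the sketches fall short.} First, your witnesses lie outside the small neighborhood $\mathcal N$ on which you set up the identity theorem, since they use translations sweeping all of $\R^d$ and dilations tending to $0$ or $\infty$. What you need instead has two parts. One is that $p^\alpha_\iii=g_{i_1}\circ A^{-1}_{\alpha_{i_1}}\circ\cdots\circ g_{i_n}\circ A^{-1}_{\alpha_{i_n}}(\infty)$ is real-analytic on all of $\mathcal A^N$, whether or not $\Phi^\alpha\in\mathcal S$. The other is that $\mathcal A^N$ is connected because $\mathrm{SO}(d)$ is. Second, moving only $\alpha_{i_1}$ does not decouple the two words once letters recur. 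Take $\iii=112$ and $\jjj=211$, with neither generator a similarity, $\alpha_1=\tau_v$, and $\alpha_2$ the identity. Letting $|v|\to\infty$ gives $p^\alpha_\iii\to p_1$ but $p^\alpha_\jjj\to g_2(p_1)$. These limits agree whenever $p_1$ is the repelling fixed point of $\fii_2$, and membership in $\mathcal S^*$ does not exclude this. Your infinitesimal alternative is not an argument either. The similarity in $f^\alpha_\iii=A\circ f^\alpha_\jjj$ depends on $\alpha$ in an unknown way, and nothing in the differentiated relation is visibly tied to $p_{i_1}\ne p_{j_1}$. Third, the case of a similarity generator is not treated beyond the remark about $q=\infty$.

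\paragraph{The missing idea.} Translate all letters at once: take $A_{\alpha(s)_i}=\tau_{sv}$ for every $i\in\II$ and let $s\to\infty$.
\begin{itemize}
  \item Set $q_{n+1}(s)=\infty$ and $q_k(s)=g_{i_k}(q_{k+1}(s)-sv)$, so that $p^{\alpha(s)}_\iii=q_1(s)$.
  \item Suppose no generator is a similarity. Downward induction gives $q_k(s)\to g_{i_k}(\infty)=p_{i_k}$, because each previous limit is finite and hence $q_{k+1}(s)-sv\to\infty$.
  \item Thus $p^{\alpha(s)}_\iii\to p_{i_1}$ and $p^{\alpha(s)}_\jjj\to p_{j_1}$. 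These differ precisely because $\Phi\in\mathcal S^*$ and $i_1\ne j_1$, and recurrence of letters plays no role.
  \item Suppose one generator $\fii_a$ is a similarity; there is at most one in $\mathcal S^*$. Then $g_a(x)=rOx+b$ with $r>1$, and each maximal run of the letter $a$ must be treated as a block.
  \item Inside such a run one finds $q_{k+l}(s)-sv=-s\sum_{j=0}^{m-l}(rO)^jv+O(1)$. Both this and $q_{k+l}(s)$ tend to $\infty=p_a$.
  \item This holds because $\sum_{j=0}^{m'}(rO)^j=(I-(rO)^{m'+1})(I-rO)^{-1}$ is invertible, no power of $rO$ having eigenvalue $1$.
\end{itemize}
With this witness, the identity theorem on the connected $\mathcal A^N$ completes the proof.
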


\begin{proof}
  The inclusion $\mathcal G_T\subseteq\mathcal G_T^{\mathrm{fin}}$ is \cref{lem:gap3}, and $\mathcal G_T^{\mathrm{fin}}\subseteq\mathcal S^*$ because $(i,j)\in Z$ for all distinct $i,j\in\II$. The pole of a generator depends continuously on the system: fix $x_0\in\Omega$ and let $\Theta$ be the continuous map of the proof of \cref{lem:polerec} for this $x_0$. By \cref{eq:pole}, $T_{\fii_i}(x_0)=T_{p_i}(x_0)$ for every $i\in\II$, whether or not $\fii_i$ is a similarity, so that $p_i=\Theta(T_{\fii_i}(x_0))$, the computation behind \cref{eq:polerecover} applying to every finite $p_i\ne x_0$ and $\Theta(0)=\infty$ covering $p_i=\infty$. Moreover $T_{\fii_i}(x_0)$ is continuous in the $\mathcal C^2$-norm on $\ol{\Omega}$ by \cref{eq:trace}, whose derivation applies to every conformal map, $D\fii_i(x_0)$ being invertible with $\|D\fii_i(x_0)^{-1}\|\le\cmin^{-1}$. Hence $\Phi\mapsto p_i(\Phi)$ is continuous on $(\mathcal S,d_2)$ for every $i\in\II$, and $\mathcal S^*$ is open.

  Fix $\Phi\in\mathcal S^*$ and write $\alpha_i=(\lambda_i,O_i,v_i)$. Since $A_{\alpha_i}$ fixes $\infty$, the pole of $A_{\alpha_i}\circ\fii_i$ is $p_i$, so that the generator is conformal on $\Omega'$; moreover $\|D(A_{\alpha_i}\circ\fii_i)\|=\lambda_i\|D\fii_i\|\le\lambda_i\cmax$ on $\ol{\Omega}$ and $|A_{\alpha_i}(\fii_i(x))-\fii_i(x)|\le\|\lambda_iO_i-I\||\fii_i(x)|+|v_i|$. So for $\alpha$ in a neighborhood $\mathcal N$ of the identity tuple, on which $\lambda_i\cmax<1$ and $\|\lambda_iO_i-I\|\sup_{\ol{\Omega}}|\fii_i|+|v_i|<\rho_\Phi$ for every $i\in\II$, the system $\Phi^\alpha$ lies in $\mathcal S$, by the margin $\rho_\Phi=\min_{i \in \II}\dist(\fii_i(\ol{\Omega}),\partial\Omega)>0$, and in $\mathcal S^*$; and $d_2(\Phi^\alpha,\Phi)\le\max_i(\|\lambda_iO_i-I\|\|\fii_i\|_{\mathcal C^2(\ol{\Omega})}+|v_i|)$, which tends to $0$ as $\alpha$ tends to the identity tuple. Since $\Phi\in\mathcal S^*$ and $N\ge2$, at most one generator is a similarity, so that $\max_{i \in \II}\|T_{\fii_i}\|>0$, and the same holds for $\Phi^\alpha$, post-composition by a similarity preserving the pole.

  The inverse of $A_{\alpha_i}\circ\fii_i$ is $g_i\circ A_{\alpha_i}^{-1}$. For $\alpha\in\mathcal A^N$ and $\iii\in\II^n$ put
  \begin{equation*}
    p^\alpha_\iii = g_{i_1}\circ A_{\alpha_{i_1}}^{-1}\circ g_{i_2}\circ A_{\alpha_{i_2}}^{-1}\circ\cdots\circ g_{i_n}\circ A_{\alpha_{i_n}}^{-1}(\infty) = g_{i_1}\circ A_{\alpha_{i_1}}^{-1}\circ\cdots\circ A_{\alpha_{i_{n-1}}}^{-1}\circ g_{i_n}(\infty),
  \end{equation*}
  which is the point $(f^\alpha_\iii)^{-1}(\infty)$ for the composition $f^\alpha_\iii$ of the tuple $\Phi^\alpha$ of M\"obius transformations, by the unfolding of \cref{eq:polerec} in the proof of \cref{lem:polerec}, which uses only \cref{eq:frev}, and is therefore the pole of $\iii$ for $\Phi^\alpha$ whenever $\alpha\in\mathcal N$. It is a real-analytic map $\mathcal A^N\to\RS^d$: each $g_i$ is real-analytic on $\RS^d$, being rational in the charts given by the identity and by $\iota_0$, and $(\beta,x)\mapsto A_\beta^{-1}(x)$ is real-analytic on $\mathcal A\times\RS^d$. Near finite points it is given by $A_\beta^{-1}(x)=\lambda^{-1}O^{-1}(x-v)$, while in the chart $\iota_0$ at $\infty$ it reads $u\mapsto\lambda O^{-1}(u-|u|^2v)/(1-2\langle u,v\rangle+|u|^2|v|^2)$. For a pair $w=(\iii,\jjj)\in Z$ of finite words let $F_w(\alpha)=\varsigma(p^\alpha_\iii,p^\alpha_\jjj)^2$, a real-analytic function on $\mathcal A^N$, the square of the chordal distance being real-analytic on $\RS^d\times\RS^d$.

  We claim that $F_w$ does not vanish identically. The manifold $\mathcal A^N$ is connected because $\mathrm{SO}(d)$ is connected for $d\ge2$. Granting the claim, the zero set $E_w$ of $F_w$ is closed and null in every chart: on each connected component of a chart domain, the function $F_w$ does not vanish identically by the identity theorem, and the zero set of a real-analytic function $G$ that does not vanish identically on a connected open subset of $\R^n$ is Lebesgue null. This last fact is proved by induction on $n$. For $n=1$ the zeros are isolated. For $n\ge2$ cover the set by countably many open boxes $I\times J$ with $I\subseteq\R$ and $J\subseteq\R^{n-1}$, on none of which $G$ vanishes identically, by the identity theorem on the connected set. Fixing $x_1\in I$, there is $k\ge0$ for which $\partial_1^kG(x_1,\,\cdot\,)$ does not vanish identically on $J$, as otherwise every $G(\,\cdot\,,y)$ would vanish to infinite order at $x_1$ and hence on the interval $I$. So the set of $y\in J$ for which $G(\,\cdot\,,y)$ vanishes identically on $I$, being contained in the zero set of the real-analytic $\partial_1^kG(x_1,\,\cdot\,)$ on the connected $J$, is null by the induction hypothesis, while for every other $y$ the zeros of $G(\,\cdot\,,y)$ in $I$ are isolated; the zero set of $G$ in $I\times J$ is closed, hence measurable, and Fubini's theorem makes it null. Returning to the proposition, the set of $\alpha\in\mathcal N$ with $\Phi^\alpha\notin\mathcal G_T^{\mathrm{fin}}$ is the union of the sets $E_w\cap\mathcal N$ over the countably many pairs $w\in Z$ of finite words, hence null in every chart. Every other $\alpha\in\mathcal N$ gives $\Phi^\alpha\in\mathcal G_T^{\mathrm{fin}}$. Since every neighborhood of the identity tuple contained in $\mathcal N$ has positive Lebesgue measure in a chart, it contains such an $\alpha$; together with $d_2(\Phi^\alpha,\Phi)\to0$, this proves that $\mathcal G_T^{\mathrm{fin}}$ is dense in $\mathcal S^*$.

  To prove the claim, we drive the parameter to infinity along a curve of translations. Fix $v\in\R^d\setminus\{0\}$ and, for $s>0$, let $\alpha(s)\in\mathcal A^N$ be the tuple with $A_{\alpha(s)_i}=\tau_{sv}$ for every $i\in\II$, so that $A_{\alpha(s)_i}^{-1}=\tau_{-sv}$. Define $q_{n+1}(s)=\infty$ and $q_k(s)=g_{i_k}(q_{k+1}(s)-sv)$ for $k\in\{1,\ldots,n\}$, by downward recursion, reading $\infty-sv=\infty$, so that $p^{\alpha(s)}_\iii=q_1(s)$. Suppose first that no generator is a similarity. Then $q_n(s)=g_{i_n}(\infty)=p_{i_n}$ is finite, and if $q_{k+1}(s)\to p_{i_{k+1}}$, a finite point, as $s\to\infty$, then $q_{k+1}(s)-sv\to\infty$ and $q_k(s)\to g_{i_k}(\infty)=p_{i_k}$ by the continuity of $g_{i_k}$ on $\RS^d$. By downward induction $p^{\alpha(s)}_\iii\to p_{i_1}$, and likewise $p^{\alpha(s)}_\jjj\to p_{j_1}$, so that $F_w(\alpha(s))\to\varsigma(p_{i_1},p_{j_1})^2>0$, the poles $p_{i_1}$ and $p_{j_1}$ being distinct because $\Phi\in\mathcal S^*$ and $i_1\ne j_1$.

  If some generator $\fii_a$ is a similarity, it is the only one, and $g_a(x)=rOx+b$ with $r=1/\|D\fii_a\|>1$. We prove by downward induction, treating each maximal run of $a$ as one block, that $q_k(s)\to p_{i_k}$ in $\RS^d$ and $q_k(s)-sv\to\infty$ as $s\to\infty$. If $i_k\ne a$ and the assertion holds at $k+1$, then $q_{k+1}(s)-sv\to\infty$ gives $q_k(s)\to g_{i_k}(\infty)=p_{i_k}$; this pole is finite, so $q_k(s)$ is bounded for large $s$ and $q_k(s)-sv\to\infty$. Let now $i_k=\cdots=i_{k+m-1}=a$ be a maximal run. If the run ends the word, then $q_k(s)=\cdots=q_{k+m-1}(s)=\infty=p_a$, so both assertions hold throughout the run. Otherwise $i_{k+m}\ne a$ and the induction gives $q_{k+m}(s)\to p_{i_{k+m}}$, a finite point. Downward induction within the run then gives $q_{k+l}(s)=-s\sum_{j=1}^{m-l}(rO)^jv+O(1)$ for $l\in\{0,\ldots,m-1\}$ as $s\to\infty$. Consequently $q_{k+l}(s)-sv=-sS_{m-l}v+O(1)$ with $S_{m'}=\sum_{j=0}^{m'}(rO)^j=(I-(rO)^{m'+1})(I-rO)^{-1}$. The matrix $S_{m'}$ is invertible because every eigenvalue of $rO$ has modulus $r>1$, so neither $rO$ nor any positive power of it has $1$ as an eigenvalue. Since also $\sum_{j=1}^{m-l}(rO)^jv=rOS_{m-l-1}v\ne0$, we obtain $q_{k+l}(s)\to\infty=p_a$ and $q_{k+l}(s)-sv\to\infty$ throughout the run. This completes the induction, and the claim follows as before.
\end{proof}

\begin{remark} \label{rem:C3gen}
  The threshold \cref{eq:strongcontr} is not universal: the constants $R$ and $\rho_b$ of \cref{eq:polegeom} are features of the pole geometry of the particular system that $d$, $N$, and $\Omega$ do not bound, so the admissible range of $\cmax$ cannot be read off the ambient data. Its two bounds play different roles, $\cmax<N^{-2/d}$ being the counting threshold, under which the pair space has Minkowski dimension below $d$ in the metric of \cref{lem:paircover} used in the proof of \cref{thm:C3dense}, and $\cmax(1+K)<1$ making the balanced direction of \cref{prop:transversal3} a right inverse of the derivative of the pole difference. The counting threshold reads $\log N/\log(1/\cmax)<d/2$ and implies the bound $\dimm(X)<d/2$ on the Minkowski dimension of $X$, the $N^n$ sets $\fii_\iii(X)$, $\iii\in\II^n$, covering $X$ and having diameter at most $C_\Phi\lambda^n\diam(\ol{\Omega})$ for every $\lambda\in(\cmax,1)$ by \cref{eq:tube}; but the count runs over the pair space $Z$ and not over $X$, and whether the dimension bound $\dimm(X)<d/2$ alone suffices for the conclusion of \cref{thm:C3dense} we do not know. Both are invariant under similarities of $\R^d$ applied to $\Omega$, $\Omega'$, and the generators at once, as membership in $\mathcal G_T$ is by \cref{lem:similarity}: a similarity of ratio $\lambda$ preserves $\cmax$, multiplies $\max_{i \in \II}\|T_{\fii_i}\|$ by $\lambda$ and hence $2/C_0$ by $\lambda^{-1}$, and, sending poles to poles and co-poles to co-poles, multiplies $R$ and $\rho_b$ by $\lambda^{-1}$, so that $K$ is unchanged.

  \Cref{prop:nondense} shows that some hypothesis is necessary and locates an obstruction at the exponent $1/d$: on a convex domain the tuples of homotheties of a common ratio above $N^{-1/d}$ whose centers lie close to one point belong to the open set $\mathcal U$, which the closure of $\mathcal G_T$ misses, whereas $\mathcal U$ lies in $\{\Phi\in\mathcal S : \cmax>N^{-1/d}\}$, so that its closure, the function $\cmax$ being $1$-Lipschitz on $(\mathcal S,d_2)$, misses both the tuples of homotheties of ratio below $N^{-1/d}$ and the systems to which \cref{thm:C3dense} applies, which have $\cmax<N^{-2/d}$.

  The contraction threshold of \cref{thm:C3dense} is not necessary for membership in $\mathcal G_T$: for every $0<r<N^{-1/d}$ there are, on suitable domains, members of $\mathcal G_T$ with $\cmax$ arbitrarily close to $r$. Let $Q$ be the orthogonal map permuting the coordinate axes cyclically, $Qe_k=e_{k+1}$ for $k<d$ and $Qe_d=e_1$, let $z_i=(i-1)e_1+e_2$ for $i\in\II$, let $h_i$ be the affine map with $h_i^{-1}(y)=rQy+z_i$, and let $\fii_i=\iota_0\circ h_i\circ\iota_0$ with $\iota_0$ the unit inversion. Each $\fii_i$ is a M\"obius transformation fixing $0$ with $\lambda_{\fii_i}(0)=r$, the unit inversion being a chordal isometry and the chordal factor of $h_i$ at its fixed point $\infty$ being $r$, and with pole $\iota_0(z_i)$, of modulus at most $1$. So for $\Omega=B^o(0,\eta)$ and $\Omega'=B^o(0,2\eta)$ with $\eta$ small the tuple $\Phi=(\fii_i)_{i\in\II}$ is a conformal iterated function system, none of whose generators is a similarity, with $\cmax$ as close to $r$ as we please. Since $g_i=\iota_0\circ h_i^{-1}\circ\iota_0$ and $\iota_0(\infty)=0$, \cref{lem:polerec}, with its continuity clause for infinite words, gives $p_\iii=\iota_0(y_\iii)$ with $y_\iii=\sum_{l\le|\iii|}(rQ)^{l-1}z_{i_l}$ for every nonempty word, the series converging as $r<1$. The coefficient of $e_1$ in $(rQ)^{l-1}z_i$ is $r^{l-1}(i-1)$ when $l\equiv1$ modulo $d$, is $r^{l-1}$ when $l\equiv0$ modulo $d$, and vanishes otherwise. So for $(\iii,\jjj)\in Z$, the terms with $l\equiv0$ modulo $d$ agreeing because $|\iii|=|\jjj|$, the first coordinates of $y_\iii$ and $y_\jjj$ differ by at least $|i_1-j_1|-(N-1)\sum_{k\ge1}r^{dk}\ge1-(N-1)r^d/(1-r^d)>0$, as $Nr^d<1$; hence $y_\iii\ne y_\jjj$ and $p_\iii\ne p_\jjj$, the unit inversion being a bijection of $\RS^d$, and $\Phi\in\mathcal G_T$ by \cref{lem:gap3}.
\end{remark}

\begin{remark} \label{rem:C3domain}
  Whether $\mathcal G_T$ is dense in the open set $\{\Phi\in\mathcal S : \cmax<N^{-1/d}\}$, which the closure of $\mathcal U$ misses, we do not know in general; by \cref{prop:finite}, at a member of the open set $\mathcal S^*$ a failure of density can only come from coincidences of poles of infinite words. The localized perturbations of \cref{lem:localizedS} on the line and the peaking polynomials of \cref{lem:peaking} in the plane have no M\"obius analogue, the family being finite-dimensional.

  The density of $\mathcal G_T$ depends on $\Omega'$ through the closed set $\RS^d\setminus\Omega'$, in which every generator pole lies. The class $\mathcal G_T$ is empty when this set has fewer than $N$ points. When it has an isolated point $q$, consider the systems whose generators $1$ and $2$ have pole $q$, which exist for every $\Omega$: homotheties of small ratio about a point of $\Omega$ when $q=\infty$ and, for finite $q$, the maps $A\circ\iota_q$ with $A$ a similarity of small ratio carrying $\iota_q(x_1)$ to a point of $\Omega$ for some $x_1\in\Omega$. They form a nonempty open subset of $\mathcal S$ that misses $\mathcal G_T$ at the pair $(1,2)\in Z$: the poles depend continuously on the system, by the proof of \cref{prop:finite}, lie in $\RS^d\setminus\Omega'$ for every member of $\mathcal S$, and $q$ is isolated there, so that a pole equal to $q$ stays equal to $q$ under small perturbations. This open set meets $\{\Phi\in\mathcal S : \cmax<N^{-1/d}\}$, the ratio being small, and when $q=\infty$ it contains every tuple of homotheties, which is why the question below carries a hypothesis on $\Omega'$.
\end{remark}

\begin{question} \label{q:C3dense}
  Let $d\ge3$ and $N\ge2$, and suppose that the closed set $\RS^d\setminus\Omega'$ has no isolated points, as when $\Omega'$ is a ball. Is $\mathcal G_T$ dense in the open set $\{\Phi\in\mathcal S : \cmax<N^{-1/d}\}$? In particular, when $\Omega$ is convex, do the tuples of homotheties $x\mapsto r(x-c_i)+c_i$, $c_i\in\Omega$, with $r<N^{-1/d}$ belong to the $d_2$-closure of $\mathcal G_T$?
\end{question}

\section{Dimension of planar self-conformal sets and measures} \label{sec:dimension}

This section derives the dimensional consequences of the separation results, in the plane, where a theorem of Feng and Rapaport converts exponential separation into the sharp dimensions of a self-conformal set and of its self-conformal measures. In \cref{sec:dim-det} we record that theorem, together with the verifiable criterion of Feng and Rapaport for two of its geometric hypotheses, and in \cref{sec:dim-gen} we show that the criterion holds on a $\mathcal C^2$-open and dense subset of the systems with injective generators, so that the sharp dimensions of \cref{thm:dim} hold generically within this subspace. Throughout this section $d=2$, and we keep the conventions of \cref{sec:C2,sec:gen-plane}. In \cref{sec:dim-gen} we assume that $\Omega$ is a bounded Jordan domain and that $\Omega'$ is simply connected, as required by the planar density theorem.

\subsection{The theorem of Feng and Rapaport} \label{sec:dim-det}

For a positive probability vector $\mathbf p=(p_i)_{i\in\II}$, let $\mu_{\mathbf p}$ be the self-conformal measure of \cref{sec:cifs-measure}. Write 
\begin{equation*}
  H(\mathbf p)=-\sum_i p_i\log p_i \qquad\text{and}\qquad \chi(\mathbf p)=-\sum_i p_i\int\log|\fii_i'|\dd\mu_{\mathbf p}
\end{equation*}
for its entropy and Lyapunov exponent, respectively. The derivative bounds of \cref{sec:cifs} give $-\log\cmax \le \chi(\mathbf p) \le -\log\cmin$. We write $\dimh$ for Hausdorff dimension and $\dim(\mu_{\mathbf p})=\inf\{\dimh(A) : A\text{ is a Borel set such that } \mu_{\mathbf p}(A)>0\}$ for the exact-dimension of the self-conformal measure; see \cite{Falconer,FengHu}.

The chain rule gives $\|\fii_{\iii\jjj}'\|\le\|\fii_\iii'\|\|\fii_\jjj'\|$ for all $\iii,\jjj\in\II^*$, so subadditivity yields
\begin{equation} \label{eq:pressure}
  P(t) = \lim_{n\to\infty}\frac{1}{n}\log\sum_{\iii\in\II^n}\|\fii_\iii'\|^t = \inf_{n\ge1}\frac{1}{n}\log\sum_{\iii\in\II^n}\|\fii_\iii'\|^t
\end{equation}
for every $t\ge0$. The function $P$ is continuous and strictly decreasing, with $P(0)=\log N$ and $P(t+u)\le P(t)+u\log\cmax$ for $t,u\ge0$; its unique zero $\dimconf(\Phi)$ is the conformality dimension of $\Phi$; see \cite[Chapter~14]{BaranySimonSolomyak}.

The theorem of Feng and Rapaport \cite{FengRapaport} concerns conformal iterated function systems on a bounded domain $\Omega\subseteq\C$ whose generators $\fii_i$ are injective on $\ol\Omega$. By the classification in \cref{sec:conformal-maps}, a planar conformal map is holomorphic with nonvanishing derivative. Consequently, the derivative of a generator does not vanish on $\ol{\Omega}$, where $|\fii_i'|=\|D\fii_i\|$ and $0<\cmin\le|\fii_i'|\le\cmax<1$. Each generator also extends to an injection on a neighborhood of $\ol{\Omega}$. For every sufficiently small $\delta>0$, the set $\{z\in\C : \dist(z,\ol{\Omega})<\delta\}$ is a domain contained in $\Omega'$, since it is a union of discs centered on the connected set $\ol{\Omega}$. If $\fii_i$ were not injective on any such neighborhood, there would be points $x_n\ne y_n$ of $\Omega'$ with $\fii_i(x_n)=\fii_i(y_n)$ and $\dist(x_n,\ol{\Omega})+\dist(y_n,\ol{\Omega})\to0$. After passing to a subsequence, $x_n$ and $y_n$ would converge to points of $\ol{\Omega}$ with a common image and hence, by injectivity on $\ol{\Omega}$, to a single point $x$. Since $\fii_i'(x)\ne0$, the map $\fii_i$ would be injective on a ball about $x$ containing $x_n$ and $y_n$ for large $n$, a contradiction.

The geometric hypotheses in the theorem of Feng and Rapaport are the following. The system $\Phi$ \emph{preserves a point} $x_0\in\ol{\Omega}$ if every generator fixes $x_0$, and it \emph{preserves an analytic curve} $\Gamma$ if $\Gamma$ is a nonempty relatively closed embedded real-analytic one-dimensional submanifold of $\Omega$, not necessarily connected, with $\fii_i(\Gamma)\subseteq\Gamma$ for every $i\in\II$. It is \emph{holomorphically conjugate to a homothetic system} if there is an injective holomorphic map $h\colon\Omega\to\C$ such that each $h\circ\fii_i\circ h^{-1}\colon h(\Omega)\to\C$ is the restriction of a map $z\mapsto r_iz+b_i$ with $0\ne r_i\in\R$.

\begin{theorem} \label{thm:FR}
  Let $\Phi$ be a planar conformal iterated function system on $\Omega \subset \R^2$ whose generators are injective on $\ol{\Omega}$. If $\Phi$ satisfies the exponential separation condition, preserves neither a point nor an analytic curve, and is not holomorphically conjugate to a homothetic system, then
  \begin{equation*}
    \dimh(X)=\min\{2,\dimconf(\Phi)\}\qquad\text{and}\qquad\dim(\mu_{\mathbf p})=\min\biggl\{2,\frac{H(\mathbf p)}{\chi(\mathbf p)}\biggr\}
  \end{equation*}
  for every positive probability vector $\mathbf p$.
\end{theorem}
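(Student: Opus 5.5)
This statement is the theorem of Feng and Rapaport \cite{FengRapaport}, recorded here in the notation of the paper, so the plan is not to reprove it but to check that our objects and hypotheses match theirs exactly and then invoke their result. First, I will match the class of systems. Their systems are finite families of injective holomorphic contractions on a neighborhood of a compact set, with nonvanishing derivative. Ours are conformal iterated function systems whose generators are injective on $\ol{\Omega}$. By the discussion preceding the statement, each generator is holomorphic with $0<\cmin\le|\fii_i'|\le\cmax<1$ on $\ol{\Omega}$ and extends to an injection of a neighborhood $\{z:\dist(z,\ol{\Omega})<\delta\}\subseteq\Omega'$.

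Next comes the step I expect to need the most care: reconciling the contraction requirement. If their setting requires contraction of distances on a convex or connected compact set, I will take the $r_\Phi$-neighborhood of \cref{lem:tube} together with \cref{lem:thicken}\cref{it:thick-contract}. These show that every generator maps the thickened closure into itself with $|\fii_i'|\le\lambda<1$, and that long compositions are strict contractions. If necessary I will pass to the iterate system $(\fii_\iii)_{\iii\in\II^n}$ with $C_\Phi\lambda^n<1$. This substitution leaves $X$ unchanged, replaces $\mu_{\mathbf p}$ by the measure with weights $p_\iii$, multiplies both $H$ and $\chi$ by $n$, and rescales the pressure so that $\dimconf$ is unchanged. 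Exponential separation also passes to the iterate, since the iterate's words of length $m$ are words of length $nm$ of $\Phi$, giving the bound $(c^n)^m$ along the same infinitely many levels.

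Third, I will match the remaining hypotheses and conclusions. The three geometric hypotheses (no preserved point, no preserved analytic curve, no holomorphic conjugacy to a homothetic system) are stated above in exactly the form used in \cite{FengRapaport}. I will also check that they survive passage to the iterate; their theorem treats this itself, or else I will apply it directly without iterating. The conformality dimension defined by \cref{eq:pressure} coincides with their zero of the pressure, and $H(\mathbf p)/\chi(\mathbf p)$ is their similarity-type dimension of $\mu_{\mathbf p}$. Exact dimensionality of $\mu_{\mathbf p}$ is supplied by \cite{FengHu}. The conclusion is then their main theorem, so the only genuine work is this dictionary, with the contraction normalization as the one point where a careful reduction is needed.
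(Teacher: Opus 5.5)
Your approach matches the paper's: the statement is simply recorded from Feng and Rapaport, with the measure formula being \cite[Theorem~1.5]{FengRapaport} and the set formula \cite[Corollary~1.6]{FengRapaport}, after the paper observes that generators injective on $\ol{\Omega}$ extend injectively to a neighborhood of $\ol{\Omega}$. The paper uses no iterate or contraction renormalization. If you did pass to the $n$-fold iterate, note that plain exponential separation holds only along unspecified levels $n_k$ that need not be multiples of $n$, so it does not transfer ``along the same levels'' without an extra argument, and the curve and conjugacy hypotheses would also have to be rechecked for the iterate.
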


Feng and Rapaport prove the measure formula in \cite[Theorem~1.5]{FengRapaport} and derive the set formula in \cite[Corollary~1.6]{FengRapaport}. They also give a criterion guaranteeing the latter two geometric hypotheses. Our genericity argument verifies this criterion.

\begin{proposition} \label{prop:FR-criterion}
  Let $\Phi$ be a planar conformal iterated function system and, for a nonempty word $\iii\in\II^*$, write $x_\iii\in X$ for the fixed point of $\fii_\iii$. If $\fii_\iii'(x_\iii)\notin\R$ for some nonempty $\iii$, then $\Phi$ does not preserve an analytic curve and is not holomorphically conjugate to a homothetic system.
\end{proposition}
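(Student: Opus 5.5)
The plan is to prove both conclusions by contraposition, reading everything at the single fixed point $x_\iii$ of the composition $\fii_\iii$, where $\fii_\iii'(x_\iii)\notin\R$ is assumed.

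\emph{Homothetic conjugacy.} Suppose $h\colon\Omega\to\C$ is injective holomorphic and every $h\circ\fii_i\circ h^{-1}$ is $z\mapsto r_iz+b_i$ with $r_i\in\R\setminus\{0\}$. Composing along $\iii$ gives $h\circ\fii_\iii=A\circ h$ on $\Omega$, where $A(z)=r_\iii z+b$ and $r_\iii=\prod_l r_{i_l}$ is real. Differentiating at $x_\iii\in X\subseteq\Omega$ and using $\fii_\iii(x_\iii)=x_\iii$ gives
\begin{equation*}
  h'(x_\iii)\fii_\iii'(x_\iii)=r_\iii h'(x_\iii).
\end{equation*}
Since $h$ is injective, $h'(x_\iii)\ne0$, so $\fii_\iii'(x_\iii)=r_\iii\in\R$. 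This is a contradiction.

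\emph{Invariant analytic curve.} Suppose $\Gamma\subseteq\Omega$ is a nonempty relatively closed embedded real-analytic curve with $\fii_i(\Gamma)\subseteq\Gamma$ for every $i$. The steps are as follows.

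First, show $X\subseteq\Gamma$. Pick $z\in\Gamma$. For every infinite word $\jjj$, the points $\fii_{\jjj|_n}(z)$ lie in $\Gamma$ and converge to $\pi(\jjj)$, which lies in $X\subseteq\Omega$. Since $\Gamma$ is relatively closed in $\Omega$, $\pi(\jjj)\in\Gamma$. In particular $x_\iii\in\Gamma$, and $\Gamma$ is invariant under $\fii_\iii$.

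Second, restrict $\fii_\iii$ to the component arc $\gamma$ of $\Gamma$ through $x_\iii$. Since $\fii_\iii$ is continuous and fixes $x_\iii$, it maps $\gamma$ into the component of $\Gamma$ containing $x_\iii$, which is $\gamma$ itself. So $\fii_\iii$ restricts to a real-analytic self-map of the embedded arc $\gamma$ fixing $x_\iii$.

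Third, pass to tangent spaces. The differential at $x_\iii$ is multiplication by $\fii_\iii'(x_\iii)$, and it must carry the real line $T_{x_\iii}\gamma=\R v$ into itself. Hence $\fii_\iii'(x_\iii)v\in\R v$, which forces $\fii_\iii'(x_\iii)\in\R$. This is a contradiction.

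The main obstacle is the curve case: one must justify that $x_\iii$ actually lies on $\Gamma$, and that the invariance passes to the tangent line at that point. I would handle the first point through relative closedness and the fact that $X\subseteq\Omega$, as above. For the second, I would take a local real-analytic parametrization $c(t)$ of $\gamma$ near $x_\iii$ with $c'(0)\ne0$. Then $\fii_\iii(c(t))=c(s(t))$ for a $\mathcal C^1$ map $s$ with $s(0)=0$, using that the embedding of $\gamma$ is a homeomorphism onto its image. Differentiating at $t=0$ gives $\fii_\iii'(x_\iii)c'(0)=s'(0)c'(0)$ with $s'(0)\in\R$, which is the required conclusion.
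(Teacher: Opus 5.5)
Your proof is correct and follows essentially the same route as the paper's sketch, which adapts the argument of Feng and Rapaport: relative closedness gives $X\subseteq\Gamma$, so $\Gamma$ passes through $x_\iii$, where multiplication by $\fii_\iii'(x_\iii)$ would have to preserve the tangent line; and a homothetic conjugacy would force the multiplier to equal the real number $r_\iii$. You supply the details that the paper leaves implicit, namely the local reparametrization $s$ and the nonvanishing of $h'(x_\iii)$.
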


The proof of \cite[Corollary~1.7]{FengRapaport} shows the statement. A preserved analytic curve $\Gamma$ contains $X$, since it is relatively closed in $\Omega$ and contains the orbit $\fii_{\jjj|_n}(p)\to\pi(\jjj)$ of any of its points $p$, so it passes through $x_\iii$, where multiplication by $\fii_\iii'(x_\iii)$ would have to preserve its tangent line. A holomorphic conjugacy to a homothetic system preserves the multiplier $\fii_\iii'(x_\iii)$, whereas every homothety has real derivative. Whether the criterion is also necessary is asked in \cite{FengRapaport}.

\begin{remark} \label{rem:FR-genericity}
  Feng and Rapaport draw a genericity consequence from their criterion as well. Combining it with a transversality theorem of Solomyak and Takahashi \cite[Theorem~2.10]{SolomyakTakahashi}, they prove in \cite[Corollary~1.8]{FengRapaport} that for a real-analytic one-parameter family of holomorphic systems that is real or imaginary non-degenerate and has one composition with a non-real multiplier at its fixed point, the two dimension formulas hold for every parameter outside a subset of the interval of Hausdorff dimension zero. \Cref{thm:dim} below is a statement of a different kind, and neither implies the other: ours is topological rather than metric, it is relative to the whole space $\mathcal S'$ rather than to a curve inside it, and it carries no non-degeneracy hypothesis, while theirs applies to families that a $\mathcal C^2$-perturbation of the generators need not respect.
\end{remark}

\subsection{Generic dimensions} \label{sec:dim-gen}

Let $\mathcal S'$ be the subspace of $\mathcal S$ consisting of the systems whose generators are injective on $\ol{\Omega}$, where $\mathcal S$ is defined in \cref{sec:gen-plane}. Let $\mathcal{G}_T'$ be the set of systems $\Phi\in\mathcal G_T\cap\mathcal S'$ which preserve no point and for which $\fii_\iii'(x_\iii)\notin\R$ for some nonempty $\iii\in\II^*$; by \cref{lem:gap}\cref{it:gap-pre}, \cref{thm:main}\cref{it:main-pre}, and \cref{prop:FR-criterion}, every such system satisfies all hypotheses of \cref{thm:FR}. Define $\mathcal{G}_S'$ in the same way with $\mathcal G_S$ in place of $\mathcal G_T$. By \cref{eq:GSsubsetGT},
\begin{equation} \label{eq:GSprime-subset-GTprime}
  \mathcal{G}_S'\subseteq\mathcal{G}_T'.
\end{equation}
By \cref{lem:gap}\cref{it:gap-schw} and \cref{thm:main}\cref{it:main-schw}, every system in $\mathcal{G}_S'$ also satisfies the strong exponential separation condition modulo M\"obius maps.

\begin{theorem} \label{thm:dim}
  Let $\Omega$ be a bounded Jordan domain and let $\Omega'$ be simply connected. Then $\mathcal G_T\cap\mathcal S'$ and $\mathcal G_S\cap\mathcal S'$ are open and dense in $(\mathcal S',d_2)$. Moreover, $\mathcal{G}_T'$ and $\mathcal{G}_S'$ are open and dense in $(\mathcal S',d_2)$. Every $\Phi\in\mathcal{G}_T'$ satisfies the strong exponential separation condition, while every $\Phi\in\mathcal{G}_S'$ satisfies it modulo M\"obius maps. Every system in either class satisfies
  \begin{equation*}
    \dimh(X)=\min\{2,\dimconf(\Phi)\}\qquad\text{and}\qquad\dim(\mu_{\mathbf p})=\min\biggl\{2,\frac{H(\mathbf p)}{\chi(\mathbf p)}\biggr\}
  \end{equation*}
  for every positive probability vector $\mathbf p$. Consequently, the systems in $\mathcal S'$ satisfying these two dimension formulas contain a $d_2$-open and dense subset.
\end{theorem}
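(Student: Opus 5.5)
The plan is to treat openness and density separately, working inside the subspace $\mathcal S'$. For openness, note first that $\mathcal S'$ carries the subspace topology of $(\mathcal S,d_2)$, so $\mathcal G_T\cap\mathcal S'$ and $\mathcal G_S\cap\mathcal S'$ are open in $\mathcal S'$ by \cref{prop:cont}. The two extra conditions defining $\mathcal G_T'$ are then handled one at a time.

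Preserving no point is an open condition. The generators of a system preserve a point only if they share their fixed points $x_i\in\ol{\Omega}$. Each $x_i$ is the unique fixed point of a strict contraction on a neighborhood of $\ol\Omega$, so it depends continuously on the system in $d_2$, and distinct fixed points stay distinct under small perturbations.

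The multiplier condition is also open. Fix a nonempty $\iii$ with $\fii_\iii'(x_\iii)\notin\R$. The fixed point $x_\iii$ and the derivative $\fii_\iii'$ at it vary continuously in $d_2$, by the chain rule along finitely many letters, so a non-real multiplier stays non-real nearby. Since $\mathcal G_T'$ is an intersection of finitely many open sets, it is open; the same holds for $\mathcal G_S'$. Openness of $\mathcal S'$ in $\mathcal S$ is not needed, since everything is relative to $\mathcal S'$.

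For density it suffices, by \cref{eq:GSprime-subset-GTprime}, to show that $\mathcal G_S'$ is dense in $\mathcal S'$. Fix $\Phi\in\mathcal S'$ and $\eps>0$. First we make $\Phi$ satisfy the two geometric conditions by a small perturbation that stays in $\mathcal S'$, and then we apply the second statement of \cref{thm:C2dense}.

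Step one is to perturb $\fii_1$ by a small rotation about its fixed point, $\psi_1=x_1+e^{i\vartheta}(\fii_1-x_1)$. This keeps injectivity and membership in $\mathcal S$ for small $\vartheta$, because $\fii_1(\ol\Omega)$ is compactly inside $\Omega$. It makes the multiplier $\psi_1'(x_1)=e^{i\vartheta}\fii_1'(x_1)$ non-real for all but countably many $\vartheta$. If all generators still share a fixed point, we translate $\psi_2$ slightly, which moves its fixed point while keeping $x_1$ fixed by $\psi_1$. Call the resulting system $\Phi^\circ\in\mathcal G$-candidate; it lies in the open set of systems preserving no point and having a non-real multiplier. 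Injectivity on $\ol\Omega$ is open among these perturbations: a small $\mathcal C^1$ perturbation of an injective map with nonvanishing derivative on a neighborhood of $\ol\Omega$ stays injective on $\ol\Omega$, by the neighborhood-injectivity argument recorded in \cref{sec:dim-det}.

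Step two applies \cref{prop:transversal} to $\Phi^\circ$. This gives the family $\Phi_t$ with $\|g_i^t-\fii_i^\circ\|_{\mathcal C^2(\ol U)}\le C|t|$ on the neighborhood $U$, and \cref{thm:C2dense} puts $\Phi_t\in\mathcal G_S$ for almost every $t\in B$. Choosing $t$ small makes $\Phi_t$ close to $\Phi^\circ$ in $\mathcal C^1$ on $U$, so by the openness arguments above it stays in the set of systems preserving no point with a non-real multiplier, and it stays injective on $\ol\Omega$.

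The main obstacle is this injectivity transfer in step two. It is exactly why \cref{prop:transversal} records convergence on $U$ rather than only on $\ol\Omega$: injectivity on $\ol\Omega$ is not open in $d_2$ alone. To get it, I would use $\mathcal C^1$-closeness on the compact $\ol{U'}\subset U$, together with injectivity of $\fii_i^\circ$ on a neighborhood $U'$ of $\ol\Omega$ and a uniform local-injectivity radius, to exclude near-collisions.

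With density in hand, the remaining conclusions follow. The separation statements come from \cref{lem:gap} and \cref{thm:main}, and the dimension formulas come from \cref{thm:FR} together with \cref{prop:FR-criterion}.
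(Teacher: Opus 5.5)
Your proof is correct and uses the same ingredients as the paper: openness via \cref{prop:cont} and continuity of fixed points and multipliers, density via \cref{prop:transversal} and \cref{thm:C2dense} with injectivity transferred through $\mathcal C^1$-convergence on the neighborhood $U$, and post-composition by similarities to arrange the geometric hypotheses of \cref{thm:FR}. The only difference is the order. The paper first reaches $\mathcal G_S\cap\mathcal S'$ and then applies the similarity perturbations inside the open set $\mathcal G_S$, whereas you apply them first and then rely on the openness of the geometric conditions.

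One small correction. For a non-convex Jordan domain a generator need not be a strict contraction near $\ol{\Omega}$. The continuous dependence of its fixed point on the system still follows from the compactness of $\ol{\Omega}$ and the uniqueness of the fixed point there, established in \cref{sec:cifs-measure}.
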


\begin{proof}
  The intersections $\mathcal G_T\cap\mathcal S'$ and $\mathcal G_S\cap\mathcal S'$ are open in $(\mathcal S',d_2)$ by \cref{prop:cont}. To prove their density, fix $\Phi\in\mathcal S'$, a number $\lambda\in(\cmax,1)$, and an integer $m>\log N/(2\log(1/\lambda))$, and let $B_1\subseteq\R^{2mN}$ be the ball, $\Phi_t=(g_i^t)_{i\in\II}$, $t\in B_1$, the systems, $C$ the constant, and $U$ the inner neighborhood of \cref{lem:collar} supplied by \cref{prop:transversal}. By \cref{thm:C2dense} there are $t$ arbitrarily close to $0$ with $\Phi_t\in\mathcal G_S$, and the bound $\|g_i^t-\fii_i\|_{\mathcal C^2(\ol{U})}\le C|t|$ of \cref{prop:transversal} shows that $\Phi_t\to\Phi$ in $\mathcal C^2$ on the neighborhood $U$ of $\ol{\Omega}$. We verify that these approximating systems belong to $\mathcal S'$ when $t$ is sufficiently small. By the observation preceding \cref{thm:FR}, we may replace $U$ by a smaller neighborhood of $\ol{\Omega}$ whose closure lies in the inner neighborhood of \cref{lem:collar}, so that every generator $\fii_i$ of $\Phi$ is injective on $U$. Choose constants $c>0$ and $B<\infty$ such that, for all sufficiently small $t$, $|(g_i^t)'|\ge c$ and $|(g_i^t)''|\le B$ on $\ol{U}$ for every $i\in\II$. Choose $\delta>0$ such that the closed $\delta$-neighborhood of $\ol{\Omega}$ lies in $U$ and $B\delta\le c$. If $x,y\in\ol{\Omega}$ and $|x-y|\le\delta$, then the segment from $x$ to $y$ lies in $U$, and Taylor's formula gives $|g_i^t(x)-g_i^t(y)|\ge(c-\tfrac12B|x-y|)|x-y|\ge\tfrac12c|x-y|$. On the compact set of pairs $(x,y)\in\ol{\Omega}^2$ with $|x-y|\ge\delta$, injectivity of $\fii_i$ gives a positive lower bound for $|\fii_i(x)-\fii_i(y)|$, uniformly in $i$, and the uniform convergence $g_i^t\to\fii_i$ preserves half of this bound for all sufficiently small $t$. Thus every $g_i^t$ is injective on $\ol{\Omega}$, so the approximants lie in $\mathcal G_S\cap\mathcal S'$. This proves the density of $\mathcal G_S\cap\mathcal S'$, and \cref{eq:GSsubsetGT} gives the density of $\mathcal G_T\cap\mathcal S'$.

  The sets $\mathcal{G}_T'$ and $\mathcal{G}_S'$ are open in $(\mathcal S',d_2)$. Indeed, $\mathcal G_T\cap\mathcal S'$ and $\mathcal G_S\cap\mathcal S'$ are open there, and the systems preserving a point form a closed set: if $\Phi_k\to\Phi$ in $d_2$ and $x_k$ is fixed by every generator of $\Phi_k$, then, after passing to a subsequence, the compactness of $\ol{\Omega}$ gives $x_k\to x\in\ol{\Omega}$, and uniform convergence gives $\fii_i(x)=x$ for every $i\in\II$. For a fixed nonempty word $\iii$, the compositions and their derivatives depend continuously on the system, uniformly on $\ol{\Omega}$, by induction on $|\iii|$. If $\Phi_k\to\Phi$, compactness and the uniqueness of the fixed point of $\fii_\iii$ therefore give $x_\iii(\Phi_k)\to x_\iii(\Phi)$, and the preceding continuity and the chain rule show that the corresponding multipliers converge. Thus the condition $\fii_\iii'(x_\iii)\notin\R$ is open for each $\iii$, and so is its union over the nonempty finite words.

  To prove density, let $\Phi\in\mathcal S'$ and $\eta>0$. By the first paragraph, there is $\Psi=(\psi_i)_{i\in\II}\in\mathcal G_S\cap\mathcal S'$ with $d_2(\Phi,\Psi)<\eta/3$. The margin $\min_{i\in\II}\dist(\psi_i(\ol{\Omega}),\partial\Omega)$ is positive, each $\psi_i(\ol{\Omega})$ being a compact subset of the open $\Omega$. Since $\mathcal G_S$ is open in $\mathcal S$, choose $r>0$ such that every member of $\mathcal S$ within $d_2$-distance $r$ of $\Psi$ belongs to $\mathcal G_S$, and decrease $r$ so that the image margin is retained by each of the similarity perturbations below whose $d_2$-distance from $\Psi$ is less than $r$. Let $y_2$ be the fixed point of $\psi_2$. Replacing $\psi_1$ by $\psi_1+\tau$, where $\tau$ is small and $\tau\ne y_2-\psi_1(y_2)$, produces a nearby system with no common fixed point. This remains true under sufficiently small further perturbations. Replace $\psi_2$ by $z\mapsto y_2+e^{i\alpha}(\psi_2(z)-y_2)$, whose multiplier at $y_2$ is $e^{i\alpha}\psi_2'(y_2)$; since $\psi_2'(y_2)\ne0$, an arbitrarily small nonzero $\alpha$ can be chosen with $e^{i\alpha}\psi_2'(y_2)\notin\R$. Both perturbations are post-compositions by injective similarities, so they preserve injectivity on $\ol{\Omega}$, as well as conformality and the contraction constants, and they can be chosen within $d_2$-distance $r$ of $\Psi$ and with total $d_2$-distance less than $2\eta/3$. The resulting system belongs to $\mathcal{G}_S'$ and lies within $\eta$ of $\Phi$, proving that $\mathcal{G}_S'$ is dense in $(\mathcal S',d_2)$. The inclusion \cref{eq:GSprime-subset-GTprime} gives the density of $\mathcal{G}_T'$.

  Finally, every $\Phi\in\mathcal{G}_T'$ lies in $\mathcal G_T$, so \cref{lem:gap,thm:main} show that $\Phi$ satisfies the strong, hence the plain, exponential separation condition, and the remaining hypotheses of \cref{thm:FR} hold by the definition of $\mathcal{G}_T'$ and \cref{prop:FR-criterion}. Hence \cref{thm:FR} gives both formulas for every positive probability vector $\mathbf p$. The inclusion \cref{eq:GSprime-subset-GTprime} gives the formulas for members of $\mathcal{G}_S'$, and \cref{lem:gap} gives their strong exponential separation condition modulo M\"obius maps. Thus both classes have all the asserted properties, and either is a required open and dense subset of the systems satisfying the dimension formulas.
\end{proof}

\begin{remark} \label{rem:injectivity-restriction}
  The restriction to $\mathcal S'$ in the density statement cannot be dropped: if a generator takes the same value at two points of $\Omega$, then by Rouch\'e's theorem \cite[p.~153]{Ahlfors} so does every generator sufficiently close to it in $d_2$, so the systems with a generator that is not injective on $\Omega$ form an open set, which the closure of $\mathcal{G}_T'$ misses. Such systems exist: on the convex $\Omega=\{z\in\C : |\re(z)|<1 \text{ and } |\im(z)|<4\}$, with $\Omega'=\{z\in\C : |\re(z)|<\frac{11}{10} \text{ and } |\im(z)|<\frac{41}{10}\}$, the pair $\fii_1(z)=\frac{e^z}{10}$ and $\fii_2(z)=\frac{z}{2}$ is a planar conformal iterated function system, the images $\fii_1(\ol{\Omega})$ and $\fii_2(\ol{\Omega})$ lying in $\Omega$ and the derivatives satisfying $|\fii_1'|\le \frac{e}{10}$ and $|\fii_2'|=\frac{1}{2}$ on $\ol{\Omega}$, whereas $\fii_1(i\pi)=\fii_1(-i\pi)=-\frac{1}{10}$ with $i\pi$ and $-i\pi$ in $\Omega$.
\end{remark}

\begin{example} \label{ex:C2dim}
  Let $\Phi=(\fii_1,\fii_2,\fii_3)$ be the system of \cref{ex:C2}. That example satisfies \cref{eq:mainhyp2}, so \cref{lem:gap} yields $\Phi\in\mathcal G_S$. The generators are injective on $\ol{\Omega}=B(0,1)$: the map $\fii_1$ is linear, and $\fii_2(z)-\fii_2(w)=(z-w)(10+z+w)/100$ and $\fii_3(z)-\fii_3(w)=(z-w)(10+i(z+w))/100$ vanish for $z,w\in B(0,1)$ only when $z=w$, since $|z+w|\le2$. The only fixed point of $\fii_1$ is $0$, whereas $\fii_3(0)=\frac{4}{5}$, so $\Phi$ preserves no point. If $x_3$ is the fixed point of $\fii_3$, then $\fii_3'(x_3)=(5+ix_3)/50$ and $|x_3-\frac{4}{5}|=|\frac{x_3}{10}+i\frac{x_3^2}{100}|\le\frac{11}{100}$, whence $\re(x_3)\ge\frac{69}{100}$ and $\im(\fii_3'(x_3))=\re(x_3)/50>0$. Thus $\Phi\in\mathcal{G}_S'$, the criterion holding with the word $3$ of length one.

  Moreover, $H(\mathbf p)\le\log3$ and $\chi(\mathbf p)\ge-\log\cmax=\log(\frac{50}{6})$, so we have $H(\mathbf p)/\chi(\mathbf p) \le \log3/\log\frac{50}{6} < 1$. The bound $\|\fii_i'\|\le\cmax=\frac{6}{50}$ and the $n=1$ term of \cref{eq:pressure} give $P(t)\le\log3+t\log\cmax$, hence $\dimconf(\Phi)\le\log3/\log(\frac{50}{6})<1$. Therefore \cref{thm:dim} gives 
  \begin{equation*}
    \dimh(X)=\dimconf(\Phi) \qquad \text{ and } \qquad \dim(\mu_{\mathbf p}) = \frac{H(\mathbf p)}{\chi(\mathbf p)}
  \end{equation*}
  for every positive probability vector $\mathbf p$.
\end{example}

\begin{declaration}
  In preparing this paper the author used large language models for language editing throughout, and in some cases to draft English prose from the author's own descriptions of the intended mathematical content. The models were also used to check proofs for correctness and to help locate references. All results and arguments are the author's own and have been verified independently of the models, and every reference has been checked by the author against the original source. The author takes full responsibility for the content of this paper.
\end{declaration}

\printbibliography

\end{document}